\renewcommand{\dateseparator}{-}
\renewcommand{\today}{\the\year \dateseparator \twodigit\month
\dateseparator \twodigit\day}
\newcommand{\Bf}[1]{\mathbf{#1}}
\newcommand{\Ca}[1]{\mathcal{#1}}
\newcommand{\Fr}[1]{\mathfrak{#1}}
\newcommand{\Sc}[1]{\mathscr{#1}}
\def\A{\mathbf{A}}
\def\C{\mathbf{C}}
\def\I{\mathscr{I}}
\def\F{\mathscr{F}}
\def\FF{\mathbf{F}}
\def\G{\mathscr{G}}
\def\L{\mathscr{L}}
\def\O{\mathcal{O}}
\def\p{\mathfrak{p}}
\def\Q{\mathbf{Q}}
\def\R{\mathbf{R}}
\def\Z{\mathbf{Z}}
\def\GL{\mathrm{GL}}
\def\GSp{\mathrm{GSp}}
\def\BT{\mathrm{BT}_1}
\DeclareMathOperator{\coker}{coker}
\DeclareMathOperator{\im}{im}
\DeclareMathOperator{\tr}{tr}
\renewcommand{\hom}{\operatorname{Hom}}
\DeclareMathOperator{\End}{\operatorname{End}}
\DeclareMathOperator{\Lie}{Lie}
\DeclareMathOperator{\spec}{Spec}
\numberwithin{equation}{section} 
\theoremstyle{plain}
\newtheorem{thm}{Theorem}[section]
\newtheorem{prop}[thm]{Proposition}
\newtheorem{lem}[thm]{Lemma}
\newtheorem{cor}[thm]{Corollary}
\newtheorem*{key claim}{Key Claim}
\newtheorem{mainthm}{Theorem}[chapter]
\theoremstyle{definition}
\newtheorem{defn}[thm]{Definition}
\newtheorem{conv}[thm]{Convention}
\newtheorem{exmp}[thm]{Example}
\theoremstyle{remark}
\newtheorem{rem}[thm]{Remark}
\newtheorem{cond}[thm]{Condition}
\title{Torsion in the Coherent Cohomology of Shimura Varieties and Galois Representations}
\author{George Boxer}
\begin{document}
\pagenumbering{roman}

\thispagestyle{empty}

\vspace*{\fill}

\begin{center}
Torsion in the Coherent Cohomology of\\ Shimura Varieties and Galois Representations\\
\vspace{0.2in}
A dissertation presented\\
\vspace{0.2in}
by\\
\vspace{0.2in}
George Andrew Boxer\\
\vspace{0.2in}
to\\
\vspace{0.2in}
The Department of Mathematics\\
\vspace{0.2in}
in partial fulfillment of the requirements\\
for the degree of\\
Doctor of Philosophy\\
in the subject of\\
Mathematics\\
\vspace{0.2in}
Harvard University\\
Cambridge, Massachusetts\\
\vspace{0.2in}
April 2015\\
\end{center}

\vspace*{\fill}

\pagebreak



\thispagestyle{empty} 

\vspace*{\fill}

\begin{center}
\copyright \, 2015 -- George A. Boxer \\
All rights reserved.
\end{center}

\vspace*{\fill}

\pagebreak


\doublespacing


\noindent Dissertation Advisor: Richard Taylor \hfill
George Andrew Boxer

\vspace{0.5in}

\centerline{Torsion in the Coherent Cohomology of Shimura Varieties and Galois Representations}

\vspace{0.8in}

\centerline{Abstract}

\vspace{0.3in}

We introduce a method for producing congruences between Hecke eigenclasses, possibly torsion, in the coherent cohomology of automorphic vector bundles on certain good reduction Shimura varieties.  The congruences are produced using some ``generalized Hasse invariants'' adapted to the Ekedahl-Oort stratification of the special fiber.

\pagebreak


\tableofcontents

\pagebreak


\section*{Acknowledgements}

First and foremost, I would like to thank my advisor Richard Taylor for all that he has taught me about Shimura varieties, automorphic forms, and Galois representations, and for his support, encouragement, and patience throughout this project.

Next it is my pleasure to thank Mark Kisin for many valuable conversations related to the topic of this thesis, and in particular for pointing me to Serre's letter \cite{Se96}.

I would like to thank Wushi Goldring and David Geraghty for explaining to me their beautiful construction of a Hasse invariant on the non ordinary locus of the Siegel threefold.  I am extremely grateful to Kai-Wen Lan for answering many questions about compactifications of Shimura varieties.  I would like to thank Bao Le Hung, Peter Scholze, and Anand Patel for valuable conversations related to this work.

I must also thank the members and honorary members of the \emph{true} alcove for all that I have learned from them over the last four years, as well as the disciples of Oleg for helping me stay sane.

I am extremely grateful to the math department staff for making the department a pleasant and productive place to work, and I am especially appreciative of Susan Gilbert for keeping me on track all these years.

Last but certainly not least, I would like to thank my family for all their support.

\pagebreak


\pagenumbering{arabic}

\chapter{Introduction}

\section{Motivation: Weight 1 Modular Forms mod $p$}

This thesis is concerned with the conjectural correspondence between modular forms mod $p$ and mod $p$ Galois representations.  We will begin by reviewing the first example where ``torsion'' phenomena arise: weight 1 modular forms mod $p$.

Fix an odd prime $p$ and an integer $N\geq 3$ relatively prime to $p$.  Let $\Ca{X}=\Ca{X}(N)/\Z_p$ be the proper modular curve with full level $N$ structure.  We have a universal generalized elliptic curve $\Ca{E}/\Ca{X}$, from which we may define a line bundle
\begin{equation*}
\omega=e^*\Omega^1_{\Ca{E}/\Ca{X}}
\end{equation*}
where $e$ denotes the identity section of $\Ca{E}$.

Then Katz \cite{Ka73} has defined the space of geometric modular cusp forms of level $N$, weight $k$, and with coefficients in a $\Z_p$-algebra $R$ as
\begin{equation*}
S_k(R)=H^0(\Ca{X},\omega^{\otimes k}(-\infty)\otimes_{\Z_p}R)
\end{equation*}
where $\infty$ denotes the divisor of cusps of $\Ca{X}$.  When $k\geq 2$, Katz proves that for each $\Z_p$-algebra $R$
\begin{equation*}
S_k(R)=S_k(\Z_p)\otimes_{\Z_p}R.
\end{equation*}

However when $k=1$, this does not hold, and in fact a new phenomenon arises: it can happen that the reduction mod $p$ map
\begin{equation*}
S_1(\Z_p)\to S_1(\FF_p) 
\end{equation*}
is not surjective.  Modular forms mod $p$ which do not lift to characteristic 0 are termed ``ethereal'' by some authors.  The first example of an ethereal form was discovered by Mestre in 1987 for $N=1429$, $p=2$ (see the appendix of \cite{Ed06}.)  Further examples were found by Buzzard in the early 2000's, including the first examples with $p$ odd \cite{Bu13}.  More recently, Schaeffer \cite{Sc14} has developed an algorithm for finding ethereal forms and has produced extensive tables.

Let us explain what the existence of ethereal forms has to do with the ``torsion'' in the title of this thesis.  We have a short exact sequence of sheaves on $\Ca{X}$
\begin{equation*}
0\to\omega(-\infty)\overset{p}{\to}\omega(-\infty)\to \omega(-\infty)/p\to 0
\end{equation*}
and upon taking cohomology we conclude that
\begin{equation*}
\coker(S_1(\Z_p)\to S_1(\FF_p))=H^1(\Ca{X},\omega(-\infty))[p].
\end{equation*}

The space $S_1(\overline{\FF}_p)$ has an action of Hecke operators $T_l,S_l$ for $l\nmid Np$.  If $f\in S_1(\overline{\FF}_p)$ is an eigenform, then there is a mod $p$ Galois representation
\begin{equation*}
\rho_f:G_\Q\to\GL_2(\overline{\FF}_p)
\end{equation*}
associated to $f$, even if $f$ is ethereal (we will explain why in the next section).  These Galois representations have the unusual property that they are unramified at $p$.  In fact there is a diagram

\begin{equation*}
\begin{tikzcd}
\left\{\text{Eigenforms in $S_1(\overline{\Q}_p)$}\right\}\arrow[leftrightarrow]{r}\arrow{d}&\left\{\rho:G_\Q\to\GL_2(\overline{\Q}_p)\text{ odd, unramified at $p,\ldots$}\right\}\arrow{d}\\
\left\{\text{Eigenforms in $S_1(\overline{\FF}_p)$}\right\}\arrow[leftrightarrow]{r}&\left\{\rho:G_\Q\to\GL_2(\overline{\FF}_p)\text{ odd, unramified at $p,\cdots$}\right\}
\end{tikzcd}
\end{equation*}
where the top horizontal arrow is the usual Langlands correspondence between weight 1 modular forms and odd two dimensional Artin representations, and the bottom horizontal arrow is part of Serre's conjecture.  The vertical arrows are reduction mod $p$.

The fact that the left vertical arrow needn't be surjective can also be seen on the Galois side: there exists odd mod $p$ representations unramified at $p$ whose projective image contains $\text{PSL}_2(\FF_p)$.  Such a representation cannot be the reduction of a two dimensional Artin representation for $p>5$.

The basic goal of this thesis is to try to understand this picture for groups beyond $\GL_2$.  In particular we want to construct the bottom arrow from left to right.

\section{Constructing Congruences Using Hasse Invariants}

We maintain the notation of the last section.  For convenience we let $X=\Ca{X}_{\FF_p}$.  Let us suppose we have a Hecke eigenform $f\in S_1(\overline{\FF}_p)$.  We will sketch how to construct a Galois representation
\begin{equation*}
\rho_f:G_\Q\to\GL_2(\overline{\FF}_p)
\end{equation*}
associated to $f$.

The key tool will be the Hasse invariant
\begin{equation*}
A\in H^0(X,\omega^{\otimes p-1}).
\end{equation*}
We will recall its construction in section \ref{S:EOGH} below.

We may form the product $Af\in S_p(\overline{\FF}_p)$.  By a crucial property of the Hasse invariant reviewed below, $Af$ is also a Hecke eigenform with the same Hecke eigenvalues as $f$.  But as it has weight $p>1$, it admits a lift to characteristic 0.  By the lemma of Deligne-Serre \cite[6.11]{DS74}, we may even pick a lift $\tilde{f}\in S_1(\overline{\Q}_p)$ which is a Hecke eigenform.  Then we make take $\rho_f=\overline{\rho}_{\tilde{f}}$, the reduction of the Galois representation associated to $\tilde{f}$.

In the rest of this thesis we will also want to consider higher coherent cohomology.  Let us suppose then that we have a Hecke eigenform
\begin{equation*}
f\in H^1(X,\omega(-\infty))
\end{equation*}
that we would like to associate a Galois representation to.  Of course, we could use Serre duality to reduce to the case above, but let us explain a different method which will be a sort of baby case of the more general argument to be explained in section \ref{S:introcong} below.

We may again try to multiply by the Hasse invariant, obtaining
\begin{equation*}
Af\in H^1(X,\omega^{\otimes p}(-\infty))
\end{equation*}
but this space is easily seen to be trivial (recall that $p>2$), and hence $Af=0$.  But all is not lost: we may consider the short exact sequence of sheaves on $X$
\begin{equation*}
0\to \omega(-\infty)\overset{A}{\to}\omega^{\otimes p}(-\infty)\to \omega^{\otimes p}|_{SS}\to 0
\end{equation*}
where $SS=V(A)\subset X$ is the supersingular locus.  The resulting long exact sequence reads
\begin{equation*}
H^0(X,\omega^{\otimes p}(-\infty))\to H^0(SS,\omega^{\otimes p}|_{SS})\overset{\delta}{\to} H^1(X,\omega(-\infty))\to 0
\end{equation*}
One may show that the surjective map $\delta$ commutes with all the Hecke operators.  Hence there exists a Hecke eigenform $g\in H^0(SS,\omega^{\otimes p}|_{SS})$ with the same Hecke eigenvalues as $f$ (note that we may not be able to pick $g$ with $\delta(g)=f$).

Now if we are lucky, $\delta(g)=0$ and so we may find some $\tilde{g}\in H^0(X,\omega^{\otimes p})$ with the same Hecke eigenvalues as $g$, and so we are done as before.

If not, then we use a ``generalized Hasse invariant''
\begin{equation*}
B\in H^0(SS,\omega^{\otimes p^2-1}|_{SS})
\end{equation*}
whose construction is explained in section \ref{S:EOGH} below (see also \cite{Se96}).  Multiplication by $B$ gives a Hecke equivariant isomorphism
\begin{equation*}
H^0(SS,\omega^{\otimes p}|_{SS})\simeq H^0(SS,\omega^{\otimes p^2+p-1}|_{SS})
\end{equation*}
But now the weight is sufficiently large that $Bg$ may be lifted first to $H^0(\Ca{X},\omega^{\otimes p^2+p-1}(-\infty))$ and then to characteristic 0.

The idea of using sections of powers of $\omega$ on the supersingular locus, as well as the ``Hasse invariant'' $B$ to raise the weight comes from the paper of Serre \cite{Se96} (see also the work of Ghitza \cite{Gh04}).

\section{Geometric Siegel Modular Forms}
We will now explain the methods and results of this thesis.  We begin by establishing some notation.  In the body of the thesis we will work with general PEL type modular varieties of type A and C (to be introduced in Chapter \ref{PEL}) but for simplicity in this introduction we will only consider Siegel modular varieties.

Fix a prime $p$, an integer $N\geq 3$ relatively prime to $p$, and a positive integer $g$.  Let $\Ca{X}/\Z_p$ be the moduli space of principally polarized abelian varieties of dimension $g$ with a principal level $N$ structure.

Over $\Ca{X}$ we have a universal family $\pi:A\to\Ca{X}$ and from it we may form the \emph{Hodge bundle}
\begin{equation*}
\Ca{E}=\pi_*\Omega^1_{A/\Ca{X}}=e^*\Omega^1_{A/\Ca{X}}
\end{equation*}
where $e:\Ca{X}\to A$ denotes the identity section.  It is locally free sheaf of rank $g$.  We may also consider its determinant
\begin{equation*}
\omega=\det\Ca{E}.
\end{equation*}

To each algebraic representation $\rho$ of $\GL_g$ on a finite free $\Z_p$-module we may form a vector bundle $V_\rho/\Ca{X}$ by ``applying $\rho$ to the transition functions of $\Ca{E}$.''  For example this procedure leads to familiar tensor constructions like $\text{Sym}^n\Ca{E}$.  The $V_\rho$ are sometimes called \emph{automorphic vector bundles}, and they are the natural generalizations of the line bundles $\omega^{\otimes k}$ when $g=1$.  When $g>1$ one speaks of sections of $V_\rho$ over $\Ca{X}$\footnote{Indeed by Koecher's principal when $g>1$ it is not necessary to impose any condition of ``holomorphy at the cusps.''} as holomorphic Siegel modular forms of genus $g$, weight $\rho$, and level $N$ (of course classically one would work with complex, rather than $p$-adic coefficients).

In this thesis we will also be interested in the higher coherent cohomology of the vector bundles $V_\rho$.  The Siegel modular varieties $\Ca{X}/\Z_p$ are not proper, and in order to have a reasonable theory of coherent cohomology we need to introduce compactifications, due to Faltings-Chai \cite{FC90} in the arithmetic setting.

First, there is a family of \emph{toroidal compactifications} $j^{\text{tor}}:\Ca{X}\hookrightarrow\Ca{X}^{\text{tor}}$ indexed by a suitable choice of combinatorial data (a so called compatible family of rational polyhedral cone decompositions.)  This choice will be suppressed in what follows, but when it is made appropriately, $\Ca{X}^{\text{tor}}/\Z_p$ is smooth and proper, with boundary $D=\Ca{X}^{\text{tor}}-\Ca{X}$ a relative simple normal crossing divisor.  There is also a \emph{minimal} (or Baily-Borel or Satake) compactification $j^{\text{min}}:\Ca{X}\to\Ca{X}^{\text{min}}$.  The minimal compactification $\Ca{X}^{\text{min}}/\Z_p$ is projective and normal, but not smooth when $g>1$.  When $g>1$ its boundary is not a divisor.  In fact it has codimension $g$.  There is a map $\Ca{X}^{\text{tor}}\to\Ca{X}^{\text{min}}$ fitting into a commutative diagram
\begin{equation*}
\begin{tikzcd}
&\Ca{X}^{\text{tor}}\arrow{d}\\
\Ca{X}\arrow[hook]{ur}{j^{\text{tor}}}\arrow[hook]{r}{j^{\text{min}}}&\Ca{X}^{\text{min}}.
\end{tikzcd}
\end{equation*}

The theory of the compactifications $\Ca{X}^{\text{tor}}$ and $\Ca{X}^{\text{min}}$ is made somewhat complicated by the fact that neither can be interpreted as moduli spaces in general.  Nonetheless there is a semiabelian scheme $A/\Ca{X}^{\text{tor}}$ extending the universal family over $\Ca{X}$.  Using it we may define
\begin{equation*}
\Ca{E}^{\text{can}}=e^*\Omega^1_{A/\Ca{X}^{\text{tor}}},
\end{equation*}
the so called \emph{canonical extension} of the Hodge bundle.

We may then define, for each representation $\rho$ as above, a \emph{canonical extension} $V_{\rho}^{\text{can}}/\Ca{X}^{\text{tor}}$ of $V_\rho$ to $\Ca{X}^{\text{tor}}$, as well as a so called \emph{subcanonical extension}
\begin{equation*}
V_{\rho}^{\text{sub}}=V_\rho^{\text{can}}(-D).
\end{equation*}
One should think of $V_{\rho}^{\text{sub}}$ as the sheaf whose sections are cusp forms.

Then we may define spaces of geometric Siegel modular forms\footnote{A priori these spaces could depend on the choice of toroidal compactification, but it turns out that they don't.  See the discussion in section \ref{S:heckeaction}}
\begin{equation*}
H^n(\Ca{X}^{\text{tor}},V_{\rho}^{\text{can}})\qquad\text{and}\qquad H^n(\Ca{X}^{\text{tor}},V_{\rho}^{\text{sub}})
\end{equation*}
as well as mod $p$ and mod $p^r$ variants
\begin{equation*}
H^n(\Ca{X}^{\text{tor}},V_{\rho}^{\text{can}}/p^r)\qquad\text{and}\qquad H^n(\Ca{X}^{\text{tor}},V_{\rho}^{\text{sub}}/p^r).
\end{equation*}
These spaces carry an action of the Hecke algebra
\begin{equation*}
\Bf{T}=\bigotimes_{l\nmid Np}\Z_p[\GSp_{2g}(\Q_l)//\GSp_{2g}(\Z_l)]
\end{equation*}
and it is this action that makes them interested.

As explained above, when $n=0$ one should think of these spaces as holomorphic Siegel modular forms and holomorphic Siegel cusp forms respectively.  When $n>0$ they should be viewed as some sort of ``non holomorphic'' Siegel modular forms.  Indeed by a theorem of Harris \cite{Ha88}, with $\C$ coefficients, these spaces can essentially be computed in terms of automorphic representations on the group $\GSp_{2g}/\Q$.  A cuspidal automorphic representation $\pi=\otimes_v \pi_v$ contributes according to its archimedean component $\pi_\infty$.  Following \cite{Ha88}, those that do contribute are called $\overline{\partial}$-cohomological.   By a theorem of Mirkovi\'{c} \cite[3.5]{Ha88}, if $\pi_\infty$ is tempered and $\overline{\partial}$-cohomological then it is either discrete series or a non degenerate limit of discrete series.  The former class of representations generalize classical modular forms of weight $k\geq 2$, while the latter generalize modular forms of weight 1.

Let us now give three reasons why it is interesting to consider the higher coherent cohomology rather than just $H^0$.
\begin{enumerate}
\item For $g\geq 2$ there should exist $L$-packets of automorphic representations which contribute to the higher coherent cohomology of some $V_{\rho}$ but not to the $H^0$ of any vector bundle.  In order to associate Galois representations to such automorphic representations one needs to work with higher coherent cohomology.
\item In a recent breakthrough Calegari and Geraghty \cite{CG14} have extended the Taylor-Wiles method to situations where the automorphic forms of interest contribute to cohomology in more than one degree (like the weight 1 modular forms considered earlier.)  In order for their method to succeed, they require the existence of Galois representations attached to all cohomology classes, including torsion, in the entire range where there is cohomology.  One of the main goals of this work is to produce some of the Galois representations they require.
\item By modifying a construction of Harris-Lan-Taylor-Thorne \cite{HLTT12} one may construct certain ``boundary cohomology classes'' in coherent cohomology out of torsion classes in the betti cohomology of certain arithmetic locally symmetric spaces.  This leads to a new proof of cases of Scholze's spectacular work \cite{Sc13}.  This is the subject of a forthcoming paper.
\end{enumerate}

\section{The Ekedahl-Oort Stratification and Generalized Hasse Invariants}\label{S:EOGH}

In this section we let $X/\FF_p$ denote the special fiber of $\Ca{X}$.  The classical Hasse invariant is a section
\begin{equation*}
A\in H^0(X,\omega^{\otimes p-1})
\end{equation*}
which plays an important role in the construction of congruences between automorphic forms.  We briefly recall one of its constructions.  We begin with the relative Verschiebung
\begin{equation*}
V=V_{A/X}:A^{(p)}\to A
\end{equation*}
where $A^{(p)}$ is defined by the Cartesian diagram
\begin{equation*}
\begin{CD}
A^{(p)}@>>> A\\
@VVV @VVV\\
X@>F_X>> X
\end{CD}
\end{equation*}
where $F_X$ denotes the absolute Frobenius on $X$.

$V$ induces a map on cotangent spaces along the identity section
\begin{equation*}
V^*:\Ca{E}\to \Ca{E}^{(p)}=F_X^*\Ca{E}
\end{equation*}
where we have used the isomorphism $e^*\Omega^1_{A^{(p)}/X}\cong (e^*\Omega^1_{A/X})^{(p)}$.  We take its determinant to obtain
\begin{equation*}
V^*:\omega\to \omega^{(p)}\cong\omega^{\otimes p}
\end{equation*}
which gives the Hasse invariant
\begin{equation*}
A\in H^0(X,\omega^{\otimes p-1}).
\end{equation*}

An abelian variety with non zero Hasse invariant is called \emph{ordinary}.  We have a (set theoretic) decomposition of $X$ into its ordinary and non ordinary loci
\begin{equation*}
X=X^{\text{ord}}\bigcup X^{\text{NO}}
\end{equation*}
where $X^{\text{NO}}$ is exactly the zero locus of $A$.

There are several ways to further stratify $X$ according to finer invariants of abelian varieties in characteristic $p$.  The one that will play a role in this thesis is the Ekedahl-Oort stratification \cite{Oo01}.  Its definition begins with the surprising observation that for a principally polarized abelian variety $A$ over an algebraically closed field $k$ of characteristic $p$, there are only finite many possibilities for its p-torsion $A[p]$ as a finite group scheme over $k$ up to isomorphism.

This observation leads to a set theoretic decomposition
\begin{equation*}
X=\coprod_{w\in W^I}X_w
\end{equation*}
of $X$ by reduced locally closed subschemes $X_w$ which is characterized in the following way: two geometric points $x,y\in A(k)$ lie in the same $X_w$ if and only if $A_x[p]\cong A_y[p]$.

The indexing set $W^I$ is a certain set of Weyl group Coset representatives which we now describe.  Let $W$ be the Weyl group of type $C_g$.  Concretely, we realize this as the subgroup of $S_{2g}$ given by
\begin{equation*}
W=\{w\in S_{2g}\mid w(2g+1-i)=2g+1-w(i)\}.
\end{equation*}
The group $W$ is generated by the simple reflections
\begin{align*}
s_i&=(i\, i+1)(2g-i\,2g+1-i)\quad i=1,\ldots,g-1\\
s_g&=(g\,g+1)
\end{align*}
Then let $I=\{1,\ldots,g-1\}$ and let $W_I$ be the subgroup of $W$ generated by $s_i$ for $i\in I$ (this is nothing but the symmetric group on $g$ letters.)  Then the indexing set $W^I$ is the set of minimal length coset representatives for $W/W_I$.  Explicitly it is the set of $w\in W^I$ satisfying
\begin{equation*}
w(1)<w(2)<\cdots<w(g).
\end{equation*}
We will explain how this indexing comes about below.

The classical Hasse invariant recalled above lives on all of $X$ and its non vanishing locus is the ordinary locus $X^{\text{ord}}$.  Our generalized Hasse invariants live on the closed strata $\overline{X}_w$ and they are non vanishing on $X_w$.  That is, they cut out $\overline{X}_w-X_w$, the union of the strata lying in the closure of the open stratum $X_w$.

\begin{mainthm}[Existence of generalized Hasse invariants]
For each $w\in W^I$ there is an integer $N_w>0$ and a section
\begin{equation*}
A_w\in H^0(\overline{X}_w,\omega^{\otimes N_w})
\end{equation*}
with the following properties:
\begin{enumerate}
\item $A_w$ is non vanishing precisely on $X_w$.
\item For every Hecke correspondence $X\overset{p_1}{\leftarrow}Y\overset{p_2}{\to}X$ we have
\begin{equation*}
p_1^* A_w=p_2^*A_w
\end{equation*}
thought of as sections of $p_1^*\omega\simeq p_2^*\omega$ over $p_1^{-1}(\overline{X}_w)=p_2^{-1}(\overline{X}_w)$.
\end{enumerate}
\end{mainthm}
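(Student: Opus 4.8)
The plan is to construct $A_w$ group-theoretically via the theory of $G$-zips (or equivalently, the Ekedahl--Oort stratification viewed through the universal $\mathrm{BT}_1$), reducing the problem to a statement about the stack $[E\backslash G]$ classifying truncated Barsotti--Tate groups with $\GSp$-structure. First I would recall that the Ekedahl--Oort stratification of $X$ arises by pulling back the stratification of such a stack $\mathcal{Z}$ along the classifying map $\zeta\colon X\to\mathcal{Z}$ induced by $A[p]$ together with its Hodge filtration and the Frobenius/Verschiebung maps. The strata $\mathcal{Z}_w$ of $\mathcal{Z}$ are indexed by $w\in W^I$, and $\zeta^{-1}(\mathcal{Z}_w)=X_w$; crucially, $\zeta$ is smooth, so $\zeta^{-1}(\overline{\mathcal{Z}_w})=\overline{X}_w$ and pullback of sections is exact. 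The line bundle $\omega$ on $X$ is itself pulled back from a line bundle $\omega_{\mathcal{Z}}$ on $\mathcal{Z}$ (the determinant of the tautological rank-$g$ subbundle), so it suffices to produce, for each $w$, an integer $N_w>0$ and a section $A_w^{\mathcal{Z}}\in H^0(\overline{\mathcal{Z}_w},\omega_{\mathcal{Z}}^{\otimes N_w})$ non-vanishing exactly on $\mathcal{Z}_w$, and then set $A_w=\zeta^*A_w^{\mathcal{Z}}$.

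The key step is the construction on $\mathcal{Z}$. Here I would work on the frame bundle: $\mathcal{Z}=[E\backslash G]$ where $G=\GSp_{2g,\FF_p}$, and $E\subset G\times G$ is the zip group attached to the cocharacter $\mu$ defining the Hodge filtration. The closed stratum $\overline{\mathcal{Z}_w}$ corresponds to an $E$-stable closed subvariety $\overline{G_w}\subset G$, where $G_w$ is the $E$-orbit of a representative $g_w\in G$; by Pink--Wedhorn--Ziegler the $E$-orbits are exactly the pieces of a Bruhat-type decomposition, each orbit is smooth, and $\overline{G_w}-G_w$ is a union of lower orbits of pure codimension one in $\overline{G_w}$ (this ``codimension-one boundary'' fact is what makes the divisor $\overline{X}_w-X_w$ cut out by a single section). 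Restricting to a $B$-stable piece and using the Bruhat decomposition, one reduces to producing an $E$-equivariant function: concretely, the Verschiebung on the universal $\mathrm{BT}_1$ over $\mathcal{Z}_w$ induces a map of line bundles which degenerates exactly along the boundary, and taking an appropriate determinant/power of this map (the power absorbing the $\GSp$-similitude twist, which is where $N_w$ comes from, a product over the relevant positive roots of multiplicities of $p$) gives the section $A_w^{\mathcal{Z}}$. This is precisely the ``partial Hasse invariant'' philosophy: when $w$ is the top element $w_{\max}$ one recovers the classical Hasse invariant $A$ and $N_{w_{\max}}=p-1$; for general $w$ the section is the ``Hasse invariant of the first non-trivial piece of $V$ restricted to $\overline{X}_w$,'' and its vanishing locus is the next stratum down.

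Property (1) is then immediate from the construction: $A_w^{\mathcal{Z}}$ is the determinant of a linear map between line bundles on $\overline{\mathcal{Z}_w}$ which is an isomorphism exactly on the open orbit $\mathcal{Z}_w$, so its zero locus is $\overline{\mathcal{Z}_w}-\mathcal{Z}_w$; pulling back along the smooth map $\zeta$ preserves this. Property (2) is essentially automatic from the fact that everything is defined on $\mathcal{Z}$: a prime-to-$p$ Hecke correspondence $X\xleftarrow{p_1}Y\xrightarrow{p_2}X$ is finite étale on each factor and the two projections identify $p_1^*(A[p])\cong p_2^*(A[p])$ with their extra structures (the level structure away from $p$ does not see $p$-torsion), hence $p_1^*\zeta=p_2^*\zeta$ as maps $Y\to\mathcal{Z}$ and $p_1^*\omega\cong p_2^*\omega$ compatibly; therefore $p_1^*A_w=p_1^*\zeta^*A_w^{\mathcal{Z}}=p_2^*\zeta^*A_w^{\mathcal{Z}}=p_2^*A_w$, where one must check the identification of line bundles is the canonical one, which it is since $\omega$ is functorially the determinant of the cotangent space of $A[p]$ along the identity.

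The main obstacle I expect is the codimension-one property of the boundary $\overline{G_w}-G_w$ inside each orbit closure $\overline{G_w}$, equivalently the assertion that $\overline{X}_w-X_w$ is a divisor in $\overline{X}_w$ (not merely a closed subset): without this one only gets a section vanishing on the boundary to some order with possible higher-codimension components, which would not let one control congruences precisely. This requires the combinatorial input that the closure relations among $E$-orbits form a graded poset where codimension equals a length function $\ell(w')-\ell(w)$ and that codimension-one covers exist for every non-open $w$ below $w_{\max}$ in the closure of a given $\overline{\mathcal{Z}_w}$ — this is where the explicit description of $W^I$ via $w(1)<\cdots<w(g)$ and the Bruhat order enters. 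A secondary technical point is bookkeeping the similitude character to pin down $N_w$ and to ensure $\omega^{\otimes N_w}$ (rather than some other automorphic bundle) is the correct target; this is a finite computation with the cocharacter $\mu$ and the Verschiebung map on the universal $\mathrm{BT}_1$.
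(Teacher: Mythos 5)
Your proposal takes a genuinely different route from the paper. You work with the stack of $G$-zips $\Ca{Z}=[E\backslash G]$ \`a la Pink--Wedhorn--Ziegler and try to construct the Hasse invariant there, then pull back along the classifying map $\zeta\colon X\to\Ca{Z}$. The paper never introduces this stack. Instead, after producing the non-vanishing section $A_w'$ on the open stratum $X_w$ (via Verschiebung isomorphisms on the graded pieces of the canonical filtration, which is the same underlying idea as your ``determinant of Verschiebung''), it introduces an auxiliary moduli space $X_J$ of abelian varieties with parahoric level structure, uses the canonical filtration to produce a section $s_w\colon X_w\to X_J$ into the Kottwitz--Rapoport stratification, and identifies the closure $\overline{X}_{J,w}$ with a normal Schubert variety via the local model diagram (de Jong, G\"ortz--Hoeve). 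The computation of orders of vanishing of the partial Hasse invariants then reduces to an explicit Schubert-variety calculation (Proposition \ref{P:van}), and the positivity needed to conclude is a combinatorial inequality (Proposition \ref{P:ineq}). What the parahoric route buys is normality and a pure codimension-one boundary for free (from Schubert varieties), at the cost of having to juggle the local model diagram; what the $G$-zip route (which Goldring and Koskivirta developed independently) buys is a more intrinsic, group-theoretical construction.

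That said, as written your sketch leaves the genuinely hard parts unaddressed rather than handled. Two points in particular. First, you flag but do not establish that $\overline{\Ca{Z}_w}-\Ca{Z}_w$ has pure codimension one and that $\overline{\Ca{Z}_w}$ is normal (or at least $R_1$), which you need both to know that a single section cuts out the boundary set-theoretically and to make sense of orders of vanishing along boundary components. This is a real theorem about $E$-orbit closures, not a formality; the paper gets the analogue from properties of Schubert varieties transported along the smooth local model diagram. Second, you gesture at ``taking an appropriate determinant/power'' with exponents chosen to land in $\omega^{\otimes N_w}$, but the whole content of the extension problem is showing that for the \emph{forced} choice of exponents, the resulting alternating product has strictly positive order of vanishing along \emph{every} boundary divisor of $\overline{\Ca{Z}_w}$. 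Some of the individual partial Hasse invariants appear with negative exponents, so this positivity is not automatic and is exactly what the paper's Proposition \ref{P:ineq} proves by a case analysis over the Bruhat descendants (Proposition \ref{P:vcases}). Your outline treats this as ``a finite computation with the cocharacter $\mu$'' but in fact it is the crux of the argument, and without it the construction could fail to extend. If you fill in these two steps on the $G$-zip side you would recover a complete proof, but it is not shorter than the paper's.
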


In fact we prove an analogous result for any PEL modular variety of type A or C, see Theorem \ref{T:hasseext}.

We now give an overview of the construction of the Hasse invariants in the theorem.  We use the theory of the canonical filtration due to Oort \cite{Oo01}.  In fact, our construction is directly motivated by the ``generalized Raynaud trick'' of Ekedahl and Oort.  Let $(A,\lambda)$ be a principally polarized abelian variety of dimension $g$ over an algebraically closed field $k$ of characteristic $p$.  Then there is a \emph{canonical filtration}
\begin{equation*}
0=G_0\subset G_1\subset G_2\subset \cdots\subset G_n=A[p]
\end{equation*}
of the $p$-torsion subgroup scheme $A[p]$ by finite subgroup schemes which is defined as the coarsest filtration such that for every $i$, $F^{-1}(G_i^{(p)})$, $V(G_i^{(p)})$ are both terms in the filtration.  Here $F$ and $V$ are the Frobenius
\begin{equation*}
F:A[p]\to (A[p])^{(p)}
\end{equation*}
and Verschiebung 
\begin{equation*}
V:(A[p])^{(p)}\to A[p]
\end{equation*}
respectively.  Note that $G_c=\ker F=\im V$ is always a term in the canonical filtration.  Moreover the canonical filtration is always self dual in the sense that $G_i=G_{n-i}^\perp$ under the Weil pairing induced by $\lambda$.  In particular $n=2c$ is even and for every $i>j$ we have isomorphisms $G_i/G_j\simeq (G_{2c-j}/G_{2c-i})^D$ (here for a finite flat group scheme $G$ we denote its cartier dual by $G^D$.)  Note that while the canonical filtration itself does not depend on the principal polarization $\lambda$, these isomorphisms do.

A crucial property of the canonical filtration is that there exists a permutation $\sigma:\{1,\ldots,2c\}\to\{1,\ldots,2c\}$ with the property that
\begin{enumerate}
\item For $i=1,\ldots,c$ we have
\begin{equation*}
V((G_{\sigma(i)})^{(p)})=G_i,\quad V((G_{\sigma(i)-1})^{(p)})=G_{i-1},
\end{equation*}
and
\begin{equation*}
V:(G_{\sigma(i)}/G_{\sigma(i)-1})^{(p)}\to G_i/G_{i-1}.
\end{equation*}
is an isomorphism.
\item For $i=c+1,\ldots,2c$,
\begin{equation*}
F^{-1}((G_{\sigma(i)})^{(p)})=G_i,\quad F^{-1}((G_{\sigma(i)-1})^{(p)})=G_{i-1},
\end{equation*}
and
\begin{equation*}
F:G_i/G_{i-1}\to (G_{\sigma(i)}/G_{\sigma(i)-1})^{(p)}
\end{equation*}
is an isomorphism.
\end{enumerate}

Let the order of $G_i$ be $p^{k_i}$.  It turns out that the $k_i$ and the permutation $\sigma$ determine the group $A[p]$ up to isomorphism, and so determine which Ekedahl-Oort stratum $(A,\lambda)$ lies in.  The Weyl group element $w$ associated to this Ekedahl-Oort stratum is the unique $w\in W^I$ with
\begin{equation*}
wx(k_i)=k_{\sigma(i)}
\end{equation*}
for $i=1,\ldots,2c$, where $x\in W$ is the permutation given by
\begin{equation*}
x(i)=i+g\quad \text{for $i=1,\ldots,g$}
\end{equation*}
and
\begin{equation*}
x(i)=i-g\quad\text{for $i=g+1,\ldots,2g$}.
\end{equation*}

Now consider an Ekedahl-Oort stratum $X_w$ and the universal abelian scheme $A$ over it.  It is not difficult to show that over $X_w$, $A[p]$ has a canonical filtration
\begin{equation*}
0=G_0\subset G_1\subset\cdots\subset G_{2c}=A[p]
\end{equation*}
by finite flat subgroup schemes $G_i$ over $X_w$ which over each geometric point gives the canonical filtration considered above.  For this it is crucial that we are working in a fixed EO stratum.  We remind the reader that the category of finite flat group schemes over a general base is not abelian, and that a finite flat subgroup scheme $H\subset G$ is a homomorphism of finite flat groups schemes which is a closed immersion.

Moreover, over all of $X_w$ we still have isomorphisms
\begin{equation*}
V:(G_{\sigma(i)}/G_{\sigma(i)-1})^{(p)}\to G_i/G_{i-1}
\end{equation*}
for $i=1,\ldots,c$ and
\begin{equation*}
F:G_i/G_{i-1}\to (G_{\sigma(i)}/G_{\sigma(i)-1})^{(p)}
\end{equation*}
for $i=c+1,\ldots,2c$.

For a finite flat group scheme $G/S$ we let
\begin{equation*}
\omega_G=e^*\Omega^1_{G/S}.
\end{equation*}

It can be shown that $\omega_{G_i/G_{i-1}}$ is locally free for $i=1,\ldots,2c$ unless $i=2c$ and $\sigma(2c)=2c$ (in which case $A[p]/G_{2c-1}$ is \'{e}tale.)  Hence we can define line bundles $\omega_i=\det\omega_{G_i/G_{i-1}}$, again unless $i=2c$ and $\sigma(2c)=2c$.  Cartier duality defines isomorphisms
\begin{equation*}
\omega_i\simeq\omega_{2c+1-i}^\vee
\end{equation*}
unless $i=1$ and $\sigma(1)=1$ or $i=2c$ and $\sigma(2c)=2c$.

Now differentiating Verschiebung and taking determinants we get isomorphisms
\begin{equation*}
V^*:\omega_i\to\omega_{\sigma(i)}^{(p)}
\end{equation*}
or in other words, nonvanishing sections
\begin{equation*}
B_i\in H^0(X_w,\omega_i^{\otimes -1}\otimes\omega_{\sigma(i)}^{\otimes p}).
\end{equation*}

Now $G_c=\ker F:A\to A^{(p)}$ and hence $\omega_{G_c}=\omega_A=\omega$, the determinant of the Hodge bundle.  From this we obtain an isomorphism
\begin{equation*}
\omega=\bigotimes_{i=1}^c\omega_i.
\end{equation*}

Now for suitable integers $r_i$, $i=1,\ldots, c$ (some possibly negative!) we can form the alternating product
\begin{equation*}
A_w'=\prod_{i=1}^c B_i^{r_i}\in H^0(X_w,\omega^{\otimes N'_w}).
\end{equation*}
The $r_i$ are chosen to make this combination of the $B_i$ a section of a positive power of $\omega$.  Then what we want to prove is
\begin{key claim}
Some power $A_w$ of $A_w'$ extends to a section $A_w\in H^0(\overline{X}_w,\omega^{\otimes N_w})$ which is non vanishing precisely on $X_w$.
\end{key claim}

Here is the idea: we would like to extend the canonical filtration on $X_w$ to $\overline{X}_w$.  Unfortunately this is impossible: it need not attain a limit, even in codimension 1.  To remedy this we introduce some auxiliary moduli spaces of abelian varieties with parahoric level structure.

Let $\tilde{X}$ be the moduli space of principally polarized abelian varieties $(A,\lambda)$ in characteristic $p$ along with a self dual filtration of $A[p]$ by finite flat group schemes $G_i$ with $|G_i|=p^{k_i}$.  Then there is a proper map $\pi:\tilde{X}\to X$ given by ``forgetting the level structure.''  Over the Ekedahl-Oort stratum $X_w$, $\pi$ admits a section $s$ given by the canonical filtration.

We let $\tilde{X}_w$ be the Zariski closure of $s(X_w)$ .  Now the situation has improved for the following reasons:

\begin{enumerate}
\item Because the canonical filtration over $s(X_w)$ extends to a filtration over $\tilde{X}_w$, we can extend the line bundles $\omega_i$ and the sections $B_i$ as well (although they will no longer be non vanishing.)
\item $\tilde{X}_w$ can be studied using Grothendieck-Messing theory \cite{dJ93}.  In particular it is normal and $\tilde{X}_w-s(X_w)$ is a union of divisors.
\item By a formal argument, in order to prove the key claim it suffices to show that $A'_w$ extends to a section of $\omega^{N_w}$ over $\tilde{X}_w$ whose non vanishing locus is precisely $s(X_w)$.
\end{enumerate}

Thus in order to complete the proof we have to
\begin{enumerate}
\item Compute the order of vanishing $\text{ord}_D(B_i)$ for $i=1,\ldots,c$ and each irreducible component $D$ of $\tilde{X}_w-s(X_w)$.
\item Show that for each irreducible component $D$ of the boundary
\begin{equation*}
\text{ord}_D(A_w)=\sum_{i=1}^c r_i\text{ord}_D(B_i)>0
\end{equation*}
\end{enumerate}
The first is reduced, via Grothendieck-Messing theory, to a completely explicit problem about sections of line bundles on Schubert varieties.  The second is combinatorics.

\section{Hasse Invariants at the Boundary}

For our applications to the construction of congruences, it is important to understand how the Hasse invariants $A_w$ of the previous section behave at the boundary of the minimal compactification $X$.  This subject is rather technical, so let us only mention some of the results.

First of all, the Ekedahl-Oort stratification extends to the minimal compactification (see Theorem \ref{T:EOmin}.)  Let $X^{\text{min}}=\Ca{X}^{\text{min}}_{\FF_p}$.  Then we have
\begin{equation*}
X^{\text{min}}=\coprod_{w\in W^I}X^{\text{min}}_w
\end{equation*}

The following theorem is somewhat technical and we refer the reader to the text for a precise statement.

\begin{mainthm}\label{MT:bdbehavior}
\begin{enumerate}
\item The Hasse invariants $A_w\in H^0(\overline{X}_w,\omega^{N_w})$ extends to a section $A_w\in H^0(\overline{X}_w^\text{min},\omega^{N_w})$ whose nonvanishing locus is $X_w^\text{min}$.
\item $A_w$ is ``nice'' at the boundary.  See Theorem \ref{T:hassemin} for a precise statement.
\end{enumerate}
\end{mainthm}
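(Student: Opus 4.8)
The plan is to prove the two assertions together, the essential geometric input being the Faltings--Chai (and Lan) description of the boundary charts of $\Ca{X}^{\text{tor}}$ and $\Ca{X}^{\text{min}}$, together with Theorem~\ref{T:EOmin}.

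First I would extend the $A_w$ to the \emph{toroidal} compactification and then descend. Concretely, one redoes the construction behind Theorem~\ref{T:hasseext}, replacing the auxiliary parahoric-level moduli spaces $\tilde{X}$ by toroidal compactifications $\tilde{X}^{\text{tor}}$ chosen compatibly with the cone decomposition fixed for $\Ca{X}^{\text{tor}}$. Over $\tilde{X}^{\text{tor}}$ there is a semiabelian scheme $\tilde{A}$, and its $p$-torsion $\tilde{A}[p]$ is still \emph{finite flat} --- an extension of an abelian scheme by a torus has finite flat $p$-torsion, sitting in $0\to\mu_p^{\,r}\to\tilde{A}[p]\to B[p]\to 0$ --- so the universal flag, the line bundles $\omega_i$, the sections $B_i$, and the identification $\omega^{\text{can}}=\bigotimes_{i=1}^{c}\omega_i$ all extend over the closure $\tilde{X}_w^{\text{tor}}$ of $s(X_w)$. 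As before $\tilde{X}_w^{\text{tor}}$ is normal (Grothendieck--Messing theory applies to the $p$-divisible group of $\tilde{A}$) and its complement to $s(X_w)$ is a union of divisors, now including the intersections with the toroidal boundary; the argument thus reduces once more to computing $\mathrm{ord}_D(B_i)$ for each such $D$ and checking $\sum_i r_i\,\mathrm{ord}_D(B_i)\geq 0$. Pushing down along $\tilde{X}^{\text{tor}}\to X^{\text{tor}}$ yields $A_w\in H^0(\overline{X}_w^{\text{tor}},(\omega^{\text{can}})^{\otimes N_w})$, non-vanishing exactly on $X_w$. To pass to $X^{\text{min}}$, I would use that $\pi_*\O_{X^{\text{tor}}}=\O_{X^{\text{min}}}$, the projection formula, and normality of $X^{\text{min}}$, together with the fact (part of Theorem~\ref{T:EOmin}) that $\overline{X}_w^{\text{min}}\setminus\overline{X}_w$ has codimension $\geq 2$ in $\overline{X}_w^{\text{min}}$; a Koecher/Hartogs argument then produces the desired unique extension $A_w\in H^0(\overline{X}_w^{\text{min}},\omega^{\otimes N_w})$.

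For part (2) I would use the Faltings--Chai structure of a neighbourhood of a boundary stratum: there $\tilde{A}$ is a semiabelian degeneration with toric part $T$ of some rank $r$ and abelian part an abelian scheme $B$ over a smaller PEL modular variety $X'$, in an extension $0\to T\to\tilde{A}\to B\to 0$, under which $\omega^{\text{can}}$ restricts to $\omega_B$ up to a trivial twist by the character lattice of $T$. This extension is compatible with $F$ and $V$, and on $T[p]=\mu_p^{\,r}$ one has $F=0$ and $V$ an isomorphism; so $T[p]$ is ``ordinary'' and the canonical filtration of $\tilde{A}[p]$ along the stratum is obtained from that of $B[p]$ by inserting this multiplicative piece in the predictable place. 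Feeding this into the combinatorial recipe (the orders $k_i$ and the permutation $\sigma$) --- which is exactly Theorem~\ref{T:EOmin} --- identifies the element $w'$ of the smaller Weyl group into whose Ekedahl--Oort stratum $X_{w'}\subset X'$ the stratum $X_w^{\text{min}}$ degenerates. Unwinding $A_w'=\prod_{i=1}^{c}B_i^{r_i}$ (of which $A_w$ is a power) through the decomposition of the $\omega_i$ and of $V^*\colon\omega_i\to\omega_{\sigma(i)}^{(p)}$ induced by $0\to T\to\tilde A\to B\to 0$: the factors $B_i$ coming from the toric part restrict to units (on $\mu_p$, $V$ is an isomorphism), while the remaining factors restrict to the $B_i'$ of $B$. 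Hence $A_w$ restricts, up to a unit and the evident identification of line bundles, to a power of $A_{w'}$ --- the precise ``niceness'' statement of Theorem~\ref{T:hassemin}. This also completes part (1): since $A_{w'}$ is non-vanishing precisely on $X_{w'}$ and the strata $X_{w'}$ so produced exhaust $X_w^{\text{min}}\setminus X$, the extended $A_w$ is non-vanishing exactly on $X_w^{\text{min}}$.

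The main obstacle is this boundary analysis. One must show that the canonical filtration of the degenerate $\tilde{A}$ --- really of its $p$-divisible group, via the Raynaud extension and Grothendieck--Messing/Dieudonn\'{e} theory --- is compatible with the extension $0\to T\to\tilde A\to B\to 0$ and refines it in the expected way, and then carry the Ekedahl--Oort bookkeeping through precisely enough both to pin down $w'$ and the restriction of each $B_i$ in part (2), and, in tandem, to verify that no negative contribution to $\mathrm{ord}_D(A_w)$ appears along the toroidal boundary divisors in part (1). As in the open case this ultimately reduces to explicit computations with line bundles on (products of) Schubert varieties; the genuinely new ingredient is the combinatorial dictionary relating the Schubert data of $X$ to that of its boundary strata --- getting that exactly right, including the case distinctions where $\sigma$ fixes $1$ or $2c$ and certain $\omega_i$ are not defined, is where the real work lies.
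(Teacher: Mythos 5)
Your proposal takes a genuinely different route from the paper's, and as written it has gaps that would need to be filled.

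\textbf{What the paper actually does.}\ \ The paper does \emph{not} redo the parahoric-level/local-model construction over a toroidal compactification. Instead, it takes the already-constructed interior Hasse invariant $A_{K,w}$ from Theorem~\ref{T:hasseext}, and shows directly (Proposition~\ref{P:boundary}) that its formal completion along each boundary stratum $\Ca{X}^{\text{tor}}_{K,\Sigma,\Sc{C}}$ is equal to $\alpha^{N_w/(p-1)}\otimes\pi^*A_{\Sc{C},w}$, i.e.\ the pullback of a Hasse invariant from the boundary PEL variety $X_{\Sc{C}}$ twisted by the canonical generator of $\det\omega_T^{\otimes(p-1)}$ coming from the toric part of the Raynaud extension. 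Since this is visibly regular, the section extends across the boundary by the formal-completion extension criterion Lemma~\ref{L:compext}. Descent to $X^{\text{min}}$ is then handled by the ``well positioned at the boundary'' machinery (Theorem~\ref{T:wellpos} and Remark~\ref{R:wellposrem}). This is considerably softer than your proposal and, in particular, never requires divisor-order bookkeeping along toroidal boundary divisors. The part of your argument about the canonical filtration refining $0\to T\to\tilde A\to B\to 0$ and $V$ being an isomorphism on $\mu_p$ is essentially Proposition~\ref{P:boundary}; the paper just uses this as input to the extension criterion rather than to a new construction.

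\textbf{Problems with your approach.}\ \ First, a factual error: you claim ``its $p$-torsion $\tilde A[p]$ is still finite flat'' over a toroidal compactification. This is false; the $p$-torsion of a degenerating semiabelian scheme is quasi-finite flat separated but not finite over the boundary --- this is precisely why the paper introduces ``partial $\BT$s.'' Only the Raynaud extensions over the formal boundary charts $\Xi_{\Sc{C},\Sigma_{\Sc{C}}}$ have finite flat $p$-torsion, and those are different objects. Second, your plan requires toroidal compactifications of the auxiliary parahoric-level moduli $\tilde X$ together with extended local-model diagrams over them, for which the normality and divisor-order computations of Chapter~\ref{siegel} would need to be reproved near the boundary; this is a substantial undertaking not covered by the references the paper relies on, and you explicitly flag it as ``where the real work lies'' without resolving it. Third, your descent from $X^{\text{tor}}$ to $X^{\text{min}}$ via Koecher/Hartogs invokes a codimension $\geq 2$ claim that you attribute to Theorem~\ref{T:EOmin} but which is not stated there, and even if it held, a Hartogs argument would additionally require normality of $\overline{X}_w^{\text{min}}$, which is not known (EO strata are not normal in general). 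The paper's descent via Theorem~\ref{T:wellpos} and $\pi_*\O_{X^{\text{tor}}}=\O_{X^{\text{min}}}$ needs neither hypothesis.
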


Roughly what ``nice'' means in the statement of the theorem is that the Fourier-Jacobi expansion of $A_w$ along each boundary component has only a constant term, and that constant term is a suitable Hasse invariant on a lower dimensional PEL modular variety.

As a corollary we have the following theorem which answers in the affirmative a question of Oort \cite{Oo01}:
\begin{mainthm}
The open Ekedahl-Oort strata of the minimal compactification $X^\text{min}_w$ are affine.
\end{mainthm}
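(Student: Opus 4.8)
The plan is to deduce affineness of $X^{\text{min}}_w$ from the existence of the extended Hasse invariants $A_w \in H^0(\overline{X}_w^{\text{min}}, \omega^{N_w})$ of Theorem~\ref{MT:bdbehavior}, together with two standard facts about the minimal compactification: that $\omega$ extends to an ample line bundle $\omega^{\text{min}}$ on $\Ca{X}^{\text{min}}$ (hence on each closed subvariety $\overline{X}_w^{\text{min}}$), and that the closed strata $\overline{X}_w^{\text{min}}$ are themselves projective (being closed in the projective scheme $X^{\text{min}}$). The key point is that $A_w$ is nonvanishing \emph{precisely} on $X_w^{\text{min}}$, so $X_w^{\text{min}}$ is the nonvanishing locus $(\overline{X}_w^{\text{min}})_{A_w}$ of a global section of a power of an ample line bundle on a projective scheme.

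First I would recall the standard lemma: if $\L$ is an ample line bundle on a projective scheme $Z$ and $s \in H^0(Z, \L^{\otimes m})$, then the open locus $Z_s = \{z : s(z) \neq 0\}$ is affine. This is immediate once one knows ampleness: some power $\L^{\otimes km}$ is very ample and embeds $Z$ in a projective space so that $Z_{s^k}$ — which equals $Z_s$ as an open set — is the intersection of $Z$ with the complement of a hyperplane, hence a closed subscheme of affine space. Applying this with $Z = \overline{X}_w^{\text{min}}$, $\L = \omega^{\text{min}}|_{\overline{X}_w^{\text{min}}}$, $m = N_w$, and $s = A_w$ gives that $X_w^{\text{min}} = (\overline{X}_w^{\text{min}})_{A_w}$ is affine, which is exactly the claim.

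So the real content is entirely imported: one needs (i) the Hasse invariant $A_w$ extended to the closed stratum of the minimal compactification with the stated nonvanishing locus, which is Theorem~\ref{MT:bdbehavior}(1); and (ii) the ampleness of $\omega$ on $\Ca{X}^{\text{min}}$, which is part of the Faltings--Chai theory of the minimal compactification \cite{FC90} — indeed $\Ca{X}^{\text{min}}$ is constructed as $\operatorname{Proj}$ of the ring of modular forms $\bigoplus_k H^0(\Ca{X}^{\text{tor}}, (\omega^{\text{can}})^{\otimes k})$, so $\omega^{\text{min}}$ is ample essentially by construction. I expect the only point requiring care is purely bookkeeping: checking that the section denoted $A_w$ on $\overline{X}_w^{\text{min}}$ in Theorem~\ref{MT:bdbehavior} is genuinely a section of (a power of) the \emph{restriction of the ample} $\omega^{\text{min}}$ — i.e.\ that the line bundle $\omega^{N_w}$ appearing there is $\omega^{\text{min}}|_{\overline{X}_w^{\text{min}}}$, not some a priori different extension — and that its scheme-theoretic nonvanishing locus (not just its support) is $X_w^{\text{min}}$. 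Both are built into the precise form of Theorem~\ref{T:hassemin}, so there is no further obstacle: the corollary is a one-line consequence once that theorem is in hand.
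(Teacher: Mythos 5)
Your argument is correct and is essentially the paper's proof verbatim: the paper's Corollary~\ref{C:affine} is deduced exactly by combining Theorem~\ref{T:hassemin} (giving $A_{K,w}^{\text{min}} \in H^0(\overline{X}_{K,w}^{\text{min}}, \omega_K^{\otimes N_w})$ with nonvanishing locus precisely $X_{K,w}^{\text{min}}$) with the ampleness of $\omega_K$ on $\Ca{X}_K^{\text{min}}$ (Proposition~\ref{P:omegaamp}) and the standard fact that the nonvanishing locus of a section of an ample line bundle on a proper scheme is affine. The two points of ``bookkeeping care'' you flag are indeed exactly what Theorem~\ref{T:hassemin} is set up to handle, so there is no gap.
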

Indeed this follows immediately from the fact that the non vanishing locus of an ample line bundle on a proper variety is affine.

\section{Constructing Congruences}\label{S:introcong}

In this section we explain how to use the generalized Hasse invariants of the last sections to produce congruences.  The  idea is this: for any automorphic vector bundle $V_\rho$ and nonnegative integers $r$ and $n$, we want to study the Hecke modules $H^n(\Ca{X}^\text{tor},V_\rho^{\text{sub}}/p^r)$.  We would like to relate it to a space of modular forms in characteristic 0 that we can already attach Galois representations to.  Here is the result, Theorem \ref{T:abscong} in the text, which may be regarded as the main result of this thesis.

\begin{mainthm}\label{MT:cong}
For any automorphic vector bundle $V_\rho$ and integers $r$ and $n$, there is an integer $C$ (which can be taken to be as large as desired) such that $H^n(\Ca{X}^{\text{\rm tor}},V_\rho^{\text{\rm sub}}/p^r)$ is a Hecke equivariant sub quotient of $H^0(\Ca{X}^{\text{\rm tor}},V_\rho^{\text{\rm sub}}\otimes\omega^{C})$.
\end{mainthm}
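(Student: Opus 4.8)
The plan is to reduce the statement to a purely characteristic‑$p$ assertion on the special fibre, prove that by a double induction that descends both the Ekedahl-Oort stratification and the cohomological degree, using a generalized Hasse invariant to effect each descent step, and then climb back to characteristic $0$ and from the minimal to the toroidal compactification by Serre vanishing for the ample bundle $\omega$ on $\Ca{X}^{\text{min}}$.

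First I would strip off the coefficients $p^r$ and the lift to characteristic $0$. The $p$-adic filtration $0\subseteq p^{r-1}V_\rho^{\text{sub}}/p^r\subseteq\cdots\subseteq V_\rho^{\text{sub}}/p^r$ has every graded piece isomorphic to $V_\rho^{\text{sub}}/p$, so $H^n(\Ca{X}^{\text{tor}},V_\rho^{\text{sub}}/p^r)$ is a Hecke-equivariant iterated extension of copies of $H^n(X^{\text{tor}},V_\rho^{\text{sub}}/p)$, where $X^{\text{tor}}$ is the special fibre; thus it suffices to treat $r=1$. On the other side, the projection formula together with Serre vanishing on $\Ca{X}^{\text{min}}$ (applied to $R\pi_*$ for $\pi\colon\Ca{X}^{\text{tor}}\to\Ca{X}^{\text{min}}$) gives $H^{>0}(\Ca{X}^{\text{tor}},V_\rho^{\text{sub}}\otimes\omega^C)=0$ for $C\gg0$, so $H^0(\Ca{X}^{\text{tor}},V_\rho^{\text{sub}}\otimes\omega^C)$ surjects Hecke-equivariantly onto its reduction mod $p$; and the same pushforward, using that the subcanonical extension has no higher direct images under $\pi$, identifies cohomology on $\Ca{X}^{\text{tor}}$ with cohomology on $\Ca{X}^{\text{min}}$. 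It therefore suffices to exhibit $H^n(X^{\text{tor}},V_\rho^{\text{sub}}/p)$ as a Hecke-equivariant subquotient of $H^0(X^{\text{tor}},V_\rho^{\text{sub}}\otimes\omega^C/p)$ for $C$ large.

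I would prove, by induction on $n$, that for every $w\in W^I$ and every automorphic vector bundle $V_\rho$ the group $H^n(\overline{X}_w^{\text{min}},V_\rho^{\text{sub}}/p)$ is a Hecke-equivariant subquotient of $H^0(\overline{X}_w^{\text{min}},V_\rho^{\text{sub}}\otimes\omega^C/p)$ for $C\gg0$; the case $w=w_{\max}$ is the assertion just reduced to. For $n=0$ this is the classical move: multiplication by $A_w^k$ is Hecke-equivariant by property (2) of the generalized Hasse invariants and injective, since the strata closures are reduced and $A_w$ is nonvanishing on the dense open $X_w^{\text{min}}$, so it is a non-zero-divisor; raising $k$ therefore embeds $H^0(\overline{X}_w^{\text{min}},V_\rho^{\text{sub}}/p)$ into $H^0$ in a high enough weight that, by Serre vanishing on the projective $\FF_p$-variety $\overline{X}_w^{\text{min}}$, the latter is a quotient of its characteristic-$0$ analogue. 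For $n\ge1$, choose $M$ with $H^n(\overline{X}_w^{\text{min}},V_\rho^{\text{sub}}\otimes\omega^{MN_w}/p)=0$ and use the Hecke-equivariant exact sequence
\[
0\longrightarrow V_\rho^{\text{sub}}/p\xrightarrow{\;A_w^{M}\;}V_\rho^{\text{sub}}\otimes\omega^{MN_w}/p\longrightarrow\Ca{Q}\longrightarrow 0,
\]
in which, by property (1) of the $A_w$ together with the closure relations of the stratification, $\Ca{Q}$ is supported on $\overline{X}_w^{\text{min}}\setminus X_w^{\text{min}}=\bigcup_{w'<w}\overline{X}_{w'}^{\text{min}}$. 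Its connecting map realizes $H^n(\overline{X}_w^{\text{min}},V_\rho^{\text{sub}}/p)$ as a Hecke-equivariant quotient of $H^{n-1}(\overline{X}_w^{\text{min}},\Ca{Q})$; a standard d\'{e}vissage of $\Ca{Q}$ --- filtering by powers of $A_w$, then by the nilradical of its support, then along the closed cover by the $\overline{X}_{w'}^{\text{min}}$ --- then exhibits $H^{n-1}(\overline{X}_w^{\text{min}},\Ca{Q})$ as a Hecke-equivariant subquotient of a finite direct sum of groups $H^{n-1}$ of subcanonical extensions of automorphic bundles on the strictly smaller strata $\overline{X}_{w'}^{\text{min}}$. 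Applying the inductive hypothesis in degree $n-1$ turns those into $H^0$'s on the $\overline{X}_{w'}^{\text{min}}$ in high weight, and, since each such bundle is the restriction of a globally defined one and restriction $H^0(\overline{X}_w^{\text{min}},-\otimes\omega^{C'}/p)\to H^0(\overline{X}_{w'}^{\text{min}},-\otimes\omega^{C'}/p)$ is surjective for $C'\gg0$ by Serre vanishing for the ideal sheaf of $\overline{X}_{w'}^{\text{min}}$, we may push those $H^0$'s back up to $\overline{X}_w^{\text{min}}$, completing the induction. Unwinding the whole chain together with the reduction of the first step yields the theorem; the induction is well-founded because each step strictly lowers $n$ or, at fixed $n$, the stratum.

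The skeleton above is short, and the real work lies in making it legitimate at the boundary: one needs to know that the Ekedahl-Oort strata and the Hasse invariants $A_w$ extend to the minimal (and toroidal) compactifications with the stated vanishing loci, that the $A_w$ remain non-zero-divisors there, and that the sheaves $\Ca{Q}$ genuinely devisse into subcanonical extensions of automorphic bundles on the compactified smaller strata, with Hecke-equivariant connecting maps --- this is exactly the content supplied by Theorem \ref{MT:bdbehavior} and the ``niceness at the boundary'' of the $A_w$, and it is the step I expect to be the principal obstacle. A secondary, bookkeeping-level point is upgrading the various iterated extensions of Hecke modules produced by the $p$-adic filtration and by the d\'{e}vissage to a genuine subquotient of a single $H^0(\Ca{X}^{\text{tor}},V_\rho^{\text{sub}}\otimes\omega^C)$; this is harmless at the level of systems of Hecke eigenvalues and can be arranged in general with some care, using that $C$ may be enlarged at will.
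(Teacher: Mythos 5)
Your outline captures the right circle of ideas (multiply by a Hasse invariant, take the long exact sequence of the vanishing locus, kill $H^{n-i}$ by Serre vanishing, descend in cohomological degree while ascending in codimension, then lift $H^0$ back up), but the structural choice of inducting on individual Ekedahl--Oort strata $\overline{X}_w^{\text{min}}$ introduces two genuine gaps that the paper's different inductive structure is specifically designed to avoid.

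\paragraph{The d\'{e}vissage of $\Ca{Q}$.} Your sheaf $\Ca{Q}$ is supported on $\overline{X}_w^{\text{min}}\setminus X_w^{\text{min}}=\bigcup_{w'\prec w}\overline{X}_{w'}^{\text{min}}$, a union of several strata closures with nontrivial intersections. Passing from $\Ca{Q}$ to a "finite direct sum of $H^{n-1}$'s of subcanonical extensions of automorphic bundles on the individual $\overline{X}_{w'}^{\text{min}}$" via "filtering by the nilradical, then along the closed cover" is not a standard d\'{e}vissage: the graded pieces of a Mayer--Vietoris-type filtration along a closed cover live on intersections of the $\overline{X}_{w'}^{\text{min}}$ and are not restrictions of automorphic bundles, so your inductive hypothesis does not reach them. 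You would need to enlarge the inductive statement to cover these sheaves and to argue their Hecke-equivariance, which is far from bookkeeping. The paper sidesteps this entirely by working with the union $X_i=\bigcup_{\ell(w)=\dim X-i}\overline{X}_{K,w}^{\text{min}}$ of \emph{all} codimension-$i$ strata closures at once, together with a single "glued Hasse invariant" $A_i\in H^0(X_i,\omega^{\otimes N_i})$ whose restriction to each component is a power of $A_w$; then the next term in the long exact sequence is $V_\rho^{\text{sub}}|_{V(\tilde{A}_i^k)}$, a single sheaf on a single closed subscheme, with no Mayer--Vietoris step at all. In short, the paper does one induction on the codimension $i$, while you propose a double induction on $(n,w)$ with an intervening decomposition step that is the hard part.

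\paragraph{The non-zero-divisor claim at the boundary.} You assert that $A_w$ is a non-zero-divisor on $V_\rho^{\text{sub}}|_{\overline{X}_w^{\text{min}}}/p$ because $\overline{X}_w^{\text{min}}$ is reduced and $A_w$ is nonvanishing on a dense open. That reasoning is valid for $\O_{\overline{X}_w^{\text{min}}}$, but $V_\rho^{\text{sub}}=\pi_*V_\rho^{\text{sub,tor}}$ is \emph{not} locally free near the boundary of $\Ca{X}^{\text{min}}$, so the argument does not transfer: a coherent sheaf on a reduced scheme can have associated primes not among the generic points. You flag this as "the step I expect to be the principal obstacle," and indeed it is the heart of the matter, but you present no resolution. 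The paper's resolution has two ingredients that depend on its inductive structure: (i) Proposition \ref{P:autminbound}, which identifies the formal completion of $V_\rho^{\text{sub}}$ along a boundary stratum as a (possibly infinite) product of locally free $\O_{\Ca{X}_{\Sc{C}}}$-modules, reducing NZD-checking to the lower-dimensional PEL varieties $\Ca{X}_{\Sc{C}}$; and (ii) the fact that $\tilde{X}_i\cap\Ca{X}_{\Sc{C}}$ is Cohen--Macaulay, which is proved by observing that $\tilde{X}_i$ is cut out by the regular sequence $\pi^r,\tilde{A}_0^{k_0},\dots,\tilde{A}_{i-1}^{k_{i-1}}$ on a regular scheme. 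That regular-sequence argument is only available because the $\tilde{X}_i$ form a descending chain of hypersurface sections; it is not available for a single stratum closure $\overline{X}_w^{\text{min}}$, which is not in any evident way a complete intersection in $\Ca{X}^{\text{min}}$.

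\paragraph{Minor.} Your reduction to $r=1$ via the $p$-adic filtration is correct in principle, but trades one source of iterated extensions for another; the paper avoids it by building the mod-$\pi^r$ thickening $\tilde{X}_0=\Ca{X}^{\text{min}}\times_R R/\pi^r$ directly into the inductive scheme and lifting $A_i^{p^m}$ to $\tilde{A}_i$ via Lemma \ref{L:lift}. Also, your parenthetical "a quotient of its characteristic-$0$ analogue" for a general stratum $\overline{X}_w^{\text{min}}$ does not parse, since these strata do not lift to characteristic $0$; that step only makes sense for $w=w_{\max}$.
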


Before discusses the proof of Theorem \ref{MT:cong}, let us mention some related work.
\begin{itemize}
\item For Hilbert modular varieties, the analogous result was proved by Emerton-Reduzzi-Xiao \cite{ERX13}
\item Building on methods of Scholze \cite{Sc13}, Pilloni and Stroh \cite{PS15} have proved an analog of Theorem \ref{MT:cong} where $\Ca{X}^{\text{tor}}$ is replaced by a ``strange integral model'' coming from the theory of the Hodge-Tate period map.  At the present time, it is not clear if there is any relation between the two results.
\item A similar result has also been announced by Goldring and Geraghty.
\end{itemize}

The first step in the proof of Theorem \ref{MT:cong} is to pass from the toroidal compactification to the minimal compactification.  By a Theorem discovered independently by Harris, Lan, Taylor, and Thorne \cite{HLTT12} and by Andreatta, Iovita, and Pilloni \cite{AIP15} (see Theorem \ref{T:HLTT}) if $\pi:X^{\text{tor}}\to X^{\text{min}}$ is the canonical map, then for $i>0$
\begin{equation*}
R^i\pi_*V_\rho^{\text{sub}}=0.
\end{equation*}

Let $V=\pi_*V_\rho^{\text{sub}}$.  We warn that $V$ is not usually a vector bundle.  This, along with the fact that $\Ca{X}^{\text{min}}/\Z_p$ is not smooth at the boundary is the source of technical complications.  On the other hand there is something important to be gained from working on the minimal compactification: $\omega$ is ample.

Now in order to prove Theorem \ref{MT:cong}, we need to show that $H^n(\Ca{X}^{\text{min}},V/p^r)$ is a Hecke equivariant sub quotient of $H^0(\Ca{X}^{\text{min}},V\otimes\omega^C)$ for some large $C$.

Let us explain the idea of the proof.  Recall that $X^{\text{min}}=\Ca{X}^{\text{min}}_{\FF_p}$ has an Ekedahl-Oort stratification
\begin{equation*}
X^{\text{min}}=\coprod_{w\in W^I}X^{\text{min}}_w
\end{equation*}
and we denote the closure of $X^{\text{min}}_w$ by $\overline{X}^{\text{min}}_w$.

For $i=0,\ldots, \dim X=\frac{g(g+1)}{2}$ we let
\begin{equation*}
X_i=\bigcup_{l(w)=\dim X-i} \overline{X}_w^{\text{min}}
\end{equation*}
be the union of the codimension $i$ EO strata.  Then each $X_i$ is a reduced closed subscheme of $X^{\text{min}}$ and every irreducible component has codimension $i$ in $X$.  By a formal argument, on each $X_i$ we can construct a ``glued Hasse invariant'' $A_i\in H^0(X_i,\omega^{\otimes N_i})$ which has the property that its restriction to each $\overline{X}_w^\text{min}$ in the union above is a suitable power of the Hasse invariant $A_w$ on $\overline{X}_w^{\text{min}}$ .  In particular $A_i$ has the following properties:
\begin{itemize}
\item The zero locus of $A_i$ is set theoretically $X_{i+1}$.
\item The section $A_i$ is ``compatible'' with all Hecke correspondences.
\end{itemize}

Now in order to prove Theorem \ref{MT:cong} we will inductively construct the following objects
\begin{enumerate}
\item Closed subschemas $\tilde{X}_i$ of $\Ca{X}^{\text{min}}$ with reduction $(\tilde{X}_i)_{\text{red}}=X_i$ and which are Hecke stable in the sense that for each Hecke correspondence $\Ca{X}\overset{p_1}{\leftarrow}\Ca{Y}\overset{p_2}{\to}\Ca{X}$ we have
\begin{equation*}
p_1^{-1}(\tilde{X}_i)=p_2^{-1}(\tilde{X}_i)
\end{equation*}
\item Sections $\tilde{A}_i\in H^0(\tilde{X}_i,\omega^{\tilde{N}_i})$ whose restriction to the reduction $X_i$ is a suitable power of $A_i$ and which are compatible with the Hecke correspondences in the natural sense.
\item Hecke equivariant surjections 
\begin{equation*}
H^{n-i-1}(\tilde{X}_{i+1},V\otimes\omega^{\otimes M_{i+1}}|_{\tilde{X}_{i+1}})\to H^{n-i}(\tilde{X}_i,V\otimes \omega^{\otimes M_i}|_{\tilde{X}_i})
\end{equation*}
\end{enumerate}
The argument proceeds in the following steps:
\begin{itemize}
\item To start the induction we take $\tilde{X}_0=\Ca{X}^{\text{min}}\times_{\Z_p}\Z/p^r$ and $M_0=0$, so that the target of the chain of surjections produced in 3 above is $H^n(\Ca{X}^*,V/p^r)$, the space we want to understand in Theorem \ref{MT:cong}.
\item The construction of $\tilde{A}_i$ given $\tilde{X}_i$ is formal: $A_i^{p^m}$ has a canonical lift to $\tilde{X}_i$ for $m$ sufficiently large.  The compatibility with Hecke correspondences also follows from the canonicity.

\item Now consider the following sequence of coherent sheaves on $\Ca{X}$:
\begin{equation}\label{E:keyseq}
0\to V\otimes\omega^{\otimes M_i}|_{\tilde{X}_i}\overset{\tilde{A}_i^k\cdot}{\to}V\otimes\omega^{\otimes M_i+k\tilde{N}_i}|_{\tilde{X}_i}\to V\otimes\omega^{\otimes M_i+k\tilde{N}_i}|_{V(\tilde{A}_i^k)}\to 0\tag{*}
\end{equation}
where $V(\tilde{A}_i^k)$ denotes the vanishing locus of the section $\tilde{A}_i^k$ on $\tilde{X}_i$.

\begin{key claim}
This sequence is exact.
\end{key claim}
Admitting this for the moment we consider a part of the long exact sequence in cohomology:
\begin{equation*}
 H^{n-i-1}(V(\tilde{A}_i^k),V\otimes\omega^{\otimes M_i+k\tilde{N}_i})\to H^{n-i}(\tilde{X}_i,V\otimes\omega^{\otimes M_i})\to H^{n-i}(\tilde{X}_i,V\otimes\omega^{\otimes M_i+k\tilde{N}_i})
\end{equation*}
Now recall that $\omega$ is ample on $\Ca{X}$ and as $n-i>0$ we can pick $k$ sufficiently large that
\begin{equation*}
H^{n-i}(\tilde{X}_i,V\otimes\omega^{\otimes M_i})=0
\end{equation*}
by Serre vanishing.  Then we can take $M_{i+1}=M_i+k\tilde{N}_i$ and $\tilde{X}_{i+1}=V(\tilde{A}_i^k)$ and the map
\begin{equation*}
H^{n-i-1}(\tilde{X}_{i+1},V_{\text{sub}}\otimes\omega^{\otimes M_{i+1}})\to H^{n-i}(\tilde{X}_i,V_{\text{sub}}\otimes \omega^{\otimes M_i})
\end{equation*}
in the above long exact sequence gives the desired surjection.
\end{itemize}

We now turn to the key claim in the above argument, namely the exactness of (\ref{E:keyseq}).  This is actually quite delicate at the boundary, where the full strength of Theorem \ref{MT:bdbehavior} is used.  Here we content ourselves with explaining why it is true in the interior.  In the interior, $V$ is locally free, so the claim amounts to the fact that $\tilde{A}_i$ is a non zero divisor on the interior of $\tilde{X}_i$.  Now $\tilde{X}_i$ is cut out by the sequence $p^r,\tilde{A}_0^{k_0},\ldots,\tilde{A}_{i-1}^{k_{i-1}}$, which, by induction, is a regular sequence.  Hence as $\Ca{X}$ is regular, the interior of $\tilde{X}_i$ is Cohen-Macaulay (this step in the argument breaks down at the boundary.)  Thus in order to check that $\tilde{A}_i$ is a non zero divisor, it suffices to check that it doesn't vanish identically on any reduced irreducible component.  But this follows from our knowledge of the set theoretic vanishing locus of $A_i$.

To summarize what we have done towards proving Theorem \ref{MT:cong}, we have constructed a closed subscheme $\tilde{X}_n$ of $\Ca{X}$, a section $\tilde{A}_n\in H^0(\tilde{X}_n,\omega^{\tilde{N}_n})$ which is a non zero divisor on $V|_{\tilde{X}_n}$, and a Hecke equivariant surjection
\begin{equation*}
H^0(\tilde{X}_n,V\otimes\omega^{M_n}|_{\tilde{X}_n})\to H^n(\Ca{X}^{\text{min}},V/p^r).
\end{equation*}

Now consider the short exact sequence of coherent sheaves on $\Ca{X}^{\text{min}}$
\begin{equation*}
0\to \Ca{K}\to V\otimes\omega^{\otimes M_n}\overset{\text{res}}{\to} V\otimes\omega^{\otimes M_n}|_{\tilde{X}_n}\to 0.
\end{equation*}
where $\Ca{K}$ is just the kernel of the restriction map on the right.  Tensoring with $\omega^{k\tilde{N}_d}$ and taking cohomology we obtain an exact sequence
\begin{equation*}
H^0(\Ca{X}^*,V\otimes\omega^{\otimes M_n+k\tilde{N}_n})\overset{\text{res}}{\to} H^0(\tilde{X}_n,V\otimes\omega^{\otimes M_n+k\tilde{N}_n})\to H^1(\Ca{X}^{\text{min}},\Ca{K}\otimes\omega^{\otimes k\tilde{N}_n}).
\end{equation*}
By Serre vanishing we can pick $k$ large enough so that the term on the right vanishes.  Then the restriction map is surjective.

Now consider the diagram
\begin{equation*}
\begin{CD}
@. H^0(\tilde{X}_n,V\otimes\omega^{\otimes M_n})@>>> H^n(\Ca{X}^{\text{min}},V/p^r)\\
@. @VV\cdot \tilde{A}_n^kV @.\\
H^0(\Ca{X}^{\text{min}},V\otimes \omega^{\otimes M_n+k\tilde{N}_n})@>\text{res}>> H^0(\tilde{X}_n,V\otimes\omega^{\otimes M_n+k\tilde{N}_n}) @.
\end{CD}
\end{equation*}
where all the maps are Hecke equivariant, the horizontal maps are sujections, and the vertical map is injective.  This gives Theorem \ref{MT:cong} with $C=M_n+k\tilde{N}_n$.

\section{Overview and Advice for the Reader}

Chapter \ref{PEL} is a review of the theory of PEL modular varieties, while Chapter \ref{compact} is concerned with their compactifications.  Nothing except possibly the results of section \ref{S:wellpos} can be considered new, and we follow Lan \cite{La13} closely.  These chapters can probably be skipped and referred back to as necessary (and moreover the results of Chapter \ref{compact} are not used until Chapter \ref{boundaryhasse}).  Chapter \ref{EOPEL} is concerned with the Ekedahl-Oort stratification, the theory of the canonical filtration, and generalized Hasse invariants on open Ekedahl-Oort strata.  The first main result of this thesis is proved in Chapter \ref{siegel}, where the proof of the existence of generalized Hasse invariants is completed.  In Chapter \ref{boundaryhasse} we study the Ekedahl-Oort stratification and generalized Hasse invariants at the boundary.  Finally the argument for constructing congruences is presented in Chapter \ref{cong}.

On first reading, the reader is advised to consider only the case of compact Shimura varieties.  Then chapters 3 and 6 can be skipped, as well as all the details concerning the boundary in chapter 7.  

\section{Notation and Conventions}
Throughout this thesis we fix a rational prime $p$, an algebraic closure $\overline{\Q}_p$ of $\Q_p$, and an isomorphism $\iota:\overline{\Q}_p\simeq \C$.  We let $E$ be a finite extension of $\Q_p$ contained in $\overline{\Q}_p$ with integer ring $R$, uniformizer $\pi$ and residue field $k$.  We will freely enlarge $E$ as necessary throughout the text.

We let
\begin{equation*}
\hat{\Z}^{(p)}=\prod_{l\not=p}\Z_l,\quad\hat{\Z}=\prod_l\Z_l
\end{equation*}
and
\begin{equation*}
\A^{\infty,p}=\hat{\Z}^{(p)}\otimes\Q,\quad \A^{\infty}=\hat{\Z}\otimes\Q,\quad \A=\R\times\A^{\infty}.
\end{equation*}

Throughout the text we will use $\Ca{X}$ with various decorations to denote a PEL modular variety, or a compactification or completion etc over $R$.  We will use $X$ with the same decorations for its base change to $k$.

Throughout this thesis all schemes are assumed to be locally noetherian unless mentioned otherwise.  We make the following conventions regarding group schemes:
\begin{itemize}
\item All group schemes will be commutative.
\item For a flat group scheme $G/S$ for $S/\FF_p$ we have \cite[VIIa 4]{SGA3} relative Frobenius and Verschiebung homomorphisms which we will denote by
\begin{equation*}
F:G\to G^{(p)}
\end{equation*}
and
\begin{equation*}
V:G^{(p)}\to G.
\end{equation*}
They are functorial, compatible with arbitrary base change, and satisfy $VF=[p]_{G}$ and $FV=[p]_{G^{(p)}}$.
\item We embed the category of commutative group schemes over $S$ into the category of fppf sheaves of commutative groups on $S$.  When we speak of injective or surjective group homomorphisms we should always mean in the sense of the corresponding morphisms of fppf sheaves (note that this conflicts with the meaning of these words in scheme theory!)  Similarly we form kernels, cokerenels, and images in the category of fppf sheaves of commutative groups (so in particular cokernels and images need not be representable), and a sequence of maps
\begin{equation*}
G_1\to G_2\to G_3
\end{equation*}
of commutative group schemes is said to be exact at $G_2$ if the corresponding sequence of fppf sheaves is.
\item For $G/S$ a finite flat group scheme, we denote by
\begin{equation*}
G^D=\hom(G,\G_m)
\end{equation*}
its Cartier dual.
\end{itemize}

\chapter{Good Reduction PEL Modular Varieties}\label{PEL}

In this chapter we review the theory of PEL modular varieties and automorphic vector bundles on them.  Basic references for this subject are \cite{Ko92} and \cite{La13}.  We will mostly adopt the approach of \cite{La13}.

We let $\Z(1)=2\pi i\Z$.  For any $\Z$-module $M$ we denote $M(1)=M\otimes\Z(1)$.

\section{PEL Datum}
In this section we recall some definitions that are needed to formulate PEL moduli problems.

\begin{defn}
A \emph{rational PEL datum} is a tuple $(B,*,V,\langle\cdot,\cdot\rangle,h)$ where
\begin{itemize}
\item $B$ is a finite dimensional semisimple $\Q$-algebra.
\item $*$ is a positive involution on $B$ (i.e. $\tr_{B/\Q}(xx^*)>0$ for all nonzero $x\in B$.)
\item $V$ is a finitely generated, left $B$-module (which we do not assume is faithful!)
\item $\langle \cdot,\cdot\rangle:V\times V\to \Q(1)$ is an alternating form such that
\begin{equation*}
\langle bv,w\rangle=\langle v,b^*w\rangle
\end{equation*}
for all $v,w\in V$ and $b\in B$.
\item $h:\C\to \End_{B_\R}(V_\R)$ is a homomorphism of $\R$-algebras such that
\begin{equation*}
\langle h(z)v,w\rangle=\langle v,h(\overline{z})w\rangle
\end{equation*}
for all $z\in \C$ and $v,w\in V$, and such that the symmetric form $1/(2\pi i)\langle v,h(i)w \rangle$ is positive definite.
\end{itemize}
\end{defn}

Let $(B,*)$ be a finite dimensional semisimple $\Q$ algebra with positive involution as above and let $F$ be its center.  We let $\Ca{T}$ denote the set of embeddings $\tau:F\to \C$.  Via the fixed isomorphism $i:\overline{\Q}_p\simeq\C$ we may also view it as the set of embeddings $\tau: F\to\overline{\Q}_p$.  We have a decomposition
\begin{equation*}
F=\prod_{[\tau]}F_{[\tau]}
\end{equation*}
of $F$ into a product of number fields, where the product is indexed by $\text{Aut}(\C)$ orbits of $\Ca{T}$.  We have a corresponding decomposition
\begin{equation*}
B=\prod_{[\tau]}B_{[\tau]}
\end{equation*}
of $B$ where $B_{[\tau]}$ is simple with center $F_{[\tau]}$.  The positivity of the involution $*$ forces it to preserve this decomposition, and hence $(B,*)$ is a product of finite dimensional simple $\Q$-algebras with positive involution.

We now recall that a simple $\Q$-algebra with positive involution $(B,*)$ falls into one of three classes.  We let $F$ denote the center of $B$ and $F^+\subset F$ the subfield fixed by $*$.
\begin{enumerate}
\item(Type A) $F/F^+$ is a totally imaginary quadratic extension of a totally real field $F^+$.
\item(Type C) $F=F^+$ is totally real and for every embedding $\tau:F\to \R$, $B\otimes_{F,\tau}\R\simeq M_n(\R)$ for some integer $n$. 
\item(Type D) $F=F^+$ is totally real and for every embedding $\tau:F\to \R$, $B\otimes_{F,\tau}\R\simeq M_n(\Bf{H})$ for some integer $n$, where $\Bf{H}$ denotes the real quaternion algebra.
\end{enumerate}

For technical reasons we will exclude case D throughout this thesis.  We say that the PEL datum $(B,*,V,\langle\cdot,\cdot,\rangle,h)$ has no factors of type D if in the decomposition of $B$ into simple factors as above, none of the factors is of type D.

The homomorphism $h$ defines a decomposition
\begin{equation*}
V\otimes\C=V_0\oplus V_0^c
\end{equation*}
as $\C$ vector spaces where $h(z)$ acts as $z$ on $V_0$ and $\overline{z}$ on $V_0^c$.  This decomposition is stable under the action of $B$, and each factor is (maximal) isotropic for $\langle\cdot,\cdot\rangle$.

\begin{defn}
The \emph{reflex field} of the PEL datum $(B,*,V,\langle\cdot,\cdot\rangle,h)$ is the subfield $F_0$ of $\C$ over which the $B\otimes\C$ module $V_0$ is defined, i.e. it is the subfield of $\C$ fixed by all those $\sigma\in\text{Aut}(\C)$ such that
\begin{equation*}
V_0^{\sigma}:=V_0\otimes_{\C,\sigma}\C\simeq V_0
\end{equation*}
as $B\otimes\C$ modules.  Equivalently it is the subfield of $\C$ generated by the traces $\tr(b|V_0)$ for all $b\in B$, where the trace is taken with $b$ thought of as an endomorphism of the $\C$ vector space $V_0$.  (We remind the reader that there need not exist a $B\otimes F_0$ module $W$ with $W\otimes_{F_0}\C\simeq V_0$ as $B\otimes\C$-modules.)
\end{defn}

\begin{defn}
An \emph{integral structure} on a rational PEL datum $(B,*,V,\langle\cdot,\cdot\rangle,h)$ is the additional choice of $\O$ and $L$ where
\begin{itemize}
\item $\O$ an order in $B$ which is stable under $*$.
\item $L$ is a lattice in $V$ which is stable by $\O$ and such that for all $x,y\in L$
\begin{equation*}
\langle x,y\rangle\in\Z(1).
\end{equation*}
\end{itemize}
An \emph{integral PEL datum} is a tuple $(\O,*,L,\langle\cdot,\cdot\rangle,h)$ consisting of a rational PEL datum with an integral structure.  We omit $B$ and $V$ from the notation as they can be recovered as $\O\otimes\Q$ and $L\otimes\Q$ respectively.
\end{defn}

\begin{defn}\label{D:G}
Given an integral PEL datum $(\O,*,L,\langle\cdot,\cdot\rangle,h)$ define an algebraic group $G/\Z$ by
\begin{equation*}
G(A)=\{(g,a)\in\GL_{\O\otimes A}(L\otimes A)\times A^\times\mid \langle gx,gy\rangle=a\langle x,y\rangle, \forall x,y\in L\otimes A\}
\end{equation*}
for all rings $A$.  We note that $G\otimes\Q$ depends only on the rational PEL datum.
\end{defn}

Next we recall what it means for a compact open subgroup $K\subset G(\A^{p,\infty})$ to be neat as in Lan \cite[1.4.1.8]{La13} (see also Pink \cite{Pi89}.)

\begin{defn}
Let $g_l=(g,a)\in G(\Q_l)$.  Then let $\Gamma_{g_l}$ be subgroup of $\Q_l^\times$ generated by $a$ and the eigenvalues of $g$ thought of as an element of $\GL(L\otimes\Q_l)$.  For any embedding $\overline{\Q}\to\overline{\Q}_l$ consider the torsion subgroup $(\overline{\Q}^\times\cap\Gamma_{g_l})_{\text{tors}}$.  This does not depend on the choice of the embedding.

Then an element $g=(g_l)\in G(\A^{\infty,p})$ is said to be \emph{neat} if
\begin{equation*}
\cap_{l\not=p}(\overline{\Q}^\times\cap\Gamma_{g_l})_{\text{tors}}=\{1\}.
\end{equation*}
An open compact subgroup $K\subset G(\A^{p,\infty})$ is said to be \emph{neat} if every $g\in K$ is.
\end{defn}

\begin{defn}
We say that a rational prime $p$ is a good prime for an integral PEL datum $(\O,*,L,\langle\cdot,\cdot,\rangle,h)$ with no factors of type D if $p\nmid\text{Disc}_\O[L^\vee:L]$, where $\text{Disc}_{\O}$ is the discriminant of $\O$, and $L^\vee$ is the dual lattice
\begin{equation*}
L^\vee=\{x\in V\mid \langle x,y\rangle\in\Z(1), \forall y\in L\}.
\end{equation*}
\end{defn}

For the rest of this chapter assume that $p$ is a good prime for the integral PEL datum $(\O,*,L,\langle\cdot,\cdot\rangle,h)$.  The fact that $p\nmid \text{Disc}_\O$ implies (see e.g. \cite[1.1.1.21]{La13}) that 
\begin{equation*}
\O\otimes\Z_p=\prod_{[\tau]}M_{n_{[\tau]}}(\O_{F_{[\tau]}}\otimes\Z_p)
\end{equation*}
where $n_{[\tau]}=\dim_{F_{[\tau]}}B_{[\tau]}^{1/2}$ and moreover that $p$ is unramified in each $F_{[\tau]}$.  In particular
\begin{equation*}
\overline{\O}=\O\otimes\FF_p=\prod_{[\tau]}M_{n_[\tau]}(\O_{F_{[\tau]}}\otimes\FF_p)
\end{equation*}
is a semisimple $\FF_p$ algebra with involution, and we may identify $\Ca{T}$ with the set of ring homomorphisms $\O_F\to\overline{\FF}_p$.

The moduli problem associated to an integral PEL datum is naturally defined over the reflex field $F_0$.  Then if $p$ is a good prime, one can even work integrally over $\O_{F_0,(p)}$.  For studying rationality properties of automorphic forms, it is important to work over a number field like $F_0$ or a more integral variant.  However for the purposes of studying congruences, it is more convenient to work over a $p$-adic base.  Moreover we have no reason to work over a small field or ring of coefficients and we will readily enlarge it whenever it is convenient.  Let us now introduce the base we will work over.

Let $E\subset\overline{\Q}_p$ be a finite extension of $\Q_p$ which contains the images of all the embeddings $\tau:F\to\Q_p$.  We can take $E$ to be unramified over $\Q_p$, but this is not necessary.  We note that $E$ contains the reflex field $F_0$ thought of as a subfield of $\overline{\Q}_p$ via $i:\overline{\Q}_p\simeq\C$.  We let $R$ be the integer ring of $E$, denote a uniformizer by $\pi$ and let $k$ be its residue field, which has a canonical embedding into $\overline{\FF}_p$.

By our choice of $R$, it is easy to see that there is an $\O\otimes R$ module $L_0$ such we have
\begin{equation*}
L_0\otimes_R\C\simeq V_0
\end{equation*}
as $\O\otimes\C$-modules.  Moreover $L_0$ is unique up to isomorphism.

Next we introduce some notation for formulating the Kottwitz determinant condition.  We follow the coordinate free approach of Lan \cite{La13}, which is based on that of Rapoport-Zink \cite{RZ96}.

\begin{defn}
Let $S$ be any scheme.
\begin{enumerate}
\item We have a quasi-coherent sheaf of $\O_S$-algebras
\begin{equation*}
\O_S[\O^\vee]:=\O_S\otimes\Z[\O^\vee]
\end{equation*}
where $\Z[\O^\vee]$ is the symmetric algebra
\begin{equation*}
\Z[\O^\vee]=\text{Sym}_\Z^*(\O^\vee).
\end{equation*}
A choice of a $\Z$-basis $\alpha_1,\ldots,\alpha_n$ of $\O$ with dual basis $\alpha_1^\vee,\ldots,\alpha_n^\vee$ of $\O^\vee$ defines an isomorphism
\begin{equation*}
\Z[\O^\vee]\simeq\Z[x_1,\ldots,x_n]
\end{equation*}
sending $\alpha_i^\vee$ to $x_i$.
\item Given a locally free sheaf of finite rank $\Ca{E}$ on $S$ with an $\O_S$ linear action of $\O$, we define a section $\text{Det}_{\O|\Ca{E}}$ of $\O_S[\O^\vee]$ as follows: choose a basis $\alpha_1,\ldots,\alpha_n$ for $\O$ and dual basis $\alpha_1^\vee,\ldots,\alpha_n^\vee$ of $\O^\vee$ and consider
\begin{equation*}
\det(x_1\alpha_1+\cdots+x_n\alpha_n|\Ca{E})\in\O_S\otimes\Z[x_1,\ldots,x_n].
\end{equation*}
Via the isomorphism $\Z[\O^\vee]\simeq\Z[x_1,\ldots,x_n]$ above this gives the desired section.  One readily checks that it does not depend on the choice of basis of $\O$.
\end{enumerate}
\end{defn}

This definition is motivated by the following easy lemma, which we refer to \cite[1.1.2.20]{La13} for a proof.
\begin{lem}\label{L:det}
Let $k$ be a field such that $\O\otimes k$ is a separable $k$ algebra.  Then if $V_1$ and $V_2$ are two finite dimensional $\O\otimes k$ modules which are finite dimensional as $k$-vector spaces such that
\begin{equation*}
\text{\rm Det}_{\O|V_1}=\text{\rm Det}_{\O|V_2}
\end{equation*}
then $V_1\simeq V_2$ as $\O\otimes k$ modules.
\end{lem}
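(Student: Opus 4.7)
The plan is to exploit the factorization structure of the $\text{Det}$ polynomial: it factors as a product, in which each irreducible factor corresponds to an isomorphism class of simple module and the multiplicity of that factor records the multiplicity of the simple. Once this is established, the multiplicities, and hence the isomorphism class, are read off from $\text{Det}_{\O|V}$.

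First I would reduce to the case where $k$ is algebraically closed. The formation of $\text{Det}_{\O|V}$ is compatible with any base change of fields, so the hypothesis persists after $-\otimes_k \bar k$. Since $\O\otimes k$ is separable hence semisimple, so is $\O\otimes\bar k$, and both $V_i$ are semisimple modules whose isomorphism classes are determined by their characters; as characters are preserved and detected by faithful base change, an isomorphism $V_1\otimes_k\bar k\simeq V_2\otimes_k\bar k$ of $\O\otimes\bar k$-modules implies $V_1\simeq V_2$ as $\O\otimes k$-modules. So we may assume $k=\bar k$.

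In that case Artin--Wedderburn gives $\O\otimes k\cong\prod_{i=1}^s M_{n_i}(k)$ with simple modules $S_1,\dots,S_s$ of respective dimensions $n_1,\dots,n_s$, and any finite-dimensional $V$ decomposes uniquely as $V\simeq\bigoplus_i S_i^{m_i}$. Choose a $\Z$-basis $\alpha_1,\dots,\alpha_n$ of $\O$, identifying $k[\O^\vee]$ with $k[x_1,\dots,x_n]$; after base change this is the same as a $k$-basis of $\O\otimes k$, so by an invertible linear change of variables we may instead use the matrix-unit coordinates $\{y^{(i)}_{ab}\}_{1\leq a,b\leq n_i,\,1\leq i\leq s}$ on $\prod M_{n_i}(k)$. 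A generic element of $\O\otimes k$ then becomes a tuple $(Y^{(1)},\dots,Y^{(s)})$ of generic matrices in disjoint variables acting on $S_i\simeq k^{n_i}$ through the $i$-th factor, so
\[
\text{Det}_{\O|V}\;=\;\prod_{i=1}^s \det(Y^{(i)})^{m_i}
\]
in $k[\{y^{(i)}_{ab}\}]$.

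Finally, since the determinant of a generic square matrix is an irreducible polynomial (a classical fact) and the factors $\det(Y^{(i)})$ live in pairwise disjoint sets of variables, they are pairwise non-associate irreducibles in the UFD $k[\{y^{(i)}_{ab}\}]$. Unique factorization therefore recovers each exponent $m_i$, and hence $V$ up to isomorphism, from $\text{Det}_{\O|V}$. The only real technicality is the base-change reduction of the first paragraph; the heart of the argument is the irreducibility and the pairwise distinctness of the generic determinants, which make the UFD factorization step automatic.
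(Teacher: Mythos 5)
Your argument is correct. The paper does not supply its own proof of this lemma but simply refers to \cite[1.1.2.20]{La13}; what you have written reproduces exactly the standard argument found there (and originally in Kottwitz): reduce to the algebraically closed case via the Noether--Deuring principle, pass to matrix-unit coordinates via Artin--Wedderburn, observe that $\text{Det}_{\O|V}$ becomes $\prod_i \det(Y^{(i)})^{m_i}$ with the $Y^{(i)}$ generic matrices in disjoint variables, and conclude by the irreducibility and pairwise non-association of generic determinants in the UFD $k[\O^\vee]$.
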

We note that the reason for working with determinants rather than traces is that otherwise, this lemma would be false if $k$ has positive characteristic.

\section{PEL Modular Varieties}

Let $(\O,*,L,\langle\cdot,\cdot\rangle,h)$ be an integral PEL datum without factors of type D for which $p$ is a good prime.  Following Lan \cite{La13}, we give two different descriptions of the associated PEL moduli problem.  The first moduli problem involves abelian schemes with extra structure up to isomorphism.

\begin{defn}\label{D:modisom}
For a compact open subgroup $K\subset G(\hat{\Z}^{(p)})$ we define a functor $M_K^{\text{isom}}$ which sends an $R$-scheme $S$ to the set of isomorphism classes of tuples $(A,\lambda,i,\alpha_K)$ where
\begin{enumerate}
\item $A/S$ is an abelian scheme.
\item $\lambda:A\to A^\vee$ is a prime to $p$ polarization of $A$.
\item $i:\O\to\text{End}_S(A)$ is a ring homomorphism compatible with $\lambda$ in the sense that
\begin{equation*}
\lambda i(x^*)=i(x)^\vee\lambda
\end{equation*}
for all $x\in \O$ and such that the Kottwitz condition is satisfied:
\begin{equation*}
\text{Det}_{\O|\Lie(A/S)}=\text{Det}_{\O|L_0}\in\O_S[\O^\vee]
\end{equation*}
where the finite locally free $\O_S$-module $\Lie(A/S)$ has an $\O_S$-linear action of $\O$ via $i$. 
\item $\alpha_K$ is an integral level $K$ structure of $(A,\lambda,i)$ in these sense of Defnition 1.3.7.6 of Lan \cite{La13}.
\end{enumerate}
and an isomorphism between $(A,\lambda,i,\alpha_K)$ and $(A',\lambda',i',\alpha_K')$ is given by an isomorphism $f:A\to A'$ of abelian schemes which is compatible with $\lambda$ and $\iota$ in the sense that
\begin{equation*}
\lambda=f^\vee\lambda'f
\end{equation*}
and
\begin{equation*}
fi(x)=i'(x)f
\end{equation*}
for all $x\in\O$, and moreover $f$ sends the level $K$ structure $\alpha_K$ to $\alpha'_K$ in the sense explained in Definition 1.4.1.4 of \cite{La13}.
\end{defn}

We do not recall Lan's somewhat complicated definition of an integral level $K$ structure in 4 above.  However below we will give an equivalent but simpler definition that works when the base $S$ is locally noetherian.

Now we give the second version of the moduli problem, involving abelian schemes with extra structure up to prime to $p$ isogeny, which is however only defined on locally noetherian schemes.

\begin{defn}\label{D:modisog}
For a compact open subgroup $K\subset G(\A^{\infty,p})$ we define a functor $M_K^{\text{isog}}$ which sends a \emph{locally noetherian} $R$-scheme $S$ to the set of isomorphism classes of tuples $(A,\lambda,i,\alpha_K)$ where
\begin{enumerate}
\item $A/S$ is an abelian scheme.
\item $\lambda:A\to A^\vee$ is a prime to $p$ quasi-polarization of $A$.  (We remind the reader that a prime to $p$ quasi-polarization is a prime to $p$ quasi-isogeny such that there exists an integer $N>0$ with $N\lambda$ a polarization.)
\item $i:\O\otimes\Z_{(p)}\to\text{End}_S(A)\otimes\Z_{(p)}$ is a ring homomorphism compatible with $\lambda$ in the sense that
\begin{equation*}
\lambda i(x^*)=i(x)^\vee\lambda
\end{equation*}
for all $x\in \O\otimes\Z_{(p)}$ and such that the Kottwitz condition is satisfied:
\begin{equation*}
\text{Det}_{\O|\Lie(A/S)}=\text{Det}_{\O|L_0}\in\O_S[\O^\vee]
\end{equation*}
where the locally free of finite rank $\O_S$-module $\Lie(A/S)$ has an $\O_S$-linear action of $\O$ via $i$. 
\item $\alpha_K$ is a rational level $K$ structure of $(A,\lambda,i)$.
\end{enumerate}
and an isomorphism between $(A,\lambda,i,\alpha_K)$ and $(A',\lambda',i',\alpha_K')$ is given by a prime to $p$ quasi-isogeny $f:A\to A'$ of abelian schemes which is compatible with $\lambda$ and $\iota$ in the sense that
\begin{equation*}
\lambda=rf^\vee\lambda'f
\end{equation*}
for some locally constant function $r:S\to\Z_{(p)}^{\times,>0}$ and
\begin{equation*}
fi(x)=i'(x)f
\end{equation*}
for all $x\in\O\otimes\Z_{(p)}$, and moreover $f$ sends the level $K$ structure $\alpha_K$ to $\alpha'_K$ in the sense explained below.
\end{defn}

Let us now recall the definitions of the rational and integral level $K$ structures appearing in the moduli problem, at least when the base $S$ is locally noetherian.
\begin{defn}
\begin{enumerate}
\item Let $S$ be a locally noetherian scheme, let $(A,\lambda,i)$ be as in definition \ref{D:modisog}, and let $K\subset G(\A^{\infty,p})$ be a compact open subgroup.  

First assume that $S$ is connected and pick a geometric point $\overline{s}$ of $S$.  Then a \emph{rational level $K$ structure} $\alpha_K$ on $(A,\lambda,i)$ is a $\pi_1(S,\overline{s})$ invariant $K$-orbit of pairs $(\alpha,\nu)$ where
\begin{equation*}
\alpha:L\otimes\A^{\infty,p}\simeq V^pA_{\overline{s}}
\end{equation*}
is an $\O$ invariant isomorphism of $\A^{\infty,p}$-modules, and
\begin{equation*}
\nu:\A^{\infty,p}(1)\simeq V^p\Bf{G}_m
\end{equation*}
is an isomorphism such that
\begin{equation*}
\begin{tikzcd}
(L\otimes\A^{\infty,p})\times(L\otimes\A^{\infty,p})\arrow{r}{\langle\cdot,\cdot\rangle}\arrow{d}{\alpha\times\alpha}&\A^{\infty,p}(1)\arrow{d}{\nu}\\
V^pA_{\overline{s}}\times V^pA_{\overline{s}}\arrow{r}{e^\lambda}&V^p\Bf{G}_m
\end{tikzcd}
\end{equation*}
commutes.  Here $V^p$ denotes the rational, prime to $p$ adelic Tate module, $e^\lambda$ is the Weil pairing induced by the polarization, and $G(\hat{\A}^{\infty,p})$ acts on the set of $(\alpha,\nu)$ on the right via its action on $L\otimes\A^{\infty,p}$ and $\A^{\infty,p}(1)$ (the later being via the similitude factor.)

If $f:(A,\lambda,i)\to(A',\lambda',i')$ is a prime to $p$ quasi-isogeny with similitude factor $r\in\Z^{\times,>0}_{(p)}$ as in the definition, then from a pair $(\alpha,\nu)$ for $(A,\lambda,i)$ as above we define $(\alpha',\nu')$ via $\alpha'=V^p(f)\alpha$ and $\nu'=r^{-1}\nu$.  In this way $f$ sends a level $K$-structure $\alpha_K$ on $(A,\lambda,i)$ to a level $K$ structure $\alpha'_K$ on $(A',\lambda',i')$.

By a standard argument which we don't recall here the notion of a rational level $K$ structure is canonically independent of the base point $\overline{s}$.  Finally if $S$ is not necessarily connected then a rational level $K$ structure on $(A,\lambda,i)$ is just a rational level $K$ structure in the sense above on each connected component of $S$.

\item Continue to assume that $S$ is a locally noetherian scheme and now let $(A,\lambda,i)$ be as in definition \ref{D:modisom}, and let $K\subset G(\hat{\Z}^{\infty,p})$ be open compact.

Let $\alpha_K$ be a rational level $K$ structure on $(A,\lambda,i)$ as above.  Then $\alpha_K$ is said to be an \emph{integral level $K$ structure} if for every geometric point $\overline{s}$ of $S$, and every pair $(\alpha,\nu)$ in the $\pi_1(S,\overline{s})$ invariant $K$ orbit as above, $\alpha$ and $\nu$ define isomorphisms between the natural $\hat{\Z}^{(p)}$ lattices on both sides, i.e. we have
\begin{equation*}
\alpha:L\otimes\hat{\Z}^{(p)}\simeq T^pA_{\overline{s}}
\end{equation*}
and
\begin{equation*}
\nu:\hat{\Z}^{(p)}(1)\simeq T^p\Bf{G}_m
\end{equation*}
isomorphisms of $\hat{\Z}^{(p)}$-modules, where $T^p$ denotes the integral prime to $p$ adelic Tate module.
\end{enumerate}
\end{defn}

\begin{rem}
Lan \cite[1.3.7.6]{La13} has given a definition of an integral level $K$ structure which does not require the base $S$ to be locally noetherian.  Then he proves in Lemma 1.3.8.5 and the discussion before it that this definition is the same as the one given above when the base is locally noetherian.  As the functor $M_K^{\text{isom}}$ will turn out to be finitely presented (see below) it is determined by its points on locally noetherian schemes.  Nonetheless, Lan's definition seems to be better adapted to his study of level structures on degenerating abelian varieties.
\end{rem}

Let $M_K^{\text{isom,LN}}$ denote the restriction of the functor $M_K^{\text{isom}}$ to the category of locally noetherian $R$-schemes.  Then there is an obvious natural transformation
\begin{equation*}
M_K^{\text{isom,LN}}\to M_K^{\text{isog}}
\end{equation*}
sending a tuple $(A,\lambda,i,\alpha_K)$ to its prime to $p$ quasi-isogeny class.  It is not difficult to see that this defines an isomorphism of functors (see \cite[1.4.3.4]{La13}.)

We now make some remarks about these definitions.
\begin{rem}\label{R:modcompare}
\begin{enumerate}
\item The reader may wonder why we would want to have both descriptions of the moduli problem.  For many purposes, for example for defining Hecke actions as in Section \ref{S:heckepel} below, it is more convenient to work with abelian schemes up to isogeny.  However for many questions of a local nature, such as the theory of degeneration, it seems more suitable to work with abelian varieties up to isomorphism (at least that is the approach of \cite{FC90} and \cite{La13}.)  In particular the results on compactifications from \cite{La13} which we refer to extensively are written in this way.

The reader may note that in order to consider $M_K^{\text{isom}}$ we assumed that $K\subset G(\hat{\Z}^{(p)})$, while in order to consider $M_K^{\text{isog}}$ we could work with any $K\subset G(\A^{\infty,p})$.  However there is not actually any generality lost by working with the first moduli problem.  Indeed it is not difficult to show that for any fixed $K\subset G(\A^{\infty,p})$ one can pick a new lattice $L'\subset V$ so that $(\O,*,L',\langle\cdot,\cdot\rangle,h)$ is also an integral PEL datum with $p$ as a good prime, and $K$ stabilizes $L'$ in the sense that if $G'$ denotes the new $\Z$ structure on $G_\Q$ determined by $L$, then $K\subset G'(\hat{\Z}^{(p)})$.  The moduli problems $M_K^{\text{isog}}$ associated to these two integral PEL data are the same.
\item The reader may also note that the definition of $M_K^{\text{isog}}$ only involves $\O\otimes\Z_{(p)}$, $L\otimes\A^{\infty,p}$, and $L\otimes\R$ and not $\O$ and $L$.  Similarly in the definition of $M_K^{\text{isom}}$ we only need the adelic object $L\otimes\Z^{(p)}$ and $L\otimes \R$ and not the lattice $L$.  

\end{enumerate}
\end{rem}

Next we recall the following theorem of Kottwitz.  A detailed proof (which is also different from that of Kottwitz) can be found in \cite{La13}.

\begin{thm}[{\cite[1.4.1.11, 7.2.3.10]{La13}}]
Suppose $K\subset G(\A^{\infty,p})$ is a neat compact open.  Then:
\begin{enumerate}
\item The objects parameterized by $M_K^{\text{\rm isog}}$ have no non trivial automorphisms.
\item $M_K^{\text{\rm isog}}$ is represented by a smooth quasi-projective scheme $\Ca{X}_K/R$.
\end{enumerate} 
\end{thm}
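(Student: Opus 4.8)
**The plan is to establish the theorem of Kottwitz by reducing to the case of the Siegel moduli problem and then descending.** The key observation, recorded already in Remark \ref{R:modcompare}, is that $M_K^{\text{\rm isog}}$ depends only on $\O\otimes\Z_{(p)}$, $L\otimes\A^{\infty,p}$, and $L\otimes\R$, so we may assume $K\subset G(\hat\Z^{(p)})$ and work with $M_K^{\text{\rm isom}}$ instead. First I would handle part (1): if $(A,\lambda,i,\alpha_K)$ is an object over a connected locally noetherian $S$ with geometric point $\bar s$, an automorphism $f$ acts on the Tate module $T^p A_{\bar s}$ compatibly with the $\O$-action and the polarization pairing, hence determines an element of $G(\hat\Z^{(p)})$ fixing the $K$-orbit $\alpha_K$; since $K$ is neat, $f$ acts trivially on $T^p A_{\bar s}$, and because $f$ is an automorphism of an abelian scheme it is determined by its action on $\ell$-adic Tate modules for any $\ell\ne p$, so $f = \mathrm{id}$. (One must take care that $f$ is really an isomorphism, not a quasi-isogeny, in the $M_K^{\text{\rm isom}}$ picture, so that it genuinely fixes the integral lattice; for $M_K^{\text{\rm isog}}$ one invokes neatness to rigidify.)

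**Next I would prove representability, part (2), by the standard Mumford-style argument.** Adding auxiliary prime-to-$p$ level structure: choose an auxiliary integer $m\ge 3$ prime to $p$ and a smaller neat compact open $K' \subset K \cap (\text{principal level } m)$, so that the forgetful map $M_{K'}^{\text{\rm isom}} \to M_K^{\text{\rm isom}}$ realizes $M_K^{\text{\rm isom}}$ as the quotient of $M_{K'}^{\text{\rm isom}}$ by the finite group $K/K'$ acting freely (freeness uses part (1)). So it suffices to represent $M_{K'}^{\text{\rm isom}}$ for $K'$ containing full level $m$ structure. For such rigidified data, one embeds into a suitable Hilbert scheme: principally (up to prime-to-$p$ issues, using $p \nmid [L^\vee : L]$) polarized abelian schemes of fixed dimension with level $m$ structure are parametrized by a quasi-projective scheme over $\Z[1/m]$ by Mumford's GIT construction. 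The PEL conditions then cut out a locally closed (in fact, given the level structure, closed) subscheme: the $\O$-action condition $i:\O \to \End_S(A)$ compatible with $\lambda$ is represented by a closed subscheme of an appropriate Hom-scheme (using that $\O$ is finitely generated and $\End$ of an abelian scheme is an unramified, representable functor), the Kottwitz determinant condition $\text{Det}_{\O|\Lie(A/S)} = \text{Det}_{\O|L_0}$ is a closed condition on $\spec$ of the base since it is the vanishing of finitely many sections of $\O_S[\O^\vee]$, and the level structure condition is closed and finite étale over the locus where the preceding conditions hold (here one uses $p \nmid \mathrm{Disc}_\O$ so that $\O\otimes\Z_{(p)}$ is a product of matrix algebras over unramified bases, making the level-$K$ structures a torsor-type condition). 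Quasi-projectivity is inherited from the Hilbert scheme, and then descends to $M_K^{\text{\rm isom}}$ along the finite free quotient by $K/K'$.

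**Finally, smoothness over $R$.** This is the infinitesimal-deformation step: given an object over an Artinian $R$-algebra $R_0$ and a square-zero extension $R_0' \to R_0$, one must lift. By Grothendieck–Messing / Serre–Tate, deformations of the abelian scheme with its $\O$-action and polarization are controlled by lifts of the Hodge filtration $\Lie(A^\vee) \subset H^1_{\mathrm{dR}}$ inside the crystal, compatibly with the $\O$-action; the Kottwitz condition guarantees (via Lemma \ref{L:det}, which is exactly why determinants rather than traces are used — it forces $\Lie(A)$ to be the correct $\O\otimes k$-module type, hence a \emph{free} module of the expected rank after the good-prime hypothesis splits $\O\otimes\Z_p$ into matrix algebras) that the space of such filtrations is smooth — it is a product of Grassmannians / flag-type varieties, one for each factor $[\tau]$, with no obstructions. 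The level structure lifts uniquely since it is finite étale over the base. So the deformation functor is formally smooth, and combined with local finite presentation (inherited from the construction above) this gives smoothness.

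**The main obstacle** is the smoothness argument: carrying out the Grothendieck–Messing deformation theory carefully in the PEL setting and checking, via the Kottwitz condition and the good-prime hypothesis, that the relevant local model is smooth — in particular verifying there are no obstructions coming from the interaction of the $\O$-action with the polarization at the primes dividing $\mathrm{Disc}_\O$, which is precisely what the hypothesis $p \nmid \mathrm{Disc}_\O[L^\vee:L]$ rules out. In the body of the text this is of course all in Lan \cite{La13}, so I would simply cite \cite[1.4.1.11, 7.2.3.10]{La13} for the full proof and content myself with the sketch above.
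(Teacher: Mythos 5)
The paper itself does not prove this theorem: it records it as ``the following theorem of Kottwitz'' and immediately refers to Lan \cite{La13} for a ``detailed proof (which is also different from that of Kottwitz).'' Your sketch is therefore filling in something the paper leaves entirely to the reference. The route you take — reduce to $M_K^{\text{isom}}$, add auxiliary prime-to-$p$ level structure and embed into a Siegel/Mumford GIT moduli space, cut out the PEL conditions, and then prove smoothness via Grothendieck--Messing plus the Kottwitz determinant condition — is in essence Kottwitz's original argument, \emph{not} Lan's. Lan avoids GIT: he constructs the moduli space more directly as a scheme finite over the Siegel moduli space (which he in turn takes as given from Mumford/Faltings--Chai), with considerable care in defining level structures over non-noetherian bases, and his representability argument is organized around relative representability of each successive piece of the datum rather than a quotient by $K/K'$. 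Both routes work; the GIT route is shorter to sketch, Lan's is better adapted to the degeneration/compactification theory that the rest of the paper relies on.

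Two small things in your sketch deserve attention. First, for the quotient $M_{K'}^{\text{isom}}/(K/K')$ to make sense as a quotient by a finite group action you need $K'$ normal in $K$; this is harmless (replace $K'$ by the intersection of its $K$-conjugates, which is still neat and still contains a full principal level), but should be said. Second, the integral level-$K$ structure is not a closed condition on the base: it lives on a finite \'etale cover of the locus where $(A,\lambda,i)$ satisfies the other conditions, and one descends representability along that cover; your phrasing (``cut out a closed subscheme'') elides this. Neither is a real gap, but both are places where the details matter if the sketch is to become a proof.
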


\section{Automorphic Vector Bundles}\label{S:pelvb}

The goal of this section is to define some automorphic vector bundles on our PEL moduli spaces.  Let $K\subset G(\A^{\infty,p})$ be a neat open compact subgroup.  Let $(A,\lambda,i,\alpha_K)$ be a member of the universal isogeny class over $\Ca{X}_K$.  To it we associate a pair $(\omega_A,\O_{\Ca{X}_K})$ of a vector bundle with an $\O\otimes\Z_{(p)}$ action and a line bundle.  If $(A',\lambda',i',\alpha_K')$ is another member of the same isogeny class, then by the neatness of $K$ there is a unique prime to $p$ quasi-isogeny $f:(A,\lambda,i,\alpha_K)\to(A',\lambda',i',\alpha_K')$, as in definition \ref{D:modisog} which defines an isomorphism of pairs
\begin{equation*}
(\omega_A,\O_{\Ca{X}_K})\simeq(\omega_{A'},\O_{\Ca{X}_K})
\end{equation*}
which is $(f^*)^{-1}$ on the first factor, and multiplication by $r$ on the second factor, where $r$ is the locally constant $\Z_{(p)}^{\times,>0}$ valued function on $S$ such that $\lambda=rf^\vee\lambda'f$.

In this way we obtain a canonical pair $(\Ca{E}_K,\Xi_K)$ of a vector bundle with an $\O\otimes\Z_{(p)}$ action and a line bundle on $\Ca{X}_K$ which is independent of the choice of $(A,\lambda,i,\alpha_K)$ in the universal isogeny class.  (The reason for this slightly convoluted definition will become clear in the next section when we define Hecke actions.  In particular the trivial line bundle $\Xi_K$ will have a non trivial Hecke action.)

\begin{defn}
\begin{enumerate}
\item Let $M/R$ be the affine algebraic group representing the functor
\begin{equation*}
M(A)=\GL_{\O\otimes_\Z A}(L_0^\vee\otimes_RA)\times A^\times
\end{equation*}
for $R$-algebras $A$.
\item The principal $M$-bundle on $\Ca{X}_K$ is defined by
\begin{equation*}
P_K(S)=\text{Isom}_{\O\otimes\O_S}((\Ca{E}_K\otimes\O_S,\Xi_K\otimes\O_S),(L_0\otimes\O_S,R\otimes\O_S))
\end{equation*}
\item For any algebraic representation $\rho$ of $M$ on a finite $R$-module $W$ define the coherent sheaf
\begin{equation*}
V_{\rho,K}=P_K\times^M W
\end{equation*}
It is functorial in $\rho$.
\end{enumerate}
\end{defn}

For example we have
\begin{equation*}
V_{L_0^\vee,K}=\Ca{E}_K
\end{equation*}
and
\begin{equation*}
V_{\det L_0^\vee,K}=\det\Ca{E}_K=:\omega_K.
\end{equation*}

\begin{prop}\label{P:autprop}
\begin{enumerate}
\item If $\rho$ is an algebraic representation on a finite free $R$-module (resp. a finite free $R/\pi^r$-module) then $V_{\rho,K}$ is a locally free sheaf on $\Ca{X}_K$ (resp. a locally free sheaf on $\Ca{X}_K\times R/\pi^r$.)
\item If $0\to \rho'\to\rho\to\rho''\to 0$ is a short exact sequence of algebraic representations of $M$ then we have a short exact sequence
\begin{equation*}
0\to V_{\rho',K}\to V_{\rho,K}\to V_{\rho'',K}\to 0
\end{equation*}
of sheaves on $\Ca{X}_K$.
\item If $\rho$ is an algebraic representation of $M$ on  a finite $R$-module and $k$ is any integer then
\begin{equation*}
V_{\rho\otimes\det^k L_0^\vee,K}=V_{\rho,K}\otimes\omega_K^{\otimes k}
\end{equation*}
\end{enumerate}
\end{prop}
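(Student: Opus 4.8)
The plan is to deduce all three assertions of Proposition~\ref{P:autprop} from one structural fact: since $P_K$ is a torsor under $M$, it is fppf--locally trivial, and on a trivializing cover the associated--bundle construction $W\mapsto P_K\times^M W$ is just $W\mapsto W\otimes_R\O_U$, for which all three properties are evident; they then descend. So the first step is to fix an fppf cover $\{U_j\to\Ca{X}_K\}$ over which $P_K$ admits a section. A choice of section trivializes $P_K|_{U_j}$ and thereby identifies $V_{\rho,K}|_{U_j}=(P_K\times^M W)|_{U_j}$ with $W\otimes_R\O_{U_j}$, compatibly with the $\O\otimes\O_{\Ca{X}_K}$--module structures and with the transition cocycle of $P_K$ on overlaps. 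Before anything else one checks that $V_{\rho,K}$ (an fppf--sheaf quotient a priori) is quasi-coherent: this is an fppf--local statement, and locally it is $W\otimes_R\O_{U_j}$, so it holds.

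\textbf{Part (1).} If $W$ is finite free over $R$ (resp.\ over $R/\pi^r$), then $W\otimes_R\O_{U_j}$ is finite free over $\O_{U_j}$ (resp.\ over the structure sheaf of the corresponding open of $\Ca{X}_K\times_R R/\pi^r$). Thus $V_{\rho,K}$ is a quasi-coherent sheaf that is fppf--locally free of finite rank, hence locally free of finite rank on $\Ca{X}_K$ (resp.\ on $\Ca{X}_K\times_R R/\pi^r$) by fppf descent. This is the whole of (1).

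\textbf{Parts (2) and (3).} For (2), exactness of $0\to V_{\rho',K}\to V_{\rho,K}\to V_{\rho'',K}\to 0$ may be checked after fppf base change, where it becomes $0\to W'\otimes_R\O_{U_j}\to W\otimes_R\O_{U_j}\to W''\otimes_R\O_{U_j}\to 0$; since $\Ca{X}_K\to\spec R$ is smooth, $\O_{U_j}$ is flat over $R$, so tensoring the exact sequence $0\to W'\to W\to W''\to 0$ with $\O_{U_j}$ preserves exactness. For (3), the point is that $W\mapsto P_K\times^M W$ is a symmetric monoidal functor compatible with $\det$ and with $(-)^\vee$: locally it is $W\mapsto W\otimes_R\O_{U_j}$, for which $(W_1\otimes_R W_2)\otimes_R\O_{U_j}\cong(W_1\otimes_R\O_{U_j})\otimes_{\O_{U_j}}(W_2\otimes_R\O_{U_j})$, and likewise for $\det$ and for duals, and these identifications are compatible with the cocycle, hence glue to canonical isomorphisms on $\Ca{X}_K$. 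Applying this with $W_1$ the module underlying $\rho$ and $W_2=(\det_R L_0^\vee)^{\otimes k}$ (negative powers read via $(-)^\vee$), and using the identity $V_{\det L_0^\vee,K}=\det\Ca{E}_K=\omega_K$ recorded in the excerpt, gives $V_{\rho\otimes\det^k L_0^\vee,K}\cong V_{\rho,K}\otimes\omega_K^{\otimes k}$.

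\textbf{Main obstacle.} Everything above is formal once $P_K$ is known to be a locally trivial $M$--torsor, and the one substantive input is exactly that: the two $\O\otimes\O_{\Ca{X}_K}$--modules in the definition of $P_K$ must be fppf--locally isomorphic. I expect this to be the real work in a self-contained proof. The argument is: since $p$ is a good prime, $\O\otimes k(\bar x)$ is a separable algebra over the residue field at every geometric point $\bar x$, so by Lemma~\ref{L:det} the Kottwitz determinant condition forces $\Lie(A_{\bar x})$ --- hence $\Ca{E}_{K,\bar x}=\omega_{A_{\bar x}}$ up to Cartier/linear duality --- to be isomorphic as an $\O$--module to $L_0\otimes k(\bar x)$; and over the local ring $\O_{\Ca{X}_K,x}$ the algebra $\O\otimes\O_{\Ca{X}_K,x}$ is (again by goodness of $p$) a product of matrix algebras over a finite étale, hence semilocal, ring, so finite projective modules over it are classified by their ranks and are determined by the fibre at the closed point. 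This yields a Zariski--local isomorphism of the two modules, i.e.\ Zariski--local triviality of $P_K$. Granting this (which is in effect built into calling $P_K$ ``the principal $M$--bundle''), parts (1)--(3) follow as above.
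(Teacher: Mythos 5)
Your proof is correct and is the argument the paper clearly intends but leaves to the reader (the proposition is stated in the text with no proof supplied). You have correctly isolated the substantive input — local triviality of the $M$-torsor $P_K$, deduced from Lemma \ref{L:det}, the Kottwitz determinant condition, and the separability of $\O\otimes\Z_p$ guaranteed by $p$ being a good prime — and the remaining assertions follow by fppf descent exactly as you describe, with the $R$-flatness of $\Ca{X}_K$ (from smoothness in the Kottwitz representability theorem) supplying part (2).
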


\section{Hecke Action}\label{S:heckepel}
Suppose we have $g\in G(\A^{\infty,p})$ and $K,K'\subset G(\A^{\infty,p})$ compact open subgroups satisfying $g^{-1}Kg\subset K'$.  Then we can define a natural transformation
\begin{equation*}
M_K^{\text{isog}}\to M_{K'}^{\text{isog}}
\end{equation*}
as follows: send a point $(A,\lambda.i,\alpha_K)$ to $(A,\lambda,i,\alpha_{K'})$ where if $\overline{s}$ is a geometric point of $S$ and $(\alpha,\nu)K$ is the $\pi_1(S,\overline{s})$ stable $K$ orbit given by $\alpha_K$, then $(\alpha,\nu)gK'$ is the $\pi_1(S,\overline{s})$ stable $K'$ orbit corresponding to $\alpha_{K'}$.  If $K'$ is neat then so is $K$ and the above natural transformation defines a map
\begin{equation*}
[g]:\Ca{X}_{K}\to\Ca{X}_{K'}.
\end{equation*}

Here are the basic facts about Hecke actions on the modular varieties and automorphic vector bundles.
\begin{prop}\label{P:hecke}
Let $g\in G(\A^{\infty,p})$ and let $K,K'\subset G(\A^{\infty,p})$ be open compact subgroups satisfying $g^{-1}Kg\subset K'$.  Suppose $K'$ is neat.  Then
\begin{enumerate}
\item The map $[g]:\Ca{X}_{K}\to\Ca{X}_{K'}$ is finite, \'{e}tale, and surjective.
\item If $\rho$ is any algebraic representation of $M$ as in the last section, then there is a canonical isomorphism
\begin{equation*}
g:[g]^*V_{\rho,K'}\to V_{\rho,K}.
\end{equation*}
\item If $g'\in G(\A^{\infty,p})$ and $K''\subset G(\A^{\infty,p})$ are such that ${g'}^{-1}K'g'\subset K''$ and $K''$ is neat then
\begin{equation*}
[gg']=[g']\circ[g]:\Ca{X}_K\to\Ca{X}_{K''}
\end{equation*}
and for any $\rho$ we have
\begin{equation*}
gg'=g\circ[g]^*(g'):[gg']^*V_{\rho,K''}\to V_{\rho,K}.
\end{equation*}
\end{enumerate}
\end{prop}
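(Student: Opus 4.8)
The plan is to reduce the whole statement to unwinding the definitions of $[g]$ and of the automorphic bundles $V_{\rho,K}$, the only genuinely geometric input being the finite \'{e}taleness claimed in (1). For part (1), I would factor $[g]$ as $\Ca{X}_K\xrightarrow{[g]}\Ca{X}_{g^{-1}Kg}\xrightarrow{[1]}\Ca{X}_{K'}$. Conjugation by $g$ preserves compactness and (the neatness condition being inherited by subgroups and conjugation-invariant) neatness, so $g^{-1}Kg$ is again a neat open compact and the intermediate variety exists. On moduli the first arrow takes the $\pi_1(S,\overline{s})$-stable $K$-orbit of a framing $(\alpha,\nu)$ to the $g^{-1}Kg$-orbit of $(\alpha,\nu)g$; right translation by $g$ is a $\pi_1$-equivariant bijection from $K$-orbits of framings onto $g^{-1}Kg$-orbits of framings, with inverse right translation by $g^{-1}$, so this arrow is an isomorphism (explicit inverse $[g^{-1}]$). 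This reduces (1) to the natural projection $[1]\colon\Ca{X}_{K_1}\to\Ca{X}_{K'}$ for neat $K_1\subset K'$, for which the transformation $M_{K_1}^{\text{isog}}\to M_{K'}^{\text{isog}}$ is relatively representable: over a point $(A,\lambda,i,\alpha_{K'})$ of $M_{K'}^{\text{isog}}(S)$ the fibre parametrizes the $\pi_1$-stable $K_1$-orbits refining $\alpha_{K'}$, and after an \'{e}tale base change trivializing the relevant torsor of framings this is a finite constant set of cardinality $[K':K_1]$ carrying a continuous $\pi_1(S,\overline{s})$-action; descent along it yields a finite \'{e}tale $S$-scheme, surjective over $S$ since a $K'$-structure can always be refined \'{e}tale-locally. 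Hence $[1]$, and so $[g]$, is finite, \'{e}tale and surjective (this is also in \cite{La13}).

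For part (2), the point is that the pair $(\Ca{E}_K,\Xi_K)$ depends on the universal object only through the prime-to-$p$ quasi-isogeny class of $(A,\lambda,i)$: for any representative $\Ca{E}_K=\omega_A$ and $\Xi_K=\O_{\Ca{X}_K}$, with transition maps for a quasi-isogeny $f$ equal to $(f^*)^{-1}$ and multiplication by the similitude factor, the level structure serving only to rigidify the class. By the construction of $[g]$, pulling the universal object of $M_{K'}^{\text{isog}}$ back along $[g]$ returns the universal object of $M_K^{\text{isog}}$ with level structure coarsened; in particular $[g]^{*}$ of the universal abelian scheme over $\Ca{X}_{K'}$ is, via the identity quasi-isogeny, the universal abelian scheme over $\Ca{X}_K$, so $[g]^{*}(\Ca{E}_{K'},\Xi_{K'})\cong(\Ca{E}_K,\Xi_K)$ canonically and $\O$-linearly. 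Since the $\text{Isom}$-sheaf defining the principal $M$-bundle commutes with pullback, this gives a canonical isomorphism $[g]^{*}P_{K'}\cong P_K$ of $M$-torsors, and applying $(-)\times^M W$ produces $g\colon[g]^{*}V_{\rho,K'}\to V_{\rho,K}$, functorial in $\rho$ because $P_K$ and the identification are fixed independently of $\rho$.

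For part (3), from $g^{-1}Kg\subset K'$ and $(g')^{-1}K'g'\subset K''$ one gets $(gg')^{-1}K(gg')\subset K''$, so $[gg']$ is defined; on moduli $[gg']$ and $[g']\circ[g]$ both carry the $K$-orbit of $(\alpha,\nu)$ to the $K''$-orbit of $(\alpha,\nu)gg'$, so they agree as natural transformations and hence as morphisms of schemes. For the bundles, using the canonical identification $[g]^{*}[g']^{*}\cong([g']\circ[g])^{*}=[gg']^{*}$, the map $gg':=g\circ[g]^{*}(g')$ is a morphism $[gg']^{*}V_{\rho,K''}\to V_{\rho,K}$ obtained by composing pullbacks of the identifications from (2); since each of those is induced by the identity quasi-isogeny of the underlying universal abelian scheme, so is their composite, and it therefore coincides with the isomorphism the part-(2) recipe assigns to $[gg']$. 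This is bookkeeping in the standard pullback formalism.

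I expect the only real obstacle to be part (1) --- the finite \'{e}taleness and surjectivity of the projection $\Ca{X}_{K_1}\to\Ca{X}_{K'}$ for $K_1\subset K'$; if a self-contained proof via relative representability and \'{e}tale-local triviality of level structures runs long, I would simply quote it from \cite{La13}. Parts (2) and (3) are formal once one records the slogan that Hecke maps act on the automorphic bundles ``through the universal abelian scheme'' and that the quasi-isogenies occurring are all identities, which makes the cocycle relation automatic.
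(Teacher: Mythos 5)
The paper states this proposition without proof, implicitly deferring the moduli-theoretic content to \cite{La13}, so there is no internal argument to compare your proposal against. Your proof is correct and is the standard one: the factorization $[g]=[1]\circ[g]$ through $\Ca{X}_{g^{-1}Kg}$ (with the observation that neatness is inherited by subgroups and invariant under conjugation, so all three varieties exist) correctly reduces (1) to the finite \'etale surjectivity of the refinement map $\Ca{X}_{K_1}\to\Ca{X}_{K'}$ for neat $K_1\subset K'$, which you rightly either sketch via relative representability of the space of $K_1$-refinements or quote from Lan. For (2) the essential observation --- that $[g]$ changes only the level structure and hence pulls back the universal prime-to-$p$ quasi-isogeny class to itself, so $(\Ca{E}_{K'},\Xi_{K'})$ pulls back canonically to $(\Ca{E}_K,\Xi_K)$ and the $M$-torsor/associated-bundle constructions commute with pullback --- is exactly the content the paper intends, and (3) then follows by unwinding the orbit bookkeeping and the functoriality of the identifications from (2). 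No gaps.
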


\section{Morphism to Siegel Space}\label{S:morsieg}
We continue to let $(\O,*,L,\langle\cdot,\cdot\rangle,h)$ be an integral PEL datum without factors of type D for which $p$ is a good prime.  In this section we recall that there is are canonical maps from general PEL type modular varieties to Siegel modular varieties given by ``forgetting the extra endomorphisms.''

Let us consider a new integral PEL datum
\begin{equation*}
(\Z,\text{id},L,\langle\cdot,\cdot\rangle,h).
\end{equation*}
This PEL datum has type C, reflex field $\Q$, and $p$ is still a good prime for it.  We let $\tilde{G}$ be the corresponding group as in definition \ref{D:G}, and for $\tilde{K}\subset\tilde{G}(\A^{\infty,p})$ open and compact, let $\tilde{M}_{\tilde{K}}^{\text{isog}}$ be the corresponding moduli problems.  Moreover if $\tilde{K}\subset\tilde{G}(\A^{\infty,p})$ is neat let $\tilde{\Ca{X}}_{\tilde{K}}/R$ denote the corresponding PEL modular variety.

The group $G$ is naturally a closed subgroup scheme of $\tilde{G}$.  Suppose we have open compact subgroups $K\subset G(\A^{\infty,p})$ and $\tilde{K}\subset \tilde{G}(\A^{\infty,p})$ such that $K\subset\tilde{K}$.  Then there is a natural transformation
\begin{equation*}
M^{\text{isog}}_K\to \tilde{M}^{\text{isog}}_{\tilde{K}}
\end{equation*}
defined as follows: send a tuple $(A,\lambda,i,\alpha_K)$ to $(A,\lambda,i_0,\alpha_{\tilde{K}})$ where $A$ and $\lambda$ are unchanged, $i_0:\Z_{(p)}\to\End_S(A)\otimes \Z_{(p)}$ is the unique such homomorphism (note that the determinant condition is trivial in this case) and $\alpha_{\tilde{K}}$ is defined by sending a $\pi_1(S,\overline{s})$ invariant $K$ orbit of pairs $(\alpha,\nu)$ to its $\tilde{K}$ orbit (which is still $\pi_1(S,\overline{s})$ invariant as the action of $\pi_1(S,\overline{s})$ and $G(\A^{\infty,p})$ commute.)

To summarize the situation we have the following.
\begin{prop}\label{P:morsieg}
\begin{enumerate}
\item Let $K\subset G(\A^{\infty,p})$ and $\tilde{K}\subset \tilde{G}(\A^{\infty,p})$ be open compact subgroups.  Suppose that $K\subset \tilde{K}$ and that $\tilde{K}$ is neat.  Then $K$ is neat and the natural transformation defined above gives a finite morphism
\begin{equation*}
\phi_{K,\tilde{K}}:\Ca{X}_K\to \tilde{\Ca{X}}_{\tilde{K}}.
\end{equation*}
\item Let $g\in G(\A^{\infty,p})$ and suppose we have $K,K'\subset G(\A^{\infty,p})$ and $\tilde{K},\tilde{K}'\subset \tilde{G}(\A^{\infty,p})$ with $K\subset \tilde{K}$, $K'\subset\tilde{K}'$, $g^{-1}Kg\subset K'$, and $g^{-1}\tilde{K}g\subset\tilde{K}'$.  Suppose $\tilde{K}'$ is neat.  Then we have a commutative diagram
\begin{equation*}
\begin{tikzcd}
\Ca{X}_K\arrow{r}{\phi_{K,\tilde{K}}}\arrow{d}{[g]}&\tilde{\Ca{X}}_{\tilde{K}}\arrow{d}{[g]}\\
\Ca{X}_{K'}\arrow{r}{\phi_{K',\tilde{K}'}}&\tilde{\Ca{X}}_{\tilde{K}'}
\end{tikzcd}
\end{equation*}
\end{enumerate}
\end{prop}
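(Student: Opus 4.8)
The plan is to prove the two parts separately; essentially everything is routine bookkeeping except for the finiteness assertion in (1), which is where the real content lies.

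First I would dispose of the statements in (1) other than finiteness. Neatness of $K$ is immediate: for $g=(g_l)_l\in G(\A^{\infty,p})$, viewed in $\tilde{G}(\A^{\infty,p})$ via the closed immersion $G\hookrightarrow\tilde{G}$, the group $\Gamma_{g_l}\subset\Q_l^\times$ at each place is generated by the similitude factor of $g$ and the eigenvalues of $g$ acting on $L\otimes\Q_l$, and neither of these changes when the $\O$-linearity of $g$ is forgotten; so neatness of $\tilde{K}$ forces neatness of every element of its subgroup $K$, hence of $K$. Next one checks that $(A,\lambda,i,\alpha_K)\mapsto(A,\lambda,i_0,\alpha_{\tilde{K}})$ is a morphism of functors $M^{\text{isog}}_K\to\tilde{M}^{\text{isog}}_{\tilde{K}}$: the Kottwitz determinant condition is automatic for the ring $\Z$ (it records only $\dim\Lie(A/S)=\tfrac12\dim_\Q V$), the $\tilde{K}$-orbit $\alpha_{\tilde{K}}$ of $\alpha_K$ inherits $\pi_1$-invariance and compatibility with the Weil pairing from $\alpha_K$, and naturality in $S$ is clear. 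Since $K$ and $\tilde{K}$ are neat, the representability theorem of Kottwitz recalled above produces smooth quasi-projective $R$-schemes $\Ca{X}_K$ and $\tilde{\Ca{X}}_{\tilde{K}}$, and the morphism of functors induces the desired $\phi_{K,\tilde{K}}:\Ca{X}_K\to\tilde{\Ca{X}}_{\tilde{K}}$.

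It then remains to show $\phi_{K,\tilde{K}}$ is finite. Put $K'=G(\A^{\infty,p})\cap\tilde{K}$, so that $K\subseteq K'\subseteq\tilde{K}$ and $K'$ is neat (its elements lie in the neat group $\tilde{K}$); on functors one has $\phi_{K,\tilde{K}}=\phi_{K',\tilde{K}}\circ[1]$, where $[1]:\Ca{X}_K\to\Ca{X}_{K'}$ is finite \'etale surjective by Proposition~\ref{P:hecke}(1), so it suffices to show $\phi_{K',\tilde{K}}$ is finite. This is a morphism of finite type between noetherian $R$-schemes, so I would show it is proper and quasi-finite. For properness I would apply the valuative criterion: let $T=\spec\mathcal{V}$ for a discrete valuation ring $\mathcal{V}$ over $R$, with generic point $\eta$; given an $\eta$-point $(A,\lambda,i,\alpha')$ of $\Ca{X}_{K'}$ and a $T$-point $(\Ca{A},\tilde{\lambda},\tilde{\alpha})$ of $\tilde{\Ca{X}}_{\tilde{K}}$ restricting to its image over $\eta$, the abelian scheme $\Ca{A}/T$ is the N\'eron model of its generic fibre, so $\lambda$, the $\O$-action $i$, and the prime-to-$p$ level structure extend uniquely over $T$ by the N\'eron mapping property, while the Kottwitz condition — an identity of sections of $\O_T[\O^\vee]=\O_T\otimes\Z[\O^\vee]$, a polynomial algebra over the domain $\mathcal{V}$ and hence reduced — holds over $T$ because it holds over $\eta$; thus the $\eta$-point extends uniquely to a $T$-point of $\Ca{X}_{K'}$. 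For quasi-finiteness, the fibre of $\phi_{K',\tilde{K}}$ over a geometric point $(A,\lambda,\alpha)$ of $\tilde{\Ca{X}}_{\tilde{K}}$ is the set of $\O$-actions $i$ on $A$ compatible with $\lambda$ and satisfying the Kottwitz condition for which $\alpha$ has an $\O$-linear representative in its $\tilde{K}$-orbit (and for such $i$ these representatives form a single $K'$-orbit, since two of them differ by an element of $\tilde{K}$ commuting with the $\O$-action, i.e. by an element of $K'$); such $i$ are finite in number by the classical finiteness of homomorphisms of algebras with involution $\O\otimes\Q\to\End(A)\otimes\Q$ of the prescribed Kottwitz type on a polarized abelian variety (see \cite{La13}). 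A proper, quasi-finite morphism of noetherian schemes is finite, so $\phi_{K',\tilde{K}}$, and therefore $\phi_{K,\tilde{K}}$, is finite.

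Part (2) is then formal. All four maps in the square are defined: $\tilde{K}'$ is neat by hypothesis, $K'\subseteq\tilde{K}'$ is therefore neat, and $\tilde{K},K$ are neat because $g^{-1}\tilde{K}g\subseteq\tilde{K}'$ and conjugation by $g\in G(\A^{\infty,p})$ preserves the invariants $\Gamma_{(\cdot)_l}$, hence neatness. Chasing a point $(A,\lambda,i,\alpha_K)$ of $M^{\text{isog}}_K$ around either path of the square produces $(A,\lambda,i_0,\beta)$ where $\beta$ is the $\tilde{K}'$-orbit of $(\alpha,\nu)g$: the two paths agree because the action of $g$ on level structures is by right translation on $L\otimes\A^{\infty,p}$ and $\A^{\infty,p}(1)$, which is compatible with the inclusion $G(\A^{\infty,p})\hookrightarrow\tilde{G}(\A^{\infty,p})$, so ``forget the $\O$-structure, then translate by $g$'' equals ``translate by $g$, then forget the $\O$-structure'', and $\phi$ sends the endomorphism structure to the canonical $\Z_{(p)}$-structure $i_0$ regardless of $i$; passing to the representing schemes gives the commutative diagram. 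The main obstacle is the finiteness in (1), and within it the quasi-finiteness of $\phi_{K',\tilde{K}}$ — concretely the finiteness of the set of $\O$-module structures of a fixed Kottwitz type on a given polarized abelian variety — everything else being routine.
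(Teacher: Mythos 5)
The paper's own proof of this proposition is a one-line citation to Proposition~1.3.3.7 of \cite{La13} for the finiteness of $\phi_{K,\tilde{K}}$, everything else being declared routine; you have attempted a self-contained argument, which is welcome, and your overall strategy (reduce to $K=K'=G(\A^{\infty,p})\cap\tilde{K}$, prove properness by the valuative criterion using N\'eron models, prove quasi-finiteness, conclude by proper $+$ quasi-finite $\Rightarrow$ finite) is a sensible way to structure such a proof. The reduction to $K'$ and the properness step are essentially correct: after passing to a representative of the $\eta$-point with underlying abelian variety equal to $\Ca{A}_\eta$, the $\O\otimes\Z_{(p)}$-action and the prime-to-$p$ level structure extend over $T$ by the N\'eron mapping property, and the Kottwitz condition propagates because $\O_T[\O^\vee]$ is a polynomial ring over a DVR, hence a domain, so an identity that holds generically holds everywhere.

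The gap is in the quasi-finiteness step, which is exactly where the real content of the finiteness lies. You identify the fiber correctly as the set of $\O\otimes\Z_{(p)}$-actions $i$ on $(A,\lambda)$ compatible with the Rosati involution and the Kottwitz condition \emph{for which the $\tilde{K}$-orbit $\alpha$ admits an $\O$-linear representative}, but you then justify finiteness by appealing to ``the classical finiteness of homomorphisms of algebras with involution $\O\otimes\Q\to\End(A)\otimes\Q$ of the prescribed Kottwitz type.'' That statement is false in general. If $A$ is, say, a power of a supersingular elliptic curve, then $\End(A)\otimes\Q$ is a matrix algebra over a quaternion algebra and the Rosati-compatible $*$-embeddings of $\O\otimes\Q$ of a fixed isotypic (Kottwitz) type form a nonempty torsor under a positive-dimensional unitary group, hence an infinite set; the Kottwitz condition and the polarization compatibility alone do not pin $i$ down. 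What does force finiteness is the prime-to-$p$ level-structure constraint that you record but do not actually use: requiring $\alpha$ to have an $\O$-linear representative in its $\tilde{K}$-orbit confines $i(b)$, viewed in $\End(V^pA)$, to a single $\tilde{K}$-conjugacy orbit of $\alpha(b\otimes 1)\alpha^{-1}$ for each $b\in\O$, and intersecting this compact constraint with the $\Z_{(p)}$-lattice $\End(A)\otimes\Z_{(p)}$ is what kills the continuous degrees of freedom. Alternatively, and perhaps more cleanly, one can dispense with counting altogether and instead show $\phi_{K',\tilde{K}}$ is unramified: because $p$ is a good prime, $\O\otimes\Z_{(p)}$ is a separable $\Z_{(p)}$-algebra, so an $\O$-module structure (on $\Lie A$, or on $A$ itself together with the polarization) admits no nontrivial first-order deformations, the map on tangent spaces is injective, and the fibers are discrete; unramified plus proper then gives finite directly. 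As written, your proof delegates the crux to a cited ``classical'' result that does not exist in the form you invoke, so this step needs to be repaired along one of these lines (or replaced, as in the paper, by a direct citation to Lan's Proposition 1.3.3.7, which is what actually carries the argument).

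Part (2) of your proof is fine: commutativity of the square is checked on the moduli functors, and the point is exactly that right translation by $g\in G(\A^{\infty,p})\subset\tilde{G}(\A^{\infty,p})$ on level structures commutes with forgetting the $\O$-action, while the endomorphism datum is sent to $i_0$ regardless; the neatness bookkeeping (conjugation preserves the torsion condition) is also correct.
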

The only point which isn't obvious is why $\phi_{K,\tilde{K}}$ is finite.  For this we refer to Proposition 1.3.3.7 of \cite{La13}.

\chapter{Arithmetic Compactifications of PEL Modular Varieties}\label{compact}

The goal of this chapter is to recall the theory of arithmetic compactifications (toroidal and minimal) of good reduction PEL modular varieties due to Faltings-Chai \cite{FC90} (for moduli spaces of principally polarized abelian varieties with principal level structure) and Lan \cite{La13} in general.  We will follow Lan closely.

The objects introduced in this chapter will not be used until chapter \ref{boundaryhasse}.  The reader who is only interested in the coherent cohomology of compact Shimura varieties may skip this chapter.

In Section \ref{S:charts} we will recall the description of certain ``boundary charts'' which model the formal completion of a toroidal compactification along its boundary strata.  In section \ref{S:compact} we will recall the compactifications themselves.  Then in section \ref{S:canext} we turn to the subject of extensions of automorphic vector bundles to the compactifications, as well as Hecke actions on them.  Finally in section \ref{S:wellpos} we prove some technical results that will be used in Chapters \ref{boundaryhasse} and \ref{cong} to deal with difficulties at the boundary in the construction of congruences in Chapter \ref{cong}.

Throughout this chapter we will fix an integral PEL datum $(\O,*,L,\langle\cdot,\cdot\rangle,h)$ without factors of type D and for which $p$ is a good prime.  For technical reasons, we will assume it satisfies the following condition, which is Condition 1.4.3.10 of \cite{La13}.

\begin{cond}\label{extcond}
We require that the action of $\O$ on $L$ extends to an action of a maximal order in $\O\otimes\Q$ containing $\O$.
\end{cond}

As explained in Remark 1.4.3.9 of \cite{La13} this condition does not limit which PEL moduli problems we may consider (see also Remark \ref{R:modcompare}) and it is used by Lan in his study of degenerations of abelian varieties with PEL structures.

\section{Toroidal Boundary Charts}\label{S:charts}

Lan defines \cite[5.4.2.4]{La13} a set $\text{Cusp}_K$ of cusp labels of level $K$.  They are defined to be certain equivalence classes of triples (denoted there by $(Z_\Ca{H},\Phi_{\Ca{H}},\delta_{\Ca{H}})$) and for convenience we fix once and for all a representative of each equivalence class.  We now recall a long list of objects associated to a cusp label $\Sc{C}\in\text{Cusp}_K$.  The definitions of these objects are rather elaborate and mostly won't be recalled here.

For each cusp label $\Sc{C}$ (or rather its chosen representative) we have
\begin{enumerate}
\item An $\O$-lattice $X$ (which is part of the data given by the chosen representative of $\Sc{C}$, see \cite[5.4.2.1]{La13}).
\item A subgroup $\Gamma_{\Sc{C}}\subset \GL_\O(X)$ (denoted $\Gamma_{\Phi_{\Ca{H}}}$ in \cite[6.2.4.1]{La13}) which is neat in the sense that for every $\gamma\in\Gamma_{\Sc{C}}$ the subgroup of $\overline{\Q}^\times$ generated by the eigenvalues $\gamma$ (thought of as an element of $\GL(X\otimes\Q)$) is torsion free.
\item An $\R$-vector space $M_{\Sc{C}}$ of bilinear pairings
\begin{equation*}
(\cdot,\cdot):(X\otimes\R)\times (X\otimes\R)\to \O\otimes\R
\end{equation*}
which are Hermitian in the sense that for all $x,y\in X\otimes\R$ and $b\in \O\otimes\R$ we have
\begin{equation*}
(x,y)=(y,x)^*
\end{equation*}
and
\begin{equation*}
(bx,y)=b(x,y).
\end{equation*}
\item The cones $P_{\Sc{C}}^+\subset P_{\Sc{C}}\subset M_{\Sc{C}}$ where $P_{\Sc{C}}^+$ is the cone of positive definite hermitian pairings in $M_{\Sc{C}}$ and $P_{\Sc{C}}$ is the cone of positive semidefinite hermitian pairings with admissible radical (see 6.2.5.3, 6.2.5.4 of \cite{La13} for the definitions.)  These cones are stable under the action of $\Gamma_{\Sc{C}}$

\item A $\Z$-lattice $\Bf{S}_{\Sc{C}}\subset M_{\Sc{C}}^\vee$ which is stable under the action of $\Gamma_{\Sc{C}}$ on $M_{\Sc{C}}^\vee$.  (See of 6.2.4.4 of \cite{La13} where $\Bf{S}_{\Sc{C}}$ is denoted $\Bf{S}_{\Phi_{\Ca{H}}}$.)

\item A non-canonical integral PEL datum $(\O,*,L_{\Sc{C}},\langle\cdot,\cdot\rangle_{\Sc{C}},h_{\Sc{C}})$ with reflex field $F_0$, the reflex field of our original PEL datum.  If $G_{\Sc{C}}/\Z$ is the corresponding group then $K$ determines a neat open compact subgroup $K_h\subset G_{\Sc{C}}(\hat{\Z}^{(p)})$.  Let $\Ca{X}_{\Sc{C}}/R$ be the corresponding PEL modular variety.  It is canonically associated to $\Sc{C}$ even though the PEL datum is not.  (See 5.4.2.6 of \cite{La13} where $\Ca{X}_{\Sc{C}}$ is denoted by $M_{\Ca{H}}^{Z_{\Ca{H}}}$.)
\item A finite \'{e}tale cover $\tilde{\Ca{X}}_{\Sc{C}}\to\Ca{X}_{\Sc{C}}$ with an action of $\Gamma_{\Sc{C}}$ such that $\tilde{\Ca{X}_{\Sc{C}}}/\Gamma_{\Sc{C}}=\Ca{X}_{\Sc{C}}$.  (See \cite[5.4.2.6,5.1.2.2]{La13} where $\tilde{\Ca{X}}_{\Sc{C}}$ is denoted by $M_{\Ca{H}}^{\Phi_{\Ca{H}}}$.)

\item A torsor under an abelian scheme $C_{\Sc{C}}\to \tilde{\Ca{X}}_{\Sc{C}}$ with a compatible action of $\Gamma_{\Sc{C}}$.  (See 6.2.4.7 of \cite{La13} where $C_{\Sc{C}}$ is denoted by $C_{\Phi_{\Ca{H}},\delta_{\Ca{H}}}$.)

\item A torsor under the split torus with character group $\Bf{S}_{\Sc{C}}$, $\Xi_{\Sc{C}}\to C_{\Sc{C}}$, with an action of $\Gamma_{\Sc{C}}$ compatible with that on $C_{\Sc{C}}$ and $\Bf{S}_{\Sc{C}}$.  We have
\begin{equation*}
\Xi_{\Sc{C}}=\underline{\spec}_{\O_{\C_{\Sc{C}}}}\bigoplus_{l\in\Bf{S}_{\Sc{C}}}\Psi_{\Sc{C}}(l)
\end{equation*}
where for each $l\in\Sc{S}_{\Sc{C}}$, $\Psi_{\Sc{C}}(l)/C_{\Sc{C}}$ is a line bundle and for each pair $l,l'\in\Bf{S}_{\Sc{C}}$ there is an isomorphism
\begin{equation*}
\Psi_{\Sc{C}}(l)\otimes\Psi_{\Sc{C}}(l')\simeq\Psi_{\Sc{C}}(l+l')
\end{equation*}
giving $\bigoplus_{l\in\Bf{S}_{\Sc{C}}}\Psi_{\Sc{C}}(l)$ the structure of a sheaf of $\O_{\Sc{C}}$-algebras.  The action of $\Gamma_{\Sc{C}}$ on $\Xi_{\Sc{C}}$ defines for each $\gamma\in\Gamma_{\Sc{C}}$ and $l\in\Bf{S}_{\Sc{C}}$ an isomorphism
\begin{equation*}
\gamma:\Psi_{\Sc{C}}(l)\to\gamma^*\Psi_{\Sc{C}}(\gamma l)
\end{equation*}
where $\gamma^*$ is the pullback along $\gamma:C_{\Sc{C}}\to C_{\Sc{C}}$.  (See 6.2.4.7 of \cite{La13} where $\Xi_{\Sc{C}}$ is denoted by $\Xi_{\Phi_{\Ca{H}},\delta_{\Ca{H}}}$ and $\Psi_{\Sc{C}}$ is denoted by $\Psi_{\Phi_{\Ca{H}},\delta_{\Ca{H}}}$.)

\item A semiabelian scheme $\tilde{A}_{\Sc{C}}/C_{\Sc{C}}$ with an $\O$ action which sits in an exact sequence
\begin{equation*}
0\to T\to \tilde{A}_{\Sc{C}}\to A_{\Sc{C}}\to 0
\end{equation*}
where $T$ is the constant torus with character group $X$ and $A_{\Sc{C}}$ is the pullback of the universal abelian scheme on $\Ca{X}_{\Sc{C}}$ to $C_{\Sc{C}}$.  $\tilde{A}_{\Sc{C}}$ carries an action of $\Gamma_{\Sc{C}}$ covering that on $C_{\Sc{C}}$ and compatible with the action of $\Gamma_{\Sc{C}}$ on $X$.
\end{enumerate}

Next we would like recall some definitions related to torus embeddings and cone decompositions (see section 6.1 of \cite{La13}).

\begin{defn}
\begin{enumerate}
\item A \emph{rational polyhedral cone} $\sigma\subset M_{\Sc{C}}$ is a subset of the form
\begin{equation*}
\sigma=\R_{>0}v_1+\cdots+\R_{>0}v_n
\end{equation*}
for $v_1,\ldots,v_n\in \Bf{S}_{\Sc{C}}^\vee$.  (Note that by convention the empty sum is $\{0\}$.)
\item For a rational polyhedral cone $\sigma\subset M_{\Sc{C}}$ we have semigroups
\begin{align*}
\sigma^{\vee}&=\{l\in\Bf{S}_{\Sc{C}}\mid l(v)\geq 0,\forall v\in\sigma\}\\
\sigma^{\vee}_0&=\{l\in\Bf{S}_{\Sc{C}}\mid l(v)> 0,\forall v\in\sigma\}\\
\sigma^\perp&=\{l\in\Bf{S}_{\Sc{C}}\mid l(v)=0,\forall v\in\sigma\}.
\end{align*}
\item A rational polyhedral cone $\sigma\subset M_{\Sc{C}}$ is said to be non degenerate if $\overline{\sigma}$ (the closure of $\sigma$ in $M_{\Sc{C}}$ for the real topology) does not contain any non trivial $\R$-vector subspace of $M_{\Sc{C}}$.
\item A rational polyhedral cone $\sigma\subset M_{\Sc{C}}$ is said to be \emph{smooth} if it is of the form
\begin{equation*}
\sigma=\R_{>0}v_1+\cdots+\R_{>0}v_n
\end{equation*}
for $v_1,\ldots,v_n\in \Bf{S}_{\Sc{C}}^\vee$ which extend to a basis for $\Bf{S}_{\Sc{C}}^\vee$.
\item A rational polyhedral cone $\tau$ is said to be a \emph{face} of a rational polyhedral cone $\sigma$ if there exists a linear functional $\lambda:M_{\Sc{C}}\to\R$ with $\lambda(\sigma)\subset\R_{\geq0}$ and $\overline{\tau}=\overline{\sigma}\cap\lambda^{-1}(0)$.  Then $\overline{\sigma}$ (the closure in the real topology on $M_{\Sc{C}}$) is the set theoretic disjoint union of the faces of Note that $\sigma$ is always a face of itself.  We say that $\tau$ is a proper face of $\sigma$ if $\tau$ is a face of $\sigma$ and $\tau\not=\sigma$.  (We note that in \cite{La13}, a face is what we have called a proper face.)
\item A \emph{$\Gamma_{\Sc{C}}$-admissible rational polyhedral cone decomposition} is a set $\Sigma_{\Sc{C}}=\{\sigma_j\}_{j\in J}$ of nondegenerate rational polyhedral cones such that
\begin{enumerate}
\item The $\sigma_j$ are pairwise disjoint, and $P_{\Sc{C}}=\bigcup_{j\in J}\sigma_j$.
\item For each $\sigma_j\in\Sigma_{\Sc{C}}$ and each face $\tau$ of $\sigma_j$, $\tau\in\Sigma_{\Sc{C}}$.
\item The set $\Sigma$ is invariant under $\Gamma$ and the set of orbits $\Sigma/\Gamma$ is finite.
\end{enumerate}
$\Sigma_{\Sc{C}}$ is said to be smooth if each $\sigma_j$ is.
\end{enumerate}
\end{defn}

To each non degenerate rational polyhedral cone $\sigma\subset M_{\Sc{C}}$ we have a relatively affine torus embedding
\begin{equation*}
\Xi_{\Sc{C}}(\sigma)=\underline{\spec}_{\O_{C_{\Sc{C}}}}\bigoplus_{l\in\sigma^\vee}\Psi_{\Sc{C}}(l)
\end{equation*}
We have a sheaf of ideals
\begin{equation*}
\I_\sigma=\bigoplus_{l\in\sigma^\vee_0}\Psi_{\Sc{C}}(l)
\end{equation*}
on $\Xi_{\Sc{C}}(\sigma)$ which defines a reduced closed subscheme $\Xi_{\Sc{C}}(\sigma)_\sigma\subset\Xi_{\Sc{C}}(\sigma)$ which is a relative torus torsor over $C_{\Sc{C}}$ under the split torus with character group $\sigma^\perp$.

If $\sigma,\tau\subset M_{\Sc{C}}$ are non degenerate rational polyhedral cones such that $\tau$ is a face of $\sigma$, then the inclusion $\sigma^\vee\subset\tau^\vee$ induces a map
\begin{equation*}
\Xi_{\Sc{C}}(\tau)\to\Xi_{\Sc{C}}(\sigma)
\end{equation*}
which is an open immersion.  We let $\Xi_{\Sc{C}}(\sigma)_{\tau}$ be the locally closed subscheme which is the image of $\Xi_{\Sc{C}}(\tau)_{\tau}$.  Then set theoretically we have
\begin{equation*}
\Xi_{\Sc{C}}(\sigma)=\coprod_{\tau\text{ face of }\sigma}\Xi_{\Sc{C}}(\sigma)_{\tau}.
\end{equation*}

Now given a $\Gamma_{\Sc{C}}$-admissible rational polyhedral cone decomposition $\Sigma_{\Sc{C}}$ we may glue the $\Xi_{\Sc{C}}(\sigma)$ for $\sigma\in\Sigma_{\Sc{C}}$ to form a separated, locally of finite type, relative torus embedding $\Xi_{\Sc{C},\Sigma_{\Sc{C}}}/C_{\Sc{C}}$.  For each $\sigma\in\Sigma_{\Sc{C}}$ we denote by $\Xi_{\Sc{C},\Sigma_{\Sc{C}},\sigma}$ the image of $\Xi_{\Sc{C}}(\sigma)_{\sigma}$ under the open immersion $\Xi_{\Sc{C}}(\sigma)\subset\Xi_{\Sc{C},\Sigma_{\Sc{C}}}$.  Then $\Xi_{\Sc{C},\Sigma_{\Sc{C}}}$ is stratified by the locally closed subschemes $\Xi_{\Sc{C},\Sigma_{\Sc{C}},\sigma}$ with $\sigma\in\Sigma_{\Sc{C}}$.  More precisely we have a set theoretic decomposition
\begin{equation*}
\Xi_{\Sc{C},\Sigma_{\Sc{C}}}=\coprod_{\sigma\in\Sigma_{\Sc{C}}}\Xi_{\Sc{C},\Sigma_{\Sc{C}},\sigma}
\end{equation*}
and
\begin{equation*}
\overline{\Xi}_{\Sc{C},\Sigma_{\Sc{C}},\sigma}=\coprod_{\substack{\tau\in\Sigma_{\Sc{C}}\\\text{$\sigma$ is a face of $\tau$}}}\Xi_{\Sc{C},\Sigma_{\Sc{C}},\tau}
\end{equation*}

The action of $\Gamma_{\Sc{C}}$ on $\Xi_{\Sc{C}}$ extends to an action of $\Xi_{\Sc{C},\Sigma_{\Sc{C}}}$ covering that on $C_{\Sc{C}}$.  $\gamma\in\Gamma_{\Sc{C}}$ sends the open $\Xi_{\Sc{C}}(\sigma)$ isomorphically to $\Xi_{\Sc{C}}(\gamma\sigma)$ and the stratum $\Xi_{\Sc{C},\Sigma_{\Sc{C}},\sigma}$ isomorphically to $\Xi_{\Sc{C},\Sigma_{\Sc{C}},\gamma\sigma}$.

If $\Sigma_{\Sc{C}}$ is smooth then $\Xi_{\Sc{C}}(\sigma)/C_{\Sc{C}}$ is smooth for each $\sigma\in\Sigma$.

Now let $\partial_{\Sc{C},\Sigma_{\Sc{C}}}\subset\Xi_{\Sc{C},\Sigma_{\Sc{C}}}$ be the reduced closed subscheme whose support is the union of the strata $\Xi_{\Sc{C},\Sigma_{\Sc{C}},\sigma}$ for $\sigma\in\Sigma_{\Sc{C}}$ with $\sigma\subset P^+_{\Sc{C}}$ (that this is closed follows from the fact that $P^+_{\Sc{C}}\subset P_{\Sc{C}}$ is open).  $\partial_{\Sc{C},\Sigma_{\Sc{C}}}$ is stable under the action of $\Gamma_{\Sc{C}}$.  We let $\Fr{X}_{\Sc{C},\Sigma_{\Sc{C}}}/C_{\Sc{C}}$ be the formal completion of $\Xi_{\Sc{C},\Sigma_{\Sc{C}}}$ along $\partial_{\Sc{C},\Sigma_{\Sc{C}}}$.

The formal scheme $\Fr{X}_{\Sc{C},\Sigma_{\Sc{C}}}/C_{\Sc{C}}$ has a cover defined by relatively affine formal schemes as follows: for each $\sigma\in\Sigma_{\Sc{C}}$ with $\sigma\subset P_{\Sc{C}}^+$ we let
\begin{equation*}
\sigma^\vee_\I=\sigma^\vee-\bigcup_{\tau\subset P^+_{\Sc{C}}\text{ a face of }\sigma}\tau^\perp
\end{equation*}
and then the sheaf of ideals defining the closed subscheme 
\begin{equation*}
\partial_{\Sc{C},\Sigma_{\Sc{C}}}\cap \Xi_{\Sc{C}}(\sigma)\subset \Xi_{\Sc{C}}(\sigma) 
\end{equation*}
is
\begin{equation*}
\I_{\partial}=\bigoplus_{l\in\sigma^\vee_\I}\Psi_{\Sc{C}}(l)\subset \bigoplus_{l\in\sigma^\vee}\Psi_{\Sc{C}}(l)
\end{equation*}
Then let
\begin{equation*}
\hat{\bigoplus_{l\in\sigma^\vee}}\Psi_{\Sc{C}}(l)
\end{equation*}
be the $\I_\partial$-adic completion of the direct sum.  This is a sheaf of adic $\O_{C_{\Sc{C}}}$-algebras (not quasi-coherent!) and by definition the formal completion $\Fr{X}_{\Sc{C}}(\sigma)$ of $\Xi_{\Sc{C}}(\sigma)$ along $\partial_{\Sc{C},\Sigma_{\Sc{C}}}$ is the relative formal spectrum
\begin{equation*}
\Fr{X}_{\Sc{C}}(\sigma)=\underline{\text{Spf}}_{\O_{C_{\Sc{C}}}}\hat{\bigoplus_{l\in\sigma^\vee}}\Psi_\Sc{C}(l).
\end{equation*}

The underlying reduced scheme of $\Fr{X}_{\Sc{C}}(\sigma)$ is the union of $\Xi_{\Sc{C}}(\sigma)_{\tau}$ for $\tau\subset P_{\Sc{C}}^+$ a face of $\sigma$.  If $\tau\subset P_{\Sc{C}}^+$ is a face of $\sigma$ then $\Fr{X}_{\Sc{C}}(\tau)\subset\Fr{X}_{\Sc{C}}(\sigma)$ is an open formal subscheme.

The $\Fr{X}_{\Sc{C}}(\sigma)$ for $\sigma\in \Sigma_{\Sc{C}}$ with $\sigma\subset P^+$ form a relatively affine cover of the formal scheme $\Fr{X}_{\Sc{C},\Sigma_{\Sc{C}}}$.  If $\sigma,\sigma'\in\Sigma_{\Sc{C}}$ with $\sigma,\sigma'\subset P_{\Sc{C}}^+$ then $\Fr{X}_{\Sc{C}}(\sigma)$ and $\Fr{X}_{\Sc{C}}(\sigma')$ intersect if and only if $\sigma$ and $\sigma'$ have a common face contained in $P_{\Sc{C}}^+$ (note that by contrast, $\Xi_{\Sc{C}}(\sigma)$ and $\Xi_{\Sc{C}}(\sigma')$ always intersect.)

The action of $\Gamma_{\Sc{C}}$ on $\Xi_{\Sc{C},\Sigma_{\Sc{C}}}$ which preserves $\partial_{\Sc{C},\Sigma_{\Sc{C}}}$, induces an action of $\Gamma_{\Sc{C}}$ on $\Fr{X}_{\Sc{C},\Sigma_{\Sc{C}}}$.  We now impose the following additional condition on the cone decomposition $\Sigma_{\Sc{C}}$, which is Condition 6.2.5.25 of \cite{La13}.

\begin{cond}\label{discond}
For each $\sigma\in\Sigma_{\Sc{C}}$ with $\sigma\subset P_{\Sc{C}}^+$, if we have $\gamma\in\Gamma_{\Sc{C}}$ with $\gamma\overline{\sigma}\cap\overline{\sigma}\not=\{0\}$ then $\gamma=1$.
\end{cond}

Assuming that $\Sigma_{\Sc{C}}$ satisfies Condition \ref{discond} we may form the quotient $\Fr{X}_{\Sc{C},\Sigma_{\Sc{C}}}/\Gamma_{\Sc{C}}$ as a formal scheme in such a way that the quotient map
\begin{equation*}
\Fr{X}_{\Sc{C},\Sigma_{\Sc{C}}}\to\Fr{X}_{\Sc{C},\Sigma_{\Sc{C}}}/\Gamma_{\Sc{C}}
\end{equation*}
is a local isomorphism of formal schemes in the Zariski topology.  Indeed, it follows from Condition \ref{discond} and the discussion preceding it that $\Fr{X}_{\Sc{C},\Sigma_{\Sc{C}}}$ has a cover by the Zariski opens $\Fr{X}_{\Sc{C}}(\sigma)$ for $\sigma\in\Sigma_{\Sc{C}}$ with $\sigma\subset P^+$, which have the property that they are disjoint from all of their translates by $\Gamma_{\Sc{C}}$.

Now recall that from point 10 at the beginning of this section that we have a semiabelian scheme $\tilde{A}_{\Sc{C}}/C_{\Sc{C}}$ with $\O$ action.  By abuse of notation, we will also denote by $\tilde{A}$ the following things:
\begin{enumerate}
\item $\tilde{A}_{\Sc{C}}/\Xi_{\Sc{C},\Sigma_{\Sc{C}}}$, the base change of $\tilde{A}_{\Sc{C}}$ to $\Xi_{\Sc{C},\Sigma_{\Sc{C}}}$, with an induced action of $\Gamma_{\Sc{C}}$ covering that on $\Xi_{\Sc{C},\Sigma_{\Sc{C}}}$.
\item $\tilde{A}_{\Sc{C}}/\Fr{X}_{\Sc{C},\Sigma_{\Sc{C}}}$ the formal completion of $\tilde{A}_{\Sc{C}}/\Xi_{\Sc{C},\Sigma_{\Sc{C}}}$ along the pre image of $\partial_{\Sc{C},\Sigma_{\Sc{C}}}$.
\item $\tilde{A}_{\Sc{C}}/(\Fr{X}_{\Sc{C},\Sigma_{\Sc{C}}}/\Gamma_{\Sc{C}})$ the quotient of $\tilde{A}_{\Sc{C}}$ by $\Gamma_{\Sc{C}}$.
\end{enumerate}

We now recall more definitions from \cite{La13}.

\begin{enumerate}
\item There is a partial order on $\text{Cusp}_K$ which we denote by $\leq$ (see definition  of \cite{La13}).  If $\Sc{C},\Sc{C}'\in\text{Cusp}_K$ with $\Sc{C}\leq\Sc{C}'$ and if $X$ and $X'$ denote the corresponding $\O$-lattices then there is an $\O$-equivariant surjection $X\to X'$ inducing inclusions $M_{\Sc{C'}}\subset M_{\Sc{C}}$ and $P_{\Sc{C'}}\subset P_{\Sc{C}}$.  For a given cusp label $\Sc{C}$ we have
\begin{equation*}
P_{\Sc{C}}=\coprod_{\Sc{C'}\leq\Sc{C}}\Gamma_{\Sc{C}}P^+_{\Sc{C'}}.
\end{equation*}
\item If $\Sc{C},\Sc{C}'\in\text{Cusp}_K$ with $\Sc{C}\leq\Sc{C}'$ and if $\Sigma_{\Sc{C}}$ (resp. $\Sigma_{\Sc{C}'}$) is a $\Gamma_{\Sc{C}}$-admissible (resp. $\Gamma_{\Sc{C}'}$-admissible) rational polyhedral cone decomposition then $\Sigma_{\Sc{C}}$ and $\Sigma_{\Sc{C}'}$ are said to be \emph{compatible} if for each $\sigma\in\Sigma_{\Sc{C}'}$, we also have $\sigma\in\Sigma_{\Sc{C}}$ via the inclusion $M_{\Sc{C}'}\subset M_{\Sc{C}}$.
\item A compatible family of cone decompositions at level $K$ is a collection $\Sigma=\{\Sigma_{\Sc{C}}\}_{\Sc{C}\in\text{Cusp}_K}$ of a $\Gamma_{\Sc{C}}$-admissible rational polyhedral cone decomposition for each cusp label $\Sc{C}$ such that if $\Sc{C}\leq\Sc{C'}$ then $\Sigma_{\Sc{C}}$ and $\Sigma_{\Sc{C}'}$ are compatible.
\item A compatible family $\Sigma=\{\Sigma_{\Sc{C}}\}$ of cone decompositions at level $K$ is said to be \emph{good} if every $\Sigma_{\Sc{C}}$ is smooth and satisfies condition \ref{discond}, and $\Sigma$ is projective in the sense of definition 7.3.1.3 of \cite{La13}.  We recall that good compatible families of cone decompositions at level $K$ exist (see Proposition 7.3.1.4 of \cite{La13}) and any two good compatible families of cone decompositions at level $K$ admit a good common refinement.
\end{enumerate}

\section{Arithmetic Compactifications}\label{S:compact}

\subsection{Toroidal Compactifications}
Here is the the main theorem, due to Lan, on the existence and basic properties of arithmetic toroidal compactifications (see \cite[6.4.1.1,6.4.3.4,7.3.3.4]{La13})

\begin{thm}[Lan]\label{T:toroidal}
Let $K\subset G(\hat{\Z}^{(p)})$ be a neat open compact subgroup and let $\Sigma$ be a good compatible family of cone decompositions at level $K$.  Then there is a smooth projective scheme $\Ca{X}_{K,\Sigma}^{\text{\rm tor}}/R$, along with a semiabelian scheme $A/\Ca{X}_{K,\Sigma}^{\text{\rm tor}}$ with an action $i:\O\to\End_{\Ca{X}_{K,\Sigma}^{\text{\rm tor}}}(A)$ of $\O$ with the following properties
\begin{enumerate}
\item
There is a dense open embedding $j^{\text{\rm tor}}_{K,\Sigma}:\Ca{X}_K\to\Ca{X}_{K,\Sigma}^{\text{\rm tor}}$ such that the pullback of $(A,\iota)$ to $\Ca{X}_K$ is canonically part of the universal object $(A,\lambda,\iota,\alpha_K)$ on $\Ca{X}_K$ (viewed as representing the functor $M_K^{\text{isom}}$).  The (reduced) boundary $D_{K,\Sigma}=\Ca{X}_{K,\Sigma}^{\text{\rm tor}}$ is a Cartier divisor with simple normal crossings and is flat over $R$.
\item There is a set theoretic decomposition
\begin{equation*}
\Ca{X}_{K,\Sigma}^{\text{\rm tor}}=\coprod_{\Sc{C}\in\text{\rm Cusp}_K}\Ca{X}^{\text{\rm tor}}_{K,\Sigma,\Sc{C}}
\end{equation*}
with each $\Ca{X}^{\text{\rm tor}}_{K,\Sigma,\Sc{C}}/R$ flat, reduced, and locally closed.  We do not call this decomposition a stratification because it is not true that the closure of one $\Ca{X}^{\text{\rm tor}}_{K,\Sigma,\Sc{C}}$ is a union of others.

By abuse of notation, let $\hat{\Ca{X}}^{\text{\rm tor}}_{K,\Sigma,\Sc{C}}$ denote the formal completion of $\Ca{X}_{K,\Sigma}^{\text{\rm tor}}$ along $\Ca{X}_{K,\Sigma,\Sc{C}}^{\text{\rm tor}}$ (by convention, by the formal completion of a scheme $X$ along a locally closed subscheme $Z$ we mean the formal completion of the open subscheme $X-(\overline{Z}-Z)$ along its closed subscheme $Z$.)  Then there is a canonical isomorphism of formal schemes
\begin{equation*}
\hat{\Ca{X}}^{\text{\rm tor}}_{K,\Sigma,\Sc{C}}\simeq \Fr{X}_{\Sc{C},\Sigma_{\Sc{C}}}/\Gamma_{\Sc{C}}
\end{equation*}
over which there is a canonical isomorphism of the formal completion of $A$ with $\tilde{A}_\Sc{C}$ compatible with the action of $\O$.
\item There is a stratification
\begin{equation*}
\Ca{X}_{K,\Sigma}^{\text{\rm tor}}=\coprod_{(\Sc{C},[\sigma])}\Ca{X}_{K,\Sigma,(\Sc{C},[\sigma])}^{\text{\rm tor}}
\end{equation*}
the indexing set being the set of all pairs of a cusp label $\Sc{C}\in\text{\rm Cusp}_K$ and a $\Gamma_{\Sc{C}}$ orbit $[\sigma]=\Gamma\cdot\sigma$ with $\sigma\in\Sigma_{\Sc{C}}$ and $\sigma\subset P^+_{\Sc{C}}$.  If $(\Sc{C},[\sigma])$ and $(\Sc{C}',[\sigma'])$ are two such pairs, then $\Ca{X}_{K,\Sigma,(\Sc{C},[\sigma])}^{\text{\rm tor}}$ lies in the closure of $\Ca{X}_{K,\Sigma,(\Sc{C}',[\sigma'])}^{\text{\rm tor}}$ if and only if $\Sc{C}\leq\Sc{C'}$ and there are representatives $\sigma$ of $[\sigma]$ and $\sigma'$ of $[\sigma']$ so that via the inclusion $M_{\Sc{C}'}\subset M_{\Sc{C}}$, $\sigma'$ is a face of $\sigma$.
\item If $K,K'\subset G(\hat{\Z}^{(p)})$ are neat open compact subgroups and $g\in G(\A^{\infty,p})$ is such that $g^{-1}Kg\subset K'$ and $\Sigma$ (resp. $\Sigma'$) is a good compatible family of cone decompositions at level $K$ (resp. $K'$) and $\Sigma$ is a $g$-refinement of $\Sigma'$ then there is a morphism
\begin{equation*}
[g]:\Ca{X}_{K,\Sigma}^{\text{\rm tor}}\to \Ca{X}_{K',\Sigma'}^{\text{\rm tor}}
\end{equation*}
extending the morphism $[g]:\Ca{X}_K\to\Ca{X}_{K'}$ of section \ref{S:heckepel}.
\end{enumerate}
\end{thm}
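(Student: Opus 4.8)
The plan is to deduce the theorem from the work of Faltings--Chai \cite{FC90} in the Siegel case and its extension to PEL type by Lan \cite{La13}; we do not reprove it, but the shape of the argument is as follows. The fundamental input is the theory of \emph{degeneration data}: over a complete (or henselian, excellent, normal, noetherian) local ring one establishes an equivalence between degenerating families of abelian schemes equipped with a prime-to-$p$ polarization, an $\O$-action satisfying the Kottwitz determinant condition, and a prime-to-$p$ level $K$ structure on the one hand, and certain linear-algebraic data (a semiabelian scheme which is an extension of an abelian scheme by a torus, a dual semiabelian scheme, a bilinear trivialization of a biextension, a positivity condition, together with the $\O$- and level-structure refinements) on the other. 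Running Mumford's construction in reverse converts such degeneration data back into a semiabelian scheme, and the positivity of the trivialization is exactly what is packaged by the cones $P_{\Sc{C}}^+\subset P_{\Sc{C}}$ and the cone decompositions $\Sigma_{\Sc{C}}$ of Section~\ref{S:charts}.

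First I would observe that each boundary chart $\Fr{X}_{\Sc{C},\Sigma_{\Sc{C}}}$ from Section~\ref{S:charts} already carries a tautological set of degeneration data: the semiabelian scheme $\tilde{A}_{\Sc{C}}$ with its $\O$-action, the abelian part pulled back from $\Ca{X}_{\Sc{C}}$, and the toric coordinates recording the trivialization, the latter being precisely the reason one works over the torus embedding $\Xi_{\Sc{C},\Sigma_{\Sc{C}}}$ and its completion. Applying Mumford's construction fibrewise (after checking the relative completeness condition, which the toroidal structure of $\Xi_{\Sc{C},\Sigma_{\Sc{C}}}$ guarantees) produces over $\Fr{X}_{\Sc{C},\Sigma_{\Sc{C}}}/\Gamma_{\Sc{C}}$ a formal semiabelian scheme whose generic fibre is abelian and carries the full PEL structure; this is the formal model of the compactification near the cusp $\Sc{C}$. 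Next I would glue: over $\Ca{X}_K$ we have the universal object, over each $\Fr{X}_{\Sc{C},\Sigma_{\Sc{C}}}/\Gamma_{\Sc{C}}$ the Mumford family, and on the overlaps (where one chart sits inside the stratum closure of another) the two agree, the matching being controlled by the partial order $\leq$ on $\text{Cusp}_K$ and the compatibility of the cone decompositions. One then algebraizes this formal-over-algebraic gluing datum using Artin's approximation and representability theorems (equivalently, the descent and algebraization machinery of \cite[Ch.~6--7]{La13}) to produce the scheme $\Ca{X}_{K,\Sigma}^{\text{tor}}/R$ together with the semiabelian scheme $A$, its $\O$-action, and the dense open embedding $j^{\text{tor}}_{K,\Sigma}$.

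With the object in hand, the listed properties follow by local analysis on the charts. Smoothness over $R$ and the simple normal crossings form of the boundary $D_{K,\Sigma}$ come from smoothness of the $\Sigma_{\Sc{C}}$ and the explicit toric structure of $\Xi_{\Sc{C}}(\sigma)$; flatness of the strata over $R$ is inherited from the charts. Part~(2), including the canonical isomorphism $\hat{\Ca{X}}^{\text{tor}}_{K,\Sigma,\Sc{C}}\simeq\Fr{X}_{\Sc{C},\Sigma_{\Sc{C}}}/\Gamma_{\Sc{C}}$ compatibly with the semiabelian schemes, is essentially the assertion that the gluing was performed correctly and follows from the uniqueness in the algebraization. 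Part~(3), the finer stratification indexed by $(\Sc{C},[\sigma])$ together with its closure relations, is read off from the torus-embedding stratification of $\Xi_{\Sc{C},\Sigma_{\Sc{C}}}$ recalled in Section~\ref{S:charts} combined with the face/partial-order combinatorics. For part~(4) one constructs $[g]$ by the same universal property: on $\Ca{X}_K$ it is the map of Section~\ref{S:heckepel}, on each boundary chart it is induced by the functorial effect of $g$ on cusp labels and on degeneration data, and the hypothesis that $\Sigma$ is a $g$-refinement of $\Sigma'$ is exactly what makes these local maps compatible so that they algebraize to a morphism of compactifications. Projectivity of $\Ca{X}_{K,\Sigma}^{\text{tor}}$ is deduced from the projectivity hypothesis on $\Sigma$, for instance by exhibiting a relatively ample line bundle over the (projective, normal) minimal compactification built from suitable powers of $\omega$ twisted by boundary divisors.

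The main obstacle is the first step: the degeneration theory of abelian varieties with PEL structure over mixed-characteristic bases. Extending Faltings--Chai from the principally polarized Siegel case to general $\O$, non-principal prime-to-$p$ polarizations, the Kottwitz determinant condition, and prime-to-$p$ level structures --- and doing so compatibly enough that Mumford's construction, the gluing, and the algebraization all go through and respect all of the PEL data --- is precisely the technical heart of \cite{La13} and occupies the bulk of that book. We therefore simply quote it; no part of this theorem is reproved here.
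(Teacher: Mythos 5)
Your proposal is correct and takes the same approach as the paper: both treat this theorem as a black box quoted from Lan \cite{La13} (the paper gives no proof at all, only the citations \cite[6.4.1.1, 6.4.3.4, 7.3.3.4]{La13}). Your outline of the strategy — degeneration data, Mumford's construction, formal gluing over the boundary charts, algebraization, and deducing the listed properties from the local toric structure — is an accurate summary of how the cited proof actually proceeds in Faltings--Chai and Lan, so it is a useful orientation for the reader even though nothing is being reproved.
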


\subsection{Minimal Compactifications}
Here is the main theorem, due to Lan, on the existence and basic properties of arithmetic minimal compactifications (see \cite[7.2.4.1,7.2.4.3]{La13}.)
\begin{thm}[Lan]\label{T:minimal}
For each neat open compact subgroup $K\subset G(\hat{\Z}^{(p)})$ there is a flat, projective, normal scheme $\Ca{X}_K^{\text{\rm min}}/R$ with the following properties.
\begin{enumerate}
\item There is a dense open embedding $j_K^{\text{\rm min}}:\Ca{X}_K\to\Ca{X}_K^{\text{\rm min}}$
\item For each cusp label $\Sc{C}\in\text{\rm Cusp}_K$ there is a canonical locally closed immersion $\Ca{X}_{\Sc{C}}\hookrightarrow\Ca{X}$.  We will identify $\Ca{X}_{\Sc{C}}$ with the induced locally closed subscheme of $\Ca{X}_K^{\text{\rm min}}$.  These subschemes define a stratification
\begin{equation*}
\Ca{X}_K^{\text{\rm min}}=\coprod_{\Sc{C}\in\text{\rm Cusp}_K}\Ca{X}_{\Sc{C}}
\end{equation*}
such that for $\Sc{C},\Sc{C}'\in\text{\rm Cusp}_K$, $\Ca{X}_{\Sc{C}}$ lies in the closure of $\Ca{X}_{\Sc{C}'}$ if and only if $\Sc{C}\leq\Sc{C}'$.

\item For each choice $\Sigma$ of a good compatible family of cone decompositions at level $K$ there is a map
\begin{equation*}
\pi_{K,\Sigma}:\Ca{X}_{K,\Sigma}^{\text{\rm tor}}\to\Ca{X}_K^{\text{\rm min}}
\end{equation*}
such that $j_K^{\text{\rm min}}=\pi_{K,\Sigma}\circ j_{K,\Sigma}^{\text{\rm tor}}$.  Moreover for each cusp label $\Sc{C}\in\text{\rm Cusp}_K$ we have
\begin{equation*}
\pi_{K,\Sigma}^{-1}(\Ca{X}_{\Sc{C}})=\Ca{X}_{K,\Sigma,\Sc{C}}^{\text{\rm tor}}
\end{equation*}
set theoretically.  Moreover we have
\begin{equation*}
\pi_{K,\Sigma,*}\O_{\Ca{X}_{K,\Sigma}^{\text{\rm tor}}}=\O_{\Ca{X}_K^{\text{\rm min}}}.
\end{equation*}
\item For each cusp label $\Sc{C}\in\text{\rm Cusp}_K$ let $\hat{\Ca{X}}_{K,\Sc{C}}^{\text{\rm min}}$ denote the formal completion of $\Ca{X}_K^{\text{\rm min}}$ along $\Ca{X}_{\Sc{C}}$.  Then there is a canonical \emph{structural morphism}
\begin{equation*}
\hat{\Ca{X}}_{K,\Sc{C}}^{\text{\rm min}}\to \Ca{X}_{\Sc{C}}.
\end{equation*}
For each choice $\Sigma$ of a good compatible family of cone decompositions at level $K$ there is a commutative diagram of morphisms of formal schemes
\begin{equation*}
\begin{tikzcd}
\hat{\Ca{X}}_{K,\Sigma,\Sc{C}}^{\text{\rm tor}}\arrow{r}\arrow{d}{\hat{\pi}_{K,\Sigma}}&\Fr{X}_{\Sc{C},\Sigma_{\Sc{C}}}/\Gamma_{\Sc{C}}\arrow{d}\\
\hat{\Ca{X}}_{K,\Sc{C}}^{\text{\rm min}}\arrow{r}& \Ca{X}_{\Sc{C}}
\end{tikzcd}
\end{equation*}
where the top horizontal arrow is the isomorphism of Theorem \ref{T:toroidal} part 2, the left vertical arrow comes from formally completing $\pi_{K,\Sigma}$.
\item If $K,K'\subset G(\hat{\Z}^{(p)})$ are neat open compact subgroups and $g\in G(\A^{\infty,p})$ is such that $g^{-1}Kg\subset K'$ then there is a finite surjective morphism
\begin{equation*}
[g]:\Ca{X}_K^{\text{\rm min}}\to\Ca{X}_{K'}^{\text{\rm min}}
\end{equation*}
extending the morphism $[g]:\Ca{X}_K\to\Ca{X}_{K'}$ of Section \ref{S:heckepel}.  If $\Sigma$ (resp. $\Sigma'$) is a good compatible family of cone decompositions at level $K$ (resp. $K'$) and $\Sigma$ is a $g$-refinement of $\Sigma'$ then there is a commutative diagram
\begin{equation*}
\begin{tikzcd}
\Ca{X}_{K,\Sigma}^{\text{\rm tor}}\arrow{r}{[g]}\arrow{d}{\pi_{K,\Sigma}}&\Ca{X}_{K',\Sigma'}^{\text{\rm tor}}\arrow{d}{\pi_{K',\Sigma'}}\\
\Ca{X}_K^{\text{\rm min}}\arrow{r}{[g]}&\Ca{X}_{K'}^{\text{\rm min}}.
\end{tikzcd}
\end{equation*}
\end{enumerate}
\end{thm}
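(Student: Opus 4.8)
Although this is Lan's theorem (\cite[Ch.~7]{La13}), let me sketch the construction I would follow. The plan is to produce $\Ca{X}_K^{\mathrm{min}}$ by \emph{contracting} the toroidal compactification of Theorem~\ref{T:toroidal} along the positive-dimensional directions of its boundary, using the determinant of the Hodge bundle as the contracting line bundle, and then to read off the stratification and formal-completion statements from the boundary charts of Section~\ref{S:charts}. Fix a good $\Sigma$ and on $\Ca{X}_{K,\Sigma}^{\mathrm{tor}}$ form the canonical extension of the Hodge bundle of the semiabelian $A/\Ca{X}_{K,\Sigma}^{\mathrm{tor}}$ and its determinant $\omega$. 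The key geometric input is that $\omega$ is semi-ample: some power $\omega^{\otimes n_0}$ is globally generated and the section ring $R_\bullet=\bigoplus_{n\ge 0}H^0(\Ca{X}_{K,\Sigma}^{\mathrm{tor}},\omega^{\otimes n})$ is a finitely generated $R$-algebra. One then sets
\[
\Ca{X}_K^{\mathrm{min}}:=\mathrm{Proj}\,R_\bullet,
\]
with $\pi_{K,\Sigma}\colon\Ca{X}_{K,\Sigma}^{\mathrm{tor}}\to\Ca{X}_K^{\mathrm{min}}$ the canonical morphism; then $\pi_{K,\Sigma,*}\O_{\Ca{X}_{K,\Sigma}^{\mathrm{tor}}}=\O_{\Ca{X}_K^{\mathrm{min}}}$, projectivity, and $R$-flatness follow formally once $R_\bullet$ is understood, using that $\Ca{X}_{K,\Sigma}^{\mathrm{tor}}$ is smooth, projective, and $R$-flat; normality of $\Ca{X}_K^{\mathrm{min}}$ follows from normality of $\Ca{X}_{K,\Sigma}^{\mathrm{tor}}$ together with $\pi_{K,\Sigma,*}\O=\O$. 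The open embedding $j_K^{\mathrm{min}}$ is the restriction of $\pi_{K,\Sigma}$ to $\Ca{X}_K$, which is an isomorphism onto its image since $\omega$ is already ample over $\Ca{X}_K$.

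\textbf{Independence of $\Sigma$ and Hecke functoriality (parts 3, 5).} These are handled together. Given two good families $\Sigma,\Sigma'$, pass to a common refinement $\Sigma''$; the identity maps $\Ca{X}_{K,\Sigma''}^{\mathrm{tor}}\to\Ca{X}_{K,\Sigma}^{\mathrm{tor}}$ and $\Ca{X}_{K,\Sigma''}^{\mathrm{tor}}\to\Ca{X}_{K,\Sigma'}^{\mathrm{tor}}$ pull $\omega$ back to $\omega$ (the Hodge bundle is intrinsic to the semiabelian scheme), hence induce isomorphisms on section rings and so on $\mathrm{Proj}$'s, giving a canonical $\Ca{X}_K^{\mathrm{min}}$ independent of $\Sigma$. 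Likewise any $[g]\colon\Ca{X}_{K,\Sigma}^{\mathrm{tor}}\to\Ca{X}_{K',\Sigma'}^{\mathrm{tor}}$ from Theorem~\ref{T:toroidal}(4) respects the Hodge bundle, so induces $[g]\colon\Ca{X}_K^{\mathrm{min}}\to\Ca{X}_{K'}^{\mathrm{min}}$ with the asserted commuting square; it is proper, and quasi-finite by checking on strata (see below), hence finite, and surjective by density of $\Ca{X}_K$. This extends the Hecke action of Section~\ref{S:heckepel}.

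\textbf{Stratification and formal completions (parts 2, 4).} One argues cusp label by cusp label. By Theorem~\ref{T:toroidal}(2) the formal completion of $\Ca{X}_{K,\Sigma}^{\mathrm{tor}}$ along $\Ca{X}_{K,\Sigma,\Sc{C}}^{\mathrm{tor}}$ is $\Fr{X}_{\Sc{C},\Sigma_{\Sc{C}}}/\Gamma_{\Sc{C}}$, which maps to $C_{\Sc{C}}$ and thence to $\tilde{\Ca{X}}_{\Sc{C}}\to\Ca{X}_{\Sc{C}}$. Expanding a section of a power of $\omega$ near this stratum in its Fourier--Jacobi series and using the positivity of the cone $P_{\Sc{C}}^+$ (Koecher-type principle: only the weight-zero Fourier--Jacobi coefficient can extend along $P_{\Sc{C}}^+$) identifies the $\Gamma_{\Sc{C}}$-invariant germs of sections with sections of the corresponding Hodge bundle determinant on $\Ca{X}_{\Sc{C}}$. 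Hence $\pi_{K,\Sigma}$ contracts $\Ca{X}_{K,\Sigma,\Sc{C}}^{\mathrm{tor}}$ onto a copy of $\Ca{X}_{\Sc{C}}$, yielding the locally closed immersion $\Ca{X}_{\Sc{C}}\hookrightarrow\Ca{X}_K^{\mathrm{min}}$, the set-theoretic equality $\pi_{K,\Sigma}^{-1}(\Ca{X}_{\Sc{C}})=\Ca{X}_{K,\Sigma,\Sc{C}}^{\mathrm{tor}}$, and — after formally completing $\pi_{K,\Sigma}$ — the commutative square of part~4 with its structural morphism. The incidence relation $\Ca{X}_{\Sc{C}}\subset\overline{\Ca{X}_{\Sc{C}'}}\iff\Sc{C}\le\Sc{C}'$ is pushed forward from the corresponding incidence of toroidal strata in Theorem~\ref{T:toroidal}(3).

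\textbf{Main obstacle.} The crux is establishing semi-ampleness of $\omega$ on $\Ca{X}_{K,\Sigma}^{\mathrm{tor}}$ and computing $R_\bullet$ precisely enough to see that $\mathrm{Proj}\,R_\bullet$ has exactly the claimed stratification — that distinct cusp labels are not accidentally glued together, that each stratum is genuinely the lower PEL variety $\Ca{X}_{\Sc{C}}$, and that the quotient by $\Gamma_{\Sc{C}}$ interacts correctly with the contraction. This is exactly where the delicate analysis of Fourier--Jacobi expansions over $R$ and the combinatorics of the cones $P_{\Sc{C}}^+$ enters, and it constitutes the bulk of \cite[Chapter~7]{La13}; once it is in hand, the remaining assertions are formal manipulations with $\mathrm{Proj}$, Stein factorization, and the already-established toroidal theory of Theorem~\ref{T:toroidal}.
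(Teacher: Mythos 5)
The paper does not reprove Theorem \ref{T:minimal}; it is stated as a citation to Lan \cite[7.2.4.1, 7.2.4.3]{La13} (building on Faltings--Chai). Your sketch correctly outlines Lan's actual construction --- $\mathrm{Proj}$ of the section ring of $\omega$ on a chosen toroidal compactification, Fourier--Jacobi analysis cusp by cusp to identify the strata and the structural morphisms, and descent of Hecke functoriality and $\Sigma$-independence via common refinements --- and you have correctly isolated where the genuine technical content lies.
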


\section{Extensions of Automorphic Vector Bundles}\label{S:canext}

In this section we explain how to extend the automorphic vector bundles on $\Ca{X}_K$ defined in section \ref{S:pelvb} to the compactifications of the last section.

\subsection{Canonical and Subcanonical Extensions}

Let $K\subset G(\hat{\Z}^{(p)})$ be a neat open compact subgroup and let $\Sigma$ be a good compatible family of cone decompositions at level $K$.  For each universal object $(A,\lambda,i,\alpha_K)$ in the universal isogeny class over $\Ca{X}_K$, by part 5 of Theorem \ref{T:toroidal} there is a canonical extension to a semiabelian scheme $A/\Ca{X}_{K,\Sigma}^{\text{tor}}$ with a ring homomorphism $i:\O\otimes\Z_{(p)}\to\End_{\Ca{X}_{K,\Sigma}^{\text{tor}}}(A)\otimes\Z_{(p)}$.  As in section \ref{S:pelvb} we associate to this extension $(A,i)$ a pair $(\omega_A,\O_{\Ca{X}_{K,\Sigma}^{\text{tor}}})$ of a vector bundle with an $\O\otimes\Z_{(p)}$ action and a line bundle on $\Ca{X}_{K,\Sigma}^{\text{tor}}$.  Moreover if $(A',\lambda',i',\alpha'_K)$ is another member of the same isogeny class, there is a unique prime to $p$ quasi-isogeny $f:(A,\lambda,i,\alpha_K)\to(A',\lambda',i',\alpha_K')$ as in definition \ref{D:modisog}, which by part 5 of Theorem \ref{T:toroidal} extends to a prime to $p$ quasi-isogeny
\begin{equation*}
f:(A,i)\to(A',i')
\end{equation*}
of semiabelian schemes with $\O$-action, and hence defines an isomorphism of pairs
\begin{equation*}
(\omega_A,\O_{\Ca{X}_{K,\Sigma}^{\text{tor}}})\simeq(\omega_{A'},\O_{\Ca{X}_{K,\Sigma}^{\text{tor}}})
\end{equation*}
which is $(f^*)^{-1}$ on the first factor, and multiplication by $r$ on the second factor, where $r$ is the locally constant $\Z_{(p)}^{\times,>0}$ valued function on $S$ such that $\lambda=rf^\vee\lambda'f$.

Hence we obtain a canonical pair
\begin{equation*}
(\Ca{E}^{\text{can}}_K,\Xi_K^{\text{can}})
\end{equation*}
of a vector bundle with an $\O\otimes\Z_{(p)}$ action and a line bundle on $\Ca{X}_{K,\Sigma}^{\text{tor}}$ whose restriction to $\Ca{X}_K$ is $(\Ca{E}_K,\Xi_K)$.

\begin{defn}
\begin{enumerate}
\item The canonical extension of the principal $M$-bundle $P_K$ is the principal $M$-bundle $P_K^{\text{can}}$ on $\Ca{X}_{K,\Sigma}^{\text{tor}}$ defined by
\begin{equation*}
P_K^{\text{can}}(S)=\text{Isom}_{\O\otimes\O_S}((\Ca{E}_K^{\text{can}}\otimes\O_S,\Xi_K^{\text{can}}),(L_0\otimes\O_S,R\otimes\O_S))
\end{equation*}
for each $\Ca{X}_{K,\Sigma}^{\text{tor}}$-scheme $S$.
\item If $\rho$ is an algebraic representation of $M$ on a finite $R$-module $W$ we define the coherent sheaf
\begin{equation*}
V_{\rho,K,\Sigma}^{\text{can}}=P_K^{\text{can}}\times^{M}W,
\end{equation*}
the \emph{canonical extension} of $V_{\rho,K}$ to $\Ca{X}_{K,\Sigma}^{\text{tor}}$.  We also define the subcanonical extension
\begin{equation*}
V_{\rho,K,\Sigma}^{\text{sub}}=\I_{D_{K,\Sigma}}V_{\rho,K,\Sigma}^{\text{can}}=V_{\rho,K,\Sigma}^{\text{can}}\otimes\I_{D_{K,\Sigma}}
\end{equation*}
where $\I_{D_{K,\Sigma}}$ is the ideal sheaf of the (reduced) boundary $D_{K,\Sigma}=\Ca{X}_{K,\Sigma}^{\text{tor}}-\Ca{X}_K$.  We remark that $\I_D$ is a line bundle because $D$ is a cartier divisor.
\end{enumerate}
\end{defn}

By abuse of notation, we will denote $\det\Ca{E}_K^{\text{can}}=V_{\det L_0^\vee,K,\Sigma}^{\text{can}}$, the canonical extension of the determinant of the Hodge bundle $\omega_K$, also by $\omega_K$.

We have the following analog of Proposition \ref{P:autprop}

\begin{prop}\label{P:autpropext}
\begin{enumerate}
\item If $\rho$ is an algebraic representation on a finite free $R$-module (resp. a finite free $R/\pi^r$-module) then $V_{\rho,K,\Sigma}^{\text{\rm can}}$ and $V_{\rho,K,\Sigma}^{\text{\rm sub}}$ are locally free sheaves on $\Ca{X}_{K,\Sigma}^{\text{\rm tor}}$ (resp. are locally free sheaves on $\Ca{X}_{K,\Sigma}^{\text{\rm tor}}\times R/\pi^r$.)
\item If $0\to \rho'\to\rho\to\rho''\to 0$ is a short exact sequence of algebraic representations of $M$ then we have short exact sequences
\begin{equation*}
0\to V_{\rho',K,\Sigma}^{\text{\rm can}}\to V_{\rho,K,\Sigma}^{\text{\rm can}}\to V_{\rho'',K,\Sigma}^{\text{\rm can}}\to 0
\end{equation*}
and
\begin{equation*}
0\to V_{\rho',K,\Sigma}^{\text{\rm sub}}\to V_{\rho,K,\Sigma}^{\text{\rm sub}}\to V_{\rho'',K,\Sigma}^{\text{\rm sub}}\to 0
\end{equation*}
of sheaves on $\Ca{X}_{K,\Sigma}^{\text{\rm tor}}$.
\item If $\rho$ is an algebraic representation of $M$ on  a finite $R$-module and $k$ is any integer then
\begin{equation*}
V_{\rho\otimes\det^k L_0^\vee,K,\Sigma}^{\text{\rm can}}=V_{\rho,K,\Sigma}^{\text{\rm can}}\otimes\omega_K^{\otimes k}
\end{equation*}
and
\begin{equation*}
V_{\rho\otimes\det^k L_0^\vee,K,\Sigma}^{\text{\rm sub}}=V_{\rho,K,\Sigma}^{\text{\rm sub}}\otimes\omega_K^{\otimes k}.
\end{equation*}
\end{enumerate}
\end{prop}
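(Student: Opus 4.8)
The plan is to reduce all three parts to the single statement that $P_K^{\mathrm{can}}$ is an $M$-torsor on $\Ca{X}_{K,\Sigma}^{\mathrm{tor}}$ which is trivial Zariski-locally, and then to argue exactly as in the proof of Proposition~\ref{P:autprop}. Granting this, fix a Zariski-open cover $\{U\}$ of $\Ca{X}_{K,\Sigma}^{\mathrm{tor}}$ trivializing $P_K^{\mathrm{can}}$, so that $V_{\rho,K,\Sigma}^{\mathrm{can}}|_U\simeq W\otimes_R\O_U$ with transition maps given by $\rho$. Part~(1) for $V^{\mathrm{can}}$ is then immediate, and part~(1) for $V^{\mathrm{sub}}=V^{\mathrm{can}}\otimes\I_{D_{K,\Sigma}}$ follows since $\I_{D_{K,\Sigma}}$ is invertible, $D_{K,\Sigma}$ being a Cartier divisor by Theorem~\ref{T:toroidal}. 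For part~(2), over each $U$ the proposed sequence of canonical extensions becomes $0\to W'\otimes_R\O_U\to W\otimes_R\O_U\to W''\otimes_R\O_U\to0$, which is exact because $\O_{\Ca{X}_{K,\Sigma}^{\mathrm{tor}}}$ is flat over $R$ (Theorem~\ref{T:toroidal}); tensoring with the line bundle $\I_{D_{K,\Sigma}}$ preserves exactness and produces the subcanonical sequence. Part~(3) follows from the natural isomorphism $P\times^M(W\otimes_RW_1)\simeq(P\times^MW)\otimes(P\times^MW_1)$, checked over the cover, applied with $W_1=\det^kL_0^\vee$ together with the identification $P_K^{\mathrm{can}}\times^M\det^kL_0^\vee=\omega_K^{\otimes k}$; the subcanonical case again follows by tensoring with $\I_{D_{K,\Sigma}}$.

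It remains to prove the claim. Because $p$ is a good prime and $R$ contains the images of all embeddings $\tau\colon F\to\overline{\Q}_p$, we have $\O\otimes_\Z R\simeq\prod_a M_{n_a}(R)$, a finite product of matrix algebras over $R$; hence by Morita equivalence $M$ is isomorphic over $R$ to a finite product of groups $\GL_{m_a}$ and one copy of $\Bb{G}_m$. In particular $M$ is a \emph{special} group over $R$, so any $M$-torsor (which is automatically trivial \'etale-locally) over an $R$-scheme is trivial Zariski-locally; likewise, any $\O\otimes\O_S$-module which is locally free over $\O_S$ is, via the Morita decomposition, locally free over $\O\otimes\O_S$. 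It therefore suffices to show that $P_K^{\mathrm{can}}$ is an $M$-torsor, i.e.\ that the scheme $\mathcal{P}\to\Ca{X}_{K,\Sigma}^{\mathrm{tor}}$ representing
\begin{equation*}
S\mapsto\mathrm{Isom}_{\O\otimes\O_S}\big((\Ca{E}_K^{\mathrm{can}}\otimes\O_S,\Xi_K^{\mathrm{can}}\otimes\O_S),(L_0\otimes\O_S,R\otimes\O_S)\big)
\end{equation*}
is smooth and surjective over $\Ca{X}_{K,\Sigma}^{\mathrm{tor}}$. Since $A/\Ca{X}_{K,\Sigma}^{\mathrm{tor}}$ is semiabelian of relative dimension $g$, $\Ca{E}_K^{\mathrm{can}}=e^*\Omega^1_{A/\Ca{X}_{K,\Sigma}^{\mathrm{tor}}}$ is locally free over $\O_{\Ca{X}_{K,\Sigma}^{\mathrm{tor}}}$, hence (by the above) over $\O\otimes\O_{\Ca{X}_{K,\Sigma}^{\mathrm{tor}}}$; therefore the sheaf of $\O$-linear homomorphisms $\Ca{E}_K^{\mathrm{can}}\to L_0\otimes\O_S$ is a vector bundle, $\mathcal{P}$ is an open subscheme of it (times the $\Bb{G}_m$-torsor of trivializations of $\Xi_K^{\mathrm{can}}$), and so $\mathcal{P}\to\Ca{X}_{K,\Sigma}^{\mathrm{tor}}$ is smooth. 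It remains only to see that $\mathcal{P}$ meets every fibre; by Lemma~\ref{L:det} (valid since $\O\otimes k(x)$ is separable, $p$ being good) this is equivalent to the Kottwitz determinant condition for $\Lie(A/\Ca{X}_{K,\Sigma}^{\mathrm{tor}})$.

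Propagating the Kottwitz condition from $\Ca{X}_K$ across the boundary $D_{K,\Sigma}$ is the one step with genuine content, and I expect it to be the main obstacle. One route: $\Ca{X}_{K,\Sigma}^{\mathrm{tor}}$ is smooth over the regular ring $R$, hence reduced, so the dense open $\Ca{X}_K$ --- on which the condition holds, $A$ there being the universal abelian scheme --- is schematically dense, and two sections of the sheaf of polynomial algebras $\O_{\Ca{X}_{K,\Sigma}^{\mathrm{tor}}}[\O^\vee]$ that agree on a schematically dense open agree everywhere. Alternatively one checks it directly on the toroidal boundary charts of Section~\ref{S:charts}, where $A$ is identified with $\tilde{A}_{\Sc{C}}$, an extension of the pullback of the universal abelian scheme on $\Ca{X}_{\Sc{C}}$ by a torus with $\O$-character group $X$; then $\Lie(A)$ is assembled from the Lie algebra of the abelian part --- which satisfies the Kottwitz condition for the boundary PEL datum, sharing the reflex field $F_0$ --- and that of the torus, and one concludes by additivity of the determinant sections in short exact sequences together with the relation between $L_0$ and $L_{0,\Sc{C}}$. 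Once the fibrewise isomorphism type of $\Ca{E}_K^{\mathrm{can}}$ is pinned down, the remaining inputs --- that a smooth surjection has sections \'etale-locally, and that a special group forces Zariski-local triviality --- are formal, and the assertions of the Proposition follow as in the first paragraph.
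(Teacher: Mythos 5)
The paper states Proposition~\ref{P:autpropext} without proof (as it does for Proposition~\ref{P:autprop}), implicitly deferring to Lan's treatment, so there is no in-paper argument to compare against; your reconstruction is correct and is, I think, the natural one. The reduction to Zariski-local triviality of $P_K^{\text{can}}$ via specialness of $M$ is sound: $\O\otimes R$ really is a product of matrix algebras over $R$ (since $p$ is unramified in each $F_{[\tau]}$ and $R$ contains the images of all embeddings), so $M$ is a product of copies of $\GL_m$ and $\Bb{G}_m$. Your surjectivity argument --- smoothness of the Isom scheme because $\Ca{E}_K^{\text{can}}$ is locally free over $\O\otimes\O_S$, plus non-emptiness of fibres via Lemma~\ref{L:det} --- is correct, and the schematic-density propagation of the Kottwitz condition across $D_{K,\Sigma}$ works exactly as you say: $\Ca{X}_{K,\Sigma}^{\text{tor}}$ is regular (being smooth over $R$) and the boundary is a Cartier divisor, so $\Ca{X}_K$ is schematically dense, and the two sections of $\O_{\Ca{X}_{K,\Sigma}^{\text{tor}}}[\O^\vee]$ in question coincide. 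Parts (2) and (3) then follow as you argue, using flatness of $\Ca{X}_{K,\Sigma}^{\text{tor}}/R$ for exactness in (2) and invertibility of $\I_{D_{K,\Sigma}}$ throughout.

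Two small remarks worth registering. First, for the surjectivity you do not strictly need the full determinant identity; once $\Lie(A)$ is locally free over $\O\otimes\O_S$, its Morita multiranks are locally constant, and since every connected component of $\Ca{X}_{K,\Sigma}^{\text{tor}}$ meets the dense open $\Ca{X}_K$ (where they agree with those of $L_0^\vee$ by hypothesis), the ranks are globally correct; Lemma~\ref{L:det} then only serves to translate the moduli-problem condition on $\Ca{X}_K$ into a statement about ranks. Second, in part~(2) the flatness of $\O_U$ over $R$ is needed because the representations are allowed to be on arbitrary finite $R$-modules; had the statement been restricted to free modules, the sequence $0\to W'\to W\to W''\to 0$ would be $R$-split and flatness would be irrelevant. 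Both of these are stylistic; your proof is correct as written.
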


Next we consider Hecke actions.

\begin{prop}\label{P:exthecke}
Let $K,K'\subset G(\hat{\Z}^{(p)})$ be neat open compact subgroups and $g\in G(\A^{\infty,p})$ be such that $g^{-1}Kg\subset K'$ and let $\Sigma$ (resp. $\Sigma'$) be a good compatible family of cone decompositions at level $K$ (resp. $K'$) such that $\Sigma$ is a $g$-refinement of $\Sigma'$ so that there is a map
\begin{equation*}
[g]:\Ca{X}_{K,\Sigma}^{\text{\rm tor}}\to\Ca{X}_{K',\Sigma'}^{\text{\rm tor}}
\end{equation*}
as in point 4 in theorem \ref{T:toroidal}.  Then the isomorphism
\begin{equation*}
g:[g]^*V_{\rho,K'}\to V_{\rho,K}.
\end{equation*}
of Proposition \ref{P:hecke} extends to an isomorphism
\begin{equation*}
g:[g]^*V_{\rho,K',\Sigma'}^{\text{\rm can}}\to V_{\rho,K,\Sigma}^{\text{\rm can}}
\end{equation*}
and a morphism
\begin{equation*}
g:[g]^*V_{\rho,K',\Sigma'}^{\text{\rm sub}}\to V_{\rho,K,\Sigma}^{\text{\rm sub}}.
\end{equation*}
\end{prop}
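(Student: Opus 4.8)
The plan is to reduce everything to a single geometric fact — that the prime‑to‑$p$ quasi‑isogeny which already produces the map $g$ of Proposition \ref{P:hecke} on the open parts extends over the toroidal compactifications — and then to run the constructions of Sections \ref{S:pelvb} and \ref{S:canext} verbatim over $\Ca{X}_{K,\Sigma}^{\text{tor}}$.

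\textbf{Step 1: extend the quasi‑isogeny.} The definition of $[g]\colon\Ca{X}_K\to\Ca{X}_{K'}$ in Section \ref{S:heckepel} equips $\Ca{X}_K$ with a canonical prime‑to‑$p$ quasi‑isogeny $f\colon A\to[g]^*A'$ of abelian schemes, $\O\otimes\Z_{(p)}$‑equivariant and compatible with the polarizations up to a locally constant $r\colon\Ca{X}_K\to\Z_{(p)}^{\times,>0}$; this is the $f$ that induces $g$ in Proposition \ref{P:hecke}. By Theorem \ref{T:toroidal} (part 4) the Hecke map extends to $[g]\colon\Ca{X}_{K,\Sigma}^{\text{tor}}\to\Ca{X}_{K',\Sigma'}^{\text{tor}}$, so over $\Ca{X}_{K,\Sigma}^{\text{tor}}$ we have both tautological semiabelian schemes $A$ and $[g]^*A'$ with their $\O$‑actions. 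Since $\Ca{X}_{K,\Sigma}^{\text{tor}}/R$ is smooth, hence normal and noetherian, and $\Ca{X}_K$ is dense in it with nowhere dense complement, the extension‑of‑homomorphisms principle for semiabelian schemes over a normal base (Faltings--Chai \cite{FC90}; see also \cite{La13}) applies: choosing a prime‑to‑$p$ integer $N$ with $Nf$ an isogeny, $Nf$ extends uniquely to an $\O$‑equivariant homomorphism $A\to[g]^*A'$ over $\Ca{X}_{K,\Sigma}^{\text{tor}}$, still an isogeny of order prime to $p$; dividing by $N$ gives the desired extension of $f$. Likewise $r$ extends to a locally constant $\Z_{(p)}^{\times,>0}$‑valued function on $\Ca{X}_{K,\Sigma}^{\text{tor}}$, since each connected component meets $\Ca{X}_K$.

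\textbf{Step 2: transport to the automorphic bundles.} A prime‑to‑$p$ isogeny of semiabelian schemes over an $R$‑scheme induces an isomorphism on $e^*\Omega^1$: one reduces to the toric and abelian parts, where multiplication by the (prime‑to‑$p$, hence invertible) degree acts, and then applies the five lemma to $0\to\omega_{A_0}\to\omega_G\to\omega_T\to 0$. Hence differentiating the extended $f$ along the identity sections gives an $\O\otimes\Z_{(p)}$‑equivariant isomorphism $(f^*)^{-1}\colon[g]^*\Ca{E}_{K'}^{\text{can}}\xrightarrow{\sim}\Ca{E}_K^{\text{can}}$, and together with multiplication by $r$ on the trivial line‑bundle factor we obtain an isomorphism of pairs $[g]^*(\Ca{E}_{K'}^{\text{can}},\Xi_{K'}^{\text{can}})\xrightarrow{\sim}(\Ca{E}_K^{\text{can}},\Xi_K^{\text{can}})$ of exactly the shape used to define the canonical extensions in Section \ref{S:canext}. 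This yields an isomorphism of principal $M$‑bundles $[g]^*P_{K'}^{\text{can}}\xrightarrow{\sim}P_K^{\text{can}}$, and therefore, for every algebraic representation $(\rho,W)$ of $M$, an isomorphism
\begin{equation*}
g\colon [g]^*V_{\rho,K',\Sigma'}^{\text{can}}=[g]^*\bigl(P_{K'}^{\text{can}}\times^M W\bigr)\xrightarrow{\ \sim\ }P_K^{\text{can}}\times^M W=V_{\rho,K,\Sigma}^{\text{can}}.
\end{equation*}
By the uniqueness in Step 1 the extended $f$ restricts on $\Ca{X}_K$ to the quasi‑isogeny of Section \ref{S:heckepel}, so $g$ restricts to the map of Proposition \ref{P:hecke}; functoriality in $\rho$ and the cocycle relation over triples of groups are inherited from that construction. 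For the subcanonical extension, note that $[g]$ restricts to $\Ca{X}_K\to\Ca{X}_{K'}$ and, by Lan's construction (cf.\ the compatibilities in Theorems \ref{T:toroidal} and \ref{T:minimal}), carries the boundary into the boundary, so $D_{K,\Sigma}\subseteq[g]^{-1}(D_{K',\Sigma'})$ as closed subsets; as $D_{K,\Sigma}$ is reduced, the image of the canonical map $[g]^*\I_{D_{K',\Sigma'}}\to\O_{\Ca{X}_{K,\Sigma}^{\text{tor}}}$ lies in $\I_{D_{K,\Sigma}}$. Hence $g$ sends $[g]^*V_{\rho,K',\Sigma'}^{\text{sub}}=([g]^*\I_{D_{K',\Sigma'}})\cdot[g]^*V_{\rho,K',\Sigma'}^{\text{can}}$ into $\I_{D_{K,\Sigma}}\cdot V_{\rho,K,\Sigma}^{\text{can}}=V_{\rho,K,\Sigma}^{\text{sub}}$, the asserted morphism — which need not be an isomorphism, since $[g]$ may ramify along the boundary.

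The real content is Step 1: extending $f$ across the boundary of the toroidal compactification while keeping it a prime‑to‑$p$ quasi‑isogeny of semiabelian schemes respecting the $\O$‑action. This rests on the good functorial behaviour of Lan's compactifications under the Hecke maps together with the extension principle for homomorphisms of semiabelian schemes over a normal base; once that is available the remaining steps are formal manipulations with principal $M$‑bundles and ideal sheaves.
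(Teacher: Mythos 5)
Your proof is essentially correct and follows the same route the paper takes implicitly: Proposition~\ref{P:exthecke} is stated without a proof block, but the construction of $(\Ca{E}_K^{\text{can}},\Xi_K^{\text{can}})$ just before it already hinges on extending prime-to-$p$ quasi-isogenies within the universal isogeny class across the toroidal compactification, and the Hecke isomorphism is obtained exactly by plugging the extended quasi-isogeny into the principal $M$-bundle machinery. You supply the missing justification for that extension directly from the normal-base extension principle for semiabelian group schemes rather than citing Lan's packaged theorem, which is a reasonable and slightly more self-contained choice.

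One place where you are a touch hasty: after extending $Nf$ as a homomorphism of semiabelian schemes over $\Ca{X}_{K,\Sigma}^{\text{tor}}$, you assert without argument that the extension is ``still an isogeny of order prime to $p$.'' The extension principle only gives a homomorphism; isogeny is a condition on the kernel which is not automatic from the generic fiber alone. What you actually need is weaker and does follow cheaply: over $\Ca{X}_K$ there is a homomorphism $q$ with $q\circ(Nf)=(Nf)\circ q=m\cdot\mathrm{id}$ for a prime-to-$p$ integer $m$; extend $q$ by the same principle, and by uniqueness of extensions the identities $q\circ(Nf)=m$ and $(Nf)\circ q=m$ persist over the whole compactification. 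This already makes $(Nf)^{*}$ (and hence $f^{*}$) an isomorphism on $e^{*}\Omega^{1}$, which is all that Step~2 consumes. Inserting this one line would close the gap; everything else, including the observation that $D_{K,\Sigma}$ is reduced so the scheme-theoretic image of $[g]^{*}\I_{D_{K',\Sigma'}}$ lands inside $\I_{D_{K,\Sigma}}$, is correct.
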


\subsection{Higher Direct Images}

In this section we record two results on higher direct images of automorphic vector bundles.

For the first results, let $K,K'\subset G(\hat{\Z}^{(p)})$ be a neat open compact subgroups with $K\subset K'$ and let $\Sigma$ and $\Sigma'$ be good compatible families of cone decompositions at levels $K$ and $K'$ such that $\Sigma$ refines $\Sigma'$ so that we have a map
\begin{equation*}
[1]:\Ca{X}_{K,\Sigma}^{\text{tor}}\to\Ca{X}_{K',\Sigma'}^{\text{tor}}
\end{equation*} 
as in part 4 of Theorem \ref{T:toroidal}.

For the proof of the following proposition, see the proof of Lemma 7.1.1.4 of \cite{La13}.

\begin{prop}
With notation as above, for all $i>0$ we have
\begin{equation*}
R^i[1]_*\O_{\Ca{X}_{K,\Sigma}^{\text{\rm tor}}}=R^i[1]_*\I_{D_{K,\Sigma}}=0.
\end{equation*}
Moreover if $K=K'$ then we have
\begin{equation*}
[1]_*\O_{\Ca{X}_{K,\Sigma}^{\text{\rm tor}}}=\O_{\Ca{X}_{K,\Sigma'}^{\text{\rm tor}}}
\end{equation*}
and
\begin{equation*}
[1]_*\I_{D_{K,\Sigma}}=\I_{D_{K,\Sigma'}}.
\end{equation*}
\end{prop}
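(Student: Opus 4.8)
The plan is to verify the assertions after passing to the formal completions of $\Ca{X}_{K',\Sigma'}^{\text{tor}}$ along the locally closed pieces $\Ca{X}^{\text{tor}}_{K',\Sigma',\Sc{C}'}$ of Theorem~\ref{T:toroidal}(2), as $\Sc{C}'$ runs over $\text{Cusp}_{K'}$. This is legitimate: these pieces cover $\Ca{X}_{K',\Sigma'}^{\text{tor}}$; a coherent sheaf is zero, and a map of coherent sheaves is an isomorphism, iff this holds after completion along each member of such a cover; and since $[1]$ is proper, $R^i[1]_*$ of any coherent sheaf commutes with these completions by the theorem on formal functions. Over the open piece (trivial $\Sc{C}'$) the map $[1]$ is an isomorphism when $K=K'$, and is finite, hence affine, in general, so there is nothing to prove there; fix a boundary cusp label $\Sc{C}'$.

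By Theorem~\ref{T:toroidal}(2) the completion of $\Ca{X}_{K',\Sigma'}^{\text{tor}}$ along $\Ca{X}^{\text{tor}}_{K',\Sigma',\Sc{C}'}$ is $\Fr{X}_{\Sc{C}',\Sigma'_{\Sc{C}'}}/\Gamma_{\Sc{C}'}$, and the preimage of $\Ca{X}^{\text{tor}}_{K',\Sigma',\Sc{C}'}$ under $[1]$ is the union of the $\Ca{X}^{\text{tor}}_{K,\Sigma,\Sc{C}}$ for the finitely many cusp labels $\Sc{C}$ of level $K$ over $\Sc{C}'$; unwinding the constructions of \cite{La13} one identifies the induced map of completions with the one coming from the refinement of $\Sigma'_{\Sc{C}'}$ by $\Sigma_{\Sc{C}}$, composed when $K\subsetneq K'$ with the finite covering maps in the $C_{\Sc{C}}$- and $\tilde{\Ca{X}}_{\Sc{C}}$-directions. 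Since the quotient maps by the groups $\Gamma$ are Zariski-local isomorphisms and the map is equivariant, and since the extra covering maps are finite hence affine (so contribute nothing to higher direct images, by a Leray spectral sequence argument), we are reduced to the map of relative torus embeddings $\Xi_{\Sc{C},\Sigma_{\Sc{C}}}\to\Xi_{\Sc{C},\Sigma'_{\Sc{C}}}$ over $C_{\Sc{C}}$ attached to the refinement --- which is proper because both have support $P_{\Sc{C}}$ --- after formal completion along $\partial$, a step again removed by formal functions.

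The key input is then the purely toric statement: for the proper morphism $f$ of relative torus embeddings over $C_{\Sc{C}}$ given by a smooth cone decomposition $\Sigma_{\Sc{C}}$ refining a smooth one $\Sigma'_{\Sc{C}}$, one has $Rf_*\O=\O$ and $Rf_*\omega_{\Xi_{\Sc{C},\Sigma_{\Sc{C}}}/C_{\Sc{C}}}=\omega_{\Xi_{\Sc{C},\Sigma'_{\Sc{C}}}/C_{\Sc{C}}}$. Here I use that, $\Sigma_{\Sc{C}}$ being smooth, the restriction of $\I_{D_{K,\Sigma}}$ to the chart $\Xi_{\Sc{C}}(\sigma)$ is the ideal $\I_\sigma=\bigoplus_{l\in\sigma^\vee_0}\Psi_{\Sc{C}}(l)$ cutting out the union of the toric prime divisors $D_\rho$ ($\rho$ a ray of $\sigma$), and that one checks this ideal to be the relative dualizing sheaf $\omega_{\Xi_{\Sc{C}}(\sigma)/C_{\Sc{C}}}=\O(-\sum_\rho D_\rho)$. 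Working over a chart $\Xi_{\Sc{C}}(\sigma')$ of the target, whose preimage $\bigcup_{\sigma\subseteq\sigma'}\Xi_{\Sc{C}}(\sigma)$ is covered by relatively affine opens over $C_{\Sc{C}}$, one computes $Rf_*$ by the corresponding \v{C}ech complex; decomposing by the $\Bf{S}_{\Sc{C}}$-grading, each graded piece is a line bundle $\Psi_{\Sc{C}}(l)$ on $C_{\Sc{C}}$ tensored with the graded piece of the \v{C}ech complex computing $H^*$ of $\O$ (resp.\ of the dualizing sheaf) on the toric variety of the subdivision of $\sigma'$, and these pieces vanish in positive degree and give back $\O$ (resp.\ the dualizing sheaf) in degree $0$ by the classical combinatorial computation --- elementary for $\O$, and toric Grauert--Riemenschneider vanishing for the dualizing sheaf, which holds over any base and in all characteristics since it reduces to a statement about monoid algebras. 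Assembling these facts yields $R^i[1]_*\O=R^i[1]_*\I_{D_{K,\Sigma}}=0$ for $i>0$ and, when $K=K'$, $[1]_*\O=\O$ and $[1]_*\I_{D_{K,\Sigma}}=\I_{D_{K,\Sigma'}}$.

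The main obstacle is the boundary bookkeeping of the second paragraph: matching the completion of $[1]$ with the refinement-plus-finite-cover morphism and checking that $R^i[1]_*$ genuinely reduces to the local toric model. Once that is in place the toric vanishing statements invoked at the end are standard and characteristic-free, and the rest of the argument --- reduction to completions, Leray for the finite part, and the \v{C}ech-plus-grading computation --- is routine.
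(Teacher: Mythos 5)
The paper gives no argument of its own here; it cites the proof of Lemma~7.1.1.4 of \cite{La13}. Your strategy --- reduce via the theorem on formal functions to the formal completions $\hat{\Ca{X}}^{\text{tor}}_{K',\Sigma',\Sc{C}'}$, identify the completed morphism with a proper toric refinement (composed with affine covers when $K\neq K'$), and invoke the combinatorial vanishing for $\O$ and for the relative dualizing sheaf --- is exactly the route that proof takes, so at the level of approach your proposal agrees with what the paper relies on.

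There is, however, a concrete error in the toric identification. You assert that $\I_{D_{K,\Sigma}}$ restricted to $\Xi_{\Sc C}(\sigma)$ is $\I_\sigma=\bigoplus_{l\in\sigma^\vee_0}\Psi_{\Sc C}(l)$ and that $\I_\sigma$ cuts out $\bigcup_\rho D_\rho$; neither is right. Since $\sigma$ is an open cone one has $\sigma^\vee_0=\sigma^\vee\setminus\sigma^\perp$, so in coordinates where $\Xi_{\Sc C}(\sigma)\cong C_{\Sc C}\times\A^n\times\Bf{G}_m^{r-n}$ the ideal $\I_\sigma$ is the maximal monomial ideal $(x_1,\ldots,x_n)$, which cuts out the closed stratum $\Xi_{\Sc C}(\sigma)_\sigma$; this is precisely why the paper says that stratum is a torus torsor with character group $\sigma^\perp$, of codimension $\dim\sigma$, not a divisor. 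The ideal you actually need is the strictly smaller principal ideal
\begin{equation*}
\bigoplus_{\substack{l\in\sigma^\vee\\ \langle l,v_\rho\rangle>0\ \forall\rho}}\Psi_{\Sc C}(l)=(x_1\cdots x_n),
\end{equation*}
the ideal of the union of prime toric divisors. Your subsequent claim that this invertible ideal equals $\O(-\sum_\rho D_\rho)=\omega_{\Xi_{\Sc C}(\sigma)/C_{\Sc C}}$ is the correct one and is what toric Grauert--Riemenschneider requires; you have simply attached the wrong name and formula to it. Once this is corrected the remaining steps --- matching the completion of $[1]$ with the refinement-plus-cover morphism, the $\Gamma_{\Sc C}$-descent, and the $\Bf{S}_{\Sc C}$-graded \v{C}ech computation --- are the real bookkeeping content, but that is exactly what Lan carries out in the cited lemma, so your sketch is sound in outline.
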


Combining this with the projection formula and Proposition \ref{P:exthecke} we obtain
\begin{cor}\label{C:autpush}
With notation as above, let $\rho$ be an algebraic representation on a finite $R$-module.  Then for all $i>0$ we have
\begin{equation*}
R^i[1]_*V_{\rho,K,\Sigma}^{\text{\rm can}}=R^i[1]_*V_{\rho,K,\Sigma}^{\text{\rm sub}}=0.
\end{equation*}
Moreover if $K=K'$ then we have
\begin{equation*}
[1]_*V_{\rho,K,\Sigma}^{\text{\rm can}}=V_{\rho,K,\Sigma'}^{\text{\rm can}}
\end{equation*}
and
\begin{equation*}
[1]_*V_{\rho,K,\Sigma}^{\text{\rm sub}}=V_{\rho,K,\Sigma'}^{\text{\rm sub}}.
\end{equation*}
\end{cor}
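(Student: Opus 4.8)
The plan is to deduce everything from the preceding proposition by means of the projection formula; the only point requiring a little care is the treatment of the subcanonical extension.

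First I would note that the morphism $[1]\colon\Ca{X}_{K,\Sigma}^{\text{tor}}\to\Ca{X}_{K',\Sigma'}^{\text{tor}}$ is precisely the Hecke morphism $[g]$ of Proposition~\ref{P:exthecke} in the special case $g=1$: the condition $g^{-1}Kg\subset K'$ becomes $K\subset K'$, which holds, and the condition that $\Sigma$ be a $g$-refinement of $\Sigma'$ becomes the hypothesis that $\Sigma$ refines $\Sigma'$. Hence Proposition~\ref{P:exthecke} supplies a canonical isomorphism
\begin{equation*}
[1]^*V_{\rho,K',\Sigma'}^{\text{can}}\;\cong\;V_{\rho,K,\Sigma}^{\text{can}},
\end{equation*}
and, tensoring with $\I_{D_{K,\Sigma}}$ and using $V_{\rho,K,\Sigma}^{\text{sub}}=V_{\rho,K,\Sigma}^{\text{can}}\otimes\I_{D_{K,\Sigma}}$,
\begin{equation*}
V_{\rho,K,\Sigma}^{\text{sub}}\;\cong\;[1]^*V_{\rho,K',\Sigma'}^{\text{can}}\otimes\I_{D_{K,\Sigma}}.
\end{equation*}
I emphasize that I do \emph{not} write $V_{\rho,K,\Sigma}^{\text{sub}}$ as a pullback of $V_{\rho,K',\Sigma'}^{\text{sub}}$, since Proposition~\ref{P:exthecke} supplies only a morphism between those two sheaves; instead the boundary ideal sheaf is kept ``upstairs'', where the preceding proposition controls its pushforward.

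Next, since $V_{\rho,K',\Sigma'}^{\text{can}}$ is locally free of finite rank by Proposition~\ref{P:autpropext} and $[1]$ is a proper morphism of noetherian schemes (both source and target being projective over $R$ by Theorem~\ref{T:toroidal}), the projection formula yields, for every $i\geq 0$,
\begin{align*}
R^i[1]_*V_{\rho,K,\Sigma}^{\text{can}}&\cong V_{\rho,K',\Sigma'}^{\text{can}}\otimes R^i[1]_*\O_{\Ca{X}_{K,\Sigma}^{\text{tor}}},\\
R^i[1]_*V_{\rho,K,\Sigma}^{\text{sub}}&\cong V_{\rho,K',\Sigma'}^{\text{can}}\otimes R^i[1]_*\I_{D_{K,\Sigma}}.
\end{align*}
The preceding proposition gives $R^i[1]_*\O_{\Ca{X}_{K,\Sigma}^{\text{tor}}}=R^i[1]_*\I_{D_{K,\Sigma}}=0$ for $i>0$, which is the first assertion; and when $K=K'$ it identifies $[1]_*\O_{\Ca{X}_{K,\Sigma}^{\text{tor}}}=\O_{\Ca{X}_{K,\Sigma'}^{\text{tor}}}$ and $[1]_*\I_{D_{K,\Sigma}}=\I_{D_{K,\Sigma'}}$, so the $i=0$ cases of the two displays above become $[1]_*V_{\rho,K,\Sigma}^{\text{can}}=V_{\rho,K,\Sigma'}^{\text{can}}$ and $[1]_*V_{\rho,K,\Sigma}^{\text{sub}}=V_{\rho,K,\Sigma'}^{\text{can}}\otimes\I_{D_{K,\Sigma'}}=V_{\rho,K,\Sigma'}^{\text{sub}}$, as required.

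The main (and rather mild) obstacle is exactly the bookkeeping in the second display for the subcanonical extension: one must factor $V_{\rho,K,\Sigma}^{\text{sub}}$ as the pullback of the locally free sheaf $V_{\rho,K',\Sigma'}^{\text{can}}$ tensored with the boundary ideal sheaf on $\Ca{X}_{K,\Sigma}^{\text{tor}}$, rather than as a pullback of $V_{\rho,K',\Sigma'}^{\text{sub}}$, so that the sheaf whose higher direct images must vanish is $\I_{D_{K,\Sigma}}$ itself --- which is exactly what the previous proposition provides. Everything else is a direct application of results already in hand.
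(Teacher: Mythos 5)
Your proof is correct and follows the same route the paper takes: the paper simply says the corollary follows by combining the preceding proposition with the projection formula and Proposition~\ref{P:exthecke}. Your extra care in writing $V_{\rho,K,\Sigma}^{\text{sub}}$ as $[1]^*V_{\rho,K',\Sigma'}^{\text{can}}\otimes\I_{D_{K,\Sigma}}$ (rather than attempting to realize it as a pullback of $V_{\rho,K',\Sigma'}^{\text{sub}}$, for which only a morphism is available) is exactly the implicit maneuver the paper relies on, spelled out explicitly.
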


The next theorem is deeper and concerns higher direct images of subcanonical extensions from toroidal to minimal compactifications.  In this generality it is Theorem 8.2.1.2 of \cite{La12}.  Special cases of it were discovered independently by Harris, Lan, Taylor, and Thorne \cite{HLTT12} and by Andreatta, Iovita, and Pilloni \cite{AIP15}.  See also the work of Lan and Stroh \cite{LS14} for a more conceptual approach under more restrictive hypotheses.

\begin{thm}\label{T:HLTT}
Let $K\subset G(\hat{\Z}^{(p)})$ be a neat open compact subgroup, and let $\Sigma$ be a good compatible family of cone decompositions at level $K$.  Let $\pi_{K,\Sigma}:\Ca{X}_{K,\Sigma}^{\text{\rm tor}}\to\Ca{X}_{K}^{\text{\rm min}}$ be the map as in part 3 of Theorem \ref{T:minimal}.  Let $\rho$ be an algebraic representation of $M$ on a finite $R$-module.
\begin{equation*}
R^i\pi_{K,\Sigma,*} V_{\rho,K,\Sigma}^{\text{\rm sub}}=0
\end{equation*}
for all $i>0$.
\end{thm}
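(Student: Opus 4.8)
\emph{Plan of proof.} Since $\Ca{X}_{K,\Sigma}^{\mathrm{tor}}$ and $\Ca{X}_K^{\mathrm{min}}$ are both projective over $R$, the morphism $\pi_{K,\Sigma}$ is proper, so by the theorem on formal functions it suffices to show that the completion of $R^i\pi_{K,\Sigma,*}V_{\rho,K,\Sigma}^{\mathrm{sub}}$ along each minimal stratum $\Ca{X}_{\Sc{C}}$, $\Sc{C}\in\mathrm{Cusp}_K$, vanishes for $i>0$. By Theorem~\ref{T:minimal}(4) together with Theorem~\ref{T:toroidal}(2), this completion is computed as the $i$-th higher direct image, under the composite $\Fr{X}_{\Sc{C},\Sigma_{\Sc{C}}}/\Gamma_{\Sc{C}}\to\Ca{X}_{\Sc{C}}$ (the structural morphism), of the formal completion of $V_{\rho,K,\Sigma}^{\mathrm{sub}}$. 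Because the quotient map $\Fr{X}_{\Sc{C},\Sigma_{\Sc{C}}}\to\Fr{X}_{\Sc{C},\Sigma_{\Sc{C}}}/\Gamma_{\Sc{C}}$ is a local isomorphism in the Zariski topology by Condition~\ref{discond}, and since $\tilde{\Ca{X}}_{\Sc{C}}\to\Ca{X}_{\Sc{C}}$ is finite \'etale, it is enough to prove that $R^if_*$ kills the completion of $V_{\rho,K,\Sigma}^{\mathrm{sub}}$, where $f\colon\Fr{X}_{\Sc{C},\Sigma_{\Sc{C}}}\to\tilde{\Ca{X}}_{\Sc{C}}$ is the composite of the structural map $\Fr{X}_{\Sc{C},\Sigma_{\Sc{C}}}\to C_{\Sc{C}}$ with the abelian-scheme torsor $C_{\Sc{C}}\to\tilde{\Ca{X}}_{\Sc{C}}$.

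Next I would work chart by chart. On $\Fr{X}_{\Sc{C}}(\sigma)=\underline{\text{Spf}}_{\O_{C_{\Sc{C}}}}\widehat{\bigoplus}_{l\in\sigma^\vee}\Psi_{\Sc{C}}(l)$, the completion of $V_{\rho,K,\Sigma}^{\mathrm{can}}$ is the pullback to $C_{\Sc{C}}$ of a vector bundle $W_\rho$ on $\tilde{\Ca{X}}_{\Sc{C}}$ (arising from the semiabelian degeneration of the Hodge bundle), and the completion of $V_{\rho,K,\Sigma}^{\mathrm{sub}}=V_{\rho,K,\Sigma}^{\mathrm{can}}\otimes\I_{D_{K,\Sigma}}$ is this twisted by the boundary ideal $\widehat{\bigoplus}_{l\in\sigma^\vee_\I}\Psi_{\Sc{C}}(l)$. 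The point of using the subcanonical rather than the canonical extension is precisely that only the \emph{strictly positive} characters $l\in\sigma^\vee_\I$ then appear. Filtering by the boundary ideal, the associated graded is a direct sum, over strictly positive $l$, of line bundles of the form $\Psi_{\Sc{C}}(l)\otimes(\text{pullback of }W_\rho)$ on $C_{\Sc{C}}$.

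The heart of the argument is then a positivity statement: for every strictly positive character $l\in\sigma^\vee_\I$ the line bundle $\Psi_{\Sc{C}}(l)$ is relatively ample for the abelian-scheme torsor $C_{\Sc{C}}\to\tilde{\Ca{X}}_{\Sc{C}}$ — equivalently, it restricts to an ample (nondegenerate, index $0$) line bundle on each abelian-variety fiber — which reflects the positive-definiteness built into $P_{\Sc{C}}^+$ and the biextension construction of $\Psi_{\Sc{C}}$. Granting this, Mumford's vanishing theorem for line bundles on abelian schemes gives $R^i(C_{\Sc{C}}\to\tilde{\Ca{X}}_{\Sc{C}})_*\bigl(\Psi_{\Sc{C}}(l)\otimes(\text{pullback of }W_\rho)\bigr)=0$ for $i>0$ and all such $l$, with the degree-$0$ pushforward locally free on $\tilde{\Ca{X}}_{\Sc{C}}$. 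Assembling these statements over the graded pieces and passing back up the boundary filtration — checking that the higher direct images commute with the relevant direct sums and with the inverse limit over the infinitesimal thickenings that build up the formal completion — yields $R^if_*=0$ in positive degrees, hence the theorem.

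The main obstacle is this final assembly: establishing the relative ampleness of $\Psi_{\Sc{C}}(l)$ for positive $l$ and, more delicately, justifying the interchange of higher direct images with formal completion, i.e.\ passing from the honest torus embedding $\Xi_{\Sc{C},\Sigma_{\Sc{C}}}$ — where one has Serre-type vanishing along the torus direction and Mumford vanishing along the abelian direction — to its formal completion $\Fr{X}_{\Sc{C},\Sigma_{\Sc{C}}}$ along the boundary. This is exactly where the full strength of the structure theory of the boundary charts of \cite{La13} enters, and where the distinction between the canonical and subcanonical extensions is indispensable: for $V_{\rho,K,\Sigma}^{\mathrm{can}}$ the character $l=0$ (and the non-strictly-positive directions) would contribute, and the vanishing would fail.
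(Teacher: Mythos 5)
This theorem is not actually proved in the paper: the text immediately preceding it points the reader to Lan (Theorem 8.2.1.2 of \cite{La12}), with special cases in \cite{HLTT12} and \cite{AIP15}, and to \cite{LS14} for a more conceptual account. So there is no in-paper argument to compare against line by line. What can be compared is your reconstruction against the structure the paper does exhibit (in Lemma \ref{L:bclem} and Proposition \ref{P:autminbound}), since those calculations carry out, for degree-zero direct images, exactly the kind of analysis your proof would need in all degrees.

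The reduction via the theorem on formal functions and the identification of $\hat{\Ca{X}}_{K,\Sigma,\Sc{C}}^{\mathrm{tor}}$ with $\Fr{X}_{\Sc{C},\Sigma_{\Sc{C}}}/\Gamma_{\Sc{C}}$ are sound, and the overall shape — reduce to vanishing along the toric and abelian directions of the boundary chart, with the subcanonical twist as the essential ingredient — is the right one. However, the positivity statement you call the heart of the argument is wrong as stated. You claim that for every $l\in\sigma^\vee_\I$ the line bundle $\Psi_{\Sc{C}}(l)$ is relatively ample for $C_{\Sc{C}}\to\tilde{\Ca{X}}_{\Sc{C}}$. This is false: $\sigma$ is a single (typically low-dimensional, possibly one-dimensional) cone inside $P_{\Sc{C}}^+$, so $\sigma^\vee$ is enormously larger than $P_{\Sc{C}}^\vee$, and $\sigma^\vee_\I$ contains plenty of $l$ that are not even nonnegative on $P_{\Sc{C}}$, let alone positive definite. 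For such $l$ the line bundle $\Psi_{\Sc{C}}(l)$ is not relatively ample and Mumford vanishing does not apply. The correct positivity condition, which the paper uses in Lemma \ref{L:bclem} and Proposition \ref{P:autminbound}, is $l\in P_{\Sc{C}}^{\vee,+}$; and that condition only materializes after you intersect over all cones $\sigma\in\Sigma_{\Sc{C}}$ with $\sigma\subset P_{\Sc{C}}^+$ (the paper observes $\bigcap_\sigma\sigma^\vee=P_{\Sc{C}}^\vee$, and the subcanonical twist then cuts this down to the strictly positive part $P_{\Sc{C}}^{\vee,+}$). So the chart-by-chart argument as you have written it — push forward from one $\Fr{X}_{\Sc{C}}(\sigma)$, filter by the boundary ideal, and apply Mumford vanishing to each graded line bundle — breaks down at the first graded piece, because the graded pieces on a single chart involve non-ample $\Psi_{\Sc{C}}(l)$'s.

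The deeper issue hiding behind this is that you have silently suppressed the \v{C}ech / gluing step of the toric direction: each $\Fr{X}_{\Sc{C}}(\sigma)$ is relatively affine over $C_{\Sc{C}}$, so has no higher direct images individually, but the global $R^i$ for $\Fr{X}_{\Sc{C},\Sigma_{\Sc{C}}}\to C_{\Sc{C}}$ is computed by the \v{C}ech complex of the cover, and it is precisely this complex whose exactness in positive degrees encodes the combinatorial convexity of $P_{\Sc{C}}^+$. Your "filtering and assembling" step is where that work would have to be done, and you flag it as the main obstacle; but the positivity claim you rely on to get started is the wrong one, so even granting interchange of limits the argument does not close. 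A correct version would have to (i) first establish that the toric-direction higher direct images vanish and the degree-zero direct image is $\prod_{l\in P_{\Sc{C}}^{\vee,+}}\Psi_{\Sc{C}}(l)\otimes(\cdot)$ — this is a convexity/\v{C}ech computation, not an ampleness computation — and then (ii) use that every $l$ appearing actually lies in $P_{\Sc{C}}^{\vee,+}$, so that $\Psi_{\Sc{C}}(l)$ is relatively ample, and Mumford vanishing kills the abelian direction. You have step (ii) in the correct form, but step (i) as you propose it would prove a false lemma along the way.
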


We remark that this theorem is not true with $V_{\rho,K,\Sigma}^{\text{sub}}$ replaced by $V_{\rho,K,\Sigma}^{\text{can}}$.

\subsection{Pushforwards to the Minimal Compactification}

In this section we consider certain extensions of automorphic vector bundles to minimal compactifications.

\begin{defn}\label{D:minext}
If $K\subset G(\hat{\Z}^{(p)})$ is a neat open compact subgroup and $\rho$ is an algebraic representation on a finite $R$-module, we define a coherent sheaf on $\Ca{X}_K^{\text{min}}$
\begin{equation*}
V_{\rho,K}^{\text{sub}}=\pi_{K,\Sigma,_*}V_{\rho,K,\Sigma}^{\text{sub}}
\end{equation*}
where $\Sigma$ is a choice of a good compatible family of cone decompositions at level $K$.  We claim that this doesn't depend on $\Sigma$.  Indeed if $\Sigma'$ is another choice of a good compatible family of cone decompositions at level $K$, and $\Sigma$ refines $\Sigma'$ then we have
\begin{equation*}
\pi_{K,\Sigma,*}V_{\rho,K,\Sigma}^{\text{sub}}={\pi_{K,\Sigma'}}_*[1]_*V_{\rho,K,\Sigma}^{\text{sub}}={\pi_{K,\Sigma'}}_*V_{\rho,K,\Sigma'}^{\text{sub}}
\end{equation*}
by part 5 of Theorem \ref{T:minimal} and Corollary \ref{C:autpush}.  The general case reduces to this upon using the fact that any two $\Sigma$ and $\Sigma'$ admit a common refinement.
\end{defn}

In general we won't consider push forwards of canonically extended (rather than subcanonically extended) automorphic sheaves to the minimal compactification.  As an exception, we have the following important proposition which is essentially part of the construction of the minimal compactification (see \cite[7.2.4.1]{La13}).

\begin{prop}\label{P:omegaamp}
The push forward $\pi_{K,\Sigma,*}\omega_K$ is a line bundle on $\Ca{X}_K^{\text{min}}$ which is ample.  
\end{prop}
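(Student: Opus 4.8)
The plan is to reduce to a statement about the line bundle $\omega_K$ on $\Ca{X}_K$ itself and the well-understood geometry of the minimal compactification. First I would recall from the construction of $\Ca{X}_K^{\text{min}}$ (Theorem \ref{T:minimal} and \cite[7.2.4.1]{La13}) that $\Ca{X}_K^{\text{min}}$ is by definition $\operatorname{Proj}$ of the graded ring $\bigoplus_{d\geq 0}H^0(\Ca{X}_{K,\Sigma}^{\text{tor}},\omega_K^{\otimes d})$ (for $d$ divisible by a suitable integer so that the powers are globally generated), with $\pi_{K,\Sigma}:\Ca{X}_{K,\Sigma}^{\text{tor}}\to\Ca{X}_K^{\text{min}}$ the canonical map to this Proj. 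This is exactly the setup in which a multiple of $\omega_K$ descends to an ample line bundle on the target: some positive power $\omega_K^{\otimes d_0}$ on $\Ca{X}_{K,\Sigma}^{\text{tor}}$ is the pullback of $\O_{\Ca{X}_K^{\text{min}}}(d_0)$, and the latter is ample by construction of Proj.

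The remaining point is to upgrade ``some power descends and is ample'' to ``$\omega_K$ itself descends to a line bundle.'' For this I would use the identity $\pi_{K,\Sigma,*}\O_{\Ca{X}_{K,\Sigma}^{\text{tor}}}=\O_{\Ca{X}_K^{\text{min}}}$ from part 3 of Theorem \ref{T:minimal}, together with the projection formula: since $\pi_{K,\Sigma}$ is proper with this property and $\omega_K$ on $\Ca{X}_{K,\Sigma}^{\text{tor}}$ is relatively generated (indeed $\omega_K$ restricted to each fiber of $\pi_{K,\Sigma}$ is trivial, because these fibers lie in the boundary where the semiabelian scheme degenerates to a torus and $\omega$ pulls back from the abelian part — this is encoded in the Fourier–Jacobi picture of Theorem \ref{T:toroidal} part 2), the sheaf $\pi_{K,\Sigma,*}\omega_K$ is invertible: locally on $\Ca{X}_K^{\text{min}}$ a generator of $\pi_{K,\Sigma,*}\omega_K^{\otimes d_0}$ has a $d_0$-th root in the pushforward of $\omega_K$, using normality of $\Ca{X}_K^{\text{min}}$ to control the ambiguity. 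Concretely: $\omega_K$ descends to a reflexive rank-one sheaf $\L$ on the normal scheme $\Ca{X}_K^{\text{min}}$ with $\L^{\otimes d_0}=\O_{\Ca{X}_K^{\text{min}}}(d_0)$ invertible, and a reflexive rank-one sheaf whose $d_0$-th reflexive power is invertible is itself invertible away from codimension $2$, hence everywhere by reflexivity and normality. Ampleness of $\L=\pi_{K,\Sigma,*}\omega_K$ then follows because $\L^{\otimes d_0}=\O_{\Ca{X}_K^{\text{min}}}(d_0)$ is ample (a line bundle a positive power of which is ample is ample).

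I would then note the independence of the choice of $\Sigma$: for $\Sigma$ refining $\Sigma'$ one has $[1]_*\omega_K=\omega_K$ on $\Ca{X}_{K,\Sigma'}^{\text{tor}}$ by Corollary \ref{C:autpush} (the case $K=K'$), so $\pi_{K,\Sigma,*}\omega_K=\pi_{K,\Sigma',*}[1]_*\omega_K=\pi_{K,\Sigma',*}\omega_K$, and the general case follows by taking a common refinement, exactly as in Definition \ref{D:minext}.

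\textbf{Main obstacle.} The delicate step is verifying that $\omega_K$ is trivial along the fibers of $\pi_{K,\Sigma}$ over the boundary strata, so that $\pi_{K,\Sigma,*}\omega_K$ is genuinely a line bundle rather than merely a reflexive sheaf with an invertible power. This is really an input from the structure theory of the compactifications: via the isomorphism $\hat{\Ca{X}}_{K,\Sigma,\Sc{C}}^{\text{tor}}\simeq\Fr{X}_{\Sc{C},\Sigma_{\Sc{C}}}/\Gamma_{\Sc{C}}$ of Theorem \ref{T:toroidal} part 2, the semiabelian scheme $A$ restricted to a boundary stratum is an extension of (the pullback of) the universal abelian scheme $A_{\Sc{C}}$ on $\Ca{X}_{\Sc{C}}$ by a torus $T$, so $\omega_A=e^*\Omega^1_{A}$ sits in $0\to\omega_T\to\omega_A\to\omega_{A_{\Sc{C}}}\to 0$ with $\omega_T$ canonically trivial; taking determinants, $\omega_K=\det\Ca{E}_K^{\text{can}}$ is pulled back (up to the trivial factor $\det\omega_T$) from $\Ca{X}_{\Sc{C}}$, and the structural morphism $\hat{\Ca{X}}_{K,\Sc{C}}^{\text{min}}\to\Ca{X}_{\Sc{C}}$ of Theorem \ref{T:minimal} part 4 shows the relevant fibers of $\pi_{K,\Sigma}$ collapse to points in $\Ca{X}_{\Sc{C}}$. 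Making this precise — i.e. the claim $\omega_K|_{\text{fiber}}\cong\O$ — is the technical heart; everything else is formal Proj/normality bookkeeping.
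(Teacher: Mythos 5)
The paper does not actually prove this proposition; it cites \cite[7.2.4.1]{La13}, where the statement is established as part of the construction of $\Ca{X}_K^{\text{min}}$ as $\operatorname{Proj}$ of $\bigoplus_d H^0(\Ca{X}_{K,\Sigma}^{\text{tor}},\omega_K^{\otimes d})$. Your overall plan --- recall the Proj construction, observe that a power of $\omega_K$ descends to $\O(d_0)$, then show $\omega_K$ itself descends using the boundary structure --- is the right plan and matches Lan's.

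However, the ``upgrade'' argument you present before the ``Main obstacle'' paragraph is wrong. You claim that a reflexive rank-one sheaf $\L$ on a normal scheme with $\L^{[d_0]}$ invertible ``is itself invertible away from codimension $2$, hence everywhere by reflexivity and normality.'' But \emph{every} reflexive rank-one sheaf on a normal scheme is invertible away from codimension $2$ (a reflexive rank-one module over a DVR is free), and reflexivity plus normality do not then force invertibility in codimension $\geq 2$: on the affine cone over a conic, $\O(D)$ for a ruling $D$ is reflexive rank one, $\O(2D)$ is invertible, but $\O(D)$ is not. So this step does not establish that $\pi_{K,\Sigma,*}\omega_K$ is a line bundle rather than merely a reflexive sheaf.

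The correct input is precisely what you identify in the ``Main obstacle'' paragraph and what this thesis itself records in Propositions \ref{P:torcompare} and \ref{P:mincompare}: via the formal isomorphism $\hat{\Ca{X}}_{K,\Sigma,\Sc{C}}^{\text{tor}}\simeq\Fr{X}_{\Sc{C},\Sigma_{\Sc{C}}}/\Gamma_{\Sc{C}}$ of Theorem \ref{T:toroidal}(2) one has $\hat{\omega}_{K,\Sc{C}}\simeq\det X\otimes\pi^*\omega_{\Sc{C}}$ on the formal completion, and pushing forward to $\hat{\Ca{X}}_{K,\Sc{C}}^{\text{min}}$ via the theorem on formal functions identifies the completion of $\pi_{K,\Sigma,*}\omega_K$ along $\Ca{X}_{\Sc{C}}$ with $\det X\otimes\pi^*\omega_{\Sc{C}}$, an honest line bundle. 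That is, invertibility of the pushforward must be checked on the formal boundary charts directly; it cannot be deduced from the invertibility of a power by a purely formal normality argument. Once that is in place, your remaining bookkeeping (ampleness from $\L^{\otimes d_0}$ ample, independence of $\Sigma$ via Corollary \ref{C:autpush}) is fine.
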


As a further abuse of notation, we will also denote the line bundle ${\pi_{\Sigma,K}}_*\omega_K$ on $\Ca{X}_K$ by $\omega_K$.

We have the following analog of Propositions \ref{P:autprop} and \ref{P:autpropext}.

\begin{prop}\label{P:autpropmin}
\begin{enumerate}
\item If $0\to \rho'\to\rho\to\rho''\to 0$ is a short exact sequence of algebraic representations of $M$ then we have a short exact sequence
\begin{equation*}
0\to V_{\rho',K,\Sigma}^{\text{\rm sub}}\to V_{\rho,K,\Sigma}^{\text{\rm sub}}\to V_{\rho'',K,\Sigma}^{\text{\rm sub}}\to 0
\end{equation*}
of sheaves on $\Ca{X}_{K}^{\text{\rm min}}$.
\item If $\rho$ is an algebraic representation of $M$ on  a finite $R$-module and $k$ is any integer then
\begin{equation*}
V_{\rho\otimes\det^k L_0^\vee,K}^{\text{\rm sub}}=V_{\rho,K}^{\text{\rm sub}}\otimes\omega_K^{\otimes k}.
\end{equation*}
\end{enumerate}
\end{prop}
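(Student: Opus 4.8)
The plan is to deduce both assertions from the corresponding statements on the toroidal compactification (Proposition \ref{P:autpropext}) by applying $\pi_{K,\Sigma,*}$, where $\pi_{K,\Sigma}\colon\Ca{X}_{K,\Sigma}^{\text{tor}}\to\Ca{X}_K^{\text{min}}$ is the map of part 3 of Theorem \ref{T:minimal} for a fixed choice of good compatible family $\Sigma$ (the final answer is independent of $\Sigma$ by Definition \ref{D:minext}, so fixing one is harmless). The essential input is Theorem \ref{T:HLTT}, which gives $R^i\pi_{K,\Sigma,*}V_{\rho,K,\Sigma}^{\text{sub}}=0$ for all $i>0$ and any algebraic representation; recall also that $V_{\rho,K}^{\text{sub}}=\pi_{K,\Sigma,*}V_{\rho,K,\Sigma}^{\text{sub}}$ by Definition \ref{D:minext}.

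For the first assertion, I would start from the short exact sequence
\[
0\to V_{\rho',K,\Sigma}^{\text{sub}}\to V_{\rho,K,\Sigma}^{\text{sub}}\to V_{\rho'',K,\Sigma}^{\text{sub}}\to 0
\]
of sheaves on $\Ca{X}_{K,\Sigma}^{\text{tor}}$ supplied by part 2 of Proposition \ref{P:autpropext}, apply $\pi_{K,\Sigma,*}$, and write out the associated long exact sequence of higher direct images. Left exactness of the pushforward handles the first two maps, while the cokernel of the third map injects into $R^1\pi_{K,\Sigma,*}V_{\rho',K,\Sigma}^{\text{sub}}$, which vanishes by Theorem \ref{T:HLTT}. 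Hence the sequence of pushforwards is short exact, and rewriting each term via Definition \ref{D:minext} gives the asserted short exact sequence on $\Ca{X}_K^{\text{min}}$.

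For the second assertion, I would apply $\pi_{K,\Sigma,*}$ to the identity $V_{\rho\otimes\det^k L_0^\vee,K,\Sigma}^{\text{sub}}=V_{\rho,K,\Sigma}^{\text{sub}}\otimes\omega_K^{\otimes k}$ on $\Ca{X}_{K,\Sigma}^{\text{tor}}$ from part 3 of Proposition \ref{P:autpropext} and use the projection formula. This is legitimate because $\omega_K$ on $\Ca{X}_{K,\Sigma}^{\text{tor}}$ is the pullback under $\pi_{K,\Sigma}$ of the ample line bundle $\pi_{K,\Sigma,*}\omega_K$ on $\Ca{X}_K^{\text{min}}$ — a fact built into Lan's construction of the minimal compactification, cf. Proposition \ref{P:omegaamp} and \cite[7.2.4.1]{La13} — so that
\[
\pi_{K,\Sigma,*}\bigl(V_{\rho,K,\Sigma}^{\text{sub}}\otimes\omega_K^{\otimes k}\bigr)\simeq\bigl(\pi_{K,\Sigma,*}V_{\rho,K,\Sigma}^{\text{sub}}\bigr)\otimes\omega_K^{\otimes k}=V_{\rho,K}^{\text{sub}}\otimes\omega_K^{\otimes k},
\]
where on the right $\omega_K$ denotes the line bundle $\pi_{K,\Sigma,*}\omega_K$ on $\Ca{X}_K^{\text{min}}$ as in the remark following Proposition \ref{P:omegaamp}. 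Combining with the displayed identity proves the claim.

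Granting Theorem \ref{T:HLTT}, which is the only deep ingredient and is already cited, both parts are formal manipulations with the long exact sequence and the projection formula; I do not expect a genuine obstacle. The single point deserving a little care is the use in the second assertion of the fact that $\omega_K^{\text{tor}}$ is literally pulled back from $\Ca{X}_K^{\text{min}}$ (not merely that its pushforward is invertible), without which the projection formula would not be directly available; this is part of the structure theory of the minimal compactification recalled in Section \ref{S:compact}.
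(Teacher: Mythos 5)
Your proposal is correct and follows essentially the same route as the paper's (very brief) proof: push forward the toroidal exact sequence and identity via $\pi_{K,\Sigma,*}$, kill the obstruction using the vanishing of $R^1\pi_{K,\Sigma,*}V^{\text{sub}}_{\rho',K,\Sigma}$ from Theorem~\ref{T:HLTT}, and for part~2 apply the projection formula. Your explicit flagging of the fact that $\omega_K$ on $\Ca{X}_{K,\Sigma}^{\text{tor}}$ is pulled back from the line bundle $\pi_{K,\Sigma,*}\omega_K$ on $\Ca{X}_K^{\text{min}}$ (not merely that the pushforward is invertible) is exactly the hidden hypothesis that makes the projection formula applicable, and is a good thing to spell out.
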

\begin{proof}
Part 1 follows from part 2 of Proposition \ref{P:autprop} and Theorem \ref{T:HLTT}.  Part 2 follows from part 3 of \ref{P:autprop}, Proposition \ref{P:omegaamp}, and the projection formula.
\end{proof}

We note however that the analog of part 1 of Propositions \ref{P:autprop} and \ref{P:autpropext} is no longer true.  Even if $\rho$ is an algebraic representation on a finite free $R$-module, $V_{\rho,K}^{\text{sub}}$ needn't be locally free.  

Next we consider Hecke actions.
\begin{prop}\label{P:minhecke}
Let $K,K'\subset G(\hat{\Z}^{(p)})$ be neat open compact subgroups and $g\in G(\A^{\infty,p})$ be such that $g^{-1}Kg\subset K'$ so that there is a map
\begin{equation*}
[g]:\Ca{X}_{K}^{\text{\rm min}}\to\Ca{X}_{K''}^{\text{\rm min}}
\end{equation*}
as in point 4 in theorem \ref{T:toroidal}.  Then the isomorphism
\begin{equation*}
g:[g]^*V_{\rho,K'}\to V_{\rho,K}.
\end{equation*}
of Proposition \ref{P:hecke} extends to a morphism
\begin{equation*}
g:[g]^*V_{\rho,K'}^{\text{\rm sub}}\to V_{\rho,K}^{\text{\rm sub}}.
\end{equation*}
\end{prop}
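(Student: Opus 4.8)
The plan is purely formal: I will obtain the desired morphism on the minimal compactifications by pushing forward the corresponding morphism on toroidal compactifications supplied by Proposition~\ref{P:exthecke}, using the compatibility of the maps $[g]$ with the projections $\pi_{K,\Sigma}$, and then check that the result restricts over the open part $\Ca{X}_K$ to the isomorphism of Proposition~\ref{P:hecke}. Since by Definition~\ref{D:minext} the sheaves $V_{\rho,K}^{\text{\rm sub}}$ and $V_{\rho,K'}^{\text{\rm sub}}$ on the minimal compactifications do not depend on the chosen cone decompositions, I first fix good compatible families $\Sigma$ at level $K$ and $\Sigma'$ at level $K'$ with $\Sigma$ a $g$-refinement of $\Sigma'$; such a pair exists. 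This provides simultaneously the toroidal map $[g]:\Ca{X}_{K,\Sigma}^{\text{\rm tor}}\to\Ca{X}_{K',\Sigma'}^{\text{\rm tor}}$, the minimal map $[g]:\Ca{X}_K^{\text{\rm min}}\to\Ca{X}_{K'}^{\text{\rm min}}$, and the commutative square $\pi_{K',\Sigma'}\circ[g]=[g]\circ\pi_{K,\Sigma}$ of part~5 of Theorem~\ref{T:minimal}.

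Next, I apply $\pi_{K,\Sigma,*}$ to the toroidal morphism $g:[g]^*V_{\rho,K',\Sigma'}^{\text{\rm sub}}\to V_{\rho,K,\Sigma}^{\text{\rm sub}}$ of Proposition~\ref{P:exthecke} and precompose with the base change morphism $[g]^*\pi_{K',\Sigma',*}V_{\rho,K',\Sigma'}^{\text{\rm sub}}\to\pi_{K,\Sigma,*}[g]^*V_{\rho,K',\Sigma'}^{\text{\rm sub}}$ attached to the square above (this is the composite of the unit for $[g]^*\dashv[g]_*$ on the toroidal side, the equality $\pi_{K',\Sigma',*}[g]_*=[g]_*\pi_{K,\Sigma,*}$, and the counit for $[g]^*\dashv[g]_*$ on the minimal side). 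Using Definition~\ref{D:minext} to identify $\pi_{K,\Sigma,*}V_{\rho,K,\Sigma}^{\text{\rm sub}}=V_{\rho,K}^{\text{\rm sub}}$ and $\pi_{K',\Sigma',*}V_{\rho,K',\Sigma'}^{\text{\rm sub}}=V_{\rho,K'}^{\text{\rm sub}}$, the composite is the sought morphism $g:[g]^*V_{\rho,K'}^{\text{\rm sub}}\to V_{\rho,K}^{\text{\rm sub}}$.

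To see that it extends the morphism of Proposition~\ref{P:hecke}, note that over $\Ca{X}_K\subset\Ca{X}_K^{\text{\rm min}}$ (respectively over $\Ca{X}_{K'}\subset\Ca{X}_{K'}^{\text{\rm min}}$) the map $\pi_{K,\Sigma}$ (respectively $\pi_{K',\Sigma'}$) restricts to an isomorphism, so $V_{\rho,K}^{\text{\rm sub}}$ restricts to $V_{\rho,K}$, $V_{\rho,K'}^{\text{\rm sub}}$ restricts to $V_{\rho,K'}$, the base change morphism restricts to an isomorphism, and the minimal $[g]$ restricts to the finite \'{e}tale map $[g]:\Ca{X}_K\to\Ca{X}_{K'}$ of Section~\ref{S:heckepel}; the restriction of the constructed morphism is therefore $\pi_{K,\Sigma,*}(g)|_{\Ca{X}_K}$, which by the compatibility asserted in Proposition~\ref{P:exthecke} is the original isomorphism of Proposition~\ref{P:hecke}. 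The argument is entirely formal; the only points needing care are the bookkeeping of the two adjunctions against the single commutative square of Theorem~\ref{T:minimal} and the observation that the minimal-compactification sheaves $V_{\rho,\bullet}^{\text{\rm sub}}$ are independent of the auxiliary cone decompositions, so that the construction is canonical. (To assemble these morphisms into a genuine Hecke action one would in addition need the cocycle identity of the shape appearing in Proposition~\ref{P:hecke}(3), which descends by the same formalism from its toroidal counterpart.)
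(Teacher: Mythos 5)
Your proposal is correct and follows essentially the same route as the paper: fix compatible cone decompositions with $\Sigma$ a $g$-refinement of $\Sigma'$, push the toroidal morphism of Proposition~\ref{P:exthecke} forward along $\pi_{K,\Sigma}$, precompose with the base change map for the commutative square of Theorem~\ref{T:minimal}(5), and invoke independence of the choice of cone decomposition as in Definition~\ref{D:minext}. The explicit unwinding of the base change morphism into units and counits and the separate check over the open part $\Ca{X}_K$ are just more detailed renderings of the same argument.
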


\begin{proof}
Let $\Sigma$ (resp. $\Sigma'$) be a good compatible family of cone decompositions at level $K$ (resp. $K'$) such that $\Sigma$ is a $g$-refinement of $\Sigma'$ so that there is a commutative diagram
\begin{equation*}
\begin{tikzcd}
\Ca{X}_{K,\Sigma}^{\text{\rm tor}}\arrow{r}{[g]}\arrow{d}{\pi_{K,\Sigma}}&\Ca{X}_{K',\Sigma'}^{\text{\rm tor}}\arrow{d}{\pi_{K',\Sigma'}}\\
\Ca{X}_K^{\text{\rm min}}\arrow{r}{[g]}&\Ca{X}_{K'}^{\text{\rm min}}.
\end{tikzcd}
\end{equation*}
as in part 5 of Theorem \ref{T:minimal}.  Then by Proposition \ref{P:exthecke} there is a canonical morphism
\begin{equation*}
g:[g]^*V_{\rho,K',\Sigma'}^{\text{sub}}\to V_{\rho,K,\Sigma}^{\text{sub}}.
\end{equation*}
Push it forward by $\pi_{K,\Sigma}$ and consider the composition
\begin{equation*}
[g]^*V_{\rho,K'}^{\text{sub}}=[g]^*{\pi_{K',\Sigma'}}_*V_{\rho,K',\Sigma'}^{\text{sub}}\to\pi_{K,\Sigma,*}[g]^*V_{\rho,K',\Sigma'}^{\text{sub}}\to\pi_{K,\Sigma,*}V_{\rho,K,\Sigma}^{\text{sub}}=V_{\rho,K}^{\text{sub}}.
\end{equation*}
One can verify that this is independent of the choice of $\Sigma$ and $\Sigma'$ with a similar argument as that in Definition \ref{D:minext}.
\end{proof}

\subsection{Hecke Action on Coherent Cohomology}\label{S:heckeaction}

The goal of this section is to define actions of Hecke algebras on the coherent cohomology of the extensions of automorphic vector bundles on compactifications considered in the previous sections.  First we will define the Hecke algebras we will consider.

\begin{defn}
Let $K\subset G(\hat{\Z}^{(p)})$.
\begin{enumerate}
\item The (universal, prime to $p$) Hecke algebra $\Bf{T}_K$ of level $K$ is the $R$-algebra of compactly supported $R$-valued bi-$K$-invariant functions on $G(\A^{\infty,p})$ with multiplication being convolution (with the Haar measure on $G(\A^{\infty,p})$ normalized so that $K$ has measure 1.)
\item Let $S$ be a finite set of places of $\Q$ including $p$, $\infty$, and all other primes $l$ for which $G(\Z_l)$ is not a hyperspecial maximal compact subgroup of $G(\Q_l)$ and let $K^S=\prod_{l\not\in S}G(\Z_l)\subset G(\A^S)$, an open compact subgroup.  We let $\Bf{T}^S$ be the $R$-algebra of compactly supported bi-$K^S$-invariant functions on $G(\A^S)$ with multiplication being convolution (with the Haar measure on $G(\A^S)$ normalized so that $K^S$ has measure 1.)  If $K^S\subset K$ then there is a homomorphism of $R$-algebras $\Bf{T}^S\to\Bf{T}_K$ defined by sending the characteristic function of $K_SgK_S$ to the characteristic function of $KgK$.
\end{enumerate}
\end{defn}

Next we consider some generalities on trace maps.

\begin{lem}\label{L:tracelem}
Let $\pi:X\to Y$ be a generically finite, separable, and proper map of reduced noetherian schemes with $Y$ normal. Then there is a trace map
\begin{equation*}
\tr_{\pi}:\pi_*\O_X\to \O_Y
\end{equation*}
which is characterized by the fact that for each generic point $\eta$ of $Y$,
\begin{equation*}
(\pi_*\O_X)_{\eta}\to\O_{Y,\eta}
\end{equation*}
is the trace map from the finite separable $\O_{Y,\eta}$-algebra $(\pi_*\O_X)_{\eta}$ to $\O_{Y,\eta}$.

If $Z\subset Y$ is a reduced closed subscheme with ideal sheaf $\I_Z$ and $Z'$ is  is the set theoretic pre image $\pi^{-1}(Z)$ with the reduced induced subscheme structure with ideal sheaf $\I_{Z'}$, then $\tr_\pi$ maps the sub sheaf $\pi_*\I_{Z'}$ of $\pi_*\O_X$ into $\I_Z$, i.e. there is a map of sheaves
\begin{equation*}
\tr_\pi:\pi_*\I_{Z'}\to\I_Z.
\end{equation*}
\end{lem}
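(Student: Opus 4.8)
The plan is to construct $\tr_\pi$ by descent from the generic points of $Y$ and then to verify the ideal-sheaf assertion one generic point of $Z$ at a time. For the construction, note that since $Y$ is normal and noetherian it is, locally, a finite disjoint union of normal integral schemes, so I may assume $Y$ integral with function field $L$. As $\pi$ is proper, $\pi_*\O_X$ is a coherent sheaf of $\O_Y$-algebras; over an affine open $\spec A\subseteq Y$ it is $\widetilde B$ for a finite $A$-algebra $B$, and flat base change gives $B\otimes_A L=\Gamma(X_\eta,\O_{X_\eta})$ for the generic fibre $X_\eta$. Because $\pi$ is generically finite and separable and $X$ is reduced, $X_\eta$ is a finite \'{e}tale $L$-algebra, with its separable trace $\tau_L\colon B\otimes_A L\to L$. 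For $b\in B$ the element $b$, hence each of its conjugates, is integral over $A$, so $\tau_L(b)$ is integral over $A$; lying in $L$ and $A$ being integrally closed, $\tau_L(b)\in A$. Thus $\tau_L$ restricts to an $A$-linear map $B\to A$, and these glue to a map $\tr_\pi\colon\pi_*\O_X\to\O_Y$. Uniqueness is clear because $\O_Y$ injects into the product of its stalks at the generic points of $Y$, at each of which $\tr_\pi$ is by construction the trace; this also yields the stated characterization.

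For the second assertion I first reduce to the case $\pi$ finite. With $X'=\underline{\spec}_Y(\pi_*\O_X)$, the map $X'\to Y$ is finite with $\O_{X'}=\pi_*\O_X$ (a sheaf of reduced rings, so $X'$ is reduced), and $X\to X'$ is proper and surjective; hence $\pi^{-1}(Z)\to\pi'^{-1}(Z)$ is surjective, and using injectivity of residue-field extensions a section of $\O_X$ vanishes on $\pi^{-1}(Z)_{\mathrm{red}}$ exactly when the corresponding section of $\pi_*\O_X=\O_{X'}$ vanishes on $\pi'^{-1}(Z)_{\mathrm{red}}$. Since also $\tr_\pi=\tr_{\pi'}$, we may replace $\pi$ by $\pi'$. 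Now the statement $\tr_\pi(\pi_*\I_{Z'})\subseteq\I_Z$ is local on $Y$, and as $\O_Z$ is reduced it suffices to show that for each generic point $\zeta$ of $Z$ the image of $\tr_\pi(\pi_*\I_{Z'})$ in $\kappa(\zeta)$ vanishes. Set $A=\O_{Y,\zeta}$, a normal local domain with maximal ideal $\m$ and fraction field $L$, and $B=(\pi_*\O_X)_\zeta$, a finite reduced $A$-algebra with $B\otimes_A L=\prod_i L_i$ and $L_i/L$ finite separable. Since $\O_{Z,\zeta}$ is a reduced zero-dimensional local ring it is a field, so $\I_{Z,\zeta}=\m$; and as $\pi$ is finite, $\pi^{-1}(Z)$ localizes to $\spec(B/\m B)$, so $(\pi_*\I_{Z'})_\zeta\subseteq\sqrt{\m B}$, which, $B$ being finite over the local ring $A$, is the intersection of all maximal ideals of $B$.

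So let $s$ lie in every maximal ideal of $B$; I must show $\tr_\pi(s)=\sum_i\tau_{L_i/L}(s_i)\in\m$, where $s_i$ denotes the image of $s$ in $L_i$. Each $s_i$ lies in the integral closure $\tilde B_i$ of $A$ in $L_i$, and, $\tilde B_i$ being integral over $B$, it lies in every maximal ideal of $\tilde B_i$. Choose a Galois closure $N_i/L$ of $L_i/L$ and let $C_i$ be the integral closure of $A$ in $N_i$; its maximal ideals are exactly the primes over $\m$, the group $\mathrm{Gal}(N_i/L)$ permutes them, and since $s_i$ lies in all maximal ideals of $\tilde B_i$ it lies in all maximal ideals of $C_i$, hence so do all of its Galois conjugates. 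Therefore $\tau_{L_i/L}(s_i)$, being a sum of such conjugates and lying in $C_i\cap L=A$, lies in the intersection of the maximal ideals of $C_i$, which meets $A$ in $\m$. Summing over $i$ gives $\tr_\pi(s)\in\m$, as required.

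The construction of $\tr_\pi$ and the various reductions are routine; the genuine point is the argument of the previous paragraph. One cannot simply base change the trace along $A\to\kappa(\zeta)$, because $\pi_*\O_X$ need not be flat over $\O_Y$ --- precisely at the non-smooth points of $\Ca{X}^{\mathrm{min}}$, of codimension $\ge 2$, where this lemma will be used. Normality of $Y$ is exactly what lets one instead pass to integral closures and argue with Galois conjugates, and making that step precise --- in particular, that the trace of a section vanishing on the reduced fibre lands in $\m$ --- is the only non-formal ingredient.
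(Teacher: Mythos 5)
Your proof is correct and follows the same overall strategy as the paper's for the construction of $\tr_\pi$ --- reduce via Stein factorization to $\pi$ finite, then do commutative algebra --- but the core integrality argument is different. You show $\tau_L(b)\in A$ by writing it as a sum of Galois conjugates of the integral element $b$, hence integral over $A$, hence in $A$ by normality; the paper instead invokes $A=\bigcap_{\mathrm{ht}\,\p=1}A_\p$ to reduce to $A$ a DVR, over which the image $B'$ of $B_\p$ in $B\otimes_A K$ is finite free and the trace visibly lies in $A_\p$. Your route leans on separability of the generic fibre, the paper's on the $R_1$ property of normal rings; both are standard and equally valid here. The more substantive difference is the second assertion, $\tr_\pi(\pi_*\I_{Z'})\subseteq\I_Z$: the paper's written proof does not address it at all (it is apparently regarded as routine), whereas you give a complete argument --- reduce to $\pi$ finite, localize at a generic point $\zeta$ of $Z$ so that $\I_{Z,\zeta}=\m$ and $(\pi_*\I_{Z'})_\zeta$ is the Jacobson radical of $B=(\pi_*\O_X)_\zeta$, then pass to the integral closure in a Galois closure and use that the Jacobson radical there is Galois-stable and meets $A$ in $\m$. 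As you observe, the tempting shortcut of base-changing the trace along $A\to\kappa(\zeta)$ fails because $\pi_*\O_X$ need not be $\O_Y$-flat, so your argument genuinely fills a gap that the paper glosses over.
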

\begin{proof}
By considering the Stein factorization of $\pi$
\begin{equation*}
X\to \underline{\spec}(\pi_*\O_X)\to Y
\end{equation*}
we may reduce to considering the case where $\pi$ is finite.  We may also assume that $Y$ is affine and irreducible.

So now we are reduced to the following problem: we have a normal domain $A$ with fraction field $K$ and a finite $A$-algebra $B$ such that $B\otimes_A K$ is a finite separable $K$ algebra.  What we want to show is that if $b\in B$ then $\tr_{B\otimes_AK/K}(b\otimes 1)$ actually lies in $A$.  But now as $A$ is normal, it suffices to show that for every height 1 prime $\Fr{p}$ of $A$, $\tr_{B\otimes_AK/K}(b\otimes 1)$ lies in $A_\p$.  Then the image $B'$ of $B\otimes_K A_\p$ in $B\otimes_A K$ is a finite and torsion free as an $A_\p$-module, and hence finite free as $A_\p$ is a DVR.  Hence
\begin{equation*}
\tr_{B\otimes_AK/K}(b\otimes 1)=\tr_{B'/A_\p}(b\otimes 1)\in A_\p.
\end{equation*}
\end{proof}

\begin{defn}\label{D:tracedef}
Let $K\subset K'\subset G(\hat{\Z}^{(p)})$ be neat open compact subgroups and let $\Sigma$ (resp. $\Sigma'$) be a good compatible family of cone decompositions at level $K$ (resp. $K'$) such that $\Sigma$ is a $[1]$ refinement of $\Sigma'$, so that there is a map
\begin{equation*}
[1]:\Ca{X}_{K,\Sigma}^{\text{tor}}\to\Ca{X}_{K',\Sigma'}^{\text{tor}}.
\end{equation*}
Let $\rho$ be an algebraic representation of $M$ on either a finite free $R$ module or a finite free $R/\pi^r$-module for some $r$. 
\begin{enumerate}
\item
As in Lemma \ref{L:tracelem} we have a trace map
\begin{equation*}
\tr:[1]_*\O_{\Ca{X}_{K,\Sigma}^{\text{tor}}}\to\O_{\Ca{X}_{K',\Sigma'}^{\text{tor}}}
\end{equation*}
Tensoring with $V_{\rho,K',\Sigma'}^{\text{can}}$ we obtain
\begin{equation*}
([1]_*\O_{\Ca{X}_{K,\Sigma}^{\text{tor}}})\otimes V_{\rho,K',\Sigma'}^{\text{can}}\to V_{\rho,K',\Sigma'}^{\text{can}}
\end{equation*}
and we also have isomorphisms
\begin{equation*}
([1]_*\O_{\Ca{X}_{K,\Sigma}^{\text{tor}}})\otimes V_{\rho,K',\Sigma'}^{\text{can}}\simeq [1]_*([1]^*V_{\rho,K',\Sigma'}^{\text{can}})\simeq[1]_*V_{\rho,K,\Sigma}^{\text{can}}
\end{equation*}
by the projection formula and the isomorphism of Proposition \ref{P:exthecke}.  Composing we obtain a trace map
\begin{equation*}
\tr:[1]_*V_{\rho,K,\Sigma}^{\text{can}}\to V_{\rho,K',\Sigma'}^{\text{can}}.
\end{equation*}
Similarly from tensoring the trace map 
\begin{equation*}
\tr:[1]_*\I_{D_{K,\Sigma}}\to\I_{D_{K',\Sigma'}}.
\end{equation*}
with $V_{\rho,K',\Sigma'}^{\text{can}}$ we obtain a trace map
\begin{equation*}
\tr:[1]_*V_{\rho,K,\Sigma}^{\text{sub}}\to V_{\rho,K',\Sigma'}^{\text{sub}}.
\end{equation*}
\item
Now consider the diagram
\begin{equation*}
\begin{tikzcd}
\Ca{X}_{K,\Sigma}^{\text{\rm tor}}\arrow{r}{[1]}\arrow{d}{\pi_{K,\Sigma}}&\Ca{X}_{K',\Sigma'}^{\text{\rm tor}}\arrow{d}{\pi_{K',\Sigma'}}\\
\Ca{X}_K^{\text{\rm min}}\arrow{r}{[1]}&\Ca{X}_{K'}^{\text{\rm min}}.
\end{tikzcd}
\end{equation*}
as in part 5 of Theorem \ref{T:minimal}.  Applying ${\pi_{K',\Sigma'}}_*$ to the trace map
\begin{equation*}
\tr:[1]_*V_{\rho,K,\Sigma}^{\text{sub}}\to V_{\rho,K',\Sigma'}^{\text{sub}}.
\end{equation*}
we obtain a map
\begin{equation*}
[1]_*V_{\rho,K}^{\text{sub}}={\pi_{K',\Sigma'}}_*[1]_*V_{\rho,K,\Sigma}^{\text{sub}}\to{\pi_{K',\Sigma'}}_*V_{\rho,K',\Sigma'}^{\text{sub}}=V_{\rho,K'}^{\text{sub}}
\end{equation*}
which we also denote by $\tr$.  One may show that this is independent of the original choice of $\Sigma$ and $\Sigma'$ by choosing common refinements as in Definition \ref{D:minext}.
\end{enumerate}
\end{defn}

Now we consider coherent cohomology.  For the rest of the section let $K\subset G(\hat{\Z}^{(p)})$ be a neat open compact subgroup and let $\rho$ be an algebraic representation of $M$ on a finite free $R$-module or a finite free $R/\pi^r$-module for some $r$.  Let $\Sigma$ and $\Sigma'$ be good compatible families of cone decompositions at level $K$ such that $\Sigma$ is a refinement of $\Sigma'$.  By considering the Leray spectral sequence for the map
\begin{equation*}
[1]:\Ca{X}_{K,\Sigma}^{\text{tor}}\to \Ca{X}_{K,\Sigma'}^{\text{tor}}
\end{equation*}
and applying Corollary \ref{C:autpush} we conclude that for each $i$ the pullback maps
\begin{equation*}
H^i(\Ca{X}_{K',\Sigma'}^{\text{tor}},V_{\rho,K',\Sigma'}^{\text{can}})\to H^i(\Ca{X}_{K,\Sigma}^{\text{tor}},V_{\rho,K,\Sigma}^{\text{can}})\quad\text{and}\quad H^i(\Ca{X}_{K',\Sigma'}^{\text{tor}},V_{\rho,K',\Sigma'}^{\text{sub}})\to H^i(\Ca{X}_{K,\Sigma}^{\text{tor}},V_{\rho,K,\Sigma}^{\text{sub}})
\end{equation*}
are isomorphisms.  Consequently to make something canonical we may define
\begin{equation*}
H^i(\Ca{X}_K^{\text{tor}},V_{\rho,K}^{\text{can}}):=\varinjlim_{\Sigma}H^i(\Ca{X}_{K,\Sigma}^{\text{tor}},V_{\rho,K,\Sigma}^{\text{can}})
\end{equation*}
and
\begin{equation*}
H^i(\Ca{X}_K^{\text{tor}},V_{\rho,K}^{\text{sub}}):=\varinjlim_{\Sigma}H^i(\Ca{X}_{K,\Sigma}^{\text{tor}},V_{\rho,K,\Sigma}^{\text{sub}})
\end{equation*}
where the limit is taken over the directed system of all good compatible families of cone decompositions at level $K$ under refinement.

By considering the Leray spectral sequence for the maps
\begin{equation*}
\pi_{K,\Sigma}:\Ca{X}_{K,\Sigma}^{\text{tor}}\to\Ca{X}_{K,\Sigma'}^{\text{min}}
\end{equation*}
and applying Theorem \ref{T:HLTT} we conclude that the pullback maps
\begin{equation*}
H^i(\Ca{X}_{K}^{\text{min}},V_{\rho,K}^{\text{sub}})\to H^i(\Ca{X}_K^{\text{tor}},V_{\rho,K}^{\text{sub}})
\end{equation*}
are isomorphisms.

Let $g\in G(\A^{\infty,p})$.  Let $\Sigma$ be a good compatible family of cone decompositions at level $K$ and let $\Sigma'$ be a good compatible family of cone decompositions of level $gKg^{-1}\cap K$ which is both a 1-refinement and a $g$-refinement of $\Sigma$, so that we have a Hecke correspondence
\begin{equation*}
\begin{tikzcd}
\Ca{X}_{K,\Sigma}^{\text{tor}}&\Ca{X}_{gKg^{-1}\cap K,\Sigma'}^{\text{tor}} \arrow{l}[swap]{[g]}\arrow{r}{[1]}&\Ca{X}_{K,\Sigma}^{\text{tor}}.
\end{tikzcd}
\end{equation*}
We define an endomorphism $T_g$ of $H^i(\Ca{X}_{K,\Sigma}^{\text{tor}},V_{\rho,K,\Sigma}^\circ)$ for $\circ$ either $\text{can}$ or $\text{sub}$ as the composition of
\begin{equation*}
H^i(\Ca{X}_{K,\Sigma}^{\text{tor}},V_{\rho,K,\Sigma}^\circ)\overset{[g]^*}{\to} H^i(\Ca{X}_{gKg^{-1}\cap K,\Sigma'}^{\text{tor}},[g]^*V_{\rho,K,\Sigma}^\circ)\overset{g}{\to}H^i(\Ca{X}_{gKg^{-1}\cap K,\Sigma'}^{\text{tor}},V_{\rho,gKg^{-1}\cap K,\Sigma'}^\circ)
\end{equation*}
and
\begin{equation*}
H^i(\Ca{X}_{gKg^{-1}\cap K,\Sigma'}^{\text{tor}},V_{\rho,gKg^{-1}\cap K,\Sigma'}^\circ)\simeq H^i(\Ca{X}_{K,\Sigma}^{\text{tor}},[1]_*V_{\rho,gKg^{-1}\cap K,\Sigma'}^\circ)\overset{\tr}{\to}H^i(\Ca{X}_{K,\Sigma}^{\text{tor}},V_{\rho,K,\Sigma}^\circ)
\end{equation*}
where the isomorphism in the second displayed equation comes from the degeneration of the Leray spectral sequence for $[1]_*$ by Corollary \ref{C:autpush}.  By considering refinements one shows that this yields an endomorphism $T_g$ of $H^i(\Ca{X}_K^{\text{tor}},V_{\rho,K}^\circ)$ which is independent of the choices of $\Sigma$ and $\Sigma'$.

Similarly we may define an endomorphism $T_g$ of $H^i(\Ca{X}_K^{\text{min}},V_{\rho,K}^{\text{sub}})$ as follows: we have a Hecke Correspondence
\begin{equation*}
\begin{tikzcd}
\Ca{X}_K^{\text{min}}&\Ca{X}_{gKg^{-1}\cap K}^{\text{min}} \arrow{l}[swap]{[g]}\arrow{r}{[1]}&\Ca{X}_K^{\text{min}}
\end{tikzcd}
\end{equation*}
and we define $T_g$ to be the composition of
\begin{equation*}
H^i(\Ca{X}_{K,\Sigma}^{\text{min}},V_{\rho,K}^{\text{min}})\overset{[g]^*}{\to}H^i(\Ca{X}_{gKg^{-1}\cap K}^{\text{min}},[g]^*V_{\rho,K,\Sigma}^{\text{sub}})\overset{g}{\to}H^i(\Ca{X}_{gKg^{-1}\cap K}^{\text{min}},V_{\rho,gKg^{-1}\cap K}^{\text{sub}})
\end{equation*}
and
\begin{equation*}
H^i(\Ca{X}_{gKg{^-1}\cap K}^{\text{min}},V_{\rho,gKg^{-1}\cap K}^{\text{sub}})\simeq H^i(\Ca{X}_K^{\text{min}},[1]_*V_{\rho,gKg^{-1}\cap K}^{\text{sub}})\overset{\tr}{\to}H^i(\Ca{X}_{K,\Sigma}^{\text{min}},V_{\rho,K}^{\text{min}}).
\end{equation*}
Moreover one readily checks that the isomorphism $H^i(\Ca{X}_K^{\text{min}},V_{\rho,K}^{\text{sub}})\simeq H^i(\Ca{X}_K^{\text{tor}},V_{\rho,K}^{\text{sub}})$ is compatible with $T_g$.

\section{Well Positioned Subschemes and Sections}\label{S:wellpos}

Throughout this section fix a neat open compact subgroup $K\subset G(\hat{\Z}^{(p)})$ and $\Sigma$ a good compatible family of cone decompositions at level $K$.  The fact that the line bundle $\omega_K/\Ca{X}_K^{\text{min}}$ is ample plays a crucial role in the construction of congruences in Chapter \ref{cong}.  However the fact that $\Ca{X}_K^{\text{min}}$ is usually not smooth over $R$ is the source of some complications.  The goal of this section is to develop some tools to deal with these difficulties.

\subsection{Subschemes Well Positioned at the Boundary}

Let $\Sc{C}\in\text{Cusp}_K$ be a cusp label.  Corresponding to $\Sc{C}$ we have we have the reduced locally closed subscheme $\Ca{X}_{K,\Sigma,\Sc{C}}^{\text{tor}}$ of $\Ca{X}_{K,\Sigma}^{\text{tor}}$ and we denoted the formal completion of the latter along the former by $\hat{\Ca{X}}_{K,\Sigma,\Sc{C}}^{\text{tor}}$.  Via the isomorphism of formal schemes
\begin{equation*}
\hat{\Ca{X}}_{K,\Sigma,\Sc{C}}^{\text{tor}}\simeq\Fr{X}_{\Sc{C},\Sigma_{\Sc{C}}}/\Gamma_{\Sc{C}}
\end{equation*}
of part 2 of Theorem \ref{T:toroidal} we have a structural morphism
\begin{equation*}
\hat{\Ca{X}}_{K,\Sigma,\Sc{C}}^{\text{tor}}\to\Ca{X}_{\Sc{C}}.
\end{equation*}

Similarly we have $\Ca{X}_{\Sc{C}}$ viewed as a reduced locally closed subscheme of $\Ca{X}_{K,\Sc{C}}^{\text{min}}$ and we denoted the formal completion of the latter along the former by $\hat{\Ca{X}}_{K,\Sc{C}}^{\text{min}}$.  By part 4 of Theorem \ref{T:minimal} we have a structural morphism
\begin{equation*}
\hat{\Ca{X}}_{K,\Sc{C}}^{\text{min}}\to\Ca{X}_{\Sc{C}}.
\end{equation*}
If $Z\subset\Ca{X}_{\Sc{C}}$ is a closed subscheme then we denote by $(-)_Z$, the base change of a scheme (or formal scheme, or sheaf) over $\Ca{X}_{\Sc{C}}$ to $Z$.

\begin{defn}
\begin{enumerate}
\item We say that a closed subscheme $Z\subset \Ca{X}^{\text{tor}}_{K,\Sigma}$ is well positioned at the boundary if for every cusp label $\Sc{C}\in\text{Cusp}_K$, the formal completion of $Z$ along $\Ca{X}_{K,\Sigma,\Sc{C}}^{\text{tor}}$ is of the form $(\hat{\Ca{X}}_{K,\Sigma,\Sc{C}}^{\text{tor}})_{Z_{\Sc{C}}}$ for some closed subscheme $Z_{\Sc{C}}$ of $\Ca{X}_{\Sc{C}}$.
\item We say that a closed subscheme $Z\subset \Ca{X}^{\text{min}}_K$ is well positioned at the boundary if for every cusp label $\Sc{C}\in\text{Cusp}_K$, the formal completion of $Z$ along $\Ca{X}_\Sc{C}$ is of the form $(\hat{\Ca{X}}_{K,\Sigma}^{\text{min}})_{Z_{\Sc{C}}}$ for some closed subscheme $Z_{\Sc{C}}$ of $\Ca{X}_{\Sc{C}}$ (which must in fact just be the scheme theoretic intersection of $Z$ with $\Ca{X}_{\Sc{C}}$.)
\end{enumerate}
\end{defn}

Intuitively, a closed subscheme of $\Ca{X}_{K,\Sigma}^{\text{tor}}$ or $\Ca{X}_K^{\text{min}}$ is well positioned at the boundary if it is locally cut out by automorphic functions whose Fourier-Jacobi expansions consist of only a constant term.

Here is the main result we will prove regarding these definitions.
\begin{thm}\label{T:wellpos}
There is a correspondence between subschemes $Z^{\text{\rm min}}\subset \Ca{X}^{\text{\rm min}}_{K}$ well positioned at the boundary and subschemes $Z^{\text{\rm tor}}\subset\Ca{X}^{\text{\rm tor}}_{K,\Sigma}$ well positioned at the boundary characterized by the fact that for each cusp label $\Sc{C}\in\text{\rm Cusp}_K$, the corresponding closed subschemes $Z_\Sc{C}$ are the same.  Moreover if $Z^{\text{\rm min}}$ and $Z^{\text{\rm tor}}$ correspond then
\begin{enumerate}
\item $\pi_{K,\Sigma}^{-1}(Z^{\text{\rm min}})=Z^{\text{\rm tor}}$ scheme theoretically.
\item $\pi_{K,\Sigma,*}\O_{Z^{\text{\rm tor}}}=\O_{Z^{\text{\rm min}}}$.
\item $Z^{\text{\rm min}}$ is reduced if and only if $Z^{\text{\rm tor}}$ is reduced if and only if $Z_{\Sc{C}}$ is reduced for every cusp label $\Sc{C}\in\text{\rm Cusp}_K$.
\end{enumerate}
\end{thm}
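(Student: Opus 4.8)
\emph{Proof plan.} The correspondence is forced on the open part: since $j_K^{\text{min}}=\pi_{K,\Sigma}\circ j_{K,\Sigma}^{\text{tor}}$ with both $j$'s open immersions and $\pi_{K,\Sigma}^{-1}(\Ca{X}_K)=\Ca{X}_{K,\Sigma,\Sc{C}_0}^{\text{tor}}=\Ca{X}_K$ by Theorem \ref{T:minimal}, the map $\pi_{K,\Sigma}$ restricts to the identity isomorphism between the open parts, and this is the datum attached to the trivial cusp label $\Sc{C}_0\in\text{Cusp}_K$ (for which $\Ca{X}_{\Sc{C}_0}=\Ca{X}_K$ and $Z_{\Sc{C}_0}=Z^{\text{min}}\cap\Ca{X}_K=Z^{\text{tor}}\cap\Ca{X}_K$), so no condition is imposed there. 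All the remaining content is local along the boundary strata $\Ca{X}_\Sc{C}$, $\Sc{C}\neq\Sc{C}_0$; since $\pi_{K,\Sigma}$ is proper, formal completion along $\Ca{X}_\Sc{C}$ commutes with $\pi_{K,\Sigma,*}$, and a closed subscheme (or coherent sheaf) of $\Ca{X}_K^{\text{min}}$ is determined by its restriction to $\Ca{X}_K$ together with its completions along the $\Ca{X}_\Sc{C}$. So the plan is to establish everything formally-locally in the boundary charts of Theorems \ref{T:toroidal} and \ref{T:minimal}, and then glue.

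\textbf{The two constructions.} Given $Z^{\text{min}}$ well positioned with data $\{Z_\Sc{C}\}$, put $Z^{\text{tor}}:=\pi_{K,\Sigma}^{-1}(Z^{\text{min}})$ scheme-theoretically; then (1) holds tautologically. Because completion is flat and commutes with formation of scheme-theoretic preimages, the completion of $Z^{\text{tor}}$ along $\Ca{X}_{K,\Sigma,\Sc{C}}^{\text{tor}}$ is the $\hat\pi_{K,\Sigma}$-preimage of $(\hat{\Ca{X}}_{K,\Sc{C}}^{\text{min}})_{Z_\Sc{C}}$; and since by part 4 of Theorem \ref{T:minimal} the morphism $\hat\pi_{K,\Sigma}\colon\hat{\Ca{X}}_{K,\Sigma,\Sc{C}}^{\text{tor}}\to\hat{\Ca{X}}_{K,\Sc{C}}^{\text{min}}$ lies over the structural morphisms to $\Ca{X}_\Sc{C}$, this preimage is exactly $(\hat{\Ca{X}}_{K,\Sigma,\Sc{C}}^{\text{tor}})_{Z_\Sc{C}}$; thus $Z^{\text{tor}}$ is well positioned with the same data. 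Conversely, given $Z^{\text{tor}}$ well positioned with data $\{Z_\Sc{C}\}$, set $\J:=\pi_{K,\Sigma,*}\I_{Z^{\text{tor}}}\subseteq\pi_{K,\Sigma,*}\O_{\Ca{X}_{K,\Sigma}^{\text{tor}}}=\O_{\Ca{X}_K^{\text{min}}}$ and $Z^{\text{min}}:=V(\J)$; one must then prove $\O_{Z^{\text{min}}}=\pi_{K,\Sigma,*}\O_{Z^{\text{tor}}}$ (assertion 2), $\pi_{K,\Sigma}^{-1}(Z^{\text{min}})=Z^{\text{tor}}$ scheme-theoretically (assertion 1), and that $Z^{\text{min}}$ is well positioned with data $\{Z_\Sc{C}\}$ — after which the two constructions are visibly mutually inverse and compatible with the open part.

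\textbf{The key local computation.} By the reduction above, everything comes down to a statement in a boundary chart: using $\hat{\Ca{X}}_{K,\Sigma,\Sc{C}}^{\text{tor}}\simeq\Fr{X}_{\Sc{C},\Sigma_\Sc{C}}/\Gamma_\Sc{C}$ and Lan's analogous description of $\hat{\Ca{X}}_{K,\Sc{C}}^{\text{min}}$, together with the local form $\hat\pi_{K,\Sigma,*}\O_{\Fr{X}_{\Sc{C},\Sigma_\Sc{C}}/\Gamma_\Sc{C}}=\O_{\hat{\Ca{X}}_{K,\Sc{C}}^{\text{min}}}$ of the identity $\pi_{K,\Sigma,*}\O^{\text{tor}}=\O^{\text{min}}$, one is reduced to showing that the formation of $\hat\pi_{K,\Sigma,*}$ of structure sheaves (and of the ideal sheaves cutting out the boundary) commutes with base change along $Z_\Sc{C}\hookrightarrow\Ca{X}_\Sc{C}$, i.e.\ $\hat\pi_{K,\Sigma,*}\O_{(\Fr{X}_{\Sc{C},\Sigma_\Sc{C}}/\Gamma_\Sc{C})_{Z_\Sc{C}}}=\O_{(\hat{\Ca{X}}_{K,\Sc{C}}^{\text{min}})_{Z_\Sc{C}}}$. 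After passing to the $\Gamma_\Sc{C}$-cover, on which $\Gamma_\Sc{C}$ acts freely by Condition \ref{discond}, and restricting to a relatively affine chart, both sides are direct sums $\bigoplus_l\Psi_\Sc{C}(l)$ of the line bundles $\Psi_\Sc{C}(l)$ on $C_\Sc{C}$, indexed over semigroups depending only on $(\Sc{C},\Sigma_\Sc{C})$, and $\hat\pi_{K,\Sigma,*}$ merely changes that index set while leaving each summand untouched; since $C_\Sc{C}$ is smooth over $\Ca{X}_\Sc{C}$ and $Z_\Sc{C}$ sits purely in the $\Ca{X}_\Sc{C}$-direction, the base change $(-)\otimes_{\O_{\Ca{X}_\Sc{C}}}\O_{Z_\Sc{C}}$ is exact on each $\Psi_\Sc{C}(l)$ and commutes with the direct sums and with $\Gamma_\Sc{C}$-invariants. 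This gives the identity \emph{without} any vanishing of higher direct images — which is essential, since $R^i\pi_{K,\Sigma,*}\O^{\text{tor}}$ need not vanish (compare the remark after Theorem \ref{T:HLTT}). The same chart computation also yields $\pi_{K,\Sigma}^{-1}(Z^{\text{min}})=Z^{\text{tor}}$, the well-positionedness of $Z^{\text{min}}$ with data $\{Z_\Sc{C}\}$, and the surjectivity of $\O_{\Ca{X}_K^{\text{min}}}\to\pi_{K,\Sigma,*}\O_{Z^{\text{tor}}}$; gluing over $\text{Cusp}_K$ then completes assertions 1 and 2 and the bijectivity.

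\textbf{Reducedness and the main obstacle.} For part 3, note that $\Ca{X}_{K,\Sigma}^{\text{tor}}$ and $\Ca{X}_K^{\text{min}}$ are of finite type over the excellent ring $R$, so reducedness of $Z^{\text{tor}}$ or $Z^{\text{min}}$ may be checked Zariski-locally on $\Ca{X}_K$ and, at the boundary, on the formal completions. The completion $(\hat{\Ca{X}}_{K,\Sigma,\Sc{C}}^{\text{tor}})_{Z_\Sc{C}}$ is faithfully flat and smooth over $Z_\Sc{C}$ (the cone decomposition $\Sigma_\Sc{C}$ is smooth, $\Sigma$ being good, and $\Gamma_\Sc{C}$ acts freely), hence reduced if and only if $Z_\Sc{C}$ is; the completion $(\hat{\Ca{X}}_{K,\Sc{C}}^{\text{min}})_{Z_\Sc{C}}$ is faithfully flat with normal, geometrically reduced fibres over $Z_\Sc{C}$ (affine semigroup algebras on line bundles), hence again reduced if and only if $Z_\Sc{C}$ is; and $Z^{\text{tor}}\cap\Ca{X}_K=Z^{\text{min}}\cap\Ca{X}_K=Z_{\Sc{C}_0}$. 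Thus all three reducedness conditions amount to ``$Z_\Sc{C}$ reduced for every $\Sc{C}\in\text{Cusp}_K$.'' The main obstacle is the local computation of the previous paragraph: because $\pi_{K,\Sigma}$ has nonvanishing higher direct images on structure sheaves, one cannot argue via a cohomology-and-base-change spectral sequence and must instead descend all the way to Lan's explicit boundary charts and the weight decomposition $\bigoplus_l\Psi_\Sc{C}(l)$, verify its compatibility with the $\Gamma_\Sc{C}$-action and with restriction to $Z_\Sc{C}$, and then reassemble the local data — the bookkeeping with Lan's stratifications and cone decompositions being the technically heaviest part.
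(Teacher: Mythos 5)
Your overall strategy matches the paper's: reduce to the formal boundary charts from Theorems \ref{T:toroidal} and \ref{T:minimal}, show that $\hat\pi_{K,\Sigma,*}$ of structure sheaves commutes with the base change along $Z_\Sc{C}\hookrightarrow\Ca{X}_\Sc{C}$ (this is exactly Lemma \ref{L:bclem} of the paper), then glue and deduce parts 1--3 and reducedness. But the ``key local computation'' in your sketch contains a genuine gap. You write that, in a relatively affine chart, both sides are $\bigoplus_l\Psi_\Sc{C}(l)$ on $C_\Sc{C}$ and that $\hat\pi_{K,\Sigma,*}$ ``merely changes that index set while leaving each summand untouched.'' That description accounts only for the pushforward $\pi_1\colon\Fr{X}_{\Sc{C},\Sigma_\Sc{C}}\to C_\Sc{C}$. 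However, the underlying space of $\hat{\Ca{X}}_{K,\Sc{C}}^{\text{min}}$ is $\Ca{X}_\Sc{C}$, not $C_\Sc{C}$, so you must push forward all the way down through $\pi_2\colon C_\Sc{C}\to\Ca{X}_\Sc{C}$, where each summand becomes $\pi_{2,*}\Psi_\Sc{C}(l)$. The statement that $\pi_{2,*}\Psi_\Sc{C}(l)$ commutes with $(-)\otimes_{\O_{\Ca{X}_\Sc{C}}}\O_{Z_\Sc{C}}$ is a cohomology-and-base-change assertion, not a consequence of the flatness of $\Psi_\Sc{C}(l)$; your line ``the base change $(-)\otimes\O_{Z_\Sc{C}}$ is exact on each $\Psi_\Sc{C}(l)$'' addresses exactness of the tensor functor, which is beside the point.

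This is precisely where Lemma \ref{L:bclem} does real work: the paper factors $\pi_2=\pi_{2,3}\circ\pi_{2,2}\circ\pi_{2,1}$ through an intermediate abelian-scheme torsor $C(l)$ so that $\Psi_\Sc{C}(l)\simeq\pi_{2,1}^*\Psi'(l)$ with $\Psi'(l)$ relatively ample over $\tilde{\Ca{X}}_\Sc{C}$; it then uses the vanishing of higher cohomology of ample line bundles on abelian varieties together with cohomology-and-base-change to get $\pi_{2,*}(\Psi_\Sc{C}(l)_Z)\simeq(\pi_{2,*}\Psi_\Sc{C}(l))_Z$, and handles the $\Gamma_\Sc{C}$-invariants by grouping into $\Gamma_\Sc{C}$-orbits and using \'etale descent along $\tilde{\Ca{X}}_\Sc{C}\to\Ca{X}_\Sc{C}$, exploiting the finite-index subgroup $\Gamma_{\Sc{C},l}'\subset\Gamma_{\Sc{C},l}$. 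None of this follows from smoothness of $C_\Sc{C}/\Ca{X}_\Sc{C}$ alone. A secondary point: the objects in question are products $\prod_{l\in P_\Sc{C}^\vee}$ rather than direct sums (one is working with completed adic rings), and the product structure is what the $\Gamma_\Sc{C}$-invariants argument is organized around. Your reducedness argument differs mildly from the paper's (you invoke faithful flatness; the paper uses that $\O_{Z_\Sc{C}}$ is the split $l=0$ direct factor of the completed structure sheaf, hence a subring of a reduced ring); both are sound once the base-change lemma is established, but neither is available without it.
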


Before proving the theorem we will prove the following lemma.  One essentially finds the proof in Faltings-Chai \cite[p. 154-155]{FC90} and in Lan \cite[Proposition 7.2.4.3]{La13}.
\begin{lem}\label{L:bclem}
Let $\Sc{C}\in\text{\rm Cusp}_K$ and let $\pi:\Fr{X}_{\Sc{C},\Sigma_{\Sc{C}}}/\Gamma_{\Sc{C}}\to\Ca{X}_{\Sc{C}}$ be the canonical map of formal schemes.  Let $Z\subset\Ca{X}_{\Sc{C}}$ be a closed subscheme.  Then
\begin{equation*}
\pi_*(\O_{(\Fr{X}_{\Sc{C},\Sigma_{\Sc{C}}}/\Gamma_{\Sc{C}})_Z})=\pi_*(\O_{\Fr{X}_{\Sc{C},\Sigma_{\Sc{C}}}/\Gamma_{\Sc{C}}})\otimes \O_Z
\end{equation*}
as sheaves of adic $\O_Z$-algebras.
\end{lem}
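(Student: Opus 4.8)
The plan is to compute the sheaf $\pi_*\O_{\Fr{X}_{\Sc{C},\Sigma_{\Sc{C}}}/\Gamma_{\Sc{C}}}$ explicitly, Zariski-locally on $\Ca{X}_{\Sc{C}}$, as a ``Fourier--Jacobi type'' completed direct sum of vector bundles whose formation commutes with arbitrary base change, and then to observe that $-\otimes\O_Z$ commutes with every ingredient of that description. This is essentially the structural input used in Faltings--Chai and in \cite[\S7.2]{La13} to build the minimal compactification.

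First I would unwind the structural morphism. By construction $\pi$ is obtained by passing to $\Gamma_{\Sc{C}}$-quotients in the $\Gamma_{\Sc{C}}$-equivariant composite $\Fr{X}_{\Sc{C},\Sigma_{\Sc{C}}}\xrightarrow{f}C_{\Sc{C}}\xrightarrow{p}\tilde{\Ca{X}}_{\Sc{C}}$, where $\Gamma_{\Sc{C}}$ acts trivially on the base of the finite \'etale map $g\colon\tilde{\Ca{X}}_{\Sc{C}}\to\Ca{X}_{\Sc{C}}=\tilde{\Ca{X}}_{\Sc{C}}/\Gamma_{\Sc{C}}$, $p$ is a torsor under an abelian scheme, and $f$ is the formal completion along $\partial_{\Sc{C},\Sigma_{\Sc{C}}}$ of the relative torus embedding $\Xi_{\Sc{C},\Sigma_{\Sc{C}}}\to C_{\Sc{C}}$. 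Gluing the affine charts $\underline{\spec}_{\O_{C_{\Sc{C}}}}\bigoplus_{l\in\sigma^\vee}\Psi_{\Sc{C}}(l)$ over $\sigma\in\Sigma_{\Sc{C}}$ identifies the pushforward to $C_{\Sc{C}}$ of the structure sheaf of $\Xi_{\Sc{C},\Sigma_{\Sc{C}}}$ with $\bigoplus_{l}\Psi_{\Sc{C}}(l)$, the sum running over the $l\in\Bf{S}_{\Sc{C}}$ that pair non-negatively with $P_{\Sc{C}}=\bigcup_\sigma\sigma$; passing to the completion along $\partial_{\Sc{C},\Sigma_{\Sc{C}}}$ then gives $f_*\O_{\Fr{X}_{\Sc{C},\Sigma_{\Sc{C}}}}=\hat{\bigoplus_{l}}\Psi_{\Sc{C}}(l)$, the $\I_\partial$-adic completion. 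For each such $l$ the line bundle $\Psi_{\Sc{C}}(l)$ is relatively semi-ample (relatively ample for $l$ in the interior) on the abelian scheme $p$ --- this is exactly the positivity built into the definition of $P_{\Sc{C}}$ and $\Bf{S}_{\Sc{C}}$, and concretely $\Psi_{\Sc{C}}(l)$ is pulled back from a relatively ample bundle on a quotient abelian scheme of $C_{\Sc{C}}$. Hence $p_*\Psi_{\Sc{C}}(l)$ is locally free of finite rank and its formation commutes with arbitrary base change (cohomology and base change, using that a semi-ample line bundle on an abelian variety has cohomology in degree $0$ only, with locally constant Euler characteristic). Since $p_*$ preserves limits and $g_*$ is exact and base-change compatible (being finite flat), applying them to $\hat{\bigoplus_{l}}\Psi_{\Sc{C}}(l)$ and passing to $\Gamma_{\Sc{C}}$-invariants presents $\pi_*\O_{\Fr{X}_{\Sc{C},\Sigma_{\Sc{C}}}/\Gamma_{\Sc{C}}}$, Zariski-locally on $\Ca{X}_{\Sc{C}}$, as a completed direct sum $\hat{\bigoplus_{[l]}}\Ca{F}_{[l]}$ of vector bundles with base-change-compatible formation.

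It then remains to check that $-\otimes\O_Z$ commutes with this presentation. It commutes with each vector bundle $\Ca{F}_{[l]}$ tautologically and with finite direct sums trivially; it commutes with the $\I_\partial$-adic completion because $\O_Z=\O_{\Ca{X}_{\Sc{C}}}/\I_Z$ with $\I_Z$ locally finitely generated, so $-\otimes\O_Z$ commutes with arbitrary products (as $\I_Z\prod M_\alpha=\prod\I_Z M_\alpha$ for finitely generated $\I_Z$) and hence with the surjective inverse system computing the completion --- note that $\I_\partial$ lies in the torus direction, transverse to $\I_Z$. Compatibility with the finite \'etale pushforward is flat base change, and compatibility with $\Gamma_{\Sc{C}}$-invariants follows once one notes that $\Gamma_{\Sc{C}}$ permutes the summands $\Psi_{\Sc{C}}(l)$ and acts freely (Condition \ref{discond}), so that the cokernels $\hat{\bigoplus_{l}}\Psi_{\Sc{C}}(l)\big/\hat{\bigoplus_{[l]}}\Psi_{\Sc{C}}(l)$ are again direct sums of vector bundles, in particular flat, which makes $\Gamma_{\Sc{C}}$-invariants of these sheaves compatible with $-\otimes\O_Z$. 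Assembling these, both $\pi_*(\O_{(\Fr{X}_{\Sc{C},\Sigma_{\Sc{C}}}/\Gamma_{\Sc{C}})_Z})$ and $\pi_*(\O_{\Fr{X}_{\Sc{C},\Sigma_{\Sc{C}}}/\Gamma_{\Sc{C}}})\otimes\O_Z$ equal $\hat{\bigoplus_{[l]}}(\Ca{F}_{[l]}\otimes\O_Z)$, and they agree as sheaves of adic $\O_Z$-algebras.

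I expect the main obstacle to be bookkeeping rather than conceptual: extracting the precise Fourier--Jacobi expansion of $f_*\O_{\Fr{X}_{\Sc{C},\Sigma_{\Sc{C}}}}$ together with the relative semi-ampleness of the $\Psi_{\Sc{C}}(l)$ from the boundary-chart theory of \cite{La13}, and then verifying carefully that the \emph{non-flat} base change $-\otimes\O_Z$ really does commute with the $\I_\partial$-adic completion of an infinite direct sum and with passage to $\Gamma_{\Sc{C}}$-invariants --- that is, proving the ``adic $\O_Z$-algebra'' refinement and not merely an isomorphism of $\O_Z$-modules.
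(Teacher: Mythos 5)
Your overall plan --- factor the structural morphism through $C_{\Sc{C}}$ and $\tilde{\Ca{X}}_{\Sc{C}}$, express $\pi_*$ Fourier--Jacobi style as a product of pushforwards $\pi_{2,*}\Psi_{\Sc{C}}(l)$, and check that $-\otimes\O_Z$ commutes termwise --- matches the paper's proof, and your observation that $-\otimes\O_Z$ commutes with arbitrary products for finitely generated $\I_Z$ is correct and needed. But the step where you pass to $\Gamma_{\Sc{C}}$-invariants has a genuine gap. You assert that $\Gamma_{\Sc{C}}$ ``acts freely'' on the set of Fourier--Jacobi indices and cite Condition~\ref{discond}; this is false. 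Condition~\ref{discond} is a statement about cones $\sigma\subset P_{\Sc{C}}^+$, not about lattice vectors $l\in P_{\Sc{C}}^\vee$. Already $l=0$ is fixed by all of $\Gamma_{\Sc{C}}$, and more generally any $l\in P_{\Sc{C}}^\vee\setminus P_{\Sc{C}}^{\vee,+}$ can have nontrivial stabilizer $\Gamma_{\Sc{C},l}$. Consequently your proposed ``flat cokernel'' argument fails: the $\Gamma_{\Sc{C}}$-invariants are not simply a permutation-summand of a direct sum, and there is no free action guaranteeing $\operatorname{Tor}_1$ vanishing. The paper handles this by decomposing the product into $\Gamma_{\Sc{C}}$-orbits, reducing each orbit's invariants to $\Gamma_{\Sc{C},l}$-invariants, observing that a finite-index subgroup $\Gamma_{\Sc{C},l}'\subset\Gamma_{\Sc{C},l}$ already acts trivially on $\pi_{2,*}\Psi_{\Sc{C}}(l)$, and then applying \'etale descent for the finite quotient group acting through the finite \'etale cover $\tilde{\Ca{X}}_{\Sc{C}}\to\Ca{X}_{\Sc{C}}$ (for $A\to B$ finite \'etale Galois with group $G$ and $M$ a $B$-module with semilinear $G$-action, $M^G\otimes_A A'\simeq(M\otimes_AA')^G$). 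This descent step is essential and absent from your proposal.

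A second, smaller issue: your justification for the termwise base-change compatibility invokes ``a semi-ample line bundle on an abelian variety has cohomology in degree $0$ only,'' which is false --- $\O_C$ is semi-ample and has nonzero higher cohomology. The correct structural input, which you do mention but do not actually use, is that $\Psi_{\Sc{C}}(l)$ is the pullback along $\pi_{2,1}:C_{\Sc{C}}\to C(l)$ of a line bundle $\Psi'(l)$ that is relatively ample over $\tilde{\Ca{X}}_{\Sc{C}}$. The base-change argument must then be split: for the pullback part, the projection formula plus $\pi_{2,1,*}\O_{C_{\Sc{C}}}=\O_{C(l)}$ handles it; for the ample part, one uses vanishing of higher cohomology of ample line bundles on abelian varieties together with cohomology-and-base-change.
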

\begin{proof}
If we let $\tilde{\pi}:\Fr{X}_{\Sc{C},\Sigma_{\Sc{C}}}\to\Ca{X}_{\Sc{C}}$ denote the canonical projection, then as the quotient map
\begin{equation*}
\Fr{X}_{\Sc{C},\Sigma_{\Sc{C}}}\to\Fr{X}_{\Sc{C},\Sigma_{\Sc{C}}}/\Gamma_{\Sc{C}}
\end{equation*}
is a local isomorphism we have
\begin{equation*}
\pi_*(\O_{(\Fr{X}_{\Sc{C},\Sigma_{\Sc{C}}}/\Gamma_{\Sc{C}})_Z})\cong(\tilde{\pi}_*\O_{(\Fr{X}_{\Sc{C},\Sigma_{\Sc{C}}})_Z})^{\Gamma_{\Sc{C}}}
\end{equation*}
We may factor $\tilde{\pi}$ as
\begin{equation*}
\Fr{X}_{\Sc{C},\Sigma_{\Sc{C}}}\overset{\pi_1}{\to} C_{\Sc{C}}\overset{\pi_2}{\to} \Ca{X}_{\Sc{C}}.
\end{equation*}
First we claim that
\begin{equation*}
\pi_{1,*}\O_{(\Fr{X}_{\Sc{C},\Sigma_{\Sc{C}}})_Z}\cong\prod_{l\in  P_{\Sc{C}}^\vee}\Psi_{\Sc{C}}(l)_Z
\end{equation*}

Indeed, we have
\begin{equation*}
\pi_{1,*}\O_{(\Fr{X}_{\Sc{C},\Sigma_{\Sc{C}}})_Z}=\bigcap_{\sigma\in\Sigma_{\Sc{C}},\sigma\subset P^+_{\Sc{C}}}\pi_{1,*}\O_{\Fr{X}_{\Sc{C}}(\sigma)}\subset\prod_{l\in P_{\Sc{C}}^\vee}\Psi_{\Sc{C}}(l)_Z
\end{equation*}
the intersection being taken inside of the huge sheaf $\prod_{l\in\Bf{S}_{\Sc{C}}}\Psi_{\Sc{C}}(l)_Z$, and where the inclusion follows from the fact that
\begin{equation*}
\bigcap_{\sigma\in\Sigma_{\Sc{C}},\sigma\subset P_{\Sc{C}}^+}\sigma^\vee=P^\vee_{\Sc{C}}
\end{equation*}
To show the other inclusion we need to show that for each $\sigma\in\Sigma_{\Sc{C}}$ with $\sigma\subset P_{\Sc{C}}^+$ we have
\begin{equation*}
\prod_{l\in P_{\Sc{C}}^\vee}\Psi_{\Sc{C}}(l)_Z\subset\pi_{1,*}\O_{\Fr{X}_{\Sc{C}}(\sigma)_Z}=\hat{\bigoplus_{l\in\sigma^\vee}}\Psi_{\Sc{C}}(l)_Z
\end{equation*}
or in other words that for each $n$, all but finitely many of the terms in the product on the left are in $\I_\partial^n$.

Hence we have
\begin{align*}
\pi_*(\O_{(\Fr{X}_{\Sc{C},\Sigma_{\Sc{C}}}/\Gamma_{\Sc{C}})_Z})&\cong\left(\pi_{2,*}\prod_{l\in  P_{\Sc{C}}^\vee}\Psi_{\Sc{C}}(l)_Z\right)^{\Gamma_{\Sc{C}}}\\
&\cong\left(\prod_{l\in  P_{\Sc{C}}^\vee}\pi_{2,*}(\Psi_{\Sc{C}}(l)_Z)\right)^{\Gamma_{\Sc{C}}}.
\end{align*}
In order to complete the proof of the lemma we must show that
\begin{equation}\label{E:twoisos}
\left(\prod_{l\in  P_{\Sc{C}}^\vee}\pi_{2,*}(\Psi_{\Sc{C}}(l)_Z)\right)^{\Gamma_{\Sc{C}}}\simeq\left(\prod_{l\in  P_{\Sc{C}}^\vee}(\pi_{2,*}\Psi_{\Sc{C}}(l))_Z\right)^{\Gamma_{\Sc{C}}}\simeq\left(\prod_{l\in  P_{\Sc{C}}^\vee}\pi_{2,*}\Psi_{\Sc{C}}(l)\right)^{\Gamma_{\Sc{C}}}\otimes\O_Z\tag{*}
\end{equation}

Now we recall some facts found in the proof of Proposition 7.2.4.3 of \cite{La13}.  For each $l\in P_{\Sc{C}}^\vee$ let $\Gamma_{\Sc{C},l}$ denote the stabilizer of $l$ in $\Gamma_{\Sc{C}}$ and let $\Gamma_{\Sc{C},l}'$ denote the finite index subgroup of $\Gamma_{\Sc{C},l}$ which acts trivially on $\tilde{\Ca{X}}_{\Sc{C}}$.  Then as in the proof of Proposition 7.2.4.3 of \cite{La13} we may factor $\pi_2$ as
\begin{equation*}
C_{\Sc{C}}\overset{\pi_{2,1}}{\to} C(l)\overset{\pi_{2,2}}{\to}\tilde{\Ca{X}}_{\Sc{C}}\overset{\pi_{2,3}}{\to}\Ca{X}_{\Sc{C}}
\end{equation*}
where both $\pi_{2,1}$ and $\pi_{2,2}$ are abelian scheme torsors and $\pi_{2,3}$ is the finite \'{e}tale cover of point 7 in the list at the beginning of section \ref{S:charts} and there is an action of $\Gamma_{\Sc{C},l}$ on $C(l)$ for which $\pi_{2,1}$ and $\pi_{2,2}$ are equivariant.  Moreover there is a $\Gamma_{\Sc{C},l}$ equivariant line bundle $\Psi'(l)/C(l)$, relatively ample over $\tilde{\Ca{X}}_{\Sc{C}}$ with a $\Gamma_{\Sc{C},l}$ equivariant isomorphism $\Psi_{\Sc{C}}(l)\simeq \pi_{2,1}^*\Psi'(l)$, and such that the $\Gamma_{\Sc{C},l}'$ action on $\pi_{2,2,*}\Psi'(l)$ is trivial.  Additionally, for use in the proof of Proposition \ref{P:autminbound} below, we note that when $l\in P_{\Sc{C}}^{\vee,+}$ then $C(l)=C_{\Sc{C}}$ so that $\Psi_{\Sc{C}(l)}$ is already relatively ample over $\tilde{X}_{\Sc{C}}$ and $\Gamma_{\Sc{C},l}$ is trivial.

Next we recall that if $\pi:C\to X$ is an abelian scheme torsor and $\L/C$ is a line bundle then the formation of $\pi_*\L$ is compatible with arbitrary base change in either of the following two cases:
\begin{enumerate}
\item $\L=\O_C$ (Indeed, $\pi_*\O_C=\O_X$.)
\item $\L$ is relatively ample for $C/X$.  Indeed, this follows from the theorem on cohomology and base change \cite[III, 7.7.5]{EGA} and the fact that for an abelian variety over a field, the higher coherent cohomology of an ample line bundle vanishes \cite{Mu70}.
\end{enumerate}

For the first isomorphism in (\ref{E:twoisos}) we must show that for each $l\in P_{\Sc{C}}^\vee$ we have 
\begin{equation*}
\pi_{2,*}(\Psi_{\Sc{C}}(l)_Z)\simeq (\pi_{2,*}\Psi_{\Sc{C}}(l))_Z
\end{equation*}
or in other words, we must show that base change to $Z$ commutes with push forward by $\pi_{2,i,*}$ for $i=1,2,3$.  For $\pi_{2,1}$ this follows by the projection formula and point 1 above.  For $\pi_{2,2}$ it follows from point 2 above.  For $\pi_{2,3}$ it follows from the fact that $\pi_{2,3}$ is affine.

For the second isomorphism in (\ref{E:twoisos}) we must show that the formation of $\Gamma_{\Sc{C}}$ invariants commutes with base change.  First note that the product over all $l\in P_{\Sc{C}}^\vee$ breaks up into a product over $\Gamma_{\Sc{C}}$ orbits on $P_{\Sc{C}}^\vee$ of terms of the form
\begin{equation*}
\text{Ind}_{\Gamma_{\Sc{C},l}}^{\Gamma_{\Sc{C}}}(\pi_{2,*}\Psi_{\Sc{C}}(l))_Z
\end{equation*}
but then
\begin{equation*}
\left(\text{Ind}_{\Gamma_{\Sc{C},l}}^{\Gamma_{\Sc{C}}}(\pi_{2,*}\Psi_{\Sc{C}}(l))_Z\right)^{\Gamma_{\Sc{C}}}=((\pi_{2,*}\Psi_{\Sc{C}}(l))_Z)^{\Gamma_{\Sc{C},l}}=((\pi_{2,*}\Psi_{\Sc{C}}(l))_Z)^{\Gamma_{\Sc{C},l}/\Gamma_{\Sc{C},l}'}
\end{equation*}
as $\Gamma_{\Sc{C},l}'$ acts trivially on $\pi_{2,*}\Psi_{\Sc{C}}(l)$.  Finally by \'{e}tale descent we have an isomorphism
\begin{equation*}
((\pi_{2,*}\Psi_{\Sc{C}}(l))_Z)^{\Gamma_{\Sc{C},l}/\Gamma_{\Sc{C},l}'}\simeq (\pi_{2,*}\Psi_{\Sc{C}}(l))^{\Gamma_{\Sc{C},l}/\Gamma_{\Sc{C},l}'}\otimes\O_Z.
\end{equation*}
Indeed this reduces to the fact that of $A\to B$ is finite \'{e}tale Galois with Galois group $G$, and $M$ is a $B$-module with a semi linear $G$-action then by \'{e}tale descent there is a $G$-equivariant isomorphism
\begin{equation*}
M^G\otimes_AB\simeq M.
\end{equation*}
Then if $A'$ is any $A$-algebra, tensoring with $A'$ and taking $G$-invariants we obtain an isomorphism
\begin{equation*}
M^G\otimes_A A'\simeq (M\otimes_AA')^G.
\end{equation*}
\end{proof}

\begin{proof}[Proof of Theorem \ref{T:wellpos}]
Let $Z\subset \Ca{X}_{K,\Sigma}^{\text{tor}}$ be well positioned at the boundary.  Our first aim is show that the natural map of coherent sheaves on $\Ca{X}_K^{\text{min}}$
\begin{equation*}
\O_{\Ca{X}_K^{\text{min}}}=\pi_{K,\Sigma,*}\O_{\Ca{X}_{K,\Sigma}^{\text{tor}}}\to \pi_{K,\Sigma,*}\O_Z
\end{equation*}
is surjective (where the first equality holds by part 3 of Theorem \ref{T:minimal}).   It suffices to prove this after formally completing along the locally closed subschemes $\Ca{X}_{\Sc{C}}$ for each cusp label $\Sc{C}\in\text{Cusp}_K$.  We may compute the formal completions of these sheaves along $\Ca{X}_{\Sc{C}}$ using \cite[III, 4.1.5]{EGA}.  We have $\pi_{K,\Sigma}^{-1}(\Ca{X}_{\Sc{C}})=\Ca{X}_{K,\Sigma,\Sc{C}}^{\text{tor}}$ and we denoted the formal completion of $\Ca{X}_{K,\Sigma}^{\text{tor}}$ along this by $\hat{\Ca{X}}_{K,\Sigma,\Sc{C}}^{\text{tor}}$, so that we have a map of formal schemes
\begin{equation*}
\hat{\pi}_{K,\Sigma}:\hat{\Ca{X}}_{K,\Sigma,\Sc{C}}^{\text{tor}}\to\hat{\Ca{X}}_{K,\Sc{C}}^{\text{min}}
\end{equation*}
As $\pi_{K,\Sigma}$ is proper, we may identify the formal completion of
\begin{equation*}
\pi_{K,\Sigma,*}\O_{\Ca{X}_{K,\Sigma}^{\text{tor}}}\to \pi_{K,\Sigma,*}\O_Z
\end{equation*}
along $\Ca{X}_{\Sc{C}}$ with
\begin{equation*}
\hat{\pi}_{K,\Sigma,*}\O_{\hat{\Ca{X}}_{K,\Sigma,\Sc{C}}^{\text{tor}}}\to\hat{\pi}_{K,\Sigma,*}\O_{\hat{Z}}
\end{equation*}
where $\hat{Z}$ denotes the formal completion of $Z$ along $\Ca{X}_{K,\Sigma,\Sc{C}}^{\text{tor}}$.  Now recall from part 4 of Theorem \ref{T:minimal} that we have a commutative diagram of formal schemes
\begin{equation*}
\begin{tikzcd}
\hat{\Ca{X}}_{K,\Sigma,\Sc{C}}^{\text{\rm tor}}\arrow{r}\arrow{d}{\hat{\pi}_{K,\Sigma}}&\Fr{X}_{\Sc{C},\Sigma_{\Sc{C}}}/\Gamma_{\Sc{C}}\arrow{d}{\pi}\\
\hat{\Ca{X}}_{K,\Sc{C}}^{\text{\rm min}}\arrow{r}& \Ca{X}_{\Sc{C}}
\end{tikzcd}
\end{equation*}
The top row is an isomorphism and the bottom row, while certainly not an isomorphism of formal schemes, does induce an isomorphism of underlying topological spaces.  Via these isomorphisms the map of sheaves we are considering can be identified with
\begin{equation*}
\pi_*\O_{\Fr{X}_{\Sc{C},\Sigma_{\Sc{C}}}/\Gamma_{\Sc{C}}}\to\pi_*\O_{(\Fr{X}_{\Sc{C},\Sigma_{\Sc{C}}}/\Gamma_{\Sc{C}})_{Z_{\Sc{C}}}}
\end{equation*}
where $Z_{\Sc{C}}$ is the closed subscheme of $\Ca{X}_{\Sc{C}}$ determining $\hat{Z}$.  This map is surjective by Lemma \ref{L:bclem}.

Hence we may define a closed subscheme $Z^{\text{min}}\subset\Ca{X}_{\Sc{C}}^{\text{min}}$ to be the closed subscheme with structure sheaf $\pi_{K,\Sigma,*}\O_Z$.  Then it follows from Lemma \ref{L:bclem} again that the formal completion of $Z^{\text{min}}$ along $\Ca{X}_{\Sc{C}}$ is $(\hat{\Ca{X}}_{K,\Sc{C}}^{\text{min}})_{Z_{\Sc{C}}}$.

For part 3, the reducedness of $Z^\text{tor}$ implies the reducedness of $Z^{\text{min}}$ by part 2.  Next we claim that if $Z^{\text{min}}$ is reduced then so is each $Z_{\Sc{C}}$.  First recall that by \cite[IV, 7.8.3]{EGA}, the formal completion of a reduced excellent ring is reduced.  Hence if $Z^{\text{min}}$ is reduced then so is the formal scheme $\hat{Z}^{\text{min}}_{\Sc{C}}$ obtained by formally completing $Z^{\text{min}}$ along $\Ca{X}_{\Sc{C}}$ (by which we mean that the structure sheaf is a sheaf of reduced rings).  But by the computation in lemma \ref{L:bclem} we saw that the structure sheaf of $\O_{Z_{\Sc{C}}}$ occurs as a direct factor of $\O_{\hat{Z}^{\text{min}}_{\Sc{C}}}$ (as the factor corresponding to $l=0$) and hence $Z_{\Sc{C}}$ is reduced.  Finally we we claim that if $Z_{\Sc{C}}$ is reduced, then so is $Z^{\text{tor}}$.  To show that $Z^{\text{tor}}$ is reduced, it suffices to show that for each cusp label $\Sc{C}$, the formal completion of $Z^{\text{tor}}$ along $\Ca{X}_{\Sc{C}}^{\text{tor}}$ is reduced.  But this is a formal completion of $(\Xi_{\Sc{C},\Sigma_{\Sc{C}}})|_{Z_{\Sc{C}}}$ and the map $\Xi_{\Sc{C},\Sigma_{\Sc{C}}}\to \Ca{X}_{\Sc{C}}$ is smooth.
\end{proof}

\subsection{Sections of $\omega^k$ Near the Boundary}\label{S:welpossec}

Let $\Sc{C}\in\text{Cusp}_K$ be a cusp label.  Over the toroidal boundary chart $\Xi_{\Sc{C},\Sigma_{\Sc{C}}}$ we have the semiabelian scheme $\tilde{A}$ which sits in an exact sequence
\begin{equation*}
0\to T\to\tilde{A}_{\Sc{C}}\to A_{\Sc{C}}\to 0
\end{equation*}
where $T$ is the constant torus with character group $X$ and $A_{\Sc{C}}$ is the pullback of the universal abelian scheme on $\Ca{X}_{\Sc{C}}$.  Then taking co-lie algebras we have a short exact sequence of locally free sheaves on $\Xi_{\Sc{C},\Sigma_{\Sc{C}}}$
\begin{equation*}
0\to\omega_{A_{\Sc{C}}}\to\omega_{\tilde{A}_{\Sc{C}}}\to \omega_T\to 0.
\end{equation*}

We will denote the determinant of the Hodge bundle of $A_{\Sc{C}}$ by $\omega_{\Sc{C}}/\Ca{X}_{\Sc{C}}$ and the determinant of $\omega_{\tilde{A}_{\Sc{C}}}$ by $\tilde{\omega}_{\Sc{C}}$.  Then we have an isomorphism
\begin{equation*}
\omega_T\simeq X\otimes\O_{\Xi_{\Sc{C},\Sigma_{\Sc{C}}}}
\end{equation*} 
given by sending $x\in X$ to $\frac{dx}{x}$.  Hence we have a $\Gamma_{\Sc{C}}$-equivariant isomorphism

\begin{equation*}
\tilde{\omega}_{\Sc{C}}\simeq \det X\otimes \pi^*\omega_{\Sc{C}}
\end{equation*}
where $\pi:\Xi_{\Sc{C},\Sigma_{\Sc{C}}}\to\Ca{X}_\Sc{C}$ denotes the canonical map.

Now we will consider completions.  Recall that we have an isomorphism of formal completions
\begin{equation*}
\hat{\Ca{X}}_{K,\Sigma,\Sc{C}}^{\text{tor}}\simeq\Fr{X}_{\Sc{C},\Sigma_{\Sc{C}}}/\Gamma_{\Sc{C}}.
\end{equation*}
We will denote the formal completion of $\omega_K/\Ca{X}_{K,\Sigma,\Sc{C}}^{\text{tor}}$ along $\Ca{X}_{K,\Sigma,\Sc{C}}^{\text{tor}}$ by $\hat{\omega}_{K,\Sc{C}}$.  We will also denote the formal completion of $\tilde{\omega}_{\Sc{C}}/\Xi_{\Sc{C},\Sigma_{\Sc{C}}}$ along $\partial_{\Sc{C},\Sigma_{\Sc{C}}}$ by $\hat{\tilde{\omega}}_{\Sc{C}}/\Fr{X}_{\Sc{C},\Sigma_{\Sc{C}}}$.  We use the same symbol for its quotient by $\Gamma_{\Sc{C}}$, a line bundle on the formal scheme $\Fr{X}_{\Sc{C},\Sigma_{\Sc{C}}}/\Gamma_\Sc{C}$.  Then we have
\begin{equation*}
\hat{\tilde{\omega}}_\Sc{C}\simeq \det X\otimes\pi^*\omega_\Sc{C}
\end{equation*}
where $\pi:\Fr{X}_{\Sc{C},\Sigma_{\Sc{C}}}/\Gamma_\Sc{C}\to\Ca{X}_{\Sc{C}}$ is the canonical map.  Indeed, even though $\Gamma_{\Sc{C}}$ acts non trivially on $X$, the neatness of $\Gamma_{\Sc{C}}$ implies that $\Gamma_{\Sc{C}}$ acts trivially on $\det X$.

\begin{prop}\label{P:torcompare}
With notation as above, for each cusp label $\Sc{C}\in\text{\rm Cusp}_K$ we have a canonical isomorphism
\begin{equation*}
\hat{\omega}_{K,\Sc{C}}\simeq \hat{\tilde{\omega}}_{\Sc{C}}\simeq \det X\otimes \pi^*\omega_{\Sc{C}}
\end{equation*}
of line bundles over the formal scheme $\hat{\Ca{X}}_{K,\Sigma,\Sc{C}}^{\text{\rm tor}}$ where $\pi:\hat{\Ca{X}}_{K,\Sigma,\Sc{C}}^{\text{\rm tor}}\to\Ca{X}_{\Sc{C}}$ is the structural morphism.
\end{prop}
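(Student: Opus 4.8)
The plan is to deduce the statement from part 2 of Theorem \ref{T:toroidal}, which supplies, over the canonical identification $\hat{\Ca{X}}_{K,\Sigma,\Sc{C}}^{\text{tor}}\simeq\Fr{X}_{\Sc{C},\Sigma_{\Sc{C}}}/\Gamma_{\Sc{C}}$, a canonical $\O$-equivariant isomorphism between the formal completion $\hat{A}$ of the semiabelian scheme $A/\Ca{X}_{K,\Sigma}^{\text{tor}}$ along $\Ca{X}_{K,\Sigma,\Sc{C}}^{\text{tor}}$ and the formal semiabelian scheme over $\Fr{X}_{\Sc{C},\Sigma_{\Sc{C}}}/\Gamma_{\Sc{C}}$ denoted $\tilde{A}_{\Sc{C}}$ (item 3 of the list of abuses of notation in Section \ref{S:charts}). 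The second isomorphism in the statement, $\hat{\tilde{\omega}}_{\Sc{C}}\simeq\det X\otimes\pi^*\omega_{\Sc{C}}$, has already been constructed in the discussion immediately preceding the proposition, so all that remains is the first isomorphism $\hat{\omega}_{K,\Sc{C}}\simeq\hat{\tilde{\omega}}_{\Sc{C}}$.

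First I would recall that, with the abuse of notation fixed in Section \ref{S:canext}, $\omega_K$ is canonically $\det\omega_A$, where $\omega_A=e^*\Omega^1_{A/\Ca{X}_{K,\Sigma}^{\text{tor}}}$ is the co-Lie algebra of the semiabelian scheme $A$ furnished by Theorem \ref{T:toroidal}. Since the formation of $\Omega^1$ and of the identity section commutes with base change, so does the co-Lie algebra, and therefore so does it with formal completion: the formal completion of $\omega_A$ along $\Ca{X}_{K,\Sigma,\Sc{C}}^{\text{tor}}$ is canonically the co-Lie algebra $\omega_{\hat{A}}$ of the formal semiabelian scheme $\hat{A}$, whence $\hat{\omega}_{K,\Sc{C}}\simeq\det\omega_{\hat{A}}$. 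The identical computation applied to $\tilde{A}_{\Sc{C}}/\Xi_{\Sc{C},\Sigma_{\Sc{C}}}$ and then passed through the $\Gamma_{\Sc{C}}$-quotient, using that the quotient map is a local isomorphism of formal schemes so that $\det$, completion, and the equivariant structures all behave as expected, identifies $\hat{\tilde{\omega}}_{\Sc{C}}$ with $\det\omega_{\tilde{A}_{\Sc{C}}}$ over $\Fr{X}_{\Sc{C},\Sigma_{\Sc{C}}}/\Gamma_{\Sc{C}}$.

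The first isomorphism then follows immediately: passing to co-Lie algebras and then to determinants in the canonical isomorphism $\hat{A}\simeq\tilde{A}_{\Sc{C}}$ of part 2 of Theorem \ref{T:toroidal} produces a canonical isomorphism $\hat{\omega}_{K,\Sc{C}}\simeq\hat{\tilde{\omega}}_{\Sc{C}}$; composing with the already-established second isomorphism gives the full chain, and one records that the resulting identifications are compatible with the $\O$-action and with the Hecke and boundary structures because they are built entirely out of the canonical objects of Theorem \ref{T:toroidal}.

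I do not expect a genuine obstacle; the proposition is a bookkeeping consequence of part 2 of Theorem \ref{T:toroidal} together with the elementary compatibility of the co-Lie algebra with formal completion. The one point that calls for care is the $\Gamma_{\Sc{C}}$-descent: one must invoke the version of the isomorphism $\hat{A}\simeq\tilde{A}_{\Sc{C}}$ that already lives over the quotient $\Fr{X}_{\Sc{C},\Sigma_{\Sc{C}}}/\Gamma_{\Sc{C}}$, so that its consequences for co-Lie algebras and determinants descend without further argument, and one should confirm, as claimed just before the proposition, that the trivialization $\omega_T\simeq X\otimes\O$ combined with the neatness of $\Gamma_{\Sc{C}}$ really does trivialize the $\Gamma_{\Sc{C}}$-action on $\det X$.
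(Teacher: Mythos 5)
Your argument is exactly the paper's proof, just unpacked: the paper's entire proof is the single sentence that the first isomorphism comes from the canonical identification of the formal completions of $A$ and $\tilde{A}_{\Sc{C}}$ in part 2 of Theorem \ref{T:toroidal}, with the second isomorphism having been established in the discussion immediately preceding the proposition. You have correctly filled in the implicit bookkeeping (compatibility of co-Lie algebras with formal completion, $\Gamma_{\Sc{C}}$-descent, triviality of the $\Gamma_{\Sc{C}}$-action on $\det X$ via neatness) that the paper leaves to the reader.
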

\begin{proof}
The first isomorphism comes from the isomorphism between the formal completions of $A$ and $\tilde{A}_\Sc{C}$ (see part 2 of Theorem \ref{T:toroidal}.)
\end{proof}

We have a similar result for the minimal compactification.  We will denote by $\hat{\omega}_{K,\Sc{C}}/\hat{\Ca{X}}^{\text{min}}_K$ the formal completion of $\omega_K/\Ca{X}^{\text{min}}_{K,\Sc{C}}$ along $\Ca{X}_K$.

\begin{prop}\label{P:mincompare}
For each cusp label $\Sc{C}\in\text{\rm Cusp}_K$ we have a canonical isomorphism
\begin{equation*}
\hat{\omega}_{K,\Sc{C}}\simeq\det X\otimes\pi^*\omega_{\Sc{C}}
\end{equation*}
of line bundles on $\hat{\Ca{X}}_{K,\Sc{C}}^{\text{\rm min}}$, where $\pi:\hat{\Ca{X}}_{K,\Sc{C}}^{\text{\rm min}}\to\Ca{X}_\Sc{C}$ is the structural morphism of part 4 of Theorem \ref{T:minimal}.
\end{prop}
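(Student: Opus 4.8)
The plan is to deduce this from the analogous statement on the toroidal compactification, Proposition \ref{P:torcompare}, by pushing forward along $\pi_{K,\Sigma}$. Recall that (by the abuse of notation fixed at the end of the previous subsection) the line bundle $\omega_K$ on $\Ca{X}_K^{\text{min}}$ is $\pi_{K,\Sigma,*}\omega_K$, where on the right $\omega_K=\det\Ca{E}_K^{\text{can}}$ is the canonical extension of the determinant of the Hodge bundle on $\Ca{X}_{K,\Sigma}^{\text{tor}}$. Since $\Ca{X}_{K,\Sigma}^{\text{tor}}$ and $\Ca{X}_K^{\text{min}}$ are both projective over $R$, the map $\pi_{K,\Sigma}$ is proper, and using the convention that completions along the locally closed stratum are taken after removing the rest of its closure (exactly as in the proof of Theorem \ref{T:wellpos}) together with $\pi_{K,\Sigma}^{-1}(\Ca{X}_{\Sc{C}})=\Ca{X}_{K,\Sigma,\Sc{C}}^{\text{tor}}$ from part 3 of Theorem \ref{T:minimal}, the theorem on formal functions \cite[III, 4.1.5]{EGA} identifies the formal completion $\hat{\omega}_{K,\Sc{C}}$ of $\omega_K$ along $\Ca{X}_{\Sc{C}}\subset\Ca{X}_K^{\text{min}}$ with $\hat{\pi}_{K,\Sigma,*}$ of the formal completion of the canonical extension $\omega_K$ along $\Ca{X}_{K,\Sigma,\Sc{C}}^{\text{tor}}$, the pushforward being along $\hat{\pi}_{K,\Sigma}\colon\hat{\Ca{X}}_{K,\Sigma,\Sc{C}}^{\text{tor}}\to\hat{\Ca{X}}_{K,\Sc{C}}^{\text{min}}$ obtained by formally completing $\pi_{K,\Sigma}$.

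Next I would invoke Proposition \ref{P:torcompare}, which supplies a canonical isomorphism of the latter completion with $\det X\otimes\pi^*\omega_{\Sc{C}}$ on $\hat{\Ca{X}}_{K,\Sigma,\Sc{C}}^{\text{tor}}$, where $\pi$ there is the structural morphism to $\Ca{X}_{\Sc{C}}$. By part 4 of Theorem \ref{T:minimal} that structural morphism factors as $\hat{\Ca{X}}_{K,\Sigma,\Sc{C}}^{\text{tor}}\xrightarrow{\hat{\pi}_{K,\Sigma}}\hat{\Ca{X}}_{K,\Sc{C}}^{\text{min}}\xrightarrow{q}\Ca{X}_{\Sc{C}}$, where $q$ is the structural morphism appearing in the statement; hence $\pi^*\omega_{\Sc{C}}\simeq\hat{\pi}_{K,\Sigma}^*q^*\omega_{\Sc{C}}$. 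Applying $\hat{\pi}_{K,\Sigma,*}$ and the projection formula (legitimate since $q^*\omega_{\Sc{C}}$ is invertible and $\det X$ is a rank one constant), one gets
\begin{equation*}
\hat{\omega}_{K,\Sc{C}}\simeq\hat{\pi}_{K,\Sigma,*}\bigl(\det X\otimes\hat{\pi}_{K,\Sigma}^{*}q^{*}\omega_{\Sc{C}}\bigr)\simeq\det X\otimes q^{*}\omega_{\Sc{C}}\otimes\hat{\pi}_{K,\Sigma,*}\O_{\hat{\Ca{X}}_{K,\Sigma,\Sc{C}}^{\text{tor}}}.
\end{equation*}
Finally $\hat{\pi}_{K,\Sigma,*}\O_{\hat{\Ca{X}}_{K,\Sigma,\Sc{C}}^{\text{tor}}}=\O_{\hat{\Ca{X}}_{K,\Sc{C}}^{\text{min}}}$: this is the formal completion along $\Ca{X}_{\Sc{C}}$ of the identity $\pi_{K,\Sigma,*}\O_{\Ca{X}_{K,\Sigma}^{\text{tor}}}=\O_{\Ca{X}_K^{\text{min}}}$ (part 3 of Theorem \ref{T:minimal}), again via \cite[III, 4.1.5]{EGA}, and alternatively it is the trivial ($Z=\Ca{X}_{\Sc{C}}$) case of the computation carried out in the proof of Lemma \ref{L:bclem}. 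This yields the asserted isomorphism $\hat{\omega}_{K,\Sc{C}}\simeq\det X\otimes\pi^*\omega_{\Sc{C}}$; canonicity is inherited from that of all the maps involved, the triviality of the $\Gamma_{\Sc{C}}$-action on $\det X$ (already used in Proposition \ref{P:torcompare} via neatness of $\Gamma_{\Sc{C}}$) being what makes the twist by $\det X$ descend canonically.

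I do not expect a serious obstacle: the geometric input is entirely contained in Proposition \ref{P:torcompare}, and the remaining work is the formal bookkeeping of transporting a line bundle isomorphism through a proper morphism whose pushforward preserves the structure sheaf. The only point that deserves care is the compatibility of formal completion with $\pi_{K,\Sigma,*}$ — i.e.\ checking that the theorem on formal functions is being applied to the restriction of $\pi_{K,\Sigma}$ over the open subscheme $\Ca{X}_K^{\text{min}}-(\overline{\Ca{X}_{\Sc{C}}}-\Ca{X}_{\Sc{C}})$, so that the completions on both sides are taken in the sense of the convention of Theorem \ref{T:toroidal} — but this is precisely the setup already handled in the proof of Theorem \ref{T:wellpos}.
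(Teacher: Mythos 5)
Your argument is exactly the paper's proof (stated there in one line): formal functions to pass the pushforward through completion, Proposition \ref{P:torcompare} on the toroidal side, and the projection formula together with $\pi_{K,\Sigma,*}\O_{\Ca{X}_{K,\Sigma}^{\text{tor}}}=\O_{\Ca{X}_K^{\text{min}}}$ to descend the twist by $\det X\otimes\pi^*\omega_{\Sc{C}}$. You have simply written out the bookkeeping that the paper leaves implicit; the details are correct.
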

\begin{proof}
This follows from Proposition \ref{P:torcompare} combined with the theorem on formal functions \cite[III, 4.1.5]{EGA} and the projection formula.
\end{proof}

Now we have the following definition.
\begin{defn}\label{D:wellpossec}
\begin{enumerate}
\item
Let $Z^{\text{tor}}\subset\Ca{X}_{K,\Sigma}^{\text{tor}}$ be a closed subscheme which is well positioned at the boundary corresponding to closed subschemes $Z_{\Sc{C}}\subset\Ca{X}_{\Sc{C}}$ for each cusp label $\Sc{C}$.  Then a section
\begin{equation*}
A\in H^0(Z^{\text{tor}},\omega_K^{\otimes k}|_{Z^{\text{tor}}})
\end{equation*}
is said to be well positioned at the boundary if for each cusp label $\Sc{C}$ there is a section
\begin{equation*}
A_{\Sc{C}}\in H^0(Z_{\Sc{C}},\omega_{\Sc{C}}^{\otimes k}|_{Z_{\Sc{C}}})
\end{equation*}
such that the section
\begin{equation*}
\hat{A}\in H^0((\hat{\Ca{X}}_{K,\Sigma}^{\text{tor}})_{Z_{\Sc{C}}},\hat{\omega}_K)
\end{equation*}
obtained by formally completing $A$ along $\Ca{X}_{K,\Sigma,\Sc{C}}^{\text{tor}}$ is of the form
\begin{equation*}
\alpha^{\otimes k}\otimes \pi^*A_{\Sc{C}}
\end{equation*}
under the isomorphism of Proposition \ref{P:torcompare} restricted to $(\hat{\Ca{X}}_{K,\Sigma}^{\text{tor}})_{Z_{\Sc{C}}}$, where 
\begin{equation*}
\pi:(\hat{\Ca{X}}_{K,\Sigma,\Sc{C}})_{Z_{\Sc{C}}}\to Z_{\Sc{C}}
\end{equation*}
 is the structural morphism and $\alpha$ is a choice of generator for $\det X$.
\item
Let $Z^{\text{min}}\subset\Ca{X}_K^{\text{min}}$ be a closed subscheme which is well positioned at the boundary corresponding to closed subschemes $Z_{\Sc{C}}\subset\Ca{X}_{\Sc{C}}$ for each cusp label $\Sc{C}$.  Then a section
\begin{equation*}
A\in H^0(Z^{\text{min}},\omega_K^{\otimes k}|_{Z^{\text{min}}})
\end{equation*}
is said to be well positioned at the boundary if for each cusp label $\Sc{C}$ there is a section
\begin{equation*}
A_{\Sc{C}}\in H^0(Z_{\Sc{C}},\omega_{\Sc{C}}^{\otimes k}|_{Z_{\Sc{C}}})
\end{equation*}
such that the section
\begin{equation*}
\hat{A}\in H^0((\hat{\Ca{X}}_{K,\Sigma}^{\text{min}})_{Z_{\Sc{C}}},\hat{\omega}_K)
\end{equation*}
obtained by formally completing $A$ along $\Ca{X}_{K,\Sigma,\Sc{C}}^{\text{min}}$ is of the form
\begin{equation*}
\alpha^{\otimes k}\otimes \pi^*A_{\Sc{C}}
\end{equation*}
under the isomorphism of Proposition \ref{P:mincompare} restricted to $(\hat{\Ca{X}}_{K,\Sigma}^{\text{min}})_{Z_{\Sc{C}}}$, where
\begin{equation*}
\pi:(\hat{\Ca{X}}_{K,\Sigma,\Sc{C}})_{Z^{\text{min}}}\to Z_{\Sc{C}}
\end{equation*}
is the structural morphism and $\alpha$ is a choice of generator for $\det X$.
\end{enumerate}
\end{defn}

\begin{rem}\label{R:wellposrem}
\begin{enumerate}
\item
The choice of the generator $\alpha$ of $\det X$ is unique up to multiplication by $-1$.  Hence the sections $A_{\Sc{C}}$ for cusp labels $\Sc{C}$ associated to $A$ are possibly only determined up to multiplication by $-1$.  On the other hand if either $k$ is even or $2=0$ on $Z$ then replacing $\alpha$ by $-\alpha$ does not change $A_{\Sc{C}}$.  In our applications one of these conditions will always be satisfied, and hence we will speak of the $A_{\Sc{C}}$ as if they are canonically associated to $A$.
\item
Now suppose that $Z^{\text{tor}}\subset\Ca{X}_{K,\Sigma}^{\text{\rm tor}}$ and $Z^{\text{min}}\subset\Ca{X}_K^{\text{min}}$ are closed subschemes which are well positioned at the boundary and correspond as in Theorem \ref{T:wellpos}.  Then by part 2 of that Theorem, the pullback map
\begin{equation*}
H^0(Z^{\text{min}},\omega_K^{\otimes k})\to H^0(Z^{\text{tor}},\omega_K^{\otimes k})
\end{equation*}
is an isomorphism.  It is clear from the definition that this induces a bijection between sections well positioned at the boundary in each space and under this bijection the corresponding $A_\Sc{C}$ for cusp labels $\Sc{C}$ are the same.
\item
If $Z$ is a subscheme of $\Ca{X}_{K,\Sigma}^{\text{tor}}$ or $\Ca{X}_K^{\text{min}}$ which is well positioned at the boundary and $A\in H^0(Z,\omega^{\otimes k}|_Z)$ is well positioned at the boundary then it is clear from the definitions that the (scheme theoretic) vanishing locus $V(A)$ of $A$ is also well positioned at the boundary.
\end{enumerate}
\end{rem}

\subsection{Automorphic Vector Bundles Near the Boundary}

In Chapter \ref{cong} we will need to show that a certain sequence of sections of powers of $\omega$ (restricted to suitable subschemes) is a regular sequence on $V_{\rho,K}^{\text{sub}}$.  This is complicated by the fact that $\Ca{X}_K^{\text{min}}$ is not usually Cohen-Macaulay and $V_{\rho,K}^{\text{sub}}$ is not usually locally free.  Our aim is to prove that the situation is better when the sections are well positioned at the boundary in the sense of the previous section.

\begin{prop}\label{P:autminbound}
Suppose we have integers $r,m\geq 0$ and a sequence $A_0,A_1,\ldots,A_m$ where
\begin{equation*}
A_0\in H^0(\Ca{X}_K^{\text{\rm min}}\times_R R/\pi^r,\omega^{\otimes k_0}|_{\Ca{X}_K^{\text{\rm min}}\times_R R/\pi^r})
\end{equation*}
and for $i=1,\ldots,m$,
\begin{equation*}
A_i\in H^0(V(A_{i-1}),\omega^{\otimes k_i}|_{V(A_{i-1})}).
\end{equation*}
Suppose that for $i=0,\ldots,m$, $A_i$ and $V(A_i)$ are well positioned at the boundary (in fact this is only a condition on the $A_i$ by part 3 of Remark \ref{R:wellposrem}.)  Suppose further that for each cusp label $\Sc{C}$, the associated sequence of sections $A_{\Sc{C},0},A_{\Sc{C},1},\ldots,A_{\Sc{C},m}$ on subschemes of $\Ca{X}_\Sc{C}$ is a regular sequence.  Then for each representation $\rho$ of $M$ on a finite free $R/\pi^r$-module, the sequence $A_0,A_1,\ldots,A_m$ is $V_{\rho}^{\text{\rm sub}}$-regular.
\end{prop}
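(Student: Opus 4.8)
The plan is to check $V_\rho^{\text{sub}}$-regularity stalk-locally on $\Ca{X}_K^{\text{min}}\times_R R/\pi^r$ and, near the boundary, to pass to formal completions along the strata $\Ca{X}_{\Sc{C}}$, where the ``well positioned'' hypothesis will let one descend the problem to the regular sequences $A_{\Sc{C},0},\dots,A_{\Sc{C},m}$ on the smaller PEL modular varieties $\Ca{X}_{\Sc{C}}$. That a sequence of global sections of powers of $\omega_K$ (restricted to the iterated vanishing loci $V(A_{i-1})$) be $V_\rho^{\text{sub}}$-regular is a condition on stalks, and it is both detected and preserved under faithfully flat local base change, in particular after completing local rings. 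Since $\Ca{X}_K^{\text{min}}$ is stratified by the locally closed subschemes $\Ca{X}_{\Sc{C}}$ for $\Sc{C}\in\text{Cusp}_K$ (including the cusp label whose stratum is the interior $\Ca{X}_K$, for which $\Gamma_{\Sc{C}}=1$ and the toric data are trivial), and since the completed local ring at a point of $\Ca{X}_{\Sc{C}}$ is obtained by localizing and completing the structure sheaf of the formal completion $\hat{\Ca{X}}_{K,\Sc{C}}^{\text{min}}$ --- with $V_\rho^{\text{sub}}$ localizing the completion of $V_\rho^{\text{sub}}$ along $\Ca{X}_{\Sc{C}}$ --- it suffices to prove, for each cusp label $\Sc{C}$, that $A_0,\dots,A_m$ is a regular sequence on the formal completion of $V_\rho^{\text{sub}}\otimes_R R/\pi^r$ along $\Ca{X}_{\Sc{C}}$.

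For the cusp label corresponding to the interior this is easy. There $\Ca{X}_K$ is smooth over $R$, hence regular, so $\Ca{X}_K\times_R R/\pi^r$ is a Cartier divisor in a regular scheme and hence Cohen--Macaulay, and $V_\rho^{\text{sub}}$ restricts there to the locally free sheaf $V_{\rho,K}\otimes R/\pi^r$ of Proposition \ref{P:autprop}. The hypothesis supplies that $A_0,\dots,A_m$ restricts to a regular sequence on the structure sheaves of the iterated vanishing loci --- equivalently, since those loci are inductively complete intersections in a Cohen--Macaulay scheme, that each $A_i$ is not a zero divisor there, a fact one also reads off from the set-theoretic vanishing loci --- and by local freeness this sequence is then $V_{\rho,K}^{\text{sub}}$-regular, the final quotient being nonzero already on $\Ca{X}_K$.

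For a boundary cusp label $\Sc{C}$ I would use the structure of $V_\rho^{\text{sub}}$ near $\Ca{X}_{\Sc{C}}$. Running the computation of Lemma \ref{L:bclem} with $V_\rho^{\text{sub}}$ in place of the structure sheaf --- factoring the structural map $\pi$ through the tower $\Xi_{\Sc{C},\Sigma_{\Sc{C}}}\to C_{\Sc{C}}\to\tilde{\Ca{X}}_{\Sc{C}}\to\Ca{X}_{\Sc{C}}$ of abelian-scheme torsors and the smooth torus embedding, and commuting $\Gamma_{\Sc{C}}$-invariants past base change by \'etale descent exactly as there --- one obtains a Fourier--Jacobi type description: the pushforward to $\Ca{X}_{\Sc{C}}$ of the completion of $V_\rho^{\text{sub}}$ along $\Ca{X}_{\Sc{C}}$ is a completed sum, over $\Gamma_{\Sc{C}}$-orbits in a sub-semigroup of $\Bf{S}_{\Sc{C}}$, of terms of the shape $\Ca{W}_{\rho,l}\otimes_{\O_{\Ca{X}_{\Sc{C}}}}\pi_{2,*}\Psi_{\Sc{C}}(l)$ with each $\Ca{W}_{\rho,l}$ locally free on $\Ca{X}_{\Sc{C}}$, the whole object being faithfully flat over $\O_{\Ca{X}_{\Sc{C}}}$. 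The point of the ``well positioned'' hypothesis (Definition \ref{D:wellpossec}, via the comparison isomorphisms of Propositions \ref{P:torcompare} and \ref{P:mincompare}) is precisely that each $A_i$ acts on this completion through $\pi^* A_{\Sc{C},i}$: its Fourier--Jacobi expansion has only the constant term. Hence regularity of $A_0,\dots,A_m$ on the completion amounts to regularity of the pulled-back sequence $\pi^* A_{\Sc{C},0},\dots,\pi^* A_{\Sc{C},m}$ on each $\Ca{W}_{\rho,l}$ tensored with the flat $\O_{\Ca{X}_{\Sc{C}}}$-algebra above, which follows from the hypothesis that $A_{\Sc{C},0},\dots,A_{\Sc{C},m}$ is a regular sequence on the structure sheaves of the iterated vanishing loci in $\Ca{X}_{\Sc{C}}$, together with the stability of regular sequences under tensoring with locally free modules and under flat base change (the final quotient remaining nonzero since $\pi$ is surjective onto $\Ca{X}_{\Sc{C}}$).

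The main obstacle is the boundary structure input just used: establishing the Fourier--Jacobi expansion of $V_\rho^{\text{sub}}$ near the strata $\Ca{X}_{\Sc{C}}$ in the form above --- the analogue for automorphic vector bundles of Lemma \ref{L:bclem} --- and checking that a well positioned section in the sense of Definition \ref{D:wellpossec} genuinely acts through its constant term $\pi^* A_{\Sc{C},i}$, keeping careful track of the $\Gamma_{\Sc{C}}$-invariants via \'etale descent as in that lemma. Once this is in hand, the reduction to completions, the interior Cohen--Macaulay computation, and the behaviour of regular sequences under flat base change are all routine.
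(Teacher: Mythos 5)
Your approach matches the paper's essentially step for step: reduce to the formal completion of $V_\rho^{\text{sub}}$ along each boundary stratum $\Ca{X}_{\Sc{C}}$, use well-positionedness so that each $A_i$ acts on the completion through $\pi^*A_{\Sc{C},i}$ (converting the question to one about regular sequences on $\O_{\Ca{X}_{\Sc{C}}}$-modules), and then prove a structure theorem exhibiting the completion as a product of locally free $\O_{\Ca{X}_{\Sc{C}}\times_RR/\pi^r}$-modules, from which the conclusion follows formally. Your treatment of the interior stratum is also fine, though in fact it is subsumed in the general cusp-label argument since for the trivial cusp label $\Ca{X}_{\Sc{C}}=\Ca{X}_K$ and the toric data are empty.

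The obstacle you flag at the end is indeed the non-routine step, and the paper fills it with a specific input you should be aware of: \cite[Prop.\ 5.6]{La14}. That result gives, after pulling back along the local isomorphism $p:\Fr{X}_{\Sc{C},\Sigma_{\Sc{C}}}\to\Fr{X}_{\Sc{C},\Sigma_{\Sc{C}}}/\Gamma_{\Sc{C}}$, an identification $p^*\hat{V}_{\rho,K,\Sigma}^{\text{sub}}\simeq\pi_1^*V\otimes\I_\partial$ for a $\Gamma_{\Sc{C}}$-equivariant sheaf $V/C_{\Sc{C}}$ admitting a finite filtration whose graded pieces are of the form $\pi_2^*V_i'$ with $V_i'$ locally free on $\tilde{\Ca{X}}_{\Sc{C}}\times_RR/\pi^r$. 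Feeding this into the $\Gamma_{\Sc{C}}$-invariants computation of Lemma \ref{L:bclem} --- and using the relative ampleness of $\Psi_{\Sc{C}}(l)$ over $\tilde{\Ca{X}}_{\Sc{C}}$ for $l\in P_{\Sc{C}}^{\vee,+}$, together with the triviality of the stabilizer $\Gamma_{\Sc{C},l}$ for such $l$ --- yields precisely the product-of-locally-free-sheaves description you want. One small bookkeeping correction to your sketch: the relevant terms are $\pi_{3,*}\pi_{2,*}(\Psi_{\Sc{C}}(l)\otimes V)$, i.e.\ the automorphic sheaf factor sits inside the pushforward along the abelian torsor $\pi_2$ rather than being tensored with $\pi_{2,*}\Psi_{\Sc{C}}(l)$ afterwards; local freeness of each factor is what one actually proves, and (together with $\pi_{2,*}$ killing higher cohomology by ampleness) this is what lets the filtration on $V$ descend to a filtration with locally free graded pieces.
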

\begin{proof}
It suffices to prove the corresponding statement after formally completing along $\Ca{X}_\Sc{C}\subset\Ca{X}_K^{\text{min}}$ for each cusp label $\Sc{C}\in\text{Cusp}_K$.  Let $\hat{V}_{\rho,K,\Sc{C}}^{\text{sub}}$ denote the formal completion of $V_{\rho,K}^{\text{sub}}$ along $\Ca{X}_{\Sc{C}}$.

From part 4 of Theorem \ref{T:minimal} that we have a commutative diagram of formal schemes
\begin{equation*}
\begin{tikzcd}
\hat{\Ca{X}}_{K,\Sigma,\Sc{C}}^{\text{\rm tor}}\arrow{r}\arrow{d}{\hat{\pi}_{K,\Sigma}}&\Fr{X}_{\Sc{C},\Sigma_{\Sc{C}}}/\Gamma_{\Sc{C}}\arrow{d}{\pi}\\
\hat{\Ca{X}}_{K,\Sc{C}}^{\text{\rm min}}\arrow{r}& \Ca{X}_{\Sc{C}}
\end{tikzcd}
\end{equation*}
in which the top horizontal arrow is an isomorphism, and the bottom horizontal arrow is not usually an isomorphism, but does induce an isomorphism of underlying topological spaces.

As $A_0,\ldots,A_i$ are well positioned at the boundary, we have
\begin{equation*}
\hat{V}_{\rho,K}^{\text{sub}}|_{V(A_i)}=\hat{V}_{\rho,K}^{\text{sub}}\otimes_{\O_{\hat{\Ca{X}}_{K,\Sc{C}}^{\text{min}}}}\O_{V(A_i)}=\hat{V}_{\rho,K}^{\text{sub}}\otimes_{\O_{\Ca{X}_{\Sc{C}}}}\O_{V(A_{\Sc{C},i})}=\hat{V}_{\rho,K}^{\text{sub}}|_{V(A_{\Sc{C},i})}
\end{equation*}
(note that the equalities on the far left and far right are just the definition of the restriction, but we want to emphasize that in this formula the restrictions correspond to tensor products over different ringed spaces!)  Hence in order to show that $A_i$ is a non zero divisor on $\hat{V}_{\rho,K}^{\text{sub}}|_{V(A_{i-1})}$ we need to show that $A_{\Sc{C},i}$ is a non zero divisor on the (not usually quasi-coherent!) sheaf of $\O_{V(A_{\Sc{C},i-1})}$-modules $\hat{V}_{\rho,K}^{\text{sub}}|_{V(A_{\Sc{C},i})}$.  To complete the proof, we will show that as a sheaf of $\O_{\Ca{X}_{\Sc{C}}}$-modules, $\hat{V}_{\rho,K}^{\text{sub}}$ is a (usually infinite) product of locally free sheaves of $\O_{\Ca{X}_{\Sc{C}}\times_RR/\pi^r}$-modules.

By the theorem on formal functions \cite[III, 7.7.5]{EGA}, 
\begin{equation*}
\hat{V}_{\rho,K}^{\text{sub}}=\hat{\pi}_{K,\Sigma,*}\hat{V}_{\rho,K,\Sigma}^{\text{sub}}
\end{equation*}
where $\hat{V}_{\rho,K,\Sigma}^{\text{sub}}$ denotes the formal completion of $V_{\rho,K,\Sigma}^{\text{sub}}$ along $\Ca{X}_{K,\Sigma,\Sc{C}}^{\text{tor}}$.  

Now we have the local isomorphism
\begin{equation*}
p:\Fr{X}_{\Sc{C},\Sigma_{\Sc{C}}}\to\Fr{X}_{\Sc{C},\Sigma_{\Sc{C}}}/\Gamma_{\Sc{C}}.
\end{equation*}
Denote $\tilde{\pi}=\pi p$.  Then as $\O_{\Ca{X}_{\Sc{C}}}$-modules
\begin{equation*}
\pi_*\hat{V}_{\rho,K,\Sigma}^{\text{sub}}=(\tilde{\pi}_*p^*V_{\rho,K,\Sigma}^{\text{sub}})^{\Gamma_{\Sc{C}}}
\end{equation*}

We will study this in a manner similar to the proof of Lemma \ref{L:bclem}.  The map $\tilde{\pi}$ factors as a composition
\begin{equation*}
\Fr{X}_{\Sc{C},\Sigma_{\Sc{C}}}\overset{\pi_1}{\to}C_{\Sc{C}}\overset{\pi_2}{\to}\tilde{\Ca{X}}_{\Sc{C}}\overset{\pi_3}{\to}\Ca{X}_{\Sc{C}}
\end{equation*}
First we recall that by Proposition 5.6 of \cite{La14},
\begin{enumerate}
\item There is a $\Gamma_{\Sc{C}}$ equivariant sheaf $V/C_{\Sc{C}}$ such that
\begin{equation*}
p^*\hat{V}_{\rho,K,\Sigma}^{\text{sub}}=\pi_1^*V\otimes\I_{\partial}
\end{equation*}
\item $V$ has a filtration
\begin{equation*}
V=V^0\supset V^1\supset\cdots\supset V^d=0
\end{equation*}
such that for each $i$, $V^i/V^{i+1}$ is of the form $\pi_2^*V_i'$ for $V_i'/\tilde{X}_{\Sc{C}}$ a locally free sheaf of $\O_{\tilde{X}_{\Sc{C}}\times_RR/\pi^r}$-modules.
\end{enumerate}

By an argument similar to that in the proof of Lemma \ref{L:bclem} we have
\begin{equation*}
\pi_{1,*}p^*\hat{V}_{\rho,K,\Sigma}^{\text{sub}}=\prod_{l\in P_{\Sc{C}}^{\vee,+}}\Psi_{\Sc{C}}(l)\otimes_{\O_{C_{\Sc{C}}}}V
\end{equation*}
and hence
\begin{equation*}
\tilde{\pi}_*p^*\hat{V}_{\rho,K,\Sigma}^{\text{sub}}=\prod_{l\in P_{\Sc{C}}^{\vee,+}}\pi_{3,*}\pi_{2,*}(\Psi_{\Sc{C}}(l)\otimes V)
\end{equation*}
Recall from the proof of Lemma \ref{L:bclem} that for $l\in P_{\Sc{C}}^{\vee,+}$, $\Psi_{\Sc{C}}(l)$ is relatively ample over $\tilde{X}_{\Sc{C}}$ and hence $\pi_{2,*}\Psi_{\Sc{C}}(l)$ is locally free and $R^1\pi_{2,*}\Psi_{\Sc{C}}(l)=0$.  Then it follows from this and point 2 above that $\pi_{2,*}(\Psi_{\Sc{C}}(l)\otimes V)$ is a locally free sheaf of $\O_{\tilde{\Ca{X}}_{\Sc{C}}\times_RR/\pi^r}$-modules.  Hence each of the terms in the product above are locally free sheaves of $\O_{\Ca{X}_{\Sc{C}}\times_RR/\pi^r}$-modules.

It remains to analyze what happens when we take $\Gamma_{\Sc{C}}$-invariants.  For $l\in P_{\Sc{C}}^{\vee,+}$, the stabilizer of $l$ in $\Gamma_{\Sc{C}}$ is trivial (see the proof of Lemma \ref{L:bclem}.)  Thus the the product over the terms corresponding to the $\Gamma_{\Sc{C}}$-orbit of $l$ is
\begin{equation*}
\text{Ind}_{\{1\}}^{\Gamma_{\Sc{C}}}\pi_{3,*}\pi_{2,*}(\Psi_{\Sc{C}}(l)\otimes V).
\end{equation*}
Hence $(\tilde{\pi}_*p^*V_{\rho,K,\Sigma}^{\text{sub}})^{\Gamma_{\Sc{C}}}$ is a product of locally free sheaves of $\O_{\Ca{X}_{\Sc{C}}\times_RR/\pi^r}$-modules as desired.
\end{proof}

\chapter{Ekedahl-Oort Stratification and Generalized Hasse Invariants}\label{EOPEL}

The first goal of this chapter is to recall the Ekedahl-Oort stratification of the special fibers of PEL type Shimura varieties.  It was introduced and studied first in the Siegel case by Ekedahl and Oort \cite{Oo01}.  For Hilbert modular varieties it was studied by Goren-Oort \cite{GO00}.  The general PEL case was taken up by Moonen and Wedhorn \cite{Mo01} \cite{Mo04} \cite{We01} (see also \cite{MW04} and \cite{VW13}.)  Then we will introduce our ``generalized Hasse invariants'' on the open Ekedahl-Oort strata.  Their definition is directly inspired by the ``generalized Raynaud trick'' of Ekedahl and Oort \cite{Oo01}.  We will then formulate the first main result of this thesis, Theorem \ref{T:hasseext}, which states that some power of these Hasse invariants extends to the closed Ekedahl-Oort stratum and vanishes on the complement of the open stratum.  We reduce the proof of this theorem to the Siegel case, which will be completed in the next chapter.

Let us now give a more detailed overview of this chapter.  In section \ref{S:btdefs} we define 1-truncated Barsotti-Tate with various additional structures (polarizations and endomorphisms.)  The key examples will be the $p$-torsion subgroup schemes of the abelian schemes parameterized by the PEL modular varieties introduced in Chapter \ref{PEL}.  We also introduce a more general notion of ``partial $\BT$s'' which includes the $p$-torsion of of the semiabelian schemes over the toroidal compactifications of Chapter \ref{compact}.  In section \ref{S:canfil} we will recall the theory of the canonical filtration, due to Ekedahl and Oort.  The canonical filtration plays a central role in the theory of $\BT$s and in this thesis.  In section \ref{S:moonen} we will recall the classification of $\BT$ with polarization and endomorphisms over algebraically closed fields of characteristic $p$, due to Kraft (unpublished work,) Oort \cite{Oo01}, Moonen \cite{Mo01}, and Moonen-Wedhorn \cite{MW04}.  Essentially the classification shows that such a $\BT$ is ``determined by its canonical filtration.''  In section \ref{S:EO} we will introduce the Ekedahl-Oort stratification of a PEL modular variety.  We will define it using the theory of canonical filtrations, as in \cite{Oo01}, but for general PEL modular varieties.  But using the results of Section \ref{S:moonen} we can show that our definition agrees with the usual one.  As a consequence of our approach we are able to prove Theorem \ref{T:opensieg} which is perhaps the first new result of this chapter: it states that under the maps
\begin{equation*}
\phi_{K,\tilde{K}}:X_K\to X_{\tilde{K}}
\end{equation*}
of section \ref{S:morsieg} from a general PEL modular variety to a Siegel modular variety, each EO strata of $X_K$ is open in the pre image of some EO stratum of $X_{\tilde{K}}$ under $\phi_{K,\tilde{K}}$.  Finally in section \ref{S:pelhasse} we turn to generalized Hasse invariants.  They are first constructed as non vanishing sections of a power of the determinant of the Hodge bundle on the open Ekedahl-Oort strata, using the canonical filtration.  Then we state our main Theorem \ref{T:hasseext} on the existence of generalized Hasse invariants: it states that some power of these these sections extend to the closed Ekedahl-Oort strata and vanish on the complement of the open strata.  In this chapter we will explain how to reduce it to the Siegel case.  The proof in the Siegel case will be given in the next chapter.

Throughout this chapter we work over a base $S$ which is assumed locally noetherian and of characteristic $p$.

\section{Definitions}\label{S:btdefs}

\subsection{Truncated Barsotti-Tate Groups}

\begin{defn}
A (1-)\emph{truncated Barsotti-Tate group} (or $\BT$ for short) is a finite flat group scheme $G/S$ such that
\begin{equation*}
\begin{tikzcd}
G\arrow{r}{F}&G^{(p)}\arrow{r}{V}&G
\end{tikzcd}
\end{equation*}
is exact.
\end{defn}

Note in particular that if $G/S$ is a $\BT$, then $[p]_G=0$ and $G[F]\subset G$ is finite flat.

\begin{rem}
More generally one can define $n$-truncated Barsotti-Tate groups for any integer $n$ over schemes which aren't necessarily of characteristic $p$.  However we won't need these notions.
\end{rem}

The $p$-torsion subgroup of an abelian scheme or $p$-divisible group over $S$ is a $\BT$.  However we will also need to consider the $p$-torsion subgroup of semiabelian schemes, which are not finite in general.  Hence we introduce the following non-standard notion.

\begin{defn}
A \emph{partial} $\BT$ is a quasi-finite, flat, separated, group scheme $G/S$ such $[p]_G=0$ and for every $s\in S$, the fiber $G_s/k(s)$ is a $\BT$.
\end{defn}

We remark that it really is necessary to assume that $[p]_G=0$ (consider the kernel of $F^2$ on the universal characteristic $p$ first order deformation of a supersingular elliptic curve.)  We now prove some lemmas to show that this is a reasonable definition.

\begin{lem}\label{L:gpfinite}
Let $G/S$ be a quasi-finite group scheme such that for every $s\in S$, the fiber $G_s/k(s)$ is connected.  Then $G/S$ is finite.
\end{lem}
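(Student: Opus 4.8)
The plan is to reduce the statement to a standard fact about quasi-finite separated group schemes over a base, namely that properness (equivalently, being finite) can be detected fiberwise once one knows the scheme is flat, or more precisely that the locus where a quasi-finite separated morphism is finite is open and closed on the base under suitable hypotheses. First I would invoke Zariski's Main Theorem: since $G/S$ is quasi-finite and separated, there is an open immersion $G \hookrightarrow \overline{G}$ into a scheme $\overline{G}$ finite over $S$. The complement $Z = \overline{G} \setminus G$ is closed in $\overline{G}$, hence its image $\pi(Z)$ in $S$ is closed (because $\overline{G} \to S$ is finite, hence proper). The point $e \in G(S)$ of the identity section shows that for every $s \in S$ the fiber $G_s$ is nonempty, and the hypothesis that each $G_s$ is connected will be used to show $\pi(Z) = \emptyset$, i.e. that $G = \overline{G}$ is already finite.

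The key step is the fiberwise analysis. Fix $s \in S$. The fiber $\overline{G}_s$ is finite over $k(s)$, and $G_s$ is an open subscheme of it; moreover $G_s$ is a group scheme over $k(s)$, so it is a disjoint union of copies of its identity component — but by hypothesis $G_s$ is connected, so $G_s$ is a single connected finite-type $k(s)$-scheme which happens to be open in the finite $k(s)$-scheme $\overline{G}_s$. An open subscheme of a finite (hence Artinian, hence a finite disjoint union of local Artinian) $k(s)$-scheme is a union of some of those local pieces; since $G_s$ is connected it is exactly one of them, and in particular $G_s$ is \emph{closed} in $\overline{G}_s$. Being simultaneously open and closed in $\overline{G}_s$, and with $\overline{G}_s$ finite, $G_s$ is clopen; hence $Z_s = \overline{G}_s \setminus G_s$ is also clopen in $\overline{G}_s$, and the point to extract is: $s \notin \pi(Z)$ would follow if $Z_s = \emptyset$. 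To see $Z_s = \emptyset$: if it were nonempty, pick a point $z \in Z_s$; I claim one can produce a point of $G_s$ specializing to or specialized by $z$ within a connected fiber, contradicting that $G_s$ is both open and closed. More carefully, the right way to conclude is: $G \to S$ being flat and of finite presentation is open, so $G$ is open in $\overline{G}$ and its image in $S$ is open; combined with the identity section, $G \to S$ is surjective on fibers; and then the truly clean argument is that $\overline{G} \to S$ finite plus $G \subset \overline{G}$ open with $G_s = \overline{G}_s$ for all $s$ forces $G = \overline{G}$.

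So I would organize it as: (i) $G \hookrightarrow \overline{G}$ finite over $S$ by Zariski's Main Theorem; (ii) for each $s$, show $G_s = \overline{G}_s$ as schemes — here is where connectedness of $G_s$ enters, since $G_s$ open in the Artinian scheme $\overline{G}_s$ and connected forces it to be one of the finitely many local components, but then $\overline{G}_s$, being the scheme-theoretic closure considerations aside, must equal $G_s$ because $\overline{G}_s$ is flat-limit-compatible with $G$... more precisely, since $G$ is dense in $\overline{G}$ fiber by fiber when $\overline{G}$ is taken minimal (the scheme-theoretic closure of $G$ in a finite $S$-scheme), and the scheme-theoretic closure restricted to a fiber contains the fiber's closure, one gets $\overline{G}_s = \overline{G_s} = G_s$ using that $G_s$ is already closed; (iii) conclude $G = \overline{G}$ because a closed immersion $\overline{G} \to \overline{G}$ — wait, we have $G \hookrightarrow \overline{G}$ open with all fibers equalities, and both finite over $S$ after (ii)? — then $G \to \overline{G}$ is an open immersion which is surjective, hence an isomorphism, so $G/S$ is finite.

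The main obstacle is step (ii): making rigorous that connectedness of each geometric fiber of the quasi-finite group scheme, together with the group structure (which forces each fiber to be a finite disjoint union of translates of the identity component, but connectedness kills the translates so each fiber is \emph{local} Artinian over its residue field), upgrades to the global statement $G = \overline{G}$. I expect the cleanest route is: take $\overline{G}$ to be the scheme-theoretic image (closure) of $G$ in \emph{some} finite $S$-scheme provided by Zariski's Main Theorem; then $\overline{G}$ is finite over $S$; flatness of $\overline{G}/S$ need not hold, but one shows fiber by fiber that $\overline{G}_s$ has the same underlying space and structure as $G_s$ using that $G_s$ is clopen in $\overline{G}_s$ (open because $G \subset \overline{G}$ open; closed because $G_s$, being local Artinian, is a clopen piece of the Artinian scheme $\overline{G}_s$), so $\overline{G}_s \setminus G_s$ is clopen, and if nonempty would give a nonempty open subset of $\overline{G}$ disjoint from the dense $G$, a contradiction. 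Hence $G_s = \overline{G}_s$ for all $s$, and then $G \hookrightarrow \overline{G}$ is a surjective open immersion, therefore an isomorphism, so $G \cong \overline{G}$ is finite over $S$.
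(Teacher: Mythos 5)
Your overall plan (Zariski's Main Theorem to realize $G$ as an open subscheme of a finite $S$-scheme $\overline{G}$, then argue $G = \overline{G}$) is a valid framework, but the key step --- showing $G_s = \overline{G}_s$ for every $s$ --- is not actually established by what you write. The argument ``$\overline{G}_s \setminus G_s$ is clopen, and if nonempty would give a nonempty open subset of $\overline{G}$ disjoint from the dense $G$'' conflates openness in the fiber $\overline{G}_s$ with openness in $\overline{G}$: a subset that is clopen in a fiber is in general neither open nor closed in the total space. The alternative assertion that ``$G$ is dense in $\overline{G}$ fiber by fiber'' is also false in general; scheme-theoretic density does not descend to fibers, and $\overline{G}_s$ may strictly contain the closure of $G_s$ inside $\overline{G}_s$. (You also appeal to flatness of $G/S$ at one point, which is not among the hypotheses.) What is actually needed, and never established, is that $\overline{G}_s$ itself has a single point: knowing only that $G_s$ is connected and clopen in $\overline{G}_s$ leaves room for $\overline{G}_s$ to be the disjoint union of $G_s$ with something nonempty.

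The decisive input you leave essentially unused is that the identity section is a closed immersion into $\overline{G}$. Indeed $\overline{G} \to S$ is finite, hence separated, and a section of a separated morphism is a closed immersion, so $e \colon S \to \overline{G}$ is a closed immersion with closed image $e(S)$. Your fiber analysis shows that each $G_s$ consists of the single point $e(s)$, so the underlying set of $G$ coincides with that of $e(S)$; hence $G$ is closed as well as open in $\overline{G}$, therefore a clopen union of connected components of $\overline{G}$, and therefore finite over $S$. The paper packages the same input differently: the fiber computation together with the identity section show that $\pi \colon G \to S$ and $e \colon S \to G$ are mutually inverse continuous bijections, so $\pi$ is a homeomorphism, and since the hypotheses persist under arbitrary base change $\pi$ is a \emph{universal} homeomorphism; one then invokes EGA IV 8.11.6, that a universal homeomorphism of finite type is finite.
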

\begin{proof}
The hypothesis implies that $G\to S$ is a universal homeomorphism, and so the conclusion follows from \cite[IV 8.11.6]{EGA}.
\end{proof}

\begin{lem}\label{L:omegafree}
Let $G/S$ be a partial $\BT$.  Then $G[F]/S$ is finite flat.
\end{lem}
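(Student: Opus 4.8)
The statement is that for a partial $\BT$ $G/S$ (with $S$ locally noetherian of characteristic $p$), the subgroup scheme $G[F]$ is finite flat over $S$. The plan is to reduce finiteness and flatness to fiberwise statements, using that $G[F]$ is the kernel of the relative Frobenius $F \colon G \to G^{(p)}$.

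First I would establish finiteness. The key observation is that $G[F] = \ker(F_{G/S})$, and relative Frobenius is a purely inseparable morphism: on each fiber $G_s/k(s)$, the kernel $G_s[F]$ is an infinitesimal (hence connected) group scheme, since $F$ is a universal homeomorphism onto its image. So every fiber of $G[F] \to S$ is connected. Since $G/S$ is quasi-finite, separated, and flat, the closed subgroup scheme $G[F]$ is also quasi-finite and separated over $S$. Then Lemma \ref{L:gpfinite} applies directly: a quasi-finite group scheme with connected fibers over (a locally noetherian) $S$ is finite. This gives that $G[F] \to S$ is finite.

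Next I would prove flatness. Since $G[F] \to S$ is now known to be finite, it is enough to show that $\O_{G[F]}$ is a locally free $\O_S$-module; as $S$ is locally noetherian, by the local criterion for flatness it suffices to check that the fiber rank is locally constant, or equivalently to show flatness fiber by fiber using that each fiber $G_s/k(s)$ is a genuine $\BT$. Here is where the $\BT$ hypothesis on the fibers enters essentially: for a $\BT$ over a field, $G_s[F] = \ker F$ has order $p^{d}$ where $d = \dim \Lie(G_s)$, and more to the point, on a $\BT$ one has $\ker F = \im V$ and the sequence $G_s \xrightarrow{F} G_s^{(p)} \xrightarrow{V} G_s$ is exact, which pins down the order of $G_s[F]$ in terms of the order of $G_s$ (which is locally constant since $G/S$ is flat and finite-rank-wise locally constant after the previous paragraph — or, more carefully, since $G/S$ is flat we can work over a connected base and the fiber order of $G_s$ is constant). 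Combining: the order of $G_s$ is locally constant in $s$, and on each fiber $|G_s| = |G_s[F]| \cdot |G_s^{(p)}/\!\im F| = |G_s[F]| \cdot |G_s[V]^{(p)}|$... — more cleanly, exactness gives $|G_s| = |G_s[F]|\cdot|G_s[V]|$ and self-duality type considerations; the cleanest route is simply that $|\ker F_{G_s}| = p^{\dim_{k(s)} \omega_{G_s}}$ and $\omega_{G_s}$ has locally constant dimension because $\omega_G = e^*\Omega^1_{G/S}$ is locally free (as $G/S$ is flat and $G \to S$ is, locally on $G$, finitely presented). So the rank of $\O_{G[F]}$ is locally constant, hence $G[F]$ is flat over $S$.

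The main obstacle, and the step that needs the most care, is pinning down that the fiber rank of $G[F]$ is locally constant — i.e. controlling $\dim_{k(s)}\omega_{G_s}$ as $s$ varies. The clean way around this is to note that $\omega_G = e^*\Omega^1_{G/S}$ is a coherent $\O_S$-module, and for a quasi-finite flat separated $G/S$ it is in fact locally free (this is where one uses that $G$ is a partial $\BT$, so that locally around the identity section $G$ has the structure making $\Omega^1_{G/S}$ locally free over $S$; alternatively one appeals to the fact that $G[F]$ is finite over $S$ from step one, and for a finite $S$-group scheme $H$ with $\Lie H$ locally free, $|H| = p^{\operatorname{rk}\Lie H}$ when $H = H[F]$ is killed by Frobenius). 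Once $\omega_G$ is locally free, the identity $|G_s[F]| = p^{\dim \omega_{G_s}} = p^{\operatorname{rk}\omega_G}$ is locally constant and flatness follows from finiteness plus locally constant fiber dimension over a reduced... — no, over a locally noetherian base we do need the local criterion, so the argument is: finite morphism with locally free $e^*\Omega^1$ and every fiber killed by Frobenius forces $\O_{G[F]}$ to have locally constant rank, hence be locally free. I would expect the write-up to invoke \cite[VIIa]{SGA3} for the structure of Frobenius kernels to make this last identification rigorous.
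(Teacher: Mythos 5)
Your finiteness argument matches the paper's: $G[F]$ has connected fibers (indeed single points), so Lemma \ref{L:gpfinite} applies. The problem is with your flatness step, which has a genuine gap. You try to conclude flatness from the fact that the fiber rank $\dim_{k(s)}\mathcal{O}_{G[F]_s}=p^{\dim_{k(s)}\omega_{G_s}}$ is locally constant, but over a locally noetherian base that is \emph{not} assumed reduced, ``finite with locally constant fiber rank'' does not imply ``flat'' (take $S=\spec k[\epsilon]/\epsilon^2$ and the closed subscheme $\spec k$: constant fiber rank $1$, not flat). Your own sentence ``forces $\mathcal{O}_{G[F]}$ to have locally constant rank, hence be locally free'' is precisely the step that fails. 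Moreover the input you want to feed into this — that $\omega_G=e^*\Omega^1_{G/S}$ is locally free of constant rank — is, in the paper, the \emph{Corollary} deduced from this lemma (``$\omega_G$ is locally free of finite rank equal to the height of $G[F]$''), so invoking it here is circular; it is not an independently available fact about quasi-finite flat separated group schemes.

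The paper avoids rank-counting entirely and gets flatness directly from the fiberwise criterion for flatness. Since $[p]_G=0$ forces $FV=[p]_{G^{(p)}}=0$, Verschiebung gives a morphism of $S$-schemes
\begin{equation*}
V\colon G^{(p)}\longrightarrow G[F],
\end{equation*}
with $G^{(p)}/S$ flat. On each fiber $G_s$ is a genuine $\BT$, so $V_s\colon G_s^{(p)}\to G_s[F]$ is \emph{surjective} (exactness of $G_s\xrightarrow{F}G_s^{(p)}\xrightarrow{V}G_s$), hence a flat map of finite group schemes over the field $k(s)$. The fiberwise criterion \cite[IV 11.3.11]{EGA} then says both that $V$ is flat and that $G[F]$ is flat over $S$ along the image of $V$, which is all of $G[F]$ by fiberwise surjectivity. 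This is the step your write-up is missing: you mention the exactness of $F,V$ on fibers only to compute orders, but the real use of Verschiebung is as a globally defined surjection from a flat scheme, which converts fiberwise flatness of $G_s[F]$ into flatness of $G[F]/S$ without any reducedness hypothesis.
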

\begin{proof}
The relative Frobenius $F$ gives a bijection on points, so each fiber $G[F]_s$ consists of a single point.  Hence by lemma \ref{L:gpfinite}, $G[F]/S$ is finite.  We need to show that $G[F]/S$ is flat.

By assumption we have $[p]_{G^{(p)}}=FV=0$ on $G$ and hence we can consider the map
\begin{equation*}
V:G^{(p)}\to G[F].
\end{equation*}
Now for every $s\in S$, $G_s/k(s)$ is a $\BT$ and so the map on fibers
\begin{equation*}
V:G_s^{(p)}\to G_s[F]
\end{equation*}
is a surjective map of finite groups schemes over a field, and hence flat.  By the fiberwise criteria for flatness \cite[IV 11.3.11]{EGA} we conclude that $G[F]$ is flat.
\end{proof}

\begin{cor}
Let $G/S$ be a partial $\BT$.  Then $\omega_G=e^*\Omega^1_{G/S}$ is locally free of finite rank equal to the height of $G[F]$.
\end{cor}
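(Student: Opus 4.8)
The plan is to reduce the assertion to the finite flat group scheme $H:=G[F]$, whose flatness is exactly Lemma~\ref{L:omegafree}, and then to the structure of finite locally free group schemes annihilated by relative Frobenius.

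First I would check that the closed immersion $G[F]\hookrightarrow G$ induces an isomorphism $\omega_G\xrightarrow{\ \sim\ }\omega_{G[F]}$; this is a statement about the first infinitesimal neighbourhood of the identity section. Working Zariski-locally, write $G=\spec A$ with augmentation ideal $\I=\ker(A\to\O_S)$, so that $\omega_G=\I/\I^2$. The ideal $\Ca{J}\subset A$ of $G[F]=\ker(F\colon G\to G^{(p)})$ is generated by $F^{\#}$ applied to the augmentation ideal of $A^{(p)}$, and since relative Frobenius is ``raising to the $p$-th power'' this means $\Ca{J}$ is the ideal generated by $\{f^{p}:f\in\I\}$; hence $\Ca{J}\subseteq\I^{p}\subseteq\I^{2}$ because $p\geq 2$. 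Therefore $\I^2=\I^2+\Ca{J}$, so $A/\I^2=(A/\Ca{J})/(\I/\Ca{J})^2$, which says precisely that the square-zero thickenings of $e(S)$ in $G$ and in $G[F]$ coincide, and hence $\omega_G=\omega_{G[F]}$.

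Next, $H:=G[F]$ is finite flat over $S$ and, by functoriality of $F$, is annihilated by its own relative Frobenius; I would then use that such an $H$ is, Zariski-locally on $S$, of the form $\spec\bigl(\O_S[x_1,\dots,x_m]/(x_1^{p},\dots,x_m^{p})\bigr)$. Concretely: choose local sections $f_1,\dots,f_m$ of the augmentation ideal $\I_H$ of $\O_H$ lifting a basis of the fibre $\omega_H$ at a point $s\in S$; since $f_i^{p}=0$ they define a homomorphism $\O_S[x_i]/(x_i^{p})\to\O_H$, which is surjective near $s$ by Nakayama's lemma (the augmentation ideal of the Artinian local ring $\O_{H,s}$ is generated by $f_1,\dots,f_m$), and on comparing the ranks of these two finite locally free $\O_S$-algebras --- here one uses that a finite group scheme over a field killed by Frobenius has order $p^{\dim\omega}$ --- the surjection is an isomorphism near $s$. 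From this presentation $\omega_H=\I_H/\I_H^2$ is free of rank $m$, and $m=\log_p|G[F]|$ is by definition the height of $G[F]$; since $|G[F]|$ is locally constant on $S$ (being finite locally free), $\omega_G\cong\omega_H$ is locally free of that constant rank, as claimed.

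I expect the only genuinely external ingredient to be the last one: that a finite group scheme over a field annihilated by Frobenius has order $p^{\dim\omega}$ --- equivalently, that its affine algebra is a truncated polynomial ring and not a proper quotient of one. This is the structure theory of such group schemes (via restricted Lie algebras, or the Dieudonn\'e-module description recalled in Section~\ref{S:moonen}); everything else is formal manipulation of ideals together with Nakayama's lemma, so this is where the real content sits, even though it is not difficult.
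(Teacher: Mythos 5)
Your proof is correct and follows the route the paper treats as implicit: the paper gives no separate proof of this corollary, regarding it as a direct consequence of Lemma~\ref{L:omegafree} together with the standard local structure theory of finite flat group schemes killed by relative Frobenius (the same fact is invoked again for Corollary~\ref{C:omegafree} and subsumed in Theorem~\ref{T:fkill}). You have simply made explicit the two steps the paper leaves to the reader --- that $\omega_G \cong \omega_{G[F]}$ because the ideal of $G[F]$ lies in $\I^2$, and that a finite flat $S$-group scheme killed by $F$ is Zariski-locally a truncated polynomial algebra of rank $p^m$ with $m = \operatorname{rk}\omega$ --- and both are carried out correctly.
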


In particular the rank of $\omega_G$ is locally constant on $S$.  We (abusively) call it the \emph{dimension} of $G$.

\begin{cor}
Let $G/S$ be a partial $\BT$ which is finite over $S$.  Then $G$ is a $\BT$.
\end{cor}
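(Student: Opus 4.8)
The plan is as follows. Since $G/S$ is a partial $\BT$ it is flat over $S$, and it is finite over $S$ by hypothesis, so $G$ is already finite flat; likewise $G^{(p)}=G\times_{S,F_S}S$ is finite flat over $S$. Thus it only remains to check that the complex $G\xrightarrow{F}G^{(p)}\xrightarrow{V}G$ is exact at $G^{(p)}$, i.e. that the fppf subsheaves $\im(F)$ and $\ker(V)$ of $G^{(p)}$ coincide. One inclusion is automatic: $VF=[p]_G=0$ forces $\im(F)\subseteq\ker(V)$. So the whole point is the reverse inclusion, and the strategy is to show that $\ker(V)$ is finite flat over $S$ and that the induced map $G\to\ker(V)$ is faithfully flat; an fppf epimorphism then has image all of $\ker(V)$.

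First I would establish that $\ker(V\colon G^{(p)}\to G)$ is finite flat over $S$. Since $FV=[p]_{G^{(p)}}=0$, the morphism $V$ has image inside $\ker(F)=G[F]$, which is finite flat over $S$ by Lemma \ref{L:omegafree}; as $G[F]\hookrightarrow G$ is a closed immersion, $V$ factors as $V'\colon G^{(p)}\to G[F]$ followed by this immersion, and $\ker(V)=\ker(V')=G^{(p)}\times_{G[F],e}S$. On each fiber $G_s$ is a $\BT$, so $V'_s\colon G_s^{(p)}\to G_s[F]$ is the surjection $G_s^{(p)}\twoheadrightarrow\im(V_s)=\ker(F_s)$, hence flat (surjections of finite group schemes over a field are flat, as in the proof of Lemma \ref{L:omegafree}). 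By the fiberwise criterion for flatness \cite[IV 11.3.11]{EGA}, $V'$ is flat, hence faithfully flat and finite; therefore its kernel, being the base change of $V'$ along the unit section $e\colon S\to G[F]$, is finite flat over $S$.

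Then I would conclude as follows. Because $VF=0$, the morphism $F\colon G\to G^{(p)}$ factors through the closed subscheme $\ker(V)\hookrightarrow G^{(p)}$, giving $F'\colon G\to\ker(V)$. Formation of $\ker(V)$ commutes with base change to any $s\in S$ (it is a fiber product), and $F'_s\colon G_s\to\ker(V_s)$ is surjective precisely because $G_s$ is a $\BT$ (exactness at $G_s^{(p)}$); again surjections of finite group schemes over a field are flat, so by \cite[IV 11.3.11]{EGA} the map $F'$ is flat over $S$, hence faithfully flat. A faithfully flat morphism is an fppf epimorphism, so $\im(F')=\ker(V)$, that is $\im(F)=\ker(V)$ inside $G^{(p)}$. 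This gives exactness at $G^{(p)}$, so $G$ is a $\BT$.

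I expect the only real subtlety to be the base-change bookkeeping: one must be careful that $\ker(V)$ is honestly representable and flat over $S$ so that its formation commutes with passing to fibers, and that ``image of a faithfully flat morphism'' computed in the fppf topology really recovers the target sheaf. All of this is routine once $\ker(V)$ is known to be finite flat, and the key inputs --- finite flatness of $G[F]$ (Lemma \ref{L:omegafree}) and the fiberwise flatness criterion --- are precisely the tools already in use in this section. (One could alternatively note that $G^D$ is again a partial $\BT$ finite over $S$ and exploit the symmetry between $F$ and $V$ under Cartier duality, but the direct argument above seems cleanest.)
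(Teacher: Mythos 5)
Your proof is correct. The paper states this corollary without an explicit proof, treating it as immediate from Lemma \ref{L:omegafree}, and your argument is a faithful elaboration of what that lemma makes available: since $G[F]$ is finite flat and $V'\colon G^{(p)}\to G[F]$ is (faithfully) flat by the fiberwise criterion, $\ker(V)$ is finite flat; and a second application of the same fiberwise criterion to $F'\colon G\to\ker(V)$ shows $F'$ is faithfully flat, hence an fppf epimorphism, giving $\im(F)=\ker(V)$. This is exactly the kind of routine bookkeeping the paper intends the reader to supply, and you have done it correctly — the only ingredients are Lemma \ref{L:omegafree}, \cite[IV 11.3.11]{EGA}, and the observation that surjections of finite group schemes over a field are flat, all of which are already in play in this section.
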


If $\pi:G\to S$ is a finite flat group scheme then the degree of $G$ is defined to be the rank of $\pi_*\O_G$ as a locally free $\O_S$-module (a locally constant function on $S$.)  The degree of a $\BT$ is a power of $p^h$ where $h$ is the \emph{height}.

Defining the height of a partial $\BT$ is somewhat more subtle.  We recall the following lemma 
\begin{lem}
Let $\pi:X\to S$ be flat, separated, quasi-finite.  Consider the degree function
\begin{align*}
d:S&\to\Bf{N}\\
s&\mapsto \deg X_s.
\end{align*}
where $\deg X_s=\dim_{k(s)} A_s$ where $X_s=\spec A_s$ (we remind the reader that the fiber $X_s$ of the quasi-finite map $\pi$ is a finite $k(s)$-scheme.)
\begin{enumerate}
\item The function $d$ on $S$ is lower semicontinuous.
\item If $d$ is locally constant then $f$ is finite.
\end{enumerate}
\end{lem}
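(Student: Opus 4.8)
This is a standard fact about quasi-finite flat morphisms, and I would prove it by reducing to the case where $S$ is local and then applying Zariski's main theorem (or rather the structure theory of quasi-finite separated morphisms). Let me outline both parts.

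\emph{Part 1 (lower semicontinuity).} The statement is local on $S$, so I may assume $S = \spec R$ is affine and, by standard limit arguments, noetherian (which we are assuming throughout anyway). Fix $s \in S$; I want to show $d(s') \geq d(s)$ for $s'$ in a neighborhood of $s$. The fiber $X_s$ is a finite $k(s)$-scheme, say with points $x_1, \ldots, x_n$ lying over $s$. By Zariski's main theorem in the form for quasi-finite separated morphisms (EGA IV 8.12.6, or \cite[IV 18.12.13]{EGA}), after possibly shrinking $S$ around $s$ we may write $X$ as an open subscheme of a scheme $\overline{X}$ finite over $S$. Each $x_i$ is then an isolated point of its fiber in $\overline{X}$, hence after further shrinking $S$ we can arrange that $\overline{X} = X \sqcup X'$ with $X$ containing all the $x_i$ and $X \to S$ finite. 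Wait — this needs care, but the point is that locally around $s$ the map $\pi$ looks like an open-and-closed piece of a finite map, and for finite flat maps the degree is locally constant. More precisely: choose an affine open $\spec B \subset X$ containing $x_1, \ldots, x_n$ which is finite over $\spec R$ (possible by the above), and such that $\spec B$ still contains the whole fiber over points $s'$ near $s$ — this last requires that no new points of the fiber ``appear,'' which follows because the complement $X \setminus \spec B$ is closed with image closed (finite maps are closed) not containing $s$. Then over this neighborhood, $d(s') = \dim_{k(s')} (B \otimes_R k(s'))$, which is lower semicontinuous in $s'$ since $B$ need not be flat over $R$ — but it is the fiber dimension of a finitely generated module, which is upper semicontinuous...

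Let me restate this more carefully, because the direction matters. The function $s' \mapsto \dim_{k(s')}(B \otimes_R k(s'))$ for a finite $R$-module $B$ is \emph{upper} semicontinuous (fiber dimension jumps up on closed sets). But we want lower semicontinuity of $d$ at $s$, i.e. the degree should not drop in a neighborhood. This is where \emph{flatness of $X$ over $S$} enters: the open piece $X$ is flat over $S$, so on $X$ the local rings are flat, and flatness forces the fiber degree to be locally constant on the flat locus — but $X$ being flat over $S$ does not make $B$ flat over $R$ unless $\spec B$ is also closed in $X$... Hmm. The cleanest route: since $\pi$ is flat and locally of finite presentation, it is open; combined with quasi-finiteness and separatedness, étale-locally on the source $\pi$ is an open immersion followed by... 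Actually the honest reference is \cite[IV 13.1.3 and 15.5.1]{EGA} or simply: for $\pi$ flat and quasi-finite, the degree function is lower semicontinuous — I would cite this, as it is the flat analogue of the general fact, with the inequality the ``right way'' precisely because flat maps are open so fibers cannot collapse.

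\emph{Part 2.} Suppose $d$ is locally constant. We want $\pi$ finite. Again work locally, so $S = \spec R$ noetherian, and after shrinking assume $d \equiv n$ is constant. By Zariski's main theorem factor $\pi$ (locally on $S$) as $X \xhookrightarrow{j} \overline{X} \xrightarrow{\bar\pi} S$ with $j$ an open immersion and $\bar\pi$ finite; write $\overline{X} = \spec B$, $X = \spec B \setminus Z$ for a closed subset $Z$. For each $s \in S$ the open immersion $X_s \hookrightarrow \overline{X}_s$ is also a closed immersion (both are finite over $k(s)$, and an open immersion of schemes finite over a field is automatically closed). Since $\bar\pi$ is finite, $\deg \overline{X}_s = \dim_{k(s)}(B \otimes k(s)) \geq \deg X_s = n$, with equality iff $Z_s = \emptyset$, i.e. iff $s \notin \pi_{\overline{X}}(Z)$. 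The locus $\{s : \deg\overline{X}_s = n\}$ is open (it is the complement of the closed image of $Z$, since $\bar\pi$ is closed), and by upper semicontinuity of $\dim_{k(s)}(B\otimes k(s))$ together with $d\equiv n$ this locus is all of $S$ after shrinking — concretely, $\deg \overline X_s \ge n$ always and we want the reverse inequality. Where $\deg\overline{X}_s > n$ we would have $Z_s \neq\emptyset$. To rule this out on an open neighborhood: $\pi(X) = S$? Not necessarily, but we only care about points in $\pi(X)$; off $\pi(X)$ the fiber $X_s$ is empty so $d(s) = 0 = n$, forcing $n = 0$ and hence $X = \emptyset$, trivially finite. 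So assume $\pi$ surjective. Then for every $s$, $X_s \neq \emptyset$, and I claim $Z_s = \emptyset$: indeed $X \hookrightarrow \overline X$ is a dense open (as $X$ is flat, hence its image is open and... actually density: $X$ is schematically dense in $\overline X$ if $\overline X$ has no embedded points over the generic points of $S$; this holds after a further standard reduction). Granting $X$ dense in $\overline X$, then since $X$ is also closed in each fiber $\overline X_s$ and dense, $X_s = \overline X_s$, so $Z_s = \emptyset$ for all $s$, whence $Z = \emptyset$ and $X = \overline X$ is finite over $S$.

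\textbf{The main obstacle.} The genuinely delicate point is getting the inequality in Part 1 to go the correct direction — lower rather than upper semicontinuity — which is exactly where flatness of $\pi$ must be used and cannot be dispensed with (without flatness, fiber degree of a quasi-finite map is only upper semicontinuous). In a full write-up I would either cite \cite[IV 15.5.1 and 13.1.3]{EGA} directly for ``flat $+$ quasi-finite $\Rightarrow$ degree lower semicontinuous,'' or argue as follows: flatness plus quasi-finiteness plus finite presentation gives that $\pi$ is open, so for $s' \leadsto s$ (specialization) every point of $X_s$ is a specialization of points of $X_{s'}$, and counting with multiplicities (lengths of local rings, which add up in a way controlled by flatness via the associativity of length along the fiber) gives $d(s') \geq d(s)$. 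The bookkeeping of multiplicities in a one-parameter (DVR) reduction, using that for $R$ a DVR and $B$ finite flat over $R$ one has $\dim_{\kappa} B/\mathfrak{m}B = \mathrm{rank}\, B = \dim_K B\otimes K$, is the technical heart; I expect to reduce to this DVR case by base-changing along $\spec \mathcal{O}_{S, s'} \to S$ and localizing, and this is where I would spend the most care.
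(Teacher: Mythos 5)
The paper states this lemma as a ``recall'' and gives no proof of its own, so there is no argument in the text to compare against; assessing your attempt on its own merits, Part~1 is muddled at the key step and Part~2 has a genuine gap. In Part~1, the demand that $\spec B$ ``still contains the whole fiber over points $s'$ near $s$'' is both unachievable Zariski-locally and unnecessary: if $\spec B$ is any open of $X$ containing all of $X_s$ and finite over a neighborhood of $s$, then $\spec B$ is flat over $S$ (being open in $X$), hence finite flat of locally constant degree, and so $d(s') \geq \deg(\spec B)_{s'} = \deg(\spec B)_s = d(s)$ directly --- points of $X_{s'}$ outside $\spec B$ only help the inequality. What is genuinely delicate is producing such a $\spec B$ at all; this requires passing to an \'etale neighborhood of $s$ (equivalently, Henselizing $\O_{S,s}$), via the structure theorem for quasi-finite separated morphisms, not a mere Zariski shrinking. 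Separately, the assertion that ``the complement $X\setminus\spec B$ is closed with image closed (finite maps are closed)'' is incorrect: $X\setminus\spec B\to S$ is only quasi-finite, not finite, so its image need not be closed --- but as just explained, you do not need this.

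The real gap is in Part~2. The step ``Granting $X$ dense in $\overline X$, then since $X$ is also closed in each fiber $\overline X_s$ and dense, $X_s=\overline X_s$'' is not a valid implication: take $\overline X=\spec R$ for $R$ a DVR and $X=\{\eta\}$ the generic point --- then $X$ is dense and open, $X_s=\emptyset$ is closed in $\overline X_s$, yet $X_s\neq \overline X_s$. Density of $X$ together with $X_s$ being clopen in $\overline X_s$ does not force fiberwise equality. A tell-tale sign that something is missing is that you never actually use the hypothesis that $d$ is locally constant except to dispose of the degenerate $n=0$ case --- but without that hypothesis the conclusion is false. The correct way to make $d\equiv n$ do its work is to reuse the \'etale-local decomposition from Part~1: after an \'etale (or Henselian) localization of $S$ at $s$, write $X = Y \sqcup X'$ with $Y\to S$ finite flat and $Y_s=X_s$, so $X'_s=\emptyset$. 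Then $\deg Y_{s'}$ is locally constant equal to $d(s)$, while $d(s') = \deg Y_{s'} + \deg X'_{s'}$; constancy of $d$ forces $\deg X'_{s'} = 0$ for all $s'$ nearby, i.e.\ $X'=\emptyset$ and $X=Y$ is finite over a neighborhood of $s$. Finiteness descends along the \'etale (or faithfully flat) cover, completing the argument.
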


Given $G/S$ a partial $\BT$ we can consider the ``finite height'' function
\begin{align*}
f_G:S&\to \Bf{N}\\
s&\mapsto\log_p\deg G_s.
\end{align*}
Then $f_G$ is lower semicontinous by the above lemma.  We will say that $G$ has height $\leq h$ if $f_G(s)\leq h$ for all $s\in S$.  The reason for calling this the finite height will become clear in the next section when we consider quasi-polarizations.

\subsection{Quasi-Polarizations on $\BT$s}

In this section we define principal quasi-polarizations on partial $\BT$'s.  There are some subtleties in characteristic 2.  Our definition is somewhat ad-hoc as a result.

If $G/S$ is a finite flat group scheme killed by $p$, then its cartier dual $G^D$ is the finite flat group scheme representing the functor
\begin{equation*}
S'\mapsto \hom(G(S'),\Bf{G}_m(S'))=\hom(G(S'),\mu_p(S')).
\end{equation*}
It is (contravariantly) functorial and compatible with arbitrary base change.  In particular $(G^{(p)})^D=(G^D)^{(p)}$.  Cartier duality interchanges Frobenius and Verschiebung in the sense that
\begin{equation*}
F_{G^D}=(V_G)^D:G^D\to (G^D)^{(p)}
\end{equation*}
and
\begin{equation*}
V_{G^D}=(F_G)^D:(G^D)^{(p)}\to G^D
\end{equation*}
Moreover there is a canonical evaluation homomorphism $G\to (G^D)^D$ which is an isomorphism.

We recall the following well known fact:
\begin{prop}\label{P:dualbt}
Let $G/S$ be a $\BT$.
\begin{enumerate}
\item $G^D$ is also a $\BT$.
\item If we let $h$ denote the height of $G$, $d$ the dimension of $G$ and $d'$ the dimension of $G^D$ (sometimes called the codimension) then we have an equality
\begin{equation*}
h=d+d'
\end{equation*}
of locally constant functions on $S$.
\end{enumerate}
\end{prop}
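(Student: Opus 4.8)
The plan is to verify the statement fiberwise. Both assertions are statements about locally constant functions on $S$ (the height $h$, dimension $d$, codimension $d'$, and the property of being a $\BT$ are all defined so as to be checkable on fibers, by the corollaries in the previous section), so by the fiberwise criterion for flatness \cite[IV 11.3.11]{EGA} and the fiberwise nature of exactness of fppf sheaf sequences, it suffices to treat the case where $S=\spec k$ for $k$ a field (which we may as well take algebraically closed, since $\omega_G$, Cartier duality, $F$ and $V$ all commute with base change). So assume $G/k$ is a $\BT$, i.e. the sequence $G\overset{F}{\to}G^{(p)}\overset{V}{\to}G$ is exact.

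For part 1, I would dualize the defining exact sequence. Applying Cartier duality to $G\overset{F}{\to}G^{(p)}\overset{V}{\to}G$ and using $F_{G^D}=(V_G)^D$, $V_{G^D}=(F_G)^D$, together with $(G^{(p)})^D=(G^D)^{(p)}$, turns exactness of $F$ followed by $V$ on $G$ into exactness of $F$ followed by $V$ on $G^D$ — provided Cartier duality is exact on the relevant category. Over a field, the category of finite (commutative) group schemes killed by $p$ is abelian and Cartier duality is an exact anti-equivalence, so a sequence $G_1\to G_2\to G_3$ is exact at $G_2$ iff the dual sequence $G_3^D\to G_2^D\to G_1^D$ is exact at $G_2^D$. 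Hence $\ker(V_{G^D})=\ker((F_G)^D)=(\coker F_G)^D=(\im V_G)^D$, and likewise one identifies $\im(F_{G^D})$ with $(\ker V_G)^D$; since $\ker V_G=\im F_G$ by hypothesis, these agree, so $G^D$ is a $\BT$. (Equivalently: over an algebraically closed field one can read this off the Dieudonné module, where $F$ and $V$ get swapped by duality and the $\BT$ condition $\ker F=\im V$, $\ker V=\im F$ is manifestly self-dual.)

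For part 2, recall from the corollary that for a $\BT$ $G/k$ the dimension $d=\dim_k\omega_G$ equals the height of $G[F]$, i.e. $\log_p|\ker F_G|=\log_p|G[F]|$. Since $G$ is a $\BT$, $\im V_G=\ker F_G=G[F]$, and from the exactness of $G^{(p)}\overset{V}{\to}G$ (surjectivity onto $\ker F$) one gets $|G^{(p)}[V]|=|G^{(p)}|/|G[F]| = p^{h}/p^{d}=p^{h-d}$, where $h=\log_p|G|=\log_p|G^{(p)}|$. Now $G^{(p)}[V]=\ker V_G=\ker((F_{G^D})^D)$, and dualizing identifies $\ker V_G$ with $(\coker F_{G^D})^D=(\im V_{G^D})^D=(G^D[F])^D$; here I used part 1, that $G^D$ is a $\BT$, so $\im V_{G^D}=G^D[F]$. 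Taking orders, $p^{h-d}=|G^{(p)}[V]|=|G^D[F]|=p^{d'}$ by the corollary applied to $G^D$, whence $h=d+d'$. I expect the only real point requiring care is the bookkeeping with Cartier duality versus kernels/cokernels — making sure one consistently uses that $(-)^D$ is exact over a field and sends kernels to cokernels — but this is standard and poses no genuine obstacle; everything else is a direct order count.
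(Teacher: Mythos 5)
Your proposal is correct and rests on the same identities the paper uses---$F_{G^D}=(V_G)^D$, $V_{G^D}=(F_G)^D$, and the fact that Cartier duality converts kernels to cokernels---the paper applying $(\ker F_{G^D})^D=\coker V_G$ directly over the base to obtain $\text{ht}(G)=\text{ht}(\coker V_G)+\text{ht}(\im V_G)=d'+d$, while you first reduce to a field (harmless, since the fibers of a finite flat group scheme killed by $p$ being $\BT$s already forces it to be one) and reorganize the same degree count. One small imprecision worth fixing: $\coker F_{G^D}$ and $\im V_{G^D}$ are not equal (one is a quotient of $(G^D)^{(p)}$, the other a subgroup of $G^D$), only canonically isomorphic via $V_{G^D}$, which is all the order count actually needs.
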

\begin{proof}
The first part is an immediate consequence of the definition and the exactness of cartier duality.  For the second, just note that the Cartier dual of $\ker (F:G^D\to (G^{(p)})^D)$ is $\coker (V:G^{(p)}\to G)$.  Hence
\begin{equation*}
\text{ht}(G)=d'+\text{ht}(\im(V:G^{(p)}\to G))=d'+\text{ht}(\ker F:G\to G^{(p)})=d'+d.
\end{equation*}
\end{proof}

Giving a bilinear pairing
\begin{equation*}
\lambda:G\times G\to \mu_p
\end{equation*}
is the same as giving a group homomorphism
\begin{equation*}
\lambda:G\to G^D.
\end{equation*}

Now let $G/S$ be a separated, quasi-finite, flat group scheme killed by $p$.  A pairing
\begin{equation*}
\lambda:G\times G\to\mu_p
\end{equation*}
is said to be antisymmetric if $\lambda=\tilde{\lambda}^{-1}$ if $\tilde{\lambda}$ denotes the pairing defined by exchanging the two factors.  If $G$ is finite, this is equivalent to the corresponding map $\lambda:G\to G^D$ satisfying $\lambda^D=-\lambda$, where $(G^D)^D$ has been identified with $G^D$ via the canonical map described above.

If $G/S$ is finite flat and endowed with an antisymmetric pairing $\lambda:G\times G\to\mu_p$ then we let
\begin{equation*}
\ker\lambda:=\ker(\lambda:G\to G^D).
\end{equation*}
We note that if $\ker\lambda$ is trivial, then $\lambda:G\to G^D$ is an isomorphism.  Indeed $\lambda$ is then injective and $G$ and $G^D$ have the same degree.  It is clear that for any $S$ scheme $S'$ and any $x\in (\ker \lambda)(S')$ and $y\in G(S')$ we have
\begin{equation*}
\lambda(x,y)=\lambda(y,x)=1
\end{equation*}
Then if $\ker\lambda$ is flat, $G/\ker\lambda$ is representable by a finite flat group scheme and it is endowed we have a pairing
\begin{equation*}
\lambda:G/\ker\lambda\times G/\ker\lambda\to\mu_p
\end{equation*}
which has trivial kernel.

\begin{defn}
\begin{enumerate}
\item
A principal quasi-polarized partial $\BT$ is a pair $(G,\lambda)$ consisting of a partial $\BT$ $G$ and a skew symmetric pairing
\begin{equation*}
\lambda:G\times G\to\mu_p
\end{equation*}
such that for each $s\in S$, the following conditions are satisfied on the fiber $(G_s,\lambda_s)$:
\begin{enumerate}
\item The kernel $\ker\lambda_s$ is a group of multiplicative type.
\item If $p=2$ then $(G_{\overline{s}},\lambda_{\overline{s}})$ denote the base change of $(G_s,\lambda_s)$ to some algebraic closure $\overline{k(s)}$ of $k(s)$, then the pairing
\begin{equation*}
D(G_{\overline{s}}/\ker \lambda_{\overline{s}})\times D(G_{\overline{s}}/\ker \lambda_{\overline{s}})\to\overline{k(s)}
\end{equation*}
induced by $\lambda_{\overline{s}}$ is alternating, where $D$ is the (contravariant) Diuedonne module functor.
\end{enumerate}
\item By a principally quasi polarized $\BT$ we mean a $\BT$ $G$ along with an anti symmetric pairing $\lambda:G\times G\to\mu_p$ such that the corresponding homomorphism $\lambda:G\to G^D$ is an isomorphism and if $p=2$, the condition of (b) in the definition above is satisfied.
\end{enumerate}
\end{defn}

Let us make some remarks about this definition.

\begin{rem}
\begin{enumerate}
\item If $(G,\lambda)$ is a principally quasi-polarized partial $\BT$ and $G$ is in fact finite (i.e. it is actually a $\BT$) then it is not necessarily true that $(G,\lambda)$ is a principally quasi-polarized $\BT$ in the sense of 2 above because $\lambda$ may have a kernel.  However if $\ker\lambda$ is trivial, then as remarked above $\lambda:G\to G^D$ is an isomorphism so $(G,\lambda)$ is a principally quasi-polarized $\BT$.

\item Let us now make some remarks on the ``correctness'' of this definition.  Our notion of partial $\BT$s is already nonstandard, so we will only discuss principal quasi-polarizations on $\BT$s.  When $p\not=2$ our definition agrees with that in \cite{We01}.  Wedhorn shows for example, that the ``truncation'' functor from the stack of principally quasi-polarized $p$-divisible groups to principally quasi-polarized $\BT$s is formally smooth, and this justifies this being the ``correct'' notion of a principle quasi-polarized $\BT$ in families.

When $p=2$ there is some trouble which has been discussed in \cite{Oo01} and \cite{Mo01}.  Our definition has been rigged with the following two considerations in mind:
\begin{enumerate}
\item If $(G,\lambda)$ is a principally quasi-polarized $\BT$ over an algebraically closed field $k$ of characteristic 2 then there is a principally quasi-polarized $2$-divisible group $(X,\lambda')/k$ with $(G,\lambda)$ as its 2-torsion.  This would not be true without the extra condition on the pairing on the Dieudonne module!
\item If $S$ is any base of characteristic 2 and $(A,\lambda)/S$ is a prime to $2$ quasi-polarized abelian scheme then $A[2]$ with the $\lambda$-Weil pairing $\lambda:A[2]\times A[2]\to\mu_2$ is a principally quasi-polarized $\BT$.
\end{enumerate}

It should be clear that this condition on the Dieudonne modules of the geometric fibers would not be suitable for the study of families.  We do not know if there exists a good notion of principally quasi-polarized $\BT$s over general bases of characteristic 2!  The reader who finds this to be a headache should just assume that $p\not=2$.
\end{enumerate}
\end{rem}

We have several numerical functions on $S$ for a principally quasi polarized partial $\BT$ $(G,\lambda)$:
\begin{enumerate}
\item The dimension $d(s)=\text{ht}G_s[F]$.
\item The height $h(s)=2 d(s)$.
\item The finite height $f(s)=\text{ht}G_s$.
\item The toral height $t(s)=\text{ht}\ker\lambda_s$.
\item The abelian height $a(s)=\text{ht}(G_s/\ker\lambda_s)$
\end{enumerate}

We note that when $(G,\lambda)$ is a principally quasi-polarized $\BT$ this definition of height agrees with that of the previous section by Proposition \ref{P:dualbt}.  We have the relations
\begin{equation*}
f+t=a+2t=h
\end{equation*}
from which we see that given the height $h$, any one of $f$, $t$ and $a$ determine the rest.  The height and dimension are locally constant, while the finite height and abelian height are lower semicontinous and the total height is upper semicontinuous.

\subsection{$\BT$s With Extra Endomorphisms}\label{SS:extra}

Let $\overline{\O}$ be a finite dimensional semisimple $\FF_p$-algebra.  Let $\FF$ be its center.  We denote by $\Ca{T}$ the set of all embeddings $\tau:\FF\to\overline{\FF}_p$.  Absolute Frobenius acts on $\Ca{T}$ and we denote it by $F$.  For $\tau\in\Ca{T}$ let $[\tau]$ denote its orbit under Frobenius.  Then we have a decomposition
\begin{equation*}
\FF=\prod_{[\tau]}\FF_{[\tau]}
\end{equation*}
where $\FF_\tau$ are finite fields.  We have a corresponding decomposition
\begin{equation*}
\overline{\O}=\prod_{[\tau]}M_{r_{[\tau]}}(\FF_{[\tau]})
\end{equation*}
and we denote the idempotent in the $[\tau]$ factor by $e_{[\tau]}$.

Let $k$ be a subfield of $\overline{\FF}_p$ containing the image of every embedding $\FF\to\overline{\FF}_p$.  Then we have decompositions
\begin{equation*}
\FF\otimes k=\prod_{\tau} k_\tau
\end{equation*}
and
\begin{equation*}
\overline{\O}\otimes k=\prod_\tau M_{r_{[\tau]}}(k_\tau)
\end{equation*}
where $k_\tau$ denotes $k$ with an $\O$-action via $\tau$.  We let $e_\tau$ denote the idempotent in the $\tau$ factor.

For the rest of the chapter we will assume that our base $S$ is actually a $k$-scheme.  Let $\Ca{E}$ be a finite locally free $\O_S$-module with an $\O_S$-linear $\overline{\O}$ action on either the left or the right.  Then we obtain a decomposition
\begin{equation*}
\Ca{E}=\bigoplus_\tau \Ca{E}_\tau=\bigoplus_\tau e_\tau\cdot\Ca{E}.
\end{equation*}
Note that $e_\tau$ lies in the center of $\overline{\O}\otimes k$ so this formula makes sense even when $\overline{\O}$ acts on the right.  Each summand $\Ca{E}_\tau$ is a summand of a finite locally free $\O_S$-module and hence itself finite locally free.  We call the vector of locally constant functions $(\text{rk} (\Ca{E}_\tau)/r_{[\tau]})_\tau$ the multi rank of $\Ca{E}$.  If $\Ca{E}^{(p)}$ denotes the pullback by absolute Frobenius $F:S\to S$ with its induced $\O_S$-linear $\O$ action, then
\begin{equation*}
(\Ca{E}_{F\tau})^{(p)}=(\Ca{E}^{(p)})_{\tau}.
\end{equation*}
Note also that if $\Ca{E}$ has an $\O_S$ linear $\overline{\O}$ action on the left, then it has the same $\overline{\O}$-multirank as its dual $\Ca{E}$ (which has a right $\overline{\O}$ action.)

\begin{defn}
By a partial $\BT$ with $\overline{\O}$ action, we mean a partial $\BT$ $G/S$ equipped with a ring homomorphism $i:\overline{\O}\to \End_S(G)$.
\end{defn}

Let $G/S$ be a partial $\BT$ with $\overline{\O}$ action.  By Lemma \ref{L:omegafree}, $\omega_G=\omega_{G[F]}$ is a locally free $\O_S$-module which inherits an $\O_S$-linear right $\overline{\O}$ action.  We denote its multi rank by $(d_\tau)_{\tau\in\Ca{T}}$ and call it the multi dimension of $G$.

Next note that the decomposition of $\overline{\O}$ into simple factors induces a decomposition
\begin{equation*}
G=\prod_{[\tau]}e_{[\tau]}\cdot G=\prod_{[\tau]} G_{[\tau]}
\end{equation*}
When $G/S$ is finite (i.e. when it is a $\BT$) let $h_{[\tau]}=\text{ht}(G)/(r_{[\tau]}[\Bf{F}_\tau:\FF_p])$.  Then we call the tuple $(h_{[\tau]})$ the multi height of $G$.

\begin{prop}\label{P:hindep}
Let $G/S$ be $\BT$ with $\overline{O}$-action.  Then for each $[\tau]$, $h_{[\tau]}$ is an integer.
\end{prop}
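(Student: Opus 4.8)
The plan is to reduce to the case where the base is the spectrum of an algebraically closed field and then to read off the divisibility from the Dieudonn\'e module of $G_{[\tau]}=e_{[\tau]}\cdot G$; the essential point is that the $\BT$ condition forces the graded pieces of that module (for the grading by the $[\FF_{[\tau]}:\FF_p]$ embeddings of $\FF_{[\tau]}$ into the base) to have equal dimension, while the semisimple algebra $M_{r_{[\tau]}}(\FF_{[\tau]})$ forces each such piece to have dimension divisible by $r_{[\tau]}$.

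First I would reduce to geometric fibers: $G_{[\tau]}$ is finite flat over $S$, so $\operatorname{ht}(G_{[\tau]})$ is locally constant on $S$, and to prove that $h_{[\tau]}=\operatorname{ht}(G_{[\tau]})/\bigl(r_{[\tau]}[\FF_{[\tau]}:\FF_p]\bigr)$ is integer-valued it suffices to check it at one point of each connected component, after which base change to an algebraic closure of the residue field changes nothing. So assume $S=\spec k$ with $k$ algebraically closed (automatically containing the image of every embedding of $\FF$). Write $r=r_{[\tau]}$ and $f=[\FF_{[\tau]}:\FF_p]$, so $f$ is the size of the Frobenius orbit $[\tau]$. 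Let $M=D(G_{[\tau]})$ be the contravariant Dieudonn\'e module; by classical Dieudonn\'e theory it is a $k$-vector space with $\dim_kM=\operatorname{ht}(G_{[\tau]})$, and it carries a $k$-linear action of $e_{[\tau]}\overline{\O}\cong M_r(\FF_{[\tau]})$. Since $e_{[\tau]}\overline{\O}\otimes_{\FF_p}k\cong\prod_{\tau'\in[\tau]}M_r(k)$, decomposing $M$ as in Section \ref{SS:extra} gives $M=\bigoplus_{\tau'\in[\tau]}M_{\tau'}$ with each $M_{\tau'}$ a module over $M_r(k)$, hence of $k$-dimension divisible by $r$.

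Now I would bring in the two semilinear operators on $M$ induced by the relative Frobenius and Verschiebung of $G_{[\tau]}$. Because $\BT$-truncated Frobenius and Verschiebung go in opposite directions between $G_{[\tau]}$ and its Frobenius twist, one of these operators is $\sigma$-semilinear and the other $\sigma^{-1}$-semilinear ($\sigma$ the $p$-power map on $k$); they satisfy $FV=VF=0$; and, since $G_{[\tau]}$ and its Cartier dual are both $\BT$s (Proposition \ref{P:dualbt}), they satisfy $\ker F=\im V$ and $\ker V=\im F$. As relative Frobenius and Verschiebung are $\overline{\O}$-semilinear for the $p$-power map rather than $\FF_{[\tau]}$-linear, these operators shift the grading in opposite directions: $F(M_{\tau'})\subseteq M_{\sigma\tau'}$ and $V(M_{\tau'})\subseteq M_{\sigma^{-1}\tau'}$. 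Intersecting the identities $\ker F=\im V$ and $\ker V=\im F$ with the graded pieces and writing $m_{\tau'}=\dim_kM_{\tau'}$ gives
\begin{align*}
m_{\tau'}-\operatorname{rk}\bigl(F|_{M_{\tau'}}\bigr)&=\operatorname{rk}\bigl(V|_{M_{\sigma\tau'}}\bigr),\\
m_{\sigma\tau'}-\operatorname{rk}\bigl(V|_{M_{\sigma\tau'}}\bigr)&=\operatorname{rk}\bigl(F|_{M_{\tau'}}\bigr).
\end{align*}
From the first equation $\operatorname{rk}(F|_{M_{\tau'}})+\operatorname{rk}(V|_{M_{\sigma\tau'}})=m_{\tau'}$, and from the second the same quantity equals $m_{\sigma\tau'}$; hence $m_{\tau'}=m_{\sigma\tau'}$ for every $\tau'$. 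As $\sigma$ permutes $[\tau]$ transitively, all the $m_{\tau'}$ are equal, so $\operatorname{ht}(G_{[\tau]})=\dim_kM=f\cdot m_{\tau'_0}$ for any fixed $\tau'_0\in[\tau]$; since $m_{\tau'_0}$ is divisible by $r$, $\operatorname{ht}(G_{[\tau]})$ is divisible by $rf=r_{[\tau]}[\FF_{[\tau]}:\FF_p]$, i.e. $h_{[\tau]}$ is an integer.

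The only step needing care — and the real crux — is checking that on $M$ the induced Frobenius and Verschiebung shift the $[\tau]$-grading by one step in opposite directions; this comes down to $F_{G_{[\tau]}}\circ i(a)=i(a^{p})\circ F_{G_{[\tau]}}$ for $a\in\FF_{[\tau]}$ together with transitivity of the absolute Frobenius on $\hom_{\FF_p}(\FF_{[\tau]},k)$, and once it is in place the dimension count is immediate. The identification of $\dim_k D$ with the height, and the bookkeeping with Cartier duality that yields both kernel-equals-image relations on $M$, are routine. (One could instead treat the \'etale and multiplicative parts of $G_{[\tau]}$ by the elementary observation that their geometric points form an $\FF_{[\tau]}$-vector space, but the local-local part appears to require the argument above, so I would apply Dieudonn\'e theory throughout.)
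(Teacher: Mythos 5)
Your proof is correct and follows essentially the same strategy as the paper's: reduce to a geometric fiber, pass to the contravariant Dieudonn\'e module, decompose into $\tau$-isotypic pieces, and use the $\BT$ identities for $F$ and $V$ to show that all pieces for $\tau'\in[\tau]$ have equal $k$-dimension. Two cosmetic differences are worth noting. First, the paper invokes Morita equivalence to reduce to the case $r_{[\tau]}=1$ before doing the dimension count, whereas you keep the matrix algebra and extract the extra divisibility by $r_{[\tau]}$ at the end from the observation that each graded piece $M_{\tau'}$ is a finite $M_{r_{[\tau]}}(k)$-module and hence has $k$-dimension divisible by $r_{[\tau]}$; both are valid, and yours is arguably a bit more self-contained since it does not rely on the (mild) foundational work of checking that Morita equivalence interacts correctly with $\BT$ structures. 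Second, the paper treats $F$ and $V$ as $k'$-linear maps $D^{(p)}\to D$ and $D\to D^{(p)}$, encoding the grading shift through the identification $(D^{(p)})_\tau = (D_{F\tau})^{(p)}$, while you treat them as semilinear endomorphisms of the single space $D$ that shift the $[\tau]$-grading; this is the same content in different bookkeeping. One small point: you cite Proposition \ref{P:dualbt} for the identity $\ker F=\im V$ on $G$, but in fact this already follows from the $\BT$ condition $\ker V=\im F$ for $G$ itself together with $FV=0$ and a degree count over the field — duality is not needed for this particular identity. This does not affect the correctness of your argument.
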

\begin{proof}
We clearly may as well assume that $\overline{\O}=M_r(\FF)$ is simple.  By the usual Morita equivalence we reduce to the case that $r=1$.  Thus what we need to show is that if $\FF$ is a finite field and $G$ is a $\BT$ with a $\FF$ action, then its height is a multiple of $[\FF:\FF_p]$.  As the height is locally constant and compatible with base change, it suffices to treat the case that $S=\spec k'$ where $k'$ is an algebraically closed field of characteristic $p$, with a chosen embedding $k\to k'$.

We now utilize Dieudonne theory.  Let $D$ be the contravariant Diuedonne module of the $\BT$ $G/k'$ with a $\FF$ action.  Then the height of $G$ is $k'$ dimension of $D$.  There are maps
\begin{equation*}
F:D^{(p)}\to D\qquad V:D\to D^{(p)}
\end{equation*}
As $G$ is a $\BT$ we have $\ker F=\im V$ and $\im V=\ker F$.  Hence there are short exact sequences
\begin{equation*}
0\to \ker F\to D\to \ker V\to 0
\end{equation*}
and
\begin{equation*}
0\to \ker V\to D^{(p)}\to \ker F\to 0.
\end{equation*}
Moreover, as $\FF$ acts on $G$, there is a $k'$ linear $\FF$ action on $D$ which commutes with $F$ and $V$ which commutes with $F$ and $V$.  We may then decompose $D$, $D^{(p)}$, $\ker F$ and $\ker V$ into isotypic pieces as above, and we see that for each $\tau\in\Ca{T}$
\begin{equation*}
\dim D_\tau=\dim (\ker F)_\tau+\dim(\ker V)_\tau=\dim(D^{(p)})_\tau=\dim D_{F\tau}.
\end{equation*}
Hence the height of $G$ is $\dim D=\sum_{\tau\in\Ca{T}}\dim D_\tau$ is a multiple of $[\FF:\FF_p]$.
\end{proof}

Now let $*$ be an involution of $\Ca{\O}$.  Then $(\overline{\O},*)$ can be factored as a product of simple algebras with involution.  We recall that there is a rough classification of simple algebras with involution as follows: let $(\overline{\O},*)$ be a simple algebra with involution with center $\Bf{F}$ and let $\Bf{F}^+=\Bf{F}^{*=\text{id}}$.  Then $(\overline{\O},*)$ has one of the following types:
\begin{enumerate}
\item Type A split: $\Bf{F}=\Bf{F}^+\times\Bf{F}^+$ with $\Bf{F}^+$ a field, and $\overline{\O}=M_r(\Bf{F}^+)\times M_r(\Bf{F}^+)^{\text{op}}$ with $*(x,y)=(y,x)$.
\item Type A non split: $\Bf{F}/\Bf{F}^+$ a quadratic extension of fields, with $\overline{\O}=M_r(\Bf{F})$.
\item Type C: $\Bf{F}=\Bf{F}^+$ and $\overline{\O}=M_r(\Bf{F})$ with $*$ given by conjugation with respect to a non degenerate symmetric form.
\item Type D: $\Bf{F}=\Bf{F}^+$ and $\overline{\O}=M_r(\Bf{F})$ with $*$ given by conjugation with respect to a non degenerate alternating form.
\end{enumerate}

As $*$ acts on the center $\FF$ it acts on the set of embeddings $\Ca{T}$.

\begin{defn}
By a principally quasi-polarized $\BT$ with $(\overline{\O},*)$ action we mean a principally quasi-polarized partial $\BT$ $(G,\lambda)/S$ along with $i:\overline{O}\to\End_S(G)$ which satisfies
\begin{equation*}
\lambda(x\cdot-,-)=\lambda(-,x^*\cdot-):G\times G\to\mu_p
\end{equation*}
for all $x\in\overline{\O}$.  
\end{defn}

Note that when $(G,\lambda)$ is a principally quasi-polarized $\BT$ then the condition in the definition is the same as asking that the isomorphism $\lambda:G\to G^D$ satisfies $\lambda i(x)=i(x^*)^D\lambda$.

\begin{prop}
Let $(G,\lambda,i)$ be a principally quasi-polarized partial $\BT$ with $\overline{\O}$-action.  Then
\begin{equation*}
d_\tau+d_{\tau*}=h_{[\tau]}.
\end{equation*}
\end{prop}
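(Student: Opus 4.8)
The plan is to reduce to a geometric point and then translate the statement into (contravariant) Dieudonn\'e theory, where it becomes a claim about a bilinear form on a finite-dimensional vector space equipped with semilinear operators.

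First I would observe that every quantity in the statement is locally constant on $S$ and compatible with arbitrary base change: the multidimension $(d_\tau)$ is the multirank of the locally free sheaf $\omega_G$ (Lemma~\ref{L:omegafree} and its corollaries), and $h_{[\tau]}$, derived from the height $h=2d=2\,\mathrm{rk}\,\omega_G$, is likewise locally constant and base-change compatible. Hence one may assume $S=\spec k'$ with $k'$ an algebraically closed field of characteristic $p$ containing $k$. Over such a base every quasi-finite scheme is finite, so $G$ is in fact a $\BT$, carrying its $(\overline{\O},*)$-action and an anti-symmetric pairing $\lambda\colon G\times G\to\mu_p$ with $\ker\lambda$ of multiplicative type (and, when $p=2$, satisfying the extra condition on the Dieudonn\'e module of $G/\ker\lambda$). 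Writing $\overline{\O}=\prod_{[\tau]}M_{r_{[\tau]}}(\FF_{[\tau]})$, the involution $*$ either stabilizes each simple factor or interchanges factors in pairs; since the asserted identity only couples $\tau$ with $\tau^*$, it suffices to treat one orbit of simple factors under $*$, and then Morita equivalence --- which respects the polarization, the multidimension and the height --- reduces us to $\overline{\O}=\FF$ a finite field with $*$ trivial or the nontrivial involution, or to $\overline{\O}=\FF^+\times\FF^+$ with $*$ the swap, which is handled in the same way.

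Next I would pass to $D=D(G)$, the contravariant Dieudonn\'e module: a $k'$-vector space with $\dim_{k'}D=\mathrm{ht}(G)$, with semilinear $F$ and $V$ satisfying $\ker F=\im V$ and $\ker V=\im F$ (as $G$ is a $\BT$), and with an $\FF$-action commuting with $F$ and $V$. This action splits $D=\bigoplus_{\tau\in\Ca{T}}D_\tau$ so that $F$ carries $D_\tau$ into $D_{F\tau}$ and $V$ carries $D_\tau$ into $D_{F^{-1}\tau}$; following the proof of Proposition~\ref{P:hindep}, $\dim_{k'}D_\tau$ is independent of $\tau$, say equal to $n$, and one identifies $\mathrm{rk}\,\omega_{G,\tau}$ with $\mathrm{rk}(F\colon D_\tau\to D_{F\tau})$, so that $d_\tau=\mathrm{rk}(F|_{D_\tau})$ after the Morita reduction. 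The remaining input is the polarization: $\lambda$ induces on $D$ an alternating $k'$-bilinear form $\langle\cdot,\cdot\rangle$ --- it is precisely here that the characteristic $2$ hypothesis is used, to force alternation rather than mere antisymmetry --- which is $\FF$-semilinear through $*$ and with respect to which $F$ and $V$ are adjoint up to a Frobenius twist, and whose radical is the multiplicative part of $D$, of dimension $t=\mathrm{ht}(\ker\lambda)$. The $*$-semilinearity forces $\langle D_\tau,D_\sigma\rangle=0$ unless $\sigma=\tau^*$, so $\langle\cdot,\cdot\rangle$ restricts to a pairing $D_\tau\times D_{\tau^*}\to k'$, and comparing its left and right radicals shows the $\tau$-graded piece of the radical has dimension $t_\tau$ independent of $\tau$, with $\sum_\tau t_\tau=t$. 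Using adjointness, the fact that $\im V$ meets the multiplicative part trivially, and that this $\tau$-graded radical piece is contained in $\im(F|_{D_\tau})$, a dimension count of the orthogonal complement of $\im(F|_{D_\tau})$ inside $D_{\tau^*}$ gives $d_{\tau^*}=n-d_\tau+t_\tau$ (after relabelling indices suitably), hence $d_\tau+d_{\tau^*}=n+t_\tau$; summing over $\tau$ and comparing with $h=2d=2\sum_\tau d_\tau$ identifies $n+t_\tau$ with $h_{[\tau]}$.

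The hard part will be the last step: precisely constructing the form on $D$, identifying its radical with $\ker\lambda$, and establishing the $F$--$V$ adjointness --- and, in characteristic $2$, checking that the definition of principal quasi-polarization has been rigged exactly so that this form is alternating. The combinatorial bookkeeping in the type A split reduction (where $*$ permutes the simple factors, so the pairing runs between the two halves of $D$) will also need some care, though it follows the same pattern.
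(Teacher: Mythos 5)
The paper records this proposition without proof, so there is no ``official'' argument to compare against; your plan of reducing to an algebraically closed field and passing to Dieudonn\'e theory is one natural route.  But there is a variant that sidesteps the two things you flag as ``the hard part,'' namely the explicit construction of the alternating form on $D$ and the dimension count involving its radical.  Over $k'$, for a $\BT$ $G$ one has the $\overline{\O}$-equivariant Hodge filtration exact sequence
\begin{equation*}
0\to\omega_G\to D(G)\to\Lie(G^D)\to 0,
\end{equation*}
which after taking $\tau$-isotypic pieces and dividing by $r_{[\tau]}$ gives $d_\tau+\ell_\tau=h_{[\tau]}$ with $\ell_\tau$ the multirank of $\Lie(G^D)=\omega_{G^D}^\vee$, and hence $\ell_\tau=d_\tau(\omega_{G^D})$ for the natural $\overline{\O}$-action on $G^D$.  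Then $\lambda\colon G\to G^D$ is, by the defining compatibility $\lambda i(x)=i(x^*)^D\lambda$, an $\overline{\O}$-equivariant map once one twists the action on $G^D$ through $*$; when $\ker\lambda=0$, $\lambda^*\colon\omega_{G^D}\to\omega_G$ is an isomorphism exchanging the $\tau$-piece with the $\tau^*$-piece, so $\ell_\tau=d_{\tau^*}$ and one is done.  When $\ker\lambda=T\neq 0$, the same $\lambda^*$ is injective with cokernel $\omega_T$, giving $\ell_{\tau^*}=d_\tau-t_\tau$, which is the same arithmetic you reach via the radical of the pairing; the advantage is that one never needs to verify the $F$--$V$ adjointness or compute the radical.

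Two concrete gaps in your sketch deserve to be named.  First, your formula $d_{\tau^*}=n-d_\tau+t_\tau$ with $n=\dim_{k'}D_\tau$ silently identifies $\dim D_\tau$ with $\dim D_{\tau^*}$.  These agree when $\tau$ and $\tau^*$ lie in the same Frobenius orbit, but in the type A split case (and in type A generally when $*$ swaps orbits) one has $[\tau]\neq[\tau^*]$, and when the toral part $T$ of $\ker\lambda$ is asymmetric the relevant identity is $d_\tau+d_{\tau^*}=\dim D_{\tau^*}+t_\tau=\dim D_\tau+t_{\tau^*}$, not $\dim D_\tau+t_\tau$; you need the consequence $f_{[\tau]}-t_{[\tau]}=f_{[\tau^*]}-t_{[\tau^*]}$ of the perfect pairing on $D/\mathrm{rad}$ to reconcile the two forms.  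Second, and more seriously, the closing step ``summing over $\tau$ and comparing with $h=2d$ identifies $n+t_\tau$ with $h_{[\tau]}$'' is not an argument: the multiheight $h_{[\tau]}$ of a principally quasi-polarized \emph{partial} $\BT$ must be pinned down before the identity can be verified, and the paper only defines $h_{[\tau]}$ for an honest $\BT$ via $\mathrm{ht}(G_{[\tau]})$, which in the presence of a nontrivial $\ker\lambda$ is the finite multiheight $f_{[\tau]}$, not $d_\tau+d_{\tau^*}=f_{[\tau]}+t_{[\tau^*]}$.  So as written the reduction to the perfect case (e.g.\ by replacing $G$ with $G\oplus T^D$ and extending $\lambda$, or by unwinding what multiheight is meant) is still missing.
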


\subsection{Mod $p$ PEL Data}\label{S:modppel}

In the last section we saw that principally quasi-polarized $\BT$ with $\overline{\O}$ action have certain discrete invariants: the multi height $(h_{[\tau]})$ and multi dimension $(d_\tau)$.  If we consider the special fiber of a PEL modular variety as in Chapter \ref{PEL}, the $p$-torsion of the universal abelian scheme will be a $\BT$ with extra structure, and we should be able to read off these discrete invariants from the PEL datum defining the moduli problem.  The goal of this section, which is pure linear algebra, is to explain how this works.

\begin{defn}
Let $k'$ be a field of characteristic $p$.  By a symplectic $\overline{\O}\otimes k'$-module we mean a finite dimensional $k'$ vector space $V$ equipped with a $k'$ linear left $\overline{\O}$ action and a non degenerate alternating pairing $\langle\cdot,\cdot\rangle:V\times V\to k'$ satisfying
\begin{equation*}
\langle xv,w\rangle=\langle v,x^*w\rangle
\end{equation*}
for all $x\in\overline{\O}$, and $v,w\in V$.  We say that two symplectic $\overline{\O}\otimes k'$-modules $(V,\langle\cdot,\cdot\rangle)$ and $(V',\langle\cdot,\cdot\rangle')$ are isomorphic if there is a $\overline{\O}\otimes k'$-linear isomorphism $f:V\to V'$ and a constant $c\in {k'}^\times$ such that for all $v,w\in V$, we have $\langle f(v),f(w)\rangle'=c\langle v,w\rangle$.
\end{defn}

The following result is basic.
\begin{prop}\label{P:algclosedunique}
Let $k'$ be an algebraically closed field of characteristic $p$ with an embedding $k\to k'$.  Then two symplectic $\overline{\O}\otimes k'$-modules $V$ and $V'$ are isomorphic if and only if their $\overline{\O}$-multiranks are the same.
\end{prop}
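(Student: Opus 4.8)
The plan is to prove the nontrivial implication, since the forward direction is immediate: an isomorphism of symplectic $\overline{\O}\otimes k'$-modules is in particular an $\overline{\O}$-linear isomorphism and hence preserves multiranks. So assume $V$ and $V'$ have the same $\overline{\O}$-multirank; I want to produce an isomorphism. First I would reduce to the case of a simple algebra with involution. Write $(\overline{\O},*)$ as a product of $*$-stable blocks, each of which is either a single simple $\FF_p$-algebra with involution or a product of two simple algebras interchanged by $*$ (the split type A case). The idempotents cutting out this decomposition are central, so any symplectic module decomposes orthogonally along them, and the pairing is forced to pair each block with its $*$-image (if $e_i$, $e_j$ are idempotents of blocks not exchanged by $*$ then $e_i^*e_j=0$, so $\langle e_iv,e_jw\rangle=\langle v,e_i^*e_jw\rangle=0$). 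Thus it suffices to treat, one at a time, the cases listed in section \ref{SS:extra}: type A split $\overline{\O}=M_r(\FF^+)\times M_r(\FF^+)^{\mathrm{op}}$, type A nonsplit $\overline{\O}=M_r(\FF)$ with $\FF/\FF^+$ quadratic, type C, and type D.

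Next I would strip off the matrix algebras by Morita equivalence. A finite $M_r(\FF)$-module is $W^{\oplus r}$ for a unique $\FF$-module $W$, and an alternating pairing on $V=W^{\oplus r}$ satisfying the $*$-adjunction relation corresponds functorially to a nondegenerate $\varepsilon$-Hermitian form on $W$ for the induced involution on $\FF$, with $\varepsilon=\pm 1$ determined by the type of $*$ on $M_r(\FF)$: in type C one gets a symmetric form on $W$ and in type D an alternating form, the involution on $\FF$ being trivial in both; in nonsplit type A the involution on $\FF$ is its nontrivial automorphism; in split type A, $W$ is just a pair of $\FF^+$-vector spaces interchanged by $*$. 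Since the definition of multirank already divides by $r_{[\tau]}$, the $\overline{\O}$-multirank of $V$ equals the $\FF$-multirank of $W$. So I am reduced to classifying nondegenerate $\varepsilon$-Hermitian $\FF\otimes k'$-modules, with $\FF$ a finite field (or a product of two finite fields) carrying an involution.

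Finally, base change to $k'$. As $k'$ is algebraically closed and contains, via $k$, the image of every embedding of $\FF$, we have $\FF\otimes k'\cong\prod_\tau k'$ and a decomposition $W\otimes k'=\bigoplus_\tau W_\tau$ with $\dim_{k'}W_\tau$ the components of the multirank; the involution permutes the $\tau$. For a factor $\tau$ not fixed by the involution, the form identifies $W_{\tau^*}$ with the $k'$-linear dual of $W_\tau$, so $(W_\tau\oplus W_{\tau^*})$ with its pairing is a standard hyperbolic space, determined up to isomorphism by $\dim_{k'}W_\tau=\dim_{k'}W_{\tau^*}$. For a factor fixed by the involution one gets a single nondegenerate symmetric or alternating $k'$-bilinear form on $W_\tau$, and over the algebraically closed field $k'$ such a form of given dimension is unique up to isomorphism (in the alternating case the dimension must be even, which holds automatically because $W_\tau$ comes from an honest symplectic $\overline{\O}$-module). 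Reassembling these isomorphisms over the $\tau$, and absorbing the global similitude constant $c$ factor by factor, yields $V\cong V'$.

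The step I expect to require the most care is the Morita reduction combined with the base change on the self-paired factors in characteristic $2$, where the distinction between symmetric and alternating forms is genuine: one must verify that the form transported to $W$ has exactly the symmetry type dictated by the classification of $(\overline{\O},*)$, and that nondegenerate forms of that type over $k'$ are classified by dimension alone. In characteristic different from $2$ this is routine Witt theory; the characteristic $2$ case is the delicate one, and it parallels the discussion already needed to pin down the notion of principal quasi-polarization in section \ref{SS:extra}.
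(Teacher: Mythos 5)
The paper offers no proof here; it simply calls the statement ``basic,'' so there is no argument of the author's to compare against. Your plan is the right one and essentially complete: decompose along the $*$-stable central idempotents, Morita-reduce, observe that the non-$*$-fixed embeddings contribute hyperbolic blocks determined by dimension, and classify the residual forms over the algebraically closed $k'$.

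There is, however, one concrete slip that turns your flagged worry into a non-issue. You have the Morita transfer of form types backwards. Writing $x^*=g^{-1}x^Tg$ and $\langle v,w\rangle = c\sum_{i,j}g_{ij}B(v_i,w_j)$ (the only possibility, by Schur), the antisymmetry of $\langle\cdot,\cdot\rangle$ forces $B$ to be \emph{skew-symmetric} when $g$ is symmetric (type C) and \emph{symmetric} when $g$ is alternating (type D) --- the opposite of what you wrote. Better still, in type C the alternating condition $\langle v,v\rangle=0$, applied to a vector supported in a single coordinate with $g_{ii}\neq 0$ (available, in every characteristic including $2$, precisely because $g$ is symmetric but not alternating), gives $g_{ii}B(u,u)=0$, so the reduced form on $W_\tau$ is honestly \emph{alternating}. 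Since both flavours of type A have $\tau\neq\tau^*$ for every embedding $\tau$ and so contribute only hyperbolic blocks, once type D is excluded the only $*$-fixed $\tau$ that can occur comes from type C and carries an alternating form --- classified by its dimension over any field whatsoever. The characteristic $2$ dichotomy you anticipated needing (nondegenerate symmetric, non-alternating forms over $\overline{\FF}_2$ having two classes in each even dimension) is real, but it arises only in type D, where it in fact shows the proposition as literally stated would fail; type D is excluded in the sentence immediately after the proposition, so the statement holds where it is used. With the C/D labels swapped, your proof goes through with no further characteristic $2$ analysis needed.
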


From now on, we assume that $(\overline{\O},*)$ has no simple factors of type D.  Let $(V,\langle\cdot,\cdot\rangle)$ be a symplectic $\overline{\O}$-module.  Then to $V$ we can associate the algebraic group
\begin{equation*}
\overline{G}(A)=\{(g,a)\in\End_{\O\otimes A}(V\otimes A)\times A^\times\mid \langle gv,gw\rangle=a\langle v,w\rangle\}
\end{equation*}

\begin{prop}\label{P:hequiv}
Assume $(\overline{\O},*)$ has no simple factors of type D.  Then there is a bijection between the set of symplectic $\overline{\O}$-modules up to isomorphism and tuples $(h_{[\tau]})$ such that 
\begin{enumerate}
\item $h_{[\tau]}$ is even if $[\tau]$ corresponds to a factor of $\overline{\O}$ of type C.
\item $h_{[\tau]}=h_{[\tau]*}$ for each $[\tau]$.
\end{enumerate}
The bijection is given by sending $(V,\langle\cdot,\cdot\rangle)$ to the $\overline{\O}$ multi rank of $V\otimes_{\FF_p} k$.
\end{prop}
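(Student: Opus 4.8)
The plan is to reduce, on both sides, to the case where $(\overline{\O},*)$ is simple as an algebra with involution, and then run Morita theory together with the elementary classification of bilinear and sesquilinear forms over finite fields.

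First I would observe that both the set of symplectic $\overline{\O}$-modules and the set of admissible tuples $(h_{[\tau]})$ decompose compatibly along the factorization of $(\overline{\O},*)$ into simple algebras with involution. If $e_j$ denotes the central idempotent of the $j$-th such factor, then $e_j^*=e_j$, so for a symplectic $\overline{\O}$-module $(V,\langle\cdot,\cdot\rangle)$ one has $\langle e_jV,e_{j'}V\rangle=0$ for $j\ne j'$ and $V=\bigoplus_j e_jV$ is an orthogonal direct sum of symplectic $\overline{\O}_j$-modules; the $\overline{\O}$-multirank of $V\otimes_{\FF_p}k$ and the conditions (1), (2) split accordingly, each $[\tau]$ and its partner $[\tau]*$ lying in a single factor. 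One also notes (a direct check on the explicit models below) that this multirank, which is constant on Frobenius orbits, is exactly the tuple $(h_{[\tau]})$, so the claimed map is visibly well-defined on isomorphism classes and it remains to prove it is injective with image the admissible tuples. So it suffices to treat the three surviving simple types, recalling that in each of them the center $\FF$, or each of the two copies of $\FF^+$ in the split case, is a single Frobenius orbit, so that there is essentially one parameter $h:=h_{[\tau]}$ to pin down.

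For $\overline{\O}_j$ a matrix algebra over (a product of) finite fields I would use Morita equivalence, carrying the involution along, to replace $\overline{\O}_j$ by its center. Writing $V\cong(\FF^r)^{\oplus m}$ as an $\overline{\O}_j$-module and using that the space of $\overline{\O}_j$-invariant $\FF$-valued forms on $\FF^r$ is one-dimensional, spanned by the standard form $b$ defining $*$, one checks that $\langle\cdot,\cdot\rangle$ is of the shape $\tr_{\FF/\FF_p}$ applied to the product of $b$ with a form $\gamma$ on the $m$-dimensional multiplicity space, and that non-degeneracy of $\langle\cdot,\cdot\rangle$ is equivalent to non-degeneracy of $\gamma$. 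The type of $\gamma$ is opposite to the type of $*$: in the split case $\gamma$ is a perfect $\FF_p$-pairing of a module with the dual of its partner, giving no constraint beyond $h_{[\tau]}=h_{[\tau]*}$; in the non-split type A case $\gamma$ is a non-degenerate Hermitian form for $\FF/\FF^+$, of which there is exactly one in each rank over a finite field, again no constraint; in type C the requirement that $\langle\cdot,\cdot\rangle$ be alternating forces $\gamma$ to be an alternating $\FF$-bilinear form, so the multiplicity space becomes a symplectic $\FF$-space, which exists precisely when its dimension, namely $h_{[\tau]}$, is even and is then unique. This simultaneously gives injectivity of the multirank map and identifies its image with the tuples satisfying (1), (2); surjectivity onto admissible tuples is witnessed by the explicit models $(\FF^r)^{\oplus m}$ equipped with these standard forms, assembled over the simple factors. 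The allowed similitude constant $c$ plays no role, forms over finite fields being rigid.

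I expect the main obstacle to be the type C analysis: keeping the Morita translation honest about how an orthogonal-type involution on $M_r(\FF)$ produces a symplectic form on the multiplicity space, and verifying the equivalence ``$\langle\cdot,\cdot\rangle$ alternating $\Leftrightarrow$ $\gamma$ alternating'' together with the resulting parity of $h_{[\tau]}$ when $p=2$, where ``alternating'' is strictly stronger than ``antisymmetric'' and one must control the quadratic map $u\mapsto b(u,u)$ on $\FF^r$. An alternative to the explicit finite-field bookkeeping for injectivity and existence would be a descent argument: combine Proposition \ref{P:algclosedunique} with the vanishing of $H^1(\mathrm{Gal}(\overline{\FF}_p/\FF_p),-)$ for the connected similitude groups $\GSp$, $\mathrm{GU}$, $\GL$ arising as automorphism groups (Lang's theorem), reducing everything to the existence question over $\overline{\FF}_p$, which is once more the same piece of linear algebra.
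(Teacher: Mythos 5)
Your proposal is correct, and the ``alternative'' you sketch in the final sentences --- combine Proposition \ref{P:algclosedunique} with the vanishing of $H^1(\mathrm{Gal}(\overline{\FF}_p/\FF_p),\overline{G})$ via Lang's theorem, using that the similitude automorphism group is connected in the absence of type D --- is \emph{exactly} the paper's proof, which is one sentence long. Your main route, reducing to simple factors and then running hermitian Morita theory against the elementary classification of nondegenerate bilinear, sesquilinear, and hermitian forms over finite fields, is a genuinely different and more hands-on argument.

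Comparing the two: the paper's descent argument is shorter and hides all the form-theoretic casework inside Proposition \ref{P:algclosedunique} (classification over the algebraic closure, a purely algebraic fact) plus a single cohomological input, but the exclusion of type D enters somewhat opaquely as ``$\overline{G}$ is connected.'' Your Morita route is longer and, as you note, requires real care in type C when $p=2$ --- one has to unwind the $\FF_p$-valued alternating form $\langle\cdot,\cdot\rangle$ through the trace pairing to a genuine $\FF$-bilinear form $B$ on $V$ (using that $a\mapsto a^2$ is surjective on $\FF$ to show $\langle\cdot,\cdot\rangle$ alternating forces $B$ alternating), and then use non-alternating-ness of $b$ to pass the alternating condition to the multiplicity-space form $\gamma$. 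But in exchange it makes the role of the type D exclusion completely transparent: there the Morita transfer would land on a symmetric, possibly non-alternating form on the multiplicity space, which over a finite field is \emph{not} determined by its rank. Both arguments also quietly supply surjectivity; in the paper's route existence over $\FF_p$ is obtained by descent from $\overline{\FF}_p$, in yours by explicitly assembling the standard models, which is arguably clearer.
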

\begin{proof}
To see that a symplectic $\overline{\O}$-module $(V,\langle\cdot,\cdot\rangle)$ is determined up to isomorphism by the $V\otimes k$ multi rank, note that because $(\overline{\O},*)$ has no factors of type D, the algebraic group $\overline{G}$ is connected, and so the result follows by Lang's theorem and Proposition \ref{P:algclosedunique}.
\end{proof}

\begin{prop}\label{P:dequiv}
Let $(V,\langle\cdot,\cdot\rangle)$ be a Let $k'/k$ be an extension which is either a finite field or an algebraically closed field.  Then two $\O$-stable maximal isotropic subspaces of $V$ are in the same $\overline{G}(k')$ orbit if and only if they have the same $\overline{\O}$ multirank.  Moreover, a tuple $(d_\tau)$ occurs as the $\overline{\O}$-multirank of such a maximal isotropic if and only if it satisfies
\begin{equation*}
d_\tau+d_{\tau*}=h_{[\tau]}
\end{equation*}
for all $\tau\in\Ca{T}$.
\end{prop}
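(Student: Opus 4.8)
The plan is to prove the numerical assertion and the transitivity assertion separately, in each case by decomposing along the centre of $\overline{\O}$, applying Morita equivalence block by block, and then invoking Witt's theorem and---when $k'$ is finite---Lang's theorem. The running theme is that excluding type D is exactly what keeps $\overline{G}$, and every isometry group that appears, connected.

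First I would set up the decomposition. Write $\FF=\prod_{[\tau]}\FF_{[\tau]}$ for the centre of $\overline{\O}$. Since $k'\supseteq k$ contains all the images $\tau(\FF)$, the idempotents $e_\tau$ are defined over $k'$, so $V_{k'}=V\otimes k'$ splits as $\bigoplus_{\tau\in\Ca{T}}V_\tau$ with $\overline{\O}\otimes k'=\prod_{[\tau]}M_{r_{[\tau]}}(k')$ acting blockwise. From $\langle xv,w\rangle=\langle v,x^*w\rangle$ and $e_\tau^{*}=e_{\tau*}$ one reads off that $\langle\cdot,\cdot\rangle$ induces a perfect pairing $V_\tau\times V_{\tau*}\to k'$, vanishes between $V_\tau$ and $V_{\tau'}$ for $\tau'\neq\tau*$, and restricts to a symplectic form on $V_\tau$ when $\tau=\tau*$. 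Morita equivalence replaces each $V_\tau$ by a $k'$-vector space $U_\tau$ of dimension $h_{[\tau]}$, with induced perfect pairing $U_\tau\times U_{\tau*}\to k'$ (symplectic if $\tau=\tau*$), and replaces $\overline{G}$ by a connected group which, up to the common similitude factor $\Bf{G}_m$, is a product of groups $\GL(U_\tau)$ (for $\tau\neq\tau*$) and $\GSp(U_\tau)$ (for $\tau=\tau*$). An $\O$-stable maximal isotropic $W\subseteq V_{k'}$ then corresponds to a blockwise maximal isotropic tuple $(W'_\tau\subseteq U_\tau)$: for $\tau=\tau*$, $W'_\tau$ is Lagrangian, so $\dim W'_\tau=\tfrac12 h_{[\tau]}$; for $\tau\neq\tau*$, maximality forces $W'_{\tau*}=(W'_\tau)^{\perp}$, so $\dim W'_\tau+\dim W'_{\tau*}=h_{[\tau]}$. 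As $d_\tau=\dim W'_\tau$ in all cases, this is precisely the asserted relation $d_\tau+d_{\tau*}=h_{[\tau]}$, and conversely any tuple obeying it is realised by an explicit $W$ defined over $k$; this settles the ``moreover'' clause.

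For transitivity I would first treat $k'$ algebraically closed: via the dictionary above it suffices that $\GL(U_\tau)$ act transitively on the subspaces of $U_\tau$ of a given dimension (the partner $W'_{\tau*}=(W'_\tau)^{\perp}$ being then determined) and that $\GSp(U_\tau)$ act transitively on the Lagrangians of $U_\tau$, both instances of Witt's theorem, with no orthogonal group entering precisely because there is no type D factor. For $k'$ finite I would then argue by descent: fixing a valid multirank $(d_\tau)$, the $\O$-stable maximal isotropics of that multirank form the $k'$-points of a closed subscheme $Y$ of a product of Grassmannians, defined over $k'$; it has a $k'$-point $W_0$ by the construction above, and by the algebraically closed case $Y_{\overline{k'}}\cong\overline{G}_{\overline{k'}}/P$ with $P=\mathrm{Stab}_{\overline{G}}(W_0)$ the stabiliser of an $\O$-stable isotropic flag, hence a parabolic subgroup of $\overline{G}$, in particular smooth and connected. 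Since $\overline{G}$ itself is connected (no type D), Lang's theorem gives $H^1(k',\overline{G})=0$, so the orbit map $\overline{G}(k')\to Y(k')$ is surjective and all $\O$-stable maximal isotropics over $k'$ of the given multirank lie in one $\overline{G}(k')$-orbit.

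The step I expect to cause the most trouble is the block-by-block bookkeeping in the set-up---in particular, pinning down, for type C and in residue characteristic $2$, the parity of the pairing induced on $U_\tau$ by the Morita reduction, so that the pertinent isometry group really is the connected group $\GSp(U_\tau)$ rather than an orthogonal group. (In characteristic $2$ this seems to require the Dieudonn\'e-module condition built into the definition of a principally quasi-polarized $\BT$; one may alternatively just assume $p\neq2$, as the paper suggests.) Once that is in hand, the transitivity inputs from Witt's theorem and the Lang-theorem descent are routine, and the exclusion of type D is exactly what makes both of them available.
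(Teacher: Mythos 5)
The paper states Proposition~\ref{P:dequiv} without proof, so there is no authorial argument to compare against directly; what one can compare against is the (one-line) proof given for the neighbouring Proposition~\ref{P:hequiv}, which likewise reduces to the algebraically closed case and invokes Lang's theorem via the connectedness of $\overline{G}$ afforded by excluding type D. Your proposal is correct and is precisely the fleshed-out version of that template. The block decomposition via the idempotents $e_\tau$, the observation that $\langle\cdot,\cdot\rangle$ pairs $V_\tau$ with $V_{\tau*}$ (symplectically when $\tau=\tau*$), the Morita reduction to spaces $U_\tau$ of dimension $h_{[\tau]}$, and the resulting identification $\dim W'_\tau + \dim W'_{\tau*} = h_{[\tau]}$ give the ``moreover'' clause cleanly; transitivity over an algebraically closed field is indeed immediate block-by-block from $\GL$-transitivity on subspaces of a fixed dimension and from Witt's theorem for symplectic spaces, with the fibre product over the similitude $\mathbf{G}_m$ causing no trouble because the moving elements can be taken in the kernel of the similitude character. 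The Lang descent is fine as written: the stabiliser $P$ of a fixed $\O$-stable maximal isotropic of the given multirank is (block-by-block) a product of $\GL$-parabolics and Siegel parabolics, hence smooth and connected over $k'$, so $H^1(k',P)=1$ and the orbit map $\overline{G}(k')\to Y(k')$ is surjective; one does not even need to know $P$ is parabolic, only smooth and connected. Your caveat about characteristic $2$ is also apposite but slightly misdirected: since Proposition~\ref{P:dequiv} is pure linear algebra about the \emph{given} alternating form on $V$ (no Dieudonn\'e modules appear), the only thing to check is that the Morita-reduced pairing on the type~C blocks $U_\tau$ is again alternating in the strong sense $b'(u,u)=0$, which one verifies directly by evaluating on a pure tensor with $S(e,e)\ne 0$; this works in all characteristics, so the Dieudonn\'e-module condition from the $\BT$ definition is not needed here, and the exclusion of type D remains the only genuinely structural hypothesis.
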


The failure of these two propositions when $(\overline{\O},*)$ has simple factors of type D is one of the reasons for excluding this case in this thesis.

Now we come to the main definition of this section.
\begin{defn}\label{D:modppel}
By a mod $p$ PEL datum we mean a semisimple $\FF_p$-algebra with involution $(\overline{\O},*)$ along with one of the following equivalent sets of data (by Propositions \ref{P:hequiv} and \ref{P:dequiv}.)
\begin{enumerate}
\item A symplectic $\overline{\O}$-module along with a $G(k)$ orbit of maximal isotropic $\overline{\O}$-submodules $N\subset V\otimes k$.
\item A pair of tuples of integers $(h_{[\tau]})$ and $(d_\tau)$ satisfying
\begin{equation*}
d_\tau+d_{\tau*}=h_{[\tau]}
\end{equation*}
for all $\tau\in\Ca{T}$.  (Note that this condition implies the two conditions on $(h_{[\tau]})$ in Proposition \ref{P:hequiv}.)
\end{enumerate}
\end{defn}

\begin{defn}\label{D:modpcons}
Given an integral PEL datum $(\overline{\O},*,L,\langle\cdot,\cdot\rangle,h)$ with no factors of type D such that $p$ is a good prime, we define a mod $p$ PEL datum as follows:
\begin{itemize}
\item Take $\overline{\O}=\O\otimes\FF_p$, which is a semisimple $\FF_p$-algebra as $p$ is a good prime.  Take $*$ to be the induced involution.  $(\overline{\O},*)$ has no simple factors of type D because $(\O,*)$ does.
\item Take $(V,\langle\cdot,\cdot\rangle)$ to be $L\otimes\FF_p$, and the pairing induced by $\langle\cdot,\cdot\rangle$ after picking a choice of an isomorphism $\Z(1)\simeq \Z$ and reducing mod $p$.  The resulting pairing on $V$ is non degenerate because $p$ is a good prime.
\item Take $N\subset V\otimes k$ to be a maximal isotropic with the same $\overline{\O}$-multirank as $L_0\otimes_R k$.
\end{itemize}
\end{defn}

We finish this section by introducing some notation that will be used in Section \ref{S:moonen}.  Let $\Ca{D}$ be a mod $p$ PEL datum.  Let $\overline{G}$ be the associated group as defined above.  Fix a maximal torus and Borel $T\subset B\subset G$ so that we get a set of simple roots $\Delta$.  Let $P_N\subset G$ be the parabolic fixing $N\subset V\otimes k$ and let $I\subset \Delta$ be the corresponding set of simple roots.  Let $W$ be the Weyl group of $G$, and let $W_I\subset W$ be the parabolic subgroup generated by the simple reflections in $I$.  Let $l$ denote the length function on $W$.  Let $W^I$ denote the set of minimal length coset representatives for $W/W_I$, so that for each $w\in W^I$ we have
\begin{equation*}
l(ww')>l(w)\qquad\forall w'\in W_I.
\end{equation*}

\section{The Canonical Filtration}\label{S:canfil}

\subsection{Definition and Basic Properties}

Let $G,G'$ be a quasi-finite, flat, separated $S$-group schemes and let $f:G\to G'$ be a homomorphism.  In general, $\ker f$ exists as a quasi-finite, separated $S$-group scheme, but it need not be flat.  Meanwhile $\im f$ needn't even be representable.  However we recall the following crucial fact: if $\ker f$ is in fact finite and flat then $\im f$ is representable by a separated, quasi-finite, flat closed subgroup scheme of $G'$ which is finite if $G$ is.

Let us introduce some definitions.
\begin{defn}
Let $G$ be a partial $\BT$ and let $H\subset G$ is a finite flat closed subgroup scheme.  
\begin{enumerate}
\item If $F^{-1}(H^{(p)})\subset G$ is finite flat then we denote it (abusively) by $F^{-1}(H)$ and say that ``$F^{-1}(H)$ exists.''  If $\im (V:H^{(p)}\to G)$ exists as a finite flat group scheme then we denote it by $V(H)$ and say ``$V(H)$ exists.''  As $G$ itself might not be finite, we also let $F^{-1}(G)=G$ and $V(G)=G[F]$.  This is consistent with the case that $G$ is finite and the above definitions apply.
\item Let $\Ca{R}$ be the set of words in the symbols $F^{-1}$ and $V$.  Given $R=R_1\cdots R_n\in\Ca{R}$ where each $R_i$ is either $F^{-1}$ or $V$, we say that $R(H)$ exists if $R_n(H),R_{n-1}R_n(H),$ $\ldots,R_1R_2\cdots R_n(H)$ all exist.  By the convention above, we may also make sense of $R(G)$, even if $G$ is not finite.
\end{enumerate}
\end{defn}

We now observe the following easy but crucial fact.

\begin{lem}\label{L:fil}
If $R,R'\in\Ca{R}$ and $R(G[F])$ and $R'(G[F])$ both exist, then either $R(G[F])\subset R'(G[F])$ or $R'(G[F])\subset R(G[F])$.
\end{lem}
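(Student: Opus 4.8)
\textbf{Proof strategy for Lemma \ref{L:fil}.}

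The plan is to reduce the statement to a combinatorial fact about the possible ``limits'' of $G[F]$ under iterated application of $F^{-1}$ and $V$, and to prove that combinatorial fact by induction on word length. First I would record the key monotonicity observations: since $F^{-1}$ and $V$ are (whenever defined) inclusion-preserving operations on the lattice of finite flat subgroup schemes of $G$, it suffices to understand the partial order among the groups $R(G[F])$ directly. The crucial starting point is that $G[F]$ itself is a canonical, distinguished subgroup: we have $V(G)=G[F]$ and $F^{-1}(0)=G[F]$ by the very definitions just given, so $G[F]$ sits at a natural ``origin''. I would then observe that the kernel of Frobenius and the image of Verschiebung interact in a constrained way: over each geometric point, $G_s$ is a genuine $\BT$, and there the operations $F^{-1}(\cdot)$ and $V(\cdot)$ on subgroups of $G_s$ are governed by the Dieudonn\'e module, where they become $\mathbf{V}^{-1}(\cdot)$ and $\mathbf{F}(\cdot)$ acting on submodules of a fixed finite-dimensional vector space. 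The point is that the collection of all such iterates forms a \emph{chain} (totally ordered set) of submodules --- this is the classical observation of Kraft and Oort underlying the canonical filtration.

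The key steps, in order, would be: (1) reduce to geometric fibers, i.e.\ show that it is enough to prove $R(G[F])_s \subset R'(G[F])_s$ or the reverse inclusion for every $s\in S$ --- this is legitimate because both $R(G[F])$ and $R'(G[F])$ are finite flat closed subgroup schemes of the separated scheme $G$, so a containment that holds on all fibers is a containment of subschemes (one of them is contained in the other scheme-theoretically once the generic points are handled and flatness propagates it); (2) on a geometric fiber, pass to contravariant Dieudonn\'e modules, translating $F^{-1}$ and $V$ on subgroups into $\mathbf{F}(\cdot)$ and $\mathbf{V}^{-1}(\cdot)$ on the $k$-subspaces of the Dieudonn\'e module $D$ that are stable under the relevant structure; (3) prove by induction on $\max(\ell(R),\ell(R'))$ that any two such iterates of the fixed subspace $D[\mathbf F]=\ker(\mathbf F)$ are comparable. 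For the inductive step I would peel off the outermost letter of the longer word: writing $R = R_0 R''$ with $R_0\in\{F^{-1},V\}$, the inductive hypothesis gives comparability of $R''(G[F])$ with $R'(G[F])$ and with all initial segments, and one then checks that applying $R_0$ preserves the comparability, using that $F^{-1}$ and $V$ are monotone and that $V\circ F^{-1}$ and $F^{-1}\circ V$ both stabilize $G[F]$-containing subgroups in a predictable way (e.g.\ $F^{-1}(H)\supseteq G[F]$ always, and $V(H)\subseteq G[F]$ always).

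I expect the main obstacle to be step (1) --- the passage from a fiberwise containment to an honest scheme-theoretic containment of the finite flat group schemes $R(G[F])$ and $R'(G[F])$ over the (not necessarily reduced, not necessarily irreducible) base $S$. The cleanest route is probably to note that both subgroups are finite flat over $S$, so each is ``determined by its fibers'' in the sense that it is a closed subscheme flat over $S$ of locally constant rank; then the inclusion $R(G[F])_s\subseteq R'(G[F])_s$ for all $s$, combined with flatness of the quotient $R'(G[F])/(R(G[F])\cap R'(G[F]))$ --- or an argument via the fact that a map of flat finite $S$-schemes which is a closed immersion on every fiber is a closed immersion --- upgrades the pointwise statement to the global one. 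An alternative, perhaps slicker, is to work entirely with the lattice-theoretic statement: once one knows that on each fiber exactly one of the two inclusions holds, the locus where $R(G[F])\subseteq R'(G[F])$ is closed and its complement, where the strict reverse inclusion holds, is also closed, so by connectedness considerations (or by just treating each connected component) one of the two inclusions holds identically. The rest of the argument --- the Dieudonn\'e-module chain property --- is classical and I would cite Oort \cite{Oo01}, so no genuinely new difficulty arises there.
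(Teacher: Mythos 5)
Your proposal contains the germ of the right idea but wraps it in a framework that is both unnecessary and, in one place, genuinely gappy. The two facts you mention almost in passing at the end of step (3) --- that $F^{-1}(H)\supseteq G[F]$ and $V(H)\subseteq G[F]$ always hold (when defined) --- together with the monotonicity of $F^{-1}$ and $V$ on subgroups, are already enough to prove the lemma by a completely formal induction on $\max(\ell(R),\ell(R'))$ directly over $S$, with no passage to geometric points and no Dieudonn\'e theory. The paper's argument is exactly this: if one of $R,R'$ is the empty word, say $R'$, the two facts above show that $R(G[F])$ is comparable to $G[F]$ according to whether its outermost letter is $V$ or $F^{-1}$; if both are nonempty with outermost letters $R_1,R_1'$, then either $R_1=R_1'$ and one peels it off (monotonicity preserves the inclusion supplied by induction), or $R_1\ne R_1'$, in which case, say, $R_1=V$ and $R_1'=F^{-1}$ give $R(G[F])\subset G[F]\subset R'(G[F])$ directly. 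No fibers, no Dieudonn\'e modules.

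The gap in your argument is step (1), and it is a real one, not just a technicality you can wave away. The base $S$ in this chapter is an arbitrary locally noetherian scheme of characteristic $p$, in particular possibly non-reduced. Over a non-reduced base, two finite flat closed subschemes can agree on every fiber without one being contained in the other scheme-theoretically: for instance over $S=\spec k[\epsilon]/(\epsilon^2)$, the closed subscheme $Z=\spec k[\epsilon][x]/(x^2,\epsilon x)$ of $H=\spec k[\epsilon][x]/(x^2)$ has the same fiber as $H$ over the unique point of $S$ but $Z\ne H$. Your first suggested fix quietly assumes reducedness ("generic points are handled and flatness propagates it"), your second ("a map of flat finite $S$-schemes which is a closed immersion on every fiber is a closed immersion") presupposes that a map $R(G[F])\to R'(G[F])$ over $S$ already exists, which is precisely what you need to construct, and the connectedness/closed-locus argument is also unsupported. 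None of these machinery repairs is needed once you notice that the lemma is a purely lattice-theoretic statement about the subgroups $R(G[F])$, and that the two order relations $F^{-1}(H)\supseteq G[F]$ and $V(H)\subseteq G[F]$, plus monotonicity, hold tautologically at the level of fppf sheaves over $S$ itself.
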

\begin{proof}
For any finite flat subgroup $H\subset G$, we have $G[F]\subset F^{-1}(H)$ and $V(H)\subset G[F]$ provided they exist.  This implies the result if one of $R$ or $R'$ is empty.

Otherwise, write $R=R_1\tilde{R}$ and $R'=R_1'\tilde{R}'$ with $R_1$ and $R_1'$ each either $F^{-1}$ or $V$.  If $R_1=R_1'$ the result follows by induction.  If not, then without loss of generality $R_1=V$ and $R_1'=F^{-1}$.  But then
\begin{equation*}
R(G[F])\subset G[F]\subset R'(G[F]).
\end{equation*}
\end{proof}

\begin{defn}
Let $G/S$ be a partial $\BT$.  We say that $G$ \emph{admits a canonical filtration} if for each $R\in\Ca{R}$, $R(G[F])$ exists.  If $G$ admits a canonical filtration then we say that it has \emph{constant type} if for each $R\in\Ca{R}$, the (locally constant) height of $R(G[F])$ is constant on $S$.
\end{defn}

Let us explain the definition.  Suppose that $G/S$ is a partial $\BT$ which admits a canonical filtration of constant type.  Then by \ref{L:fil}, the subgroups of the form $R(G[F])$ for $R\in\Ca{R}$ form a filtration of $G$.  Moreover, if two groups $R(G[F])$ and $R'(G[F])$ have the same (constant) height, they must be equal.  As the height of any $R(G[F])$ is bounded by the height of any of the fibers $G_s$, we see that the set $\{R(G[F])\mid R\in\Ca{R}\}$ must in fact be finite.  Ordering them by inclusion, and adding $0$ and $G$ if necessary, we arrive at a filtration
\begin{equation*}
0=G_0\subset G_1\subset\cdots\subset G_c=G[F]\subset \cdots\subset G_n=G
\end{equation*}
of $G$ by finite flat closed subgroup schemes.  It is called the \emph{canonical filtration.}  By construction it has the following property: for $i=0,\ldots,n$ both $F^{-1}(G_i)$ and $V(G_i)$ exist and are terms in the filtration.  Moreover, it is the coarsest filtration of $G$ with this property.

Let us continue to assume that $G/S$ is a partial $\BT$ which admits a canonical filtration of constant type.  For $i=1,\ldots,n$, $G_{i-1}\subset G_i$ is a finite flat closed subgroup scheme, and hence we may form the quotient $G_i/G_{i-1}$, which is separated, quasi-finite, flat, and even finite expect possibly when $i=n$ (if $G$ itself is not finite.)  Our next goal is to study how $F$ and $V$ behave on the ``associated gradeds" of the canonical filtration.

The following theorem, due to Ekedahl and Oort, summarizes the main properties of the canonical filtration.
\begin{thm}\label{T:strcan}
Let $G/S$ be a partial $\BT$ which admits a canonical filtration of constant type.
\begin{enumerate}
\item For $i=1,\ldots c$, there exists some $1\leq j\leq n$ with $G_i=V(G_j)$.  Let $\sigma(i)$ be the smallest such $j$.  Then $V(G_{\sigma(i)-1})=G_{i-1}$ and
\begin{equation*}
V:(G_{\sigma(i)}/G_{\sigma(i)-1})^{(p)}\to G_i/G_{i-1}
\end{equation*}
is an isomorphism.
\item For $i=c+1,\ldots,n$, there exists some $1\leq j\leq n$ with $G_i=F^{-1}(G_j)$.  Let $\sigma(i)$ be the smallest such $j$.  Then $F^{-1}(G_{\sigma(i)-1})=G_{i-1}$ and
\begin{equation*}
F:G_i/G_{i-1}\to (G_{\sigma(i)}/G_{\sigma(i)-1})^{(p)}
\end{equation*}
is an isomorphism.
\item The map $\sigma:\{1,\ldots,n\}\to\{1,\ldots,n\}$ defined in parts 1 and 2 is a bijection and satisfies
\begin{equation*}
\sigma(1)<\sigma(2)<\cdots<\sigma(c)
\end{equation*}
and
\begin{equation*}
\sigma(c+1)<\sigma(c+2)<\cdots<\sigma(n)
\end{equation*}
\end{enumerate}
\end{thm}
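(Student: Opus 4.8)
The plan is to reduce the whole statement to a question of semilinear algebra by means of Dieudonn\'{e} theory, and then to halve the work using Cartier duality. First I would reduce to the case $S=\spec k$ with $k$ algebraically closed of characteristic $p$. Relative Frobenius and Verschiebung, the operations $F^{-1}(-)$ and $V(-)$, and the formation of the subquotients $G_i/G_{i-1}$ all commute with base change, and the constant-type hypothesis forces the heights of all the $R(G[F])$ --- hence the combinatorial shape of the canonical filtration and of the candidate permutation $\sigma$ --- to be the same over every point of $S$. Since any two terms of the canonical filtration are comparable by Lemma~\ref{L:fil}, an identity such as $G_i=V(G_{\sigma(i)})$ or $V(G_{\sigma(i)-1})=G_{i-1}$ may be tested on geometric fibres, where it becomes an equality of finite flat closed subgroup schemes of the same order; and a homomorphism between the quasi-finite flat group schemes $G_i/G_{i-1}$ that is an isomorphism on all geometric fibres is an isomorphism. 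The only borderline point is $i=n$ when $G$ is not finite, where $\sigma(n)=n$, the quotient $G_n/G_{n-1}$ is \'{e}tale, and the map of part~2 is the relative Frobenius of an \'{e}tale group scheme, hence automatically an isomorphism; I would treat this case separately at the outset. Over $\spec k$ a partial $\BT$ is finite (a quasi-finite separated scheme over a field is finite), hence a genuine $\BT$ admitting a canonical filtration, and $G[F]$ is supplied by Lemma~\ref{L:omegafree}.

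Next I would pass to the contravariant Dieudonn\'{e} module $N$ of $G$: a finite-dimensional $k$-vector space with $k$-linear maps $F\colon N^{(p)}\to N$ and $V\colon N\to N^{(p)}$ satisfying $FV=VF=0$ and, because $G$ is a $\BT$, $\ker F=\im V$ and $\ker V=\im F$. Under the anti-equivalence the canonical filtration becomes a chain of $F$- and $V$-stable subspaces $N=N_0\supsetneq N_1\supsetneq\cdots\supsetneq N_n=0$ in which $N_c=\ker V=\im F$ corresponds to $G[F]$; a direct computation identifies the subspace attached to $V(G_j)$ with $\{x\in N\mid Vx\in N_j^{(p)}\}$ and the one attached to $F^{-1}(G_j)$ with $F(N_j^{(p)})$, so that $\{N_i\}$ is exactly the minimal chain containing $0$ and $N$ that is stable under these two operations, and the maps of parts~1 and~2 become the maps induced by $V$ and $F$ on successive graded pieces. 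The statement is now purely combinatorial.

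Then I would prove the combinatorial statement. By Cartier duality for $\BT$s --- which replaces $G$ by $G^D$, exchanges $F$ and $V$, and reverses the canonical filtration --- part~2 for $G$ is part~1 for $G^D$, so it is enough to prove part~1, i.e.\ the assertions about indices $i$ with $N_i\supseteq N_c$. The driving fact is that $V$ restricts to an inclusion-preserving bijection from the subspaces of $N$ containing $\ker V$ onto the subspaces of $N^{(p)}$ contained in $\im V$, with inverse $U\mapsto V^{-1}(U)$. Since $F(W^{(p)})\subseteq\im F=\ker V$ for every subspace $W$, a term $N_i$ strictly containing $N_c$ cannot have arisen from an $F$-step in building the minimal chain, so $N_i=\{x\mid Vx\in N_{\sigma(i)}^{(p)}\}$ for $\sigma(i)$ the least index with this property; minimality of the chain then forces $N_{i-1}=\{x\mid Vx\in N_{\sigma(i)-1}^{(p)}\}$ as well, whence the map induced by $V$, namely $N_{i-1}/N_i\to(N_{\sigma(i)-1}/N_{\sigma(i)})^{(p)}$, is injective (as $\ker V\subseteq N_i$) and surjective, hence an isomorphism; this is part~1. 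The monotonicity $\sigma(1)<\cdots<\sigma(c)$ follows since $V$ preserves inclusions on subspaces containing its kernel, the mirror argument gives $\sigma(c+1)<\cdots<\sigma(n)$, and that the images of the two halves are complementary --- whence $\sigma$ is a bijection of $\{1,\dots,n\}$ --- drops out of the explicit description of the chain produced along the way.

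The step I expect to be the real obstacle is the last one: making precise the phrase ``minimality forces the passage to consecutive terms'' --- equivalently, showing that the canonical filtration omits no term under iteration of these two operations --- and pinning down the bijectivity of $\sigma$ both require an honest induction on the length $n$ of the filtration together with some case analysis, whereas the first two steps are essentially formal once one is careful about base change and about the Frobenius twists.
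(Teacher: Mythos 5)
Your route is genuinely different from the paper's. The paper works directly over the base $S$, uses no Dieudonn\'{e} theory and no Cartier duality, and deduces everything from one short exact sequence: for each $i$ there is an exact sequence
\begin{equation*}
0\to F^{-1}(G_i)/F^{-1}(G_{i-1})\overset{F}{\to} G_i/G_{i-1}\overset{V}{\to} V(G_i)/V(G_{i-1})\to 0
\end{equation*}
coming from the $\BT$ condition ($\ker V=\im F$ fibrewise). Since $G_i/G_{i-1}\neq 0$, at least one of $F$ or $V$ is nonzero on it; the paper shows that $V\neq 0$ there forces $i\in\sigma(\{1,\dots,c\})$ and $F\neq 0$ forces $i\in\sigma(\{c+1,\dots,n\})$, so surjectivity (hence bijectivity) of $\sigma$ and the isomorphism claims in parts 1 and 2 all fall out in a single stroke, with no passage to a geometric point. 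Your proposal, by contrast, reduces to $\spec k$, translates to semilinear algebra, and splits the work by duality; this buys concreteness but at a cost.

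The real gap is exactly where you flag it yourself: bijectivity of $\sigma$. Your phrase ``that the images of the two halves are complementary drops out of the explicit description of the chain produced along the way'' is not an argument. You need to show both that $\sigma(\{1,\dots,c\})$ and $\sigma(\{c+1,\dots,n\})$ are disjoint and that together they cover $\{1,\dots,n\}$, and neither is visible from the explicit chain by itself; you need some replacement for the exact sequence above, which is precisely the device that converts ``$G_i/G_{i-1}\neq 0$'' into ``$i$ is hit by $\sigma$.'' Saying the step requires ``an honest induction on the length $n$'' is also suspect: the paper's proof is not inductive in $n$, and I don't see a clean inductive structure, since removing a term of the canonical filtration does not give the canonical filtration of a smaller partial $\BT$.

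There is a second, more local gap in the reduction to geometric fibres. The constant-type hypothesis fixes the heights of $R(G[F])$, i.e.\ of $G_0,\dots,G_{n-1}$, but says nothing about the degree of $G_n=G$ itself; if $G$ is not finite this degree genuinely varies. At fibres where $(G_{n-1})_s=(G_n)_s$ the canonical filtration of $G_s$ has $n-1$ terms and its permutation $\sigma_s$ is \emph{not} the restriction of $\sigma$, so your blanket claim that the combinatorics is constant on $S$ is false. You are right that the only index touched is $i=n$ (one can show $\sigma(i)<n$ for all $i<n$ when $G$ is not finite, using that $F^{-1}(G_j)$ is finite for $j<n$ while $G$ is not), and you do flag $i=n$ for separate treatment; but the justification you give is circular: you assert ``$G_n/G_{n-1}$ is \'{e}tale, hence $F$ is automatically an isomorphism,'' when \'{e}taleness of $G_n/G_{n-1}$ is itself a consequence of the theorem being proved. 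The honest order is: show $F^{-1}(G_{n-1})=G_{n-1}$ (easy: $F^{-1}(G_{n-1})$ is a finite term of the filtration containing $G_{n-1}$ and distinct from the non-finite $G_n$), conclude $F$ is injective on $G/G_{n-1}$, conclude \'{e}taleness fibrewise, then conclude $F$ is an isomorphism. Finally, Dieudonn\'{e} theory is not doing anything for you here that elementary manipulation of $F$, $V$, kernels and images of finite flat group schemes does not already do; it obscures the fact that the theorem is a statement that lives happily over the arbitrary base $S$.
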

\begin{proof}
We first prove the first sentences of parts 1 and 2.  We have that $F^{-1}(G_n)=G_n$ and $V(G_n)=G_c$.  For $i=1,\ldots,n-1$ it follows from the definition of the canonical filtration that $G_i=R(G[F])$ for some $R\in\Ca{R}$ and so there exists $j$ such that either $G_i=F^{-1}(G_j)$ or $G_i=V(G_j)$.  But if $i<c$ then $G_i\subset G_c=G[F]$ so we must have $G_i=V(G_j)$, and if $i>c$ then $G[F]\subset G_i$ so we must have $G_i=F^{-1}(G_j)$.

Now if $1\leq i<i'\leq c$ then $V(G_{\sigma(i)})=G_i\subset V(G_{\sigma(i')})= G_{i'}$.  Thus $G_{\sigma(i')}\not\subset G_{\sigma(i)}$ so we must have $G_{\sigma(i)}\subset G_{\sigma(i')}$ and hence $\sigma(i)<\sigma(i')$.  Thus for $i=1,\ldots,c$, $\sigma(i-1)\leq\sigma(i)-1$ and hence 
\begin{equation*}
G_{i-1}=V(G_{\sigma(i-1)})\subset V(G_{\sigma(i)-1}).
\end{equation*}
Thus if $V(G_{\sigma(i)-1})=G_j$ then $j\geq i-1$.  But also $j<i$ by the definition of $\sigma(i)$, and hence $j=i-1$.  Consequently we have a map
\begin{equation*}
V:(G_{\sigma(i)}/G_{\sigma(i)-1})^{(p)}\to G_i/G_{i-1}.
\end{equation*}
which is surjective.

Similarly if $c<i<i'\leq n$ then $F^{-1}(G_{\sigma(i)})=G_i\subset F^{-1}(G_{\sigma(i')})=G_{i'}$.  Thus $G_{\sigma(i')}\not\subset G_{\sigma(i)}$ and so we must have $G_{\sigma(i)}\subset G_{\sigma(i')}$ and hence $\sigma(i)<\sigma(i')$.  Thus for $i=c+1,\ldots,n$, $\sigma(i-1)\leq \sigma(i)-1$ and hence
\begin{equation*}
G_{i-1}=F^{-1}(G_{\sigma(i-1)})\subset F^{-1}(G_{\sigma(i)-1}).
\end{equation*}
Thus if $F^{-1}(G_{\sigma(i)-1})=G_j$ then $j\geq i-1$.  But also $j<i$ by the definition of $\sigma(i)$, and hence $j=i-1$.
Consequently we have a map
\begin{equation*}
F:G_i/G_{i-1}\to (G_{\sigma(i)}/G_{\sigma(i)-1})^{(p)}.
\end{equation*}
which is injective.

Now for $i=1,\ldots,n$ we have a short exact sequence
\begin{equation*}
0\to F^{-1}(G_i)/F^{-1}(G_{i-1})\overset{F}{\to} G_i/G_{i-1}\overset{V}{\to} V(G_i)/V(G_{i-1})\to 0.
\end{equation*}
The $F$ is non zero if and only if have $i=\sigma(j)$ where $G_j=F^{-1}(G_i)$.  Likewise $V$ is non zero if and only $i=\sigma(j)$ where $G_j=V(G_i)$.  One of $F$ or $V$ must be non zero, as $G_i/G_{i-1}$ is.  Hence $\sigma$ is surjective.  Consequently it is also injective.  We conclude that in the above exact sequence exactly one of $F$ or $V$ is zero, and the other is an isomorphism.
\end{proof}

Next we deduce something about the structure of the sub quotients for the canonical filtration.

\begin{prop}
Let $G/S$ be a partial $\BT$ which admits a canonical filtration.  If $1\leq i\leq n$ satisfies $\sigma(i)=i$ then
\begin{enumerate}
\item If $i\leq c$ then $i=1$, in which case $G_1$ is of multiplicative type.
\item If $i>c$ then $i=n$, in which case $G_n/G_{n-1}$ is \'{e}tale.
\end{enumerate}
If $1\leq i\leq n$ is such that $\sigma(i)\not=i$ then $G_i/G_{i-1}$ is an $\alpha$-group (i.e. both $F$ and $V$ are 0.)
\end{prop}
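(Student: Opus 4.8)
The plan is to exploit the exact sequence
\begin{equation*}
0\to F^{-1}(G_i)/F^{-1}(G_{i-1})\overset{F}{\to} G_i/G_{i-1}\overset{V}{\to} V(G_i)/V(G_{i-1})\to 0
\end{equation*}
established in the proof of Theorem \ref{T:strcan}, together with the bijectivity of $\sigma$ and the description of when $F$ and $V$ vanish on the graded pieces. Recall from that proof that for each $i$, exactly one of the following holds: $G_i/G_{i-1}\overset{V}{\to}(G_{\sigma^{-1}(i)}/G_{\sigma^{-1}(i)-1})$ is induced by $V$ being an isomorphism (this happens when $\sigma^{-1}(i)\leq c$, i.e. $i=\sigma(j)$ for some $j\le c$), or $F$ induces an isomorphism onto $G_i/G_{i-1}$ from some graded piece (when $\sigma^{-1}(i)>c$). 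I would restate this more symmetrically: on $G_i/G_{i-1}$, the map $F$ is nonzero exactly when $i>c$ (so that $G_i=F^{-1}(G_{\sigma(i)})$ and the map $F:G_i/G_{i-1}\to(G_{\sigma(i)}/G_{\sigma(i)-1})^{(p)}$ of part 2 is an isomorphism onto its image), and the map $V$ is nonzero on $G_i/G_{i-1}$ exactly when there is $j\le c$ with $\sigma(j)=i$, in which case $V:(G_i/G_{i-1})^{(p)}\to G_j/G_{j-1}$ is an isomorphism.

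First I would handle the case $\sigma(i)=i$ with $i\le c$. Since $\sigma$ is a bijection preserving the order on $\{1,\dots,c\}$ and on $\{c+1,\dots,n\}$, the condition $\sigma(i)=i$ for $i\le c$ forces $\sigma$ to fix $1,\dots,i$ pointwise (an order-preserving self-bijection of $\{1,\dots,c\}$ that fixes $i$ must fix everything below it), hence in particular $\sigma(1)=1$. But then $G_1=V(G_1)$, and I would argue that the only way a term of the canonical filtration equals its own image under $V$ is that it has height equal to its $F$-kernel; more concretely, $\ker F$ on $G_1$ would be $V(G_1)=G_1$ itself, meaning $F=0$ on $G_1$, so $G_1$ is killed by $F$ and is its own image under $V$ — by Dieudonné theory over geometric points this says $V$ is bijective and $F=0$, i.e. $G_1$ is of multiplicative type. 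To also conclude $i=1$: if $i>1$ with $\sigma(i)=i\le c$, the order-preservation gives $\sigma(i-1)<\sigma(i)=i$, but $\sigma(i-1)\geq\sigma(1)=\dots$ forces $\sigma(i-1)=i-1$ and inductively all of $1,\dots,i$ are fixed; the graded piece $G_i/G_{i-1}$ then has $V$ restricting to an isomorphism onto itself and $F$ to zero (since $i\le c$ means $G_i\subset G[F]$ so $F=0$ on it), which is consistent, so I instead rule out $i>1$ by dimension/height bookkeeping: fixing $1,\dots,i$ means $G_1,\dots,G_i$ are all of multiplicative type, but then $G[F]=G_c$ contains these and the self-duality of the canonical filtration (from the polarization, if present) or simply the structure theory forces $i=1$. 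Actually the cleanest route: $\sigma(i)=i$, $i\le c$ means $G_i=V(G_i^{(p)})$ and $G_{i-1}=V(G_{i-1}^{(p)})$, so $V$ is surjective on $G_i$, hence (over geometric points, counting) bijective, hence $F=0$ on $G_i$; since $G_i\subset G[F]$ automatically here this is no contradiction, so I fall back on: the definition of the canonical filtration as the \emph{coarsest} such filtration forces any multiplicative-type subgroup to be unrefined, giving $G_i=G_1$, i.e. $i=1$. The case $i>c$, $\sigma(i)=i$ is dual: order-preservation forces $\sigma$ to fix $i,i+1,\dots,n$, so $\sigma(n)=n$, $F^{-1}(G_n)=G_n$ says $V=0$ on $G_n/G_{n-1}$ and $F$ is an isomorphism, so $G_n/G_{n-1}$ is étale; and coarseness forces $i=n$.

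Finally, for $\sigma(i)\not=i$: I would show $F=0$ and $V=0$ on $G_i/G_{i-1}$. From Theorem \ref{T:strcan}, $F$ is an isomorphism on $G_i/G_{i-1}$ only when $i>c$ and then its target is $(G_{\sigma(i)}/G_{\sigma(i)-1})^{(p)}$; but one also checks that $F\ne 0$ on the graded piece forces $i=\sigma(i')$ for the $i'$ with $G_{i'}=F^{-1}(G_i)$, and combined with surjectivity of $F$-induced maps and the exact sequence above, $F$ being nonzero on $G_i/G_{i-1}$ would force $\sigma$ to fix $i$. Dually $V\ne 0$ on $G_i/G_{i-1}$ forces $\sigma(i)=i$. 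Since $\sigma(i)\ne i$, both vanish, so $G_i/G_{i-1}$ is an $\alpha$-group (local-local with $F=V=0$, hence a product of copies of $\alpha_p$ over a geometric point). The main obstacle I anticipate is pinning down rigorously — without circularity — why $\sigma(i)=i$ together with order-preservation of $\sigma$ forces $i\in\{1,n\}$; the honest argument uses the minimality (coarseness) of the canonical filtration to say that a graded piece on which one of $F,V$ is an isomorphism and it maps isomorphically back into the same filtration cannot have been separated out as a proper step unless it sits at the very bottom or top, and I would spell this out by examining, over a geometric point, the Dieudonné module and checking that the terms $R(G[F])$ collapse.
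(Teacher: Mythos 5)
Your proposal has a genuine gap exactly where you anticipate one: you never actually derive $i=1$ (resp.\ $i=n$). Your observation that $\sigma(i)=i$ with $i\le c$ together with $\sigma(1)<\cdots<\sigma(c)$ forces $\sigma(j)=j$ for all $j\le i$ is correct, but it does not rule out $i>1$: a chain $G_1\subset\cdots\subset G_i$ of multiplicative groups on each of which $V$ is an isomorphism is perfectly self-consistent, and neither fallback you offer closes the gap. The proposition is stated for an arbitrary partial $\BT$ with no polarization, so self-duality of the canonical filtration is simply unavailable, and the appeal to coarseness (``any multiplicative subgroup must be unrefined'') restates the desired conclusion rather than proving it --- one would need to exhibit a strictly coarser filtration that is still closed under $F^{-1}$ and $V$, and you do not do so.

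The paper's argument for $i=1$ bypasses multiplicativity entirely and uses the word structure of the canonical filtration. Suppose $i>1$ with $i\le c$ and $\sigma(i)=i$. Then $G_{i-1}$ is a nonzero term of the filtration strictly contained in $G[F]=G_c$, so $G_{i-1}=R(G[F])$ for some nonempty word $R$ in $F^{-1}$ and $V$; since $F^{-1}$ of any term contains $G[F]$, the leftmost letter of $R$ must be $V$, and iterating one finds $G_{i-1}=V^r(G_j)$ with $r\ge 1$ and $j\ge c$. But $\sigma(i)=i$ gives $V(G_i)=G_i$, hence $V^r(G_i)=G_i$; since $G_i\subset G_c\subset G_j$ this yields $G_i=V^r(G_i)\subset V^r(G_j)=G_{i-1}$, a contradiction. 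The dual argument with $F^{-1}$ in place of $V$ handles $i>c$. With $i=1$ in hand, Theorem \ref{T:strcan} gives that $V:(G_1)^{(p)}\to G_1$ is an isomorphism, which together with $G_1\subset G[F]$ shows $G_1$ is multiplicative. Your sketch of the $\alpha$-group case is in substance right but still needs the concrete bookkeeping: $V(G_i)=G_j$ with $j\le i$ and $j=i$ forces $\sigma(i)=i$, so if $\sigma(i)\ne i$ then $j<i$ and $V$ factors through $G_{i-1}$; likewise, since $G_{\sigma(i)}\subset F^{-1}(G_{\sigma(i)})=G_i$ gives $\sigma(i)\le i$ when $i>c$, if $\sigma(i)\ne i$ then $F$ maps $G_i$ into $G_{\sigma(i)}^{(p)}\subset G_{i-1}^{(p)}$.
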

\begin{proof}
Let us suppose that $\sigma(i)=i$.  Let us first consider the possibility that $i\leq c$.  Suppose $i>1$.  From the definition of the canonical filtration, we must have $G_{i-1}=V^r(G_j)$ for some $j\geq c$ and some $r\geq0$.  But $G_i\subset G_j$ and hence
\begin{equation*}
G_i=V^r(G_i)\subset V^r(G_j)=G_{i-1}
\end{equation*}
a contradiction.  Hence we must have $i=1$.  Then by Theorem \ref{T:strcan} we have that
\begin{equation*}
V:G_1^{(p)}\to G_1
\end{equation*}
is an isomorphism, and hence $G_1$ is multiplicative.

Now let us consider the case that $i>c$.  Suppose $i<n$.  From the definition of the canonical filtration, we must have $G_i=F^{-r}(G_j)$ for some $j\leq c$ and $r\geq 0$.  But $G_j\subset G_{i-1}$ and hence
\begin{equation*}
G_i=F^{-r}(G_j)\subset F^{-r}(G_{i-1})=G_{i-1}
\end{equation*}
a contradiction.  Hence we must have $i=n$.  Then by Theorem \ref{T:strcan} we have that
\begin{equation*}
F:G_n/G_{n-1}\to (G_n/G_{n-1})^{(p)}
\end{equation*}
is an isomorphism, and hence $G_n/G_{n-1}$ is \'{e}tale.

Now we prove the last statement of the proposition.  For $i=1,\ldots n$ we have $V(G_i)=G_j$ with $0\leq j\leq c$.  We clearly must have $j\leq i$.  Suppose $i=j$.  Then $j>0$ so by Theorem \ref{T:strcan} $V(G_{\sigma(j)})=G_j$ and $V(G_{\sigma(j)-1})=G_{j-1}$.  Hence we must have $G_{\sigma(j)}\subset G_i$ and so
\begin{equation*}
\sigma(j)\leq i=j
\end{equation*}
But for $1\leq j\leq c$ we have $\sigma(j)\geq j$ and hence $\sigma(j)=j$.  Thus unless $\sigma(i)=i$ we must have $i<j$.  In other words we have shown that unless $\sigma(i)=i$, the map
\begin{equation*}
V:G_i^{(p)}\to G_i
\end{equation*}
factors through $G_j\subset G_{i-1}$, and hence
\begin{equation*}
V:(G_i/G_{i-1})^{(p)}\to G_i/G_{i-1}
\end{equation*}
is 0.

Now we consider $F$.  If $0< i\leq c$ then $G_i\subset G[F]=G_c$ so certainly $F$ is 0 on $G_i/G_{i-1}$.  Hence we assume that $i>c$.  Now by Theorem \ref{T:strcan} we have $G_i=F^{-1}(G_{\sigma(i)})$.  Now $\sigma(i)\leq i$ so unless $\sigma(i)=i$ we have $\sigma(i)<i$ and hence
\begin{equation*}
F:G_i\to G_i^{(p)}
\end{equation*}
factors through $G_{\sigma(i)}^{(p)}\subset G_{i-1}^{(p)}$.  Thus
\begin{equation*}
F:G_{i}/G_{i-1}\to (G_i/G_{i-1})^{(p)}
\end{equation*}  
is 0.
\end{proof}

\begin{cor}\label{C:omegafree}
Let $G/S$ be a partial $\BT$ which admits a canonical filtration of constant type.  Then for $i=1,\ldots n$, $\omega_{G_i/G_{i-1}}$ is locally free.  It is trivial if and only if $i=n$, $n>c$ and $\sigma(n)=n$, in which case $G_i/G_{i-1}$ is \'{e}tale.  Otherwise it has rank equal to the height of $G_i/G_{i-1}$.
\end{cor}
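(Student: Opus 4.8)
The plan is to reduce everything to the structure theory already established. Recall that for a partial $\BT$ $G/S$ admitting a canonical filtration of constant type, each subquotient $G_i/G_{i-1}$ is a finite flat group scheme (finite when $i<n$, and even when $i=n$ provided $G$ itself is finite or $G_n/G_{n-1}$ is étale). First I would invoke the corollary to Lemma \ref{L:omegafree}: any partial $\BT$ $H/S$ has $\omega_H$ locally free of rank equal to the height of $H[F]$; in particular if $H$ is a finite flat group scheme killed by $p$, then $\omega_H$ is locally free of rank $=\mathrm{ht}(H[F])=\dim H$. So for $i<n$, and for $i=n$ when the relevant finiteness holds, $\omega_{G_i/G_{i-1}}$ is automatically locally free, and its rank is the dimension of $G_i/G_{i-1}$, i.e. $\mathrm{ht}((G_i/G_{i-1})[F])$.

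Next I would identify when this rank is zero, i.e. when $G_i/G_{i-1}$ is étale. Using Theorem \ref{T:strcan} together with the preceding proposition on the structure of the subquotients: for $1\le i\le c$ the map $V:(G_{\sigma(i)}/G_{\sigma(i)-1})^{(p)}\to G_i/G_{i-1}$ is an isomorphism, so $V$ is nonzero on $G_i/G_{i-1}$, which forces $G_i/G_{i-1}$ to be non-étale (an étale group scheme killed by $p$ over a base in characteristic $p$ has $V$ an isomorphism but $F$ zero — wait, more precisely for étale groups $F$ is an isomorphism). The cleanest route: $\omega_{G_i/G_{i-1}}=\omega_{(G_i/G_{i-1})[F]}$, and $(G_i/G_{i-1})[F]$ is trivial exactly when $F:G_i/G_{i-1}\to(G_i/G_{i-1})^{(p)}$ is injective (equivalently an isomorphism, since both sides have the same degree), i.e. exactly when $G_i/G_{i-1}$ is étale. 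By the structure proposition, $F$ is an isomorphism on $G_i/G_{i-1}$ only in the case $i=n>c$ and $\sigma(n)=n$; in every other case either $F$ is zero (when $i\le c$, or when $i>c$ and $\sigma(i)\ne i$) or, for $i\le c$, even if $F$ were nonzero, $G_i\subset G[F]$ kills $F$ — so $F$ is never an isomorphism there. Hence $\omega_{G_i/G_{i-1}}$ is trivial precisely in the stated exceptional case, and I would also note that in that case $G_n/G_{n-1}$ is étale (again by the structure proposition), so it is in particular finite flat and the rank-zero locally free statement is consistent.

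Finally I would assemble: in all cases $i=1,\dots,n$ except $i=n>c$, $\sigma(n)=n$, the subquotient $G_i/G_{i-1}$ is a finite flat group scheme killed by $p$ (finiteness for $i<n$ being automatic; for $i=n$ one needs to observe that $G_n/G_{n-1}=G/G[F]$ and either this is forced étale or $G$ is finite — but actually the statement only claims local freeness, which via the corollary to Lemma \ref{L:omegafree} holds for the partial $\BT$ $G_n/G_{n-1}$ regardless), so $\omega_{G_i/G_{i-1}}$ is locally free of rank $\dim(G_i/G_{i-1})=\mathrm{ht}((G_i/G_{i-1})[F])$, which equals the height of $G_i/G_{i-1}$ whenever $G_i/G_{i-1}$ is not étale, and in the one remaining case it is the zero bundle. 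I expect the only genuinely delicate point to be handling the top subquotient $i=n$ when $G$ is not finite: there one must check that $G_n/G_{n-1}$ is still a partial $\BT$ (so the corollary to Lemma \ref{L:omegafree} applies) and correctly track that $\omega_{G/G[F]}$ vanishes iff $G/G[F]$ is étale; everything else is a direct bookkeeping consequence of Theorem \ref{T:strcan} and the structure proposition that precede this corollary.
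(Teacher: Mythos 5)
The central step of your argument — invoking the corollary to Lemma \ref{L:omegafree} to conclude that $\omega_{G_i/G_{i-1}}$ is locally free — does not apply, because $G_i/G_{i-1}$ is in general \emph{not} a partial $\BT$. The preceding proposition shows that when $\sigma(i)\neq i$ the subquotient $G_i/G_{i-1}$ is an $\alpha$-group (both $F$ and $V$ are zero), and an $\alpha$-group fails the $\BT$-exactness $\ker V = \im F$ fiberwise. So the hypothesis of the corollary fails for most of the subquotients, and your ``in particular'' — that any finite flat group scheme killed by $p$ has locally free $\omega$ — is not a specialization of the partial-$\BT$ statement. In fact that broader claim is false. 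Take $S=\spec\FF_p[t]$ and $H=\spec\FF_p[t][x]/(x^p-tx)$ with the additive Hopf structure $\Delta(x)=x\otimes 1+1\otimes x$: this is finite locally free of order $p$ over $S$ and killed by $p$, with \'{e}tale generic fiber and special fiber $\alpha_p$, yet a direct computation of $I/I^2$ gives $\omega_H\simeq\FF_p[t]/(t)$, a torsion module. So ``killed by $p$'' alone does not give local freeness.

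What the corollary actually requires is the stronger observation that the non-exceptional subquotients $G_i/G_{i-1}$ are killed by \emph{Frobenius}: when $\sigma(i)\neq i$ they are $\alpha$-groups, and when $\sigma(i)=i$ with $i\leq c$ (forcing $i=1$) they are multiplicative, both of which have $F=0$. The paper's proof then appeals to the structure of finite locally free group schemes killed by $F$ — essentially the anti-equivalence of Theorem \ref{T:fkill}, under which such a group scheme corresponds to a locally free sheaf $\Ca{M}=\omega_H$ of rank equal to the height. This is where the local freeness and the rank-equals-height statement genuinely come from; being killed by $F$ (not merely by $p$, and not being a partial $\BT$) is the hypothesis that makes it work. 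The identification of the exceptional case with $i=n$, $n>c$, $\sigma(n)=n$, and $G_n/G_{n-1}$ \'{e}tale is correct as you have it.
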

\begin{proof}
This follows from a general fact about finite flat group schemes killed by $F$.
\end{proof}

It is certainly not true that any partial $\BT$ over a general base admits a canonical filtration.  The next theorem, also due to Ekedahl and Oort, describes how we can decompose the base $S$ into locally closed subschemes in such a way that the restriction of $G$ to each piece admits a canonical filtration.  This decomposition will ultimately give the construction of the Ekedahl-Oort stratification for the special fibers of Siegel modular varieties and their toroidal compactifications.

In preparation we record the following well known lemma.
\begin{lem}\label{L:semiflat}
Let $f:X\to S$ be finite map.  And let
\begin{align*}
S&\to\Bf{N}\\
s&\mapsto\deg X_s
\end{align*}
be the degree map.  Then
\begin{enumerate}
\item The function $d$ is upper semicontinuous on $S$.
\item If $d$ is constant and $S$ is reduced then $f$ is flat.
\end{enumerate}
\end{lem}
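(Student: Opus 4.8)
The plan is to recast everything in terms of the coherent sheaf $\Ca{F}:=f_*\O_X$ on $S$. Since $f$ is finite, $\Ca{F}$ is a coherent $\O_S$-module, and by definition $d(s)=\dim_{k(s)}(\Ca{F}\otimes_{\O_S}k(s))$ is its fibre dimension; moreover $f$ is flat if and only if $\Ca{F}$ is flat over $\O_S$, which for a coherent sheaf on a locally noetherian scheme is the same as being locally free. So both assertions become standard facts about the fibre-rank function of a coherent sheaf, and since semicontinuity, local freeness and flatness are all local properties, it suffices to work on an affine open $\spec A$ with $A$ noetherian, where $\Ca{F}$ is given by a finitely generated — hence finitely presented — $A$-module $M$.

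For part $1$, I would choose a finite presentation $A^m\overset{T}{\to}A^n\to M\to 0$. Then for a prime $\mathfrak{p}\in\spec A$ one has $d(\mathfrak{p})=n-\mathrm{rank}\,T(\mathfrak{p})$, where $T(\mathfrak{p})$ denotes $T$ reduced modulo $\mathfrak{p}$ and regarded as a matrix over the residue field $k(\mathfrak{p})$. For each integer $r$ the locus $\{\mathfrak{p}:\mathrm{rank}\,T(\mathfrak{p})\le r\}$ is cut out by the vanishing of all $(r+1)\times(r+1)$ minors of $T$, hence is closed; therefore $\{\mathfrak{p}:d(\mathfrak{p})\ge c\}=\{\mathfrak{p}:\mathrm{rank}\,T(\mathfrak{p})\le n-c\}$ is closed for every $c$, which is precisely upper semicontinuity of $d$.

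For part $2$, assume in addition that $S$ is reduced and that $d$ is constant, say $d\equiv d_0$. Fix a point $s$; by Nakayama's lemma a minimal generating set of the stalk $\Ca{F}_s$ yields, after shrinking to a neighbourhood, a surjection $\O_S^{\,d_0}\twoheadrightarrow\Ca{F}$ whose kernel $\Ca{K}$ is coherent and is a subsheaf of $\O_S^{\,d_0}$. I claim $\Ca{K}=0$. It is enough to check this at each generic point $\eta$ of $S$: since $S$ is reduced, $\O_{S,\eta}$ is a field, so $\O_{S,\eta}^{\,d_0}\twoheadrightarrow\Ca{F}_\eta$ is a surjection of $k(\eta)$-vector spaces of the same dimension $d_0=d(\eta)$, hence an isomorphism, whence $\Ca{K}_\eta=0$. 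Now over a reduced noetherian ring $A$ with minimal primes $\mathfrak{p}_1,\dots,\mathfrak{p}_t$ the natural map $A\to\prod_i A_{\mathfrak{p}_i}$ is injective (each $A_{\mathfrak{p}_i}$ is the fraction field of the domain $A/\mathfrak{p}_i$, so the map has kernel $\bigcap_i\mathfrak{p}_i=(0)$); applying this coordinatewise to sections of $\Ca{K}\subset\O_S^{\,d_0}$ and using that $\Ca{K}$ vanishes at the finitely many generic points gives $\Ca{K}=0$. Hence $\Ca{F}\cong\O_S^{\,d_0}$ near $s$, so $\Ca{F}$ is locally free and $f$ is flat.

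The one point that genuinely requires care — the \emph{only} real obstacle here — is the appeal to reducedness in part $2$: without it the kernel $\Ca{K}$ could be a nonzero torsion sheaf supported on a nowhere-dense closed subset while still vanishing at every generic point, so the hypothesis cannot be dropped, and the argument above is tailored to exploit it through the injection $A\hookrightarrow\prod_i A_{\mathfrak{p}_i}$. Everything else is routine matrix-rank and Nakayama bookkeeping.
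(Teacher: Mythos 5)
Your proof is correct. The paper states this lemma as a ``well known'' fact and does not give a proof, so there is nothing to compare against; what you have written is the standard argument, and the only delicate point --- why reducedness forces the kernel of $\O_S^{\,d_0}\twoheadrightarrow\Ca{F}$ to vanish once it vanishes at all generic points --- is exactly the one you isolate and handle via the injection $A\hookrightarrow\prod_i A_{\mathfrak p_i}$.
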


\begin{thm}\label{T:candecomp}
Let $G/S$ be a partial $\BT$ of height $\leq h$.  Then there is a coarsest set theoretic decomposition
\begin{equation*}
S=\coprod_\alpha S_\alpha
\end{equation*}
into finitely many reduced locally closed subschemes such that for each $\alpha$, $G|{S_\alpha}$ admits a canonical filtration of constant type.
\end{thm}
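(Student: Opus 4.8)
The plan is to construct the decomposition by a Noetherian induction on $S$, using the operations $F^{-1}(-)$ and $V(-)$ one at a time and the semicontinuity lemmas recorded above (Lemmas \ref{L:semiflat} and the degree lemma preceding it, together with Lemma \ref{L:omegafree}). The key point is that for a partial $\BT$ $G/S$ of height $\leq h$, the set $\Ca{R}$ of words that can possibly produce a nontrivial subgroup is \emph{a priori} infinite, but because every fiber $G_s$ is an honest $\BT$ of bounded height, only finitely many of the groups $R(G_s[F])$ are distinct in each fiber; so the induction will terminate. First I would set up the induction: given $G/S$, consider the finite-height function $f_G$ and the dimension function $d_G$; by the degree lemma these are lower semicontinuous and locally constant respectively, so after stratifying $S$ into finitely many reduced locally closed pieces I may assume $f_G$ and $d_G$ are constant, i.e. $G[F]$ is finite flat of constant height (this is already Lemma \ref{L:omegafree} plus the fiberwise flatness criterion).

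Next, the inductive step. Suppose we have built a chain $0=H_0\subset H_1\subset\cdots\subset H_m$ of finite flat closed subgroup schemes of $G$, each of the form $R(G[F])$ for some $R\in\Ca{R}$, each of constant height on $S$, such that the chain is closed under those applications of $F^{-1}$ and $V$ that have so far been shown to ``exist.'' For each $H_j$ I must examine $F^{-1}(H_j)$ and $V(H_j)$. Consider for instance $V(H_j)=\im(V:H_j^{(p)}\to G)$. The kernel $H_j^{(p)}[V]\cap H_j^{(p)}$... more precisely, $V$ restricted to $H_j^{(p)}$ has kernel a quasi-finite separated group scheme whose fiberwise degree is upper semicontinuous; by the degree lemma, stratifying $S$ into finitely many reduced locally closed pieces makes this kernel finite flat, and then $\im(V|_{H_j^{(p)}})$ is representable by a finite flat closed subgroup scheme whose height is then locally constant, so after a further stratification it is constant. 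The same argument applies to $F^{-1}(H_j^{(p)})$, whose formation commutes with the quasi-finite flat base change and whose fiberwise degree is again semicontinuous. By Lemma \ref{L:fil} each new group is comparable to all the old ones, so it slots into the chain; I refine the chain and repeat. Since on each fiber the total number of distinct groups $R(G_s[F])$ is bounded by $h+1$ (they form a chain of subgroups of an $h$-dimensional space of heights between $0$ and $h$), and the heights are now constant on each stratum, the chain stabilizes after finitely many steps on each stratum — at which point $G|_{S_\alpha}$ admits a canonical filtration of constant type. This produces a finite set-theoretic decomposition into reduced locally closed subschemes with the desired property.

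Finally, coarseness. I would argue that on any reduced locally closed $T\subseteq S$ over which $G|_T$ admits a canonical filtration of constant type, the heights $\mathrm{ht}\,R(G|_T[F])$ are locally constant functions of the point, hence (by reducedness and connectedness considerations, passing to connected components) constant; so the ``type'' — the tuple of heights of all $R(G[F])$, equivalently by Theorem \ref{T:strcan} the permutation $\sigma$ together with the jumps $k_i$ — is a locally constant invariant on the locus where a canonical filtration of constant type exists. Two points of $S$ where $G$ has canonical filtrations of different types therefore cannot lie in a common such locus, which forces any decomposition of the stated kind to refine the one given by this invariant; conversely the construction above realizes each nonempty fiber of the invariant as a single reduced locally closed stratum. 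Hence the decomposition by type is the coarsest one, and it is the one we built.

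The main obstacle, and the step I would spend the most care on, is the passage ``$\ker(V|_{H_j^{(p)}})$ finite flat $\Rightarrow$ $\im(V|_{H_j^{(p)}})$ is a finite flat closed subgroup scheme of constant height after stratification'': one must be careful that a single stratification into \emph{finitely many} reduced locally closed pieces suffices for \emph{all} the finitely many words $R$ that arise before the chain stabilizes, i.e. that the process does not spawn infinitely many stratifications. The resolution is exactly the fiberwise height bound $h$: on each stratum the chain has length $\leq h+1$, so only $\leq h+1$ applications of $F^{-1}$ or $V$ are ever needed there, each costing one further finite stratification, and the whole tree of stratifications is finite. I would state this bound explicitly at the outset so the Noetherian induction is manifestly well-founded.
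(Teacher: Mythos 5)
Your proposal is correct and follows essentially the same line of argument as the paper: stratify by the fiberwise degree of the relevant kernels using the semicontinuity lemma (Lemma \ref{L:semiflat}), apply $F^{-1}$ and $V$ one word at a time, use Lemma \ref{L:fil} to keep the resulting subgroups in a chain, and bound the number of rounds by the height $h$. The paper phrases the iteration over words of bounded length in $\Ca{R}$ rather than as a growing chain, and it invokes Lemma \ref{L:gpfinite} to see that $F^{-1}R(G[F])$ is finite before stratifying, but these are cosmetic differences and your argument captures the same content, including the coarseness claim.
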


\begin{proof}
We will construct a decomposition $S=\coprod_\alpha S_\alpha$ into reduced locally closed subschemes such that the following two properties hold
\begin{enumerate}
\item For each $\alpha$, $G|_{S_\alpha}$ admits a canonical filtration.
\item Two points $s,s'\in S$ lie in the same $S_\alpha$ if and only if $R(G_s[F])$ and $R(G_{s'}[F])$ have the same height for all $R\in\Ca{R}$.
\end{enumerate}
It is clear that such a stratification satisfies the conditions of the theorem.

We make the following preliminary observation: if $R\in\Ca{R}$ and $R(G[F])$ exists, then the fibers of $R(G[F])$ consist (set theoretically) of single points.  Indeed if this is the case for some finite flat $H\subset G$, then the same is true for $V(H)$ and $F^{-1}(H)$ if they exist.

Suppose we have $G/S$ a partial $\BT$ and $R\in\Ca{R}$ such that $R(G[F])$ exists and has constant height.  Let $R_1$ be either $V$ or $F^{-1}$.  As a step in the construction of the decomposition in the theorem, we will explain how to decompose $S=\coprod S_\alpha$ into into finitely many reduced locally closed subschemes such that
\begin{enumerate}
\item For each $\alpha$, $R_1(R(G[F])|_{S_\alpha})$ exists.
\item Two points $s,s'\in S$ lie in the same $S_\alpha$ if and only $R_1R(G_s[F])$ and $R_1R(G_{s'}[F])$ have the same height.
\end{enumerate}

First suppose $R_1=F^{-1}$.  Then $F^{-1}R(G[F])$ is quasi-finite and separated, but not necessarily flat.  But by Lemma \ref{L:gpfinite} it is in fact finite.  Thus by Lemma \ref{L:semiflat} we see that the subset $S_n\subset S$ of points $s$ such that $\text{ht}(F^{-1}R(G[F]))_s=n$ is locally closed.  Give it the reduced induced subscheme structure.  Then Lemma \ref{L:semiflat} again implies that $F^{-1}R(G[F])|_{S_n}$ is flat.

Now we argue similarly when $R_1=V$.  In this case, consider $H=\ker(V: (R(G[F]))^{(p)}\to G)$.  Then the subset $S_n\subset S$ of points $s$ such that $\text{ht}(H_s)=n$ is locally closed, and if we give it the reduced induced subscheme structure, $H|_{S_n}$ is flat by \ref{L:semiflat}, and hence $V(R(G[F])|_{S_n})$.  Again there are at most $h+1$ values of $n$ for which $S_n$ is nonempty, and so we have the desired decomposition.

Let $\Ca{R}_n\subset \Ca{R}$ be the set of words of length at most $n$.  Then by iterating the above two steps we may construct a decomposition $S=\coprod_\alpha S_\alpha$ into finitely many reduced locally closed subschemes such that the following two properties hold:
\begin{enumerate}
\item For each $\alpha$, $G|_{S_\alpha}$ admits a canonical filtration.
\item Two points $s,s;\in S$ lie in the same $S_\alpha$ if and only if $R(G_s[F])$ and $R(G_{s'}[F])$ have the same height for all $R\in\Ca{R}_n$.
\end{enumerate}

To complete the proof we need to show that this decomposition is independent of $n$ for $n$ sufficiently large.  But in fact, this is the case for $n\geq h$ by Lemma \ref{L:fil}.
\end{proof}

\begin{rem}
Note that we make no attempt to give a ``scheme theoretic'' definition of the decomposition in the theorem.  In particular one might as well assume that the base $S$ in the theorem is reduced.  In fact, we could have put scheme structures on the $S_\alpha$ by at each step, considering the flattening stratification of the relevant finite, but not necessarily flat group scheme.  However, we don't know of any application of this.
\end{rem}

Finally we say something about how the canonical filtration behaves under base change.
\begin{prop}\label{P:canfilbc}
Let $G/S$ be a partial $\BT$ and let $f:S'\to S$ be any morphism with $S'$ nonempty.
\begin{enumerate}
\item If $G$ admits a canonical filtration
\begin{equation*}
0=G_0\subset G_1\subset\cdots\subset G_c=G[F]\subset\cdots\subset G_n=G
\end{equation*}
then $G_{S'}$ admits a canonical filtration which is
\begin{equation*}
0=(G_0)_{S'}\subset (G_1)_{S'}\subset\cdots\subset (G_c)_{S'}=G_{S'}[F]\subset\cdots\subset (G_n)_{S'}=G_{S'}
\end{equation*}
except that it may may happen that $(G_{n-1})_{S'}=(G_n)_{S'}$ in which case the canonical filtration of $G_{S'}$ ends at $(G_{n-1})_{S'}$.  This never happens if $G$ is a $\BT$.
\item Suppose $G$ has bounded height.  If $S=\coprod_\alpha S_\alpha$ is the decomposition of $S$ into reduced locally closed subschemes as in Theorem \ref{T:candecomp} then the decomposition of $S'$ for $G_{S'}$ is
\begin{equation*}
S'=\coprod_\alpha f^{-1}(S_\alpha)
\end{equation*}
where the union is over those $\alpha$ for which $f^{-1}(S_\alpha)$ is nonempty, and the locally closed set $f^{-1}(S_\alpha)$ is given its reduced induced subscheme structure.
\end{enumerate}
\end{prop}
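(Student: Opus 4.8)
The plan is to deduce both statements from the compatibility of the relative Frobenius $F$ and Verschiebung $V$ with arbitrary base change, recalled in the conventions. For part (1), I would show by induction on the length of $R\in\Ca{R}$ that if $R(G[F])$ exists over $S$, then $R(G_{S'}[F])$ exists over $S'$ and equals $(R(G[F]))_{S'}$. The base case is $G[F]$, which is finite flat by Lemma \ref{L:omegafree} and whose formation commutes with base change. For the inductive step put $H=R(G[F])$, a finite flat $S$-group scheme. If $F^{-1}(H)$ exists it is the closed subscheme $G\times_{G^{(p)}}H^{(p)}$ of $G$; this fibre product commutes with base change, and its pullback to $S'$ remains finite and flat, so $F^{-1}(H_{S'})=(F^{-1}(H))_{S'}$. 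If $V(H)$ exists, then in the fppf-exact sequence $0\to\ker(V|_{H^{(p)}})\to H^{(p)}\overset{V}{\to}V(H)\to 0$ the kernel is finite (closed in $H^{(p)}$) and flat (a fibrewise degree count as in the proof of Lemma \ref{L:omegafree}, since a surjection of finite group schemes over a field is faithfully flat), whence $V(H)=H^{(p)}/\ker(V|_{H^{(p)}})$ and the formation of both kernel and quotient commutes with base change. Thus every term of the canonical filtration of $G$ base changes to a term of the canonical filtration of $G_{S'}$.

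Since the canonical filtration is by definition the chain of pairwise distinct subgroups of the form $R(-[F])$, it remains to decide which of the inclusions $0=(G_0)_{S'}\subseteq\cdots\subseteq(G_n)_{S'}=G_{S'}$ are strict; working componentwise we may assume $S'$ connected. For $i<n-1$ the subquotient $G_{i+1}/G_i$ is a nonzero finite flat group scheme killed by $p$ (every subquotient of $G$ is killed by $p$), hence of order a power $p^m$ with $m\ge 1$ on each connected component of $S$, and this order is preserved by base change, so $(G_i)_{S'}\subsetneq(G_{i+1})_{S'}$. For $i=n-1$, however, $G_n/G_{n-1}=G/G_{n-1}$ is only quasi-finite, flat and separated, so its fibrewise degree $s\mapsto\log_p\deg(G/G_{n-1})_s$ is merely lower semicontinuous and can drop to $0$ along a closed subset of $S$; over the preimage of that subset $(G_{n-1})_{S'}=G_{S'}$, which is precisely the exceptional behaviour in the statement. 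If $G$ is a $\BT$ then $G=G_n$ is finite flat, hence so is $G/G_{n-1}$, which is then nonzero and killed by $p$, so of order $\ge p$ on each connected component, and no collapse can occur.

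For part (2), recall from the proof of Theorem \ref{T:candecomp} that, when $G$ has bounded height, the decomposition $S=\coprod_\alpha S_\alpha$ is characterized by: the $S_\alpha$ are reduced and locally closed, $G$ restricted to each $S_\alpha$ admits a canonical filtration of constant type, and $s,t\in S$ lie in the same $S_\alpha$ if and only if $R(G_s[F])$ and $R(G_t[F])$ have the same height for every $R\in\Ca{R}$. Since fibre degrees are unchanged under field extension, $G_{S'}$ again has bounded height, so Theorem \ref{T:candecomp} produces such a decomposition of $S'$. By the base change compatibility established in part (1), for every $s'\in S'$ and every $R\in\Ca{R}$ the group $R((G_{S'})_{s'}[F])$ is the base change to $k(s')$ of $R(G_{f(s')}[F])$, so it has the same height; hence $s',t'\in S'$ lie in the same stratum for $G_{S'}$ if and only if $f(s')$ and $f(t')$ lie in the same $S_\alpha$. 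This identifies the decomposition of $S'$ with $\coprod_\alpha f^{-1}(S_\alpha)$, where the union runs over those $\alpha$ with $f^{-1}(S_\alpha)\ne\varnothing$ and each $f^{-1}(S_\alpha)$ carries the reduced induced structure.

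The only step I expect to require care is the analysis of strictness in part (1): namely, isolating that the single place where a term of the canonical filtration may collapse under base change is the passage from $G_{n-1}$ to $G_n=G$, which is exactly the step at which one leaves the category of finite flat group schemes and the fibrewise degree stops being locally constant. Granting this, the rest is a formal consequence of the base change compatibility of $F$ and $V$.
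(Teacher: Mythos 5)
Your proof is correct and follows the same idea the paper uses (base-change compatibility of $F$, $V$, kernels, images, and quotients by finite flat subgroups), which the paper compresses into a single sentence. You do add a careful analysis the paper omits: inducting on the length of $R\in\Ca{R}$ to establish $(R(G[F]))_{S'}=R(G_{S'}[F])$, and then isolating, via the locally-constant degree of finite flat subquotients versus the merely lower-semicontinuous degree of the quasi-finite quotient $G/G_{n-1}$, why the only term that can collapse after base change is the top one. That is exactly the point the exceptional clause in the statement is about, and making it explicit strengthens the exposition.
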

\begin{proof}
Both parts follow from the fact that $F$, $V$, and the formation of images and inverse images (when they exist as finite flat group schemes) are all compatible with base change.
\end{proof}

Let us try to demystify the exception in part 1 of the above proposition through a simple and typical example.
\begin{exmp}
Let $E/\FF_p[[q]]$ be the semiabelian extension of the Tate curve.  Let $G=E[p]$ be its $p$-torsion subgroup.  This is a partial $\BT$ which admits a two step canonical filtration
\begin{equation*}
0\subset \mu_p\subset G
\end{equation*}
where $G/\mu_p$ is a quasi-finite \'{e}tale group with trivial special fiber.  On the other hand the special fiber $G_{\FF_p}$ is just $\mu_p$, and hence it only has a one step canonical filtration.  Of course this is because the \'{e}tale quotient $G/\mu_p$ has trivial special fiber.
\end{exmp}
As we will see below, in the presence of a principal quasi-polarization, partial $\BT$ ``remembers'' whether or not it is missing an \'{e}tale part and so we can fix this defect in Convention \ref{convpartial} below.

\subsection{The canonical Filtration of a Principally Quasi-Polarized partial $\BT$}

In this section we will study the canonical filtration in the presence of a principal quasi-polarization.

\begin{defn}
Let $(G,\lambda)/S$ be a principally quasi-polarized $\BT$.  Let $H\subset G$ be closed finite flat subgroup scheme.  Then we let
\begin{equation*}
H^\perp=\lambda^{-1}(\ker(G^D\to H^D)).
\end{equation*}
The antisymmetry of $\lambda$ implies that $(H^\perp)^\perp=H$.
\end{defn}

Next we observe the following easy lemma:
\begin{lem}\label{L:fvperp}
Let $(G,\lambda)/S$ be a principally quasi-polarized $\BT$ and let $H\subset G$ be a finite flat subgroup scheme.  Then
\begin{enumerate}
\item $F^{-1}(H)$ exists if and only if $V(H^\perp)$ does, in which case
\begin{equation*}
(F^{-1}(H))^\perp=V(H^\perp).
\end{equation*}
\item $V(H)$ exists if and only if $F^{-1}(H^\perp)$ does, in which case
\begin{equation*}
(V(H))^\perp=F^{-1}(H^\perp).
\end{equation*}
\end{enumerate}
\end{lem}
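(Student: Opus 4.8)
The plan is to deduce everything from Cartier duality. Since $(G,\lambda)$ is a principally quasi-polarized $\BT$, the homomorphism $\lambda\colon G\to G^D$ is an isomorphism; applying the exact contravariant Cartier duality functor to $0\to H\to G\to G/H\to 0$ identifies $\ker(G^D\to H^D)$ with $(G/H)^D$, so by definition $\lambda(H^\perp)=(G/H)^D$ inside $G^D$. I will use the two facts recalled earlier in the text: Cartier duality interchanges Frobenius and Verschiebung, $V_{G^D}=(F_G)^D$ and $F_{G^D}=(V_G)^D$; and since $\lambda$ is a homomorphism of group schemes over $S/\FF_p$ it is compatible with $F$ and $V$, i.e.\ $\lambda\circ V_G=V_{G^D}\circ\lambda^{(p)}$ and $\lambda^{(p)}\circ F_G=F_{G^D}\circ\lambda$, where $\lambda^{(p)}$ is viewed as a map $G^{(p)}\to (G^D)^{(p)}=(G^{(p)})^D$.

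I would first prove part~1 and get part~2 formally. Write $K=G/H$, let $q\colon G\to K$ be the quotient, and set $\phi:=q^{(p)}\circ F_G\colon G\to K^{(p)}$. Unwinding definitions, $F^{-1}(H)$ (that is $F^{-1}(H^{(p)})\subset G$) is exactly $\ker\phi$. On the dual side, $\lambda^{(p)}$ carries $(H^\perp)^{(p)}$ isomorphically onto $(K^{(p)})^D$, sitting in $(G^{(p)})^D=(G^D)^{(p)}$ via the closed immersion $(q^{(p)})^D$ dual to $q^{(p)}$; combining this with $\lambda\circ V_G=V_{G^D}\circ\lambda^{(p)}$ and $V_{G^D}=(F_G)^D$ shows that $\lambda$ identifies $V(H^\perp)=\im\bigl(V_G\colon (H^\perp)^{(p)}\to G\bigr)$ with $\im\bigl((F_G)^D\circ(q^{(p)})^D\bigr)=\im(\phi^D)\subset G^D$.

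Next I would invoke the elementary fact that, for a homomorphism $\phi$ of finite flat $S$-group schemes, $\ker\phi$ is finite flat over $S$ if and only if $\ker(\phi^D)$ is: if $\ker\phi$ is finite flat then by the representability fact recalled before Lemma~\ref{L:fil} the image $\im\phi$ is finite flat with $G/\ker\phi\simeq\im\phi$, and dualizing $0\to\ker\phi\to G\to\im\phi\to 0$ and $0\to\im\phi\to K^{(p)}\to\coker\phi\to 0$ shows $\phi^D$ factors as $(K^{(p)})^D\twoheadrightarrow(\im\phi)^D\hookrightarrow G^D$, so $\ker(\phi^D)=(\coker\phi)^D$ and $\im(\phi^D)=(\im\phi)^D$ are finite flat; the converse follows by applying this to $\phi^D$ and using $(\phi^D)^D=\phi$. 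Hence $F^{-1}(H)=\ker\phi$ exists iff $\im(\phi^D)$ is finite flat iff $V(H^\perp)$ exists, and when they do, dualizing $0\to\ker\phi\to G\to\im\phi\to 0$ gives $\ker\bigl(G^D\to(\ker\phi)^D\bigr)=(\im\phi)^D=\im(\phi^D)=\lambda(V(H^\perp))$, whence $(F^{-1}(H))^\perp=\lambda^{-1}(\lambda(V(H^\perp)))=V(H^\perp)$. Part~2 then follows by applying part~1 with $H$ replaced by $H^\perp$ and using $(H^\perp)^\perp=H$: this gives that $F^{-1}(H^\perp)$ exists iff $V(H)$ does, with $(F^{-1}(H^\perp))^\perp=V(H)$, and taking $\perp$ of this identity yields $F^{-1}(H^\perp)=(V(H))^\perp$.

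The only delicate point — the step I would be most careful with — is the bookkeeping of Frobenius twists and of ``image as an fppf sheaf'' versus ``image as a finite flat closed subgroup scheme'': one must check that $\lambda^{(p)}$ is indeed the quasi-polarization used to form $(H^\perp)^{(p)}$, that the compatibilities of $\lambda$ with $F$ and $V$ carry the twists in the correct positions, and that each identification of the form $\im(\psi^D)=(\im\psi)^D$ or $\ker(G^D\to H^D)=(G/H)^D$ is invoked only once the relevant subsheaves are known to be representable by finite flat group schemes. No input beyond the exactness of Cartier duality and the $F\leftrightarrow V$ interchange is required.
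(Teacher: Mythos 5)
Your proof is correct and rests on essentially the same Cartier-duality ideas that the paper invokes: duality interchanges $F$ and $V$, the polarization is compatible with $F$ and $V$ by functoriality, and dualizing a short exact sequence converts perps of kernels into images of transposed maps. The paper only gestures at this via a terse general remark about a map $f\colon G\to G'$ compatible with the polarizations, whereas you carry out the computation explicitly with $\phi=q^{(p)}\circ F_G$ and the auxiliary lemma that $\ker\phi$ is finite flat iff $\ker\phi^D$ is; the details you supply are all correct, including the derivation of part~2 from part~1 via $(H^\perp)^\perp=H$.
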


\begin{proof}
Indeed, more generally let $(G,\lambda)$ and $(G',\lambda')$ be finite flat group schemes equipped with isomorphisms $\lambda:G\to G^D$ and $\lambda':G'\to (G')^D$ and $f:G\to G'$ is such that $\lambda=f^D\lambda'f$.  Then if $H\subset G$ is a finite flat subgroup scheme then $f(H):=\im(f:H\to G')$ exists as a finite flat if and only $f^{-1}(H^\perp)$ is finite flat, in which case
\begin{equation*}
(f(H))^\perp=f^{-1}(H^\perp).
\end{equation*}
The lemma follows from this and the fact that cartier duality exchanges Frobenius and Verschiebung.
\end{proof}

\begin{prop}\label{P:candual}
Let $(G,\lambda)$ be a principally quasi-polarized $\BT$ which admits a canonical filtration of constant type.  Let
\begin{equation*}
0=G_0\subset G_1\subset \cdots\subset G_c=G[F]\subset\cdots\subset G_n=G.
\end{equation*}
be the canonical filtration.  Then $n=2c$ and the filtration is self dual in the sense that for $i=0,\ldots,2c$
\begin{equation*}
G_i=G_{2c-i}^\perp.
\end{equation*}
and for $0\leq i< j\leq 2c$, $\lambda$ induces an isomorphism
\begin{equation*}
\lambda:G_j/G_i\simeq (G_{2c-i}/G_{2c-j})^D
\end{equation*}
Moreover the permutation $\sigma:\{1,\ldots,2c\}\to\{1,\ldots,2c\}$ satisfies
\begin{equation*}
\sigma(2c+1-i)=2c+1-\sigma(i)
\end{equation*}
\end{prop}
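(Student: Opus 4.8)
The plan is to combine Lemma \ref{L:fvperp} with the characterization of the canonical filtration as the coarsest filtration stable under $F^{-1}$ and $V$.

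First I would establish self-duality at the level of the whole filtration. Since $\lambda\colon G\to G^D$ is an isomorphism, $H\mapsto H^\perp$ is an order-reversing involution on the finite flat subgroup schemes of $G$, with $0^\perp=G$ and $G^\perp=0$. For each term $G_i$ of the canonical filtration, $F^{-1}(G_i)$ and $V(G_i)$ exist and are again terms; applying Lemma \ref{L:fvperp} with $H=G_i^\perp$ (so that $H^\perp=G_i$) shows that $F^{-1}(G_i^\perp)$ and $V(G_i^\perp)$ exist and equal $\bigl(V(G_i)\bigr)^\perp$ and $\bigl(F^{-1}(G_i)\bigr)^\perp$ respectively, hence lie in the set $\{G_0^\perp,\dots,G_n^\perp\}$. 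Thus $\{G_0^\perp,\dots,G_n^\perp\}$ is a filtration of $G$ by finite flat subgroup schemes stable under $F^{-1}$ and $V$, so since the canonical filtration is the coarsest such, $\{G_0,\dots,G_n\}\subseteq\{G_0^\perp,\dots,G_n^\perp\}$; applying $\perp$ gives the opposite inclusion, and the two sets coincide. Since $\perp$ is then a strictly order-reversing self-bijection of $\{G_0\subsetneq\cdots\subsetneq G_n\}$, it must be $G_i\mapsto G_{n-i}$, i.e. $G_i^\perp=G_{n-i}$.

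Next I would pin down $c$. Because $G$ is finite, $G_c=G[F]=V(G)$, so Lemma \ref{L:fvperp}(2) with $H=G$ gives $G_c^\perp=\bigl(V(G)\bigr)^\perp=F^{-1}(G^\perp)=F^{-1}(0)=G[F]=G_c$. Comparing with $G_c^\perp=G_{n-c}$ forces $n-c=c$, hence $n=2c$ and $G_i^\perp=G_{2c-i}$. The isomorphisms $\lambda\colon G_j/G_i\simeq(G_{2c-i}/G_{2c-j})^D$ then follow by a standard Cartier-duality diagram: for finite flat $H_1\subseteq H_2\subseteq G$, $\lambda$ carries $H_2^\perp\subseteq H_1^\perp$ isomorphically onto $\ker(G^D\to H_2^D)\subseteq\ker(G^D\to H_1^D)$, and factoring $G^D\to H_1^D$ through $H_2^D$ identifies $H_1^\perp/H_2^\perp$ with $\ker(H_2^D\to H_1^D)=(H_2/H_1)^D$; taking $H_1=G_{2c-j}$ and $H_2=G_{2c-i}$, whose perps are $G_j$ and $G_i$, gives the claim.

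Finally, for the permutation identity set $\tau(i)=2c+1-i$, so that the assertion reads $\sigma\circ\tau=\tau\circ\sigma$. For $i\le c$, Theorem \ref{T:strcan}(1) gives $G_{i-1}=V(G_{\sigma(i)-1})$; taking $\perp$, using Lemma \ref{L:fvperp}(2) and $G_k^\perp=G_{2c-k}$, yields $G_{\tau(i)}=F^{-1}(G_{\tau(\sigma(i))})$, and since $\tau(i)>c$ the minimality in the definition of $\sigma(\tau(i))$ gives $\sigma(\tau(i))\le\tau(\sigma(i))$. The case $i>c$ is symmetric, using Theorem \ref{T:strcan}(2) and Lemma \ref{L:fvperp}(1). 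Hence $\sigma(\tau(i))\le\tau(\sigma(i))$ for all $i\in\{1,\dots,2c\}$. As $\sigma\circ\tau$ and $\tau\circ\sigma$ are both permutations of $\{1,\dots,2c\}$, the permutation $(\tau\circ\sigma)\circ(\sigma\circ\tau)^{-1}$ moves no element downward, hence is the identity, giving $\sigma\circ\tau=\tau\circ\sigma$. I expect this last step — upgrading the one-sided inequality to an equality via the elementary remark that a permutation of a finite totally ordered set with $h(i)\ge i$ for all $i$ is trivial — to be the only point needing a genuine idea; everything else is bookkeeping with $\perp$, $F^{-1}$, $V$, Lemma \ref{L:fvperp}, and Theorem \ref{T:strcan}.
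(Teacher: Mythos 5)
Your proof is correct and takes essentially the same route as the paper's: both rest on Lemma~\ref{L:fvperp} together with the fact that $H\mapsto H^\perp$ is an order-reversing involution. The one genuine (but minor) difference is in how the stability of the canonical filtration under $\perp$ is established. You argue that $\{G_0^\perp,\dots,G_n^\perp\}$ is a filtration closed under $F^{-1}$ and $V$ and invoke the ``coarsest such filtration'' characterization; the paper instead proves the explicit identity $R(G[F])^\perp = R'(G[F])$ by induction on the length of the word $R$, where $R'$ exchanges $F^{-1}$ and $V$, using $G[F]^\perp = G[F]$ as the base case. These are two packagings of the same induction. Your re-derivation of $G_c^\perp = G_c$ from $G_c = V(G)$ and Lemma~\ref{L:fvperp}(2) is a clean way to supply the input $G[F]=G[F]^\perp$, which the paper takes for granted. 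Finally, the paper dismisses the last two statements (the quotient duality and the symmetry of $\sigma$) as ``immediate''; your argument for the permutation identity via the one-sided inequality $\sigma\tau\le\tau\sigma$ plus the observation that a permutation of a finite totally ordered set moving nothing downward is trivial is correct. A slightly more direct route to that equality is available: from $V(G_{\sigma(i)-1})=G_{i-1}$ and $V(G_{\sigma(i)})=G_i\supsetneq G_{i-1}$ one sees that $\sigma(i)-1$ is the \emph{largest} index $k$ with $V(G_k)=G_{i-1}$, which after applying $\perp$ identifies $\tau(\sigma(i))$ as the smallest $j$ with $F^{-1}(G_j)=G_{\tau(i)}$, i.e.\ as $\sigma(\tau(i))$ exactly. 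Either way your proof is sound.
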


\begin{proof}
For any $R\in\Ca{R}$, let $R'\in\Ca{R}$ denote the element obtained by exchanging $F^{-1}$ and $V$.  Then as $G[F]=G[F]^\perp$, Lemma \ref{L:fvperp} implies that $R(G[F])^\perp=R'(G[F])$.  Hence the terms of the canonical filtration are stable under $H\mapsto H^{\perp}$.  As this is inclusion reversing, we conclude that we must have $n=2c$ and $G_i=G_{2c-i}^\perp$.  The second two statements follow immediately.
\end{proof}

Now we want to explain how to extend Proposition \ref{P:candual} to principally quasi-polarized \emph{partial} $\BT$s.  First we have to deal with a small notational inconvenience.  Note that if $(G,\lambda)$ is a principally quasi-polarized $\BT$, the by the previous proposition, if $G_1$ is multiplicative, then $G_n/G_{n-1}$, its Cartier dual, is \'{e}tale.  On the other hand $(G_{n-1},\lambda)$ is a principally quasi-polarized such that $G_1$ is multiplicative, but such that the ``top'' sub quotient in the canonical filtration, $G_{n-1}/G_{n-2}$, is not \'{e}tale.
\begin{conv}\label{convpartial}
If $(G,\lambda)$ is a principally quasi-polarized partial $\BT$ with $G_1$ multiplicative but $G_n/G_{n-1}$ not \'{e}tale, then we will increment $n$ by one and extend the canonical filtration so that $G_{n-1}=G_n$ and $G_n/G_{n-1}$ is \'{e}tale (and trivial.)
\end{conv}
As remarked above, this convention makes no change when $(G,\lambda)$ is a principally quasi-polarized $\BT$ and the reader may verify that with this new canonical filtration all the results of the previous section remain trivially valid.  Moreover with this convention, the canonical filtration is compatible with base change without the exception in part 1 of Proposition \ref{P:canfilbc}.

Now we have the following analog of Proposition \ref{P:candual}
\begin{prop}
Let $(G,\lambda)$ be a principally quasi-polarized partial $\BT$ which admits a canonical filtration of constant type.  Let
\begin{equation*}
0=G_0\subset G_1\subset \cdots\subset G_c=G[F]\subset\cdots\subset G_n=G.
\end{equation*}
be the canonical filtration, observing Convention \ref{convpartial}.  Then $n=2c$ and for $i=1,\ldots 2c-1$, $G_i$ and $G_{2c-i}$ are orthogonal for $\lambda$ and for $1\leq i< j\leq 2c-1$, $\lambda$ induces an isomorphism
\begin{equation*}
\lambda:G_j/G_i\simeq (G_{2c-i}/G_{2c-j})^D
\end{equation*}
Moreover the permutation $\sigma:\{1,\ldots,2c\}\to\{1,\ldots,2c\}$ satisfies
\begin{equation*}
\sigma(2c+1-i)=2c+1-\sigma(i)
\end{equation*}
\end{prop}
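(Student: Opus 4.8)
The plan is to run, with only cosmetic changes, the argument that proves Proposition \ref{P:candual}; the only genuinely new content is the bookkeeping forced by $G$ being a partial $\BT$ rather than a $\BT$, together with the observation that Convention \ref{convpartial} is precisely what restores self-duality at the top of the filtration. First I would reduce to geometric fibers wherever the assertion is pointwise in nature: by Proposition \ref{P:canfilbc} the canonical filtration is compatible with base change, and the hypothesis of constant type guarantees that the heights of all the groups $R(G[F])$, $R\in\Ca{R}$, agree over $S$ and over any geometric point, so the relations between consecutive terms of the filtration and the permutation $\sigma$ of Theorem \ref{T:strcan} may be analyzed over a geometric point, where a partial $\BT$ is an honest $\BT$. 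The one subtlety here is that Convention \ref{convpartial} can make the filtration over $S$ one step longer than over a fiber in which the final non-formal subquotient degenerates to the trivial group (the phenomenon noted after Proposition \ref{P:canfilbc}), so one must keep track of the possibility that $G_{n-1}=G_n=G$.

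Granting this, I would reproduce the proof of Proposition \ref{P:candual} as follows. For a finite flat closed subgroup $H\subset G$ the pairing $G\times H\to\mu_p$ induced by $\lambda$ gives a homomorphism $G\to H^D$, and one sets $H^\perp=\ker(G\to H^D)$; only the terms $G_i$ with $i\le c$ (the smaller half of the filtration, all finite flat) need be fed into this construction, their images $G_i^\perp$ being the larger terms, one of which is $G=G_n=G_{n-1}$ in the degenerate case. The key lemma to verify is the partial $\BT$ analogue of Lemma \ref{L:fvperp}: whenever $F^{-1}(H)$ or $V(H)$ exists as a finite flat group scheme one has $(F^{-1}(H))^\perp=V(H^\perp)$ and $(V(H))^\perp=F^{-1}(H^\perp)$, which follows, exactly as in the proof of Lemma \ref{L:fvperp}, from the fact that Cartier duality exchanges Frobenius and Verschiebung, applied to the finite flat pieces. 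Since $G[F]=G[F]^\perp$, it then follows that $R(G[F])^\perp=R'(G[F])$ for every $R\in\Ca{R}$, where $R'$ is obtained by interchanging $F^{-1}$ and $V$; hence $H\mapsto H^\perp$ permutes the set of terms of the canonical filtration, is inclusion-reversing, and matches the $i$-th term with the $(n-i)$-th. The existence of a term to pair with the multiplicative term $G_1$ is exactly what Convention \ref{convpartial} supplies, and this forces $n=2c$. The orthogonality $G_i\perp G_{2c-i}$ for $1\le i\le 2c-1$ and the isomorphisms $\lambda:G_j/G_i\simeq(G_{2c-i}/G_{2c-j})^D$ for $1\le i<j\le 2c-1$ are then immediate, and the identity $\sigma(2c+1-i)=2c+1-\sigma(i)$ follows because the involution $i\mapsto 2c+1-i$ of the index set induced by $\perp$ interchanges the defining relations $G_i=V(G_{\sigma(i)})$ ($i\le c$) and $G_i=F^{-1}(G_{\sigma(i)})$ ($i>c$) of Theorem \ref{T:strcan}.

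The main obstacle is the behaviour at the two endpoints, and this is exactly why the statement is genuinely weaker than Proposition \ref{P:candual}. For a partial $\BT$ arising as the $p$-torsion of a semiabelian scheme with nontrivial toric part, the multiplicative term $G_1$ lies in the radical of $\lambda$ on $G$ — the étale group it ``ought'' to pair with is simply not present in $G$ — so $G_1^\perp=G$ is not finite flat, $G_n^\perp=G^\perp\supsetneq 0$, and consequently one cannot extend the isomorphism of subquotients to $i=0$ or $j=2c$, nor the orthogonality relation to $i\in\{0,2c\}$; this is what the convention's relabelling $G_{n-1}=G_n=G$ encodes. Pinning down precisely this behaviour of $G_1$ (equivalently, of the formal étale quotient at the top introduced by the convention) is the crux of the matter; everything else is the same manipulation of finite flat group schemes under $F$, $V$, and Cartier duality already carried out in the proofs in Section \ref{S:canfil}.
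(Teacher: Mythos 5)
The paper states this proposition without proof --- it is presented as the partial-$\BT$ analog of Proposition \ref{P:candual}, with the adaptation left to the reader --- so the implicit intent is exactly the argument you give, and your proof is correct. Two brief remarks on exposition. The opening reduction to geometric fibers does no real work and is somewhat misleading: the orthogonality and Cartier-duality claims are assertions about morphisms over $S$, not pointwise conditions, and the argument of Proposition \ref{P:candual} already runs directly over $S$, the constant-type hypothesis serving only to make the filtration globally defined; you never actually invoke the fiber reduction in what follows. And the claim that the analog of Lemma \ref{L:fvperp} follows \emph{exactly as} in the finite case is slightly too quick, since that proof is phrased for $\lambda:G\to G^D$ an isomorphism of finite flat group schemes; here one must take $H^\perp=\ker(G\to H^D)$ from the degenerate pairing, as you in fact do, and then observe that $\ker\lambda$, being of multiplicative type, is killed by $F$ and hence lies in $G[F]$ and in every $H^\perp$ in sight, which is what keeps the $F$/$V$ and Cartier-duality bookkeeping consistent. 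Neither point is a genuine gap, only a place where the write-up should slow down.
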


\subsection{The Canonical Filtration of a partial $\BT$ With Extra Endomorphisms}

We begin with the following important observation: if $(G,i)$ is a partial $\BT$ with $\overline{\O}$-action and $R\in\Ca{R}$ is such that $R(G[F])$ exists, then $R(G[F])$ is stable under the action of $\overline{\O}$.  Indeed this follows immediately from the functoriality of $F$ and $V$.  Thus if $G$ admits a canonical filtration
\begin{equation*}
0=G_0\subset G_1\subset\cdots\subset G_c=G[F]\subset\cdots\subset G_n=G
\end{equation*}
it must be stable by $\overline{\O}$.  In particular $\overline{\O}$ acts on the sub quotients $G_i/G_{i-1}$ and their co-lie algebras $\omega_{G_i/G_{i-1}}$ (on the right) which are locally free $\O_S$-modules by Corollary \ref{C:omegafree}.

\begin{defn}\label{D:otype}
Let $(G,i)/S$ be a partial $\BT$ with $\overline{\O}$-action.  We say that $(G,i)$ admits a canonical filtration with constant $\overline{\O}$-type if $G$ admits a canonical filtration of constant type
\begin{equation*}
0=G_0\subset G_1\subset\cdots\subset G_c=G[F]\subset\cdots\subset G_n=G
\end{equation*}
and moreover, for each $i=1,\ldots,n$ the $\overline{\O}$ multi rank of $\omega_{G_i/G_{i-1}}$ is constant.  
\end{defn}

The following theorem will ultimately lead to the construction of the Ekedahl-Oort stratification on PEL type modular varieties and their toroidal compactifications.
\begin{thm}\label{T:candecompO}
Let $(G,i)$ be a partial $\BT$ with $\overline{\O}$ action with height $\leq h$.  Then there is a coarsest (set theoretic) decomposition
\begin{equation*}
S=\coprod_\beta S_\beta 
\end{equation*}
into finitely many reduced locally closed subschemes such that for each $\beta$, $G|_{S_{\beta}}$ admits a canonical filtration of constant $\overline{\O}$-type.  Moreover if $S=\coprod_\alpha S_\alpha$ is the decomposition from Theorem \ref{T:candecomp} then for each $\beta$ there is some $\alpha$ such that $S_\beta$ is an open and closed subscheme of $S_\alpha$.
\end{thm}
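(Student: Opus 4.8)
The plan is to refine the decomposition of Theorem \ref{T:candecomp} by further stratifying each piece according to the $\overline{\O}$-multiranks of the graded pieces of the canonical filtration. So first I would restrict attention to one stratum $S_\alpha$ from Theorem \ref{T:candecomp}; over $S_\alpha$, $G$ admits a canonical filtration of constant type
\begin{equation*}
0=G_0\subset G_1\subset\cdots\subset G_c=G[F]\subset\cdots\subset G_n=G
\end{equation*}
by finite flat closed subgroup schemes, and each subquotient $G_i/G_{i-1}$ is (quasi-)finite flat with an $\O_{S_\alpha}$-linear $\overline{\O}$-action. By Corollary \ref{C:omegafree}, each $\omega_{G_i/G_{i-1}}$ is a locally free $\O_{S_\alpha}$-module carrying a right $\overline{\O}$-action, hence decomposes as $\bigoplus_\tau e_\tau\cdot\omega_{G_i/G_{i-1}}$ into locally free summands as in Section \ref{SS:extra}.

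The key point is then elementary: for each $i$ and each $\tau\in\Ca{T}$, the rank of the summand $e_\tau\cdot\omega_{G_i/G_{i-1}}$ is a locally constant function on $S_\alpha$. Therefore the locus on which the entire tuple of these ranks equals a fixed value is open and closed in $S_\alpha$. Running over the finitely many possible tuples (each rank is bounded by the total rank of $\omega_{G_i/G_{i-1}}$, which is constant on $S_\alpha$ since $S_\alpha$ has constant type), we partition $S_\alpha$ into finitely many open and closed subschemes $S_\beta$, on each of which the $\overline{\O}$-multirank of every $\omega_{G_i/G_{i-1}}$ is constant; that is, $G|_{S_\beta}$ admits a canonical filtration of constant $\overline{\O}$-type in the sense of Definition \ref{D:otype}. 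Doing this for every $\alpha$ and taking the union of all the resulting pieces gives a set-theoretic decomposition $S=\coprod_\beta S_\beta$ with each $S_\beta$ open and closed in some $S_\alpha$, which is exactly the refinement statement; coarseness follows because any decomposition along which the canonical filtration has constant $\overline{\O}$-type in particular has constant type (so refines the $\{S_\alpha\}$) and along which all the multiranks are constant (so refines each of the above partitions of $S_\alpha$).

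The one genuinely substantive point to check is that the $\overline{\O}$-multirank of $\omega_{G_i/G_{i-1}}$ really is locally constant on $S_\alpha$, i.e. that the idempotent decomposition of $\omega_{G_i/G_{i-1}}$ behaves well in families — but this is immediate from the discussion in Section \ref{SS:extra}: $e_\tau$ lies in the center of $\overline{\O}\otimes k$, and a direct summand of a finite locally free sheaf is finite locally free, so its rank is locally constant. Compatibility with base change of the canonical filtration (Proposition \ref{P:canfilbc}) ensures the refinement is itself the coarsest one and is well-behaved, and one should also invoke Proposition \ref{P:canfilbc} to know the $G_i/G_{i-1}$ over $S_\alpha$ are obtained from $G$ over $S$ by restriction so that no new subtlety enters. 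I do not expect any serious obstacle here; the work is essentially bookkeeping once one has Theorem \ref{T:candecomp} and the formalism of $\overline{\O}$-multiranks in hand.
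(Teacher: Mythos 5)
Your proof is correct and takes essentially the same approach as the paper: restrict to a stratum $S_\alpha$ of Theorem \ref{T:candecomp}, observe that the $\overline{\O}$-multiranks of the $\omega_{G_i/G_{i-1}}$ are locally constant there, and cut $S_\alpha$ into finitely many open-and-closed pieces accordingly. The extra care you take in unwinding why the multiranks are locally constant (via Corollary \ref{C:omegafree} and the idempotent decomposition from Section \ref{SS:extra}) is exactly the content the paper's proof implicitly relies on.
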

\begin{proof}
It is clear that the decomposition $S=\coprod_\beta S_\beta$ in the theorem must refine the decomposition $S=\coprod_\alpha S_\alpha$ of Theorem \ref{T:candecomp}.  So consider one of the locally closed subschemes $S_\alpha$ from Theorem \ref{T:candecomp}.  Then $G|_{S_\alpha}$ admits a canonical filtration
\begin{equation*}
0=G_0\subset G_1\subset\cdots\subset G_c=G[F]\subset\cdots\subset G_n=G|_{S_\alpha}.
\end{equation*}
Consider for each $i$, the $\overline{\O}$ multi ranks of $\omega_{G_i/G_{i-1}}$.  These are locally constant functions on $S_\alpha$, and consequently there is a coarsest decomposition of $S_\alpha$ into open and closed subschemes on which they are all constant.  This gives the decomposition in the theorem.
\end{proof}

\section{Classification Over an Algebraically Closed Field}\label{S:moonen}

The goal of this section is to recall the classification of principally quasi-polarized $\BT$s with $\overline{\O}$-action over an algebraically closed field.  This is due to Oort \cite{Oo01} when $\overline{\O}=\FF_p$ and Moonen \cite{Mo01} in general (at least when $p>2$.)

Let $\Ca{D}=(\overline{\O},*,(h_{[\tau]}),(d_\tau))$ be a mod $p$ PEL datum with no factors of type D.  Let $k'$ be an algebraically closed field of characteristic $p$, with an embedding $k\to k'$.  Let ${\BT}^\Ca{D}_{/k'}$ denote the set of isomorphism classes of principally quasi polarized $\BT$ with $\overline{\O}$ action of type over $k'$ of type $\Ca{D}$.

Let $(V,\langle\cdot,\cdot\rangle)$ be the symplectic $\Ca{O}$-module corresponding to $\Ca{D}$ and let $\overline{G}$ be the corresponding group.

Let $G$ be an element of ${\BT}^\Ca{D}_{/k'}$.  $G$ admits a canonical filtration
\begin{equation*}
0=G_0\subset\cdots\subset G_c=G[F]\subset\cdots\subset G_{2c}=G.
\end{equation*}
Now let $D=D(G)$ be the contravarient Dieudonne module of $G$.  It is a $k'$ vector space with a $k'$-linear right action of $\Ca{O}$, a pairing $\langle\cdot,\cdot\rangle$ and an $F$ and $V$.  Let $h_{\tau}$ be its multi degree.  Then by the proof of Proposition we have $h_{\tau}=h_{[\tau]}$ for each $\tau\in\Ca{T}$.

We now define two flags on $D(G)$.  The first is
\begin{equation*}
0\subset \ker F\subset D
\end{equation*}
and let $P_N\subset G_{k'}$ be the parabolic stabilizing this flag.  We have $\dim_{k'}(\ker F)_\tau=d_\tau$ and moreover $\ker F$ is maximal isotropic.  Hence $P_N$ has type $I$.

The other flag comes from the canonical filtration.  Let
\begin{equation*}
D_i=\ker(D(G)\to D(G_{2c-i}))
\end{equation*}
so that we have an $\overline{\O}$-stable, self dual, flag
\begin{equation*}
0=D_0\subset D_1\subset\cdots\subset D_c=\im F\subset\cdots\subset D_{2c}=D.
\end{equation*}
Let $P_C\subset G_{k'}$ be the parabolic which stabilizes it.  Then following Moonen, we define
\begin{equation*}
w(G)=\text{relpos}(P_C,P_N)\in W^I.
\end{equation*}

The following theorem is due to Oort \cite{Oo01} when $\overline{\O}=\FF_p$, Moonen \cite{Mo01} for $\overline{\O}$ general and $p>2$, and Moonen-Wedhorn \cite{MW04} in general.  The formulation given here is due to Moonen.
\begin{thm}
The map defined above gives a bijection
\begin{align*}
{\BT}^\Ca{D}_{/k'}&\to W^I\\
G&\mapsto w(G)
\end{align*}
\end{thm}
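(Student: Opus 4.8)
The statement is precisely the classification theorem of Oort, Moonen, and Moonen--Wedhorn, so in the end the proof will be a citation; nevertheless the plan is to sketch why it holds. The first step is to pass through (contravariant) Dieudonn\'e theory: a principally quasi-polarized $\BT$ with $\overline{\O}$-action $G/k'$ is the same datum as a finite-dimensional $k'$-vector space $D = D(G)$ carrying a right $\overline{\O}$-action, a perfect alternating pairing compatible with $*$, and $\sigma^{\pm 1}$-linear operators $F$ and $V$ with $FV = VF = 0$, $\im F = \ker V$, $\im V = \ker F$; this is an equivalence of categories onto the evident category of ``Dieudonn\'e data'' (a $G$-structured $F$-zip, in the language of \cite{MW04}), compatible with canonical filtrations, which on the $D$-side is the coarsest $F$- and $V$-stable flag, refining both $0 \subset \ker F \subset D$ and $0 \subset \im F \subset D$. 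One checks, as already recorded before the statement, that the numerical invariants of $G$ and of $D$ agree, so $\ker F$ is an $\overline{\O}$-stable maximal isotropic subspace of the correct multirank; hence the parabolic $P_N$ has type $I$, the canonical flag $D_\bullet$ is self-dual and $\overline{\O}$-stable, and $w(G) = \operatorname{relpos}(P_C,P_N)$ genuinely lies in $W^I$. So the map is well defined.

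For surjectivity I would argue directly: given $w \in W^I$, lift it to $\dot w \in \overline{G}(k')$ using a pinning, and use $\dot w$ (together with $\sigma$) to build operators $F,V$ on the standard symplectic $\overline{\O}\otimes k'$-module attached to $\Ca{D}$, with Hodge filtration $N$; one reads off directly that the resulting $\BT$ $G_w$ satisfies $w(G_w) = w$. Here connectedness of $\overline{G}$ --- which is exactly the no-factors-of-type-D hypothesis, via Propositions \ref{P:hequiv} and \ref{P:dequiv} --- guarantees that the various $\overline{G}(k')$-orbits used are the ones predicted by the Weyl-group combinatorics.

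For injectivity, the heart of the matter, I would produce a normal form. Theorem \ref{T:strcan}, transported to $D$, says that on each graded piece $\operatorname{gr}_i D$ of the canonical flag exactly one of $F$, $V$ induces an isomorphism between $\operatorname{gr}_i D$ and a Frobenius twist of $\operatorname{gr}_{\sigma(i)} D$; choosing a basis on one graded piece in each cycle of $\sigma$ and propagating it around produces a basis of $D$ in which $F$ and $V$ are monomial and in which $\ker F$ is in relative position $w(G)$ to the canonical flag. The remaining ambiguity is an action of the automorphism group of the rigidified datum, and one shows this ambiguity is absorbed by $\overline{G}(k')$: the key point is that the unipotent radical of $P_C$ acts transitively on the relevant set of sub-data, which again uses that $\overline{G}$ is connected (and, when $p = 2$, the \emph{ad hoc} alternating-on-Dieudonn\'e-modules condition built into the definition of a principally quasi-polarized $\BT$). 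The upshot is that $D$ is isomorphic to the standard datum $D_{w(G)}$ constructed in the previous paragraph, so $w$ is injective.

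The main obstacle is exactly this last injectivity step: it is not a formal consequence of the Bruhat decomposition, because one must control the automorphism group of the Dieudonn\'e datum and show that every choice made in the normalization can be absorbed by $\overline{G}(k')$ --- this is where connectedness of $\overline{G}$ and the characteristic-$2$ subtleties are genuinely used. For the complete argument I would refer to \cite{Oo01} in the case $\overline{\O} = \FF_p$, to \cite{Mo01} for general $\overline{\O}$ with $p > 2$, and to \cite{MW04} in general.
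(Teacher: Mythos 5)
The paper gives no proof of this theorem: it simply records the statement as known and cites \cite{Oo01}, \cite{Mo01}, and \cite{MW04}, with the remark that the formulation is Moonen's. Your sketch is a faithful outline of the argument in those references (Dieudonn\'e theory, well-definedness, surjectivity via a standard datum built from a lift of $w$, injectivity via a normal form, with connectedness of $\overline{G}$ and the $p=2$ subtlety correctly flagged), and you correctly defer the details to the literature, so there is no discrepancy with the paper to report.
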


Now we want to connect this classification with the notions introduced in section \ref{S:canfil}.  Here is the key proposition.
\begin{prop}\label{P:candetermines}
Let the notation be as above.  Then $w(G)$, and hence $G$ up to isomorphism, is determined by the permutation $\sigma$ as in theorem \ref{T:strcan} and the $\overline{\O}$-multiranks of $\omega_{G_i/G_{i-1}}$ for $i=0,\ldots,2c$.
\end{prop}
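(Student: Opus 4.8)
The plan is to show that the data consisting of the permutation $\sigma$ together with the $\overline{\O}$-multiranks of the graded pieces $\omega_{G_i/G_{i-1}}$ recovers the element $w(G) = \text{relpos}(P_C, P_N) \in W^I$. Since $W^I$ is the classifying set by the theorem of Oort--Moonen--Wedhorn, this suffices. First I would unwind what $P_C$ and $P_N$ are in terms of the Dieudonn\'e module $D = D(G)$: the parabolic $P_N$ stabilizes the flag $0 \subset \ker F \subset D$, which is determined purely by the multiranks $(d_\tau)$ of $\ker F$ (these are part of the mod $p$ PEL datum $\Ca{D}$ and hence fixed), while $P_C$ stabilizes the canonical flag $0 = D_0 \subset D_1 \subset \cdots \subset D_{2c} = D$ coming from the canonical filtration, whose jump data is recorded by the heights $k_i = \text{ht}(G_i)$ and the $\overline{\O}$-multiranks of the graded pieces. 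So the relative position $\text{relpos}(P_C, P_N)$ is a function of: the flag type of $P_C$ (determined by the $k_i$'s, equivalently by the multiranks of $\omega_{G_i/G_{i-1}}$ via Corollary \ref{C:omegafree}), the flag type of $P_N$ (fixed by $\Ca{D}$), and the actual mutual position of the two flags inside $D$.

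The key point is that the mutual position is governed by how $F$ (equivalently $V$) interacts with the canonical flag, and this is exactly the content of the permutation $\sigma$ from Theorem \ref{T:strcan}. Concretely, I would argue as follows. Choose an $\overline{\O}$-stable basis of $D$ adapted to the canonical flag $D_\bullet$; by Theorem \ref{T:strcan}, the maps $V: (G_{\sigma(i)}/G_{\sigma(i)-1})^{(p)} \to G_i/G_{i-1}$ (for $i \le c$) and $F: G_i/G_{i-1} \to (G_{\sigma(i)}/G_{\sigma(i)-1})^{(p)}$ (for $i > c$) are isomorphisms, and the translation to Dieudonn\'e modules (remembering that $D$ is contravariant, so $F$ and $V$ swap roles) tells us precisely that on the $i$-th graded piece $D_i/D_{i-1}$, exactly one of $F, V$ is an isomorphism onto the $\sigma^{-1}$-indexed piece and the other is zero. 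Keeping track of the $\overline{\O}$-action, the pair $(\sigma, (\text{mult. ranks of } \omega_{G_i/G_{i-1}})_i)$ determines the semilinear operators $F, V$ on the associated graded of $D$ relative to the canonical flag, up to the action of the Levi of $P_C$. Since $\ker F \subset D$ is then determined as a subspace relative to this flag data (it is the sum of the pieces where $V$, i.e.\ the contravariant $F$, vanishes, together with partial contributions you can read off from $\sigma$), the position of the flag stabilized by $P_N$ relative to the flag stabilized by $P_C$ is pinned down. This is essentially the explicit recipe for $w(G)$ in terms of $\sigma$ and $(k_i)$ that appears in Moonen's work and in the introduction of this thesis (the formula $w x(k_i) = k_{\sigma(i)}$).

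The cleanest way to organize this is probably to not reprove the combinatorial formula from scratch but instead to observe: (i) the canonical filtration of $G$, with its heights $k_i$ and graded multiranks, together with the isomorphisms $F, V$ on graded pieces provided by $\sigma$, reconstructs the Dieudonn\'e module $D(G)$ with all its structure ($\overline{\O}$-action, pairing, $F$, $V$) up to isomorphism --- this is the real substance, and it is exactly the "generalized Raynaud" philosophy that a $\BT$ is determined by its canonical filtration; (ii) by Dieudonn\'e theory $D(G)$ with its structure determines $G$ up to isomorphism; (iii) hence by the Oort--Moonen--Wedhorn theorem $w(G)$ is determined. For step (i), the pairing is recovered via the self-duality $G_i = G_{2c-i}^\perp$ and the induced isomorphisms $G_j/G_i \simeq (G_{2c-i}/G_{2c-j})^D$ from Proposition \ref{P:candual}, which are themselves consequences of the filtration data, and the compatibility $\sigma(2c+1-i) = 2c+1-\sigma(i)$; the $\overline{\O}$-action on graded pieces is recorded by the multiranks plus the fact (Proposition \ref{P:algclosedunique}, \ref{P:dequiv}) that over an algebraically closed field such modules with their isotropic structure are rigid once the multirank is fixed.

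\textbf{Main obstacle.} The delicate step is (i): showing that $F$ and $V$ on the \emph{whole} of $D$ --- not just on the associated graded --- are determined up to isomorphism by $\sigma$ and the graded multiranks. A priori there could be "off-diagonal" contributions to $F$ and $V$ with respect to the canonical flag, and one must show these can be normalized away by the action of the automorphism group of the flag (a parabolic). Over an algebraically closed field this is a Lang's theorem / $H^1$-vanishing type argument, and the no-type-$D$ hypothesis (which forces the relevant group $\overline{G}$ to be connected, see Proposition \ref{P:hequiv}) is what makes it go through. I would handle this either by a direct normal-form computation on $D$ respecting $\overline{\O}$, the pairing, and the flag, or --- more slickly --- by citing that Moonen's explicit combinatorial description of $w(G)$ already expresses $w(G)$ as a function of precisely $\sigma$ and the $(k_i)$ (hence the multiranks), which is really the same statement packaged differently; the work is then just to match conventions between the two indexings.
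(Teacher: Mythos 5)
Your proposal identifies several valid routes to the proposition but then singles out the harder one as "cleanest," when in fact the route you mention in passing and abandon is the one the paper takes, and it is genuinely shorter. The paper does not attempt to reconstruct $(D,F,V)$ up to isomorphism from the graded data at all. Instead it observes that $w(G) = \text{relpos}(P_C, P_N)$ is determined by the $\overline{\O}$-multiranks of the intersections $\ker F \cap D_i$ for $i = 0,\ldots,2c$, and then computes these by a one-step recursion in $i$: translating Theorem \ref{T:strcan} to Dieudonn\'e modules, for each $i$ exactly one of the contravariant $F$ or $V$ is an isomorphism on the graded piece $D_i/D_{i-1}$, and since $\ker F = \im V$ this forces either $\ker F \cap D_i = \ker F \cap D_{i-1}$ (when $\sigma^{-1}(i)\leq c$) or $\ker F \cap D_i$ to grow by exactly the multirank of $D_i/D_{i-1}$ (when $\sigma^{-1}(i)>c$). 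Everything is read off from the \emph{associated graded}; nothing needs to be said about $F$ or $V$ on $D$ itself.

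This means the "main obstacle" you spend the last paragraph worrying about — off-diagonal contributions of $F,V$ relative to the canonical flag, and how to normalize them away by a Lang-theorem / $H^1$-vanishing argument using connectedness of $\overline{G}$ — is a red herring for this proposition. That kind of normalization is needed if you want to show (as you propose in step (i)) that the graded data reconstructs the Dieudonn\'e module up to isomorphism, which is essentially the content of the Kraft--Oort--Moonen classification theorem itself; proving it here would be circular at worst and substantially more work at best. Since the proposition only claims the data determines $w(G)$ (and then $G$ follows formally from the bijection of Section \ref{S:moonen}), you don't need to reconstruct $G$ first. You had the key idea when you wrote that $\ker F$ is "determined as a subspace relative to this flag data" — pushing that to the precise statement about intersection multiranks $\dim(\ker F \cap D_i)$, and then running the recursion, is the whole proof.
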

\begin{proof}
The relative position $w(G)$ is determined by the $\overline{\O}$-multiranks of

\begin{equation*}
\ker F\cap D_i
\end{equation*}
for $i=0,\ldots,2c$.

By Theorem \ref{T:strcan}, translated into the language of Dieudonne modules, we have that for $i=1,\ldots,c$,
\begin{equation*}
F:(D_{\sigma(i)}/D_{\sigma(i)-1})^{(p)}\to D_i/D_{i-1}
\end{equation*}
is an isomorphism, while for $i=c+1,\ldots,2c$,
\begin{equation*}
V:D_i/D_{i-1}\to (D_{\sigma(i)}/D_{\sigma(i)-1})^{(p)}
\end{equation*}
is an isomorphism.  Moreover we have canonical isomorphism
\begin{equation*}
\omega_{G_{2c+1-i}/G_{2c-i}}\simeq D_i/(D_{i-1}+FD_i)
\end{equation*}
and we have $FD_i\subset D_{i-1}$ except when $i=1$ and $\sigma(1)=1$.

Now for $i=1,\ldots,2c$ there are two possibilities.  If $\sigma^{-1}(i)\leq c$ then as we have an isomorphism
\begin{equation*}
F:(D_i/D_{i-1})^{(p)}\to D_{\sigma^{-1}(i)}/D_{\sigma^{-1}(i-1)}
\end{equation*}
we have
\begin{equation*}
\ker F\cap D_i=\ker F\cap D_{i-1}.
\end{equation*}
On the other hand when $\sigma^{-1}(i)>c$ then we have an isomorphism
\begin{equation*}
V:D_{\sigma^{-1}(i)}/D_{\sigma^{-1}(i)-1}\to(D_i/D_{i-1})^{(p)}
\end{equation*}
As $\im V=\ker F$ it follows that the $\overline{\O}$-multirank of $\ker F\cap D_i$ is that of $\ker F\cap D_{i-1}$ plus that of $D_i/D_{i-1}$.
\end{proof}

\section{The Ekedahl-Oort Stratification of a PEL Type Modular Variety}\label{S:EO}

Now we return to the setting of PEL modular varieties.  Let $(\Ca{\O},*,L,\langle\cdot,\cdot\rangle,h)$ be an integral PEL datum with no factors of type D for which $p$ is a good prime.  Let $\Ca{D}=(\overline{\Ca{O}},*,(h_{[\tau]}),d_\tau)$ be the corresponding mod $p$ PEL datum.

Let $K\subset G(\A^{\infty,p})$ be a neat open compact subgroup.  For any $(A,\lambda,i,\alpha_K)$ in the universal isogeny class over $X_K$ we get a principally quasi polarized $\BT$ of type $\Ca{D}$ by taking the $p$ torsion $A[p]$ along with the Weil pairing $\lambda:A[p]\times A[p]\to \mu_p$ determined by $\lambda$, and the $\overline{\O}$ action induced by the action of $\overline{\O}$ on $A$.

As $K$ is neat, for any other choice $(A',\lambda',i',\alpha_K')$ in the universal isogeny class over $X_K$, there is a unique prime to $p$ quasi-isogeny $f:(A,\lambda,i,\alpha_K)\to(A',\lambda',i',\alpha_K')$ as in Definition \ref{D:modisog}.  Hence there is a canonical isomorphism $f:(A[p],i)\to (A'[p],i')$ of $\BT$ with $\overline{\O}$ action.  In this way we get a canonical $\BT$ with $\overline{\O}$-action over $X_K$, which we denote by $(G,i)$ along with a principal quasi-polarization $\lambda$ which is only canonical up to $\FF_p^\times$ similitude.

Now from Theorem \ref{T:candecompO} and Proposition \ref{P:candetermines} we get a set theoretic decomposition
\begin{equation*}
X_K=\coprod_{w\in W^I} X_{K,w}
\end{equation*}
of $X_K$ into reduced locally closed subschemes $X_{K,w}/k$ which has the following properties:
\begin{enumerate}
\item For each $w\in W^I$, $(G,i)|_{X_{K,w}}$ admits a canonical filtration of constant $\overline{\O}$ type.
\item For any $k'/k$ algebraically closed extension and $x\in X_K(k')$ let $w=w(G_x)$ be as in section \ref{S:moonen}.  Then $x\in X_{K,w}(k')$.
\end{enumerate} 
This is the \emph{Ekedahl-Oort} stratification of $X_K$.  We will also denote the Zariski closure of $X_{K,w}$ by $\overline{X}_{K,w}$.

We now list some of the basic properties of the Ekedahl-Oort stratification, due to Oort, Moonen, Wedhorn, and Wedhorn-Viehman.
\begin{thm}\label{T:EOprop}
\begin{enumerate}
\item For each $w\in W^I$, $X_{K,w}$ is nonempty, smooth, and of dimension $l(w)$.
\item There is a partial order $\preceq$ on $W^I$, which we emphasize is not necessarily the Bruhat order, such that (set theoretically)
\begin{equation*}
\overline{X}_{K,w}=\coprod_{w'\preceq w}X_{K,w'}.
\end{equation*}
\end{enumerate}
\end{thm}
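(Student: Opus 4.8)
The plan is to deduce both parts from the now-standard description of the Ekedahl--Oort stratification as the pullback, along a \emph{smooth} morphism, of a stratification of a smooth algebraic stack of dimension zero — the stack of truncated Barsotti--Tate groups with $(\overline{\O},*)$-structure, equivalently the stack of $G$-zips of type $\mu$ (Moonen, Moonen--Wedhorn, Pink--Wedhorn--Ziegler, Viehmann--Wedhorn). The only point genuinely special to the present setup is that our definition of $X_{K,w}$ via the canonical filtration and the permutation $\sigma$ must be identified with the one used in that literature; but this is exactly what Theorem \ref{T:candecompO} together with Proposition \ref{P:candetermines} provides, since both the $\overline{\O}$-multiranks of the graded pieces of the canonical filtration and the permutation $\sigma$ are recorded by the invariant $w$ of Section \ref{S:moonen}.

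Concretely, I would introduce the moduli stack $\Ca{Z}_{\Ca{D}}$ over $k$ classifying principally quasi-polarized $\BT$ with $(\overline{\O},*)$-action of type $\Ca{D}$, together with the classifying morphism $\zeta:X_K\to\Ca{Z}_{\Ca{D}}$ sending $(A,\lambda,i,\alpha_K)$ to its $p$-torsion $(A[p],\lambda,i)$ with the Weil pairing; by the preceding paragraph $X_{K,w}$ is the reduced fibre of $\zeta$ over the point $w$, i.e. $X_{K,w}=\zeta^{-1}(\Ca{Z}_{\Ca{D},w})$ for the corresponding locally closed substack $\Ca{Z}_{\Ca{D},w}$. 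Two facts would then be imported: first, that $\zeta$ is smooth — this is Illusie's deformation theory of truncated Barsotti--Tate groups, refined in the polarized and $\overline{\O}$-equivariant setting by Wedhorn, combined with Grothendieck--Messing/Serre--Tate theory and the fact that $X_K$ is a PEL moduli space; and second, that $\Ca{Z}_{\Ca{D}}$ is smooth of dimension $0$, that its underlying topological space is in bijection with $W^I$, that the point $w$ is locally closed with $\dim\Ca{Z}_{\Ca{D},w}=l(w)-\dim X_K$, and that its closure is the union of the $w'$ with $w'\preceq w$ for the partial order $\preceq$ computed by Pink--Wedhorn--Ziegler.

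Granting these, part 1 is immediate. Smoothness of $X_{K,w}$ over $k$ follows from smoothness of $\zeta$ and of the residual gerbe at $w$; nonemptiness follows from surjectivity of $\zeta$ onto $\Ca{Z}_{\Ca{D}}$; and since $\zeta$ is smooth of relative dimension $\dim X_K-\dim\Ca{Z}_{\Ca{D}}=\dim X_K$, one gets $\dim X_{K,w}=\dim X_K+\dim\Ca{Z}_{\Ca{D},w}=l(w)$. For part 2, a smooth morphism is open and, being surjective here, satisfies $\zeta^{-1}\big(\overline{\Ca{Z}_{\Ca{D},w}}\big)=\overline{\zeta^{-1}(\Ca{Z}_{\Ca{D},w})}=\overline{X}_{K,w}$; feeding in the closure formula for $\overline{\Ca{Z}_{\Ca{D},w}}$ gives $\overline{X}_{K,w}=\coprod_{w'\preceq w}X_{K,w'}$ with $\preceq$ the transported order, and one notes as claimed that this $\preceq$ need not be the Bruhat order.

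The main obstacle is importing the second fact, and within it the surjectivity of $\zeta$ — equivalently, the nonemptiness of \emph{every} Ekedahl--Oort stratum. This is the one deep input: for $\overline{\O}=\FF_p$ it rests on Oort's construction of a distinguished (``minimal'') $\BT$ in each isomorphism class and a specialization argument from the ordinary locus, and in the PEL generality it is the theorem of Viehmann--Wedhorn, which uses the classification of Section \ref{S:moonen} together with a degeneration argument inside $\Ca{Z}_{\Ca{D}}$. By contrast the smoothness of $\zeta$ and the closure relations are comparatively formal once $\Ca{Z}_{\Ca{D}}$ and its topology are available. I would add the remark that Theorem \ref{T:opensieg} gives a complementary and more elementary handle via comparison with the Siegel case, but since $\phi_{K,\tilde{K}}$ need not be smooth it does not by itself yield smoothness of the $X_{K,w}$, the dimension formula in the group-theoretic index $W^I$, or nonemptiness, so the zip-stack input cannot be avoided.
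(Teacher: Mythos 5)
The paper does not prove this theorem: it cites it as a known result of Oort, Moonen, Wedhorn, and Viehmann--Wedhorn (the references \cite{Oo01}, \cite{Mo01}, \cite{Mo04}, \cite{We01}, \cite{MW04}, \cite{VW13}), so there is no in-text proof to compare against. Your proposal is a correct and accurate outline of how that literature does in fact establish the statement.

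To elaborate slightly: you have correctly decomposed the theorem into the three inputs the cited papers supply. Smoothness of the classifying map $\zeta$ from $X_K$ to the stack of quasi-polarized $\BT$ with $\overline{\O}$-action (or, in more recent language, the stack of $G$-zips of type $\mu$) is Wedhorn's refinement of Illusie's deformation theory combined with Serre--Tate; the structure of that stack as a smooth zero-dimensional algebraic stack with locally closed points indexed by $W^I$, dimensions $l(w)-\dim X_K$, and closure relations given by the order $\preceq$ is the Pink--Wedhorn--Ziegler/Moonen--Wedhorn analysis, which is available here precisely because Theorem \ref{T:candecompO} and Proposition \ref{P:candetermines} identify the paper's definition of $X_{K,w}$ via the canonical filtration with the one used in that literature; and nonemptiness of every stratum, equivalently surjectivity of $\zeta$, is genuinely the hard point, settled for the Siegel case by Oort's minimal $\BT$ construction and specialization and in PEL generality by Viehmann--Wedhorn. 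Your argument for transporting closure relations along $\zeta$ is also correct: a flat morphism is generizing, so $\zeta^{-1}(\overline{Z}) \subset \overline{\zeta^{-1}(Z)}$, and the reverse inclusion is automatic by continuity, hence $\zeta^{-1}(\overline{Z}) = \overline{\zeta^{-1}(Z)}$ as needed. Your closing caveat about Theorem \ref{T:opensieg} is apt: the comparison map $\phi_{K,\tilde{K}}$ to Siegel space is finite but not smooth, so it cannot by itself deliver the dimension formula, the smoothness, or the nonemptiness, and the zip-stack (or Dieudonn\'e-theoretic) input really is unavoidable.
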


Next we show that the Ekedahl-Oort stratification is prime to $p$ Hecke stable.
\begin{prop}\label{P:EOhecke}
Let $g\in G(\A^{\infty,p})$ and let $K,K'\subset G(\A^{\infty,p})$ be neat compact open subgroups with $g^{-1}Kg\subset K'$ so that we have a map
\begin{equation*}
[g]:X_K\to X_K'.
\end{equation*}
Then for each $w\in W^I$,
\begin{equation*}
[g]^{-1}(X_{K',w})=X_{K,w}\qquad\text{and}\qquad [g]^{-1}(\overline{X}_{K',w})=\overline{X}_{K,w}.
\end{equation*}
Note that these pullbacks can be interpreted either set theoretically or scheme theoretically as $[g]$ is \'{e}tale.
\end{prop}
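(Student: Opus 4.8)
The plan is to exploit the compatibility of the Ekedahl-Oort stratification with base change together with the fact that $[g]$ is finite \'etale, so it suffices to work over geometric points. First I would recall that, over $X_{K'}$, we have a canonical $\BT$ with $\overline{\O}$-action $(G',i')$ together with a principal quasi-polarization $\lambda'$ (canonical up to $\FF_p^\times$-similitude), and over $X_K$ the analogous datum $(G,i,\lambda)$. The key input is that the map $[g]\colon X_K\to X_{K'}$ is induced by a prime-to-$p$ quasi-isogeny of universal families: more precisely, if $(A,\lambda,i,\alpha_K)$ lies in the universal isogeny class over $X_K$, then its image under $[g]$ is (the isogeny class of) $(A,\lambda,i,\alpha_{K'})$ with the \emph{same} underlying $(A,\lambda,i)$, only the level structure changing. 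Consequently there is a canonical isomorphism $[g]^*(G',i',\lambda')\simeq (G,i,\lambda)$ of principally quasi-polarized $\BT$s with $\overline{\O}$-action over $X_K$ (compatibly with the $\FF_p^\times$-similitude ambiguity, which does not affect the canonical filtration or the associated invariants).

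Given this, the argument is essentially formal. By Proposition \ref{P:canfilbc} (more precisely its analogue with $\overline{\O}$-structure, Theorem \ref{T:candecompO}, and the base-change statement it rests on), the decomposition of $X_K$ according to constant $\overline{\O}$-type of the canonical filtration of $(G,i)$ is the pullback along $[g]$ of the corresponding decomposition of $X_{K'}$ for $(G',i')$, since $[g]^*(G',i')\simeq(G,i)$ and formation of $F$, $V$, images, inverse images, and $\overline{\O}$-multiranks of the $\omega_{G_i/G_{i-1}}$ all commute with base change. This already gives $[g]^{-1}(X_{K',w})=X_{K,w}$ at the level of the locally closed subsets, provided we also match up the labels $w\in W^I$. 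For that I would invoke property (2) of the Ekedahl-Oort stratification together with Proposition \ref{P:candetermines}: for a geometric point $x\in X_K(k')$ with image $\bar x=[g](x)\in X_{K'}(k')$, we have $G'_{\bar x}\simeq G_x$ as principally quasi-polarized $\BT$s with $\overline{\O}$-action, hence $w(G_x)=w(G'_{\bar x})$, so $x\in X_{K,w}$ if and only if $\bar x\in X_{K',w}$. Since both $X_{K,w}$ and $[g]^{-1}(X_{K',w})$ are reduced locally closed subschemes with the same geometric points, and $[g]$ is \'etale, they coincide as schemes.

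For the closures, one passes from $[g]^{-1}(X_{K',w})=X_{K,w}$ to $[g]^{-1}(\overline{X}_{K',w})=\overline{X}_{K,w}$ by a general fact: if $f\colon Y\to Z$ is a finite (hence closed and open, being \'etale here) surjective morphism and $S\subset Z$ is locally closed, then $f^{-1}(\overline{S})=\overline{f^{-1}(S)}$. Indeed $f^{-1}(\overline S)$ is closed and contains $f^{-1}(S)$, so it contains $\overline{f^{-1}(S)}$; conversely, since $f$ is closed, $f(\overline{f^{-1}(S)})$ is a closed set containing $S$, hence containing $\overline S$, and by surjectivity $\overline{f^{-1}(S)}$ surjects onto $\overline S$, forcing $f^{-1}(\overline S)\subset\overline{f^{-1}(S)}$ (using that $f^{-1}(\overline S)\setminus\overline{f^{-1}(S)}$ is an open subset of $f^{-1}(\overline S)$ mapping onto a subset of $\overline S$ already hit by $\overline{f^{-1}(S)}$). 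Alternatively, and more cleanly, one can just combine $[g]^{-1}(X_{K',w})=X_{K,w}$ with part (2) of Theorem \ref{T:EOprop}, which expresses $\overline{X}_{K,w}$ as the disjoint union of the $X_{K,w'}$ with $w'\preceq w$ (the same poset $W^I$ governs both $X_K$ and $X_{K'}$), so $[g]^{-1}(\overline{X}_{K',w})=\coprod_{w'\preceq w}[g]^{-1}(X_{K',w'})=\coprod_{w'\preceq w}X_{K,w'}=\overline{X}_{K,w}$.

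The only real point requiring care — and the step I expect to be the main obstacle to write cleanly — is the identification $[g]^*(G',i',\lambda')\simeq(G,i,\lambda)$ of \emph{principally quasi-polarized} $\BT$s with $\overline{\O}$-action, together with the observation that the residual $\FF_p^\times$-similitude ambiguity in $\lambda$ is harmless. This is because the canonical filtration, the permutation $\sigma$, and the $\overline{\O}$-multiranks of the graded pieces depend only on $(G,i)$ and on $\lambda$ up to similitude (a similitude rescaling of the pairing does not change $\ker\lambda$, the perpendicular subgroups $H^\perp$, or the induced isomorphisms on multiranks), so $w(G_x)$ genuinely only depends on the isogeny class, matching what the Hecke map does. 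Once this is granted, everything else is base change plus the structure theory already recorded in Sections \ref{S:canfil} and \ref{S:moonen}.
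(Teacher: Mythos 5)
Your proof follows the same route as the paper's: identify $[g]^*(G_{K'},i')\simeq(G_K,i)$ from the definition of the Hecke morphism, then invoke the base-change compatibility of the canonical-filtration decomposition (Proposition \ref{P:canfilbc} via Theorem \ref{T:candecompO}), and pass to closures using flatness of $[g]$. Your added remarks — matching the labels $w$ via geometric points and Proposition \ref{P:candetermines}, and the observation that the $\FF_p^\times$-similitude ambiguity in $\lambda$ is harmless because the canonical filtration and multiranks are similitude-invariant — are elaborations of what the paper leaves implicit, not a different argument. For the closure step the paper just says ``$[g]$ is flat'' (flat and locally of finite presentation implies open, and for an open map $f$ one always has $f^{-1}(\overline S)=\overline{f^{-1}(S)}$); both of your variants reach the same place, though the alternative via Theorem \ref{T:EOprop}(2) and the common poset $(W^I,\preceq)$ is the cleanest of the three.
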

\begin{proof}
If we let $G_{K'}$ (resp. $G_K$) denote the canonical $\BT$ with $\overline{\O}$ action on on $X_{K'}$ (resp. $X_K$) then by the definition of $[g]$ we have a canonical isomorphism $G_{K'}\times_{X_{K'}}X_K\simeq G_K$, and so the first result follows by Proposition \ref{P:canfilbc}.  The statement about the closures follows from the fact that $[g]$ is flat.
\end{proof}

Finally we consider the relation between the Ekedahl-Oort stratification on $X_K$ and that on a Siegel modular variety.  We recall the notation of Section \ref{S:morsieg}.  We have another PEL datum $(\Z,\text{id},L,\langle\cdot,\cdot\rangle,h)$ which defines a group $\tilde{G}$ with $G\subset\tilde{G}$ and with corresponding PEL modular varieties $\tilde{X}_{\tilde{K}}$ for $\tilde{K}\subset \tilde{G}(\A^{\infty,p})$ neat open compact.  We also get a new mod $p$ PEL datum $\tilde{\Ca{D}}$.  We note that a $\BT$ of type $\tilde{\Ca{D}}$ is just a principally quasi polarized $\BT$ of height $\dim L$.  We let $(\tilde{W},\tilde{I})$ be the corresponding Weyl group and parabolic type.

For any algebraically closed extension $k'/k$ we get a commutative diagram
\begin{equation*}
\begin{tikzcd}
{\BT}_{k'}^{\Ca{D}}\arrow{r}\arrow{d}&W^I\arrow[dotted]{d}{\theta}\\
{\BT}_{k'}^{\tilde{\Ca{D}}}\arrow{r}& \tilde{W}^{\tilde{I}}
\end{tikzcd}
\end{equation*}
where the horizontal arrows are the bijections of Section \ref{S:moonen} and the left vertical arrow is ``forget the $\overline{\O}$ action.''  The dotted map $\theta$ is defined by the commutativity of the diagram.  It is independent of $k'$.

Now for $\tilde{K}\subset\tilde{G}(\A^{p,\infty})$ neat open compact, we have an Ekedahl-Oort stratification
\begin{equation*}
\tilde{X}_{\tilde{K}}=\coprod_{\tilde{w}\in\tilde{W}^{\tilde{I}}} \tilde{X}_{\tilde{K},\tilde{w}}.
\end{equation*}

Finally for $K\subset G(\A^{\infty,p})$ and $\tilde{K}\subset \tilde{G}(\A^{\infty,p})$ neat open compacts, with $K\subset\tilde{K}$ we have a map
\begin{equation*}
\phi_{K,\tilde{K}}:X_K\to X_{\tilde{K}}
\end{equation*}
from Proposition \ref{P:morsieg}.

\begin{thm}\label{T:opensieg}
With notation as above, for each $w\in\tilde{W}^{\tilde{I}}$
\begin{equation*}
\phi_{K,\tilde{K}}^{-1}(\tilde{X}_{\tilde{K},w})=\coprod_{w\in W^I, \theta(w)=\tilde{w}}X_{K,w}
\end{equation*}
where here the disjoint union is actually as schemes, i.e. each $X_{K,w}$ is open and closed in $\phi_{K,\tilde{K}}^{-1}(\tilde{X}_{\tilde{K},w}$).
\end{thm}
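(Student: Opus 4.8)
The plan is to reduce the statement to the pointwise classification results of the previous sections together with the base-change compatibility of the canonical filtration. First I would recall that over $X_{\tilde K}$ we have the canonical principally quasi-polarized $\BT$, call it $\tilde G$, and that its pullback along $\phi_{K,\tilde K}$ is canonically isomorphic (as a principally quasi-polarized $\BT$, forgetting the $\overline\O$-action) to the $\BT$ underlying the canonical $\BT$-with-$\overline\O$-action $(G,i)$ on $X_K$. This is immediate from the construction of $\phi_{K,\tilde K}$ in Section~\ref{S:morsieg} and the discussion at the start of Section~\ref{S:EO}: $\phi_{K,\tilde K}$ literally forgets the endomorphism structure of the abelian scheme, so it forgets the $\overline\O$-action on the $p$-torsion. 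Consequently, by Proposition~\ref{P:canfilbc}, the locally closed decomposition of $X_{\tilde K}$ cut out by ``$\tilde G$ admits a canonical filtration of constant type'' pulls back to the corresponding decomposition of $X_K$ for the $\BT$ underlying $(G,i)$; and the finer decomposition of $X_K$ cut out by constancy of the $\overline\O$-multiranks of the graded pieces (Theorem~\ref{T:candecompO}) refines it, with each piece open and closed in the corresponding coarser piece.

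Next I would identify the two decompositions with the Ekedahl--Oort stratifications. The coarse decomposition of $X_{\tilde K}$ is $\coprod_{\tilde w}\tilde X_{\tilde K,\tilde w}$ by definition, and likewise the fine decomposition of $X_K$ is $\coprod_{w}X_{K,w}$. So set-theoretically $\phi_{K,\tilde K}^{-1}(\tilde X_{\tilde K,\tilde w})$ is a union of certain $X_{K,w}$'s, and the point is to show the indexing set is exactly $\{w\in W^I:\theta(w)=\tilde w\}$. For this I would argue pointwise on geometric points: for $x\in X_K(k')$ with $k'/k$ algebraically closed, $x$ lies in $X_{K,w}$ for $w=w(G_x)$ by construction of the EO stratification, and $\phi_{K,\tilde K}(x)$ lies in $\tilde X_{\tilde K,\tilde w}$ for $\tilde w=w(\tilde G_{\phi(x)})=w((G_x,\text{no action}))$. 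By the very definition of $\theta$ via the commutative square relating ${\BT}^{\Ca D}_{k'}$, ${\BT}^{\tilde{\Ca D}}_{k'}$, $W^I$ and $\tilde W^{\tilde I}$ (forgetting the $\overline\O$-action on the left vertical arrow), we have $\theta(w(G_x))=w((G_x,\text{no action}))$, i.e. $\theta(w)=\tilde w$. Hence $x\in\phi_{K,\tilde K}^{-1}(\tilde X_{\tilde K,\tilde w})$ if and only if $\theta(w)=\tilde w$, which gives the equality of underlying sets. Non-emptiness of each $X_{K,w}$ (Theorem~\ref{T:EOprop}) ensures no stratum on the right is spurious.

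The remaining point — and the one I expect to require the most care — is the scheme-theoretic assertion that each $X_{K,w}$ appearing is \emph{open and closed} in $\phi_{K,\tilde K}^{-1}(\tilde X_{\tilde K,\tilde w})$, not merely a locally closed piece of a set-theoretic decomposition. Here is how I would get it: $\phi_{K,\tilde K}^{-1}(\tilde X_{\tilde K,\tilde w})$, with its reduced induced structure, carries the pullback of $\tilde G$, which by the first paragraph is the $\BT$ underlying $(G,i)$ restricted to this locally closed subscheme; by Proposition~\ref{P:canfilbc} part~2, this restricted $\BT$ (forgetting $\overline\O$) already admits a canonical filtration of constant type over all of $\phi_{K,\tilde K}^{-1}(\tilde X_{\tilde K,\tilde w})$. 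Then Theorem~\ref{T:candecompO} applied to $(G,i)$ restricted to this subscheme says precisely that the further decomposition by constancy of the $\overline\O$-multiranks of the graded pieces is into \emph{open and closed} subschemes — because once the underlying canonical filtration has constant type, the $\overline\O$-multiranks of its graded pieces are locally constant functions. These open-and-closed pieces are exactly the $X_{K,w}$ with $\theta(w)=\tilde w$ by the pointwise identification above. The only thing to double-check is that the reduced induced structure on $\phi_{K,\tilde K}^{-1}(\tilde X_{\tilde K,\tilde w})$ is the right ambient scheme in which to apply Theorem~\ref{T:candecompO}; since $\phi_{K,\tilde K}$ is finite (Proposition~\ref{P:morsieg}) and $\tilde X_{\tilde K,\tilde w}$ is reduced, and since the EO strata $X_{K,w}$ are by definition reduced, this is harmless, and one may as well pass to reduced subschemes throughout, exactly as in the Remark following Theorem~\ref{T:candecomp}.
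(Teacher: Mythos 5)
Your proposal is correct and follows essentially the same route as the paper: identify $\phi_{K,\tilde K}^*\tilde G$ with the $\BT$ underlying $(G,i)$, invoke Proposition~\ref{P:canfilbc} to pull back the coarse decomposition of Theorem~\ref{T:candecomp}, invoke Theorem~\ref{T:candecompO} (which directly asserts the finer $\overline\O$-type decomposition is into open-and-closed pieces of the coarser one) to get the scheme-theoretic claim, and pin down the indexing by a pointwise application of the definition of $\theta$. The only cosmetic difference is that you restate the open-and-closed step by first restricting $(G,i)$ to $\phi_{K,\tilde K}^{-1}(\tilde X_{\tilde K,\tilde w})$ rather than quoting the ``open and closed in some $S_\alpha$'' clause of Theorem~\ref{T:candecompO} directly over $X_K$, which is an immaterial variation.
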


\begin{proof}
Let $\tilde{G}$ be the canonical $\BT$ on $\tilde{X}_{\tilde{K}}$ and let $(G,i)$ be the canonical $\BT$ with $\overline{\O}$-action.  Then it follows from the definition of $\phi_{K,\tilde{K}}$ that $G=\tilde{G}\times_{X_{\tilde{K}}} X_K$.  The Ekedahl-Oort stratification of $\tilde{X}_{\tilde{K}}$ is the decomposition from Theorem \ref{T:candecomp} applied to $\tilde{G}/\tilde{X}_{\tilde{K}}$.  Then by Proposition \ref{P:canfilbc}, the decomposition
\begin{equation*}
X_K=\coprod_{\tilde{w}\in\tilde{W}^{\tilde{I}}} \phi_{K,\tilde{K}}^{-1}(X_{\tilde{K},\tilde{w}})
\end{equation*}
is that given by applying Theorem \ref{T:candecomp} to $G/X_{K}$, except that some of the terms may be empty.  On the other hand the Ekedahl-Oort stratification of $X_K$ is exactly the decomposition given by Theorem \ref{T:candecompO} applied to $(G,i)/X_K$.  Moreover Theorem \ref{T:candecompO} tells us that each $X_{K,w}$ must be open and closed in some $\phi_{K,\tilde{K}}^{-1}(X_{\tilde{K},\tilde{w}})$.  But by considering a geometric point of $X_{K,w}$ it follows from the definition of $\theta$ that $X_{K,w}$ must lie in $\phi_{K,\tilde{K}}^{-1}(X_{\tilde{K},\theta(w)})$.
\end{proof}

\begin{rem}
The observation that each $X_{K,w}$ is open in $\phi^{-1}_{K,\tilde{K}}(\tilde{X}_{\tilde{K},w})$ seems to be new to this thesis.  The reader might find it more surprising in light of the fact that the dimensions of the $X_{K,w}$ with $\theta(w)=\tilde{w}$ aren't even all the same.  In principle there should be a completely combinatorial proof of this fact: indeed the map $\theta:W^I\to\tilde{W}^I$ and the partial order $\preceq$ on $W^I$ both have purely combinatorial descriptions, and the problem is to show that $\theta^{-1}(\tilde{w})$ is discrete for $\preceq$ for each $\tilde{w}$.
\end{rem}

\section{Generalized Hasse Invariants}\label{S:pelhasse}
\subsection{Generalized Hasse Invariants for Partial $\BT$ With Canonical Filtration}\label{S:hassebt}

In this section let $G/S$ be a partial $\BT$ which admits a canonical filtration of constant type
\begin{equation*}
0=G_0\subset G_1\subset\cdots\subset G_c=G[F]\subset\cdots\subset G_{n}=G.
\end{equation*}

By Theorem \ref{T:strcan} and Corollary \ref{C:omegafree} there is a permutation $\sigma:\{1,\ldots,n\}\to\{1,\ldots,n\}$ such that:
\begin{enumerate}
\item For $i=1,\ldots c$ we have $V(G_{\sigma(i)})=G_i$ and $V(G_{\sigma(i)-1})=G_{i-1}$ and
\begin{equation*}
V:(G_{\sigma(i)}/G_{\sigma(i)-1})^{(p)}\to G_i/G_{i-1}
\end{equation*}
is an isomorphism.
\item For $i=c+1,\ldots,n$ we have $F^{-1}(G_{\sigma(i)})=G_i$ and $F^{-1}(G_{\sigma(i)-1})=G_{i-1}$ and
\begin{equation*}
F:G_i/G_{i-1}\to (G_{\sigma(i)}/G_{\sigma(i)-1})^{(p)}
\end{equation*}
is an isomorphism.
\item The co-lie algebra $\omega_{G_i/G_{i-1}}$ is finite locally free.  It is trivial if $i=n$ and $\sigma(n)=n$ (in which case $G_n/G_{n-1}$ is \'{e}tale) and has rank equal to the height of $G_i/G_{i-1}$ otherwise.
\end{enumerate}

Now for $i=1,\ldots,n$ and unless $i=n=\sigma(n)$ we have line bundles
\begin{equation*}
\omega_i=\det\omega_{G_i/G_{i-1}}.
\end{equation*}
on $S$.  We also have the line bundle $\omega=\det\omega_G$.  Moreover there is a natural isomorphism
\begin{equation*}
\omega=\otimes_{i=1}^c\omega_i
\end{equation*}
which comes about as follows: the filtration
\begin{equation*}
0=G_0\subset G_1\subset \cdots\subset G_c=G[F]
\end{equation*}
of $G[F]$ by finite flat subgroup schemes induces a filtration of $\omega_G=\omega_{G[F]}$ by locally free subsheaves where the sub quotients are $\omega_{G_i/G_{i-1}}$ for $i=1,\ldots,c$.

Now for $i=1,\ldots,c$ Verschiebung defines isomorphisms of co-lie algebras
\begin{equation*}
V^*:\omega_{G_i/G_{i-1}}\to\omega_{G_{\sigma(i)}/G_{\sigma(i)-1}}^{(p)}
\end{equation*}
and hence upon taking determinants an isomorphism
\begin{equation*}
A_i:\omega_i\to\omega_{\sigma(i)}^{(p)}\simeq\omega_{\sigma(i)}^{\otimes p}.
\end{equation*}
Similarly for $i=c+1,\ldots,n$ Frobenius defines isomorphisms of co-lie algebras
\begin{equation*}
F^*:\omega_{G_{\sigma(i)}/G_{\sigma(i)-1}}^{(p)}\to \omega_{G_i/G_{i-1}}
\end{equation*}
and unless $i=n$ and $\sigma(n)=n$ upon taking determinants we obtain an isomorphism
\begin{equation*}
B_i:\omega_{\sigma(i)}^{\otimes p}\to\omega_i.
\end{equation*}
We will also let
\begin{equation*}
A_i=B_i^{-1}:\omega_i\to\omega_{\sigma(i)}^{\otimes p}
\end{equation*}
We may also think of the $A_i$ as non vanishing sections
\begin{equation*}
A_i\in H^0(S,\omega_i^{\otimes -1}\otimes\omega_{\sigma(i)}^{\otimes p}).
\end{equation*}

We like to think of the $A_i$ defined above as being some sort of ``partial Hasse invariants.''  By combining them suitably we may create a ``total Hasse invariant'' as follows: let $N$ be the least common multiple of the orders of the cycles of $\sigma$.  For $i=1,\ldots,c$ consider the composition
\begin{equation*}
\omega_i\overset{A_i}{\to}\omega_{\sigma(i)}^{\otimes p}\overset{A_{\sigma(i)}^p}{\to}\omega_{\sigma^2(i)}^{\otimes p^2}\to\cdots\to\omega_{\sigma^{N-1}(i)}^{\otimes p^{N-1}}\overset{A_{\sigma^{N-1}(i)}^{p^n}}{\to}\omega_{\sigma^N(i)}^{p^N}
\end{equation*}
multiplying these together over $i=1,\ldots c$ and using the fact that $\sigma^N(i)=i$ we get an isomorphism
\begin{equation*}
\omega\to \omega^{p^N}
\end{equation*}
which can be viewed as a non vanishing section
\begin{equation*}
A'\in H^0(S,\omega^{p^N-1}).
\end{equation*}
We call this the total Hasse invariant for the partial $\BT$ $G$.

We record the fact that the formation of this total Hasse invariant is compatible with base change.
\begin{prop}\label{P:hassebc}
Let $G/S$ be a partial $\BT$ which admits a canonical filtration, and let $S'\to S$.  Let $A'\in H^0(S,\omega^{p^{N}-1})$ be the total Hasse invariant for $G/S$ as defined above.  
\end{prop}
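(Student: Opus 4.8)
The statement to prove is that the formation of the total Hasse invariant $A' \in H^0(S, \omega^{p^N - 1})$ commutes with base change: if $f \colon S' \to S$ is any morphism and $G' = G_{S'}$, then (after identifying $f^* \omega_G \simeq \omega_{G'}$) the total Hasse invariant $A'_{G'}$ of $G'/S'$ equals $f^* A'_G$. The plan is to unwind the construction of $A'$ into its constituent pieces and check base change compatibility at each stage, invoking Proposition \ref{P:canfilbc} for the compatibility of the canonical filtration itself.

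First I would record that, by part 1 of Proposition \ref{P:canfilbc} (together with Convention \ref{convpartial} to dispose of the \'etale-part subtlety in the non-finite case), the canonical filtration $0 = G_0 \subset \cdots \subset G_n = G$ of $G/S$ pulls back to the canonical filtration $0 = (G_0)_{S'} \subset \cdots \subset (G_n)_{S'} = G_{S'}$ of $G'/S'$, with the same length $n$, the same index $c$, and the same permutation $\sigma$. Next, since $\omega_{G_i/G_{i-1}} = e^* \Omega^1_{(G_i/G_{i-1})/S}$ and the formation of quotients of finite flat group schemes, of relative differentials, of pullback along the identity section, and of determinants all commute with arbitrary base change, we get canonical isomorphisms $f^* \omega_i \simeq \omega_i'$ compatible with the product decomposition $\omega = \bigotimes_{i=1}^c \omega_i$ and its pullback $\omega' = \bigotimes_{i=1}^c \omega_i'$ (this last coming from the filtration of $\omega_{G[F]}$, whose formation is again compatible with base change because $G[F]$ is finite flat by Lemma \ref{L:omegafree} and its formation is compatible with base change).

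Then I would observe that relative Frobenius $F$ and relative Verschiebung $V$ are, by the conventions of the excerpt (see the bulleted conventions on group schemes), functorial and compatible with arbitrary base change; hence the induced maps $V^* \colon \omega_{G_i/G_{i-1}} \to \omega_{G_{\sigma(i)}/G_{\sigma(i)-1}}^{(p)}$ and $F^* \colon \omega_{G_{\sigma(i)}/G_{\sigma(i)-1}}^{(p)} \to \omega_{G_i/G_{i-1}}$ pull back to the corresponding maps for $G'/S'$. Taking determinants, the partial Hasse invariants $A_i$ (and $B_i$, $A_i = B_i^{-1}$), viewed as sections of $\omega_i^{\otimes -1} \otimes \omega_{\sigma(i)}^{\otimes p}$, satisfy $f^* A_i = A_i'$ under the above identifications. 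Finally, since $N$ depends only on $\sigma$ (the least common multiple of the cycle lengths), which is unchanged by base change, the composite isomorphism $\omega \to \omega^{\otimes p^N}$ built from the $A_i$ and their Frobenius twists $A_{\sigma^j(i)}^{p^j}$ is simply the pullback of the composite for $G'/S'$ — here one uses that forming the $p$-power-twisted maps $A_{\sigma^j(i)}^{p^j}$ commutes with base change because $(-)^{(p)}$ is itself base change along absolute Frobenius, which commutes with $f$ after the identification $(\omega_i^{(p)})_{S'} = (\omega_i)_{S'}^{(p)}$ noted in Section \ref{SS:extra}. Multiplying over $i = 1, \ldots, c$ yields $f^* A' = A'_{G'}$ as sections of $\omega^{\otimes p^N - 1}$, which is the claim.

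The argument is essentially a bookkeeping exercise with no serious obstacle; the only point requiring a little care is the interaction with Convention \ref{convpartial} and the exceptional case $n = \sigma(n)$ of part 1 of Proposition \ref{P:canfilbc}, where the pulled-back filtration could a priori be shorter. In the polarized setting this defect is repaired by the convention, so one should state the base change compatibility under the standing assumption that $G$ (or $(G,\lambda)$) admits a canonical filtration of constant type and that this property is inherited by $G_{S'}$ — which it is, by part 2 of Proposition \ref{P:canfilbc}, whenever $S'$ maps into the relevant stratum. Thus the proof reduces to citing Proposition \ref{P:canfilbc} and the base-change compatibility of $F$, $V$, $\det$, and $(-)^{(p)}$, and then assembling the pieces.
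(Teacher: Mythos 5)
Your proof is correct, but there is nothing in the paper to compare it against: Proposition \ref{P:hassebc} is stated without any proof (and in fact the statement itself is truncated — it sets up $G/S$, $S'\to S$, and $A'$, but never actually asserts the conclusion that $f^*A'$ coincides with the total Hasse invariant of $G_{S'}$). You have correctly inferred the intended conclusion from the surrounding prose (``We record the fact that the formation of this total Hasse invariant is compatible with base change'') and supplied the routine bookkeeping argument the author evidently considered too obvious to write out: cite part 1 of Proposition \ref{P:canfilbc} for base-change compatibility of the canonical filtration, use that $F$, $V$, $\det$, $\omega_{(-)}$, and $(-)^{(p)}$ all commute with base change, and assemble. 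One small remark on your worry about the exceptional case of Proposition \ref{P:canfilbc}: when $\sigma(n)=n$, the index $n$ is a fixed point of $\sigma$, so it lies in no $\sigma$-orbit of any $i\le c$; since the composition defining $A'$ only traverses orbits of $i=1,\ldots,c$, the possible disappearance of the top \'etale subquotient after base change has no effect on either $N$ or the product defining $A'$. Also note the hypothesis in the Proposition should read ``admits a canonical filtration \emph{of constant type},'' matching the standing assumption of Section \ref{S:hassebt} where $A'$ is constructed; your proof implicitly (and correctly) assumes this.
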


\subsection{Generalized Hasse Invariants on Open Ekedahl-Oort Strata}\label{S:hasse}

Now we return to the setting of PEL modular varieties.  We retain the notation of Section \ref{S:EO}.  In particular for each $K\subset G(\A^{\infty,p})$ let $G/X_K$ be the canonical $\BT$ of section \ref{S:EO}.  Then we have a canonical isomorphism $\omega_G=\Ca{E}_K$ and hence $\det\omega_G=\omega_K$.

\begin{defn}
For each $w\in W^I$ let
\begin{equation*}
A'_{K,w}\in H^0(X_{K,w},\omega^{N_w'})
\end{equation*}
be the non vanishing section attached to the $\BT$ $G|_{X_{K,w}}$ as constructed in the last section.  Here $N_w'=p^N-1$ where $N$ is the least common multiple of the cycles in the permutation $\sigma$ associated to $G$.
\end{defn}

We record the behavior of these Hasse invariants under the Hecke action and the maps to Siegel modular varieties.

\begin{prop}\label{P:openhasse}
Let the notation be as above.
\begin{enumerate}
\item If $g\in G(\A^{\infty,p})$ and $K,K'\subset G(\A^{\infty,p})$ are open compact subgroups with $g^{-1}Kg\subset K'$ then
\begin{equation*}
[g]^*A_{K',w}'=A_{K,w}'
\end{equation*}
under the canonical isomorphism $[g]^*\omega_{K',A}\simeq\omega_{K,A}$ restricted to $X_{K,w}$.
\item Let notation be as in Sections \ref{S:morsieg} and \ref{S:EO}.  Let $\tilde{w}=\theta(w)\in\tilde{W}^{\tilde{I}}$.  For $K\subset G(\A^{\infty,p})$ and $\tilde{K}\subset G(\A^{\infty,p})$ open compact subgroups with $K\subset\tilde{K}$, we have
\begin{equation*}
\phi_{K,\tilde{K}}^*A'_{\tilde{K},\tilde{w}}=A'_{K,w}
\end{equation*}
under the canonical isomorphism $\phi_{K,\tilde{K}}^*\omega_{\tilde{K}}\simeq\omega_{K}$.
\end{enumerate}
\end{prop}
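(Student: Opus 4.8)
The statement to prove, Proposition \ref{P:openhasse}, asserts two compatibilities of the open-stratum Hasse invariants $A'_{K,w}$: with the prime-to-$p$ Hecke action, and with pullback along the forgetful maps $\phi_{K,\tilde K}$ to Siegel space. The plan is to observe that both assertions follow formally from a single principle: the total Hasse invariant $A'$ attached to a partial $\BT$ with canonical filtration is \emph{functorial in isomorphisms of partial $\BT$s} and, more importantly, compatible with arbitrary base change (Proposition \ref{P:hassebc}). Each of the two maps in question is set up precisely so that the canonical $\BT$ upstairs is the base change of the canonical $\BT$ downstairs, together with an isomorphism of Hodge bundles identifying the two $\omega$'s.

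For part 1, I would argue as follows. Recall from Section \ref{S:EO} that over $X_K$ there is a canonical $\BT$ with $\overline{\O}$-action $(G_K,i_K)$, and from the construction of $[g]$ (Section \ref{S:heckepel}, together with Proposition \ref{P:hecke}) that there is a canonical isomorphism $G_{K'}\times_{X_{K'}}X_K\simeq G_K$ compatible with the $\overline{\O}$-action; restricting to the strata, Proposition \ref{P:EOhecke} gives $[g]^{-1}(X_{K',w})=X_{K,w}$ and this isomorphism identifies $G_{K'}|_{X_{K',w}}$ pulled back to $X_{K,w}$ with $G_K|_{X_{K,w}}$. By Proposition \ref{P:canfilbc}, the canonical filtration of the former pulls back to the canonical filtration of the latter, and hence the permutation $\sigma$, the line bundles $\omega_i$, the partial Hasse maps $A_i$, and therefore the total Hasse invariant $A'$ are all compatible with this base change by Proposition \ref{P:hassebc}. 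Since the canonical isomorphism $[g]^*\omega_{K',A}\simeq\omega_{K,A}$ is exactly $\det$ of the isomorphism of Hodge bundles underlying $G_{K'}|_{X_K}\simeq G_K$, we conclude $[g]^*A'_{K',w}=A'_{K,w}$.

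Part 2 is entirely parallel, using Section \ref{S:morsieg} and Theorem \ref{T:opensieg} in place of the Hecke material. The map $\phi_{K,\tilde K}$ sends $(A,\lambda,i,\alpha_K)$ to $(A,\lambda,i_0,\alpha_{\tilde K})$ with the same underlying abelian scheme, so the canonical $\BT$ $\tilde G$ on $\tilde X_{\tilde K}$ pulls back to the canonical $\BT$ $G$ (with $\overline{\O}$-action forgotten) on $X_K$: $G=\tilde G\times_{\tilde X_{\tilde K}}X_K$. By Theorem \ref{T:opensieg}, $\phi_{K,\tilde K}^{-1}(\tilde X_{\tilde K,\tilde w})$ contains $X_{K,w}$ as an open-and-closed subscheme whenever $\theta(w)=\tilde w$; over $X_{K,w}$ the canonical filtration of $\tilde G|_{\tilde X_{\tilde K,\tilde w}}$ pulls back, via Proposition \ref{P:canfilbc}, to the canonical filtration of $G|_{X_{K,w}}$ — this is implicit already in the proof of Theorem \ref{T:opensieg}, where it is noted that the Siegel EO stratification is the decomposition of Theorem \ref{T:candecomp} applied to $\tilde G$ and that the finer $\overline{\O}$-decomposition refines it. Again by Proposition \ref{P:hassebc} the total Hasse invariant is compatible with this base change, and under the canonical identification $\phi_{K,\tilde K}^*\omega_{\tilde K}\simeq\omega_K$ (which is $\det$ of the identity on Hodge bundles) we get $\phi_{K,\tilde K}^*A'_{\tilde K,\tilde w}=A'_{K,w}$.

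The only genuinely substantive point — and the one I would spell out with care — is that the \emph{permutation} $\sigma$ attached to the pulled-back $\BT$ agrees with the one attached downstairs, so that the two total Hasse invariants are built by the same recipe (same exponents $p^j$, same least common multiple $N$, hence sections of the same power $\omega^{p^N-1}$). This is exactly what Proposition \ref{P:canfilbc} guarantees: $F$, $V$, and formation of (finite flat) images and preimages commute with base change, so the entire combinatorial skeleton of the canonical filtration is preserved. One mild subtlety to address explicitly is the role of Convention \ref{convpartial} and the étale-quotient exception in Proposition \ref{P:canfilbc}(1): since here we are working with genuine $\BT$s (the $p$-torsion of abelian schemes over $X_K$, not semiabelian schemes), that exception does not arise, so $n=2c$ and the length of the filtration is unchanged under pullback. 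Once this is noted, both parts of the Proposition reduce to the base-change compatibility of Proposition \ref{P:hassebc} applied verbatim.
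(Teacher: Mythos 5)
Your proof is correct and takes the same route as the paper: reduce both assertions to the base-change compatibility of the total Hasse invariant (Proposition~\ref{P:hassebc}), after noting that the canonical $\BT$ upstairs is the pullback of the one downstairs. The paper dispatches this in one sentence; you supply the intermediate justifications (the $\BT$ isomorphism from the definition of $[g]$ and $\phi_{K,\tilde K}$, Proposition~\ref{P:canfilbc}, and the irrelevance of the Convention~\ref{convpartial} exception since these are genuine $\BT$s), all of which are accurate and implicit in the paper's terse argument.
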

\begin{proof}
Both statements are immediate consequences of the behavior of the total Hasse invariant under base change in Proposition \ref{P:hassebc}.
\end{proof}

\subsection{Extensions: Reduction to Siegel case}

Now we are in a position to state the first main result of this thesis.

\begin{thm}\label{T:hasseext}
For each $w\in W^I$ there is an integer $N_w>0$ such that for each $K\subset G(\A^{\infty,p})$ neat open compact, there is a unique section
\begin{equation*}
A_{K,w}\in H^0(\overline{X}_{K,w},\omega_K^{\otimes N_w})
\end{equation*}
with the following properties
\begin{enumerate}
\item $A_{K,w}$ is non vanishing precisely on $X_{K,w}\subset\overline{X}_{K,w}$
\item There is some integer $n>0$ such that $A_{K,w}|_{X_{K,w}}=(A'_{K,w})^n$.
\item If $g\in G(\A^{\infty,p})$ and $K,K'\subset G(\A^{\infty,p})$ are open compact subgroups with $g^{-1}Kg\subset K'$ then we have
\begin{equation*}
[g]^*A_{K',w}=A_{K,w}
\end{equation*}
under the canonical isomorphism $[g]^*\omega_{K'}\simeq\omega_K$ restricted to $\overline{X}_{K,w}$.
\end{enumerate}
\end{thm}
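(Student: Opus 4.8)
The plan is to reduce Theorem \ref{T:hasseext} to the Siegel case, which (as the text announces) will be handled in the next chapter. First I would observe that by Theorem \ref{T:opensieg}, each open Ekedahl-Oort stratum $X_{K,w}$ of a general PEL modular variety is open and closed in $\phi_{K,\tilde{K}}^{-1}(\tilde{X}_{\tilde{K},\tilde{w}})$ where $\tilde{w}=\theta(w)$. Taking closures, $\overline{X}_{K,w}$ maps into $\overline{\tilde{X}}_{\tilde{K},\tilde{w}}$ under $\phi_{K,\tilde{K}}$ (since $\phi$ is finite hence closed). So granting the Siegel case, I would simply \emph{define} $A_{K,w}$ to be a suitable power of the pullback $\phi_{K,\tilde{K}}^*A_{\tilde{K},\tilde{w}}$ along this map, using the canonical isomorphism $\phi_{K,\tilde{K}}^*\omega_{\tilde{K}}\simeq\omega_K$ of Proposition \ref{P:morsieg}. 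Then $N_w$ is the corresponding multiple of $N_{\tilde{w}}$.

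Next I would verify the three listed properties. For property (1): the vanishing locus of $A_{\tilde{K},\tilde{w}}$ on $\overline{\tilde{X}}_{\tilde{K},\tilde{w}}$ is $\overline{\tilde{X}}_{\tilde{K},\tilde{w}}-\tilde{X}_{\tilde{K},\tilde{w}}$ by the Siegel case, and its preimage in $\overline{X}_{K,w}$ is $\overline{X}_{K,w}-X_{K,w}$ — here one uses that $\phi_{K,\tilde{K}}^{-1}(\tilde{X}_{\tilde{K},\tilde{w}})\cap\overline{X}_{K,w}=X_{K,w}$, which follows from Theorem \ref{T:opensieg} together with the fact that $\overline{X}_{K,w}\cap\phi^{-1}(\tilde{X}_{\tilde{K},\tilde{w}})$ is a union of EO strata all of which must be $\preceq w$ and have $\theta=\tilde{w}$; since $X_{K,w}$ is open and closed in $\phi^{-1}(\tilde{X}_{\tilde{K},\tilde{w}})$ and dense in $\overline{X}_{K,w}$, it is the only one. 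For property (2): on the open stratum $X_{K,w}$, Proposition \ref{P:openhasse}(2) gives $\phi_{K,\tilde{K}}^*A'_{\tilde{K},\tilde{w}}=A'_{K,w}$, and since in the Siegel case $A_{\tilde{K},\tilde{w}}|_{\tilde{X}_{\tilde{K},\tilde{w}}}$ is a power of $A'_{\tilde{K},\tilde{w}}$, the restriction $A_{K,w}|_{X_{K,w}}$ is the corresponding power of $A'_{K,w}$. For property (3): this follows from the compatibility $\phi_{K,\tilde{K}}\circ[g]=[g]\circ\phi_{K',\tilde{K}'}$ of Proposition \ref{P:morsieg}(2) together with the Hecke-equivariance of $A_{\tilde{K},\tilde{w}}$ in the Siegel case; one must be mildly careful to choose compatible $\tilde{K},\tilde{K}'$, but this is routine.

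Two small points require attention. First, uniqueness of $A_{K,w}$: since $\overline{X}_{K,w}$ is reduced and $X_{K,w}$ is dense, a section of a line bundle on $\overline{X}_{K,w}$ is determined by its restriction to $X_{K,w}$, so property (2) pins down $A_{K,w}$ once $N_w$ and the exponent $n$ are fixed; but I should state the theorem with a fixed normalization (e.g. take the minimal such power coming from the Siegel construction) so that ``unique'' makes literal sense — alternatively uniqueness is understood among sections satisfying (2) for the \emph{given} $n$. Second, I need $N_w$ to be independent of $K$: this is automatic because $N_{\tilde{w}}$ is independent of $\tilde{K}$ in the Siegel case and $\theta$ does not depend on $K$.

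The main obstacle is genuinely the Siegel case itself — constructing $A_{\tilde{K},\tilde{w}}$ on $\overline{\tilde{X}}_{\tilde{K},\tilde{w}}$ — which is deferred to Chapter \ref{siegel} and is not part of this proof. Within the present reduction, the only nontrivial step is the set-theoretic identification $\phi_{K,\tilde{K}}^{-1}(\tilde{X}_{\tilde{K},\tilde{w}})\cap\overline{X}_{K,w}=X_{K,w}$ needed for property (1); everything else is formal manipulation with pullbacks, finiteness of $\phi_{K,\tilde{K}}$, and the already-established Hecke- and base-change-compatibilities of Propositions \ref{P:morsieg}, \ref{P:openhasse}, and \ref{P:hassebc}.
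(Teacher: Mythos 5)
Your proposal follows the same strategy as the paper's proof: pull back the (deferred) Siegel Hasse invariant along $\phi_{K,\tilde K}$, verify non-vanishing via Theorem~\ref{T:opensieg}, verify property~(2) via Proposition~\ref{P:openhasse}, and note that density of $X_{K,w}$ in the reduced scheme $\overline X_{K,w}$ pins things down. One place where the paper is more careful, and where you have left a small gap, is the handling of all levels $K$ simultaneously. You quietly assume that for every neat $K\subset G(\A^{\infty,p})$ there exists a neat $\tilde K\subset\tilde G(\A^{\infty,p})$ with $K\subset\tilde K$, so that the pullback is available; this is not automatic, since a neat compact open in $\tilde G$ containing $K$ need not exist. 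The paper sidesteps this by first proving a stand-alone reduction to a \emph{single} level: if $K\subset K'$ with $[1]:X_K\to X_{K'}$ finite \'etale, then $(A'_{K,w})^n$ extends with the required vanishing locus on $\overline X_{K,w}$ if and only if the corresponding statement holds for $K'$; combined with the trick of passing through $K\cap K'$, this reduces the theorem to one convenient level for which a suitable $\tilde K$ does exist. You should add this step (or otherwise justify the existence of $\tilde K$).

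A second, more cosmetic difference: the paper derives property~(3) directly from property~(2) and the Hecke stability of $A'_{K,w}$ (Proposition~\ref{P:openhasse}), via density of $X_{K,w}$ in the reduced $\overline X_{K,w}$ — exactly the uniqueness principle you mention. You instead attempt a separate direct verification through compatible choices of $\tilde K,\tilde K'$; this works but is more work and invites the same ``does $\tilde K$ exist?'' difficulty. The paper's route is cleaner and you already have the ingredient. Finally, your worry about normalizing ``unique'' is unnecessary: with $N_w$ fixed, the exponent $n$ in~(2) is forced to equal $N_w/N_w'$, so the section is genuinely unique.
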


Here we give some reductions.  The proof of the theorem will be completed in the next chapter.

\begin{proof}
First note that once an $A_{K,w}$ satisfying 2 has been constructed, the Hecke stability in 3 is automatic by the density of $X_{K,w}$ in $\overline{X}_{K,w}$ and the Hecke stability of $A'_{K,w}$ from Proposition \ref{P:openhasse}.

Next we claim that it suffices to show that $N_w$ and $A_{K,w}$ satisfying 1 and 2 for a single level $K\subset G(\A^{\infty,p})$.  Indeed, first note that if $K\subset K'\subset G(\A^{\infty,p})$ are neat open compacts so that we have a map
\begin{equation*}
[1]:X_K\to X_{K'}
\end{equation*}
Then observe that
\begin{enumerate}
\item For $n>0$, $(A_{K,w}')^n$ extends to a (necessarily unique) element of $H^0(\overline{X}_{K,w},\omega_K^{nN_w'})$ if and only if $(A_{K',w}')^n$ extends to an element of $H^0(\overline{X}_{K',w},\omega_{K'}^{nN_w'})$.
\item If such extensions exist, the extension of $(A_{K,w}')^n$ is non vanishing precisely on $X_{K,w}$ if and only if the extension of $(A_{K',w}')^n$ is non vanishing precisely on $X_{K',w}$.
\end{enumerate}
Point 1 follows from the fact that $[1]$ is finite \'{e}tale, and point 2 is clear.  Then the reduction is complete upon noting that for any pair $K,K'\subset G(\A^{\infty,p})$ of neat open compact subgroups, we have $K\cap K'\subset G(\A^{\infty,p})$ open compact with $K\cap K'\subset K$ and $K\cap K'\subset K'$.

Next we claim that it suffices to prove the theorem for Siegel modular varieties.  Indeed, with notation as in Sections \ref{S:morsieg} and \ref{S:EO}, suppose we have $K\subset G(\A^{\infty,p})$ and $\tilde{K}\subset G(\A^{\infty,p})$ neat open compacts and suppose we have shown that for some $n>0$, $(A'_{\tilde{K},\theta(w)})^n$ extends to a section
\begin{equation*}
A_{\tilde{K},\theta(w)}\in H^0(\overline{\tilde{X}}_{\tilde{K},\theta(w)},\omega_{\tilde{K}}^{N_{\theta(w)}})
\end{equation*}
which is non vanishing precisely on $\tilde{X}_{\tilde{K},\theta(w)}$.  Then take $N_w=N_{\theta(w)}$ and
\begin{equation*}
A_{K,w}=\phi_{K,\tilde{K}}^*A_{\tilde{K},\theta(w)}\in H^0(\overline{X}_{K,w},\omega_K^{\otimes N_w})
\end{equation*}
under the canonical isomorphism $\phi_{K,\tilde{K}}^*\omega_{\tilde{K}}\simeq\omega_K$.  Note that by Proposition \ref{P:openhasse}, $A_{K,w}$ extends $(A_{K,w}')^n$.  Also note that by Theorem \ref{T:opensieg} we have
\begin{equation*}
\overline{X}_{K,w}\cap \phi_{K,\tilde{K}}^{-1}(\tilde{X}_{\tilde{K},\theta(w)})=X_{K,w}.
\end{equation*}
It follows that $A_{K,w}$ is non vanishing precisely on $X_{K,w}$.

The Siegel case will be treated in Chapter \ref{siegel}.
\end{proof}

\chapter{Extension of Hasse Invariants}\label{siegel}

The goal of this chapter is to complete the proof of Theorem \ref{T:hasseext}.  We refer the reader to the introduction for an overview of the strategy.

We now introduce some notation that will be used in this chapter.  Fix a positive integer $g$.  Let $W$ be the Weyl group of type $C_g$.  We realize $W$ as the subgroup of permutations on $w\in S_{2g}$, the permutation group on $\{1,\ldots,2g\}$, satisfying
\begin{equation*}
w(2g+1-i)=2g+1-w(i).
\end{equation*}
For notational convenience we will adopt the convention that $w(0)=0$.

Inside $W$ we have the simple reflections
\begin{equation*}
s_i=(i\ i+1)(2g+1-i\ 2g-i)\qquad i=1,\ldots,g-1
\end{equation*}
and
\begin{equation*}
s_g=(g\ g+1).
\end{equation*}

We denote the usual length function on $W$ by $l$ and the Bruhat order by $\leq$.  For a subset $J\subset\{1,\ldots,g\}$ we denote by $W_J$ the parabolic subgroup of $W$ generated by $s_i$ for $i\in J$, $i\not=0$.  For two subsets $J$ and $J'$ it is known that the cosets $wW_J$, $W_Jw$, and $W_JwW_{J'}$ each contain a unique element of minimal length, and we denote the corresponding sets of minimal coset representatives by ${}^JW$, $W^J$, and ${}^JW^{J'}$.

Throughout we fix $I=\{1,\ldots,g-1\}$.  Then $W_I\subset W$ is the symmetric group on $g$ letters.  We also fix the Weyl group element
\begin{equation*}
x=(1\  g+1)(2\  g+2)\cdots(g\  2g).
\end{equation*}
We note that $x\in {}^IW^I$ and in fact it is the longest element in this set.

If $w\in W$ we denote by ${}^wJ$ the set
\begin{equation*}
{}^wJ=\{i\mid s_i=ws_jw^{-1}\text{ for some $j\in J$}\}.
\end{equation*}

Throughout most of this chapter we will fix another subset $J\subset \{1,\ldots,g-1\}$.  We will then use the following notation:
\begin{itemize}
\item $\{0,\ldots,g\}-J=\{k_0,\ldots,k_c\}$ with $0=k_0<k_1<\ldots<k_c=g$.
\item $k_i=2g-k_{2r-i}$ for $i=c,\ldots,2c$.
\item $\tilde{J}={}^xJ=\{g-i\mid i\in J\}$.
\item $\{0,\ldots,g\}-\tilde{J}=\{\tilde{k}_0,\ldots,\tilde{k}_r\}$ with $0=\tilde{k}_0<\tilde{k}_1<\ldots<\tilde{k}_c=g$.
\item $\tilde{k}_i=2g-\tilde{k}_{2r-i}$ for $i=c,\ldots,2c$.
\end{itemize}

We note the formula
\begin{equation*}
\tilde{k}_i=g-k_{c-i}
\end{equation*}
for $i=0,\ldots,r$.

When this notation becomes burdensome the reader should focus on the ``generic'' case when $J=\emptyset$ so that $\tilde{J}=\emptyset$, $k_i=\tilde{k}_i=i$, and $c=g$.  The subsets $J$ and $\tilde{J}$ will determine the types of certain partial flags we will consider, and this will correspond to the case where all the flags are complete.

Let $V=\FF_p^{2g}$ with basis $e_1,\ldots,e_{2g}$ and fix the symplectic form $\psi$ on $V$ given by
\begin{equation*}
\psi(e_i,e_j)=0
\end{equation*}
and
\begin{equation*}
\psi(e_i,e_{2g+1-j})=\delta_{ij}
\end{equation*}
for $1\leq i,j\leq g$.  For this section only we will let $G=\GSp(V,\psi)=\GSp_{2g}/\FF_p$.

\section{Weyl Groups and Schubert Varieties}

\subsection{Admissible pairs}

We begin this section with some lemmas about Weyl group cosets.

\begin{lem}\label{L:W1}
$J,J'\subset \{1,\ldots,g\}$ be arbitrary and let $w\in W^{J'}$.  Then the following are equivalent
\begin{enumerate}
\item $w\in {}^JW$ and $W_JwW_{J'}=wW_{J'}$
\item $J\subset {}^wJ'$
\end{enumerate}
\end{lem}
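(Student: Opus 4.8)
The plan is to reduce everything to the standard length characterizations of minimal coset representatives in the Coxeter group $W$. Recall (e.g. from Bourbaki or Geck--Pfeiffer) that for $w\in W$ one has $w\in W^{J'}$ if and only if $\ell(wv)=\ell(w)+\ell(v)$ for every $v\in W_{J'}$, which is in turn equivalent to $\ell(ws_j)>\ell(w)$ for all $j\in J'$; and dually $w\in{}^JW$ if and only if $\ell(s_iw)=\ell(w)+1$ for every $i\in J$. I would also use the elementary fact that the only length-one elements of a standard parabolic $W_{J'}$ are the simple reflections $s_j$ with $j\in J'$, together with the definition ${}^wJ'=\{i\mid s_i=ws_jw^{-1}\text{ for some }j\in J'\}$. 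Note that since $J,J'\subset\{1,\dots,g\}$ the index $0$ never intervenes, so no boundary case arises.

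For the implication (2)$\Rightarrow$(1): assume $J\subset{}^wJ'$ and fix $i\in J$. Then $s_i=ws_jw^{-1}$ for some $j\in J'$, i.e.\ $s_iw=ws_j$. Since $w\in W^{J'}$ and $j\in J'$, we get $\ell(s_iw)=\ell(ws_j)=\ell(w)+1>\ell(w)$; as this holds for all $i\in J$, the length criterion gives $w\in{}^JW$. Moreover $s_iwW_{J'}=ws_jW_{J'}=wW_{J'}$ for each generator $s_i$ of $W_J$, and since the $s_i$ with $i\in J$ generate $W_J$ this yields $W_JwW_{J'}=wW_{J'}$.

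For the implication (1)$\Rightarrow$(2): assume $w\in{}^JW$ and $W_JwW_{J'}=wW_{J'}$, and fix $i\in J$. Then $s_iw\in W_JwW_{J'}=wW_{J'}$, so $s_iw=wv$ for some $v\in W_{J'}$. On one hand $w\in W^{J'}$ forces $\ell(wv)=\ell(w)+\ell(v)$; on the other hand $w\in{}^JW$ gives $\ell(s_iw)=\ell(w)+1$. Comparing, $\ell(v)=1$, so $v=s_j$ for some $j\in J'$, whence $s_i=ws_jw^{-1}$ and $i\in{}^wJ'$. Since $i\in J$ was arbitrary, $J\subset{}^wJ'$.

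The argument is essentially self-contained once the length characterizations are granted, so I do not expect a real obstacle. The only point requiring a bit of care is the equivalence "$\ell(ws_j)>\ell(w)$ for all $j\in J'$ $\iff$ $\ell(wv)=\ell(w)+\ell(v)$ for all $v\in W_{J'}$", i.e.\ the standard fact that a coset representative minimal against all simple reflections of $W_{J'}$ is minimal in the whole coset; I would simply quote this rather than reprove it.
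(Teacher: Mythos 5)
Your proof is correct and follows essentially the same route as the paper's: both directions reduce to the identity $s_i w = w s_j$, the length formula $\ell(wv)=\ell(w)+\ell(v)$ for $v\in W_{J'}$ (valid since $w\in W^{J'}$), and the fact that the unique length-one elements of $W_{J'}$ are the $s_j$ with $j\in J'$. No substantive difference.
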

\begin{proof}
First we prove that 2 implies $w\in {}^JW$.  Indeed, 2 implies that for all $i\in J$, $s_i=ws_jw^{-1}$ for $j\in J'$.  But then
\begin{equation*}
l(s_iw)=l(ws_j)=l(w)+1
\end{equation*}
because $w\in w^{J'}$, and hence $w\in {}^JW$.

Next note that $W_JwW_{J'}=wW_{J'}$ if and only if for ever $i\in I$, $s_iwW_{J'}=wW_{J'}$.  This is equivalent to
\begin{equation*}
s_i\in wW_{J'}w^{-1}.
\end{equation*}
From this it is clear that 2 implies 1.  To show the converse what we need to show is that if
\begin{equation*}
s_iw=wv
\end{equation*}
for some $v\in W_{J'}$ then $v$ is a simple reflection.  But in fact, as $w\in{}^JW^{J'}$
\begin{equation*}
1+l(w)=l(s_iw)=l(wv)=l(w)+l(v)
\end{equation*}
and hence $l(v)=1$.
\end{proof}

\begin{lem}\label{L:W2}
Let $w\in W^I$ and $J\subset\{1,\ldots,g-1\}$ be such that ${}^wJ=J$.  Then
\begin{enumerate}
\item For all $i=1,\ldots,2c$, there exists $0<j\leq 2c$ such that $w(k_i)=k_j$.
\item If $i=1,\ldots,2c$ and $k_{i-1}<a\leq k_i$ then $w(a)=w(k_i)-(k_i-a)$.
\item There is a unique permutation $\sigma\in S_{2c}$ satisfying $w(k_i)=k_{\sigma(i)}$.  We have $\sigma(2c+1-i)=2c+1-\sigma(i)$ for all $i=1,\ldots,2c$ and $\sigma(1)<\sigma(2)<\cdots<\sigma(c)$.
\item For $i=1,\ldots,2c$, $k_i-k_{i-1}=k_{\sigma(i)}-k_{\sigma(i)-1}$
\end{enumerate}
\end{lem}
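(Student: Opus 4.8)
\textbf{Proof plan for Lemma \ref{L:W2}.}

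The plan is to work throughout with the concrete realization of $W$ inside $S_{2g}$ as signed permutations satisfying $w(2g+1-i)=2g+1-w(i)$, and to exploit the hypothesis ${}^wJ=J$ via its translation into combinatorics of the set $\{0,\ldots,g\}-J=\{k_0,\ldots,k_c\}$. The key observation, which I would establish first, is that conjugation $s_i\mapsto ws_iw^{-1}$ being a bijection of simple reflections forces $w$ to carry the ``block structure'' determined by $J$ to itself: the simple reflection $s_a$ (for $1\le a\le g-1$) lies in $W_J$ iff $a\notin\{k_1,\ldots,k_{c-1}\}$, i.e. the maximal runs of consecutive integers on which $W_I$-complete flags get coarsened are exactly the intervals $(k_{i-1},k_i]$. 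Since $w\in W^I$ means $w(1)<\cdots<w(g)$, and ${}^wJ=J$ means $w$ permutes the generators of $W_J$, one deduces that $w$ must send each interval $\{k_{i-1}+1,\ldots,k_i\}$ (for $i=1,\ldots,c$) to an interval of the same length among the $\{k_{j-1}+1,\ldots,k_j\}$, in an order-preserving way on each block. This is essentially the content of parts 1 and 2 restricted to $i\le c$; I would prove it by checking that $ws_aw^{-1}$ is a simple reflection and using $w(1)<\cdots<w(g)$ to pin down that consecutive elements inside a block map to consecutive elements, while the block endpoints $k_i$ map to some $k_j$.

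Next I would extend from the range $1\le i\le c$ to $1\le i\le 2c$ using the symmetry $w(2g+1-i)=2g+1-w(i)$ together with the definition $k_i=2g-k_{2c-i}$ for $i=c,\ldots,2c$ (note the excerpt writes $2r$ but $r=c$ here). Concretely, if $w(k_i)=k_{\sigma(i)}$ for $i\le c$, then applying the signed-permutation relation gives $w(2g+1-k_i)=2g+1-k_{\sigma(i)}$; rewriting $2g+1-k_i$ and $2g+1-k_{\sigma(i)}$ in terms of the $k_j$'s with index $>c$ yields $w(k_{2c-i}+1\text{-shift})$ — more carefully, one checks $w$ maps $k_{2c+1-i}$-type endpoints to $k$-endpoints and reads off that the natural extension of $\sigma$ to $\{1,\ldots,2c\}$ satisfies $\sigma(2c+1-i)=2c+1-\sigma(i)$. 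Part 2 for $i>c$ follows the same way from part 2 for $i\le c$ by reflecting. For part 3, the order relation $\sigma(1)<\cdots<\sigma(c)$ is then immediate from $w(k_1)<w(k_2)<\cdots<w(k_c)$, which itself follows from $w\in W^I$ (so $w$ is increasing on $\{1,\ldots,g\}$) applied to $k_1<\cdots<k_c\le g$; uniqueness of $\sigma$ is clear since the $k_j$ are distinct. Finally part 4, $k_i-k_{i-1}=k_{\sigma(i)}-k_{\sigma(i)-1}$, is exactly the statement that $w$ maps the block $(k_{i-1},k_i]$ bijectively onto the block $(k_{\sigma(i)-1},k_{\sigma(i)}]$, which is built into the proof of parts 1 and 2; I would just record it, being careful that $\sigma(i)-1$ really is the predecessor index, i.e. that $w(k_{i-1})=k_{\sigma(i)-1}$ (not merely $k_j$ for some $j<\sigma(i)$), which needs the fact that $w$ respects adjacency of blocks — this comes from $w$ being order-preserving on $\{1,\ldots,g\}$ and the reflection symmetry extending this to all of $\{1,\ldots,2g\}$.

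I expect the main obstacle to be the careful bookkeeping at the ``seam'' $i=c$, where one passes from indices $\le g$ (governed directly by $w\in W^I$) to indices $>g$ (governed by the symmetry relation), and in particular verifying that the extended $\sigma$ is a genuine permutation of $\{1,\ldots,2c\}$ with no collisions and that $w(k_{i-1})=k_{\sigma(i)-1}$ rather than some earlier $k_j$. The cleanest route is probably to first prove everything for the ``generic'' case $J=\emptyset$ (where $k_i=i$, $c=g$, and the statement reduces to the well-known description of $W^I$ as shuffles), and then observe that the hypothesis ${}^wJ=J$ says precisely that $w$ lies in the subgroup of such elements compatible with the coarsening by $J$, so the general case is obtained by ``collapsing'' consecutive indices — making the reduction to $J=\emptyset$ rigorous is where I would spend most of the effort.
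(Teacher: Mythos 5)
Your treatment of parts 1--3 matches the paper's: you translate ${}^wJ=J$ into the statement that $w$ preserves the set $S=J\cup(2g-J)$ of ``non-breakpoint'' indices and sends consecutive elements of $S$ to consecutive elements, which gives parts 1 and 2, and then part 3 reads off immediately from $w\in W^I$. That is the right argument.

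Part 4, however, is where your plan has a genuine gap. You claim that $w$ maps the block $(k_{i-1},k_i]$ bijectively onto the block $(k_{\sigma(i)-1},k_{\sigma(i)}]$, asserting in support that $w(k_{i-1})=k_{\sigma(i)-1}$, i.e.\ $\sigma(i-1)=\sigma(i)-1$. This last claim is false. Take $g=3$, $J=\{1\}$ (so $k_0,\dots,k_4=0,2,3,4,6$) and $w=s_3=(3\ 4)$: then $w\in W^I$, ${}^wJ=J$, and $\sigma=(1,3,2,4)$, but $w(k_1)=w(2)=2=k_1\ne k_2=k_{\sigma(2)-1}$. What parts 1 and 2 actually give you is that the image of $(k_{i-1},k_i]$ is an interval of the \emph{same} length $k_i-k_{i-1}$ ending at $k_{\sigma(i)}$, whose interior consists of non-$k$ values; since $k_{\sigma(i)-1}$ is the largest $k$ below $k_{\sigma(i)}$, this yields only the \emph{inequality} $k_i-k_{i-1}\le k_{\sigma(i)}-k_{\sigma(i)-1}$. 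Equality — which is exactly the content of ``block maps onto block'' — requires an extra idea: the paper iterates the inequality along the $\sigma$-orbit, $k_i-k_{i-1}\le k_{\sigma(i)}-k_{\sigma(i)-1}\le k_{\sigma^2(i)}-k_{\sigma^2(i)-1}\le\cdots$, and uses $\sigma^n(i)=i$ to force all inequalities to be equalities (alternatively, summing the inequality over $i=1,\dots,2c$ and noting both sides telescope to $2g$ works). Without some such step your part 4 is circular: ``blocks map onto blocks'' is equivalent to part 4, not a consequence of parts 1 and 2.

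Note also that your proposed cleanup route of reducing to $J=\emptyset$ does not help here, since for $J=\emptyset$ all blocks have length one and part 4 is vacuous.
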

\begin{proof}
Let $i\in J$.  Then by hypothesis there is $j\in J$ with
\begin{equation*}
s_j=ws_iw^{-1}
\end{equation*}
and hence
\begin{equation*}
(j\ j+1)(2g+1-j\ 2g-j)=(w(i)\ w(i+1))(w(2g+1-i)\ w(2g-i)).
\end{equation*}
Moreover as $w\in W^I$ we have
\begin{equation*}
w(i)<w(i+1)\qquad\text{and}\qquad w(2g-i)<w(2g+1-i)
\end{equation*}
(indeed $w\in W^I$ is equivalent to $w(k)<w(k+1)$ for all $k\not=g$).  From these three formulas we conclude that one of the following two possibilities occurs:
\begin{itemize}
\item $w(i)=j$, $w(i+1)=j+1$, $w(2g-i)=2g-j$, $w(2g+1-i)=2g+1-j$ or
\item $w(i)=2g-j$, $w(i+1)=2g+1-j$, $w(2g-i)=j$, $w(2g+1-i)=j+1$.
\end{itemize}
Either way we see that the set
\begin{equation*}
S=J\cup (2g-J)
\end{equation*}
is stable by $w$.  But by definition we had
\begin{equation*}
\{1,\ldots,k_{2c}\}=\{1,\ldots,2g\}-S
\end{equation*}
and so this set is stable by $w$ as well, proving 1.  Moreover these formulas show that for $i\in S$, $w(i+1)=w(i)+1$ and so 2 follows from this and induction.  Point 3 is clear.

To prove 4, note that by 2, $k_{\sigma(i)-1}\leq k_{\sigma(i)}-(k_i-k_{i-1})$.  Hence
\begin{equation*}
k_i-k_{i-1}\leq k_{\sigma(i)}-k_{\sigma(i)-1}\leq k_{\sigma^2(i)}-k_{\sigma^2(i)-1}\leq \cdots.
\end{equation*}
But $\sigma^n(i)=i$ for some $n>0$ and hence all of these inequalities are equalities. 
\end{proof}

\begin{defn}
We say that the pair $(w,J)$ with $w\in W$ and $J\subset \{1,\ldots,g-1\}$ is admissible if $w\in W^I$ and ${}^w\tilde{J}=J$
\end{defn}

Note that $(w,\emptyset)$ for $w\in W^I$ is always admissible.  Moreover if $(w,J)$ and $(w,J')$ are admissible then so is $(w,J\cup J')$.  Hence if $w\in W^I$ there is a maximal $J$ such that $(w,J)$ is admissible, and we denote it by $J_w$.

\begin{prop}\label{P:adm}
Let $(w,J)$ be an admissible pair.  Then
\begin{enumerate}
\item $w\in{}^JW$ and $W_JwW_{\tilde{J}}=wW_{\tilde{J}}$.
\item For all $i=1,\ldots,2c$, there exists $0<j\leq 2c$ such that $w(\tilde{k}_i)=k_j$.
\item If $i=1,\ldots,2c$ and $\tilde{k}_{i-1}<a\leq k_i$ then $w(a)=w(\tilde{k}_i)-(k_i-a)$.
\item There is a unique permutation $\sigma\in S_{2c}$ satisfying $wx(k_i)=k_{\sigma(i)}$ for $i=1,\ldots,2c$.  Moreover $\sigma(2c+1-i)=2c+1-\sigma(i)$ for all $i=1,\ldots,2c$ and $\sigma(1)<\sigma(2)<\cdots<\sigma(c)$.
\item For $i=1,\ldots,2c$, $k_i-k_{i-1}=k_{\sigma(i)}-k_{\sigma(i)-1}$.
\end{enumerate}
\end{prop}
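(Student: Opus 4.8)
The plan is to get part 1 straight from Lemma \ref{L:W1}, and to deduce parts 2--5 by applying Lemma \ref{L:W2} to the twisted element $wx$. For part 1, note that $\tilde{J}=\{g-i\mid i\in J\}\subset I$, so $W_{\tilde{J}}\subset W_I$ and hence $W^I\subset W^{\tilde{J}}$; in particular $w\in W^{\tilde{J}}$. Admissibility gives ${}^w\tilde{J}=J$, hence $J\subset{}^w\tilde{J}$, which is condition (2) of Lemma \ref{L:W1} taken with $J'=\tilde{J}$; condition (1) of that lemma is then exactly the pair of assertions $w\in{}^JW$ and $W_JwW_{\tilde{J}}=wW_{\tilde{J}}$.

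For parts 2--5 the first step is to show that $v:=wx$ again lies in $W^I$ and satisfies ${}^vJ=J$. For $v\in W^I$ I would use the one-line characterization: $w\in W^I$ iff $w(1)<\dots<w(g)$ and $w(g+1)<\dots<w(2g)$; since $x$ sends $k\mapsto k+g$ on $\{1,\dots,g\}$ and $k\mapsto k-g$ on $\{g+1,\dots,2g\}$, the inequalities required of $wx$ are precisely these two chains for $w$ with the roles of the two blocks exchanged. For ${}^vJ=J$ I would use the identity $xs_jx^{-1}=s_{g-j}$, valid for $j\in\{1,\dots,g-1\}$, so that conjugation by $x$ sends $\{s_j\mid j\in J\}$ bijectively onto $\{s_m\mid m\in\tilde{J}\}$, whence ${}^{wx}J=\{i\mid s_i=ws_mw^{-1},\ m\in\tilde{J}\}={}^w\tilde{J}=J$. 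Lemma \ref{L:W2} then applies to the pair $(v,J)$ — with the \emph{same} integers $k_0<\dots<k_{2c}$ attached to $J$ — and produces a unique permutation $\sigma\in S_{2c}$ with $v(k_i)=k_{\sigma(i)}$, i.e. $wx(k_i)=k_{\sigma(i)}$, satisfying $\sigma(2c+1-i)=2c+1-\sigma(i)$, $\sigma(1)<\dots<\sigma(c)$, and $k_i-k_{i-1}=k_{\sigma(i)}-k_{\sigma(i)-1}$. These are parts 4 and 5.

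It remains to transport these back through $x$ to get parts 2 and 3. Using $\tilde{k}_j=g-k_{c-j}$ (equivalently $\tilde{k}_{c+j}=g+k_j$) one checks that $x$ carries $\{k_0,\dots,k_{2c}\}$ bijectively onto $\{\tilde{k}_0,\dots,\tilde{k}_{2c}\}$ via the involution $\rho$ of $\{0,\dots,2c\}$ that fixes $0$ and swaps $\{1,\dots,c\}$ with $\{c+1,\dots,2c\}$, so that $x(k_{\rho(i)})=\tilde{k}_i$, and that $x$ carries the interval $(\tilde{k}_{i-1},\tilde{k}_i]$ onto $(k_{\rho(i)-1},k_{\rho(i)}]$. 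Then $w(\tilde{k}_i)=(wx)(k_{\rho(i)})=v(k_{\rho(i)})=k_{\sigma(\rho(i))}$, which is part 2; and substituting $b=x(a)$ into the ``slope one'' statement of part 2 of Lemma \ref{L:W2} for $v$ on $(k_{\rho(i)-1},k_{\rho(i)}]$, together with the fact that $x(a)-a$ and $x(\tilde{k}_i)-\tilde{k}_i$ equal the same constant $\pm g$, gives part 3. I do not expect a genuine obstacle: everything reduces to Lemmas \ref{L:W1} and \ref{L:W2}. The one place needing care — and where I would argue case by case according to whether the relevant index is $\le g$ or $>g$ — is the bookkeeping in this last step, especially near the ``seam'' $k\approx g$, where $x(0)=0$ forces the endpoints of the transported intervals to be read off from $x$ applied to interior points rather than to the nominal endpoints.
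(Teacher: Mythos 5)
Your proposal is correct and follows exactly the paper's own route: part~1 is an immediate application of Lemma~\ref{L:W1} with $J'=\tilde J$, and parts~2--5 come from applying Lemma~\ref{L:W2} to $wx$ (after verifying $wx\in W^I$ and ${}^{wx}J={}^{w}\tilde J=J$ via $xs_jx^{-1}=s_{g-j}$). The paper leaves all of this to the reader in a single sentence; you have simply filled in the bookkeeping, and your care near the seam $\tilde k_c=g$ is exactly the care that is needed.
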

\begin{proof}
Point 1 follows from Lemma \ref{L:W1} while 2 through 5 follow from Lemma \ref{L:W2} applied to wx.
\end{proof}

For notational purposes we also introduce the permutation $\tau\in S_{2c}$ given by
\begin{equation*}
\tau(i)=\begin{cases}\sigma(i+c)&1\leq i\leq c\\ \sigma(i-c)&c+1\leq i\leq 2c\end{cases}
\end{equation*}
Then by parts 4 of the Proposition we have that for $i=1,\ldots,2c$, $w(\tilde{k}_i)=k_{\tau(i)}$.  Moreover $\tau(2c+1-i)=2c+1-\tau(i)$ for $i=1,\ldots,2c$ and $\tau(1)<\tau(2)<\cdots<\tau(c)$.  By part 5 of the proposition we have $\tilde{k}_i-\tilde{k}_{i-1}=k_{\tau(i)}-k_{\tau(i)-1}$.

\subsection{Schubert varieties}

Let $\text{Fl}_{\tilde{J}}/\FF_p$ denote the variety of symplectic flags of type $\tilde{J}$ in $V$.  More precisely, this space parameterizes flags
\begin{equation*}
0=F_0\subset F_1\subset\cdots\subset F_c\subset\cdots\subset F_{2c}=V\otimes\O_S
\end{equation*}
where the $F_i$ are local direct summands, $F_i=F_{2c-i}^\perp$, and $\dim F_i=\tilde{k}_i$ for $i=0,\ldots, 2c$.

$\text{Fl}_{\tilde{J}}$ has stratifications by certain Schubert varieties which we now recall.  We define standard flags
\begin{align*}
E_i&=\langle e_1,\ldots,e_{k_i}\rangle\\
\tilde{E}_i&=\langle e_1,\ldots,e_{\tilde{k}_i}\rangle\\
\tilde{E}_i^B&=\langle e_1,\ldots,e_i\rangle
\end{align*}
and let $P$, $\tilde{P}$, and $B$ be the parabolics of $G$ stabilizing the flags $(E_i)$, $(\tilde{E}_i)$, and $(E_i^B)$ respectively.

The Schubert varieties we will consider correspond to the orbits of $P$ and $B$ on $\text{Fl}_{\tilde{J}}$.  We have two decompositions into reduced locally closed subvarieties
\begin{equation*}
\text{Fl}_{\tilde{J}}=\coprod_{w\in{}^JW^{\tilde{J}}}Y_w
\end{equation*}
and
\begin{equation*}
\text{Fl}_{\tilde{J}}=\coprod_{w\in W^{\tilde{J}}}Y_w^B
\end{equation*}

For $w\in {}^JW^{\tilde{J}}$, $0\leq i\leq 2c$, and $0\leq j\leq 2c$ let
\begin{equation*}
d_w(i,j)=\dim(w\tilde{E}_i\cap E_j)=\#(w\{1,\ldots,\tilde{k}_i\}\cap\{1,\ldots,k_j\})
\end{equation*}
and for $w\in W^{\tilde{J}}$, $0\leq i\leq 2c$, and $0\leq j\leq 2g$ let
\begin{equation*}
d_w^B(i,j)=\dim(w\tilde{E}_i\cap E_j^B)=\#(w\{1,\ldots,k_i\}\cap\{1,\ldots,j\}).
\end{equation*}

Let $k/\FF_p$ be a field.  Then $(F_i)\in \text{Fl}_{\tilde{J}}(k)$ lies in $Y_w(k)$ for $w\in {}^JW^{\tilde{J}}$ if and only if the Schubert condition
\begin{equation*}
\dim(F_i\cap E_j\otimes k)=d_w(i,j)\qquad\text{for all $0\leq i\leq 2c$, $0\leq j\leq 2c$}
\end{equation*}
is satisfied.  Similarly $(F_i)$ lies in $Y^B_w(k)$ if and only if
\begin{equation*}
\dim(F_i\cap E^B_j\otimes k)=d_w^B(i,j)\qquad\text{for all $0\leq i\leq 2c$, $0\leq j\leq 2g$}.
\end{equation*}

Let $\overline{Y}_w$ and $\overline{Y}_w^B$ denote the Zariski closures of $Y_w$ and $Y_w^B$ respectively.  Their points can also be characterized by Schubert conditions.  We have $(F_i)\in \overline{Y}_w(k)$ if and only if
\begin{equation*}
\dim(F_i\cap E_j\otimes k)\geq d_w(i,j)\qquad\text{for all $0\leq i\leq 2c$, $0\leq j\leq 2c$}
\end{equation*}
and $(F_i)\in\overline{Y}^B_w(k)$ if and only if
\begin{equation*}
\dim(F_i\cap E_j^B\otimes k)\geq d_w^B(i,j)\qquad\text{for all $0\leq i\leq 2c$, $0\leq j\leq 2g$}.
\end{equation*}

We remark that for the Schubert varieties $Y_w$, the Schubert condition for the pair $(i,j)$ is equivalent to that for the pair $(2c-i,2c-j)$.  Similarly for the Schubert varieties $Y_w^B$ the Schubert condition for the pair $(i,j)$ is equivalent to that for the pair $(2c-i,2g-j)$.

The following proposition regarding the Schubert varieties $\overline{Y}_w^B$ is well known.
\begin{prop}\label{P:bschub}
For $v,w\in W^{\tilde{J}}$
\begin{enumerate}
\item $\dim Y^B_w=\dim \overline{Y}_w^B=l(w)$.
\item $\overline{Y}^B_w$ is normal.
\item $Y_v^B\subset \overline{Y}_w^B$ if and only if $v\leq w$ in the Bruhat order.
\item The complement
\begin{equation*}
\overline{Y}_v^B-Y_v^B=\bigcup_{v\in D_w^B}Y_w^B
\end{equation*}
is a union of irreducible divisors where
\begin{equation*}
D_w^B=\{v\in W^{\tilde{J}}\mid \text{$v\leq w$ and $l(v)=l(w)-1$}\}
\end{equation*}
is the set of Bruhat descendants of $w$.
\end{enumerate}
\end{prop}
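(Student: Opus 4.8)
The plan is to recognize $\mathrm{Fl}_{\tilde J}$ as the partial flag variety $G/\tilde P$ and the subvarieties $Y_w^B$, $\overline{Y}_w^B$ as the Schubert cells and Schubert varieties attached to the $B$-orbits, so that the proposition becomes the standard structure theory of Schubert varieties in $G/\tilde P$. Concretely, $B$ acts on $\mathrm{Fl}_{\tilde J}=G/\tilde P$ with orbits $Bw\tilde P/\tilde P$ indexed by $W/W_{\tilde J}\cong W^{\tilde J}$; the relative position of a symplectic flag $(F_i)$ with the fixed full flag $(E_j^B)$ is recorded by the intersection dimensions $d_w^B(i,j)$, and so the Schubert conditions quoted in the text cut out exactly these orbits (with $=$) and their closures (with $\geq$, by semicontinuity of $\dim(F_i\cap E_j^B\otimes k)$). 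Thus $Y_w^B=Bw\tilde P/\tilde P$ and $\overline{Y}_w^B$ is its Zariski closure, the Schubert variety indexed by $w\in W^{\tilde J}$.

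For part 1 I would invoke the Bruhat decomposition: since $w$ is the minimal-length representative of its coset, the cell $Bw\tilde P/\tilde P$ is isomorphic to an affine space of dimension $l(w)$; as $Y_w^B$ is irreducible and open dense in $\overline{Y}_w^B$, all three dimensions agree and equal $l(w)$. For part 3, the closure relation $\overline{BwB/B}=\coprod_{v\leq w}BvB/B$ in $G/B$, pushed forward along the proper projection $\pi\colon G/B\to G/\tilde P$, gives $\overline{Bw\tilde P/\tilde P}=\bigcup_{v\leq w}Bv\tilde P/\tilde P$; combined with the fact that for $v,w\in W^{\tilde J}$ one has $v\leq w$ in $W$ iff some element of $vW_{\tilde J}$ is $\leq w$ (which holds since the minimal coset representative is $\leq$ every element of its coset), this yields $Y_v^B\subseteq\overline{Y}_w^B\iff v\leq w$. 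The rank-inequality description of $\overline{Y}_w^B(k)$ in the text is then the usual semicontinuity description of this closure.

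Part 4 follows by combining parts 1 and 3 with the gradedness (the chain, or $Z$-, property) of the Bruhat order: if $v<w$ in $W^{\tilde J}$ then every saturated chain from $v$ to $w$ has unit steps in length, so the maximal proper elements below $w$ are precisely those $v$ with $l(v)=l(w)-1$, i.e.\ the set $D_w^B$; by part 1 each $\overline{Y}_v^B$ then has dimension $l(w)-1$, hence is a divisor in the $l(w)$-dimensional $\overline{Y}_w^B$, and it is irreducible as the closure of the irreducible cell $Y_v^B$. Therefore $\overline{Y}_w^B-Y_w^B=\bigcup_{v\in D_w^B}\overline{Y}_v^B$ is a union of irreducible divisors. (The displayed union in the statement, printed with $Y_w^B$, should read $\overline{Y}_v^B$, $v\in D_w^B$, and the roles of $v$ and $w$ there are transposed.)

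The only genuinely non-formal input is the normality asserted in part 2. Over a field of characteristic $0$ it is classical, but over $\FF_p$ it requires the Frobenius-splitting theory of flag varieties: $G/\tilde P$ is Frobenius split compatibly with all of its Schubert subvarieties, and from this one deduces that every $\overline{Y}_w^B$ is normal (in fact Cohen--Macaulay with rational singularities) --- this is the work of Ramanan--Ramanathan and Mehta--Ramanathan. This step is the main obstacle only in the sense that it is the one not reducible to elementary combinatorics of the $BN$-pair; since the proposition is entirely standard, the proof amounts to assembling these references rather than to any new argument.
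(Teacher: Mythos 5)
The paper does not actually prove this proposition---it is introduced with ``The following proposition regarding the Schubert varieties $\overline{Y}_w^B$ is well known'' and the proof is omitted---so there is no argument of the paper's own to compare against; your assembly of the standard ingredients (identification of $\mathrm{Fl}_{\tilde J}$ with $G/\tilde P$, Bruhat decomposition for the dimension and closure relations, gradedness of the Bruhat order for the divisorial structure of the boundary, and Frobenius-splitting normality of Schubert varieties in characteristic $p$ via Mehta--Ramanathan and Ramanan--Ramanathan) is exactly what ``well known'' is pointing to, and is correct. You are also right to flag the typo in the displayed equation of part 4: as printed, $v$ and $w$ are transposed and $Y_w^B$ should read $\overline{Y}_v^B$, i.e.\ the intended statement is $\overline{Y}_w^B-Y_w^B=\bigcup_{v\in D_w^B}\overline{Y}_v^B$.
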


As $B\subset P$ we can decompose each $P$ orbit $Y_w$ for $w\in {}^JW^{\tilde{J}}$ into $B$ orbits as
\begin{equation*}
Y_w=\coprod_{v\in (W_JwW_{\tilde{J}})\cap W^{\tilde{J}}}Y_v^B
\end{equation*}
As $X_w$ is irreducible and the decomposition is finite, there must be a unique open orbit.  That is there is a maximal element $\dot{w}\in(W_JwW_{\tilde{J}})\cap W^{\tilde{J}}$ for the Bruhat order (this can also be seen purely combinatorially) and $Y^B_{\dot{w}}\subset Y_w$ is open dense.

We now have the following Corollary to proposition \ref{P:bschub}:
\begin{cor}\label{P:schub}
For $w\in {}^JW^{\tilde{J}}$
\begin{enumerate}
\item $\dim Y_w=\dim \overline{Y}_w=l(\dot{w})$
\item $\overline{Y}_w$ is normal.
\item If $W_JwW_{\tilde{J}}=wW_{\tilde{J}}$ then the complement
\begin{equation*}
\overline{Y}_w-Y_w=\bigcup_{v\in D_w^B}\overline{Y}_v^B=\bigcup_{v\in D_w}\overline{Y}_v
\end{equation*}
is a union of irreducible divisors where
\begin{equation*}
D_w=W_JD_w^B\cap {}^JW^{\tilde{J}}.
\end{equation*}
\end{enumerate}
\end{cor}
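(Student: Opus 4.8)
The plan is to deduce all three parts from the corresponding facts about the $B$-Schubert varieties $\overline{Y}^B_v$ recorded in Proposition \ref{P:bschub}, exploiting the decomposition $Y_w=\coprod_{v\in(W_JwW_{\tilde{J}})\cap W^{\tilde{J}}}Y^B_v$ of the $P$-orbit $Y_w$ into $B$-orbits together with the dense open $B$-orbit $Y^B_{\dot w}\subset Y_w$ noted just before the statement.

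For parts 1 and 2: since $P$ is connected, $Y_w$ is irreducible, and as $Y^B_{\dot w}$ is open and dense in it we get $\overline{Y}_w=\overline{Y}^B_{\dot w}$. Then $\dim Y_w=\dim\overline{Y}_w=\dim\overline{Y}^B_{\dot w}=l(\dot w)$ by Proposition \ref{P:bschub}(1), and $\overline{Y}_w$ is normal because $\overline{Y}^B_{\dot w}$ is, by Proposition \ref{P:bschub}(2).

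For part 3, assume $W_JwW_{\tilde{J}}=wW_{\tilde{J}}$. Then $(W_JwW_{\tilde{J}})\cap W^{\tilde{J}}=(wW_{\tilde{J}})\cap W^{\tilde{J}}=\{w\}$, so $\dot w=w$ and $\overline{Y}_w=\overline{Y}^B_w$; Proposition \ref{P:bschub}(4) gives
\begin{equation*}
\overline{Y}_w-Y_w=\overline{Y}^B_w-Y^B_w=\bigcup_{v\in D^B_w}\overline{Y}^B_v ,
\end{equation*}
a finite union of irreducible divisors of $\overline{Y}_w$. It remains to rewrite this as $\bigcup_{v'\in D_w}\overline{Y}_{v'}$. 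The set $\overline{Y}_w-Y_w$ is closed and $P$-stable (as $Y_w$ and $\overline{Y}_w$ are), so its finite set of irreducible components is permuted by $P$; since $P$ is connected, each component $\overline{Y}^B_v$ $(v\in D^B_w)$ is itself $P$-stable, hence is a $P$-orbit closure $\overline{Y}_{v'}$ for a unique $v'\in{}^JW^{\tilde{J}}$. Its dense open $B$-orbit $Y^B_v$ then lies in the dense open $P$-orbit $Y_{v'}$, so $v$ and $v'$ lie in the same double coset $W_JvW_{\tilde{J}}=W_Jv'W_{\tilde{J}}$; thus $v'$ is the minimal-length representative of this double coset, which (as $v\in W^{\tilde{J}}$) is ${}^Jv$, the minimal element of the left coset $W_Jv$, so $v'\in W_Jv\cap{}^JW^{\tilde{J}}=D_w$ and $\overline{Y}^B_v=\overline{Y}_{v'}$. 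Conversely, any $v'\in D_w$ is of the form $v'=uv_0$ with $u\in W_J$, $v_0\in D^B_w$; since $v'\in{}^JW$ it is the unique minimal element of $W_Jv_0$, i.e. $v'={}^Jv_0$, and applying the computation just made to $v_0$ gives $\overline{Y}_{v'}=\overline{Y}^B_{v_0}$, which yields the reverse inclusion.

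The main obstacle is this last step of part 3: reconciling the intrinsic $B$-orbit description $\bigcup_{v\in D^B_w}\overline{Y}^B_v$ of the boundary with the $P$-orbit description $\bigcup_{v'\in D_w}\overline{Y}_{v'}$. It rests on two technical inputs: (a) the connectedness of $P$, which promotes ``$\overline{Y}_w-Y_w$ is $P$-stable'' to ``each irreducible component is $P$-stable'' (hence an honest $P$-orbit closure); and (b) the standard combinatorics of minimal-length double-coset representatives, in particular that for $v\in W^{\tilde{J}}$ the element ${}^Jv$ lies in $W^{\tilde{J}}$ and is the minimal-length element of $W_JvW_{\tilde{J}}$, which is proved by the kind of length manipulations used in Lemma \ref{L:W1}. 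Everything else is a direct transcription of Proposition \ref{P:bschub}.
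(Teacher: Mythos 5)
Your proof is correct and takes essentially the same route as the paper: both deduce parts 1 and 2 from the identity $\overline{Y}_w=\overline{Y}^B_{\dot w}$, and both use the connectedness/irreducibility of $P$ to promote $P$-stability of $\overline{Y}_w-Y_w$ to $P$-stability of each irreducible component $\overline{Y}^B_v$, thereby identifying it with a closed $P$-Schubert variety. The paper states the argument a bit more tersely (fixing $\tilde v$ as the shortest element of $W_JvW_{\tilde J}$ and showing $\overline{Y}^B_v=\overline{Y}_{\tilde v}$ directly), while you arrive at the same identification by first recognizing $\overline{Y}^B_v$ as a $P$-orbit closure and then pinning down its label via the double-coset combinatorics; these are the same idea in two orderings.
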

\begin{proof}
1 and 2 follow from the fact that $\overline{Y}_w=\overline{Y}_{\dot{w}}^B$.  For 3, note that when $W_JwW_{\tilde{J}}=wW_{\tilde{J}}$ we have $Y_w=Y_w^B$.  Hence
\begin{equation*}\tag{*}
\overline{Y}_w-Y_w=\overline{Y}_w^B-Y_w^B=\bigcup_{v\in D_w^B}\overline{Y}_v^B.
\end{equation*}
For the last equality, for each $v\in D_v^B$, let $\tilde{v}$ be the shortest element of $W_JvW_{\tilde{J}}$.  Then what we need to show is that $\overline{Y}_v^B=\overline{Y}_{\tilde{v}}$, or in other words that $\dot{\tilde{v}}=v$.  There is a purely combinatorial proof of this but here we give a geometric one.  First note that $Y_v^B\subset Y_{\tilde{v}}$ which implies one inclusion, and also using the fact that $Y_{\tilde{v}}$ is $P$-stable by definition we have 
\begin{equation*}
Y_{\tilde{v}}\subset P\cdot Y_v^B\subset P\cdot\overline{Y}_v^B.
\end{equation*}
Hence we would be done if we knew that $\overline{Y}_v^B$ was $P$-stable.  But indeed the left hand side of (*) is $P$-stable and hence so is the right.  Moreover, as $P$ is irreducible it must stabilize each of the $\overline{Y}_v^B$, $v\in D_w$ individually, so we are done.
\end{proof}

For the rest of the section we assume $(w,J)$ is admissible and so in particular $w\in {}^JW^{\tilde{J}}$.  We first observe that the Schubert conditions defining $\overline{Y}_w$ are very easy to describe.  

\begin{prop}\label{P:schubcon}
If $(w,J)$ is admissible and $k/\FF_p$ is a field then
\begin{equation*}
\overline{Y}_w(k)=\{(F_i)\in \text{\rm Fl}_{\tilde{J}}(k)\mid F_i\subset E_{\tau(i)}\otimes k,\text{ for $i=1,\ldots, c$}\}.
\end{equation*}
\end{prop}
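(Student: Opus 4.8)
\textbf{Proof proposal for Proposition \ref{P:schubcon}.}

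The plan is to identify $\overline{Y}_w$ by its defining Schubert inequalities and show that, for an admissible pair $(w,J)$, all but a small collection of these inequalities are vacuous, and the ones that remain are exactly the conditions $F_i\subset E_{\tau(i)}\otimes k$ for $i=1,\dots,c$. Recall $\overline{Y}_w(k)$ consists of those $(F_i)\in\text{Fl}_{\tilde J}(k)$ with $\dim(F_i\cap E_j\otimes k)\ge d_w(i,j)$ for all $0\le i\le 2c$, $0\le j\le 2c$, where $d_w(i,j)=\#\bigl(w\{1,\dots,\tilde k_i\}\cap\{1,\dots,k_j\}\bigr)$. The first step is to compute $d_w(i,j)$ explicitly using Proposition \ref{P:adm}: since $w(\tilde k_i)=k_{\tau(i)}$ and $w$ carries each ``block'' $(\tilde k_{i-1},\tilde k_i]$ onto the block $(k_{\tau(i)-1},k_{\tau(i)}]$ bijectively and order-preservingly (parts 2, 3, 5 of the proposition, combined with the definition of $\tau$), the set $w\{1,\dots,\tilde k_i\}$ is precisely the union of the blocks indexed by $\tau(1),\dots,\tau(i)$ — a union of $i$ full blocks of the $k$-partition. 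Intersecting with $\{1,\dots,k_j\}=$ union of blocks $1,\dots,j$ gives $d_w(i,j)=\sum_{\{a\le i:\ \tau(a)\le j\}}(k_{\tau(a)}-k_{\tau(a)-1})$.

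Next I would analyze when the inequality $\dim(F_i\cap E_j\otimes k)\ge d_w(i,j)$ is automatically satisfied for every flag $(F_i)$. Since $\dim F_i=\tilde k_i$ and $\dim E_j=k_j$, we always have $\dim(F_i\cap E_j\otimes k)\ge \tilde k_i+k_j-2g$ and $\ge \max(0,\dots)$; more usefully, the trivial lower bound coming from $F_i\subset V$ gives $\dim(F_i\cap E_j)\ge \dim F_i - \mathrm{codim}\,E_j = \tilde k_i - (2g-k_j)$. The claim to verify is that $d_w(i,j)\le \tilde k_i-(2g-k_j)$ whenever this trivial bound is positive, \emph{unless} $j=\tau(i)$ and $i\le c$, in which case $d_w(i,\tau(i))=\tilde k_i$ forces $F_i\subset E_{\tau(i)}\otimes k$. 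This is a finite combinatorial check: using the block formula for $d_w(i,j)$ together with $\tilde k_i=\sum_{a\le i}(k_{\tau(a)}-k_{\tau(a)-1})$ and the symmetry $\tau(2c+1-i)=2c+1-\tau(i)$, $k_i-k_{i-1}=k_{\tau(i)}-k_{\tau(i)-1}$, one checks that for $i\le c$ the only constraining $j$ is $j=\tau(i)$, and that the conditions for $i>c$ are dual (via the pair $(2c-i,2c-j)\leftrightarrow(i,j)$ symmetry remarked after the definition of the Schubert conditions) to those for $i\le c$, hence redundant. I would also note the case $j>\tau(i)$ (resp. $j<\tau(i)$) makes $d_w(i,j)=\tilde k_i$ but then the condition $\dim(F_i\cap E_j)\ge\tilde k_i$ is implied by $F_i\subset E_{\tau(i)}\subset E_j$; conversely for $j<\tau(i)$ the bound is strictly below the trivial one.

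The main obstacle I anticipate is the bookkeeping in the combinatorial step: one must be careful that $\tau$ need not be monotone, so $w\{1,\dots,\tilde k_i\}$ is a union of $i$ blocks whose indices $\tau(1),\dots,\tau(i)$ are scattered, and the comparison $d_w(i,j)$ versus the trivial bound $\tilde k_i-(2g-k_j)$ requires summing block sizes over $\{a\le i:\tau(a)\le j\}$ versus over $\{a\le i:\tau(a)>j\}$ and invoking $k_i-k_{i-1}=k_{\tau(i)-1}$-type identities to show the ``missing mass'' is accounted for exactly when $j\ge\tau(i)$. Once this inequality is pinned down, the remaining arguments are formal: the inclusion $\subseteq$ follows because $d_w(i,\tau(i))=\tilde k_i$ for $i\le c$ forces $F_i\subset E_{\tau(i)}\otimes k$, and the inclusion $\supseteq$ follows because any flag satisfying $F_i\subset E_{\tau(i)}\otimes k$ for $i\le c$ (automatically also for $i>c$ by self-duality $F_i=F_{2c-i}^\perp$ and $E_{\tau(i)}=E_{2c-\tau(2c-i)}^\perp$) meets every remaining Schubert inequality by the trivial dimension bounds just established. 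Finally I would remark that the flag $(E_{\tau(i)})$ itself lies in the set, confirming $\overline{Y}_w$ is nonempty and that $w$ really is the relevant index (compatibly with Corollary \ref{P:schub}).
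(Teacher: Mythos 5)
The computation of $d_w(i,j)$ as a sum of block sizes is fine, and your treatment of $j\ge\tau(i)$ matches the paper's. But the case $\tau(1)\le j<\tau(i)$ (with $2\le i\le c$) is where your strategy breaks, for two reasons.

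First, the trivial intersection bound $\dim(F_i\cap E_j)\ge\tilde k_i-(2g-k_j)$ can \emph{never} show a Schubert condition is vacuous when $d_w(i,j)>0$: since $\tilde k_i-d_w(i,j)=\sum_{a\le i,\ \tau(a)>j}(k_{\tau(a)}-k_{\tau(a)-1})$ is a sum over a \emph{subset} of the indices $b>j$, one always has $\tilde k_i-d_w(i,j)\le\sum_{b>j}(k_b-k_{b-1})=2g-k_j$, i.e.\ $d_w(i,j)\ge\tilde k_i-(2g-k_j)$. So "$d_w(i,j)\le\tilde k_i-(2g-k_j)$" fails whenever $d_w(i,j)>0$ (strictly, unless the blocks $\{\tau(a):a\le i,\ \tau(a)>j\}$ exhaust $\{j+1,\dots,2c\}$). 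Concretely, with $g=3$, $J=\emptyset$, and $w=(1\,2\,4)(3\,6\,5)$ one has $\tau(1)=2,\tau(2)=4$, and for $(i,j)=(2,2)$ the required bound is $d_w(2,2)=1$ while the trivial bound is $2-(6-2)=-2$; the condition $\dim(F_2\cap E_2)\ge 1$ is a real constraint on $(F_\bullet)$, not an automatic one. Your sentence "for $j<\tau(i)$ the bound is strictly below the trivial one" is exactly backwards.

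Second, the fix is not dimension counting but inheritance from a smaller index: since $\tau(1)<\tau(2)<\cdots<\tau(c)$ (this is Proposition \ref{P:adm}(4) — so your worry that $\tau$ "need not be monotone" is misplaced; $\tau$ \emph{is} increasing on $\{1,\dots,c\}$, which is what makes the block-counting go through), for $j<\tau(i)$ one has $d_w(i,j)=\tilde k_{i'}$ where $i'$ is the largest index $\le i$ with $\tau(i')\le j$ (equivalently $w(\tilde k_{i'})\le k_j$; one checks $w(\tilde k_{i'}+1)>k_j$ using Proposition \ref{P:adm}(3) and (5)). Then the condition $F_{i'}\subset E_{\tau(i')}$, already among your retained containments, gives $F_{i'}\subset F_i\cap E_j$, whence $\dim(F_i\cap E_j)\ge\tilde k_{i'}=d_w(i,j)$. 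This is the paper's argument. In short: the Schubert conditions with $j<\tau(i)$ are redundant relative to the retained conditions at $(i',\tau(i'))$, not relative to the ambient intersection theory, and your proposal supplies no argument for them.
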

\begin{proof}
Note that for $i=1,\ldots,c$ we have
\begin{equation*}
d_w(i,\tau(i))=\tilde{k}_i
\end{equation*}
and hence the Schubert condition for the pair $(i,\tau(i))$ is
\begin{equation*}
F_i\subset E_{\tau(i)}\otimes k.
\end{equation*}

If $(i,j)$ satisfies $1\leq i\leq c$ and $j>\tau(i)$ then $d_w(i,j)=\tilde{k}_i$ and so the condition for the pair $(i,j)$ is
\begin{equation*}
F_i\subset E_j\otimes k.
\end{equation*}
As $E_{\tau(i)}\subset E_j$ this condition is implied by the Schubert condition for $(i,\tau(i))$.

Next we consider pairs $(i,j)$ where $i=1,\ldots,c$ and $j<\tau(i)$.  We claim that $d_w(i,j)=\tilde{k}_{i'}$ where $i'$ is the largest integer for which $\tau(i')\leq j$, or equivalently, for which $w(\tilde{k}_{i'})\leq k_j$.  For this it suffices to show that $w(\tilde{k}_{i'}+1)>k_j$.  Indeed
\begin{equation*}
w(\tilde{k}_{i'}+1)=w(\tilde{k}_{i'+1})-(\tilde{k}_{i'+1}-\tilde{k}_{i'}-1)=k_{\tau(i'+1)}-(k_{\tau(i'+1)}-k_{\tau(i'+1)-1})+1>k_j
\end{equation*}
where the first equality is by Proposition \ref{P:adm} 3, the second equality is by Proposition \ref{P:adm} 5, and the last inequality uses that $k_{\tau(i'+1)-1}\geq k_j$ as $\tau(i'+1)>j$.

The Schubert condition for the pair $(i,j)$ is
\begin{equation*}
\dim F_i\cap E_{j}\otimes k\geq \tilde{k}_{i'}
\end{equation*}
But if the Schubert condition for $(i',\tau(i'))$ holds then
\begin{equation*}
F_{i'}\subset F_i\cap E_j
\end{equation*}
and hence the Schubert condition for the pair $(i,j)$ holds.
\end{proof}

Let $(\F_i)/\text{Fl}_{\tilde{J}}$ be the universal flag of type $\tilde{J}$ on $V\otimes\O_{\text{Fl}_{\tilde{J}}}$.  Define $\Ca{E}_i$ by
\begin{equation*}
\Ca{E}_i=E_i\otimes\O_{\text{Fl}_{\tilde{J}}}.
\end{equation*} 
We will use the same symbols for their restrictions to any subvariety of $\text{Fl}_{\tilde{J}}$ when this will cause no confusion.

By the proposition, over $\overline{Y}_w$ we have
\begin{equation*}
\F_i\subset\Ca{E}_{\tau(i)}
\end{equation*}
and
\begin{equation*}
\F_{i-1}\subset\Ca{E}_{\tau(i-1)}\subset\Ca{E}_{\tau(i)-1}
\end{equation*}
for $i=1,\ldots,c$.  Hence we may form the maps of vector bundles (of the same rank by Proposition \ref{P:adm} 5)
\begin{equation*}
\F_i/\F_{i-1}\to \Ca{E}_{\tau(i)}/\Ca{E}_{\tau(i)-1}
\end{equation*}
and then
\begin{equation*}
C_i:\det(\F_i/\F_{i-1})\to\det(\Ca{E}_{\tau(i)}/\Ca{E}_{\tau(i)-1}).
\end{equation*}

We may think of $C_i$ as a section of the line bundle
\begin{equation*}
\underline{\hom}_{\O_{\overline{Y}_w}}(\det(\F_i/\F_{i-1}),\det(\Ca{E}_{\tau(i)}/\Ca{E}_{\tau(i)-1}))\simeq\det(\F_i/\F_{i-1})^\vee
\end{equation*}
where the last isomorphism is non canonical and depends on a choice of a trivialization of the trivial line bundle $\det(\Ca{E}_{\tau(i)}/\Ca{E}_{\tau(i)-1})$.

For the rest of this section, the goal is to study the vanishing locus of $C_i$, $i=1,\ldots,c$.  First we give an explicit description of the set of Bruhat descendants $D_w^B$.  Let $w=s_{i_1}\cdots s_{i_l}$ be a reduced expression for $w$ in terms of simple reflections.  Then any $v\in D_w^B$ can be written in the form
\begin{equation*}
v=s_{i_1}\cdots \hat{s}_{i_j}\cdots s_{i_l}
\end{equation*}
for some $j$.  In other words
\begin{equation*}
v=w(s_{i_{j+1}}\cdots s_{i_l})^{-1}s_{i_j}(s_{i_{j+1}}\cdots s_{i_l}).
\end{equation*}
Hence any $v\in D_w^B$ has the form $v=ws$ where $s\in W$ is a reflection (recall that a reflection is just an element conjugate to a simple reflection.)  Since it is easy to enumerate all reflections in $W$, in order to explicitly determine the set $D_w^B$ we should just check whether or not $ws$ is in $D_w^B$ for each reflection $s$.

We first consider the case that $s$ is conjugate to $s_g$, i.e. $s=(i\ 2g+1-i)$ for $i=1,\ldots,g$.  Then one easily checks that $ws\leq w$ if and only if $w(i)>g$ and that in this case one always has $l(ws)=l(w)-1$.  Now we need to determine when $ws\in W^{\tilde{J}}$.  We have $ws\in W^{\tilde{J}}$ if and only if for all $j=1,\ldots,c$ and all $\tilde{k}_{j-1}<l<m\leq \tilde{k}_j$, $w(l)<w(m)$.  Let $j$ be such that $\tilde{k}_{j-1}<i\leq \tilde{k}_j$.  Then if $i>\tilde{k}_{j-1}+1$ we have by Proposition \ref{P:adm} that $w(i-1)=w(i)-1>g$ (we cannot have $w(i-1)=g$ as $g=k_c$ but $i-1$ isn't of the form $\tilde{k}_l$ for any $l$.)  Hence
\begin{equation*}
ws(i-1)=w(i-1)>g\geq ws(i)
\end{equation*}
and so $ws\not\in W^{\tilde{J}}$.  On the other hand if $i=k_{j-1}+1$ then to show that $ws\in W^{\tilde{J}}$ there is nothing to check if $i=k_j$ and otherwise we only have to check that $ws(i)<ws(i+1)$.  But by Proposition \ref{P:adm} we have $ws(i+1)=w(i+1)=w(i)+1>g$ whereas $ws(i)\leq g$.  Finally we note that if $i=k_{j-1}+1$ then $w(i)>g$ if and only if $w(k_j)>g$, again by Proposition \ref{P:adm}.

Next consider the case where $s$ is conjugate to $s_i$, $i\not=g$.  Then $s=(a\ b)(2g+1-a\ 2g+1-b)$.  We can assume without loss of generality that $a<\text{min}(b,2g+1-b)$.  Then one checks that $ws\leq w$ if and only if $w(a)>w(b)$, and a slightly tedious calculation shows that under this condition, $l(ws)=l(w)-1$ if in addition $w(a)\leq g$.  A calculation similar to the one above determines when $ws\in W^{\tilde{J}}$.

To summarize we have
\begin{prop}\label{P:vcases}
Let $(w,J)$ be admissible.  Then $D_w^B$ consists of
\begin{itemize}
\item $v_i:=w(\tilde{k}_{i-1}+1\ 2g-\tilde{k}_{i-1})$ for $1\leq i\leq c$ for which $\tau(i)>c$.
\item $v_{a,b}:=w(\tilde{k}_{a-1}+1\ \tilde{k}_b)(2g-\tilde{k}_{a-1}\ 2g+1-\tilde{k}_b)$ for $1\leq a\leq c$, $c+1\leq b\leq 2c$ for which $\tau(b)<\tau(a)$ and $\tau(a)\leq c$.
\end{itemize}
\end{prop}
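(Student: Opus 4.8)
The plan is to turn the case analysis begun in the preceding paragraphs into a complete classification. Recall that $D_w^B=\{v\in W^{\tilde J}\mid v\le w,\ l(v)=l(w)-1\}$, and that the reduced-word argument given above shows every $v\in D_w^B$ has the form $v=ws$ for a reflection $s\in W$. So it suffices to run through all reflections of $W$ and decide for which of them one has simultaneously $ws<w$, $l(ws)=l(w)-1$, and $ws\in W^{\tilde J}$. The reflections of the type $C_g$ Weyl group split into two families: the $g$ sign changes $s=(i\ 2g+1-i)$, $i=1,\dots,g$, and the ordinary reflections $s=(a\ b)(2g+1-a\ 2g+1-b)$, which we normalize so that $a$ is the smallest of the four moved indices, i.e. $1\le a<\min(b,2g+1-b)$. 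I would filter these two families in turn.

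First I would handle the sign changes. A standard length count in type $C$ gives that $w(i\ 2g+1-i)<w$ if and only if $w(i)>g$, and in that case the drop in length is automatically $1$. The condition $ws\in W^{\tilde J}$ — that $ws$ be increasing on each block $(\tilde k_{j-1},\tilde k_j]$ — is then analyzed using Proposition \ref{P:adm}: since $w$ is affine-linear with slope $1$ on each such block, multiplying by $(i\ 2g+1-i)$ breaks monotonicity unless $i$ is the left endpoint $\tilde k_{j-1}+1$ of its block, and by parts (3) and (5) of Proposition \ref{P:adm} one has $w(\tilde k_{j-1}+1)>g$ exactly when $w(\tilde k_j)=k_{\tau(j)}>g$, i.e. when $\tau(j)>c$. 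Collecting these, the surviving descendants are precisely the $v_i=w(\tilde k_{i-1}+1\ 2g-\tilde k_{i-1})$ with $1\le i\le c$ and $\tau(i)>c$.

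Next I would handle the ordinary reflections. Here $ws<w$ iff $w(a)>w(b)$, and a somewhat more delicate but routine computation shows $l(ws)=l(w)-1$ precisely when in addition $w(a)\le g$ (equivalently, no intermediate position carries a $w$-value strictly between $w(b)$ and $w(a)$). As before, $ws\in W^{\tilde J}$ forces both moved positions to be left endpoints of $\tilde J$-blocks, say $\tilde k_{a-1}+1$ and $\tilde k_b$, and then the inequalities $w(a)>w(b)$ and $w(a)\le g$ translate, via Proposition \ref{P:adm}(3),(5) together with the monotonicity $\tau(1)<\dots<\tau(c)$, into the conditions $a\le c$, $b\ge c+1$, and $\tau(b)<\tau(a)\le c$. (A swap of two positions both $\le g$ is excluded at this point, since $\tau$ is increasing on $\{1,\dots,c\}$ and so admits no inversion among those block endpoints; likewise the $b\le g$ case collapses.) This produces exactly the $v_{a,b}$ of the statement. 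Assembling the two lists, and noting that the two families are visibly disjoint and without internal repetition from their explicit cycle structures, gives the proposition.

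The hard part will not be any single length computation — all of those are elementary — but the careful bookkeeping in the third paragraph: verifying that the combined requirements ``$ws$ increasing on every $\tilde J$-block'' and ``length drops by exactly one'' really do collapse to the clean $\tau$-inequalities, and, crucially, confirming that every reflection has been accounted for. This means checking that the normalization $a<\min(b,2g+1-b)$ together with the symmetry $i\leftrightarrow 2g+1-i$ exhausts all ordinary reflections, handling the boundary cases where a moved index equals some $\tilde k_j$, and making sure no $t^+$-type reflection slips through the filter.
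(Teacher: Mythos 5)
Your proposal is correct and follows essentially the same route as the paper: observe that every $v\in D_w^B$ has the form $ws$ for a reflection $s$, split the reflections of $W(C_g)$ into sign changes $(i\ 2g+1-i)$ and ordinary reflections $(a\ b)(2g+1-a\ 2g+1-b)$, then filter each family through the three requirements $ws<w$, $l(ws)=l(w)-1$, $ws\in W^{\tilde J}$ using Proposition \ref{P:adm}(3),(5) to rewrite everything in terms of $\tau$; your extra observation that monotonicity of $\tau$ on $\{1,\dots,c\}$ kills the case where both moved positions lie in $\{1,\dots,g\}$ is correct and fills in a step the paper leaves implicit. One small imprecision: for $b\ge c+1$ the position $\tilde k_b$ is $>g$, so the left endpoint forced by the $W^{\tilde J}$-condition is really its mirror $2g+1-\tilde k_b=\tilde k_{2c-b}+1$ rather than $\tilde k_b$ itself, and your parenthetical claim that ``$w(a)\le g$'' is equivalent to the no-intermediate-value condition is not the right formulation in type $C$ — but neither slip affects the argument or the final list.
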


As $\overline{Y}_w$ is normal and hence regular in codimension 1, it makes sense to talk about the order vanishing of a section of a line bundle along an irreducible divisor.

\begin{prop}\label{P:van}
If $1\leq i\leq c$ then $C_i$ is non vanishing on $Y_w$.  Moreover for $v\in D_w^B$ we have
\begin{equation*}
\text{\rm ord}_{\overline{Y}^B_v}(C_i)=\begin{cases}1&\text{if $v=v_i$, or if $v=v_{a,b}$ with $i=a$ or $2c+1-b$}  \\ 0&\text{\rm otherwise.}\end{cases}
\end{equation*}
\end{prop}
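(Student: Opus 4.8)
The plan is to handle the non-vanishing statement and the order computation separately, with the latter splitting into ``which divisors occur'' and ``with what multiplicity''. Throughout I use that, since $(w,J)$ is admissible, Proposition \ref{P:adm}(1) gives $W_JwW_{\tilde J}=wW_{\tilde J}$, so by (the proof of) Corollary \ref{P:schub} we have $Y_w=Y_w^B$, $\overline{Y}_w=\overline{Y}_w^B$ is normal, and $\overline{Y}_w-Y_w=\bigcup_{v\in D_w^B}\overline{Y}_v^B$ is a union of irreducible divisors.

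\emph{Non-vanishing on $Y_w$.} First I would show that over $Y_w$ one has $\F_i\cap\Ca{E}_{\tau(i)-1}=\F_{i-1}$. By Proposition \ref{P:adm}(3),(5) (exactly the computation already made in the proof of Proposition \ref{P:schubcon}), $w$ carries $\{\tilde k_{i-1}+1,\dots,\tilde k_i\}$ bijectively onto $\{k_{\tau(i)-1}+1,\dots,k_{\tau(i)}\}$. Hence on the open stratum $\dim(\F_i\cap\Ca{E}_{\tau(i)-1})=d_w(i,\tau(i)-1)=d_w(i-1,\tau(i)-1)=\tilde k_{i-1}$, the middle equality because the block $\{k_{\tau(i)-1}+1,\dots,k_{\tau(i)}\}$ is disjoint from $\{1,\dots,k_{\tau(i)-1}\}$, and the last because $\F_{i-1}\subseteq\Ca{E}_{\tau(i-1)}\subseteq\Ca{E}_{\tau(i)-1}$ (using $\tau(i-1)<\tau(i)$). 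Since $\F_{i-1}\subseteq\F_i\cap\Ca{E}_{\tau(i)-1}$ and the dimensions agree, equality holds, so $\F_i/\F_{i-1}\to\Ca{E}_{\tau(i)}/\Ca{E}_{\tau(i)-1}$ is injective on $Y_w$; as source and target have the same rank $k_{\tau(i)}-k_{\tau(i)-1}$ (Proposition \ref{P:adm}(5)), it is an isomorphism there and $C_i$ is non-vanishing on $Y_w$. In particular $V(C_i)$ is an honest effective divisor on the normal variety $\overline{Y}_w$, hence a non-negative combination $\sum_{v\in D_w^B}\mathrm{ord}_{\overline{Y}_v^B}(C_i)\,[\overline{Y}_v^B]$.

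\emph{Which $\overline{Y}_v^B$ occur.} The section $C_i$ vanishes at a flag exactly when $\F_i/\F_{i-1}\to\Ca{E}_{\tau(i)}/\Ca{E}_{\tau(i)-1}$ drops rank, i.e.\ when $\dim(\F_i\cap\Ca{E}_{\tau(i)-1})>\tilde k_{i-1}$; so $\overline{Y}_v^B$ is a component of $V(C_i)$ iff $d_v(i,\tau(i)-1)>\tilde k_{i-1}$, where $d_v(i,j)=\#\big(v\{1,\dots,\tilde k_i\}\cap\{1,\dots,k_j\}\big)$. By Proposition \ref{P:vcases} each relevant $v$ is $w$ times an explicit transposition (long-root type for the $v_i$, short-root type for the $v_{a,b}$), so $v\{1,\dots,\tilde k_i\}$ differs from $w\{1,\dots,\tilde k_i\}$ by moving one or two elements, and one computes $d_v(i,\tau(i)-1)$ directly from the description of $w\{1,\dots,\tilde k_i\}$ above together with $k_{2c-m}=2g-k_m$ and $\tau(2c+1-m)=2c+1-\tau(m)$. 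This bookkeeping should show that $d_v(i,\tau(i)-1)$ exceeds $\tilde k_{i-1}$ --- and then by exactly $1$ --- precisely for $v=v_i$ and for $v=v_{a,b}$ with $i\in\{a,\,2c+1-b\}$, and equals $\tilde k_{i-1}$ otherwise. For instance, for $v=v_i$ one finds $w(2g-\tilde k_{i-1})=2g-k_{\tau(i)-1}$, which lies in $\{1,\dots,k_{\tau(i)-1}\}$ exactly because the hypothesis $\tau(i)>c$ forces $k_{\tau(i)-1}\ge g$, so the count goes up by one; and the constraints $\tau(b)<\tau(a)\le c$ on $v_{a,b}$ rule out $a=i$ and $b=2c+1-i$ holding at once, so no $\overline{Y}_v^B$ can be a component ``twice''. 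This is the combinatorial heart of the argument and the most laborious step.

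\emph{Multiplicity one.} Fix $v\in D_w^B$ occurring in $V(C_i)$, and write $v=ws_\beta$ with $\beta$ the positive root crossed (long for $v=v_i$, short for $v=v_{a,b}$). Since $\overline{Y}_w$ is normal and $\overline{Y}_v^B$ has codimension one, $\mathcal{O}_{\overline{Y}_w,\eta_v}$ is a DVR, so it suffices to bound $\mathrm{ord}_{\overline{Y}_v^B}(C_i)\le1$. I would do this by restricting $C_i$ to the $T$-stable rational curve $E_\beta\subset\overline{Y}_w$ joining the torus fixed points $vP_{\tilde J}/P_{\tilde J}$ and $wP_{\tilde J}/P_{\tilde J}$ (connected by $s_\beta$), which meets $Y_v^B$ only at the first point and the open cell $Y_w^B$ at the second. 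On $E_\beta\cong\mathbf{P}^1$ the degree of $\det(\F_i/\F_{i-1})^{-1}$ is the difference of its $T$-weights at the two fixed points paired against $\beta^\vee$, and the comparison of $v\{1,\dots,\tilde k_i\}$ with $w\{1,\dots,\tilde k_i\}$ from the previous paragraph shows this is at most $1$; since $C_i$ is non-vanishing at $wP_{\tilde J}/P_{\tilde J}$ by the first paragraph, $C_i|_{E_\beta}$ vanishes only at $vP_{\tilde J}/P_{\tilde J}$, to order $1$, and as $E_\beta$ meets $\overline{Y}_v^B$ properly there this forces $\mathrm{ord}_{\overline{Y}_v^B}(C_i)=1$. (Alternatively, one may invoke the Chevalley--Demazure formula on the normal Schubert variety $\overline{Y}_w^B$, with $C_i$ the tautological section of $\det(\F_i/\F_{i-1})^{-1}$.) The main obstacle is the second paragraph --- keeping the indexing straight and verifying in each case that the relevant rank jumps by exactly one --- together with checking, in the third, that $E_\beta$ really lies in $\overline{Y}_w$ and crosses $\overline{Y}_v^B$ transversally.
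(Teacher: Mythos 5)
Your proposal is correct and follows essentially the paper's route: the non-vanishing and which-divisors steps rest on the same combinatorics from Propositions \ref{P:adm} and \ref{P:vcases} (the paper phrases them as explicit computations at the torus fixed points $(wF_i)$ and $(vF_i)$ together with $P$- and $B$-equivariance, you via Schubert dimension counts --- equivalent reformulations). For multiplicity one, the paper exhibits the explicit tangent vector $\langle e_{v(j)}+\epsilon e_{w(j)}\rangle_j$, which is precisely the first-order jet at $v$ of your $T$-stable curve $E_\beta$, so your degree computation is the globalized form of the same calculation.
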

\begin{proof}
For the first statement consider the point $(wF_i)\in Y_w(\FF_p)$.  By Proposition \ref{P:adm}, 
\begin{equation*}
w\{\tilde{k}_{i-1}+1,\ldots,\tilde{k}_i\}=\{k_{\tau(i)-1}+1,\ldots,k_{\tau(i)}\}
\end{equation*}
and hence
\begin{equation*}
wF_i/wF_{i-1}\to E_{\tau(i)}/E_{\tau(i)-1}
\end{equation*}
is an isomorphism.  This shows that $C_i$ doesn't vanish on the point $(wF_i)$ and by considering the $P$ action we see that $C_i$ is non vanishing on all of $Y_w$.

Now let $v\in D_w^B$.  An inspection of the cases in \ref{P:vcases} shows that for all $1\leq j\leq g$, we have $v(j)=w(j)$ except
\begin{itemize}
\item if $v=v_l$ and $j=\tilde{k}_{l-1}+1$,
\item or if $v=v_{a,b}$ and $j=\tilde{k}_{a-1}+1$ or $\tilde{k}_{2g-b}+1$.
\end{itemize}
and moreover in these exceptional cases we have $v(j)<w(j)=k_{\tau(i)-1}+1$.

Consider the point $(vF_i)\in Y_v^B(\FF_p)$.  Then unless $i=l$ in the first case, or $i=a$ or $2g+1-b$ in the second case, we still have that
\begin{equation*}
vF_i/vF_{i-1}\to E_{\tau(i)}/E_{\tau(i)-1}
\end{equation*}
is an isomorphism.  Now in the exceptional cases we see that
\begin{equation*}
v\{\tilde{k}_{i-1}+1,\ldots,\tilde{k}_i\}=\{v(\tilde{k}_{i-1}+1),k_{\tau(i)-1}+2,\ldots,k_{\tau(i)}\}
\end{equation*}
and so $\det (vF_i/vF_{i-1})$ is generated by the class of
\begin{equation*}
e_{v(\tilde{k}_{i-1}+1)}\wedge e_{k_{\tau(i)-1}+2}\wedge\cdots\wedge e_{k_{\tau(i)}}
\end{equation*}
but this maps to 0 in $E_{\tau(i)}/E_{\tau(i)-1}$ as $v(k_{i-1}+1)\leq k_{\tau(i)-1}$ and hence $e_{v(k_{i-1}+1)}\in E_{\tau(i)-1}$.  Now by considering the action of $B$ we can conclude that $C_i$ vanishes on all of $Y_v^B$ if and only if it vanishes at the point $(vF_i)$.

Thus to complete the proof of the proposition it suffices to show that when $C_i$ vanishes on $\overline{Y}_v^B$, it vanishes to order 1.  In order to do this it suffices to find a tangent vector $(F_j')\in\overline{Y}_w(\FF_p[\epsilon]/\epsilon^2)$ such that the underlying $\FF_p$ point of $(F_j')$ lies in $Y^B_v$ and such that the restriction of $C_i$ to $(F_j')$ is nonzero.

Indeed take the tangent vector to the point $vF_i$ given by
\begin{equation*}
F_j'=\langle e_{v(1)}+\epsilon e_{w(1)},\ldots,e_{v(\tilde{k}_j)}+\epsilon e_{w(\tilde{k}_j)}\rangle
\end{equation*}
Then $\det F'_i/F'_{i-1}$ is a free $k[\epsilon]/\epsilon^2$-module of rank 1 generated by the image of
\begin{equation*}
(e_{v(\tilde{k}_{i-1}+1)}+\epsilon e_{k_{\tau(i)-1}+1})\wedge (1+\varepsilon)e_{k_{\tau(i)-1}+2}\wedge\cdots\wedge(1+\varepsilon)e_{k_{\tau(i)}}
\end{equation*}
and this maps to
\begin{equation*}
\epsilon\cdot e_{k_{\tau(i)-1}+1}\wedge e_{k_{\tau(i)-1}+2}\wedge\cdots\wedge e_{k_{\tau(i)}}\in \det(E_{\tau(i)}/E_{\tau(i)-1})\otimes \FF_p[\epsilon]/\epsilon^2
\end{equation*}
under $C_i$.
\end{proof}

\subsection{An Inequality}\label{S:ineq}
Let $(w,J)$ be an admissible pair.  First we define some constants $c_i$, $i=1,\ldots, c$.  Let $N$ be the least common multiple of the lengths of the cycles of the permutation $\sigma$.  We let
\begin{equation*}
c_i=\sum_{j=0}^{N-1}\epsilon_{i,j} p^j
\end{equation*}
where for $j=1,\ldots,N$,
\begin{equation*}
\epsilon_{i,N-j}=\begin{cases}1&\text{if $\sigma^{j}(i)\leq c$}\\ -1&\text{otherwise.} \end{cases}
\end{equation*}

Now we prove
\begin{prop}\label{P:ineq}
If $(w,J)$ is admissible then for all $v\in D_w^B$ we have
\begin{equation*}
\sum_{i=1}^g c_i\text{\rm ord}_{\overline{Y}^B_v}(C_{c+1-i})>0
\end{equation*}
\end{prop}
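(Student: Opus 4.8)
The plan is to fix a Bruhat descendant $v\in D_w^B$ and reduce the claimed inequality to an elementary statement about the integers $c_i$. First I would use Proposition \ref{P:van}, together with the classification of $D_w^B$ in Proposition \ref{P:vcases}, to evaluate $\sum_{i=1}^g c_i\,\operatorname{ord}_{\overline{Y}^B_v}(C_{c+1-i})$: when $v=v_l$ (so $1\le l\le c$ and $\tau(l)>c$) the only factor $C_{c+1-i}$ vanishing along $\overline{Y}^B_v$ is $C_l$, and the sum collapses to $c_{c+1-l}$; when $v=v_{a,b}$ (so $1\le a\le c$, $c+1\le b\le 2c$, $\tau(b)<\tau(a)\le c$) the factors vanishing along $\overline{Y}^B_v$ are $C_a$ and $C_{2c+1-b}$, so the sum is $c_{c+1-a}+c_{b-c}$. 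Thus it is enough to prove $c_{c+1-l}>0$ in the first case and $c_{c+1-a}+c_{b-c}>0$ in the second.

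Next I would develop the arithmetic of the $c_i$. Extending the formula $c_i=\sum_{m=0}^{N-1}\delta(\sigma^{-m}(i))p^{m}$ to all $i\in\{1,\dots,2c\}$ (with $\delta(l)=1$ for $l\le c$ and $\delta(l)=-1$ for $l>c$), the relation $\sigma(2c+1-i)=2c+1-\sigma(i)$ of Proposition \ref{P:adm} gives immediately $c_{2c+1-i}=-c_i$, and a one-line manipulation gives the recursion
\[p\,c_i=c_{\sigma(i)}+\delta(\sigma(i))\,(p^N-1)\qquad(i\in\{1,\dots,2c\}).\]
Comparing the leading term $\delta(\sigma(i))p^{N-1}$ with the remaining digits (whose sum has absolute value $1+p+\dots+p^{N-2}<p^{N-1}$) yields the sign rule $\operatorname{sign}(c_i)=\delta(\sigma(i))$, and more generally that for $i\ne j$ the sign of $c_i-c_j$ equals that of $\delta(\sigma^{t}(i))-\delta(\sigma^{t}(j))$ for the least $t\ge1$ at which these differ (and $c_i=c_j$ precisely when no such $t$ exists). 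I would also record the translation $\sigma(i+c)=\tau(i)$ for $i\le c$ and $\sigma(i-c)=\tau(i)$ for $i>c$, and set $S:=\sigma(\{1,\dots,c\})$, so that $\sigma(\{c+1,\dots,2c\})=S^c$.

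With these in hand the first case is immediate: $\tau(l)>c$ says $\sigma(l+c)>c$, hence $\sigma(c+1-l)=2c+1-\sigma(l+c)\le c$, so $c_{c+1-l}>0$ by the sign rule. For the second case, $\tau(a)\le c$ forces $\sigma(c+1-a)=2c+1-\tau(a)>c$, so $c_{c+1-a}<0$; applying the recursion to $c_{c+1-a}+c_{b-c}$, the two correction terms $\delta(\sigma(c+1-a))(p^N-1)$ and $\delta(\sigma(b-c))(p^N-1)$ cancel (the first index lands $>c$, the second $\le c$), and using $c_{\sigma(c+1-a)}=c_{2c+1-\tau(a)}=-c_{\tau(a)}$ one gets $p\,(c_{c+1-a}+c_{b-c})=c_{\tau(b)}-c_{\tau(a)}$. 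Since $p>0$, it remains to show $c_{\tau(b)}>c_{\tau(a)}$, where $\tau(b)<\tau(a)\le c$ and, crucially, $\tau(b)\in S$ while $\tau(a)\in S^c$ (because $\tau(b)=\sigma(b-c)$ with $b-c\le c$ and $\tau(a)=\sigma(a+c)$ with $a+c>c$).

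This last inequality is the combinatorial heart of the argument, and I expect it to be the main obstacle. The plan is to iterate the recursion: as long as $\sigma^{t}(\tau(b))$ and $\sigma^{t}(\tau(a))$ lie on the same side of $c$, the correction terms cancel and one is reduced — after multiplying by the positive number $p$ — to the same comparison for the pair $(\sigma^{t}(\tau(b)),\sigma^{t}(\tau(a)))$; at the first $t$ where they lie on opposite sides of $c$, the sign rule applied to the (by then smaller) pair decides the sign of the difference, and one must show it decides in favour of $\tau(b)$, i.e.\ that $\sigma^{t}(\tau(b))\le c<\sigma^{t}(\tau(a))$ at that step. Equivalently: for $1\le j<j'\le c$ with $j\in S$ and $j'\in S^c$, at the least $t\ge1$ with $\delta(\sigma^{t}(j))\ne\delta(\sigma^{t}(j'))$ one has $\delta(\sigma^{t}(j))=+1$, and such a $t$ exists (the two forward $\delta$-orbits are not identical). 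I would prove this by an induction that tracks how the pair $(j,j')$ and its membership in $S$ and $S^c$ evolves under $\sigma$, using that $\sigma$ is increasing on each of $\{1,\dots,c\}$ and $\{c+1,\dots,2c\}$ and is compatible with $i\mapsto 2c+1-i$; the bookkeeping is routine once the correct inductive invariant is isolated, and it is exactly the admissibility of $(w,J)$ (through Propositions \ref{P:adm} and \ref{P:vcases}) that rules out the degenerate possibility $c_{\tau(b)}=c_{\tau(a)}$.
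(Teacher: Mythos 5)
Your reduction matches the paper's: by Propositions \ref{P:van} and \ref{P:vcases} the claim reduces to $c_{c+1-l} > 0$ when $v = v_l$ and $c_{c+1-a} + c_{b-c} > 0$ when $v = v_{a,b}$, your arithmetic of the $c_i$ (the recursion $p\,c_i = c_{\sigma(i)} \pm (p^N-1)$, the anti-symmetry $c_{2c+1-i} = -c_i$, the sign rule via the leading $p$-adic digit) is correct, and your treatment of the first case is complete and essentially the paper's. In the second case your division by $p$, reducing to $c_{\tau(b)} > c_{\tau(a)}$ with $\tau(b) < \tau(a) \leq c$, is an equivalent algebraic repackaging of the paper's direct argument on the sum. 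Both end up at the same combinatorial assertion: at the first $t \geq 1$ where $\sigma^t(\tau(b))$ and $\sigma^t(\tau(a))$ land on opposite sides of $c$, it is $\sigma^t(\tau(b))$ that falls on the side $\leq c$.

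The gap is that you do not prove this assertion. You defer it to an ``induction whose bookkeeping is routine once the correct inductive invariant is isolated,'' but isolating that invariant is the content of the step, and the paper singles it out as a lemma inside the proof. The invariant is: if a pair $(r, s)$ satisfies $r > c \geq s$ and $r + s < 2c+1$, then $\sigma(r) + \sigma(s) < 2c+1$ as well. Its one-line proof --- $s < 2c+1-r \leq c$, so $\sigma(s) < \sigma(2c+1-r) = 2c+1 - \sigma(r)$ by the monotonicity and reflection-compatibility of $\sigma$ from Proposition \ref{P:adm} --- is exactly what rules out the possibility that $\sigma$ eventually sends both entries of the pair above $c$, and it forces the first same-side event to land below $c$. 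Your sketch names the right tools (monotonicity on each half, compatibility with $i \mapsto 2c+1-i$, the $S$/$S^c$ data guaranteeing separation occurs) but never produces this inequality, and without it the sign of $c_{\tau(b)} - c_{\tau(a)}$ is not pinned down. Supplying this lemma, or its equivalent in your $(j, j')$ coordinates, would close the argument.
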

\begin{proof}
We begin with a trivial remark.  If $r(x)=\sum_{j=0}^n a_jx^j$ is a polynomial with $a_j\in\{0,1,-1\}$ for all $j$, then if $a_n=1$, $r(p)>0$ for any prime number $p$.  Indeed, this is so because $p^n-p^{n-1}-\cdots-1>0$ for any prime number $p$.

Now we split up into two cases according to the two possibilities for $v$ in Proposition \ref{P:vcases}.  First suppose we have $v=v_j$ so that $1\leq j\leq c$ and $\tau(j)>g$.  Then by Proposition \ref{P:van}
\begin{equation*}
\sum_{i=1}^g c_i\text{\rm ord}_{\overline{Y}_v^B}(C_{c+1-i})=c_{c+1-j}.
\end{equation*}
In order to win we just need to check that the leading coefficient $\varepsilon_{j,N-1}$ of $c_j$ is $1$.  But $\sigma(c+1-j)=\tau(2c+1-j)=2c+1-\tau(j)\leq c$.

Now consider the case where $v=v_{a,b}$ with $1\leq a\leq c$, $c+1\leq b\leq 2c$, $\tau(b)<\tau(a)$ and $\tau(a)\leq c$.  Then by Proposition \ref{P:van},
\begin{equation*}
\sum_{i=1}^g c_i\text{\rm ord}_{\overline{Y}_v^B}(C_{c+1-i})=c_{c+1-a}+c_{b-c}.
\end{equation*}
This will be a polynomial in $p$ all of whose coefficients are either $0,2,$ or $-2$.  First we note that the coefficients of this polynomial are not identically zero.  Indeed, we have
\begin{equation*}
\sigma^N(c+1-a)=c+1-a\leq c\quad\text{and}\quad\sigma^N(b-c)=b-c\leq c
\end{equation*}
and hence
\begin{equation*}
\epsilon_{a,0}=\epsilon_{2c+1-b,0}=1
\end{equation*}
and hence the constant term is 2.

We need to prove that the leading nonzero term is positive.  From the recipe above, we see that the leading term will be $\pm 2p^{N-i}$ where $i$ is the smallest positive integer for which the pair of numbers
\begin{equation*}
\sigma^i(c+1-a),\quad\text{and}\quad \sigma^i(b-c)
\end{equation*}
are either both $\leq c$ or both $>c$.  By what we have said above such an $i$ exists (and is $\leq N$.)  Moreover the sign will be positive of they are both $\leq c$ and negative otherwise.  Let $r=\sigma(c+1-a)$, and $s=\sigma(b-c)$.  Then
\begin{equation*}
r=\sigma(c+1-a)=\tau(2c+1-a)=2c+1-\tau(a)
\end{equation*}
and
\begin{equation*}
s=\sigma(b-c)=\tau(b)
\end{equation*}
and so the conditions on $a$ and $b$ show that we have
\begin{equation*}
r>c\geq s
\end{equation*}
and
\begin{equation*}
r+s<2c+1.
\end{equation*}

What we want now follows by repeated application of the following lemma:
\begin{lem}
Let $r$ and $s$ satisfy
\begin{equation*}
r>c\geq s
\end{equation*}
and
\begin{equation*}
r+s<2c+1.
\end{equation*}
Then exactly one of the following three possibilities holds:
\begin{itemize}
\item $\sigma(r),\sigma(s)\leq g$.
\item The pair $(r',s')=(\sigma(r),\sigma(s))$ satisfies the hypotheses of the lemma.
\item The pair $(r',s')=(\sigma(s),\sigma(r))$ satisfies the hypotheses of the lemma.
\end{itemize}
\end{lem}
\begin{proof}
Note that the conclusion of the lemma is equivalent to the claim that $\sigma(r)+\sigma(s)< 2c+1$.  But the hypothesis on $r$ and $s$ imply that $s<2c+1-r\leq g$.  But then
\begin{equation*}
\sigma(s)<\sigma(2g+1-r)=2c+1-\sigma(r)
\end{equation*}
which is what we wanted.
\end{proof}
\end{proof}

\section{Moduli of Abelian Varieties with Parahoric Level, Local Models, and Kottwitz-Rapoport Stratification}

In this section we will recall the theory of moduli spaces of abelian varieties with parahoric level structure and their local models.  A basic reference for this subject is the original paper of de Jong \cite{dJ93}.  We also refer to the book of Rapoport-Zink \cite{RZ96} and the survey article of Haines \cite{Ha05}.

\subsection{The moduli problem}
We denote by $X_J$ the moduli space of principally polarized abelian schemes over a base of characteristic $p$ with parahoric level structure of type $J$, and an auxiliary prime to $p$ level structure which will be suppressed throughout this section because it plays no role except to rigidify our moduli problem.  Here are two equivalent descriptions of the moduli problem
\begin{enumerate}
\item $X_J$ parameterizes tuples $(\{A_i\},\{\phi_i:A_i\to A_{i+1}\},\lambda,\lambda')$ where
\begin{enumerate}
\item $A_i$ for $i=0,\ldots,c$ are abelian schemes of dimension $g$.
\item $\lambda:A_0\to \hat{A}_0$ and $\lambda':A_{c}\to\hat{A}_{c}$ are principal polarizations.
\item $\phi_i:A_i\to A_{i+1}$ for $i=0,\ldots,c-1$ are isogenies of degree $p^{k_i-k_{i-1}}$
\end{enumerate}
Moreover we impose the condition that the composition
\begin{equation*}
A_0\overset{\phi_0}{\to} A_1\to\cdots \to A_{c-1}\overset{\phi_{c-1}}{\to}A_c\overset{\lambda'}{\to}\hat{A}_c\overset{\phi_{c-1}^\vee}{\to}\hat{A}_{c-1}\to\cdots\to \hat{A}_1\overset{\phi_0^\vee}\to \hat{A}_0
\end{equation*}
is $p\lambda$.
\item $X_J$ parameterizes tuples $(A_0,\lambda,\{G_i\})$ where $(A_0,\lambda)$ is a principally polarized abelian scheme and $G_i$ for $i=1,\ldots,c$ is a finite flat subgroup scheme of $A_0[p]$ of order $p^{k_i}$ which satisfy $G_i\subset G_{i+1}$ for $i=1,\ldots, c-1$, and $G_c$ is isotropic for the Weil pairing on $A_0[p]$ induced by $\lambda$.
\end{enumerate}

To pass from description 1 to description 2, one takes $G_i=\ker \phi_i\phi_{i-1}\cdots \phi_0$.  To go in the other direction, take $A_i=A_0/G_i$ and let $\phi_i$ be the canonical map $A_0/G_{i-1}\to A_0/G_i$.  Finally to get $\lambda'$ consider the diagram
\begin{equation*}
\begin{CD}
A_0@>p\lambda>>\hat{A}_0\\
@VVV @AAA\\
A_c @. \hat{A}_c
\end{CD}
\end{equation*}
and argue that the existence of a bottom arrow making the diagram commute is equivalent to $G_c$ being isotropic.  

When we consider points of $X_J$ we will use all of the notation in 1 and 2 above, as well as the following additional notation.  We let $A_i=\hat{A}_{2c-i}$ for $c<i\leq 2c$.  We let $\phi_c=\phi_{c-1}^\vee\circ \lambda':A_c\to \hat{A}_{c-1}$, and we let $\phi_i=\phi_{2c-i}^\vee: A_i\to A_{i+1}$ for $c<i<2c$.  Using this notation, the chain of isogenies
\begin{equation*}
A_0\to A_1\to\cdots\to A_c\to\cdots\to A_{2c}
\end{equation*}
is self dual in the sense that the diagram
\begin{equation*}
\begin{CD}
A_0@>>>\cdots @>>> A_c@>>>\cdots @>>> A_{2c}\\
@| @. @VV\lambda'V @. @|\\
\hat{A}_{2c}@>>> \cdots @>>> \hat{A}_c @>>> \cdots @>>> \hat{A}_0
\end{CD}
\end{equation*}
commutes (here the top row is the original sequence, the bottom row is its dual, and all the vertical maps are the defining equalities except for the one in the middle which is $\lambda'$.)  Next we let $G_i=\ker(A_0\to A_i)$ for $c<i\leq 2c$.  Then the sequence of finite flat subgroup schemes
\begin{equation*}
0=G_0\subset G_1\subset\cdots\subset G_c\subset\cdots\subset G_{2c}=A_0[p]
\end{equation*}
is self dual for the Weil pairing on $A[p]$ induced by $\lambda$.  In particular we obtain isomorphisms $G_i/G_{i-1}\simeq (G_{2c+1-i}/G_{2c-i})^D$, where we recall that $G^D$ denotes the Cartier dual of the finite flat group scheme $G$.

We denote the universal point over $X_J$ by $(\{\Ca{A}_i,\{\phi_i\},\lambda,\lambda')$ and the universal subgroups by $\Ca{G}_i$.  As usual we will also use the same symbols to for the restrictions of these objects to subschemes of $X_J$, when the meaning is clear from context.

\subsection{Local models}
For $i=0,\ldots,2c-1$ we define a map
\begin{equation*}
\phi_i:V\to V
\end{equation*}
by
\begin{equation*}
\phi_i(e_j)=\begin{cases}0&\text{if $k_i<j\leq k_{i+1}$}\\ e_j&\text{otherwise.}\end{cases}
\end{equation*}
Then we define the local model $X^{\text{loc}}_J/\FF_p$ as the moduli space of tuples $(W_i)_{i=0,\ldots,c}$ where $W_i$ is a local direct summand of $V\otimes\O_S$ of rank $g$ which satisfy:
\begin{enumerate}
\item $\phi_i(W_i)\subset W_{i+1}$ for $i=0,\ldots,c-1$.
\item $W_0$ and $W_c$ are isotropic for $\psi$.
\end{enumerate}

For $i=c,\ldots,2c$ let $W_i=W_{2c-i}^\perp$.  With this notation, 1 is equivalent to $\phi_i(W_i)\subset W_{i+1}$ for $i=c,\ldots,2c-1$.

In this paper we will be especially interested in the irreducible component $X^{\text{loc},0}_J\subset X^{\text{loc}}_J$ where $W_c=\langle e_{g+1},\ldots e_{2g}\rangle \otimes\O_S$, or equivalently the locus where
\begin{equation*}
(\phi_{2c-1}\cdots\phi_{c+1}\phi_{c})|_{W_g}=0.
\end{equation*}

We now explain how to identify $X^{\text{loc},0}_J$ with a certain Schubert variety.  Indeed we will construct a map
\begin{equation*}
\psi:X^{\text{loc},0}_J\to \text{Fl}_{\tilde{J}}
\end{equation*}
and show that it defines an isomorphism $\psi:X^{\text{loc},0}\simeq \overline{Y}_x$.  Given a point $(W_i)$ of $X^{\text{loc},0}_J(S)$ we define
\begin{equation*}
F_i=\phi_{2c-1}\cdots \phi_{2c-i}(W_{2c-i}).
\end{equation*}
for $i=0,\ldots,c$.  Then one checks that $F_i\subset V\otimes\O_S$ is a local direct summand of rank $\tilde{k}_i$, and moreover one has inclusions $0=F_0\subset F_1\subset\cdots\subset F_c$.  Moreover, $F_c=W_{2c}$ is isotropic.  Hence we can define $F_{2c-i}=F_i^\perp$ for $i=0,\ldots c$ and we obtain a flag of type $\tilde{J}$.  It is clear from the definition that $F_i\subset E_{k_{i+c}}\otimes\O_S$ for $i=1,\ldots,c$ and hence we have defined a map $\psi:X_J^{\text{loc},0}\to \overline{Y}_x$.

This map has an inverse defined as follows: given a point $(F_i)\in \overline{Y}_x(S)$ we take
\begin{equation*}
W_{2c-i}=(\phi_{2c-1}\cdots\phi_{2c-i})^{-1}(F_i)=F_i\oplus \langle e_{k_{g+i}+1},\ldots,e_{2g}\rangle\otimes\O_S
\end{equation*}
for $i=0,\ldots,c$, and
\begin{equation*}
W_i=W_{2c-i}^\perp
\end{equation*}
for $i=0,\ldots,c$.  Hence we have an isomorphism $\psi:X_J^{\text{loc},0}\simeq\overline{Y}_x$.

Let $\overline{P}_J/k$ be the the subgroup
\begin{equation*}
\overline{P}_J\subset \text{GSp}(V,\psi)\times \GL(V)^{c-1}\times\text{GSp}(V,\psi)
\end{equation*}
of elements $(g_0,g_1,\ldots,g_c)$ satisfying
\begin{equation*}
\phi_ig_i=g_{i+1}\phi_i
\end{equation*}
for $i=0,\ldots,c-1$, and such that $g_0$ and $g_c$ have the same multiplier.  Then it is known that $\overline{P}_J$ is smooth (it is the special fiber of a parahoric group scheme.)  Moreover, $\overline{P}_J$ acts on $X_J^{\text{loc}}$ via $(g_i)\cdot(W_i)=(g_iW_i)$.  It is known that the action of $\overline{P}_J$ has finitely many orbits on $X_g^{\text{loc}}$ and they are naturally indexed by a certain finite set of affine Weyl group double cosets.  We will describe the orbits of $\overline{P}_J$ on the component $X_J^{\text{loc},0}$ explicitly in terms of (ordinary) Schubert varieties below, and this is all that will be used in this paper.

For a point $(g_i)_{i=0,\ldots,c}$ of $\overline{P}_J$ we denote
\begin{equation*}
g_i=\mu (g_{2c-i}^*)^{-1}
\end{equation*}
for $i=c+1,\ldots,2c$, where the adjoint is taken with respect to $\psi$ and $\mu$ is the common multiplier of $g_0$ and $g_c$.  With this definition we have $g_0=g_{2c}$ and $\phi_ig_i=g_{i+1}\phi_i$ for $i=c,\ldots,2c-1$.

There is a surjective homomorphism
\begin{align*}
\overline{P}_J&\to P\\
(g_i)&\mapsto g_0
\end{align*}
To see that $g_0$ actually lies in $P$ note the identity
\begin{equation*}
\phi_{i-1}\cdots\phi_0g_0=g_i\phi_{i-1}\cdots\phi_0.
\end{equation*}
The kernel of right hand side is $E_i$, while the kernel of the left hand side is $g_0^{-1}E_i$.  Hence $g_0E_i=E_i$.

Similarly, one shows that $g_c$ leaves $\langle e_{g+1},\ldots,e_{2g}\rangle$ invariant, and hence $X^{\text{loc},0}_J$ is stable under $\overline{P}_J$.  Moreover it is clear that the map $X_J^{\text{loc},0}\to \text{Fl}_{\tilde{J}}$ defined above is $\overline{P}_J$ equivariant, where we let $\overline{P}_J$ act on $\text{Fl}_{\tilde{J}}$ via its map to $P$.

Then we get a decomposition
\begin{equation*}
X^{\text{loc},0}_J=\coprod_{w\in {}^JW^{\tilde{J}}, w\leq x}X^{\text{loc}}_w
\end{equation*}
into irreducible locally closed subvarieties by transferring the stratification of $\overline{Y}_x$ by Schubert varieties.  Moreover this is precisely the decomposition of $X^{\text{loc},0}_J$ into its orbits under $\overline{P}_J$.  As usual we denote the Zariski closure of $X_{J,w}^{\text{loc}}$ by $\overline{X}_{J,w}^{\text{loc}}$.

Next we want to translate some of our constructions and results from the last section to $X^{\text{loc},0}$.  This is pure bookkeeping.  First we observe that for any point $(W_i)\in X^{\text{loc},0}(k)$, $k$ a field of characteristic $p$, the natural maps
\begin{equation*}
W_i\to W_j
\end{equation*}
for $c\leq i\leq j\leq 2c$ are of rank $g-(k_j-k_i)$.  Then for $i=0,\ldots,c$ we define
\begin{equation*}
U_i=\im( W_{2c-i}\to W_{2c})\qquad V_i=\coker(W_{2c-i}\to W_{2c})
\end{equation*}
and
\begin{equation*}
U_i'=\ker(W_c\to W_{2c-i})\qquad V_i'=\im(W_c\to W_{2c-i}).
\end{equation*}

Over all of $X^{\text{loc}}_J$ we have the tautological vector bundles $\Ca{W}_i$, $i=0,\ldots,2c$ corresponding to the universal point of $X^{\text{loc}}_J$.  By the observation above, over $X_J^{\text{loc},0}$ we can define locally free sheaves
\begin{equation*}
\Ca{U}_i=\im(\Ca{W}_{2c-i}\to \Ca{W}_{2c})\qquad \Ca{V}_i=\coker(\Ca{W}_{2c-i}\to \Ca{W}_{2c})
\end{equation*}
and
\begin{equation*}
\Ca{U}_i'=\ker(\Ca{W}_c\to \Ca{W}_{2c-i})\qquad \Ca{V}_i'=\im(\Ca{W}_c\to \Ca{W}_{2c-i}).
\end{equation*}
which, by considering the definition of the map $\psi$, fit in diagrams
\begin{equation*}
\begin{CD}
0@>>> \Ca{U}_i@>>>\Ca{W}_{2c}@>>>\Ca{V}_i@>>> 0\\
@. @VVV @VVV @VVV @.\\
0@>>> \psi^*\Ca{F}_{c-i}@>>> \psi^*\Ca{F}_c@>>> \psi^*(\Ca{F}_c/\Ca{F}_{c-i})@>>> 0
\end{CD}
\end{equation*}
and
\begin{equation*}
\begin{CD}
0@>>> \Ca{U}_i'@>>> \Ca{W}_c@>>> \Ca{V}_i' @>>> 0\\
@. @VVV @VVV @VVV @.\\
0@>>> \psi^*\Ca{E}_{2c-i}/\Ca{E}_c@>>> \psi^*\Ca{E}_{2c}/\Ca{E}_c@>>> \psi^*\Ca{E}_{2c}/\Ca{E}_{2c-i}@>>> 0
\end{CD}
\end{equation*}
where the rows are exact and where all the vertical maps are canonical isomorphisms induced by $\psi$.  We remark that in the second diagram, all the vector bundles are trivial.

Next we record:
\begin{prop}\label{P:less}
For $0\leq i,j\leq c$ and any point $(W_i)\in X_J^{\text{\rm loc},0}(k)$ consider the diagram
\begin{equation*}
\begin{CD}
0@>>> U_i@>>>W_0@>>> V_i@>>>0\\
@. @. @VVV @. @.\\
0@>>> U_j' @>>> W_c @>>> V_j'@>>>0
\end{CD}
\end{equation*}
Then the following are equivalent
\begin{enumerate}
\item We can fill in an arrow $U_i\to U_j'$ in the above diagram.
\item We can fill in an arrow $V_i\to V_j'$ in the above diagram.
\item If $(F_i)=\psi((W_i))$ is the corresponding point of $\text{\rm Fl}_{\tilde{J}}(k)$ then $F_{c-i}\subset E_{2c-j}\otimes k$.
\end{enumerate}
\end{prop}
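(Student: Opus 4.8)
The three conditions all concern the same piece of linear algebra: whether the composite $W_0 \to W_c$ (which is an inclusion $U_i \hookrightarrow W_0$ followed by a map into $W_c$, up to the identifications) is compatible with the filtrations. The plan is to reduce everything to the dictionary provided by the isomorphism $\psi \colon X_J^{\text{loc},0} \simeq \overline{Y}_x$ and the two exact diagrams recording $\Ca{U}_i, \Ca{V}_i, \Ca{U}_i', \Ca{V}_i'$ in terms of the universal flag $(\F_i)$ and the standard flags $(\Ca{E}_i)$. Concretely, I would first translate each of (1), (2), (3) into a statement purely about the flag $(F_i)$ and the standard filtration.

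\emph{First}, I would unwind (3). By the construction of $\psi$ and its inverse, $W_0 = W_{2c}^\perp$ and $W_c$ is the isotropic subspace with $W_c = \langle e_{g+1},\dots,e_{2g}\rangle \otimes k$ on the base component; the map $W_0 \to W_c$ in the diagram is the natural one coming from the chain of isogenies (equivalently, from $\phi_{2c-1}\cdots\phi_c$). Using the identifications $U_i \simeq F_{c-i}$, $V_i \simeq F_c/F_{c-i}$ from the first diagram and $U_j' \simeq E_{2c-j}/E_c$, $V_j' \simeq E_{2c}/E_{2c-j}$ from the second diagram, filling in an arrow $U_i \to U_j'$ means exactly that the image of $U_i$ in $W_c$ lands in $U_j'$; chasing through, this is the condition that $F_{c-i}$, viewed inside $W_0$ and then mapped to $W_c \simeq E_{2c}/E_c$, has image in $E_{2c-j}/E_c$. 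Since the map $W_0 \to W_c$ on the component $X_J^{\text{loc},0}$ is (up to the canonical isomorphisms) just the quotient $V \otimes k \to (V/E_c)\otimes k$ restricted appropriately, this image condition is literally $F_{c-i} \subset E_{2c-j} \otimes k$, giving (1) $\Leftrightarrow$ (3).

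\emph{Second}, (1) $\Leftrightarrow$ (2) is the standard snake-type observation: given a commutative square
\begin{equation*}
\begin{CD}
0 @>>> U_i @>>> W_0 @>>> V_i @>>> 0\\
@. @. @VVV @. @.\\
0 @>>> U_j' @>>> W_c @>>> V_j' @>>> 0
\end{CD}
\end{equation*}
with exact rows, the vertical map $W_0 \to W_c$ carries $U_i$ into $U_j'$ if and only if it induces a map $V_i \to V_j'$ on the quotients; both are equivalent to saying the composite $U_i \hookrightarrow W_0 \to W_c \twoheadrightarrow V_j'$ vanishes. This is purely formal and needs no geometry. So the whole proposition collapses to establishing the single equivalence (1) $\Leftrightarrow$ (3), i.e. correctly matching the abstract cokernel/kernel description of $W_0 \to W_c$ on $X_J^{\text{loc},0}$ with the explicit flag picture.

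\emph{The main obstacle} I anticipate is purely bookkeeping: getting the index shifts right (the $c-i$ versus $i$, the $2c-j$ versus $j$, and the interplay with the perp operation $W_i = W_{2c-i}^\perp$, $F_i = F_i^\perp$) and verifying that the canonical isomorphisms in the two displayed diagrams are genuinely compatible with the map $W_0 \to W_c$ in the statement — that is, that the square one wants to chase actually commutes on the nose and not merely up to sign or up to a twist. Once the identifications $U_i \simeq \psi^*\F_{c-i}$ and $U_j' \simeq \psi^*(\Ca{E}_{2c-j}/\Ca{E}_c)$ are in hand (these are exactly the left columns of the two diagrams preceding the proposition), the equivalence is immediate, so I would spend the bulk of the write-up carefully stating which map is which and then invoke the diagrams. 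No hard input beyond the already-established isomorphism $\psi$ and the exactness of those two diagrams is required.
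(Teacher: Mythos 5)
Your proposal is correct and follows essentially the same route as the paper: the paper also dismisses $(1)\Leftrightarrow(2)$ as obvious (the formal observation that $U_i\to U_j'$ can be filled in iff $V_i\to V_j'$ can, both being equivalent to the vanishing of the composite $U_i\hookrightarrow W_0\to W_c\twoheadrightarrow V_j'$), and then transports $(1)$ via the isomorphism $\psi$ and the two exact diagrams preceding the proposition to the question of whether $F_{c-i}\to E_{2c-j}/E_c$ fills in the square with middle arrow $F_c\hookrightarrow V\twoheadrightarrow V/E_c$, which is exactly condition $(3)$.
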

\begin{proof}
The equivalence of the first two is obvious.  For the first and third, note that passing to the description in terms of $\text{Fl}_{\tilde{J}}$ we are asking whether the diagram
\begin{equation*}
\begin{CD}
F_{c-i}@>>> F_c\\
@. @VVV\\
E_{2c-j}/E_c@>>> E_{2c}/E_c
\end{CD}
\end{equation*}
can be filled in.
\end{proof}

Next for $i=1,\ldots,c$ we define the reduced closed subscheme $X_J^{\text{loc},i}\subset X_J^{\text{loc},0}$ by specifying that a field valued point $(W_i)\in X_J^{\text{loc},0}(k)$ lies in $X_J^{\text{loc},i}$ if and only if $\ker \phi_{c-1}\cdots\phi_{c-i}=\langle e_{k_{c-i}+1},\ldots,e_g\rangle\subset W_{c-i}$.

We want to understand what this condition means in terms of $(F_i)=\psi((W_i))$.  Then we note that
\begin{equation*}
\langle e_{k_{c-i}+1},\ldots,e_g\rangle\subset W_{c-i}
\end{equation*}
if and only if
\begin{equation*}
W_{c+i}=W_{c-i}^\perp\subset \langle e_{k_{c-i}+1},\ldots,e_g\rangle^\perp\subset\langle e_1,\ldots,e_g\rangle\oplus\langle e_{k_{c+i}+1},\ldots,e_{2g}\rangle
\end{equation*}
which holds if and only if
\begin{equation*}
F_i\subset\langle e_1,\ldots,e_g\rangle.
\end{equation*}

To summarize, we have shown
\begin{prop}\label{P:i}
For $i=1,\ldots,c$ we have
\begin{equation*}
X_J^{\text{\rm loc},i}=\coprod_{w\in{}^JW^{\tilde{J}},w\leq x,w\{1,\ldots,\tilde{k}_i\}\subset\{1,\ldots,g\}} X^{\text{\rm loc}}_{J,w}
\end{equation*}
\end{prop}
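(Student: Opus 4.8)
The plan is to transfer everything to the model $\overline{Y}_x$ via the isomorphism $\psi\colon X_J^{\text{loc},0}\simeq\overline{Y}_x$ constructed above, and to recognize the condition defining $X_J^{\text{loc},i}$ as a single Schubert condition on the universal flag. The starting point is the translation already carried out just before the statement: since $k_c=g$ one has $E_c=\langle e_1,\ldots,e_g\rangle$, and for a field-valued point $(W_j)$ of $X_J^{\text{loc},0}$ with image $(F_j)=\psi((W_j))$, the condition $\ker(\phi_{c-1}\cdots\phi_{c-i})=\langle e_{k_{c-i}+1},\ldots,e_g\rangle\subset W_{c-i}$ is equivalent (using $W_{c+i}=W_{c-i}^\perp$) to $F_i\subset E_c\otimes k$. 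As $X_J^{\text{loc},i}$ is by definition the reduced closed subscheme cut out by this condition and $\psi$ is an isomorphism of $k$-schemes, it suffices to identify the underlying set of $\{(F_j)\in\overline{Y}_x : F_i\subset E_c\otimes k\}$.

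Next I would use the Schubert stratification $\overline{Y}_x=\coprod_{w\in{}^JW^{\tilde J},\,w\le x}Y_w$, the one transported to give the decomposition of $X_J^{\text{loc},0}$. On the cell $Y_w$ every field-valued point satisfies the Schubert equalities $\dim(F_i\cap E_j\otimes k)=d_w(i,j)$ for all $0\le i,j\le 2c$; taking $j=c$ gives
\[
\dim(F_i\cap E_c\otimes k)=d_w(i,c)=\#\bigl(w\{1,\ldots,\tilde k_i\}\cap\{1,\ldots,g\}\bigr).
\]
Since $\dim F_i=\tilde k_i$ and $w$ is a permutation, the inclusion $F_i\subset E_c\otimes k$ holds at one (hence every) point of $Y_w$ exactly when $d_w(i,c)=\tilde k_i$, i.e.\ when $w\{1,\ldots,\tilde k_i\}\subset\{1,\ldots,g\}$, and fails at every point of $Y_w$ otherwise. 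Therefore the locus $\{F_i\subset E_c\}$ is the union of precisely those cells $Y_w$ with $w\in{}^JW^{\tilde J}$, $w\le x$, and $w\{1,\ldots,\tilde k_i\}\subset\{1,\ldots,g\}$; this union is automatically closed, being cut out by the Schubert-type condition $\dim(F_i\cap E_c\otimes k)\ge\tilde k_i$. Transporting back through $\psi$, and recalling that $Y_w$ corresponds to $X^{\text{loc}}_{J,w}$, yields the asserted equality, the right-hand side being a stratification of $X_J^{\text{loc},i}$ because the latter is a reduced closed subscheme that is a union of strata of $X_J^{\text{loc},0}$.

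There is essentially no serious obstacle; the proof is pure bookkeeping with the combinatorics of $d_w$ and the standard flags. The one point deserving a moment of care — already supplied in the discussion preceding the statement — is that the condition defining $X_J^{\text{loc},i}$ depends only on the flag $(F_j)$, equivalently is $\overline{P}_J$-invariant, so that it can be a union of Schubert cells at all; the remaining verifications (that $E_c=\langle e_1,\ldots,e_g\rangle$, and the displayed value of $d_w(i,c)$) are immediate from the definitions.
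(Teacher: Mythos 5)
Your proof is correct and takes essentially the same route as the paper: translate the defining condition of $X_J^{\text{loc},i}$ into the flag condition $F_i\subset E_c\otimes k$ via $\psi$, then read off the cells of the Schubert stratification of $\overline{Y}_x$ on which this holds by evaluating $d_w(i,c)$. The paper leaves the Schubert-cell bookkeeping implicit after carrying out the translation, so your write-up simply makes that last step explicit.
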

In particular $X_J^{\text{loc},i}$ is $\overline{P}_J$ stable (but it is easy to see this directly.)

Now let $c<i\leq 2c$.  For a point $(W_i)\in X_J^{\text{loc},i-c}(k)$ we note that the map $W_{2c-i}\to W_c$ has rank $k_c-k_{2c-i}$ (by the definition of $X_J^{\text{loc},i-c}$.)  Hence we can define
\begin{equation*}
U_i=\im(W_{2c-i}\to W_c)\qquad V_i=\coker(W_{2c-i}\to W_{c})
\end{equation*}
and similarly the map $\Ca{W}_{2c-i}|_{X_J^{\text{loc},2c-i}}\to\Ca{W}_{c}|_{X_J^{\text{loc},2c-i}}$ has constant rank and so we can define vector bundles on $X_J^{\text{loc},2c-i}$ by
\begin{equation*}
\Ca{U}_i=\im(\Ca{W}_{2c-i}\to \Ca{W}_c)\qquad \Ca{V}_i=\coker(\Ca{W}_{2c-i}\to \Ca{W}_{c})
\end{equation*}
Note that we cannot define vector bundles on all of $X_{J}^{\text{loc},0}$ by the same formulas.

At the level of points we have
\begin{align*}
W_i&=F_{i-c}\oplus\langle e_{k_i+1},\ldots,e_{2g}\rangle
\end{align*}
and hence
\begin{equation*}
W_{2c-i}=W_i^\perp=F_{3c-i}\cap\langle e_{k_{2c-i}+1},\ldots,e_{2g}\rangle
\end{equation*}
We have a diagram of sheaves on $X_J^{\text{loc},i}$
\begin{equation*}
\begin{CD}
0@>>> \Ca{U}_i@>>>\Ca{W}_c@>>> \Ca{V}_i@>>> 0\\
@. @VVV @VVV @VVV @.\\
0@>>> \psi^*\Ca{F}_{3c-i}/\Ca{E}_c@>>> \psi^*\Ca{E}_{2c}/\Ca{E}_c@>>> \psi^*\Ca{E}_{2c}/\Ca{F}_{3c-i}@>>>0
\end{CD}
\end{equation*}

Then we have
\begin{prop}\label{P:great}
For $c<i\leq 2c$ and $0\leq j\leq c$ and any point $(W_i)\in X_{J}^{\text{\rm loc},i-c}$ consider the diagram
\begin{equation*}
\begin{CD}
0@>>> U_j'@>>> W_c@>>> V_j'@>>>0\\
@. @. @V\text{\rm id}VV @. @.\\
0@>>> U_i@>>> W_c@>>> V_i@>>>0
\end{CD}
\end{equation*}
Then the following are equivalent
\begin{enumerate}
\item We can fill in an arrow $U_j'\to U_i$ in the above diagram.
\item We can fill in an arrow $V_j'\to V_i$ in the above diagram.
\item If $(F_i)=\psi((W_i))$ is the corresponding point of $\text{Fl}_{\tilde{J}}(k)$ then $E_{2c-j}\subset F_{3c-i}$, or equivalently $F_{i-c}\subset E_{j}$.
\end{enumerate}
\end{prop}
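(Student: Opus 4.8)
The plan is to reduce all three conditions to one inclusion of subspaces of $W_c$ and then read that inclusion off on the flag $(F_\bullet)$, in the same spirit as the proof of Proposition~\ref{P:less}.

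First I would observe that, since both rows of the displayed diagram are short exact and the middle vertical arrow is the identity of $W_c$, conditions 1 and 2 are each equivalent to the single assertion that $U_j'\subseteq U_i$ as subspaces of $W_c$. Indeed a left arrow $U_j'\to U_i$ making the left square commute exists exactly when the subspace $U_j'$ of $W_c$ is contained in $U_i$, while a right arrow $V_j'\to V_i$ making the right square commute exists exactly when the quotient $W_c\twoheadrightarrow V_i$ kills $\ker(W_c\twoheadrightarrow V_j')=U_j'$, i.e. again when $U_j'\subseteq U_i$; moreover when such an arrow exists it is unique.

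Next I would transport $U_j'\subseteq U_i$ into the flag picture. From the explicit form of $\psi$ one gets $\langle e_1,\dots,e_m\rangle^\perp=\langle e_1,\dots,e_{2g-m}\rangle$ for every $m$, hence $E_a^\perp=E_{2c-a}$ for all $a$ (using $2g-k_a=k_{2c-a}$); also $F_a^\perp=F_{2c-a}$ by self-duality of the universal symplectic flag. In particular $E_c^\perp=E_c$, so on $X_J^{\text{loc},i-c}$, where by definition $F_{i-c}\subseteq E_c$, one has $E_c=E_c^\perp\subseteq F_{i-c}^\perp=F_{3c-i}$; this is exactly what makes the quotient $\psi^*(\Ca{F}_{3c-i}/\Ca{E}_c)$ in the diagram just before the statement meaningful. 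The two diagrams of this section realizing $\Ca{U}_j'$ and $\Ca{U}_i$ as the subsheaves $\psi^*(\Ca{E}_{2c-j}/\Ca{E}_c)$ and $\psi^*(\Ca{F}_{3c-i}/\Ca{E}_c)$ of $\Ca{W}_c\simeq\psi^*(\Ca{E}_{2c}/\Ca{E}_c)$ then show that, at a $k$-point $(W_i)$, $U_j'\subseteq U_i$ is equivalent to $E_{2c-j}\otimes k\subseteq F_{3c-i}$; and applying $(-)^\perp$ (using $2c-(2c-j)=j$ and $2c-(3c-i)=i-c$) this is in turn equivalent to $F_{i-c}\subseteq E_j\otimes k$. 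This produces the two forms of condition 3 and closes the circle of equivalences.

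I expect no genuine obstacle; the argument is essentially bookkeeping. The only points needing a little care are checking that the two identifications $\Ca{W}_c\simeq\psi^*(\Ca{E}_{2c}/\Ca{E}_c)$ used in the two diagrams are literally the same morphism, and that the indices $3c-i$, $2c-j$, $i-c$ remain in the range $\{0,\dots,2c\}$, which they do since $c<i\le 2c$ and $0\le j\le c$.
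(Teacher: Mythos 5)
Your argument is correct and follows essentially the same route as the paper: both proofs first reduce conditions 1 and 2 to the single subspace inclusion $U_j'\subseteq U_i$ in $W_c$, and then pass through $\psi$ and the two displayed identifications $\Ca{U}_j'\simeq\psi^*(\Ca{E}_{2c-j}/\Ca{E}_c)$, $\Ca{U}_i\simeq\psi^*(\Ca{F}_{3c-i}/\Ca{E}_c)$ inside $\Ca{W}_c\simeq\psi^*(\Ca{E}_{2c}/\Ca{E}_c)$ to translate this into $E_{2c-j}\subseteq F_{3c-i}$. You merely spell out a few details the paper leaves implicit, namely the standard orthogonality facts giving $E_a^\perp=E_{2c-a}$, $E_c^\perp=E_c\subseteq F_{3c-i}$, and the $\perp$-duality that yields the second form $F_{i-c}\subseteq E_j$.
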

\begin{proof}
The equivalence of the first two points is obvious.  For the equivalence of 1 and 3, we note that upon applying $\psi$, 1 is equivalent to asking whether
\begin{equation*}
\begin{CD}
E_{2c-j}/E_c@>>>E_{2c}/E_c\\
@. @VVV\\
F_{3c-i}/E_c@>>> E_{2c}/E_c
\end{CD}
\end{equation*}
can be filled in.
\end{proof}

Now let $(w,J)$ be admissible.  Combining Propositions \ref{P:i}, \ref{P:less}, and \ref{P:great} with Proposition \ref{P:schubcon}
we conclude
\begin{prop}\label{P:admchar}
Let $(w,J)$ be admissible and let $i_0$ be the largest integer with $i_0\leq c$ and $w(\tilde{k}_{i_0})\leq c$.  Then $\overline{X}_{J,w}^{\text{\rm loc}}\subset X^{\text{\rm loc},i_0}_J$, and a point $(W_i)\in X_J^{\text{\rm loc},i_0}(k)$ lies in $\overline{X}_{J,w}^{\text{\rm loc}}$ if and only if
\begin{enumerate}
\item For $1\leq i\leq c$ with $\sigma(i)\leq c$ we can fill in
\begin{equation*}
\begin{CD}
W_0@>>> V_{i-1}\\
@VVV @.\\
W_c@>>> V_{\sigma(i)-1}'
\end{CD}
\end{equation*}
with an arrow $V_{i-1}\to V_{\sigma(i)-1}$.
\item For $1\leq i\leq c$ with $\sigma(i)>c$ we can fill in
\begin{equation*}
\begin{CD}
W_c@>>> V_{\sigma(2c+1-i)}'\\
@VVV @.\\
W_c@>>> V_{2c+1-i}
\end{CD}
\end{equation*}
\end{enumerate}
\end{prop}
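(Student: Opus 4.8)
The plan is to reduce the statement, via the isomorphism $\psi\colon X_J^{\text{loc},0}\simeq\overline{Y}_x$ (which carries $\overline{X}_{J,w}^{\text{loc}}$ onto $\overline{Y}_w$, since the stratification of $X_J^{\text{loc},0}$ was defined by transporting the Schubert stratification of $\overline{Y}_x$), to the explicit description of $\overline{Y}_w$ in Proposition \ref{P:schubcon}, and then to translate the membership condition ``$F_a\subset E_{\tau(a)}\otimes k$ for $a=1,\dots,c$'' into the diagram-filling conditions (1) and (2) by means of the dictionaries in Propositions \ref{P:less} and \ref{P:great}.

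First I would prove the inclusion $\overline{X}_{J,w}^{\text{loc}}\subset X_J^{\text{loc},i_0}$. It suffices to check this on $k$-points. If $(W_i)$ is such a point and $(F_i)=\psi((W_i))\in\overline{Y}_w(k)$, then Proposition \ref{P:schubcon} gives $F_{i_0}\subset E_{\tau(i_0)}\otimes k$. Since $\tau$ is increasing on $\{1,\dots,c\}$ by Proposition \ref{P:adm}(4) and $w(\tilde k_a)=k_{\tau(a)}$, the numbers $w(\tilde k_1)<\cdots<w(\tilde k_c)$ are increasing, so $\{a\le c: w(\tilde k_a)\le g\}$ is an initial segment $\{1,\dots,i_0\}$; by the choice of $i_0$ we have $w(\tilde k_{i_0})=k_{\tau(i_0)}\le g$, hence $E_{\tau(i_0)}\subset\langle e_1,\dots,e_g\rangle$ and $F_{i_0}\subset\langle e_1,\dots,e_g\rangle$, which by the computation preceding Proposition \ref{P:i} is exactly the condition that $(W_i)\in X_J^{\text{loc},i_0}(k)$. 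The same argument (comparing the index sets in Proposition \ref{P:i}) shows $X_J^{\text{loc},i_0}\subset X_J^{\text{loc},a}$ for every $a\le i_0$, which I will need so that the tautological sheaves $U_{a+c},V_{a+c}$ (with $c<a+c\le 2c$) appearing in condition (2) are defined over $X_J^{\text{loc},i_0}$.

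Next, working over $X_J^{\text{loc},i_0}$, I would rewrite the Schubert condition $F_a\subset E_{\tau(a)}\otimes k$ from Proposition \ref{P:schubcon} for each $a\in\{1,\dots,c\}$, putting $i=c+1-a$ and using $\tau(a)=\tau(c+1-i)=\sigma(2c+1-i)=2c+1-\sigma(i)$ (from Proposition \ref{P:adm}(4) together with the definition of $\tau$). When $\sigma(i)\le c$ one has $\tau(a)=2c+1-\sigma(i)>c$, so $E_{\tau(a)}\supset\langle e_1,\dots,e_g\rangle$; writing $a=c-(i-1)$ and $\tau(a)=2c-(\sigma(i)-1)$ and invoking Proposition \ref{P:less}, the inclusion $F_a\subset E_{\tau(a)}\otimes k$ becomes the existence of an arrow $V_{i-1}\to V_{\sigma(i)-1}'$, which is condition (1). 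When $\sigma(i)>c$ one has $\tau(a)\le c$, so $F_a\subset E_{\tau(a)}\otimes k\subset\langle e_1,\dots,e_g\rangle$; writing $a+c=2c+1-i$ and $\tau(a)=\sigma(2c+1-i)$ and invoking Proposition \ref{P:great} (with index $2c+1-i\in(c,2c]$, for which the point lies in $X_J^{\text{loc},a}\supset X_J^{\text{loc},i_0}$), the inclusion becomes the existence of an arrow $V'_{\sigma(2c+1-i)}\to V_{2c+1-i}$, which is condition (2). Conjoining these equivalences over all $a$ (equivalently over all $i\in\{1,\dots,c\}$) and applying Proposition \ref{P:schubcon} yields the claimed characterization.

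The step I expect to be the main obstacle is the index bookkeeping: one must keep straight the reparametrization $a\leftrightarrow i=c+1-a$, the passage between the permutations $\sigma$ and $\tau$, and the two different conventions for $U_i,V_i$ according to whether $i\le c$ or $i>c$, and one must verify in each branch both that the relevant tautological sheaves are defined on $X_J^{\text{loc},i_0}$ and that the index ranges allowed in Propositions \ref{P:less} and \ref{P:great} are respected. Once this dictionary is correctly aligned, the argument is purely formal.
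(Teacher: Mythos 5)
Your proof is correct and takes essentially the same route as the paper's: transport everything through $\psi$ to Schubert varieties, invoke Proposition \ref{P:schubcon}, and translate back via Propositions \ref{P:less} and \ref{P:great} using the identity $\tau(c+1-i)=\sigma(2c+1-i)=2c+1-\sigma(i)$. You are also right to read the defining condition for $i_0$ as $w(\tilde{k}_{i_0})\le g$ (the printed ``$\le c$'' is surely a typo, since $w(\tilde{k}_{i_0})=k_{\tau(i_0)}$ and the relevant bound is $k_{\tau(i_0)}\le k_c=g$), and your explicit verification of the inclusion $\overline{X}_{J,w}^{\text{loc}}\subset X_J^{\text{loc},i_0}$ together with the containments $X_J^{\text{loc},i_0}\subset X_J^{\text{loc},a}$ for $a\le i_0$ supplies details that the paper's proof leaves implicit.
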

\begin{proof}
By what we have proved, this is just a matter of keeping track of indices.  Let $(F_i)=\psi((W_i))$ By Proposition \ref{P:less} we see that 1 is equivalent to
\begin{equation*}
F_{c+1-i}\subset E_{\sigma(2c+1-i)}\otimes k=E_{\tau(c+1-i)}\otimes k
\end{equation*}
For 2, first note that $\sigma(i)>c$ if and only if $\tau(c+1-i)=\sigma(2c+1-i)\leq c$ and hence $V_{2c+1-i}$ is defined and we may apply proposition \ref{P:great}.  Then it says that 2 is equivalent to
\begin{equation*}
F_{c+1-i}\subset E_{\sigma(2c+1-i)}\otimes k=E_{\tau(c+1-i)}\otimes k
\end{equation*}
but these two conditions together are precisely the Schubert conditions for $\overline{Y}_w$ according to proposition \ref{P:schubcon}.
\end{proof}

\begin{prop}\label{P:schubcomp}
Let $(w,J)$ be admissible and let $(W_i)\in \overline{X}_{J,w}^{\text{\rm loc}}(k)$
\begin{enumerate}
\item If $1\leq i\leq c$ and $\sigma(i)\leq c$ then we can fill in the diagram
\begin{equation*}
\begin{CD}
W_0@>>> V_i\\
@VVV @.\\
W_c@>>> V'_{\sigma(i)}
\end{CD}
\end{equation*}
with a map $V_i\to V'_{\sigma(i)}$.  Thus over $\overline{X}_{J,w}^{\text{\rm loc}}$ we have a commutative diagram of locally free sheaves
\begin{equation*}
\begin{CD}
\Ca{W}_0@>>> \Ca{V}_i @>>> \Ca{V}_{i-1}\\
@VVV @VVV @VVV\\
\Ca{W}_c@>>> \Ca{V}'_{\sigma(i)} @>>> \Ca{V}'_{\sigma(i)-1}
\end{CD}
\end{equation*}
and hence we obtain a map of sheaves on $\overline{X}_{J,w}^{\text{\rm loc}}$
\begin{equation*}
A_i^{\text{\rm loc}}:\det\ker(\Ca{V}_i\to\Ca{V}_{i-1})\to\det\ker(\Ca{V}'_{\sigma(i)}\to\Ca{V}'_{\sigma(i)-1})
\end{equation*}
which, fits in the commutative diagram
\begin{equation*}
\begin{CD}
\det\ker(\Ca{V}_i\to\Ca{V}_{i-1})@>A_i^{\text{\rm loc}}>>\det\ker(\Ca{V}'_{\sigma(i)}\to\Ca{V}'_{\sigma(i)-1})\\
@VVV @VVV\\
\psi^*\det(\Ca{F}_{c+1-i}/\Ca{F}_{c-i})@>\psi^*C_{c+1-i}>>\psi^*\det(\Ca{E}_{\tau(c+1-i)}/\Ca{E}_{\tau(c-i)})
\end{CD}
\end{equation*}
where the vertical maps are isomorphisms induced by $\psi$.
\item If $1\leq i\leq c$ and $\sigma(i)>c$ then we can fill in the diagram
\begin{equation*}
\begin{CD}
W_c@>>> V_{\sigma(2c+1-i)-1}'\\
@VVV @.\\
W_c @>>> V_{2c-i}
\end{CD}
\end{equation*}
with an arrow $V_{\sigma(2c+1-i)-1}'\to V_{2c-i}$.  Thus over $\overline{X}_{J,w}^{\text{\rm loc}}$ we have a commutative diagram of locally free sheaves
\begin{equation*}
\begin{CD}
\Ca{W}_c@>>> \Ca{V}_{\sigma(2c+1-i)}'@>>> \Ca{V}_{\sigma(2c+1-i)-1}'\\
@VVV @VVV @VVV\\
\Ca{W}_c@>>> \Ca{V}_{2c+1-i}@>>> \Ca{V}_{2c-i}
\end{CD}
\end{equation*}
and hence we obtain a map of sheaves on $\overline{X}_{J,w}^{\text{loc}}$
\begin{equation*}
B_{2c+1-i}^{\text{\rm loc}}:\det\ker(\Ca{V}'_{\sigma(2c+1-i)}\to \Ca{V}'_{\sigma(2c+1-i)-1})\to\det\ker(\Ca{V}_{2c+1-i}\to\Ca{V}_{2c-i})
\end{equation*}
which fits in a commutative diagram
\begin{equation*}
\begin{CD}
\det\ker(\Ca{V}'_{\sigma(2c+1-i)}\to \Ca{V}'_{\sigma(2c+1-i)-1})@>B_{2c+1-i}^{\text{\rm loc}}>> \det\ker(\Ca{V}_{2c+1-i}\to\Ca{V}_{2c-i})\\
@VVV @VVV\\
\psi^*\det(\Ca{E}_{\sigma(i)}/\Ca{E}_{\sigma(i)-1})@>\psi^*C_{c+1-i}^\vee>>\psi^*\det(\F_{c+i}/\F_{c+i-1})
\end{CD}
\end{equation*}
where the vertical maps are isomorphisms induced by $\psi$.
\end{enumerate}
\end{prop}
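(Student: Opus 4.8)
The strategy is to transport everything through the isomorphism $\psi\colon \overline{X}_{J,w}^{\text{loc}}\to\overline{Y}_w$, use the explicit Schubert description of $\overline{Y}_w$ in Proposition~\ref{P:schubcon} to produce the maps at the level of points, globalize by reducedness, and then use the canonical isomorphisms recorded in the two exact diagrams preceding Propositions~\ref{P:less} and~\ref{P:great} to identify the resulting maps with $C_{c+1-i}$ and its dual. I will describe part~1; part~2 is formally identical, with Proposition~\ref{P:great} in place of Proposition~\ref{P:less}.

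\emph{Producing the maps.} Fix $1\le i\le c$ with $\sigma(i)\le c$. First I would work at a field-valued point $(W_\bullet)$ of $\overline{X}_{J,w}^{\text{loc}}$ with image $(\F_\bullet)=\psi((W_\bullet))$. By Proposition~\ref{P:less}, the fillability of the arrow $V_i\to V'_{\sigma(i)}$ is equivalent to an inclusion of the form $F_{c-i}\subset E_{2c-\sigma(i)}\otimes k$, and this follows from the Schubert condition of Proposition~\ref{P:schubcon} for the index $c-i$ (namely $F_{c-i}\subset E_{\tau(c-i)}\otimes k$) together with the monotonicity $\sigma(1)<\cdots<\sigma(c)$ of Proposition~\ref{P:adm}, which gives $\tau(c-i)\le 2c-\sigma(i)$; the fillability of the shifted arrow $V_{i-1}\to V'_{\sigma(i)-1}$ is Proposition~\ref{P:admchar}(1) itself. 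Now fillability of such a diagram of locally free sheaves over $\overline{X}_{J,w}^{\text{loc}}$ amounts to the vanishing of a composite $\Ca{U}_\bullet\hookrightarrow\Ca{W}_0\to\Ca{W}_c\twoheadrightarrow\Ca{V}'_\bullet$, i.e.\ of a section of a locally free sheaf; as $\overline{X}_{J,w}^{\text{loc}}$ is reduced (indeed normal, by Corollary~\ref{P:schub}) and this section dies at every point, it vanishes identically. Hence the maps of locally free sheaves $\Ca{V}_i\to\Ca{V}'_{\sigma(i)}$ and $\Ca{V}_{i-1}\to\Ca{V}'_{\sigma(i)-1}$ exist over $\overline{X}_{J,w}^{\text{loc}}$; they are compatible with the transition surjections $\Ca{V}_i\to\Ca{V}_{i-1}$ and $\Ca{V}'_{\sigma(i)}\to\Ca{V}'_{\sigma(i)-1}$ (all being induced by the single morphism $\Ca{W}_0\to\Ca{W}_c$), and so induce a map on determinants of kernels, which is $A_i^{\text{loc}}$.

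\emph{Identifying the square.} For the commutative square with $\psi^*C_{c+1-i}$ I would substitute the canonical isomorphisms $\Ca{U}_i\cong\psi^*\F_{c-i}$, $\Ca{W}_{2c}\cong\psi^*\F_c$, $\Ca{V}_i\cong\psi^*(\F_c/\F_{c-i})$ and the primed analogues in terms of $\psi^*(\Ca{E}_\bullet/\Ca{E}_c)$ coming from those diagrams. Under these, the surjection $\Ca{V}_i\to\Ca{V}_{i-1}$ is $\psi^*$ of $(\F_c/\F_{c-i})\to(\F_c/\F_{c-i+1})$, whose kernel is $\psi^*(\F_{c+1-i}/\F_{c-i})$, while $\Ca{V}_i\to\Ca{V}'_{\sigma(i)}$ becomes $\psi^*$ of the map induced by the flag inclusion $\F_{c-i}\subset\Ca{E}_\bullet$; passing to the relevant graded piece and taking determinants, this is precisely $\psi^*$ of the map $\det(\F_{c+1-i}/\F_{c-i})\to\det(\Ca{E}_{\tau(c+1-i)}/\Ca{E}_{\tau(c+1-i)-1})$ defining $C_{c+1-i}$. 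For part~2 the same bookkeeping applies, except that the relevant containment runs in the opposite direction (it comes from the Frobenius half of the self-dual chain $A_0\to\cdots\to A_{2c}$, dual to the Verschiebung half), so the morphism appearing under $\psi$ is the dual $C_{c+1-i}^\vee$; there Proposition~\ref{P:great} supplies the pointwise fillability and Proposition~\ref{P:admchar}(2) the shifted diagram.

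\emph{Where the difficulty lies.} None of the ingredients is deep — the Schubert description of $\overline{Y}_w$, the fill-in criteria of Propositions~\ref{P:less} and~\ref{P:great}, the reducedness and normality of Schubert varieties, and the identities $\sigma(2c+1-i)=2c+1-\sigma(i)$ and $k_i-k_{i-1}=k_{\sigma(i)}-k_{\sigma(i)-1}$ of Proposition~\ref{P:adm} are all in hand. The real work, and the only place an error is likely to creep in, is the index bookkeeping: one must keep straight the two meanings of $V_i$ (cokernel to $W_{2c}$ when $i\le c$, cokernel to $W_c$ when $i>c$), verify that $\overline{X}_{J,w}^{\text{loc}}$ lies in the stratum $X_J^{\text{loc},i_0}$ on which every sheaf $\Ca{V}_i,\Ca{V}'_j$ occurring in the two cases is defined and locally free, and check that the isomorphism induced by $\psi$ is compatible with each quotient map in the picture. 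Matching up the $k$-, $\tilde k$-, $\sigma$- and $\tau$-indices so that the graded pieces of the two flags correspond correctly is the heart of the matter.
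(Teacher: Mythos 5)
Your proof follows the same route as the paper's. Both pass through $\psi$ to $\overline{Y}_w$, invoke Proposition~\ref{P:less} (resp.\ \ref{P:great}) to reduce fillability of the arrow to the single inclusion $F_{c-i}\subset E_{2c-\sigma(i)}\otimes k$, and derive that from the Schubert condition $F_{c-i}\subset E_{\tau(c-i)}\otimes k$ of Proposition~\ref{P:schubcon} together with $\tau(c-i)\le\tau(c+1-i)-1=2c-\sigma(i)$; the paper then dismisses the globalization and identification of the commutative squares with ``the rest is a diagram chase,'' which you usefully spell out via reducedness and the canonical isomorphisms with $\psi^*(\F_\bullet/\F_\bullet)$ and $\psi^*(\Ca{E}_\bullet/\Ca{E}_\bullet)$. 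One small imprecision: the monotonicity you actually need is $\tau(1)<\cdots<\tau(c)$, equivalently $\sigma$ increasing on $\{c+1,\ldots,2c\}$ (a consequence of $\sigma(1)<\cdots<\sigma(c)$ and self-duality), not literally $\sigma(1)<\cdots<\sigma(c)$; and you should flag the degenerate case $i=c$ where there is nothing to prove.
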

\begin{proof}
Let $(F_i)=\psi((W_i))$.  If $\sigma(i)\leq c$ then by Proposition \ref{P:less}, to prove the first part of 1 it suffices to show that
\begin{equation*}
F_{c-i}\subset  E_{2c-\sigma(i)}.
\end{equation*}
Similarly if $\sigma(i)>c$ then by Proposition \ref{P:great}, to prove the first part of 2 it also suffices to show that
\begin{equation*}
F_{c-i}\subset  E_{2c-\sigma(i)}.
\end{equation*}
In either case, if $i=c$ there is nothing to prove.  Otherwise $\tau(c-i)\leq \tau(c+1-i)-1=2c-\sigma(i)$ and
\begin{equation*}
F_{c-i}\subset E_{\tau(c-i)}\otimes k\subset E_{2c-\sigma(i)}\otimes k.
\end{equation*}
where the first inclusion follows from the Schubert condition for $\overline{Y}_w$.  The rest of the proposition is a diagram chase.
\end{proof}

\subsection{The local model diagram and the Kottwitz-Rapoport stratification.}
We now explain the connection between $X_J$ and the local model $X^{\text{loc}}_J$ introduced in the last section.  Let $\tilde{X}_J$ be the moduli space of $(\{A_i\},\{\phi_i:A_i\to A_{i+1}\},\lambda,\lambda',\{\alpha_i\})$ where $(\{A_i\},\{\phi_i\},\lambda,\lambda')$ gives a point of $X_J$ and for $i=0,\ldots,c$, $\alpha_i:H^1_{dR}(\hat{A}_{2c-i}/S)\to V\otimes\O_S$ are isomorphisms which satisfy:
\begin{enumerate}
\item $\alpha_0$ and $\alpha_c$ send the poincare pairings induced by $\lambda$ and $\lambda'$ to $\psi$.
\item The following diagram commutes
\begin{equation*}
\begin{CD}
H^1_{dR}(A_{2c}/S)@>\phi_{2c-1}^*>> H^1_{dr}(A_{2c-1}/S)@>\phi_{2c-2}^*>> \cdots @>\phi_c^*>> H^1_{dR}(A_c/S)\\
@VVV @VVV @. @VVV\\
V\otimes\O_S@>\phi_0>> V\otimes\O_S@>\phi_1>> \cdots @>\phi_{c-1}>> V\otimes\O_S.
\end{CD}
\end{equation*}
\end{enumerate}

We have a diagram
\begin{equation*}
X_J\overset{p_1}{\leftarrow}\tilde{X}_J\overset{p_2}{\to}X_J^{\text{loc}}
\end{equation*}
where $p_1$ is given by ``forget the $\alpha$'s'' and $p_2$ sends $(\{A_i\},\{\phi_i:A_i\to A_{i+1}\},\lambda,\lambda',\{\alpha_i\})$ to $(\alpha_i(\omega_{A_{2c-i}}))_{i=0,\ldots,c}$.  We have an action of the group $\overline{P}_J$ on $\tilde{X}_J$ where $(g_i)_{i=0,\ldots,c}$ acts by replacing $\{\alpha_i\}$ with $\{g_i\alpha_i\}$.  The maps $p_1$ and $p_2$ are equivariant for this action where we let $\overline{P}_J$ act trivially on $X_J$.  Then the basic result of the theory of local models is (see \cite{dJ93} or \cite{Ha05})
\begin{thm}\label{T:locmodel}
\begin{enumerate}
\item The map $p_1$ is a torsor for $\overline{P}_J$.  In particular it is smooth and surjective.
\item The map $p_2$ is smooth.
\end{enumerate}
\end{thm}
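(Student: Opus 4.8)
\textbf{Proof proposal for Theorem \ref{T:locmodel}.}

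The plan is to verify both assertions by the standard Grothendieck-Messing / de Rham deformation-theoretic argument, following de Jong \cite{dJ93} and the exposition in \cite{Ha05}. For part (1), I would first check that $p_1$ is surjective: given a point $(\{A_i\},\{\phi_i\},\lambda,\lambda')$ of $X_J$ over $S$, the de Rham cohomology sheaves $H^1_{dR}(\hat{A}_{2c-i}/S)$ are locally free $\O_S$-modules of rank $2g$, equipped with the perfect Poincar\'e pairings induced by $\lambda,\lambda'$ and with the maps $\phi^*_j$ forming a chain; \'etale-locally on $S$ one may choose isomorphisms $\alpha_i$ to the standard symplectic chain $(V\otimes\O_S,\psi,\phi_j)$ because the parahoric group scheme with special fiber $\overline{P}_J$ is smooth with connected fibers, so its torsors are \'etale-locally trivial. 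Then I would check that the $\overline{P}_J$-action on the fibers of $p_1$ is simply transitive: two choices $\{\alpha_i\}$ and $\{\alpha_i'\}$ differ by $g_i=\alpha_i'\alpha_i^{-1}\in\GL(V\otimes\O_S)$, and the compatibility conditions (1) and (2) defining $\tilde X_J$ translate exactly into the conditions $\phi_i g_i=g_{i+1}\phi_i$ and that $g_0,g_c$ preserve $\psi$ with a common multiplier --- i.e.\ $(g_i)\in\overline{P}_J(S)$. Hence $p_1$ is an fppf (indeed \'etale-locally trivial) $\overline{P}_J$-torsor, and since $\overline{P}_J$ is smooth over $k$, $p_1$ is smooth and surjective.

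For part (2), the key point is the identification of $\tilde X_J$ with a moduli space that maps smoothly to $X^{\text{loc}}_J$ via Grothendieck-Messing theory. Concretely, I would show that $p_2$ is formally smooth by the infinitesimal lifting criterion: given a square-zero thickening $S\hookrightarrow S'$ of $k$-schemes (so nilpotent ideal killed by $p$, hence with divided powers), a point of $\tilde X_J$ over $S$, and a lift of its image under $p_2$ to $X^{\text{loc}}_J(S')$, one must produce a lift in $\tilde X_J(S')$. By Grothendieck-Messing, deformations of each $\hat A_{2c-i}$ over $S'$ correspond to liftings of the Hodge filtration $\omega_{\hat A_{2c-i}}\subset H^1_{dR}(\hat A_{2c-i}/S)\otimes\O_{S'}$ (using the crystal structure to identify the de Rham cohomology over $S'$ with $H^1_{dR}(\hat A_{2c-i}/S)\otimes\O_{S'}$); and via the trivializations $\alpha_i$ these lifted filtrations are precisely the data of a point of $X^{\text{loc}}_J(S')$ lifting the given one. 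One must then check that the polarizations $\lambda,\lambda'$, the isogenies $\phi_i$, and the auxiliary prime-to-$p$ level structure all extend uniquely: the isogenies extend by functoriality of Grothendieck-Messing applied to the chain, the polarizations extend because the lifted filtrations $W_0,W_c$ were required to be isotropic for $\psi$, and the level structure extends uniquely since it is prime to $p$ and $S\hookrightarrow S'$ is a nilpotent thickening. This produces the required lift in $\tilde X_J(S')$, and uniqueness up to the ambiguity matching that of the $X^{\text{loc}}_J(S')$-lift, so $p_2$ is smooth. (Since both $\tilde X_J$ and $X^{\text{loc}}_J$ are of finite type over $k$, formal smoothness plus local finite presentation gives smoothness.)

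The main obstacle I anticipate is bookkeeping rather than conceptual depth: one must carefully set up the crystalline/de Rham comparison for the \emph{entire self-dual chain} $\hat A_{2c}\to\cdots\to\hat A_c\to\cdots$ simultaneously and keep track of how the polarization conditions (1) on $\alpha_0,\alpha_c$ interact with the duality $W_i=W_{2c-i}^\perp$ on the local model side, so that the lifted filtrations automatically satisfy the isotropy constraints. A secondary subtlety is checking that the divided-power structure needed to invoke Grothendieck-Messing is available --- this is where the hypothesis that we work over a base of characteristic $p$ enters, since then the ideal of a square-zero thickening is automatically a PD-ideal. Once these are in place, the argument is the standard local model formalism and I would simply cite \cite{dJ93} and \cite{RZ96} for the details of the general machine, presenting here only the verification of the torsor property in part (1) and the filtration-lifting computation in part (2).
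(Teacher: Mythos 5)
The paper does not actually prove Theorem \ref{T:locmodel}; it states the result and cites \cite{dJ93} and \cite{Ha05} for it, so there is no "paper's proof" to compare against. Your sketch is a correct outline of the standard argument in those references. One small point worth tightening in part (1): you justify \'etale-local existence of the trivializations $\alpha_i$ by appealing to smoothness of $\overline{P}_J$ and the fact that torsors under smooth groups are \'etale-locally trivial, but this is circular --- to know $\tilde X_J\to X_J$ is a torsor at all (rather than merely a pseudo-torsor) you must first establish local nonemptiness, and that is exactly the content of the lemma (de Jong, or Rapoport--Zink, on local standardness of polarized multichains of lattices) asserting that the chain $\left(H^1_{dR}(\hat A_{2c-i}/S),\,\phi_j^*,\,\text{pairings}\right)$ is Zariski- or \'etale-locally isomorphic to the standard chain $(V\otimes\O_S,\phi_j,\psi)$. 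Once that lemma is in hand, $p_1$ is Zariski-locally a product, so flatness, smoothness and surjectivity all follow and the smoothness of $\overline{P}_J$ is used only to upgrade from fppf to \'etale local triviality. For part (2), your Grothendieck--Messing argument is correct in outline; as you say, the bookkeeping to address is lifting the $\alpha_i$ to trivializations of the crystals evaluated at $S'$ (possible since $\overline{P}_J$ is smooth) and then reading off the lifted Hodge filtration as $\alpha_i'^{-1}(W_i')$, checking compatibility with the polarization and chain conditions.
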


Given any locally closed subvariety $Z^{\text{loc}}\subset X_J^{\text{loc}}$ we can define $Z=p_1(p_2^{-1}(Z^{\text{loc}}))\subset X_J$.  If $Z^{\text{loc}}$ is $\overline{P}_J$ stable then $p_1^{-1}(Z)=p_2^{-1}(Z^{\text{loc}})$.  In this case we can often transfer ``smooth local'' information from $Z$ to $Z'$.  For example we have the following Proposition.

\begin{prop}\label{P:trans}
Let $Z^{\text{\rm loc}}\subset X_J^{\text{\rm loc}}$ be $\overline{P}_J$ stable and let $Z=p_1(p_2^{-1}(Z^{\text{\rm loc}}))$ be the corresponding subvariety of $X_J$.
\begin{enumerate}
\item If $Z^{\text{\rm loc}}$ is normal then so is $Z$.
\item Let $W^{\text{\rm loc}}\subset Z^{\text{\rm loc}}$ be a $\overline{P}_J$ stable prime divisor such that the local ring of $Z^{\text{\rm loc}}$ at the generic point of $W^{\text{\rm loc}}$ is regular.  Then $W\subset Z$ is a divisor with the property that the local ring of $Z$ at every generic point of $W$ is regular.  Moreover if there are line bundles $\L$ on $Z$ and $\L^{\text{\rm loc}}$ on $Z^{\text{\rm loc}}$, sections $s\in H^0(Z,\L)$ and $s^{\text{\rm loc}}\in H^0(Z^{\text{\rm loc}},\L^{\text{\rm loc}})$, and an isomorphism $p_1^*\L\simeq p_2^*\L^{\text{\rm loc}}$ on $p_1^{-1}(Z)=p_2^{-1}(Z^{\text{\rm loc}})$ sending $s$ to $s^{\text{\rm loc}}$ then
\begin{equation*}
\text{\rm ord}_W(s)=\text{\rm ord}_{W^{\text{\rm loc}}}(s^{\text{\rm loc}})
\end{equation*}
where the left hand side is to be interpreted as the order of vanishing on every irreducible component of $W$.
\end{enumerate}
\end{prop}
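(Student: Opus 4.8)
\textbf{Proof plan for Proposition \ref{P:trans}.}

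The whole point is that both assertions are statements about the schemes $Z$ and $Z^{\text{loc}}$ in neighborhoods of points or generic points of divisors, and such statements are ``smooth local'' in the sense that they can be checked after a smooth surjective base change and are insensitive to passing from a scheme to a smooth space over it. So the plan is to exploit the local model diagram $Z \overset{p_1}{\leftarrow} p_1^{-1}(Z) = p_2^{-1}(Z^{\text{loc}}) \overset{p_2}{\to} Z^{\text{loc}}$, where by Theorem \ref{T:locmodel} the map $p_1$ is a $\overline{P}_J$-torsor (hence smooth surjective) and $p_2$ is smooth, and the equality $p_1^{-1}(Z) = p_2^{-1}(Z^{\text{loc}})$ holds because $Z^{\text{loc}}$ is $\overline{P}_J$-stable.

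For part 1: normality of $Z$. First I would recall that normality is fppf-local on the source and descends along smooth surjective maps; more precisely, if $f: X' \to X$ is a smooth surjective morphism of (locally noetherian) schemes then $X$ is normal if and only if $X'$ is. Apply this to $p_2: p_1^{-1}(Z) \to Z^{\text{loc}}$: since $Z^{\text{loc}}$ is normal and $p_2$ restricted to $p_1^{-1}(Z)=p_2^{-1}(Z^{\text{loc}})$ is smooth with image the normal scheme $Z^{\text{loc}}$, we get that $p_1^{-1}(Z)$ is normal (a scheme smooth over a normal scheme is normal, being locally a relative affine space fibered over it — more carefully, smooth morphisms have geometrically regular fibers and preserve the $S_2$ and $R_1$ conditions suitably, so normality lifts). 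Then apply the descent direction to $p_1: p_1^{-1}(Z) \to Z$: $p_1$ is smooth and surjective, and $p_1^{-1}(Z)$ is normal, hence $Z$ is normal.

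For part 2: the divisor $W$, regularity of the local rings along it, and the order of vanishing. I would set $W = p_1(p_2^{-1}(W^{\text{loc}}))$, so $p_1^{-1}(W) = p_2^{-1}(W^{\text{loc}})$. Since $W^{\text{loc}}$ is a prime divisor in $Z^{\text{loc}}$ and $p_2$ is smooth (hence flat, equidimensional on fibers, and sends generic points to generic points appropriately), $p_2^{-1}(W^{\text{loc}})$ is a divisor in $p_1^{-1}(Z)$ each of whose generic points maps to the generic point of $W^{\text{loc}}$; and since the local ring of $Z^{\text{loc}}$ at the generic point of $W^{\text{loc}}$ is regular, flatness of $p_2$ together with regularity of the fibers gives that the local ring of $p_1^{-1}(Z)$ at each generic point of $p_2^{-1}(W^{\text{loc}})$ is regular. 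Descending along the smooth map $p_1$, $W$ is a divisor in $Z$ and the local ring of $Z$ at each generic point of $W$ is regular (regularity descends along smooth surjections). For the order of vanishing statement: given $\L$, $\L^{\text{loc}}$, sections $s$, $s^{\text{loc}}$, and an isomorphism $p_1^*\L \simeq p_2^*\L^{\text{loc}}$ carrying $p_1^*s$ to $p_2^*s^{\text{loc}}$, I would compute $\operatorname{ord}$ upstairs on $p_1^{-1}(Z)$ in two ways. Localizing at a generic point $\eta$ of an irreducible component of $p_1^{-1}(W) = p_2^{-1}(W^{\text{loc}})$, the order of vanishing of $p_1^*s$ at $\eta$ equals the order of vanishing of $s$ at $p_1(\eta)$ (which is the generic point of $W$) because $p_1$ is smooth — a smooth morphism induces, on completed local rings, an inclusion of the form $\hat{\mathcal{O}}_{Z,p_1(\eta)} \to \hat{\mathcal{O}}_{Z,p_1(\eta)}[[t_1,\dots,t_n]]$ up to completion, which preserves the valuation of an element of the base ring; similarly for $p_2$ and $s^{\text{loc}}$. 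Hence $\operatorname{ord}_W(s) = \operatorname{ord}_\eta(p_1^*s) = \operatorname{ord}_\eta(p_2^*s^{\text{loc}}) = \operatorname{ord}_{W^{\text{loc}}}(s^{\text{loc}})$, where the middle equality uses the hypothesis on the sections.

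\textbf{Main obstacle.} The only genuinely delicate point is bookkeeping around the claim ``order of vanishing is preserved under smooth pullback,'' and more precisely that it is independent of the choice of irreducible component of $p_1^{-1}(W)$ (so that the left-hand side is well-defined as stated). The cleanest route is to note that smoothness implies $p_1$ is flat with regular fibers, so for a height-one prime $\mathfrak{p}$ in $\mathcal{O}_{Z,p_1(\eta)}$ the extension to the local ring upstairs is again height one, the residue fields are related by a smooth (hence regular) extension, and the valuation of a base element is read off in either DVR identically; one must just make sure the relevant local rings are indeed DVRs, which is exactly where the regularity-in-codimension-one hypotheses (part of normality, plus the explicit hypothesis on $W^{\text{loc}}$) are used. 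I do not anticipate any serious difficulty beyond invoking the standard behavior of flat/smooth morphisms on local rings and divisors.
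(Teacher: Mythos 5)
The paper states Proposition \ref{P:trans} without proof, treating it as a routine consequence of the local model diagram, so there is nothing to compare against. Your argument is correct and is exactly the expected one: transfer normality, regularity at codimension-one points, and orders of vanishing across $p_1^{-1}(Z)=p_2^{-1}(Z^{\text{loc}})$ using that $p_1$ is a $\overline{P}_J$-torsor (smooth surjective) and $p_2$ is smooth. Your identification of the delicate point is also the right one, and your resolution is sound: at a generic point $\eta'$ of $p_1^{-1}(W)$ lying over a generic point $\eta$ of $W$, flatness plus $\dim \mathcal{O}_{p_1^{-1}(Z),\eta'}=\dim\mathcal{O}_{Z,\eta}=1$ forces the fiber dimension to be zero, so the map of DVRs $\mathcal{O}_{Z,\eta}\to\mathcal{O}_{p_1^{-1}(Z),\eta'}$ is unramified and valuations are preserved; the same applies to $p_2$, and surjectivity of $p_1$ guarantees every generic point of $W$ is hit by some generic point of $p_1^{-1}(W)$, which gives both that $\operatorname{ord}_W(s)$ is well defined (independent of the component of $W$) and that it equals $\operatorname{ord}_{W^{\text{loc}}}(s^{\text{loc}})$.
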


We now record the following well known lemma.
\begin{lem}\label{L:Ffactor}
Let $\phi:A\to B$ be an isogeny of abelian varieties over a field $k$ of characteristic $p$ of degree $p^i$ and such that
\begin{equation*}
\dim_k(\ker \phi^*:\omega_B\to\omega_A)=i.
\end{equation*}
Then Frobenius $F:A\to A^{(p)}$ factors through $\phi$:
\begin{equation*}
A\overset{\phi}{\to}B\to A^{(p)} 
\end{equation*}
\end{lem}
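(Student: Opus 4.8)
The plan is to reduce everything to the inclusion $\ker\phi\subseteq\ker F_A$, where $F_A\colon A\to A^{(p)}$ is the relative Frobenius. Once this is known, $F_A$ kills $\ker\phi$, so by the universal property of the quotient $B\cong A/\ker\phi$ — which applies since $\phi$, being an isogeny, is finite faithfully flat — there is a (unique) homomorphism $\psi\colon B\to A^{(p)}$ with $\psi\circ\phi=F_A$, giving the asserted factorization $A\overset{\phi}{\to}B\overset{\psi}{\to}A^{(p)}$.

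Set $G=\ker\phi$, a finite flat group scheme over $k$ of order $p^i$. First I would rephrase the hypothesis. The map $\phi^*\colon\omega_B\to\omega_A$ is the $k$-linear transpose of the tangent map $d\phi\colon\Lie A\to\Lie B$, so $\ker(\phi^*)\cong(\coker d\phi)^\vee$; since $\Lie A$ and $\Lie B$ both have dimension $g=\dim A$, this gives $\dim_k\ker(\phi^*)=\dim_k\coker(d\phi)=\dim_k\ker(d\phi)=\dim_k\Lie G$, the last equality because $\Lie G=\ker(d\phi)$ by left-exactness of $\Lie$ on $0\to G\to A\overset{\phi}{\to}B$. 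Thus the hypothesis $\dim_k\ker(\phi^*)=i$ says exactly that $\dim_k\Lie G=i=\log_p|G|$.

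Now I would invoke the standard fact that for any finite group scheme $H/k$ the subgroup scheme $H[F]=\ker(F\colon H\to H^{(p)})$, being killed by Frobenius, has order $p^{\dim_k\Lie(H[F])}=p^{\dim_k\Lie H}$ (the underlying scheme of a height-$\le 1$ group scheme being $\spec$ of a truncated polynomial algebra on $\dim\Lie$ generators, and $\Lie$ being insensitive to passage to $H[F]$). Applied to $G$, this yields $|G[F]|=p^{\dim_k\Lie G}=p^i=|G|$. Since $G[F]$ is a closed subgroup scheme of $G$ of the same order, $G[F]=G$; equivalently $F\colon G\to G^{(p)}$ is the zero homomorphism. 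By functoriality of relative Frobenius along the closed immersion $G\hookrightarrow A$, this forces $F_A$ to vanish on $G$, i.e. $G=\ker\phi\subseteq\ker F_A$, which is what was needed.

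The argument is essentially formal. The two points needing a little care are the identification of $\phi^*$ with the transpose of $d\phi$, which turns the numerical hypothesis into the maximality statement $\dim_k\Lie G=\log_p|G|$ (the reverse inequality $|G|\ge p^{\dim_k\Lie G}$ always holds), and the order computation $|G[F]|=p^{\dim_k\Lie G}$ for the Frobenius kernel; both are classical. One could alternatively run the same numerical argument through the Dieudonn\'e module of $G$, but I would present the Lie-algebra version as the main proof.
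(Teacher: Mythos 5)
The paper records this as a ``well known lemma'' and gives no proof of its own, so there is nothing in the text to compare against; your argument is correct and is the natural one. The reduction to $\ker\phi\subseteq\ker F_A$ is right, the identification $\ker\phi^*\cong(\coker d\phi)^\vee$ together with $\Lie(\ker\phi)=\ker(d\phi)$ (left-exactness of $\Lie$) correctly converts the numerical hypothesis into $\dim_k\Lie G=\log_p|G|$ for $G=\ker\phi$, and the order formula $|H[F]|=p^{\dim_k\Lie H}$ for the Frobenius kernel of a finite group scheme then forces $G[F]=G$. One step worth supporting explicitly in a written-up version: the formula $|H[F]|=p^{\dim_k\Lie H}$ rests on the classification of finite group schemes of height $\le 1$, namely that the affine algebra of $H[F]$ is (as an augmented $k$-algebra) a truncated polynomial ring $k[x_1,\dots,x_n]/(x_1^p,\dots,x_n^p)$ with $n=\dim_k\Lie(H[F])=\dim_k\Lie H$, the last equality because $\Lie$ kills the map induced by relative Frobenius; you call this classical and it is (SGA~3, Exp.\ VII$_A$; Demazure--Gabriel), but it is the only genuinely nontrivial input, so a citation there would be appropriate.
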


For $i=0,\ldots,c$ we let $X_J^i\subset X_J$ be the subvariety associated with $X_J^{\text{loc,i}}\subset X_J^{\text{loc}}$.  Then we have
\begin{prop}\label{P:idesc}
Let $x=(\{A_i\},\{\phi_i\},\lambda,\lambda')\in X_J(k)$ be a field valued point of $X_J$.
\begin{enumerate}
\item We have $x\in X_J^0(k)$ if and only if the isogeny $A_0\to A_c$ factors
\begin{equation*}
A_0\overset{F}{\to} A_0^{(p)}\simeq A_c.
\end{equation*}
\item If this is the case then we further have $x\in X_J^i(k)$ if and only if $G_{c+i}/G_c$ is killed by $F$.
\end{enumerate}
\end{prop}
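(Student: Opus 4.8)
\textbf{Proof plan for Proposition \ref{P:idesc}.}

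The plan is to unwind the local model diagram of Theorem \ref{T:locmodel} and translate the defining conditions of $X_J^{\text{loc},0}$ and $X_J^{\text{loc},i}$ into statements about the de Rham cohomology, then about the group schemes themselves, using the factorization criterion of Lemma \ref{L:Ffactor}. First I would treat part 1. Fix a point $x=(\{A_i\},\{\phi_i\},\lambda,\lambda')\in X_J(k)$; since $p_1$ is smooth and surjective, lift it to a point $\tilde{x}=(\{A_i\},\{\phi_i\},\lambda,\lambda',\{\alpha_i\})\in\tilde{X}_J(k)$ with Hodge filtration data $(W_i)=(\alpha_i(\omega_{A_{2c-i}}))$. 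By definition $x\in X_J^0(k)$ iff $(W_i)\in X_J^{\text{loc},0}(k)$ iff $W_c=\langle e_{g+1},\ldots,e_{2g}\rangle\otimes k$, which by the description of $X_J^{\text{loc},0}$ is equivalent to $(\phi_{2c-1}\cdots\phi_{c})|_{W_g}=0$. Transporting through the $\alpha_i$, and using that $\alpha_i$ identifies $\omega_{\hat A_{2c-i}}\subset H^1_{\mathrm{dR}}(\hat A_{2c-i})$ with $W_i\subset V\otimes k$ and intertwines the $\phi_i^*$ with the combinatorial $\phi_i$, this says precisely that the map
\begin{equation*}
\phi_{c-1}^*\cdots\phi_0^*:\omega_{A_c}\to\omega_{A_0}
\end{equation*}
is zero, i.e. that $\omega_{A_c}$ maps to zero in $\omega_{A_0}$ under the isogeny $\psi_c:=\phi_{c-1}\cdots\phi_0:A_0\to A_c$ of degree $p^{k_c}=p^g$. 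Thus $\dim_k\ker(\psi_c^*:\omega_{A_c}\to\omega_{A_0})=g$ equals the $p$-exponent of $\deg\psi_c$, so Lemma \ref{L:Ffactor} applies and gives a factorization $A_0\overset{F}{\to}A_0^{(p)}\to A_c$; since $F$ has degree $p^g=\deg\psi_c$ the second map is an isomorphism $A_0^{(p)}\simeq A_c$. Conversely if $A_0\to A_c$ factors as $A_0\overset{F}{\to}A_0^{(p)}\simeq A_c$ then $\psi_c^*$ factors through $(F)^*=V^*\circ(\text{iso})$ which kills $\omega$, so $W_c$ has the required form and $x\in X_J^0(k)$. (One should check the independence of the lift $\tilde x$: different lifts differ by the $\overline{P}_J$-action, and the condition on $(W_i)$ is $\overline{P}_J$-stable since $X_J^{\text{loc},0}$ is, so this is automatic.)

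For part 2, assume $x\in X_J^0(k)$, so $A_0\to A_c$ is $F$ and $A_c\simeq A_0^{(p)}$. Recall $x\in X_J^i(k)$ iff $(W_i)\in X_J^{\text{loc},i}(k)$ iff, in the notation of the section, $\ker(\phi_{c-1}\cdots\phi_{c-i})\subset W_{c-i}$ contains $\langle e_{k_{c-i}+1},\ldots,e_g\rangle$; equivalently, as worked out just before Proposition \ref{P:i}, $F_i\subset\langle e_1,\ldots,e_g\rangle=\tilde E^B_g$ where $(F_i)=\psi((W_i))$. Translating through $\psi$ and the $\alpha$'s, the flag $(F_j)$ records the images $\phi_{2c-1}^*\cdots\phi_{2c-j}^*(\omega_{A_{2c-j}})$ inside $H^1_{\mathrm{dR}}(A_{2c})\cong V\otimes k$; the subspace $\langle e_1,\ldots,e_g\rangle$ corresponds to $\omega_{A_{2c}}=\omega_{\hat A_0}$, which under the duality $A_0\overset{F}{\to}A_c\simeq A_0^{(p)}$ and the poincare pairing is identified with $\ker(F:H^1_{\mathrm{dR}}(A_0^{(p)})\to\ldots)$ — more precisely with the image of $V$ on the de Rham realization, i.e. with the Hodge filtration on $A_0^{(p)}$. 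So the condition $F_i\subset\langle e_1,\ldots,e_g\rangle$ becomes: the co-Lie algebra of $A_0^{(p)}/(G_{c+i}/G_c)^{(p)}$ receives $\omega_{A_0^{(p)}}$, equivalently the quotient isogeny $A_0/G_c=A_0^{(p)}\to A_0/G_{c+i}$ induces an isomorphism on co-Lie algebras, equivalently $G_{c+i}/G_c\subset A_0^{(p)}[F]$; by duality $G_{c+i}/G_c$ is killed by $F$. I would carry this out concretely: work out the identification of $F_i$ with $\ker(V:\omega_{(A_0/G_{c+i})^{(p)}}\to \omega_{A_0/G_{c+i}})^{\perp}$ or the corresponding statement, then use that $G_c=\ker(A_0\to A_0^{(p)})=A_0[F]$ so $A_0/G_c=A_0^{(p)}$ and $G_{c+i}/G_c$ sits inside $A_0^{(p)}$, and that a subgroup of a $\mathrm{BT}$ being killed by $F$ is equivalent to the corresponding Hodge-filtration inclusion.

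The main obstacle I expect is the precise bookkeeping in part 2: keeping straight which $H^1_{\mathrm{dR}}$, which dual abelian variety, and which twist by Frobenius the combinatorial subspace $\langle e_1,\ldots,e_g\rangle$ and the flag terms $F_i$ correspond to, and verifying that the isomorphism $A_c\simeq A_0^{(p)}$ from part 1 is compatible with the polarizations and with the identifications $\alpha_i$ in the way needed to convert ``$F_i\subset\langle e_1,\ldots,e_g\rangle$'' into ``$G_{c+i}/G_c$ killed by $F$.'' Part 1 is essentially immediate from Lemma \ref{L:Ffactor} once the local-model translation is set up; it is part 2 where a careful dévissage through Grothendieck--Messing / de Rham cohomology is required. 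Everything else — independence of the lift $\tilde x$, $\overline{P}_J$-stability, the passage between the two descriptions of $X_J$ — is routine given the results already established in this chapter.
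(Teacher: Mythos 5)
Your part 1 matches the paper's argument: translate the defining condition of $X_J^{\text{loc},0}$ to the vanishing of $\omega_{A_c}\to\omega_{A_0}$ and apply Lemma~\ref{L:Ffactor}.

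For part 2 you take a detour that the paper avoids, and the detour contains a genuine error. The paper does not pass to the flag description $(F_j)=\psi((W_j))$ at all. It notes that $G_{c+i}/G_c=\ker(A_c\to A_{c+i})$, that this subgroup is killed by $F$ if and only if $F:A_c\to A_c^{(p)}$ factors through $A_c\to A_{c+i}$, and that the defining condition of $X_J^{\text{loc},i}$, once transported through the $\alpha$'s, says exactly that the kernel of the co-Lie map for $A_c\to A_{c+i}$ has dimension $k_{c+i}-k_c$, the full $p$-exponent of the degree. Lemma~\ref{L:Ffactor}, applied now to $A_c\to A_{c+i}$ rather than to $A_0\to A_c$, finishes. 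No Cartier duality, no reidentification of $\langle e_1,\ldots,e_g\rangle$ with a Hodge filtration, no Grothendieck--Messing dévissage.

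The concrete error in your route is the asserted equivalence ``$F_i\subset\langle e_1,\ldots,e_g\rangle$ iff the quotient isogeny $A_0^{(p)}\to A_0/G_{c+i}$ induces an isomorphism on co-Lie algebras iff $G_{c+i}/G_c\subset A_0^{(p)}[F]$.'' That middle condition is wrong in both directions: if $H$ is a nontrivial finite flat subgroup of an abelian variety $B$ killed by $F$, the co-Lie map $\omega_{B/H}\to\omega_B$ is \emph{not} an isomorphism; its kernel has dimension $\log_p|H|$, which is positive. The correct criterion, and the one Lemma~\ref{L:Ffactor} provides, is that this kernel has the maximal possible dimension $\log_p(\deg)$, not dimension zero. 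If you insist on going through $\psi$ and $A_0^{(p)}$ you will need to replace ``isomorphism'' by ``kernel of full rank'' and redo the bookkeeping accordingly; it is cleaner and shorter to apply Lemma~\ref{L:Ffactor} directly to $A_c\to A_{c+i}$ as the paper does.
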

\begin{proof}
From the definition we have $x\in X_J^0(k)$ if and only if the map $\omega_0\to\omega_c$ is 0.  But $A_0\to A_c$ has degree $p^g$, and hence 1 follows from Lemma \ref{L:Ffactor}.  For 2, note that $G_c/G_{c+i}=\ker A_c\to A_{c+i}$.  Hence this is killed by frobenius if and only if $A_c\to A_{c+i}$ factors through Frobenius.  On the other hand, by definition we have $x\in X_J^i(k)$ if and only if $\dim_k(\ker(\omega_{A_c}\to\omega_{A_{c+i}}))=k_{c+i}-k_c$.  As the degree of $A_c\to A_{c+i}$ is $p^{k_{c+i}-k_c}$, the result follows again rom Lemma \ref{L:Ffactor}.
\end{proof}

As $X_J^0$ is reduced, we conclude from the proposition that for the universal family over $X_J^0$, the isogeny $\Ca{A}_0\to\Ca{A}_c$ factors
\begin{equation*}
\Ca{A}_0\overset{F}{\to}\Ca{A}_0^{(p)}\overset{\gamma}{\to}\Ca{A}_c,
\end{equation*}
where $\gamma$ is an isomorphism.  Here as before we are abusing notation by not writing restrictions when they are clear from context.  Similarly we conclude that over $X_J^i$, $\Ca{G}_{i+c}/\Ca{G}_i$ is killed by Frobenius.

We now recall the following important theorem.
\begin{thm}\label{T:fkill}
Let $S$ be a scheme over $\FF_p$.  The functor
\begin{equation*}
G\mapsto (\omega_G,V^*:\omega_G\to\omega_G^{(p)})
\end{equation*}
defines an anti equivalence between the category of finite, locally free group schemes $G/S$ killed by Frobenius, and the category of pairs $(\Ca{M},V:\Ca{M}\to\Ca{M}^{(p)})$ with $\Ca{M}$ a locally free sheaf on $S$ 
\end{thm}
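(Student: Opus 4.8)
The statement is a classical result (essentially a reformulation of the classification of finite group schemes killed by Frobenius, going back to work of Oort–Tate and others, and treated in this form e.g. in \cite{Oo01} or \cite{dJ93}). The plan is to produce an explicit quasi-inverse to the functor $G\mapsto(\omega_G,V^*)$ and check that the two composites are naturally isomorphic to the identity. First I would recall that a finite locally free group scheme $G/S$ killed by Frobenius is the same as a cocommutative Hopf algebra $\Ca{O}_G$ over $\O_S$ whose augmentation ideal $\I$ satisfies $\I^{(p)}=0$, i.e. $f^p=0$ for every $f$ in the augmentation ideal; concretely, $\Ca{O}_G=\text{Sym}_{\O_S}(\omega_G)/(f^p: f\in\omega_G)$ once one knows the structure, but the clean way is via the duality below. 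I would work with the Cartier dual and the contravariant Dieudonné/Frobenius-module description: $G$ killed by $F$ corresponds, under Cartier duality, to a finite locally free group scheme $G^D$ killed by Verschiebung, hence of multiplicative-type-after-twist form; and $\omega_G$ with the operator $V^*\colon\omega_G\to\omega_G^{(p)}$ is precisely the ``Dieudonné module'' data remembering $G$.

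The key steps, in order, are: (1) construct the functor in the reverse direction, sending a pair $(\Ca{M},V\colon\Ca{M}\to\Ca{M}^{(p)})$ with $\Ca{M}$ locally free of finite rank to a group scheme $G(\Ca{M},V)$. The natural construction is $G(\Ca{M},V)=\underline{\Spec}_{\O_S}\bigl(\text{Sym}_{\O_S}(\Ca{M})/\Ca{J}\bigr)$ where $\Ca{J}$ is the ideal locally generated by the elements $m^{\otimes p}-V(m)$ for $m\in\Ca{M}$ (interpreting $V(m)$, a section of $\Ca{M}^{(p)}$, inside $\text{Sym}^p(\Ca{M})$ via the canonical map $\Ca{M}^{(p)}\to\text{Sym}^p(\Ca{M})$ dual to the $p$-th power), with comultiplication making $\Ca{M}$ primitive. (2) Check that this is a finite locally free $S$-group scheme killed by Frobenius: finiteness and local freeness are local on $S$ and follow after choosing a local basis of $\Ca{M}$, where $\Ca{O}_G$ becomes a free module on monomials of degree $<p$ in each coordinate; the comultiplication being well defined uses that $(x+y)^p=x^p+y^p$ in characteristic $p$, which is exactly where the hypothesis $S/\FF_p$ is used. (3) Identify $\omega_{G(\Ca{M},V)}$ with $\Ca{M}$ and the Verschiebung-induced map $V^*$ with the given $V$: this is a direct computation at the level of the augmentation ideal modulo its square, together with the general formula relating Verschiebung on $G$ to the $p$-th power map on $\Ca{O}_{G^D}$, or equivalently to the relation $m^{\otimes p}=V(m)$ imposed in $\Ca{J}$. (4) Conversely, given $G$ killed by $F$, show the canonical map $G\to G(\omega_G,V^*)$ is an isomorphism; this can be checked locally on $S$ and then, by the theory of finite group schemes over a field (or a local Artin ring) killed by Frobenius, reduces to a linear-algebra statement about Hopf algebras generated in degree one with $p$-th powers killed. (5) Verify naturality of all these identifications in $G$ and in $(\Ca{M},V)$, and that morphisms correspond, so that one indeed has an anti-equivalence of categories (anti-, because $\omega_{(-)}$ and Cartier duality are contravariant).

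I expect the main obstacle to be step (4): proving that the unit $G\to G(\omega_G,V^*)$ is an isomorphism, i.e. that an arbitrary finite locally free group scheme killed by Frobenius really is ``generated by its invariant differentials with the single relation $x^{(p)}=V(x)$.'' Over a field this is the classical structure theorem, but to get it over a general (locally noetherian) base $S/\FF_p$ one must argue that both source and target are finite locally free of the same rank and that the map is a closed immersion — for instance by checking it fibrewise using the field case and then invoking the fibrewise criterion for an isomorphism between finite locally free schemes of constant rank (Nakayama plus constancy of rank). The rest of the argument is bookkeeping with Hopf algebras and the standard compatibilities of $F$, $V$, and Cartier duality already recalled in Section \ref{S:btdefs}; none of it is deep, but it must be organized carefully because the functor is contravariant and the relation defining $\Ca{J}$ must be matched precisely with the sign-free formula $V_{G^D}=(F_G)^D$.
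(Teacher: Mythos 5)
The paper does not prove this theorem---it recalls it as a known result (Gabriel's classification of commutative group schemes of height one, cf.\ \cite{SGA3})---so I will evaluate your proposal on its own. Your strategy of producing a quasi-inverse and checking the two compositions locally is the right one, but the quasi-inverse you write in step (1) is wrong in a way that cannot be repaired by adjusting signs or twists. Test it on $G=\mu_p$ over a field $k/\FF_p$: here $\omega_{\mu_p}$ is free of rank one and $V^*$ is an isomorphism (since $V_{\mu_p}=(F_{\Z/p\Z})^D=\mathrm{id}$), so if $m$ is a generator, the canonical map $\omega_{\mu_p}^{(p)}\to\mathrm{Sym}^p(\omega_{\mu_p})$ sends $V(m)$ to $m^p$, your ideal $\Ca{J}=(m^p-V(m))$ is the zero ideal, and $G(\Ca{M},V)=\spec k[m]$ with $m$ primitive is $\Bf{G}_a$---not finite, and certainly not $\mu_p$. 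The conceptual obstruction is that by declaring $\Ca{M}$ primitive you force $G(\Ca{M},V)$ to embed in a vector group and hence to be unipotent, whereas $\mu_p$ (which is killed by $F$) is of multiplicative type and admits no nonzero primitive functions. In particular $\omega_{\mu_p}$ does not lift to a sheaf of primitives inside $\Ca{O}_{\mu_p}$, so the unit map you need in step (4) does not even exist in that case, let alone become an isomorphism.

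The correct quasi-inverse runs through the Lie algebra and Cartier duality rather than through $\omega_G$ directly. Given $(\Ca{M},V)$, dualize to obtain $\Ca{M}^\vee$ with the transpose $V^\vee:(\Ca{M}^\vee)^{(p)}\to\Ca{M}^\vee$; one should recognize this as the restricted $p$-th power operation on $\Lie(G)=\omega_G^\vee$. Then form $U=\mathrm{Sym}_{\O_S}(\Ca{M}^\vee)/\Ca{J}'$, where $\Ca{J}'$ is generated by $\xi^p-V^\vee(\xi^{(p)})$ for $\xi$ a local section of $\Ca{M}^\vee$, and give $U$ the Hopf structure making $\Ca{M}^\vee$ primitive. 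This $U$ is $\Ca{O}_{G^D}$, and $G$ itself is recovered as the Cartier dual of $\underline{\spec}\,U$. As a sanity check, $(\Ca{M},V)=(k,\mathrm{id})$ yields $U=k[\xi]/(\xi^p-\xi)\cong\Ca{O}_{\Z/p\Z}$, whose Cartier dual is indeed $\mu_p$, while $(\Ca{M},V)=(k,0)$ gives $U=k[\xi]/\xi^p\cong\Ca{O}_{\alpha_p}$ and its self-dual $\alpha_p$. With this replacement for step (1), your steps (2)--(5) go through essentially as you describe, with the fibrewise argument plus Nakayama handling step (4).
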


Now for $i=1,\ldots,c$ we have an exact sequence of group schemes on $X_J$
\begin{equation*}
0\to \Ca{G}_i\to \Ca{A}_0\to \Ca{A}_i.
\end{equation*}
Over $X_J^0$ we have $\Ca{G}_i\subset \Ca{A}_0[F]=G_c$ and hence over $X_J^0$ we have an exact sequence of finite flat groups schemes killed by Frobenius
\begin{equation*}
0\to \Ca{G}_i\to\Ca{A}_0[F]\to\Ca{A}_i[F].
\end{equation*}
Hence we have an exact sequence
\begin{equation*}
\omega_{\Ca{A}_i}\to\omega_{\Ca{A}_0}\to \omega_{\Ca{G}_i}\to 0.
\end{equation*}
Over $\tilde{X}_J^0$ we have a commutative diagram
\begin{equation*}
\begin{tikzcd}
p_1^*\omega_{\Ca{A}_i}\arrow{r}\arrow{d}& p_1^*\omega_{\Ca{A}_0}\arrow{r}\arrow{d}&p_1^*\omega_{\Ca{G}_i}\arrow{r}\arrow[dotted]{d}& 0\\
p_2^*\Ca{W}_{2c-i}\arrow{r}&p_2^*\Ca{W}_{2c}\arrow{r}&p_2^*\Ca{V}_i\arrow{r}&0
\end{tikzcd}
\end{equation*}
where the first two vertical arrows are isomorphisms induced by the $\alpha$'s and the dotted arrow is the induced isomorphism.

Next, for $c<i\leq 2c$ we have an exact sequence
\begin{equation*}
0\to \Ca{G}_i/\Ca{G}_c\to \Ca{A}_c\to \Ca{A}_i.
\end{equation*}
By Proposition \ref{P:idesc}, over $X_J^{i-c}$ we have $\Ca{G}_i/\Ca{G}_c\subset\Ca{A}_c[F]$ and hence we have an exact sequence of finite flat group schemes killed by Frobenius
\begin{equation*}
0\to \Ca{G}_i/\Ca{G}_c\to\Ca{A}_c[F]\to\Ca{A}_i[F].
\end{equation*}
Then we get an exact sequence of sheaves
\begin{equation*}
\omega_{\Ca{A}_i}\to\omega_{\Ca{A}_c}\to \omega_{\Ca{G}_i/\Ca{G}_c}\to 0.
\end{equation*}
Over $\tilde{X}_J^{i-c}$ we get a commutative diagram
\begin{equation*}
\begin{tikzcd}
p_1^*\omega_{\Ca{A}_i}\arrow{r}\arrow{d}& p_1^*\omega_{\Ca{A}_c}\arrow{r}\arrow{d}&p_1^*\omega_{\Ca{G}_i/\Ca{G}_c}\arrow{r}\arrow[dotted]{d}& 0\\
p_2^*\Ca{W}_{2c-i}\arrow{r}&p_2^*\Ca{W}_{c}\arrow{r}&p_2^*\Ca{V}_i\arrow{r}&0
\end{tikzcd}
\end{equation*}
where the first two vertical arrows are isomorphisms induced by the $\alpha$'s and the dotted arrow is the induced isomorphism.

Now for $0<i\leq c$ note that
\begin{equation*}
\Ca{G}_c/\Ca{G}_i=\ker(\Ca{A}_i\to\Ca{A}_c).
\end{equation*}
Over $X_J^0$, $\Ca{G}_c/\Ca{G}_i$ is killed by Frobenius and hence over $X_J^0$ we also have
\begin{equation*}
\Ca{G}_c/\Ca{G}_i=\ker(\Ca{A}_i[F]\to\Ca{A}_c[F]).
\end{equation*}
Hence the the image of the map $\Ca{A}_i[F]\to\Ca{A}_c[F]$ is representable by a finite flat subgroup scheme $\Ca{H}_i\subset \Ca{A}_c[F]$.  By Proposition \ref{P:idesc} there is a canonical isomorphism
\begin{equation*}
\gamma:\Ca{A}_0^{(p)}\overset{\sim}{\to}\Ca{A}_c
\end{equation*}
and hence
\begin{equation*}
\gamma:\Ca{A}_0^{(p)}[F]\overset{\sim}{\to}\Ca{A}_c[F].
\end{equation*}
Under this isomorphism we have
\begin{equation*}
\gamma:\Ca{G}_i^{(p)}\overset{\sim}{\to}\Ca{H}_i.
\end{equation*}

Hence we have natural isomorphisms
\begin{equation*}
\omega_{\Ca{G}_i^{(p)}}\simeq\im(\omega_{\Ca{A}_c}\to\omega_{\Ca{A}_i})
\end{equation*}
But on $\tilde{X}_J^0$ we have a commutative diagram
\begin{equation*}
\begin{tikzcd}
p_1^*\omega_{\Ca{A}_i}\arrow{r}\arrow{d}&p_1^*\omega_{\Ca{A}_c}\arrow{d}\\
p_2^*\Ca{W}_{2c-i}\arrow{r}&p_2^*\Ca{W}_c
\end{tikzcd}
\end{equation*}
where the vertical maps are isomorphisms induced by the $\alpha$'s.  Hence we have an induced isomorphism
\begin{equation*}
p_1^*\omega_{\Ca{G}_i^{(p)}}\simeq p_2^*\Ca{V}_i'.
\end{equation*}

Now we record
\begin{prop}\label{P:less2}
Let $x=(\{A_i\},\{\phi_i\},\lambda,\lambda')\in X_J^0(k)$ be a field valued point.  Let $0\leq i,j\leq c$.  Then the following are equivalent
\begin{enumerate}
\item $V(G_j^{(p)})\subset G_i$
\item $F(G_{2c-i})\subset G_{2c-j}^{(p)}$
\item We can fill in the dotted arrow in the diagram
\begin{equation*}
\begin{tikzcd}
\omega_{A_{2c}}\arrow[two heads]{r}\arrow{d}&\omega_{G_i}\arrow[dotted]{d}\\
\omega_{A_c}\arrow[two heads]{r}&\omega_{G_j^{(p)}}
\end{tikzcd}
\end{equation*}
\end{enumerate} 
\end{prop}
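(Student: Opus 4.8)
The plan is to prove $(1)\Leftrightarrow(2)$ from the self-duality of the parahoric flag together with Lemma~\ref{L:fvperp}, and $(1)\Leftrightarrow(3)$ by translating everything into cotangent spaces via the anti-equivalence of Theorem~\ref{T:fkill}; this last tool is available precisely because $x\in X_J^0(k)$ forces $G_i\subseteq A_0[F]$ by Proposition~\ref{P:idesc}. Throughout we work over the field $k$, so all kernels, images, and inverse images of the homomorphisms below exist automatically as closed subgroup schemes.

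For $(1)\Leftrightarrow(2)$: the pair consisting of $A_0[p]$ and the $\lambda$-Weil pairing is a principally quasi-polarized $\BT$ over $k$, and the flag $\{G_\bullet\}$ is self-dual for this pairing by the very definition of $X_J$, so $G_i^\perp=G_{2c-i}$. Since $H\mapsto H^\perp$ is inclusion-reversing and involutive, $V(G_j^{(p)})\subseteq G_i$ is equivalent to $G_{2c-i}=G_i^\perp\subseteq\bigl(V(G_j^{(p)})\bigr)^\perp$. Now $\im\bigl(V\colon(A_0[p])^{(p)}\to A_0[p]\bigr)=A_0[F]$ already contains $V(G_j^{(p)})$, so all perps and inverse images here are formed inside the $\BT$ $A_0[p]$, and Lemma~\ref{L:fvperp}(2) applied to $H=G_j$ gives $\bigl(V(G_j^{(p)})\bigr)^\perp=F^{-1}\bigl((G_{2c-j})^{(p)}\bigr)$. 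Hence the condition reads $G_{2c-i}\subseteq F^{-1}\bigl((G_{2c-j})^{(p)}\bigr)$, i.e.\ $F(G_{2c-i})\subseteq G_{2c-j}^{(p)}$, which is $(2)$.

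For $(1)\Leftrightarrow(3)$: over $X_J^0$, Proposition~\ref{P:idesc} gives $G_i,G_j\subseteq A_0[F]=G_c$, so $G_i$, $G_j$, $G_j^{(p)}$ are killed by $F$, and so is $V(G_j^{(p)})\subseteq A_0[F]$. By Theorem~\ref{T:fkill}, closed subgroup schemes of $G_c$ correspond contravariantly to $V^*$-compatible quotients of $\omega_{A_0[F]}=\omega_{A_0}$: the subgroup $G_i$ corresponds to the surjection $\omega_{A_0}\twoheadrightarrow\omega_{G_i}$ (with kernel the image of $\omega_{A_i}\to\omega_{A_0}$ pulled back along $A_0\to A_i$), and $V(G_j^{(p)})$ corresponds to $\omega_{A_0}\twoheadrightarrow\omega_{V(G_j^{(p)})}\hookrightarrow\omega_{G_j^{(p)}}$, whose composite is the cotangent map along the identity of the homomorphism $V\colon G_j^{(p)}\to A_0$. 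Thus $(1)$ holds if and only if this composite factors through $\omega_{A_0}\twoheadrightarrow\omega_{G_i}$ --- which, after the three solid maps of the diagram in $(3)$ are identified with these canonical maps, is exactly the fillability of the dotted arrow. The identifications needed are: $\omega_{A_{2c}}\overset{\sim}{\to}\omega_{A_0}$ via the principal polarization $\lambda$ (under which the top arrow becomes $\omega_{A_0}\twoheadrightarrow\omega_{G_i}$); the surjection $\omega_{A_c}\twoheadrightarrow\omega_{G_j^{(p)}}$ coming from the isomorphism $\gamma\colon A_0^{(p)}\overset{\sim}{\to}A_c$ of Proposition~\ref{P:idesc} restricted to $G_j^{(p)}\overset{\sim}{\to}H_j\subseteq A_c[F]$; and the vertical map $\omega_{A_{2c}}\cong\omega_{A_0}\to\omega_{A_c}$, which equals $(\gamma^*)^{-1}\circ V_{A_0}^*$ --- this last following by writing the full chain isogeny $A_0\to\cdots\to A_{2c}$ as $p\lambda$, using its factorization through $\gamma F_{A_0}\colon A_0\to A_c$ over $X_J^0$, and invoking $V_{A_0}F_{A_0}=[p]_{A_0}$.

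The main obstacle is exactly this bundle of compatibility checks in $(1)\Leftrightarrow(3)$: one must keep straight four canonical identifications simultaneously --- $A_{2c}=\hat{A}_0\cong A_0$ via $\lambda$, $A_c\cong A_0^{(p)}$ via $\gamma$, $G_j^{(p)}\cong H_j$, and $\omega_{(-)}\cong\omega_{(-)[F]}$ for the abelian schemes in play --- and then verify that the diagram's three solid arrows are precisely those output by the anti-equivalence. I would also dispose of the degenerate values $i$ or $j$ in $\{0,c\}$ separately (there $G_0=0$ or $G_c=A_0[F]$, and all three statements are vacuously true), and record that a cotangent map of a homomorphism is automatically $V^*$-compatible, so that a factorization found on cotangent spaces genuinely lifts, via Theorem~\ref{T:fkill}, to the asserted inclusion of subgroup schemes.
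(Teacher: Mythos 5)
Your proposal is correct and follows the same route as the paper: $(1)\Leftrightarrow(2)$ via the self-dual flag and Lemma~\ref{L:fvperp} (the paper's terse ``Cartier duality''), and $(1)\Leftrightarrow(3)$ via the anti-equivalence of Theorem~\ref{T:fkill} together with the factorization of $A_0\to A_c$ through Frobenius and the isomorphism $\gamma$ of Proposition~\ref{P:idesc}. Your explicit identification of the vertical arrow as $(\gamma^*)^{-1}\circ V_{A_0}^*$ is exactly the commutative square the paper produces by dualizing the factorization of $A_0\to A_c$ to get $A_c\to A_{2c}^{(p)}\to A_{2c}$.
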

\begin{proof}
The equivalence of the first two points follows from Cartier Duality.  For the equivalence of 1 and 3 note that by Theorem \ref{T:fkill} we have $V(G_j^{(p)})\subset G_i$ if and only if we can fill in the diagram
\begin{equation*}
\begin{tikzcd}
\omega_{A_0[F]}\arrow[two heads]{r}\arrow{d}{V^*}&\omega_{G_i}\arrow[dotted]{d}\\
\omega_{A_0^{(p)}[F]}\arrow[two heads]{r}&\omega_{G_j^{(p)}}
\end{tikzcd}
\end{equation*}
But now we have a commutative diagram
\begin{equation*}
\begin{tikzcd}
\omega_{A_{2c}}\arrow{r}\arrow{d}&\omega_{A_0[F]}\arrow{d}{V^*}\\
\omega_{A_c}\arrow{r}&\omega_{A_0^{(p)}[F]}
\end{tikzcd}
\end{equation*}
where the top horizontal arrow is the isomorphism induced by $\lambda:A_0\to A_{2c}=\hat{A}_0$, the bottom horizontal arrow is induced by $\gamma$, and the commutativity of the diagram follows from the fact that $A_c\to A_{2c}$ is the dual of $A_0\to A_c$ and hence factors
\begin{equation*}
A_c\overset{\gamma^\vee}{\to}A_{2c}^{(p)}\overset{V}{\to}A_c
\end{equation*}

\end{proof}
Similarly we have
\begin{prop}\label{P:great2}
Let $x=(\{A_i\},\{\phi_i\},\lambda,\lambda')\in X_J^0(k)$ be a field valued point.  Let $0\leq i\leq c$ and $c\leq j\leq 2c$.  Then the following are equivalent
\begin{enumerate}
\item $V(G_j^{(p)})\subset G_i$
\item $F(G_{2c-i})\subset G_{2c-j}^{(p)}$
\item We have $x\in X_J^{c-i}(k)$ and we can fill in the dotted arrow in the diagram
\begin{equation*}
\begin{tikzcd}
\omega_{A_c}\arrow[equals]{d}\arrow[two heads]{r}&\omega_{G_{2c-j}^{(p)}}\arrow[dotted]{d}\\
\omega_{A_c}\arrow[two heads]{r}&\omega_{G_{2c-i}/G_c}
\end{tikzcd}
\end{equation*}
\end{enumerate}
\end{prop}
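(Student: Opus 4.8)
The plan is to follow the proof of Proposition \ref{P:less2} essentially verbatim, with the one structural change forced by the range $c\le j\le 2c$: now $G_j\supseteq G_c$, so the relevant finite flat group schemes sit inside $A_c=A_0^{(p)}$ and its quotients rather than inside $A_0$, which is exactly why the subquotient $G_{2c-i}/G_c$ and the condition $x\in X_J^{c-i}(k)$ show up in statement $3$.

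First I would prove $1\iff 2$ by taking orthogonal complements for the $\lambda$-Weil pairing on $A_0[p]$, just as in Proposition \ref{P:less2}. Using $G_i=G_{2c-i}^\perp$, the involutivity $(H^\perp)^\perp=H$, the compatibility of $(-)^\perp$ with Frobenius twist, and the general identity $(f(H))^\perp=f^{-1}(H^\perp)$ for an isogeny $f$ compatible with the polarizations (the principle isolated in the proof of Lemma \ref{L:fvperp}, applied to $V\colon A_0^{(p)}\to A_0$), one gets $(V(G_j^{(p)}))^\perp=F^{-1}(G_{2c-j}^{(p)})$, so $V(G_j^{(p)})\subseteq G_i$ is equivalent to $G_{2c-i}\subseteq F^{-1}(G_{2c-j}^{(p)})$, i.e. to $F(G_{2c-i})\subseteq G_{2c-j}^{(p)}$; this step is insensitive to the ranges of $i$ and $j$.

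Next I would observe that $1$ (equivalently $2$) already forces $x\in X_J^{c-i}(k)$: since $2c-j\le c$ we have $G_{2c-j}^{(p)}\subseteq G_c^{(p)}=A_0^{(p)}[F]=A_c[F]$, so $F(G_{2c-i})\subseteq A_c[F]$, which says precisely that $G_{2c-i}/G_c$ is killed by Frobenius, and by Proposition \ref{P:idesc}(2) this is equivalent to $x\in X_J^{c-i}(k)$. Hence imposing $x\in X_J^{c-i}(k)$ in statement $3$ costs nothing, and it is exactly what makes $G_{2c-i}/G_c$ (and $G_{2c-j}^{(p)}$) killed by $F$, so that Theorem \ref{T:fkill} applies. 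Assuming $x\in X_J^{c-i}(k)$, I would then deduce $2\iff 3$ from Theorem \ref{T:fkill}. Since $x\in X_J^0(k)$, the isogeny $A_0\to A_c$ is the quotient by $G_c$ and factors as $A_0\overset{F}{\to}A_0^{(p)}\overset{\gamma}{\to}A_c$ with $\gamma$ an isomorphism; under $\gamma$ the inclusion $\bar F\colon G_{2c-i}/G_c\hookrightarrow A_c$ induced by Frobenius is identified with the tautological inclusion $G_{2c-i}/G_c=\ker(A_c\to A_{2c-i})\hookrightarrow A_c[F]$, and $F(G_{2c-i})$ corresponds to $G_{2c-i}/G_c$. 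So condition $2$ says exactly $G_{2c-i}/G_c\subseteq G_{2c-j}^{(p)}$ inside $A_c[F]$. On the other hand, using $\omega_{A_c}=\omega_{A_c[F]}$ and the surjections onto $\omega_{G_{2c-j}^{(p)}}$ and $\omega_{G_{2c-i}/G_c}$ coming from the closed immersions into $A_c[F]$, and recalling that the left vertical map in the stated diagram is the identity, the dotted arrow exists if and only if $\ker(\omega_{A_c}\to\omega_{G_{2c-j}^{(p)}})\subseteq\ker(\omega_{A_c}\to\omega_{G_{2c-i}/G_c})$; by the anti-equivalence of Theorem \ref{T:fkill} (the poset of subgroups of $A_c[F]$ is anti-isomorphic to the poset of quotients of $\omega_{A_c[F]}$) this is again $G_{2c-i}/G_c\subseteq G_{2c-j}^{(p)}$. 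Combining the three steps gives $1\iff 2\iff 3$.

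The hard part will be purely the bookkeeping of Frobenius twists and of the isomorphism $\gamma\colon A_0^{(p)}\overset{\sim}{\to}A_c$: one must check carefully that under $\gamma$ the Frobenius-induced inclusion of $G_{2c-i}/G_c$ into $A_c$ is the obvious one, so that condition $2$ and the diagram genuinely become the same containment of subgroups of $A_c[F]$. The exact sequences relating $\omega_{A_c}$, $\omega_{A_{2c-i}}$ and $\omega_{G_{2c-i}/G_c}$ are already recorded before Proposition \ref{P:less2}, so what remains is a short diagram chase.
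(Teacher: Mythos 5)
Your proposal is correct and follows essentially the same route as the paper's proof: (1)$\iff$(2) by orthogonality for the $\lambda$-Weil pairing (the paper's "Cartier duality"), the observation that (2) already forces $x\in X_J^{c-i}(k)$ via Proposition \ref{P:idesc}, and then (2)$\iff$(3) by Theorem \ref{T:fkill} after transporting everything to subgroups of $A_c[F]$ via $\gamma$. Your unwinding of the identification $\gamma(F(G_{2c-i}))=G_{2c-i}/G_c$ and of the anti-equivalence as a containment of kernels is precisely the diagram chase the paper leaves implicit.
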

\begin{proof}
The equivalence of the first two points follows from Cartier duality.  Now we prove the equivalence of 2 and 3.  First note that if $F(G_{2c-i})\subset G_{2c-j}^{(p)}$ then as $G_{2c-j}^{(p)}\subset G_c^{(p)}$ we conclude that $F$ kills $G_{2c-i}/G_c$ and hence that $x\in X_J^{c-i}(k)$.  Now assuming that $x\in X_J^{c-i}(k)$, point 2 is holds if and only if we can fill in the dotted arrow in the diagram
\begin{equation*}
\begin{tikzcd}
\omega_{A_0^{(p)}[F]}\arrow{d}\arrow[two heads]{r}&\omega_{G_{2c-j}^{(p)}}\arrow[dotted]{d}\\
\omega_{A_c[F]}\arrow[two heads]{r}&\omega_{G_{2c-i}/G_c}
\end{tikzcd}
\end{equation*}
where the first vertical arrow is the isomorphism induced by $\gamma$.
\end{proof}

Now from the stratification of $X_J^{\text{loc},0}$ by Schubert varieties we obtain a stratification
\begin{equation*}
X_J^0=\coprod_{w\in{}^JW^{\tilde{J}},w\leq x}X_{J,w}.
\end{equation*}
This is the Kottwitz-Rapoport stratification of $X_J^0$.  In fact there is a Kottwitz-Rapoport stratification of all of $X_J$, obtained from the stratification of all of $X_J^{\text{loc}}$ by $\overline{P}_J$ orbits, but it will not play a role in this thesis.  As usual we denote the Zariski closure of $X_{J,w}$ by $\overline{X}_{J,w}$.

For the rest of this section we fix an admissible $(w,J)$.  We now record the following corollary to Propositions \ref{P:less2} and \ref{P:great2}, the discussion leading up to them, and Proposition \ref{P:admchar}.
\begin{prop}
Let $(w,J)$ be admissible and let $x=(\{A_i\},\{\phi_i\},\lambda,\lambda')\in X_J^0(k)$ be a field valued point.  Then the following are equivalent
\begin{enumerate}
\item $x\in\overline{X}_{J,w}(k)$
\item For $1\leq i\leq c$ we have $V(G_{\sigma(i)-1}^{(p)})\subset G_{i-1}$.
\item For $c+1\leq i\leq 2c$ we have $F(G_{i})\subset G_{\sigma(i)}^{(p)}$.
\end{enumerate}
\end{prop}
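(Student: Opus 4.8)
The plan is to transport everything to the local model via the diagram $X_J\overset{p_1}{\leftarrow}\tilde X_J\overset{p_2}{\to}X_J^{\text{loc}}$ of Theorem \ref{T:locmodel} and then decode Proposition \ref{P:admchar} into a statement about the group schemes $G_i$. First I would reduce to the local model. Since $p_1$ is a $\overline{P}_J$-torsor it is surjective, and since $\overline{X}_{J,w}^{\text{loc}}$ is $\overline{P}_J$-stable we have $p_1^{-1}(\overline{X}_{J,w})=p_2^{-1}(\overline{X}_{J,w}^{\text{loc}})$; as $\overline{X}_{J,w}$ is closed we may extend $k$ to an algebraic closure and pick a lift $\tilde x$ of $x$ to $\tilde X_J(k)$, so that $x\in\overline{X}_{J,w}(k)$ if and only if $(W_i):=p_2(\tilde x)\in\overline{X}_{J,w}^{\text{loc}}(k)$. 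Moreover $x\in X_J^0(k)$, so by Proposition \ref{P:idesc} the isogeny $A_0\to A_c$ factors through $F$ and $(W_i)$ lies in the stratum $X_J^{\text{loc},i_0}$ of Proposition \ref{P:admchar}; correspondingly $x\in X_J^{i_0}(k)$, hence $x\in X_J^m(k)$ for all $0\le m\le i_0$, which is exactly what makes the bundles $\Ca{V}_j$ and the canonical isomorphisms recalled below available at $x$.

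Next I would invoke Proposition \ref{P:admchar}: $(W_i)\in\overline{X}_{J,w}^{\text{loc}}(k)$ is equivalent to the conjunction, over $1\le i\le c$, of the two families of ``the dotted arrow can be filled in'' conditions there, one family for $\sigma(i)\le c$ and one for $\sigma(i)>c$. Each such diagram lives in the bundles $\Ca{W}_0,\Ca{W}_c,\Ca{V}_j,\Ca{V}_j'$ on $X_J^{\text{loc}}$. Pulling back along $p_2$ and using the canonical isomorphisms assembled in the discussion preceding Propositions \ref{P:less2} and \ref{P:great2} — namely $p_1^*\omega_{\Ca{G}_j}\simeq p_2^*\Ca{V}_j$ for $1\le j\le c$, $p_1^*\omega_{\Ca{G}_j/\Ca{G}_c}\simeq p_2^*\Ca{V}_j$ for $c<j\le 2c$, $p_1^*\omega_{\Ca{G}_j^{(p)}}\simeq p_2^*\Ca{V}_j'$ for $0\le j\le c$, and $p_2^*\Ca{W}_i\simeq p_1^*\omega_{\Ca{A}_{2c-i}}$ — these diagrams become, on $\tilde X_J$, precisely the diagrams of Proposition \ref{P:less2} when $\sigma(i)\le c$ and of Proposition \ref{P:great2} when $\sigma(i)>c$ (with the indices matched by $i\mapsto i-1$ and $j\mapsto\sigma(i)-1$). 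By those two propositions, in either case filling in the arrow is equivalent to the single inclusion $V(G_{\sigma(i)-1}^{(p)})\subset G_{i-1}$, the auxiliary hypothesis $x\in X_J^{c-(i-1)}(k)$ needed to apply Proposition \ref{P:great2} being supplied by the first paragraph. Running over all $1\le i\le c$ yields exactly condition (2).

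Finally I would establish (2)$\Leftrightarrow$(3) by Cartier duality. Substituting $i\mapsto 2c+1-i$ and using $\sigma(2c+1-i)=2c+1-\sigma(i)$, the system $V(G_{\sigma(i)-1}^{(p)})\subset G_{i-1}$ for $1\le i\le c$ becomes $F(G_{2c+1-i})\subset G_{2c+1-\sigma(i)}^{(p)}$ for $1\le i\le c$, i.e.\ $F(G_j)\subset G_{\sigma(j)}^{(p)}$ for $c+1\le j\le 2c$; each individual equivalence is exactly the ``first two items are equivalent'' assertion of Proposition \ref{P:less2} (when $\sigma(i)\le c+1$) or of Proposition \ref{P:great2} (when $\sigma(i)>c+1$), applied with the appropriate indices, all of which are pure Cartier duality. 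Combined with the previous paragraph this closes the circle of implications.

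The main obstacle I expect is the index bookkeeping in the second and third paragraphs: matching summand by summand the specific diagrams of Proposition \ref{P:admchar} with those of Propositions \ref{P:less2} and \ref{P:great2} under the dictionary $\Ca{V}_j\leftrightarrow\omega_{\Ca{G}_j}$ (or $\omega_{\Ca{G}_j/\Ca{G}_c}$) and $\Ca{V}_j'\leftrightarrow\omega_{\Ca{G}_j^{(p)}}$, and in particular confirming that membership in $X_J^{\text{loc},i_0}$ is exactly enough to make every $\Ca{V}_j$ that occurs well defined and every needed factorization through Frobenius available — equivalently, that the conditions of Proposition \ref{P:admchar} item 2 occur only for $i$ with $c+1-i\le i_0$. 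The geometric inputs (the torsor structure, $\overline{P}_J$-stability, Propositions \ref{P:less2}, \ref{P:great2}, \ref{P:admchar}, Theorem \ref{T:fkill}) are all in hand; the remaining work is purely combinatorial.
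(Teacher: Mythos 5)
Your approach is the one the paper intends: the text presents this proposition as a corollary of Propositions \ref{P:less2}, \ref{P:great2}, the discussion preceding them, and Proposition \ref{P:admchar}, and your translation of the local-model diagrams into group-scheme conditions via the torsor $p_1$, as well as the Cartier-duality step for (2)$\Leftrightarrow$(3), follows exactly that route and the index-matching you carry out is correct.

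There is, however, a circularity in your first paragraph. The assertion that $x\in X_J^0(k)$ implies $(W_i)\in X_J^{\text{loc},i_0}$ is false: membership in $X_J^0$ only gives $(W_i)\in X_J^{\text{loc},0}$, whereas lying in $X_J^{\text{loc},i_0}$ is the nontrivial extra condition that $\Ca{G}_{i_0+c}/\Ca{G}_c$ be killed by Frobenius, and this does not follow from $x\in X_J^0$. So when you later write that ``the auxiliary hypothesis $x\in X_J^{c-(i-1)}(k)$ needed to apply Proposition \ref{P:great2} [is] supplied by the first paragraph,'' the claim begs the question in the (2)$\Rightarrow$(1) direction, since Proposition \ref{P:admchar} can only be applied to points already known to lie in $X_J^{\text{loc},i_0}$. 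The logic has to be asymmetric: for (1)$\Rightarrow$(2) the inclusion $\overline{X}_{J,w}^{\text{loc}}\subset X_J^{\text{loc},i_0}$ asserted in Proposition \ref{P:admchar} supplies it, while for (2)$\Rightarrow$(1) it is supplied by condition (2) itself, because each inclusion $V(G_{\sigma(i)-1}^{(p)})\subset G_{i-1}$ with $\sigma(i)>c$, via item 3 of Proposition \ref{P:great2}, already forces $x\in X_J^{c+1-i}(k)$, and the maximum of $c+1-i$ over such $i$ is precisely $i_0$. That last identity is the combinatorial observation you make in your closing paragraph, so you have the needed ingredient; it just needs to be moved to where it actually carries the argument, rather than being an afterthought about well-definedness.
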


As $\overline{X}_{J,w}$ is reduced, we conclude that over $\overline{X}_{J,w}$, for $1\leq i\leq c$:
\begin{itemize}
\item The map $V:\Ca{G}_{\sigma(i)-1}^{(p)}\to \Ca{G}_{\sigma(i)-1}$ factors through $\Ca{G}_{i-1}$.
\item The map $V:\Ca{G}_{\sigma(i)}^{(p)}\to\Ca{G}_{\sigma(i)}$ factors through $\Ca{G}_{i}$.  Indeed, if $i=c$ then $\Ca{G}_c=\im(V:\Ca{G}_{2c}^{(p)}\to\Ca{G}_{2c})$ and there is nothing to prove.  Otherwise this follows from the previous point and the fact that $\Ca{G}_{\sigma(i)}^{(p)}\subset\Ca{G}_{\sigma(i+1)-1}^{(p)}$.
\end{itemize}
Similarly for $c+1\leq i\leq 2c$
\begin{itemize}
\item The map $F:\Ca{G}_i\to \Ca{G}_{i}^{(p)}$ factors through $\Ca{G}_{\sigma(i)}^{(p)}$.
\item The map $F:\Ca{G}_{i-1}\to\Ca{G}_{i-1}^{(p)}$ factors through $\Ca{G}_{\sigma(i)-1}^{(p)}$.  Indeed, if $i=c+1$ there is nothing to prove as $\Ca{G}_c$ is killed by Frobenius.  Otherwise by the previous point it factors through $\Ca{G}_{\sigma(i-1)}^{(p)}\subset\Ca{G}_{\sigma(i)-1}^{(p)}$.
\end{itemize}

We remark that it doesn't necessarily make sense to say something like $V(\Ca{G}_{\sigma(i)-1}^{(p)})\subset\Ca{G}_{i-1}$ because the image on the left may not exist.

\begin{prop}\label{P:omega}
Let $(w,J)$ be admissible.  Then over $\overline{X}_{J,w}$, for $i=1,\ldots,2c$ we have
\begin{enumerate}
\item The map $V:(\Ca{G}_i/\Ca{G}_{i-1})^{(p)}\to \Ca{G}_{i}/\Ca{G}_{i-1}$ is 0 unless $\sigma(i)=i$ and $i\leq c$.
\item The map $F:\Ca{G}_i/\Ca{G}_{i-1}\to (\Ca{G}_{i}/\Ca{G}_{i-1})^{(p)}$ is 0 unless $\sigma(i)=i$ and $i\geq c+1$.
\item $\omega_{\Ca{G}_i/\Ca{G}_{i-1}}$ is locally free of rank $k_i-k_{i-1}$ unless $\sigma(i)=i$ and $i\geq c+1$.
\item We have a canonical isomorphism $\omega_{\Ca{G}_i/\Ca{G}_{i-1}}\simeq \omega_{\Ca{G}_{2c+1-i}/\Ca{G}_{2c-i}}^\vee$ induced by $\lambda$, unless $\sigma(i)=i$.
\end{enumerate}
\end{prop}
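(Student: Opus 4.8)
The plan is to deduce all four assertions from the factorization properties of $F$ and $V$ on the universal chain $\Ca{G}_\bullet$ over $\overline{X}_{J,w}$ already established in the discussion immediately preceding the proposition; no new geometric input is needed, only careful bookkeeping with the permutation $\sigma$.

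First I would record the combinatorial facts about $\sigma\in S_{2c}$: by Proposition \ref{P:adm} it satisfies $\sigma(1)<\cdots<\sigma(c)$ and $\sigma(2c+1-i)=2c+1-\sigma(i)$, and a pigeonhole argument on $\sigma(1)<\cdots<\sigma(i)$ gives $\sigma(i)\ge i$ for $1\le i\le c$, hence $\sigma(i)\le i$ for $c+1\le i\le 2c$, with equality in either range exactly when $\sigma(i)=i$ (equivalently $\sigma(2c+1-i)=2c+1-i$). I would also recall the inputs over $\overline{X}_{J,w}\subset X_J^0$: the isogeny $\Ca{A}_0\to\Ca{A}_c$ factors through Frobenius (Proposition \ref{P:idesc}), so $\Ca{G}_c=\Ca{A}_0[F]=\im\big(V\colon\Ca{A}_0^{(p)}\to\Ca{A}_0\big)$ and hence $V(\Ca{G}_j^{(p)})\subset\Ca{G}_c$ for all $j$; and the factorizations $V(\Ca{G}_{\sigma(i)}^{(p)})\subset\Ca{G}_i$, $V(\Ca{G}_{\sigma(i)-1}^{(p)})\subset\Ca{G}_{i-1}$ for $1\le i\le c$, together with $F(\Ca{G}_i)\subset\Ca{G}_{\sigma(i)}^{(p)}$, $F(\Ca{G}_{i-1})\subset\Ca{G}_{\sigma(i)-1}^{(p)}$ for $c+1\le i\le 2c$.

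Parts 1 and 2 are then immediate. For $F$ on $\Ca{G}_i/\Ca{G}_{i-1}$: when $i\le c$ we have $\Ca{G}_i\subset\Ca{G}_c=\Ca{A}_0[F]$, so $F$ already vanishes on $\Ca{G}_i$; when $i\ge c+1$ and $\sigma(i)\ne i$ we have $\sigma(i)\le i-1$, so $F(\Ca{G}_i)\subset\Ca{G}_{\sigma(i)}^{(p)}\subset\Ca{G}_{i-1}^{(p)}$ and $F$ induces $0$ on the subquotient. For $V$ on $(\Ca{G}_i/\Ca{G}_{i-1})^{(p)}$: when $i\ge c+1$ then $\Ca{G}_c\subset\Ca{G}_{i-1}$, so $V(\Ca{G}_i^{(p)})\subset\Ca{G}_c\subset\Ca{G}_{i-1}$; when $i\le c$ and $\sigma(i)\ne i$ then $\sigma(i)-1\ge i$, so $\Ca{G}_i^{(p)}\subset\Ca{G}_{\sigma(i)-1}^{(p)}$ and $V(\Ca{G}_i^{(p)})\subset V(\Ca{G}_{\sigma(i)-1}^{(p)})\subset\Ca{G}_{i-1}$. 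Part 3 follows: $\Ca{G}_i/\Ca{G}_{i-1}$ is finite flat of order $p^{k_i-k_{i-1}}$ (a quotient of the finite flat $\Ca{G}_i\subset\Ca{A}_0[p]$ by the finite flat $\Ca{G}_{i-1}$), and outside the excluded case it is killed by $F$ by part 2, so by Theorem \ref{T:fkill} (equivalently Corollary \ref{C:omegafree}) its co-Lie algebra is locally free of rank equal to its height $k_i-k_{i-1}$.

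For part 4 I would first note that when $\sigma(i)\ne i$, applying parts 1 and 2 at both $i$ and $2c+1-i$ shows that $\Ca{G}_i/\Ca{G}_{i-1}$ and $\Ca{G}_{2c+1-i}/\Ca{G}_{2c-i}$ are killed by both $F$ and $V$. The self-duality of the filtration under the $\lambda$-Weil pairing supplies a canonical isomorphism $\Ca{G}_i/\Ca{G}_{i-1}\simeq(\Ca{G}_{2c+1-i}/\Ca{G}_{2c-i})^D$; combining this with the standard fact that for a finite locally free group scheme $H$ killed by both $F$ and $V$ the Cartier dual $H^D$ is also killed by both and there is a canonical isomorphism $\omega_{H^D}\simeq\omega_H^\vee$ (realize $H$ as the Frobenius kernel of the vector group with co-Lie algebra $\omega_H$, or differentiate the Cartier pairing to get a perfect pairing $\Lie(H)\times\Lie(H^D)\to\O_S$) gives the desired isomorphism $\omega_{\Ca{G}_i/\Ca{G}_{i-1}}\simeq\omega_{\Ca{G}_{2c+1-i}/\Ca{G}_{2c-i}}^\vee$. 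I do not anticipate a genuine obstacle: the substance is already contained in the factorization statements over $\overline{X}_{J,w}$ (which rest on Propositions \ref{P:less2} and \ref{P:great2}, i.e. on the local model and Grothendieck--Messing analysis), and what remains is the index-chasing above plus the elementary group-scheme duality used in part 4.
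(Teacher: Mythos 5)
Your proof is correct and follows essentially the same route as the paper: parts 1 and 2 come from the factorization properties of $F$ and $V$ over $\overline{X}_{J,w}$ combined with the inequalities $\sigma(i)\ge i$ for $i\le c$ (and the dual), part 3 from Theorem \ref{T:fkill}/Corollary \ref{C:omegafree} applied to the $F$-killed finite flat quotients, and part 4 from the $\alpha$-group observation plus Cartier duality. Two small deviations worth noting: the paper reduces parts 1 and 2 to the range $1\le i\le c$ by Cartier duality whereas you treat both ranges directly (a harmless variant), and your treatment of part 1 via the single containment $\Ca{G}_i^{(p)}\subset\Ca{G}_{\sigma(i)-1}^{(p)}$ together with $V(\Ca{G}_{\sigma(i)-1}^{(p)})\subset\Ca{G}_{i-1}$ is actually tighter than the paper's choice of the minimal $j$ with $\sigma(j)\ge i>\sigma(j-1)$, which as written asserts $j<i$ but this can fail when $\sigma$ fixes an initial segment of $\{1,\dots,c\}$ (e.g.\ $\sigma(1)=1$, $\sigma(2)=3$); your inequality $\sigma(i)-1\ge i$ sidesteps that issue cleanly.
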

\begin{proof}
By Cartier duality, it suffices to prove 1 and 2 for $i=1,\ldots,c$.  Then 2 is clear because $\Ca{G}_i\subset\Ca{G}_c$ is killed by Frobenius.  For 1, let $1\leq j\leq c$ be the unique integer with $\sigma(j)\geq i>\sigma(j-1)$.  Then by the assumption that $\sigma(i)\not=i$ we must have $i>j$ and hence
\begin{equation*}
\Ca{G}_i^{(p)}\subset\Ca{G}_{\sigma(j)}^{(p)}
\end{equation*}
but
\begin{equation*}
V:\Ca{G}_{\sigma(j)}^{(p)}\to\Ca{G}_{\sigma(j)}
\end{equation*}
factors through $\Ca{G}_j\subset\Ca{G}_{i-1}$.

To prove 3, note that by 2, unless $\sigma(i)=i$ and $i\geq c+1$, $\Ca{G}_i/\Ca{G}_{i-1}$ is a finite flat group scheme of order $p^{k_i-k_{i-1}}$ which is killed by Frobenius, and hence $\omega_{\Ca{G}_i/\Ca{G}_{i-1}}$ is locally free of rank $k_i-k_{i-1}$.

Finally to prove 4, we note that by 1 and 2, unless $\sigma(i)=i$, $\Ca{G}_i/\Ca{G}_{i-1}$ is an $\alpha$-group (i.e. both $F$ and $V$ are 0) and hence the isomorphism
\begin{equation*}
\Ca{G}_i/\Ca{G}_{i-1}\simeq(\Ca{G}_{2c+1-i}/\Ca{G}_{2c+1})^D
\end{equation*}
induced by $\lambda$ induces
\begin{equation*}
\omega_{\Ca{G}_i/\Ca{G}_{i-1}}\simeq \omega_{\Ca{G}_{2c+1-i}/\Ca{G}_{2c-i}}^\vee.
\end{equation*}
\end{proof}

For $i=1,\ldots, 2c$, suppose that $\sigma(i)\not=i$ if $i\geq c+1$ and let
\begin{equation*}
\omega_i:=\det\omega_{\Ca{G}_i/\Ca{G}_{i-1}},
\end{equation*}
a line bundle on $\overline{X}_{J,w}$.  Also let $\omega:=\det\omega_{\Ca{A}_0}$.  Then by the proposition, we have canonical isomorphisms
\begin{equation*}
\omega_i\simeq\omega_{2c+1-i}^\vee
\end{equation*}
unless $\sigma(i)=i$.  Moreover from the filtration
\begin{equation*}
0=\Ca{G}_0\subset\Ca{G}_1\subset\Ca{G}_2\subset\cdots\subset\Ca{G}_c=\Ca{A}_0[F]
\end{equation*}
we have an isomorphism
\begin{equation*}
\omega=\det \omega_{\Ca{A}_0[F]}\simeq\bigotimes_{i=1}^c\omega_i.
\end{equation*}

Now we come to the key definition of this thesis.  For $i=1,\ldots c$ we have maps over $\overline{X}_{J,w}$
\begin{equation*}
V:(\Ca{G}_{\sigma(i)}/\Ca{G}_{\sigma(i)-1})^{(p)}\to\Ca{G}_i/\Ca{G}_{i-1}
\end{equation*}
which induce maps
\begin{equation*}
V^*:\omega_{\Ca{G}_i/\Ca{G}_{i-1}}\to\omega_{\Ca{G}_{(\sigma(i)}/\Ca{G}_{\sigma(i)-1})^{(p)}}
\end{equation*}
and hence upon taking determinants
\begin{equation*}
A_i:\omega_i\to\omega_{\sigma(i)}^p.
\end{equation*}
We also view this as a section
\begin{equation*}
A_i\in H^0(\overline{X}_{J,w},\omega_{\sigma(i)}^{p}\otimes\omega_i^{-1}).
\end{equation*}
Similarly for $i=c+1,\ldots,2c$ with $\sigma(i)\not=i$ we have
\begin{equation*}
F:\Ca{G}_i/\Ca{G}_{i-1}\to(\Ca{G}_{\sigma(i)}/\Ca{G}_{\sigma(i)-1})^{(p)}
\end{equation*}
aand we can form
\begin{equation*}
F^*:\omega_{(\Ca{G}_{\sigma(i)}/\Ca{G}_{\sigma(i)-1})^{(p)}}\to \omega_{\Ca{G}_i/\Ca{G}_{i-1}}
\end{equation*}
and hence upon taking determinants
\begin{equation*}
B_i:\omega_{\sigma(i)}^{(p)}\to\omega_i.
\end{equation*}
We also view this as a section
\begin{equation*}
B_i\in H^0(\overline{X}_{J,w},\omega_{i}\otimes\omega_{\sigma(i)}^{-p})
\end{equation*}

If $i=1\ldots,c$ and $\sigma(i)\not=i$ then Cartier duality gives a commutative diagram
\begin{equation*}
\begin{tikzcd}
\omega_{\Ca{G}_i/\Ca{G}_{i-1}}\arrow{r}{V^*}\arrow{d}&\omega_{\Ca{G}_{\sigma(i)}/\Ca{G}_{\sigma(i)-1}}\arrow{d}\\
\omega_{\Ca{G}_{2c+1-i}/\Ca{G}_{2c-i}}^\vee\arrow{r}{(F^*)^\vee}&\omega_{\Ca{G}_{2c+1-\sigma(i)}/\Ca{G}_{2c-\sigma(i)}}^\vee
\end{tikzcd}
\end{equation*}
where the vertical maps are the isomorphisms of Proposition \ref{P:omega} 4.  Hence we conclude that
\begin{equation*}
A_i=B_{2c+1-i}^\vee\in H^0(\overline{X}_{J,w},\omega_{\sigma(i)}^p\otimes\omega_i^{-1}).
\end{equation*}

We want to compute the vanishing locus of the $A_i$ (or equivalently $B_{2c+1-i}$).  In preparation for this we record the following proposition.
\begin{prop}
Let $(w,J)$ be admissible.  Then
\begin{enumerate}
\item $\overline{X}_{J,w}$ is normal.
\item The complement
\begin{equation*}
\overline{X}_{J,w}-X_{J,w}=\bigcup_{v\in D_w}\overline{X}_{J,v}
\end{equation*}
is a union of (not necessarily irreducible) divisors.
\end{enumerate}
\end{prop}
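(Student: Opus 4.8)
The plan is to deduce both assertions from the corresponding statements for Schubert varieties via the local model diagram $X_J\overset{p_1}{\leftarrow}\tilde{X}_J\overset{p_2}{\to}X_J^{\text{loc}}$ of Theorem~\ref{T:locmodel}. Recall that the Kottwitz--Rapoport stratification of $X_J^0$ is transferred, through the isomorphism $\psi\colon X_J^{\text{loc},0}\xrightarrow{\sim}\overline{Y}_x$, from the stratification of $\overline{Y}_x$ by the Schubert varieties $\overline{Y}_w$, $w\in{}^JW^{\tilde{J}}$, $w\leq x$, and that this coincides with the decomposition of $X_J^{\text{loc},0}$ into $\overline{P}_J$-orbits. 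Thus each $\overline{X}_{J,w}^{\text{loc}}$ is a $\overline{P}_J$-stable closed subvariety with $\psi(\overline{X}_{J,w}^{\text{loc}})=\overline{Y}_w$, and by construction $\overline{X}_{J,w}=p_1(p_2^{-1}(\overline{X}_{J,w}^{\text{loc}}))$ with $p_1^{-1}(\overline{X}_{J,w})=p_2^{-1}(\overline{X}_{J,w}^{\text{loc}})$; the same holds for $X_{J,w}$ and for each $\overline{X}_{J,v}$, $v\in D_w$.

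For part~1, since $(w,J)$ is admissible we have $w\in{}^JW^{\tilde{J}}$, so $\overline{Y}_w$ is normal by Corollary~\ref{P:schub}; hence $\overline{X}_{J,w}^{\text{loc}}\cong\overline{Y}_w$ is normal, and Proposition~\ref{P:trans}(1) yields that $\overline{X}_{J,w}$ is normal. For part~2, admissibility of $(w,J)$ gives $W_JwW_{\tilde{J}}=wW_{\tilde{J}}$ by Proposition~\ref{P:adm}(1), so Corollary~\ref{P:schub}(3) applies and $\overline{Y}_w-Y_w=\bigcup_{v\in D_w}\overline{Y}_v$ is a union of irreducible divisors; moreover each $\overline{Y}_v$ is $P$-stable (as recorded in the proof of that corollary). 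Since $\overline{Y}_w$ is normal it is regular in codimension $1$, so the generic point of each prime divisor $\overline{Y}_v$ is a codimension-one point of $\overline{Y}_w$ at which the local ring is a DVR, in particular regular. Transporting these data through $\psi$ and applying Proposition~\ref{P:trans}(2) to the inclusion $\overline{X}_{J,v}^{\text{loc}}\subset\overline{X}_{J,w}^{\text{loc}}$ for each $v\in D_w$, I conclude that each $\overline{X}_{J,v}$ is a (a priori possibly reducible) divisor in $\overline{X}_{J,w}$ whose local ring in $\overline{X}_{J,w}$ at every one of its generic points is regular. Finally, using $p_1^{-1}(\overline{X}_{J,w}-X_{J,w})=p_2^{-1}(\overline{X}_{J,w}^{\text{loc}}-X_{J,w}^{\text{loc}})$ together with the surjectivity of $p_1$ and the identity $\overline{X}_{J,w}^{\text{loc}}-X_{J,w}^{\text{loc}}\cong\overline{Y}_w-Y_w=\bigcup_{v\in D_w}\overline{Y}_v$, I obtain $\overline{X}_{J,w}-X_{J,w}=\bigcup_{v\in D_w}\overline{X}_{J,v}$, which is a union of divisors.

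The only real work is bookkeeping rather than geometry: one must check carefully that $\overline{X}_{J,w}^{\text{loc}}$ and each $\overline{X}_{J,v}^{\text{loc}}$ are genuinely $\overline{P}_J$-stable (so that the hypotheses of Proposition~\ref{P:trans} are met and $p_1^{-1}=p_2^{-1}$ on the relevant loci), and that the set-theoretic operation $p_1\circ p_2^{-1}$ really does intertwine the Schubert and Kottwitz--Rapoport stratifications — i.e.\ that the $\overline{X}_{J,v}$, $v\in D_w$, exhaust $\overline{X}_{J,w}-X_{J,w}$. The passage of normality and of ``regular at a codimension-one point'' across the smooth surjective maps $p_1,p_2$ is exactly what Proposition~\ref{P:trans} supplies, so I anticipate no serious obstacle beyond this indexing.
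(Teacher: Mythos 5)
Your proof is correct and uses exactly the same ingredients the paper cites (Theorem~\ref{T:locmodel}, Proposition~\ref{P:trans}, and Corollary~\ref{P:schub}, together with the admissibility hypothesis feeding into Proposition~\ref{P:adm}(1) and Corollary~\ref{P:schub}(3)); the paper simply declares the result an immediate consequence of these, so you have spelled out the same argument in more detail rather than taken a different route.
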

\begin{proof}
This is an immediate consequence of Theorem \ref{T:locmodel} and Propositions \ref{P:trans} and \ref{P:schub}.
\end{proof}

Next we have the following exercise in bookkeeping:
\begin{prop}\label{P:3d}
Let $(w,J)$ be admissible.
\begin{enumerate}
\item If $i=1,\ldots,c$ with $\sigma(i)\leq c$ then over $\overline{\tilde{X}}_{J,w}$ we have a commutative diagram
\begin{equation*}
\begin{tikzcd}[row sep=scriptsize, column sep=0]
& p_1^*\omega_{G_i/G_{i-1}} \arrow{dl}\arrow{rr}\arrow[dotted]{dd} & & p_1^*\omega_{G_i} \arrow{dl}\arrow{dd}\arrow{rr}& &p_1^*\omega_{G_{i-1}} \arrow{dl}\arrow{dd} \\
p_1^*\omega_{(G_{\sigma(i)}/G_{\sigma(i)-1})^{(p)}} \arrow[crossing over]{rr}\arrow[dotted]{dd} & & p_1^*\omega_{G_{\sigma(i)}^{(p)}}  \arrow[crossing over]{rr}\arrow{dd}& &p_1^*\omega_{G_{\sigma(i)-1}^{(p)}} \\
& p_2^*\ker(\Ca{V}_i\to \Ca{V}_{i-1}) \arrow{dl}\arrow{rr} &  & p_2^*\Ca{V}_i\arrow{dl}\arrow{rr} &  & p_2^*\Ca{V}_{i-1} \arrow{dl} \\
p_2^*\ker(\Ca{V}_{\sigma(i)}'\to \Ca{V}_{\sigma(i)-1}') \arrow{rr} & & \Ca{V}_{\sigma(i)}'\arrow{rr}\arrow[crossing over, leftarrow]{uu} & & \Ca{V}_{\sigma(i)-1}'\arrow[crossing over, leftarrow]{uu} \\
\end{tikzcd}
\end{equation*}
where the rows are short exact sequences, the solid vertical maps are induced by the $\alpha$'s as discussed after Theorem \ref{T:fkill} and the dotted arrows are isomorphisms induced by those on the right.  Taking the determinant of the face on the left we conclude we obtain a commutative diagram
\begin{equation*}
\begin{tikzcd}
p_1^*\omega_i\arrow{r}{p_1^*A_i}\arrow{d}&p_1^*\omega_{\sigma(i)}^{(p)}\arrow{d}\\
p_2^*\det\ker(\Ca{V}_i\to\Ca{V}_{i-1})\arrow{r}{p_2^*A_i^{\text{\rm loc}}}&p_2^*\ker(\Ca{V}'_{\sigma(i)}\to\Ca{V}_{\sigma(i)-1}')
\end{tikzcd}
\end{equation*}
where the vertical maps are isomorphisms.
\item If $i=c+1,\ldots,2c$ with $\sigma(i)\leq c$ then over $\overline{\tilde{X}}_{J,w}$ we have a commutative diagram
\begin{equation*}
\begin{tikzcd}[row sep=scriptsize, column sep=0]
& p_1^*\omega_{(G_\sigma(i)/G_{\sigma(i)-1})^{(p)}} \arrow{dl}\arrow{rr}\arrow{dd} & & \omega_{G_{\sigma(i)}^{(p)}} \arrow{dl}\arrow{dd}\arrow{rr}& &\omega_{G_{\sigma(i)-1}^{(p)}} \arrow{dl}\arrow{dd} \\
p_1^*\omega_{G_{i}/G_{i-1}}\arrow[crossing over]{rr}\arrow{dd} & & p_1^*\omega_{G_i/G_c}  \arrow[crossing over]{rr}\arrow{dd}& & p_1^*\omega_{G_{i-1}/G_c}\\
& p_2^*\ker(\Ca{V}_{\sigma(i)}'\to\Ca{V}_{\sigma(i)-1}') \arrow{dl}\arrow{rr} &  & p_2^*\Ca{V}'_{\sigma(i)}\arrow{dl}\arrow{rr} &  & \Ca{V}'_{\sigma(i)-1} \arrow{dl} \\
p_2^*\ker(\Ca{V}_i\to\Ca{V}_{i-1}) \arrow{rr} & & p_2^*\Ca{V}_i\arrow{rr}\arrow[crossing over, leftarrow]{uu} & & p_2^*\Ca{V}_{i-1}\arrow[crossing over, leftarrow]{uu} \\
\end{tikzcd}
\end{equation*}
where the rows are short exact sequences, the solid vertical maps are induced by the $\alpha$'s as discussed after Theorem \ref{T:fkill} and the dotted arrows are isomorphisms induced by those on the right.  Taking the determinant of the face on the left we conclude we obtain a commutative diagram
\begin{equation*}
\begin{tikzcd}
p_1^*\omega_{\sigma(i)}^{(p)}\arrow{r}{p_1^*B_i}\arrow{d}&p_1^*\omega_i\arrow{d}\\
p_2^*\det\ker(\Ca{V}'_{\sigma(i)}\to\Ca{V}'_{\sigma(i)-1})\arrow{r}{p_2^*B_i^{\text{\rm loc}}}&p_2^*\ker(\Ca{V}_i\to\Ca{V}_{i-1})
\end{tikzcd}
\end{equation*}
where the vertical maps are isomorphisms.
\end{enumerate}
\end{prop}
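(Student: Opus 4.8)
\textbf{Proof plan for Proposition \ref{P:3d}.}

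The statement is essentially a bookkeeping assertion: it packages together, via the local model diagram of Theorem \ref{T:locmodel}, the three-dimensional commutative diagrams relating the $\omega$-sheaves attached to the subquotients $\Ca{G}_i/\Ca{G}_{i-1}$ of the universal chain of group schemes and the tautological vector bundles $\Ca{W}_i$, $\Ca{V}_i$, $\Ca{V}_i'$ on the local model. The approach is to assemble each big diagram face by face from constructions already in place in the excerpt, then to conclude the two-dimensional determinant diagrams by applying $\det$ to the leftmost square. I would treat part 1 in detail and note that part 2 is the Cartier dual statement.

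For part 1, fix $i$ with $1\le i\le c$ and $\sigma(i)\le c$. The top face of the cube is the short exact sequence
\begin{equation*}
0\to \omega_{G_i/G_{i-1}}\to \omega_{G_i}\to \omega_{G_{i-1}}
\end{equation*}
together with its $V^*$-image, the analogous sequence for $(G_{\sigma(i)}/G_{\sigma(i)-1})^{(p)}\subset G_{\sigma(i)}^{(p)}$. Here I would use that over $\overline{X}_{J,w}$ all the $\Ca{G}_i$ for $i\le c$ are killed by Frobenius (by Proposition \ref{P:idesc} and reducedness of $\overline{X}_{J,w}$, as already recorded before Proposition \ref{P:omega}), so Theorem \ref{T:fkill} produces $V^*$ on co-Lie algebras and the fact, recorded just after Proposition \ref{P:less2}, that $V:\Ca{G}_{\sigma(i)}^{(p)}\to\Ca{G}_{\sigma(i)}$ factors through $\Ca{G}_i$ (and similarly at level $i-1$); this gives the three vertical solid arrows on the top half of the cube and makes the back and front faces commute by functoriality of $V^*$. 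The bottom face is the first diagram displayed just after Theorem \ref{T:fkill} — the one with the dotted isomorphism $p_1^*\omega_{\Ca{G}_i}\simeq p_2^*\Ca{V}_i$ — applied with $i$ and with $i-1$, placed over the corresponding diagram for $\Ca{V}_{\sigma(i)}'$, namely the isomorphism $p_1^*\omega_{\Ca{G}_{\sigma(i)}^{(p)}}\simeq p_2^*\Ca{V}_{\sigma(i)}'$ from the discussion of $\Ca{H}_i$ and $\gamma:\Ca{G}_i^{(p)}\xrightarrow{\sim}\Ca{H}_i$. The remaining vertical face is exactly the diagram $A_i^{\mathrm{loc}}$ from Proposition \ref{P:schubcomp}(1), pulled back along $p_2$; and that $p_1^*A_i$ and $p_2^*A_i^{\mathrm{loc}}$ correspond under the $\alpha$-induced isomorphisms follows because both are the determinant of the one edge of the cube joining $\ker(\Ca{V}_i\to\Ca{V}_{i-1})$ to $\ker(\Ca{V}_{\sigma(i)}'\to\Ca{V}_{\sigma(i)-1}')$, whose compatibility is what the cube encodes. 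Taking $\det$ of the left face of the cube then yields the asserted two-dimensional square.

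Part 2 is the same argument with the roles of $F$ and $V$ exchanged: for $i$ with $c+1\le i\le 2c$ and $\sigma(i)\le c$, one works over $X_J^{i-c}$ so that $\Ca{G}_i/\Ca{G}_c$ is killed by Frobenius (Proposition \ref{P:idesc}(2)), uses the second diagram after Theorem \ref{T:fkill} (the one with $p_1^*\omega_{\Ca{G}_i/\Ca{G}_c}\simeq p_2^*\Ca{V}_i$) together with the identification of $\Ca{V}_{\sigma(i)}'$ coming from $\gamma$, and the edge map is now $B_i^{\mathrm{loc}}$ from Proposition \ref{P:schubcomp}(2). Alternatively, one can deduce part 2 from part 1 by Cartier duality, using $A_i=B_{2c+1-i}^\vee$ and Proposition \ref{P:omega}(4). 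I do not expect a genuine obstacle here — every ingredient is already established — so the only real work is the disciplined tracking of indices and the verification that the various $\alpha$-induced and $\gamma$-induced isomorphisms are mutually compatible, i.e. that the cube really does commute on the nose rather than up to sign; this last point uses that $A_c\to A_{2c}$ factors as $A_c\xrightarrow{\gamma^\vee}A_{2c}^{(p)}\xrightarrow{V}A_c$, exactly as in the proof of Proposition \ref{P:less2}, to reconcile the polarization-induced identification $\omega_{A_{2c}}\simeq\omega_{A_0[F]}$ with $V^*$. The most delicate bookkeeping step, and the one I would write out most carefully, is the commutativity of the back-and-front faces of the cube, since there one is simultaneously invoking functoriality of $V^*$ (resp.\ $F^*$) for the inclusions $\Ca{G}_{i-1}\subset\Ca{G}_i$ and the compatibility of the two different presentations (via $\Ca{G}_i$ and via $\Ca{H}_i\cong\Ca{G}_i^{(p)}$) of the same co-Lie algebra.
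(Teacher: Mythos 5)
The paper offers no proof of Proposition \ref{P:3d} at all: it is introduced with the phrase ``the following exercise in bookkeeping'' and the exposition passes directly to Corollary \ref{C:krord}. Your proposal therefore fills a genuine gap, and the approach you describe is exactly the one the surrounding text implies: the vertical faces of the cube are the commutative diagrams displayed in the paragraphs immediately after Theorem \ref{T:fkill} (identifying $p_1^*\omega_{\Ca{G}_i}$, $p_1^*\omega_{\Ca{G}_i/\Ca{G}_c}$, $p_1^*\omega_{\Ca{G}_i^{(p)}}$ with $p_2^*\Ca{V}_i$, $p_2^*\Ca{V}_i$, $p_2^*\Ca{V}_i'$ respectively via the $\alpha$'s), the top and bottom faces come from the self-evident short exact sequences of co-Lie algebras and tautological bundles, the maps connecting them are $V^*$ (resp.\ $F^*$) on one side and the $A_i^{\text{loc}}$, $B_i^{\text{loc}}$ of Proposition \ref{P:schubcomp} on the other, and the cube commutes by functoriality; applying $\det$ to the left face gives the two-dimensional square. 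This is correct.

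One small inaccuracy worth fixing: the sentence claiming that ``one can deduce part 2 from part 1 by Cartier duality, using $A_i = B_{2c+1-i}^\vee$'' does not quite work as a literal reduction. If $c+1 \le i \le 2c$ and $\sigma(i)\le c$ (the hypotheses of part 2), then setting $j = 2c+1-i$ gives $1\le j\le c$ with $\sigma(j) = 2c+1-\sigma(i) > c$, which is \emph{not} the case covered by part 1 (which requires $\sigma(j) \le c$). In other words, the Cartier dual of part 2 is the case $i\le c$, $\sigma(i)>c$, which is handled separately in the text (for instance in the proof of Corollary \ref{C:krord}, where that case is reduced to part 2 rather than the other way around). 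So the duality goes in the opposite direction from what you wrote. This does not affect the validity of your main argument, since you also give the direct proof of part 2 with $F$ and $V$ interchanged, and that argument is the right one; just drop or reverse the ``alternatively'' remark.
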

As an immediate corollary we can compute the order of vanishing of the $A_i$.
\begin{cor}\label{C:krord}
Let $(w,J)$ be admissible.  For $i=1,\ldots,c$ and $v\in D_w$ we have
\begin{equation*}
\text{\rm ord}_{\overline{X}_{J,v}}(A_i)=\text{\rm ord}_{\overline{Y}_v}(C_{c+1-i})
\end{equation*}
\end{cor}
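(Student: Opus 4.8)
The plan is to transport the order-of-vanishing computation from the integral model $\overline{X}_{J,w}\subset X_J$ down to the Schubert variety $\overline{Y}_w$, where Proposition \ref{P:schubcomp} has already matched the relevant map of line bundles with $\psi^*C_{c+1-i}$ (or with its dual), and then to invoke the smoothness properties of the local model diagram. Recall that by construction of the Kottwitz--Rapoport stratification $\overline{X}_{J,v}=p_1(p_2^{-1}(\overline{X}^{\text{loc}}_{J,v}))$, that $\overline{X}^{\text{loc}}_{J,v}$ and $\overline{X}^{\text{loc}}_{J,w}$ are $\overline{P}_J$-stable (being unions of Schubert varieties, i.e.\ of $\overline{P}_J$-orbit closures), so that $p_1^{-1}(\overline{X}_{J,w})=p_2^{-1}(\overline{X}^{\text{loc}}_{J,w})=\overline{\tilde{X}}_{J,w}$, and that the isomorphism $\psi$ restricts to an isomorphism $\psi\colon\overline{X}^{\text{loc}}_{J,w}\overset{\sim}{\longrightarrow}\overline{Y}_w$ carrying $\overline{X}^{\text{loc}}_{J,v}$ onto $\overline{Y}_v$. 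For $v\in D_w$ the divisor $\overline{Y}_v$ is irreducible (Corollary \ref{P:schub}) and $\overline{Y}_w$ is normal, hence regular at its generic point, so all the orders of vanishing in sight make sense.

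First I would reduce to a statement on the local model. If $\sigma(i)\le c$, then part 1 of Proposition \ref{P:3d} provides line bundles on $\overline{X}_{J,w}$ and on $\overline{X}^{\text{loc}}_{J,w}$ together with an isomorphism of their pullbacks to $\overline{\tilde{X}}_{J,w}$ taking $p_1^*A_i$ to $p_2^*A_i^{\text{loc}}$. If instead $\sigma(i)>c$, then $2c+1-i\in\{c+1,\dots,2c\}$ has $\sigma(2c+1-i)=2c+1-\sigma(i)\le c$, so part 2 of Proposition \ref{P:3d} applies to the index $2c+1-i$ and takes $p_1^*B_{2c+1-i}$ to $p_2^*B^{\text{loc}}_{2c+1-i}$; since $A_i=B_{2c+1-i}^{\vee}$ and dualizing a map of line bundles does not change its order of vanishing along a divisor, computing $\text{\rm ord}_{\overline{X}_{J,v}}(B_{2c+1-i})$ suffices. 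In either case, Theorem \ref{T:locmodel} (smoothness and surjectivity of $p_1$ and $p_2$) together with Proposition \ref{P:trans}(2) gives
\begin{equation*}
\text{\rm ord}_{\overline{X}_{J,v}}(A_i)=\text{\rm ord}_{\overline{X}^{\text{loc}}_{J,v}}(A_i^{\text{loc}})\quad(\sigma(i)\le c),\qquad \text{\rm ord}_{\overline{X}_{J,v}}(A_i)=\text{\rm ord}_{\overline{X}^{\text{loc}}_{J,v}}(B^{\text{loc}}_{2c+1-i})\quad(\sigma(i)>c),
\end{equation*}
the left-hand side being read as the common order of vanishing on every irreducible component of the (possibly reducible) divisor $\overline{X}_{J,v}$.

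Next I would push across $\psi$. By Proposition \ref{P:schubcomp}(1), when $\sigma(i)\le c$ the map $A_i^{\text{loc}}$ corresponds under $\psi$ to $\psi^*C_{c+1-i}$, and by Proposition \ref{P:schubcomp}(2), when $\sigma(i)>c$ the map $B^{\text{loc}}_{2c+1-i}$ corresponds to $\psi^*C^{\vee}_{c+1-i}$; since $\psi$ is an isomorphism and a dual has the same order of vanishing along a divisor, we get $\text{\rm ord}_{\overline{X}^{\text{loc}}_{J,v}}(A_i^{\text{loc}})=\text{\rm ord}_{\overline{Y}_v}(C_{c+1-i})$ in the first case and $\text{\rm ord}_{\overline{X}^{\text{loc}}_{J,v}}(B^{\text{loc}}_{2c+1-i})=\text{\rm ord}_{\overline{Y}_v}(C_{c+1-i})$ in the second. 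Combined with the previous display this yields the corollary. I expect the only real obstacle to be bookkeeping: keeping the two cases $\sigma(i)\le c$ and $\sigma(i)>c$, the attendant index shift $i\leftrightarrow 2c+1-i$, and the dualizations straight, and checking that each transfer (from $X_J$ to $\tilde{X}_J$, up to $X_J^{\text{loc}}$, and across $\psi$) genuinely preserves orders of vanishing along the relevant divisors; all of the geometric substance has already been packaged into Propositions \ref{P:3d}, \ref{P:trans}, and \ref{P:schubcomp}.
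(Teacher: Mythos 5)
Your proposal is correct and follows precisely the same two-case route as the paper's proof: reduce via the local model diagram (Proposition~\ref{P:3d}, Proposition~\ref{P:trans}, Theorem~\ref{T:locmodel}) and then compare across $\psi$ using Proposition~\ref{P:schubcomp}, handling the $\sigma(i)>c$ case by passing to $B_{2c+1-i}$ and using $A_i=B_{2c+1-i}^\vee$. You are slightly more explicit than the paper about why the index shift $i\leftrightarrow 2c+1-i$ makes Proposition~\ref{P:3d}(2) applicable and why dualizing a map of line bundles preserves order of vanishing, but the substance and the cited ingredients are identical.
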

\begin{proof}
If $\sigma(i)\leq c$ then we have
\begin{equation*}
\text{ord}_{\overline{X}_{J,v}}(A_i)=\text{ord}_{\overline{X}_{J,v}}^{\text{loc}}(A_i^{\text{loc}})=\text{ord}_{\overline{Y}_v}(C_{c+1-i})
\end{equation*}
where the first equality is by Propositions \ref{P:3d} and \ref{P:trans}, and Theorem \ref{T:locmodel} and the second equality is by Proposition \ref{P:schubcomp}.  On the other hand if $\sigma(i)>c$ then we have
\begin{equation*}
\text{ord}_{\overline{X}_{J,v}}(A_i)=\text{ord}_{\overline{X}_{J,v}}(B_{2c+1-i})=\text{ord}_{\overline{X}_{J,v}}^{\text{loc}}(B_{2c+1-i}^{\text{loc}})=\text{ord}_{\overline{Y}_v}(C_{c+1-i})
\end{equation*}
by the same list of results.
\end{proof}

For $1\leq i\leq c$ with $\sigma(i)<c$ let $A_i'=A_i$ while if $\sigma(i)>c$ let
\begin{equation*}
A_i'\in H^0(\overline{X}_{J,w},\omega_{2c+1-\sigma(i)}^{-p}\otimes\omega_i^{-1})
\end{equation*}
be $A_i$ after applying the isomorphism $\omega_{\sigma(i)}\simeq\omega_{2c+1-\sigma(i)}$.  Recall the definition of the numbers $N$ and $c_i$ from Section \ref{S:ineq}.  Then we define
\begin{equation*}
A_{J,w}=\prod_{i=1}^c{A_i'}^{c_i}\in H^0(X_{J,w},\omega^{p^N-1})
\end{equation*}

\begin{thm}
The section $A_{J,w}$ above extends to a section
\begin{equation*}
A_{J,w}\in H^0(\overline{X}_{J,w},\omega^{p^N-1})
\end{equation*}
whose vanishing locus is precisely $\overline{X}_{J,w}-X_{J,w}$.
\end{thm}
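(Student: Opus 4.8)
The plan is to treat $A_{J,w}$, which is a priori a monomial (with some negative exponents) in the $A_i'$, as a \emph{rational} section of the line bundle $\omega^{\otimes p^N-1}$ on $\overline{X}_{J,w}$ and to show it has no poles. First I would record that this makes sense: by Proposition \ref{P:omega} the line bundles $\omega_i$, the Cartier duality isomorphisms $\omega_i\simeq\omega_{2c+1-i}^\vee$, and the isomorphism $\omega\simeq\bigotimes_{i=1}^c\omega_i$ all live over $\overline{X}_{J,w}$, hence so do the maps $A_i$ and $A_i'$, and $A_{J,w}=\prod_{i=1}^c{A_i'}^{c_i}$ is a well-defined rational section of $\omega^{\otimes p^N-1}$ on $\overline{X}_{J,w}$. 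Over $X_{J,w}$ it is regular and nowhere vanishing: each $A_i'$ equals $A_i$ up to an isomorphism of line bundles, and $A_i|_{X_{J,w}}$ is nowhere vanishing, which one gets by transport of structure along the local model diagram --- Proposition \ref{P:van} gives that $C_{c+1-i}$ is nowhere vanishing on the open Schubert cell $Y_w$, and Propositions \ref{P:schubcomp} and \ref{P:3d}, together with the smoothness and surjectivity of $p_1,p_2$ in Theorem \ref{T:locmodel}, propagate this to $A_i$ on $X_{J,w}$.

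Next I would bound the order of $A_{J,w}$ along the prime divisors in the boundary. By the proposition stated just before the theorem, $\overline{X}_{J,w}-X_{J,w}=\bigcup_{v\in D_w}\overline{X}_{J,v}$ is a union of divisors. Since passing from $A_i$ to $A_i'$ only post-composes with an isomorphism of line bundles, orders are unchanged, so for $v\in D_w$
\begin{equation*}
\operatorname{ord}_{\overline{X}_{J,v}}(A_{J,w})=\sum_{i=1}^c c_i\,\operatorname{ord}_{\overline{X}_{J,v}}(A_i),
\end{equation*}
which by Corollary \ref{C:krord} equals $\sum_{i=1}^c c_i\,\operatorname{ord}_{\overline{Y}_v}(C_{c+1-i})$. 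Because $(w,J)$ is admissible we have $W_JwW_{\tilde J}=wW_{\tilde J}$ (Proposition \ref{P:adm}), so Corollary \ref{P:schub}(3) applies and each $\overline{Y}_v$ with $v\in D_w$ is of the form $\overline{Y}_{v'}^B$ for a suitable $v'\in D_w^B$; hence
\begin{equation*}
\operatorname{ord}_{\overline{X}_{J,v}}(A_{J,w})=\sum_{i=1}^c c_i\,\operatorname{ord}_{\overline{Y}_{v'}^B}(C_{c+1-i})>0
\end{equation*}
by the inequality of Proposition \ref{P:ineq}. (Throughout, $\operatorname{ord}_{\overline{X}_{J,v}}$ is read on each irreducible component, as in Proposition \ref{P:trans}.)

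With this in hand the conclusion is formal. The displayed inequalities say $A_{J,w}$ has non-negative order along every prime divisor of $\overline{X}_{J,w}$ contained in $\overline{X}_{J,w}-X_{J,w}$, and along every prime divisor meeting $X_{J,w}$ its order is $0$ (as $A_{J,w}$ is regular and nowhere vanishing there); since $\overline{X}_{J,w}$ is normal, a rational section of a line bundle with no poles in codimension one is a regular section, so $A_{J,w}$ extends (uniquely) to an element of $H^0(\overline{X}_{J,w},\omega^{\otimes p^N-1})$. For the zero locus: it is disjoint from $X_{J,w}$, hence contained in $\overline{X}_{J,w}-X_{J,w}$; conversely it is closed, pure of codimension one (Krull), and contains the generic point of each irreducible component of $\overline{X}_{J,w}-X_{J,w}$ by the strict positivity, and $\overline{X}_{J,w}-X_{J,w}$ is itself pure of codimension one, so the zero locus contains all of it. Thus the zero locus is exactly $\overline{X}_{J,w}-X_{J,w}$.

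I expect no real obstacle in this particular argument --- the substance is all upstream, in the identification of $\operatorname{ord}_{\overline{X}_{J,v}}(A_i)$ with orders of the Schubert sections $C_j$ via the local model (Proposition \ref{P:3d}, Corollary \ref{C:krord}) and in the combinatorial inequality (Proposition \ref{P:ineq}). The one point that needs care is the matching of the $P$-orbit indexing $D_w$ of the boundary divisors with the $B$-orbit indexing $D_w^B$ in which the inequality was proved, which is exactly what Corollary \ref{P:schub}(3) supplies, together with the observation that the single positive lower bound simultaneously gives both the extension across the boundary and the sharp description of the vanishing locus.
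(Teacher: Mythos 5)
Your argument is correct and follows the same route the paper takes: the paper's proof is the one-line ``This follows from Corollary \ref{C:krord} combined with Proposition \ref{P:ineq},'' and your write-up simply fills in the intermediate bookkeeping (rational section of a line bundle on the normal variety $\overline{X}_{J,w}$, order zero on $X_{J,w}$, positive order along each boundary divisor by the order computation and the combinatorial inequality, hence regular extension with the stated vanishing locus). The one point you flag --- reconciling the $P$-orbit indexing $D_w$ with the $B$-orbit indexing $D_w^B$ via Corollary \ref{P:schub}(3), which is what makes Corollary \ref{C:krord} and Proposition \ref{P:ineq} refer to the same divisors --- is real and is handled correctly; it is implicit in the paper's phrasing of Corollary \ref{C:krord}.
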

\begin{proof}
This follows from Corollary \ref{C:krord} combined with Proposition \ref{P:ineq}.
\end{proof}

\section{Ekedahl-Oort and Kottwitz-Rapoport Strata}\label{S:cansec}

Let $X$ be the moduli space of principally polarized abelian schemes over a base of characteristic $p$ (with suitable prime to $p$ level structure which we omit from the notation.)  We have an Ekedahl-Oort stratification
\begin{equation*}
X=\coprod_{w\in W^I}X_w
\end{equation*}
as in Section \ref{S:EO}.

Now fix some $w\in W^I$.  Let $A$ be the universal abelian scheme over $X$.  Then over $X_w$, the principally quasi-polarized $\BT$ $A[p]$ has a canonical filtration
\begin{equation*}
0=G_0\subset G_1\subset \cdots\subset G_c\subset\cdots\subset G_{2c}=A[p]|_{X_w}.
\end{equation*}
as in Section \ref{S:canfil}.  It is self dual by proposition \ref{P:candual}.  Let $J$ be the type of this filtration, i.e. choose $J$ such that
\begin{equation*}
k_i=\text{ht}G_i
\end{equation*}
for $i=0,\ldots,2c$.  Then the canonical filtration defines a canonical section
\begin{equation*}
s_w:X_w\to X_J
\end{equation*}
to the projection
\begin{equation*}
\pi_J:X_J\to X
\end{equation*}
given by forgetting the level structure.  We note that $\pi_J$ is proper by \cite{dJ93}.

We have the following theorem of G\"{o}rtz and Hoeve \cite[Theorem 5.3]{GH12} (a related result can be found in \cite{EG09}).
\begin{thm}
Let the notation be as above.  Then $(w,J)$ is admissible and the section $s_w$ defines an isomorphism
\begin{equation*}
s_w:X_w\to X_{J,w}.
\end{equation*}
\end{thm}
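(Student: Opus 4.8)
We must show that, given $w\in W^I$ and $J$ the type of the canonical filtration on the Ekedahl–Oort stratum $X_w\subset X$ (so $k_i=\operatorname{ht} G_i$), the pair $(w,J)$ is admissible and the section $s_w\colon X_w\to X_J$ given by the canonical filtration lands in the Kottwitz–Rapoport stratum $X_{J,w}$ and induces an isomorphism $X_w\xrightarrow{\ \sim\ } X_{J,w}$.

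The plan is to proceed in three steps. \emph{First}, I would check admissibility of $(w,J)$. The point is that the canonical filtration on $A[p]|_{X_w}$ is $\overline{\O}$-stable (here $\overline{\O}=\FF_p$, so this is automatic) and, crucially, self-dual and of constant type, with the permutation $\sigma$ of Theorem~\ref{T:strcan} satisfying $\sigma(2c+1-i)=2c+1-\sigma(i)$ and $\sigma(1)<\cdots<\sigma(c)$ by Propositions~\ref{P:candual}. The definition of $w(G)$ in Section~\ref{S:moonen} as $\operatorname{relpos}(P_C,P_N)$, together with the dictionary $wx(k_i)=k_{\sigma(i)}$ spelled out in the introduction and in Section~\ref{S:EOGH}, forces $w\in W^I$ and ${}^wx$ to carry the index set $\{k_i\}$ to itself in the manner of Lemma~\ref{L:W2}; rereading that lemma shows precisely that ${}^w\tilde J=J$. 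So admissibility is essentially a translation exercise between the Dieudonné-module description of $w(G)$ and the combinatorics of Proposition~\ref{P:adm}.

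\emph{Second}, I would show $s_w(X_w)\subseteq X_{J,w}$, and more precisely that $s_w$ factors through the open Kottwitz–Rapoport stratum $X_{J,w}$ inside $\overline{X}_{J,w}$. Since $X_w$ and $X_{J,w}$ are reduced, it suffices to check this on geometric points. Let $k'$ be algebraically closed and $x\in X_w(k')$; then $s_w(x)$ is the point of $X_J$ given by $(A_x, \lambda_x, \{G_i\})$ where $\{G_i\}$ is the canonical filtration. One first checks $s_w(x)\in X_J^0(k')$: by Proposition~\ref{P:idesc}(1) this amounts to the isogeny $A_0\to A_c$ being Frobenius, which holds because $G_c=\ker F=A_0[F]$ by construction of the canonical filtration. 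Then, by the proposition just before Proposition~\ref{P:omega} (characterizing $\overline{X}_{J,w}$ by the conditions $V(G_{\sigma(i)-1}^{(p)})\subset G_{i-1}$ for $1\le i\le c$ and $F(G_i)\subset G_{\sigma(i)}^{(p)}$ for $c+1\le i\le 2c$), the defining property of the canonical filtration — namely that $V(G_j^{(p)})$ and $F^{-1}(G_j^{(p)})$ are again terms of the filtration, with the indices governed by exactly this $\sigma$ (Theorem~\ref{T:strcan}) — gives $s_w(x)\in\overline{X}_{J,w}(k')$ immediately. To see it lands in the \emph{open} stratum $X_{J,w}$ and not a smaller one, I would use the dimension count: by Theorem~\ref{T:EOprop}(1), $\dim X_w = l(w)$, while $\dim X_{J,v}=l(\dot v)$ for the Kottwitz–Rapoport strata; since $s_w$ is a section of the proper map $\pi_J$ it is a closed immersion, so $\dim s_w(X_w)=l(w)$, and among the strata $X_{J,v}$ with $s_w(X_w)\subseteq \overline{X}_{J,v}$ the one of matching dimension must be $X_{J,w}$ itself (using $l(\dot w)=l(w)$, which follows from admissibility via Proposition~\ref{P:adm}(1): $W_JwW_{\tilde J}=wW_{\tilde J}$ so $\dot w=w$).

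\emph{Third}, and this is the main obstacle, I would prove that $s_w\colon X_w\to X_{J,w}$ is an isomorphism. It is a closed immersion (section of a separated map), so I need surjectivity, i.e.\ that every point of $X_{J,w}$ has its parahoric flag equal to the canonical filtration of its underlying abelian variety, \emph{and} that the underlying abelian variety lies in $X_w$. The second point follows from the compatibility of $\theta$ with geometric points (the argument at the end of the proof of Theorem~\ref{T:opensieg}): a point of $X_{J,w}$ maps under $\pi_J$ to a point whose $p$-torsion has canonical-filtration invariants $(k_i,\sigma)$, hence Ekedahl–Oort type $w$. For the first point, one must show that over $X_{J,w}$ the universal parahoric chain $\{\Ca G_i\}$ \emph{is} the canonical filtration: one knows from the discussion following Proposition~\ref{P:omega} that over $\overline{X}_{J,w}$ the maps $V$ and $F$ behave on the graded pieces exactly as they do for a canonical filtration, and the type is $J$ by construction; since the canonical filtration is characterized as the \emph{coarsest} filtration closed under $V$ and $F^{-1}$, and $\{\Ca G_i\}$ has the right invariants, one gets that over the (reduced) stratum $X_{J,w}$ the chain $\{\Ca G_i\}$ coincides with the canonical filtration, giving a morphism $X_{J,w}\to X_w$ inverse to $s_w$. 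The delicate part is making the "coarsest" characterization do its work \emph{in families} over $X_{J,w}$ rather than just on geometric points — but here one can invoke Proposition~\ref{P:canfilbc} (base change of canonical filtrations) together with reducedness of $X_{J,w}$ to descend the pointwise equality to a scheme-theoretic one. Since this is precisely the content of the cited theorem of Görtz–Hoeve \cite[Theorem 5.3]{GH12}, I would at this stage simply appeal to their result, having checked that our normalizations (the conventions on $\sigma$, the choice of $x$, the indexing of $W^I$) match theirs.
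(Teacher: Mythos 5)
The paper does not prove this statement at all; it is presented verbatim as a theorem of G\"ortz--Hoeve, citing \cite[Theorem 5.3]{GH12}, with no argument supplied. Your proposal, which you correctly recognize at the very end, ultimately reduces to the same citation for the genuinely hard content (that the parahoric chain over $X_{J,w}$ coincides scheme-theoretically with the canonical filtration in families, giving the inverse morphism). So at the level of ``what the paper does,'' your approach matches: both you and the paper invoke G\"ortz--Hoeve; you have additionally supplied a sketch of how the pieces fit together, which is useful context but is not part of the paper's treatment.

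That said, your Step 2 has a gap. You argue that $s_w(X_w)$ lands in the open stratum $X_{J,w}$ (rather than merely its closure) by a dimension count. But knowing that $s_w(X_w)\subseteq\overline X_{J,w}$ is a closed subscheme of dimension $l(w)=\dim\overline X_{J,w}$ does not by itself rule out that a lower-dimensional locus in $s_w(X_w)$ lies in the boundary $\overline X_{J,w}\setminus X_{J,w}$; the dimension count only constrains the generic behavior. The correct and more direct argument is pointwise: at a geometric point $x\in X_w$, Theorem~\ref{T:strcan} gives that $V$ and $F$ induce \emph{isomorphisms} on the appropriate graded pieces of the canonical filtration, which is precisely what distinguishes $X_{J,w}$ from the rest of $\overline X_{J,w}$ (the $B_i^{\text{loc}}$ and $A_i^{\text{loc}}$ are non-vanishing there, equivalently the relative position is exactly $w$ and not smaller). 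This direct check replaces the dimension argument and closes the gap. The rest of your outline --- admissibility via the combinatorics of $\sigma$, membership in $X_J^0$ via Proposition~\ref{P:idesc}, and the ``coarsest filtration'' characterization in Step 3 --- is sound in spirit, and the step you flag as ``delicate'' (descending from geometric points to families) is indeed the content that G\"ortz--Hoeve supply.
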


Now we are in the following situation.  We have a proper surjective map
\begin{equation*}
\pi: \overline{X}_{J,w}\to\overline{X}_w
\end{equation*}
which restricts to an isomorphism from $X_{J,w}$ to $X_w$.  We have a section
\begin{equation*}
A_w'\in H^0(X_w,\omega^{\otimes N_w'})
\end{equation*}
as constructed in Section \ref{S:hasse}.  On the other hand, $s_w^* A_w'$ is the section $A_{J,w}$ which extends to
\begin{equation*}
A_{J,w}\in H^0(\overline{X}_{J,w},\omega^{\otimes N_w'})
\end{equation*}
which vanishes precisely on $\overline{X}_{J,w}-X_{J,w}$.  We would like to conclude from this that there is some $n>0$ such that $(A_w')^n$ extends to an element of $H^0(X_w,\omega^{\otimes nN_w'})$ which vanishes precisely on $\overline{X}_w-X_w$, and hence complete the proof of Theorem \ref{T:hasseext}.  This follows from the following lemma.

\begin{lem}\label{L:extlem}
Let $f:X\to Y$ be a proper surjective morphism of reduced noetherian schemes.  Let $\L$ be a line bundle on $Y$ and let $U\subset Y$ be a dense open subscheme with the property that $f^{-1}(U)\to U$ is an isomorphism and let $Z=X-U$ be its (set theoretic) complement.  Suppose we are given a section $s\in H^0(X,f^*\L)$ which vanishes (set theoretically) on $f^{-1}(Z)$.  Then there is an integer $n>0$ and a section $t\in H^0(Y,\L^n)$ which vanishes set theoretically on $Z$ and pulls back to $s^n$ under $f$.
\end{lem}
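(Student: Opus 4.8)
The key point is that $f_*\O_X = \O_Y$, which will let us descend sections. The plan is to first establish this, then use it to descend a power of $s$.

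\textbf{Step 1: Reduce to $f_*\O_X = \O_Y$.} Since $f:X\to Y$ is proper surjective with $Y$ reduced noetherian, and $f^{-1}(U)\to U$ is an isomorphism over the dense open $U$, the Stein factorization $X\to \underline{\spec}(f_*\O_X)\to Y$ exhibits $f_*\O_X$ as a coherent sheaf of $\O_Y$-algebras which agrees with $\O_Y$ over $U$. I would argue $f_*\O_X = \O_Y$ using the trace-map argument of Lemma \ref{L:tracelem}: $f$ is generically finite (in fact generically an isomorphism), separable, and proper, but $Y$ need not be normal here, so one cannot directly invoke that lemma. Instead, the clean route is: we do \emph{not} need $f_*\O_X = \O_Y$ on the nose; it suffices to know that the natural map $\O_Y \to f_*\O_X$ becomes an isomorphism after passing to a suitable power, or more simply, we only need surjectivity of $H^0(Y,\L^n)\to H^0(X,f^*\L^n)$ onto the subspace of sections that are ``constant along the fibers.''

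\textbf{Step 2: Descend a power of $s$.} Consider $s \in H^0(X, f^*\L)$. Over $U$, via the isomorphism $f^{-1}(U)\cong U$, the section $s$ corresponds to a section $s_U \in H^0(U, \L|_U)$. The hope is that $s_U$ extends to a global section of some power $\L^n$ on $Y$. To see this, note $s$ vanishes set-theoretically on $f^{-1}(Z)$, so $s_U$ vanishes set-theoretically on $U\setminus U$... more precisely $s_U$ is a section of $\L$ on $U$, and I want to control its behavior approaching $Z = Y\setminus U$. The idea: the closed subscheme $V(s)\subset X$ is set-theoretically $f^{-1}(Z)$, hence $f(V(s)) = Z$ set-theoretically; since $f$ is proper, $f(V(s))$ is closed, and one gets that the ideal sheaf $f_*(\text{something})$ cuts out $Z$ up to radical. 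The technically cleanest implementation: replace $s$ by $s^{N}$ for $N$ large enough that $s^N$ lies in $f^* \I_{Z'}^{(1)}$ where $Z' = f^{-1}(Z)_{\mathrm{red}}$ — actually the point is just that $s^N$, viewed as a map $\O_X \to f^*\L^N$, when pushed forward and composed with $f_*\O_X \leftarrow \O_Y$ (adjunction), gives $\O_Y \to f_*f^*\L^N = \L^N \otimes f_*\O_X$, and we compose with a trace/norm $f_*\O_X \to \O_Y$ to land in $\L^N$.

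\textbf{Step 3: Identify the descended section and its vanishing.} Take $t \in H^0(Y,\L^n)$ to be the image of $s^n$ under $H^0(X, f^*\L^n) = H^0(Y, f_*f^*\L^n) \overset{\text{proj. formula}}{=} H^0(Y, \L^n \otimes f_*\O_X) \overset{\mathrm{tr}}{\to} H^0(Y,\L^n)$, where $\mathrm{tr}: f_*\O_X \to \O_Y$ is the trace map from Lemma \ref{L:tracelem} (valid since $f$ is generically finite separable — here we \emph{do} need $Y$ normal, so this is where the argument must be adjusted). Alternatively, since $f^{-1}(U)\to U$ is an isomorphism and $U$ is dense, one can define $t$ over $U$ as $s_U^n$ and check it extends: on $Y\setminus U$ the issue is whether $s_U^n$ has poles, but $s^n$ is a \emph{regular} section of $f^*\L^n$ on all of $X$, and since $f$ is proper $f_*f^*\L^n$ is coherent, so $s^n \in H^0(X,f^*\L^n) = H^0(Y, f_*f^*\L^n)$; restricting the coherent sheaf map $f_*f^*\L^n \to \L^n$ (which is an iso over $U$) shows $s^n$ maps to a genuine section $t$ of $\L^n$ over $Y$ — this requires $f_*f^*\L^n \to \L^n$, i.e. $f_*\O_X \to \O_Y$, or at least that the composite is well-defined. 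Once $t$ exists, $t|_U = s_U^n$ pulls back to $s^n$ on $f^{-1}(U)$, and by density and separatedness $f^*t = s^n$ on all of $X$; the vanishing locus of $t$ is set-theoretically $Z$ because $f^*t = s^n$ vanishes exactly on $f^{-1}(Z)$ and $f$ is surjective.

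\textbf{Main obstacle.} The genuine difficulty is the hypothesis gap: Lemma \ref{L:tracelem} needs $Y$ normal, but here $Y = \overline{X}_w$ (for the intended application) or a general reduced noetherian scheme, which may fail to be normal or Cohen–Macaulay. The robust fix I would pursue is to prove directly that the natural map $\O_Y \to f_*\O_X$ is an isomorphism under the stated hypotheses — using that $f$ is proper with geometrically connected fibers (since $f^{-1}(U)\to U$ is an iso and $f$ is proper surjective, generic fibers are points; combined with reducedness of $Y$, Zariski's main theorem / Stein factorization forces $f_*\O_X = \O_Y$ provided the finite part of the Stein factorization is an isomorphism, which follows from it being finite, birational, and $Y$ reduced... though $Y$ reduced alone doesn't give normality, one needs the finite birational morphism to be an isomorphism, which needs $Y$ normal or at least that $\O_Y$ is integrally closed in $f_*\O_X$). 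I would instead settle for: after replacing $\L$ by a power, $t$ exists because $s^n$, for $n \gg 0$, lands in the image of $H^0(Y,\L^n) \to H^0(X,f^*\L^n)$ by a direct Čech/coherence argument on $Y\setminus U$ using that $Z$ has a neighborhood where everything is controlled — but honestly the cleanest correct statement uses $f_*\O_X = \O_Y$, and establishing this (rather than the final descent, which is then formal via the projection formula $f_*f^*\L^n \cong \L^n$) is where the real work lies.
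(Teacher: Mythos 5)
Your proposal circles the right ideas — descend a power of $s$, use coherence, exploit $f^{-1}(U)\cong U$ — but it never closes the argument, and you honestly flag the gap yourself. The central problem is that two of your candidate fixes are not available here, and the idea that actually works is never found.

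First, $f_*\O_X = \O_Y$ is simply false under the hypotheses of this lemma. Take $Y$ a nodal curve, $X$ its normalization, $U$ the complement of the node: then $f^{-1}(U)\to U$ is an isomorphism and $f$ is proper, surjective, birational, but $f_*\O_X$ is strictly larger than $\O_Y$. Your Zariski-main-theorem/Stein-factorization argument would force $f_*\O_X = \O_Y$ only if $Y$ were normal, which is exactly what is \emph{not} assumed (and is false for the $\overline{X}_w$ in the intended application). Second, the trace map of Lemma \ref{L:tracelem} also requires $Y$ normal, as you correctly notice. Neither route is salvageable as stated, so the "formal via the projection formula" descent never gets off the ground.

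The idea the paper uses and that is missing from your write-up is this: reduce by Stein factorization to $f$ finite, then to the affine case $A\to B$ finite with $A$ reduced. Form $0\to A\to B\to C\to 0$. The point is not that $C=0$ (it isn't), but that $C$ is supported on $Z$ because $A_\p\to B_\p$ is an isomorphism for $\p\in U$; hence $I^{n_1}C=0$ for some $n_1$, where $I$ is the radical ideal of $Z$. Now this is where the vanishing hypothesis on $s$ enters: $s$ vanishing set-theoretically on $f^{-1}(Z)$ means $b:=s\in\sqrt{IB}$, so $b^{n_2}\in IB$, i.e. $b^{n_2}=b'a$ with $a\in I$. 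Then $b^{n_1n_2}$ maps to $a^{n_1}\cdot(\text{image of }b'^{n_2})=0$ in $C$, so $b^{n_1n_2}$ lies in $A$. The vanishing of $s$ on $f^{-1}(Z)$ is not a side condition — it is precisely what supplies the factor $a\in I$ needed to kill the cokernel. Your proposal treats it only as something to be verified about $t$ at the end, rather than as the engine of the descent. There is also a small gluing step you would need (the integers $n$ produced locally must be reconciled across an affine cover by taking common powers, using that $f$ is surjective and $Y$ reduced to see the local descents agree on overlaps), which the paper handles and your proposal does not address.
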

\begin{proof}
First we reduce to the case that $f$ is finite.  We consider the stein factorization
\begin{equation*}
X\overset{g}{\to} X'=\underline{\spec}\,f_*\O_X\overset{f'}{\to} Y.
\end{equation*}
As $g_*\O_X=\O_X'$ we have that $s$ gives a section of
\begin{equation*}
g_*f^*\L={f'}^*\L
\end{equation*}
and so it suffices to prove the theorem for $f'$ which is finite.

Next we observe that the question is local on $Y$.  Indeed if for some finite affine open cover $Y=\cup V_i$ we have integers $n_i$ and sections $t_i\in H^0(V_i,\L^{n_i})$ such that $f^* t_i=s^{n_i}|_{f^{-1}(V_i)}$ then form $t_i'=t_i^{\prod_{j\not=i}n_i}$.  Then for each $i,j$, $t_i'|_{V_i\cap V_j}=t_j'|_{V_i\cap V_j}$ because $f$ is surjective, $Y$ is reduced, and they both pull back to $s^n$, $n=\prod_i n_i$.  Hence they glue to a section $t\in H^0(Y,\L^n)$ which pulls back to $s^n$.

Hence we may assume that $Y=\spec A$ and $X=\spec B$ are affine, $A\to B$ is finite, and $\L$ is trivial.  Let $I$ be the radical ideal corresponding to $Z\subset \spec A$ and let $I'=\sqrt{IB}$ be the radical ideal corresponding to $f^{-1}(Z)$.  Then $t$ is just a regular function $b\in B$ and the fact that it vanishes on $f^{-1}(Z)$ means that $b\in I'$.

Form the exact sequence of $A$ modules
\begin{equation*}
0\to A\to B\overset{\alpha}{\to} C\to 0
\end{equation*}
where the map $A\to B$ is injective because $f$ is surjective and $A$ is reduced.  For $\p\in U\subset\spec A$, $A_\p\to B_\p$ is an isomorphism by assumption, and hence $C_\p=0$.  Thus $\text{supp} C\subset Z$ and hence there is an integer $n_1$ such that $I^{n_1}C=0$.  Now $b\in I'=\sqrt{BI}$ and so there is some $n_2>0$ with $b^{n_2}\in BI$ and hence we can write
\begin{equation*}
b^{n_2}=b'a
\end{equation*}
with $a\in I$.  Then
\begin{equation*}
\alpha (b^{n_1n_2})=a^{n_1}\alpha(b'^{n_2})=0
\end{equation*}
and hence $b^{n_1n_2}$ lies in the image of $A$.
\end{proof}

\chapter{Generalized Hasse Invariants at the Boundary}\label{boundaryhasse}

The goal of this section is to study the Ekedahl-Oort stratification and generalized Hasse invariants of chapters \ref{EOPEL} and \ref{siegel} near the boundary of the compactifications recalled in chapter \ref{compact}.  As we have seen in chapter \ref{compact}, suitable formal neighborhoods of the boundary in either a toroidal or minimal compactification ``fiber over'' smaller PEL modular varieties.  Roughly speaking, these ``structural morphisms'' give the non degenerating abelian part of an abelian variety near the boundary.  What we will show is that in these formal neighborhoods, the Ekedahl-Oort stratification is just the pullback of the Ekedahl-Oort stratification of the smaller PEL modular variety, and similarly on each Ekedahl-Oort strata, the Hasse invariant is the pullback of a suitable Hasse invariant on the smaller Shimura variety. 

Let us now summarize the contents of this Chapter.  In Section \ref{S:eobound} we study the Ekedahl-Oort stratification at the boundary.  We treat toroidal compactifications first.  On a toroidal compactification $X_{K,\Sigma}^{\text{tor}}$, we have a semiabelian scheme $A$ with $\O$-action extending the abelian scheme on the interior.  Its $p$ torsion $A[p]$ gives a principally quasi-polarized partial $\BT$ with $\overline{\O}$-action, and hence the general construction of chapter \ref{EOPEL} we may extend the stratification from the interior to the entire toroidal compactification.  In order to understand it at the boundary, we also consider the ``Raynaud Extensions'' $\tilde{A}$ on the boundary charts $\Xi_{\Sc{C},\Sigma_{\Sc{C}}}\times_Rk$.  The $p$-torsion of $\tilde{A}_{\Sc{C}}$ defines an Ekedahl-Oort stratification of $\Xi_{\Sc{C},\Sigma_{\Sc{C}}}\times_Rk$, which from the definition of $\tilde{A}_{\Sc{C}}$ is seen to be the pullback of the Ekedahl-Oort stratification of $X_{\Sc{C}}$ via the map $\Xi_{\Sc{C},\Sigma_{\Sc{C}}}\times_Rk\to X_{\Sc{C}}$.  Hence we obtain two stratifications of our formal scheme $\hat{X}_{K,\Sigma,\Sc{C}}^{\text{tor}}\simeq(\Fr{X}_{\Sc{C},\Sigma_{\Sc{C}}}/\Gamma_{\Sc{C}})\times_Rk$ (see \ref{T:toroidal} part 2) and our aim is to show that they are the same.

Let us illustrate how this goes in the simplest possible case.  Consider what happens at the cusp of the modular curve.  We have the semiabelian Tate curve $E/\FF_p[[q]]$ and the Raynaud extension $\Bf{G}_m/\FF_p[[q]]$.  They are related by the fact that their formal completions along their special fibers are canonically isomorphic.  Now in general given a semiabelian scheme $A/\FF_p[[q]]$ we cannot expect to recover the quasi-finite flat group scheme $A[p]$ from the formal completion $\hat{A}$.  However we can recover the finite part $A[p]^f$ (see Lemma \ref{L:finitepart} and the surrounding discussion for this notion) and this is enough to determine the Ekedahl-Oort strata of both the generic and special fiber of $A$.  Returning to the Tate curve $E$, we can conclude that 
\begin{equation*}
E[p]^f\simeq\Bf{G}_m[p]=\mu_p
\end{equation*}
and hence the generic fiber of the Tate curve is ordinary.  The general case is just an elaboration of this argument.

The main properties of the Ekedahl-Oort stratification of a toroidal compactification are summarized in Theorem \ref{T:EOtor}.  From this it is easy to define an Ekedahl-Oort stratification of the minimal compactification, and we summarize its properties in Theorem \ref{T:EOmin}.

Next, in section \ref{S:hassebound} we turn to Hasse invariants.  In theorem \ref{T:hassetor} we show that for each Ekedahl-Oort stratum $X_{K,w}$, the Hasse invariant on $\overline{X}_{K,w}$ extends to $\overline{X}_{K,w}^{\text{tor}}$, and on a suitable formal neighborhood of the boundary, it is just the pullback of a suitable Hasse invariant on an Ekedahl-Oort stratum of a smaller PEL modular variety.  Again this will be done by comparing the semiabelian scheme $A/X_{K,\Sigma}^{\text{tor}}$ with the Raynaud extension $\tilde{A}_{\Sc{C}}$.  The latter is an extension
\begin{equation*}
0\to T\to \tilde{A}_{\Sc{C}}\to A_\Sc{C}\to 0
\end{equation*}
where $A_{\Sc{C}}$ is the pullback of the universal abelian scheme on $X_{\Sc{C}}$ and $T$ is a torus.  Then it follows easily from the definition that on an Ekedahl-Oort stratum $\Xi_{\Sc{C},\Sigma_{\Sc{C}},w}$ of the boundary chart, the $w$ Hasse invariant of $\tilde{A}_{\Sc{C}}$ is just the product of a suitable Hasse invariant and a power of the determinant of
\begin{equation*}
V^*:\omega_T\to\omega_T^{(p)},
\end{equation*}
which is nothing but the canonical generator of $\det\omega_T^{p-1}$.

In the case of the Tate curve considered above, the isomorphism $E[p]^f\simeq\Bf{G}_m[p]$ gives a canonical generator (up to sign) $\pm\frac{dT}{T}$ of $\omega_{E}$ where $T$ is one of the generators of the character lattice of $\Bf{G}_m$.  Moreover we have
\begin{equation*}
V^*\frac{dT}{T}=\left(\frac{dT}{T}\right)^p
\end{equation*}
and hence over $\Z[[q]]$ the classical Hasse invariant is nothing but $A=(\frac{dT}{T})^{p-1}$ (note that this formula does not depend on the choice of $T$.)  The reader will recognize this as nothing but the classical calculation of the $q$-expansion of the Hasse invariant.  We like to interpret Theorem \ref{T:hassetor} as saying that ``each Fourier-Jacobi expansion of the generalized Hasse invariant $A_w$ has a suitable generalized Hasse invariant at the boundary as a constant term, and vanishing non constant terms.''

Finally we show in Theorem \ref{T:hassemin} that the Hasse invariants also extend to the minimal compactification (this is easily deduced from the Toroidal case using the results of Section \ref{S:wellpos}).  As an immediate consequence we deduce in Corollary \ref{C:affine} that the minimally compactified Ekedahl-Oort strata $X_{K,w}^{\text{min}}$ are affine.  In the Siegel case, this answers a question of Oort \cite[14.2]{Oo01}.

We remark that for Siegel modular varieties, extensions of the Ekedahl-Oort stratification to compactifications were already considered by Oort, and the results of section \ref{S:eobound} should be compared with \cite[\S 6]{Oo01}.

Throughout this chapter we fix an integral PEL datum $(\Ca{O},*,L,\langle\cdot,\cdot,\rangle,h)$ without factors of type D satisfying Condition \ref{extcond}, a neat open compact subgroup $K\subset G(\hat{\Z}^{(p)})$, and $\Sigma$ a good compatible family of cone decompositions at level $K$.  We let $\Ca{D}$ be the corresponding mod $p$ PEL datum as in Definition \ref{D:modpcons} and we denote by $W^I$ the associated set of Weyl group cosets (see the end of section \ref{S:modppel}).

\section{Ekedahl-Oort Stratification at the Boundary}\label{S:eobound}

\subsection{Ekedahl-Oort Stratification of the Boundary Charts}
Fix a cusp label $\Sc{C}\in\text{Cusp}_K$.  Associated to it is an integral PEL datum $(\O,*,L_{\Sc{C}},\langle\cdot,\cdot\rangle_{\Sc{C}},h_{\Sc{C}})$ and hence an associated mod $p$ PEL datum $\Ca{D}_{\Sc{C}}$.  We let $W_{\Sc{C}}^{I_{\Sc{C}}}$ be the associated set of Weyl group cosets.  Both $\Ca{D}$ and $\Ca{D}_{\Sc{C}}$ consist of the same semisimple $\FF_p$-algebra with involution $(\overline{\O},*)$.  We denote the numerical invariants of $\Ca{D}$ by $(h_{[\tau]})$ and $(d_\tau)$ and those of $\Ca{D}_{\Sc{C}}$ by $(h_{\Sc{C},[\tau]})$ and $(d_{\Sc{C},\tau})$ (see definition \ref{D:modppel}.)  We also let $(t_\tau=t_{[\tau]})$ be the $\overline{\O}$-multirank of $X\otimes_\Z k$.  Then it follows from the construction of $(\O,*,L_{\Sc{C}},\langle\cdot,\cdot\rangle_{\Sc{C}},h_{\Sc{C}})$ (see 5.4.2.6 of \cite{La13}) that for each $\tau$
\begin{equation*}
h_{[\tau]}=h_{\Sc{C},[\tau]}+2t_{[\tau]}
\end{equation*}
and
\begin{equation*}
d_{[\tau]}=d_{\Sc{C},[\tau]}+t_{[\tau]}.
\end{equation*}

Now let $k'$ be an algebraically closed field of characteristic $p$ with an embedding $k\to k'$.  Let $G_0/k'$ be the $\BT$ with $\overline{\O}$-action of multiplicative type with character group $X\otimes\FF_p$.  It has multi height and multi dimension $(t_{[\tau]})$.  We form
\begin{equation*}
G_1=G_0\times G_0^D
\end{equation*}
which we make into a principally quasi-polarized $\BT$ with $\overline{\O}$-action in the obvious way.

We define a map
\begin{equation*}
\iota_{\Sc{C}}:W_{\Sc{C}}^{I_{\Sc{C}}}\to W^I
\end{equation*}
by the commutativity of the diagram
\begin{equation*}
\begin{tikzcd}
{\BT}_{k'}^{\Ca{D}_{\Sc{C}}}\arrow{r}\arrow{d}&W_{\Sc{C}}^{I_{\Sc{C}}}\arrow[dotted]{d}{\iota_{\Sc{C}}}\\
{\BT}_{k'}^{\Ca{D}}\arrow{r}& W^{I}
\end{tikzcd}
\end{equation*}
where the horizontal arrows are the bijections of Section \ref{S:moonen} and the left vertical arrow is the map
\begin{equation*}
G\mapsto G\times G_1.
\end{equation*}
It is clear that $\iota_{\Sc{C}}$ is injective and independent of the choice of $k'$.

For $G\in{\BT}_{k'}^{\Ca{D}}$ we may also define a principally quasi polarized partial $\BT$ with $\overline{\O}$-action of type $\Ca{D}$ with the group with its $\overline{\O}$-action given by $G\times G_0$, and with the quasi-polarization extended from $G$ to be zero on $G_0$.  Then we note that $G\times G_0$ and $G\times G_1$ have canonical filtrations with the same $\overline{\O}$-type in the sense of Definition \ref{D:otype}.

Then as in section \ref{S:EO} the PEL modular variety $X_{\Sc{C}}$ has an Ekedahl-Oort stratification
\begin{equation*}
X_{\Sc{C}}=\coprod_{w_{\Sc{C}}\in W_{\Sc{C}}^{I_{\Sc{C}}}}X_{\Sc{C},w_{\Sc{C}}}.
\end{equation*}
For notational purposes we define for $w\in W^I$
\begin{equation*}
X_{\Sc{C},w}=\begin{cases}X_{\Sc{C},w_{\Sc{C}}}&\text{if there exists $w_{\Sc{C}}\in W_{\Sc{C}}^{I_{\Sc{C}}}$ with $\iota_{\Sc{C}}(w_{\Sc{C}})=w$}\\ \emptyset&\text{otherwise}\end{cases}.
\end{equation*}

Now consider the special fiber of the toroidal boundary chart
\begin{equation*}
\Xi_{\Sc{C},\Sigma_{\Sc{C}}}\times_R k\to X_{\Sc{C}}.
\end{equation*}
Over $\Xi_{\Sc{C},\Sigma_{\Sc{C}}}\times_R k$ we have a semiabelian scheme $\tilde{A}_{\Sc{C}}$ which sits in an exact sequence
\begin{equation*}
0\to T\to \tilde{A}_{\Sc{C}}\to A_\Sc{C}\to 0
\end{equation*}
where $T$ is the split torus with character group $X$, and $A_{\Sc{C}}$ is the pullback of the universal abelian scheme on $X_{\Sc{C}}$.  
Then $\tilde{A}_{\Sc{C}}[p]$ is a $\BT$ with $\overline{\O}$-action.  We make it into a principally quasi polarized $\BT$ with $\overline{\O}$-action of type $\Ca{D}$ by extending the principal quasi polarization on $A_{\Sc{C}}[p]$ to be zero on $T[p]$.

As a consequence of Theorem \ref{T:candecompO} applied to $\tilde{A}_{\Sc{C}}[p]$ and Proposition \ref{P:candetermines} we have a decomposition
\begin{equation*}
\Xi_{\Sc{C},\Sigma_{\Sc{C}}}\times_Rk=\coprod_{w\in W^I}\Xi_{\Sc{C},\Sigma_{\Sc{C}},w}.
\end{equation*}
Now consider a geometric point $x\in \Xi_{\Sc{C},\Sigma_{\Sc{C}},w}(k')$.  We have
\begin{equation*}
\tilde{A}_{\Sc{C}}[p]_x\simeq A_{\Sc{C}}[p]_x\times G_0.
\end{equation*}
Hence by the definition of $\iota_{\Sc{C}}$ and what we observed above, if $w_{\Sc{C}}$ corresponds to $A_{\Sc{C}}[p]_x$ then $\iota(w_{\Sc{C}})=w$, or in other words under the map $\pi:\Xi_{\Sc{C},\Sigma_{\Sc{C}}}\times_Rk\to X_{\Sc{C}}$, the point $x$ maps into $X_{\Sc{C},w}$.  Hence we have
\begin{equation*}
\pi^{-1}(X_{\Sc{C},w})=\Xi_{\Sc{C},\Sigma_{\Sc{C}},w}
\end{equation*}
at least set theoretically.  But in fact as $\pi$ is smooth, $\pi^{-1}(X_{\Sc{C},w})$ is reduced and so this holds scheme theoretically as well.  As $\pi$ is flat, we also have
\begin{equation*}
\pi^{-1}(\overline{X}_{\Sc{C},w})=\pi^{-1}(\overline{\Xi}_{\Sc{C},\Sigma_{\Sc{C}},w})
\end{equation*}
where $\overline{X}_{\Sc{C},w}$ (resp. $\overline{\Xi}_{\Sc{C},\Sigma_{\Sc{C}},w}$) denotes the Zariski closure of $X_{\Sc{C},w}$ (resp. $\Xi_{\Sc{C},\Sigma_{\Sc{C}},w}$).

Now consider the canonical filtration
\begin{equation*}
0=G_0\subset G_1\subset \cdots\subset G_c\subset\cdots\subset G_{2c}=\tilde{A}[p]|_{\Xi_{\Sc{C},\Sigma_{\Sc{C}},w}}
\end{equation*}
of the principally quasi-polarized partial $\BT$ $\tilde{A}[p]|_{\Xi_{\Sc{C},\Sigma_{\Sc{C}},w}}$ where we remind the reader that according to Convention \ref{convpartial} for canonical filtrations for partial $\BT$ we may have $G_{2c-1}=G_{2c}$ if $\tilde{A}[p]|_{\Xi_{\Sc{C},\Sigma_{\Sc{C}},w}}$ has no \'{e}tale part.

Now note that as $T[p]|_{\Xi_{\Sc{C},\Sigma_{\Sc{C}},w}}$ is multiplicative we have
\begin{equation*}
V(T[p]|_{\Xi_{\Sc{C},\Sigma_{\Sc{C}},w}}^{(p)})=T[p]|_{\Xi_{\Sc{C},\Sigma_{\Sc{C}},w}}
\end{equation*}
and thus
\begin{equation*}
T[p]|_{\Xi_{\Sc{C},\Sigma_{\Sc{C}},w}}\subset G_1.
\end{equation*}
and
\begin{equation*}
0\subset G_1/T[p]\subset G_2/T[p]\subset\cdots G_c/T[p]\subset\cdots\subset G_{2c}/T[p]=A_{\Sc{C}}[p]|_{\Xi_{\Sc{C},\Sigma_{\Sc{C}},w}}
\end{equation*}
is a canonical filtration for the $\BT$ $A_{\Sc{C}}[p]|_{\Xi_{\Sc{C},\Sigma_{\Sc{C}},w}}$, except that if the fibers of $A_{\Sc{C}}[p]|_{\Xi_{\Sc{C},\Sigma_{\Sc{C}},w}}$ are connected-connected then $G_1/T[p]=0$ and $G_{2c}/T[p]=G_{2c-1}/T[p]$ and the outermost two terms should be removed (we call this the exceptional case in what follows.)

Now we turn to Hasse invariants.  Applying the construction of section \ref{S:hassebt} of a Hasse invariant associated to a $\BT$ with canonical filtration we obtain
\begin{equation*}
\tilde{A}'_{\Sc{C},w}\in H^0(\Xi_{\Sc{C},\Sigma_{\Sc{C},w}},\tilde{\omega}_{\Sc{C}}^{\otimes p^N-1})
\end{equation*}
associated to $\tilde{A}_{\Sc{C}}[p]|_{\Xi_{\Sc{C},\Sigma_{\Sc{C},w}}}$ and
\begin{equation*}
A'_{\Sc{C},w}\in H^0(\Xi_{\Sc{C},\Sigma_{\Sc{C}},w},\pi^*\omega_{\Sc{C}}^{\otimes p^N-1})
\end{equation*}
associated to $A_{\Sc{C}}[p]|_{\Xi_{\Sc{C},\Sigma_{\Sc{C},w}}}$ where the notation is as in the beginning of section \ref{S:welpossec}.  Note that the same integer $N$ occurs in both expressions because the canonical filtrations for $\tilde{A}_{\Sc{C}}[p]|_{\Xi_{\Sc{C},\Sigma_{\Sc{C},w}}}$ and $A_{\Sc{C}}[p]|_{\Xi_{\Sc{C},\Sigma_{\Sc{C},w}}}$ have the same associated permutation $\sigma$, except in the exceptional case when that for the latter is missing two cycles of length 1 (which doesn't change $N$.)

We want to compare $\tilde{A}'_{\Sc{C},w}$ and $A_{\Sc{C},w}'$.  From the definition we have
\begin{equation*}
\tilde{A}'_{\Sc{C},w}=\tilde{A}_1^{\frac{p^N-1}{p-1}}B,\qquad A_{\Sc{C},w}'=A_1^{\frac{p^N-1}{p-1}}B
\end{equation*}
where
\begin{equation*}
B\in H^0(\Xi_{\Sc{C},\Sigma_{\Sc{C}},w},(\omega_2\otimes\cdots\otimes\omega_{c})^{p^N-1})
\end{equation*}
is the product of the terms for $i=2,\ldots,c$ in the definitions of $\tilde{A}'_{\Sc{C},w}$ and $A_{\Sc{C},w}'$ while
\begin{equation*}
\tilde{A}_1\in H^0(\Xi_{\Sc{C},\Sigma_{\Sc{C}},w},\omega_1^{\otimes p-1})
\end{equation*}
comes from
\begin{equation*}
\det V^*:\det\omega_{G_1}\to(\det\omega_{G_1})^{\otimes p}
\end{equation*}
while
\begin{equation*}
A_1\in H^0(\Xi_{\Sc{C},\Sigma_{\Sc{C}},w},(\det \omega_{G_1/T[p]})^{\otimes p-1})
\end{equation*}
comes from
\begin{equation*}
\det V^*:\det\omega_{G_1/T[p]}\to (\det\omega_{G_1/T[p]})^{\otimes p}.
\end{equation*}

Now consider
\begin{equation*}
V^*:\det\omega_{T[p]}\to(\det\omega_{T[p]})^{\otimes p}
\end{equation*}
If $x_1,\ldots,x_m$ form a basis for $X$, then $\det\omega_{T[p]}=\det\omega_{T}$ is generated by
\begin{equation*}
\alpha=\frac{dx_1}{x_1}\wedge\cdots\wedge\frac{dx_m}{x_m}
\end{equation*}
and
\begin{equation*}
V^*\alpha=\alpha^{\otimes p}
\end{equation*}
and hence
\begin{equation*}
\tilde{A}_1=\alpha^{\otimes p-1}A_1.
\end{equation*}
We note that the section
\begin{equation*}
\alpha^{\otimes p-1}\in H^0(\Xi_{\Sc{C},\Sigma_{\Sc{C}}}\times_Rk,\omega_T^{\otimes p-1})=H^0(\Xi_{\Sc{C},\Sigma_{\Sc{C}}}\times_Rk,(\det X\otimes\O_{\Xi_{\Sc{C},\Sigma_{\Sc{C}}}\times_Rk})^{\otimes p-1})
\end{equation*}
is really canonical and independent of the choice of basis $x_1,\ldots,x_m$.  Indeed, $\alpha$ is unique up to multiplication by $-1$, and either $p=2$ or $p-1$ is even.

Now by Theorem \ref{T:hasseext} applied to the PEL modular variety $X_{\Sc{C}}$, for each $w_{\Sc{C}}\in W_{\Sc{C}}^{I_{\Sc{C}}}$ there is a generalized Hasse invariant
\begin{equation*}
A_{\Sc{C},w_{\Sc{C}}}\in H^0(\overline{X}_{\Sc{C},w_{\Sc{C}}},\omega_\Sc{C}^{\otimes N_{w_{\Sc{C}}}})
\end{equation*}
which satisfies
\begin{equation*}
{A'_{\Sc{C},w}}^{\frac{N_{w_{\Sc{C}}}}{p^N-1}}=\pi^*A_{\Sc{C},w_{\Sc{C}}}|_{\Xi_{\Sc{C},\Sigma_{\Sc{C}},w}}.
\end{equation*}
For the rest of this chapter we will adopt the following convention.

\begin{conv}
For each $w\in W^I$ and each $\Sc{C}\in\text{Cusp}_K$ for which there is a $w_{\Sc{C}}\in W_{\Sc{C}}^{I_{\Sc{C}}}$ with $\iota_{\Sc{C}}(w_{\Sc{C}})=w$ then we assume that $N_{w_{\Sc{C}}}=N_w$.  If this is not already the case it may be arrange by replacing $A_w$ and the $A_{\Sc{C},{w_{\Sc{C}}}}$ with suitable powers.
\end{conv}

For the rest of this chapter for $w\in W^I$ such that there exists $w_{\Sc{C}}\in W_{\Sc{C}}^{I_{\Sc{C}}}$ with $\iota_{\Sc{C}}(w_{\Sc{C}})=w$ we will denote the section $A_{\Sc{C},w_{\Sc{C}}}$ by $A_{\Sc{C},w}\in H^0(\overline{X}_{\Sc{C},w},\omega_\Sc{C}^{\otimes N_w})$.

Let us summarize what we have seen in this section in the following proposition.

\begin{prop}\label{P:boundary}
With notation as above, for each cusp label $\Sc{C}\in\text{\rm Cusp}_K$ and each $w\in W^I$ we have
\begin{equation*}
\pi^{-1}(X_{\Sc{C},w})=\Xi_{\Sc{C},\Sigma_{\Sc{C}},w},\qquad \pi^{-1}(\overline{X}_{\Sc{C},w})=\overline{\Xi}_{\Sc{C},\Sigma_{\Sc{C}},w}
\end{equation*}
where $\pi:\Xi_{\Sc{C},\Sigma_{\Sc{C}}}\times_Rk\to X_{\Sc{C}}$ is the canonical map.  Moreover if
\begin{equation*}
\tilde{A}'_{\Sc{C},w}\in H^0(\Xi_{\Sc{C},\Sigma_{\Sc{C},w}},\tilde{\omega}_{\Sc{C}}^{\otimes p^N-1})
\end{equation*}
is the Hasse invariant associated to the $\BT$ with canonical filtration $\tilde{A}_{\Sc{C}}[p]|_{\Xi_{\Sc{C},\Sigma_{\Sc{C},w}}}$ by the construction of section \ref{S:hassebt} then under the isomorphism
\begin{equation*}
\tilde{\omega}_{\Sc{C}}\simeq\det X\otimes\pi^*\omega_{\Sc{C}}
\end{equation*}
of section \ref{S:welpossec} we have
\begin{equation*}
{\tilde{A}{}'_{\Sc{C},w}}^{\frac{N_w}{p^N-1}}=\alpha^{\frac{N_w}{p-1}}\otimes\pi^*A_{\Sc{C},w}|_{\Xi_{\Sc{C},\Sigma_{\Sc{C}},w}}
\end{equation*}
where $\alpha\in\det X$ is a generator and $A_{\Sc{C},w}\in H^0(\overline{X}_{\Sc{C},w},\omega_{\Sc{C}}^{\otimes N_w})$ is the generalized Hasse invariant of Theorem \ref{T:hasseext}.
\end{prop}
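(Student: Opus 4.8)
The plan is to assemble the statement from pieces already developed in this section, organizing the argument around two independent claims: (i) the set-theoretic and scheme-theoretic identities $\pi^{-1}(X_{\Sc{C},w})=\Xi_{\Sc{C},\Sigma_{\Sc{C}},w}$ and $\pi^{-1}(\overline{X}_{\Sc{C},w})=\overline{\Xi}_{\Sc{C},\Sigma_{\Sc{C}},w}$, and (ii) the Fourier–Jacobi constant-term formula for $\tilde A'_{\Sc{C},w}$. For (i), I would recall that $\tilde A_{\Sc{C}}[p]$ over $\Xi_{\Sc{C},\Sigma_{\Sc{C}}}\times_R k$ is a principally quasi-polarized $\BT$ with $\overline{\O}$-action of type $\Ca{D}$ (with the quasi-polarization extended by zero on $T[p]$), apply Theorem \ref{T:candecompO} and Proposition \ref{P:candetermines} to get the decomposition $\Xi_{\Sc{C},\Sigma_{\Sc{C}}}\times_Rk=\coprod_{w\in W^I}\Xi_{\Sc{C},\Sigma_{\Sc{C}},w}$, and then check on a geometric point $x\in\Xi_{\Sc{C},\Sigma_{\Sc{C}},w}(k')$ that $\tilde A_{\Sc{C}}[p]_x\simeq A_{\Sc{C}}[p]_x\times G_0$ forces $\iota_{\Sc{C}}(w_{\Sc{C}})=w$ where $w_{\Sc{C}}$ classifies $A_{\Sc{C}}[p]_x$; this uses the compatibility of canonical filtrations observed just before Proposition \ref{P:boundary} (that $G\times G_0$ and $G\times G_1$ have canonical filtrations of the same $\overline{\O}$-type). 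Hence $\pi^{-1}(X_{\Sc{C},w})=\Xi_{\Sc{C},\Sigma_{\Sc{C}},w}$ set-theoretically; since $\pi$ is smooth, $\pi^{-1}(X_{\Sc{C},w})$ is reduced, so the equality is scheme-theoretic, and since $\pi$ is flat, the statement about Zariski closures follows.

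For (ii), I would follow the explicit comparison already carried out in this section. Over $\Xi_{\Sc{C},\Sigma_{\Sc{C}},w}$, since $T[p]$ is multiplicative one has $T[p]\subset G_1$ and $0\subset G_1/T[p]\subset\cdots\subset G_{2c}/T[p]=A_{\Sc{C}}[p]$ is a canonical filtration for $A_{\Sc{C}}[p]|_{\Xi_{\Sc{C},\Sigma_{\Sc{C}},w}}$ (with the exceptional case where the outermost two terms disappear, which changes neither $\sigma$ in a way that affects $N$ nor the argument). The permutation $\sigma$ is therefore the same for $\tilde A_{\Sc{C}}[p]$ and $A_{\Sc{C}}[p]$ up to two fixed points of length-$1$ cycles, so the same $N$ appears. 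Applying the construction of Section \ref{S:hassebt} to both $\BT$'s and factoring out the terms $B$ for $i=2,\ldots,c$ (which agree), the comparison reduces to the $i=1$ term, i.e. to comparing $\det V^*$ on $\det\omega_{G_1}$ with $\det V^*$ on $\det\omega_{G_1/T[p]}$; the difference is exactly $\det V^*$ on $\det\omega_{T[p]}=\det\omega_T$. Choosing a $\Z$-basis $x_1,\ldots,x_m$ of $X$ and the generator $\alpha=\frac{dx_1}{x_1}\wedge\cdots\wedge\frac{dx_m}{x_m}$, one computes $V^*\alpha=\alpha^{\otimes p}$, whence $\tilde A_1=\alpha^{\otimes p-1}A_1$, and therefore $\tilde A'_{\Sc{C},w}=\alpha^{\otimes p-1}A'_{\Sc{C},w}$ (the exponent $p-1$ arising from the $\frac{p^N-1}{p-1}$ copies of the degree-$(p-1)$ section, assembled by the total-Hasse-invariant recipe). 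Here I would note that $\alpha^{\otimes p-1}$ is independent of the choice of basis since $\alpha$ is determined up to sign and $p-1$ is even or $p=2$.

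Finally, to land the exponents in the statement, I would invoke Theorem \ref{T:hasseext} applied to the smaller PEL modular variety $X_{\Sc{C}}$: some power of $A'_{\Sc{C},w}$ extends to the generalized Hasse invariant $A_{\Sc{C},w_{\Sc{C}}}\in H^0(\overline{X}_{\Sc{C},w_{\Sc{C}}},\omega_{\Sc{C}}^{\otimes N_{w_{\Sc{C}}}})$ with ${A'_{\Sc{C},w}}^{\,N_{w_{\Sc{C}}}/(p^N-1)}=\pi^*A_{\Sc{C},w_{\Sc{C}}}|_{\Xi_{\Sc{C},\Sigma_{\Sc{C}},w}}$, and then use the normalizing convention $N_{w_{\Sc{C}}}=N_w$ (arranged by replacing all the Hasse invariants by compatible powers). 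Raising the identity $\tilde A'_{\Sc{C},w}=\alpha^{\otimes p-1}\otimes\pi^*A'_{\Sc{C},w}$ (read under the isomorphism $\tilde\omega_{\Sc{C}}\simeq\det X\otimes\pi^*\omega_{\Sc{C}}$ of Section \ref{S:welpossec}) to the $\tfrac{N_w}{p^N-1}$ power then gives
\[
{\tilde A'_{\Sc{C},w}}^{\,\frac{N_w}{p^N-1}}=\alpha^{\frac{N_w}{p-1}}\otimes\pi^*A_{\Sc{C},w}|_{\Xi_{\Sc{C},\Sigma_{\Sc{C}},w}},
\]
which is the asserted formula. The main obstacle, as flagged in the text, is the bookkeeping in the exceptional connected-connected case—checking that deleting the two extremal terms of the canonical filtration of $A_{\Sc{C}}[p]$ removes exactly two fixed points of $\sigma$, leaves $N$ unchanged, and does not disturb the factorization into $\tilde A_1$ (resp. $A_1$) and the common tail $B$. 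Everything else is the diagram-chasing and determinant-of-$V^*$ computation already sketched above.
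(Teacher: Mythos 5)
Your proposal tracks the paper's own argument line by line: the $\BT$ decomposition of the boundary chart via Theorem \ref{T:candecompO} and Proposition \ref{P:candetermines}, the pointwise identification $\tilde A_{\Sc{C}}[p]_x\simeq A_{\Sc{C}}[p]_x\times G_0$, smoothness/flatness of $\pi$ for the scheme-theoretic and closure statements, the quotient canonical filtration by $T[p]$, the factorization $\tilde A'_{\Sc{C},w}=\tilde A_1^{\frac{p^N-1}{p-1}}B$ and $A'_{\Sc{C},w}=A_1^{\frac{p^N-1}{p-1}}B$, the computation $V^*\alpha=\alpha^{\otimes p}$ yielding $\tilde A_1=\alpha^{\otimes p-1}A_1$, and the normalization $N_{w_\Sc{C}}=N_w$. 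Proposition \ref{P:boundary} is essentially a summary of this preceding discussion, so your route is not merely the same in outline but identical in substance.

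There is, however, an arithmetic slip in your exponent bookkeeping that you should clean up. From $\tilde A_1=\alpha^{\otimes(p-1)}A_1$ and $\tilde A'_{\Sc{C},w}=\tilde A_1^{\frac{p^N-1}{p-1}}B$ one gets
\begin{equation*}
\tilde A'_{\Sc{C},w}=\bigl(\alpha^{\otimes(p-1)}\bigr)^{\frac{p^N-1}{p-1}}A_1^{\frac{p^N-1}{p-1}}B=\alpha^{\otimes(p^N-1)}\otimes A'_{\Sc{C},w},
\end{equation*}
not $\alpha^{\otimes(p-1)}\otimes A'_{\Sc{C},w}$ as you wrote; the exponent of $\alpha$ must be $p^N-1$, matching the degree $\tilde\omega_{\Sc{C}}^{\otimes(p^N-1)}\simeq(\det X)^{\otimes(p^N-1)}\otimes\pi^*\omega_{\Sc{C}}^{\otimes(p^N-1)}$. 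Raising to the power $\tfrac{N_w}{p^N-1}$ then gives $\alpha^{\otimes N_w}\otimes\pi^*A_{\Sc{C},w}$, which is the exponent consistent with Definition \ref{D:wellpossec} (a section of $\tilde\omega_{\Sc{C}}^{\otimes N_w}$ must decompose with an $\alpha^{\otimes N_w}$ factor). Note that neither your claimed result $\alpha^{\frac{N_w}{p-1}}$ nor the way you get there survives a degree count: $(\alpha^{\otimes(p-1)})^{\frac{N_w}{p^N-1}}=\alpha^{\frac{(p-1)N_w}{p^N-1}}$, which agrees with $\alpha^{\frac{N_w}{p-1}}$ only when $(p-1)^2=p^N-1$. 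The statement of the proposition as printed in the paper appears to have the same mismatch of degrees, so you should not treat $\alpha^{\frac{N_w}{p-1}}$ as the target; the degree-correct formula is $\alpha^{\otimes N_w}\otimes\pi^*A_{\Sc{C},w}|_{\Xi_{\Sc{C},\Sigma_{\Sc{C}},w}}$, and this is what Theorem \ref{T:hassetor} actually requires.
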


\subsection{Ekedahl-Oort Stratification of the Toroidal Compactification}\label{S:eotor}
Let $A$ be an adic noetherian ring with ideal of definition $I$.  We let $\overline{A}=A/I$ and for a scheme $X/A$ we let $\overline{X}=X\times_A\overline{A}$.  Given a scheme $X/A$ we let $\hat{X}$ denote its formal completion along $\overline{X}$.  This defines a functor $X\mapsto\hat{X}$ from the category of finite type schemes over $A$ to the category of formal schemes, adic and finite type over $\text{Spf}(A)$.  This functor is in general far from being an equivalence.  Nonetheless, it does induce an equivalence between the category of schemes finite over $\spec(A)$ and formal schemes finite over $\text{Spf}(A)$ (we remind the reader that a formal scheme $\Fr{X}/\text{Spf}(A)$ is finite if it is adic over $\text{Spf}(A)$ and $\overline{\Fr{X}}$ is finite over $\overline{A}$.)

We now consider the theory of ``finite parts'' of quasi-finite schemes over $A$.  We recall the following well known lemma, whose proof we sketch because we don't know of a reference for this exact statement.

\begin{lem}\label{L:finitepart}
Let $X/A$ be quasi-finite and separated, and assume that $\overline{X}$ is finite over $\overline{A}$.  Then there is a unique (scheme theoretic) decomposition
\begin{equation*}
X=X^f\coprod X'
\end{equation*}
with $X^f/A$ finite and $\overline{X}'$ empty.
\end{lem}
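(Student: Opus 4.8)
The plan is to use the equivalence between schemes finite over $\spec(A)$ and formal schemes finite over $\text{Spf}(A)$, together with the standard structure theory of quasi-finite separated morphisms. First I would observe that the formal completion $\hat{X}$ is a formal scheme adic and of finite type over $\text{Spf}(A)$, and that since $\overline{X}$ is finite over $\overline{A}$, the underlying topological space of $\hat{X}$ is the same as that of $\overline{X}$, hence $\hat{X}$ is \emph{finite} over $\text{Spf}(A)$ in the sense recalled just above the lemma. By the quoted equivalence of categories, $\hat{X}$ is the completion of a unique scheme finite over $\spec(A)$; call it $X^f$, and let $X^f \to X$ be the canonical morphism obtained from the universal property of formal completion (or, more concretely, from Zariski's main theorem applied to $X$). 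The content of the lemma is then to show this $X^f$ is an open and closed subscheme of $X$ whose complement $X'$ has empty special fiber.

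The key steps, in order: (1) Since $X/A$ is quasi-finite and separated with $\overline{X}$ finite, apply Zariski's main theorem to factor $X \hookrightarrow \overline{X}{}^{c} \to \spec(A)$ as an open immersion into a finite $A$-scheme $\overline{X}{}^{c} = \spec(B)$ with $B$ a finite $A$-algebra. (2) Decompose $B$ using the adic topology: the $I$-adic completion $\hat{B}$ of $B$ splits as a product $\hat{B} = \hat{B}_1 \times \hat{B}_2$ according to whether the corresponding points of $\spec(\overline{B})$ lie in $\overline{X}$ or not — this uses that $\overline{X} \subset \spec(\overline{B})$ is open (since $X$ is open in $\spec(B)$) and closed (since $\overline{X}$ is finite hence proper over $\overline{A}$, so its image, equivalently itself, is closed), so $\overline{X}$ is a union of connected components of $\spec(\overline{B})$, and idempotents lift along the complete ring $\hat{B}$. (3) Transport this idempotent back: because $X = \spec(B) \setminus Z$ for the closed set $Z$ disjoint from $\overline{X}$, and the idempotent of $\hat{B}$ cutting out the "$\overline{X}$ part" actually lifts to an idempotent $e \in B$ localized suitably (one checks that on a Zariski neighborhood of $\overline{X}$ in $\spec B$, which is contained in $X$, the completion-splitting descends), obtaining $X = X^f \coprod X'$ with $X^f = \spec(B_e)$ finite over $A$ and $\overline{X}{}' = \emptyset$. (4) Uniqueness: any such decomposition induces a decomposition of $\hat{X}$, and by the equivalence of categories the finite-over-$\text{Spf}(A)$ part is uniquely $\hat{X}$ itself, forcing $X^f$ to be the one constructed.

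I expect the main obstacle to be step (3), namely checking carefully that the clean splitting available after $I$-adic completion actually descends to an honest scheme-theoretic decomposition of $X$ over $A$ (not merely over the completion). The point is that $\overline{X}$ being open \emph{and} closed in $\spec(\overline{B})$ is exactly what is needed: it means there is an idempotent $\bar e \in \overline{B}$ with $\overline{X} = \spec(\overline{B}\bar e)$, and then one must lift $\bar e$ to $B$ — which is not automatic for a general (non-complete) ring, but \emph{is} possible here because the locus where lifting could fail is supported away from $\overline{X}$, so after inverting a suitable element of $B$ congruent to $1$ mod $I$ (hence a unit in a neighborhood of $\overline{X}$) the lift exists; alternatively one lifts in $\hat{B}$ and uses that $B \to \hat{B}$ is an isomorphism on a neighborhood of $\overline{X}$ inside $X$ together with the fact that $X$ retracts onto that neighborhood. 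Once the idempotent is in hand, $X^f$ and $X'$ are defined, finiteness of $X^f/A$ follows since $B_e$ is a finite $A$-algebra, and $\overline{X}{}' = \emptyset$ by construction; uniqueness is then formal.
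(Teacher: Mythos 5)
Your strategy is the same as the paper's: factor $X\hookrightarrow\spec(B)$ with $B$ finite over $A$ via Zariski's main theorem, observe that $\overline{X}$ is open and closed in $\spec(\overline{B})$ (open because $X$ is open in $\spec B$; closed because $\overline{X}\to\spec\overline{B}$ is a morphism between finite $\overline{A}$-schemes, hence finite, hence a closed immersion), lift the resulting idempotent of $\overline{B}$ to an idempotent of $B$, and intersect back with $X$.

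The gap is in your step (3). You lift the idempotent in $\hat{B}$ and then try to descend it to $B$, proposing either to ``invert a suitable element of $B$ congruent to $1$ mod $I$'' or to use ``that $B\to\hat{B}$ is an isomorphism on a neighborhood of $\overline{X}$.'' Neither of these is a valid argument as stated: lifting an idempotent from $\overline{B}$ to $B$ is not a Zariski-local problem, so inverting elements of $B$ does not reduce it, and $B\to\hat{B}$ is not a local isomorphism in any generality. What you have missed is that there is no descent to do: $A$ is an adic noetherian ring, hence $I$-adically complete, and a finite algebra over a complete noetherian ring is itself complete for the induced adic topology, so $B=\hat{B}$; in particular $IB\subset\operatorname{Jac}(B)$ and $(B,IB)$ is a Henselian pair. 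Idempotents of $B/IB$ therefore lift directly to $B$, with no detour through the completion. This is precisely what the paper invokes via Raynaud \cite[XI, Prop.~1]{Ra70} and Hensel's lemma, and its Remark after the lemma isolates the real hypothesis: only $(A,I)$ Henselian is needed. (Note also that once the idempotent $e\in B$ is found, the verification that $\spec(B/(1-e))$ actually lies in $X$ and is therefore your $X^f$ again uses $IB\subset\operatorname{Jac}(B)$ via Nakayama, so you will want that fact in hand in any case.)
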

\begin{proof}
By Zariski's main theorem one may factor $X\to\spec A$ as
\begin{equation*}
X\to \tilde{X}\to\spec A
\end{equation*}
with $X\to\tilde{X}$ an open immersion and $\tilde{X}\to\spec A$ finite.  Then
\begin{equation*}
\overline{X}\to\overline{\tilde{X}}
\end{equation*}
is an open immersion because $X\to\tilde{X}$ is, and it is also closed because $\overline{X}$ is finite over $\overline{A}$.  By \cite[XI Prop. 1]{Ra70} and Hensel's lemma we then have
\begin{equation*}
\tilde{X}=\tilde{X}_1\coprod\tilde{X}_2
\end{equation*}
where $\overline{\tilde{X}_1}=\overline{X}$.  Then take $X^f=\tilde{X}_1\cap X$ and $X'=\tilde{X}_2\cap X$.
\end{proof}

\begin{rem}
The same result (and proof) holds if one only assumes that $(A,I)$ is a Henselian couple in the sense of \cite[XI]{Ra70}.
\end{rem}

We call $X^f$ as in the lemma the finite part of $X$.  From the lemma it follows that $\hat{X}=\hat{X}^f$.  In other words, we may recover $X^f$ from the formal completion $\hat{X}$ via the equivalence of categories discussed above.

Formation of the finite part is clearly functorial in $X$ and compatible with fiber products over $A$.  In particular if $G/\spec A$ is a quasi-finite, separated, group scheme with $\overline{G}/\overline{A}$ finite, then $G^f/\spec A$ is a finite group scheme.  Moreover $G^f$ is flat over $A$ if $G$ is.  

Now we turn to the problem of extending the Ekedahl-Oort stratification at the boundary of a toroidal compactification.  By Theorem \ref{T:toroidal} there is a semiabelian scheme $A/X_{K,\Sigma}^{\text{tor}}$ with an action of $\O$ which extends the universal abelian scheme $A$ over $X_K$.  Then $A[p]$ a principally quasi-polarized partial $\BT$ with $\overline{\O}$-action.  Hence by Theorem \ref{T:candecompO} and Proposition \ref{P:candetermines} we obtain a set theoretic decomposition 

\begin{equation*}
X_{K,\Sigma}^{\text{tor}}=\coprod_{w\in W^I}X_{K,\Sigma,w}^{\text{tor}}.
\end{equation*}
As usual we will also denote by $\overline{X}_{K,\Sigma,w}^{\text{tor}}$ the Zariski closure of $X_{K,\Sigma,w}$.

Let $\Sc{C}\in\text{Cusp}_K$ be any cusp label.  We may cover the formal scheme 
\begin{equation*}
\hat{X}_{K,\Sigma,\Sc{C}}^{\text{tor}}\simeq (\Fr{X}_{\Sc{C},\Sigma_{\Sc{C}}}\times_R k)/\Gamma_{\Sc{C}}
\end{equation*}
by affine formal schemes $\Fr{U}$ with the following properties:
\begin{enumerate}
\item $\Fr{U}$ lifts to an open in the formal scheme $(\Fr{X}_{\Sc{C},\Sigma_{\Sc{C}}}\times_Rk)$ (use the fact that $\Fr{X}_{\Sc{C},\Sigma_{\Sc{C}}}\to\Fr{X}_{\Sc{C},\Sigma_{\Sc{C}}}/\Gamma_{\Sc{C}}$ is Zariski locally an isomorphism.)
\item $\Fr{U}$ arises as the formal completion of an affine open in $X_{K,\Sigma}^{\text{tor}}$, as well as from the formal completion of an affine open in $\Xi_{\Sc{C},\Sigma_{\Sc{C}}}\times_Rk$.
\end{enumerate}
We have $\Fr{U}=\text{Spf}\, A$ for $A$ an adic noetherian ring with some ideal of definition $I$.  We may also consider the scheme $U=\spec A$.  As the formal completion of a noetherian ring is flat, we have flat maps of schemes
\begin{equation*}
f_1:U\to X_{K,\Sigma}^{\text{tor}}
\end{equation*} 
and
\begin{equation*}
f_2:U\to \Xi_{\Sc{C},\Sigma_{\Sc{C}}}\times_Rk.
\end{equation*}
We also know that $U$ is reduced because it is the formal completion of a reduced excellent ring (see \cite[IV, 7.8.3]{EGA}).

We denote by $(A,i)/U$ the pullback of the semiabelian scheme with $\O$-action $(A,i)/X_{K,\Sigma}^{\text{tor}}$ by $f_1$, and we denote by $(\tilde{A},i)/U$ the pullback of the semiabelian scheme with $\O$-action $(\tilde{A},i)/(\Xi_{\Sc{C},\Sigma_{\Sc{C}}}\times_Rk)$ by $f_2$.  By part 3 of Theorem \ref{T:toroidal}, the $I$-adic completions $\hat{A}$ and $\hat{\tilde{A}}$ are isomorphic, compatibly with the $\O$-action.  In particular we conclude from Lemma \ref{L:finitepart} and the remarks following it we conclude that we have an isomorphism of $\BT$ with $\O$-action over $U$
\begin{equation*}
A[p]^f\simeq\tilde{A}[p]^f=\tilde{A}[p]
\end{equation*}
where the second equality is because $\tilde{A}[p]$ is already finite.  Now applying the construction of theorem \ref{T:candecompO} and its compatibility with base change \ref{P:canfilbc} (noting that it makes no difference whether we apply it to $A[p]^f$ or $A[p]$) we conclude that set theoretically,
\begin{equation*}
f_1^{-1}(X^{\text{tor}}_{K,\Sigma,w})=f_2^{-1}(\Xi_{\Sc{C},\Sigma_{\Sc{C}},w}).
\end{equation*}
Now as $f_1$ and $f_2$ are flat we conclude that again set theoretically
\begin{equation*}
f_1^{-1}(\overline{X}_{K,\Sigma,w}^{\text{tor}})=f_2^{-1}(\overline{\Xi}_{\Sc{C},\Sigma_{\Sc{C}},w}).
\end{equation*}
But $\overline{X}_{K,\Sigma,w}^{\text{tor}}$ and $\overline{\Xi}_{\Sc{C},\Sigma_{\Sc{C}},w}$ are reduced by definition, and hence by \cite[IV, 7.8.3]{EGA} again, $f_1^{-1}(\overline{X}_{K,\Sigma,w}^{\text{tor}})$ and $f_2^{-1}(\overline{\Xi}_{\Sc{C},\Sigma_{\Sc{C}},w})$ are reduced as well, and hence the equality holds as schemes.

We can now conclude our main theorem on the Ekedahl-Oort stratification on a toroidal compactification.

\begin{thm}\label{T:EOtor}
Let $K\subset G(\hat{\Z}^{(p)})$ be neat open compact and let $\Sigma$ be a good compatible family of cone decompositions at level $K$.  For each $w\in W^I$ we have
\begin{enumerate}
\item $\overline{X}_{K,\Sigma,w}^{\text{\rm tor}}$ is well positioned at the boundary and for each cusp label $\Sc{C}\in\text{\rm Cusp}_K$ the corresponding subscheme is $\overline{X}_{\Sc{C},w}\subset X_\Sc{C}$.
\item We have (set theoretically)
\begin{equation*}
\overline{X}_{K,\Sigma,w}^{\text{\rm tor}}=\coprod_{w'\preceq w}X_{K,\Sigma,w'}^{\text{\rm tor}}.
\end{equation*}
where $\preceq$ is the partial order of Theorem \ref{T:EOprop}.
\item Let $g\in G(\A^{\infty,p})$ and let $K,K'\subset G(\hat{\Z}^{(p)})$ be neat open compact subgroups with $g^{-1}Kg\subset K'$ and let $\Sigma$ (resp. $\Sigma'$) be a good compatible family of cone decompositions at level $K$ (resp. $K'$) such that $\Sigma$ is a $g$-refinement of $\Sigma'$.  Then
\begin{equation*}
[g]^{-1}(X_{K',\Sigma',w}^{\text{\rm tor}})=X_{K,\Sigma,w}^{\text{\rm tor}}\qquad\text{and}\qquad [g]^{-1}(\overline{X}_{K',\Sigma',w}^{\text{\rm tor}})=\overline{X}_{K,\Sigma,w}^{\text{\rm tor}}.
\end{equation*}
\end{enumerate}
\end{thm}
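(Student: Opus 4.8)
The plan is to prove the three parts of Theorem \ref{T:EOtor} by leveraging the local analysis carried out in Section \ref{S:eobound}, in particular the key fact established there (via Lemma \ref{L:finitepart} and the isomorphism $\hat{A} \simeq \hat{\tilde{A}}$ of $p$-torsion finite parts) that on the auxiliary schemes $U = \spec A$ covering the formal completion $\hat{X}_{K,\Sigma,\Sc{C}}^{\text{tor}}$, we have scheme-theoretic equalities
\begin{equation*}
f_1^{-1}(\overline{X}_{K,\Sigma,w}^{\text{tor}}) = f_2^{-1}(\overline{\Xi}_{\Sc{C},\Sigma_{\Sc{C}},w}),\qquad f_1^{-1}(X_{K,\Sigma,w}^{\text{tor}}) = f_2^{-1}(\Xi_{\Sc{C},\Sigma_{\Sc{C}},w}),
\end{equation*}
together with Proposition \ref{P:boundary}, which identifies $\pi^{-1}(\overline{X}_{\Sc{C},w}) = \overline{\Xi}_{\Sc{C},\Sigma_{\Sc{C}},w}$ via the structural morphism $\pi: \Xi_{\Sc{C},\Sigma_{\Sc{C}}}\times_R k\to X_{\Sc{C}}$.

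First I would prove part 1. Fix a cusp label $\Sc{C}$. By definition of ``well positioned at the boundary,'' I must show that the formal completion of $\overline{X}_{K,\Sigma,w}^{\text{tor}}$ along $X_{K,\Sigma,\Sc{C}}^{\text{tor}}$ is of the form $(\hat{X}_{K,\Sigma,\Sc{C}}^{\text{tor}})_{Z_{\Sc{C}}}$ for $Z_{\Sc{C}} = \overline{X}_{\Sc{C},w}\subset X_{\Sc{C}}$. The completion of $\overline{X}_{K,\Sigma,w}^{\text{tor}}$ along the boundary stratum can be computed locally on the cover by formal opens $\Fr{U}$; pulling back to $U = \spec A$ via the faithfully flat $f_1$, the local computation on $U$ together with the displayed equality shows that this completion agrees with the pullback along $f_2$ of $\overline{\Xi}_{\Sc{C},\Sigma_{\Sc{C}},w}$, which by Proposition \ref{P:boundary} is $\pi^{-1}(\overline{X}_{\Sc{C},w})$, i.e. the base change to the boundary chart of $\overline{X}_{\Sc{C},w}$. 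Descending through $\Fr{X}_{\Sc{C},\Sigma_{\Sc{C}}}/\Gamma_{\Sc{C}} \simeq \hat{X}_{K,\Sigma,\Sc{C}}^{\text{tor}}$ (and using that $\overline{X}_{\Sc{C},w}$ is $\Gamma_{\Sc{C}}$-stable, which follows from its intrinsic description via the canonical filtration of $A_{\Sc{C}}[p]$) gives exactly the required form with $Z_{\Sc{C}} = \overline{X}_{\Sc{C},w}$. The same argument with $X_{K,\Sigma,w}^{\text{tor}}$ and $X_{\Sc{C},w}$ in place of the closures handles the open strata.

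Next, part 3 is the easiest: $[g]:X_{K,\Sigma}^{\text{tor}}\to X_{K',\Sigma'}^{\text{tor}}$ extends $[g]:X_K\to X_{K'}$, and over it the semiabelian scheme with $\O$-action $(A,i)/X_{K',\Sigma'}^{\text{tor}}$ pulls back to $(A,i)/X_{K,\Sigma}^{\text{tor}}$ (by part 5 of Theorem \ref{T:toroidal}, which gives the prime-to-$p$ quasi-isogeny extending to semiabelian schemes with $\O$-action, hence an isomorphism of $p$-torsion partial $\BT$s with $\overline{\O}$-action). Then the decomposition of $X_{K,\Sigma}^{\text{tor}}$ into $X_{K,\Sigma,w}^{\text{tor}}$ is the pullback of that of $X_{K',\Sigma'}^{\text{tor}}$ by Proposition \ref{P:canfilbc} (applied to $A[p]$), giving the set-theoretic equality $[g]^{-1}(X_{K',\Sigma',w}^{\text{tor}}) = X_{K,\Sigma,w}^{\text{tor}}$; the statement about Zariski closures follows since $[g]$ is flat (indeed it is a finite morphism restricting to a finite \'etale one on the interiors, but flatness is all that is needed to commute with closure).

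The main obstacle will be part 2, the closure relation $\overline{X}_{K,\Sigma,w}^{\text{tor}} = \coprod_{w'\preceq w} X_{K,\Sigma,w'}^{\text{tor}}$ set-theoretically. On the interior $X_K$ this is Theorem \ref{T:EOprop} part 2. To handle the boundary, I would combine the local picture at each cusp with the interior statement: by part 1, near the stratum indexed by $\Sc{C}$, $\overline{X}_{K,\Sigma,w}^{\text{tor}}$ looks like $\overline{\Xi}_{\Sc{C},\Sigma_{\Sc{C}},w} = \pi^{-1}(\overline{X}_{\Sc{C},w})$, and on $\Xi_{\Sc{C},\Sigma_{\Sc{C}}}\times_R k$ the Ekedahl-Oort stratification is the pullback under the smooth map $\pi$ of that of $X_{\Sc{C}}$, so the closure relation there reduces to the one on $X_{\Sc{C}}$, which again is Theorem \ref{T:EOprop}. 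One then has to check that the partial order $\preceq$ on $W^I$ is compatible with $\iota_{\Sc{C}}:W_{\Sc{C}}^{I_{\Sc{C}}}\hookrightarrow W^I$ in the sense that $w_{\Sc{C}}'\preceq w_{\Sc{C}}$ iff $\iota_{\Sc{C}}(w_{\Sc{C}}')\preceq \iota_{\Sc{C}}(w_{\Sc{C}})$, and that $w'\in \im\iota_{\Sc{C}}$ lies in $\overline{X}_{K,\Sigma,w}^{\text{tor}}$ iff $w'\preceq w$ in $W^I$, which holds because the strata $X_{K,\Sigma,w'}^{\text{tor}}$ with $w'$ not in the image of any relevant $\iota_{\Sc{C}}$ are exactly the interior strata. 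Assembling these local statements over all cusp labels $\Sc{C}$, and using that the $X_{K,\Sigma,\Sc{C}}^{\text{tor}}$ together with $X_K$ cover $X_{K,\Sigma}^{\text{tor}}$, yields the global closure relation; the bookkeeping to match the combinatorics of $\preceq$ under $\iota_{\Sc{C}}$ is where the real work lies, though one can cite or adapt the corresponding analysis in \cite[\S 6]{Oo01} in the Siegel case and transfer via the maps to Siegel space as in Theorem \ref{T:opensieg}.
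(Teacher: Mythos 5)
Your part 1 argument tracks the paper's: the paper reduces it to the local computation on the affine covers $\Fr{U}$ done just before the theorem statement, which is exactly what you spell out. Your part 2 sketch is also in the spirit of the paper (the paper simply says ``follows from Theorem \ref{T:EOprop}'', relying implicitly on part 1 and the boundary analysis, so your more careful accounting of the $\iota_{\Sc{C}}$-compatibility of $\preceq$ is a reasonable elaboration of the same idea, not a different route).

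The genuine gap is in your part 3. You assert that $[g]:\Ca{X}_{K,\Sigma}^{\text{tor}}\to\Ca{X}_{K',\Sigma'}^{\text{tor}}$ is ``a finite morphism'' and hence flat, and use flatness to pass the closure equality through. This is false. Already with $g=1$, $K=K'$, and $\Sigma$ a strict refinement of $\Sigma'$, the map $[1]$ is a nontrivial toroidal modification: it is a proper birational morphism that is an isomorphism over the interior but has positive-dimensional fibers over boundary strata. Such a map is neither finite nor flat, and it is not topologically open, so the closure statement does not follow the way it does in Proposition \ref{P:EOhecke} (where $[g]:X_K\to X_{K'}$ is finite \'etale). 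Your factoring the claim as ``restricts to finite \'etale on the interior'' only reproves the interior statement, which is \ref{P:EOhecke}; it does not control what happens in the boundary, which is precisely where flatness fails. To repair this you can either (i) prove part 2 first and then observe
\begin{equation*}
[g]^{-1}(\overline{X}_{K',\Sigma',w}^{\text{tor}})=\coprod_{w'\preceq w}[g]^{-1}(X_{K',\Sigma',w'}^{\text{tor}})=\coprod_{w'\preceq w}X_{K,\Sigma,w'}^{\text{tor}}=\overline{X}_{K,\Sigma,w}^{\text{tor}},
\end{equation*}
which needs only the open-stratum pullback you already established; or (ii) use your part 1 together with the compatibility, furnished by Theorem \ref{T:toroidal} and \ref{T:minimal}, of the structural morphisms $\hat{\Ca{X}}_{K,\Sigma,\Sc{C}}^{\text{tor}}\to\Ca{X}_{\Sc{C}}$ with $[g]$, plus Proposition \ref{P:EOhecke} applied to $[g]:X_{\Sc{C}}\to X_{\Sc{C}'}$, to match the set-theoretic supports of $[g]^{-1}(\overline{X}_{K',\Sigma',w}^{\text{tor}})$ and $\overline{X}_{K,\Sigma,w}^{\text{tor}}$ stratum by stratum. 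Note the paper's own citation of \ref{P:EOhecke} for part 3 is similarly terse, but filling it in via ``flatness of $[g]$'' is exactly the step that breaks.
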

\begin{proof}
Recall that by definition, part 1 just means that for each cusp label $\Sc{C}\in\text{Cusp}_K$, the formal completion of $\overline{X}_{K,\Sigma,w}^{\text{tor}}$ along $\Ca{X}_{K,\Sigma,\Sc{C}}^{\text{tor}}$ is
\begin{equation*}
(\Fr{X}_{\Sc{C},\Sigma_{\Sc{C}}}/\Gamma_{\Sc{C}})_{\overline{X}_{\Sc{C},w}}
\end{equation*}
under the isomorphism
\begin{equation*}
\hat{\Ca{X}}^{\text{tor}}_{K,\Sigma,\Sc{C}}\simeq \Fr{X}_{\Sc{C},\Sigma_{\Sc{C}}}/\Gamma_{\Sc{C}}.
\end{equation*}
But this is exactly what we showed in the preceding paragraphs after intersecting with each affine open $\Fr{U}$.

Part 2 follows from Theorem \ref{T:EOprop} and part 3 follows from \ref{P:EOhecke}.
\end{proof}

\subsection{Ekedahl-Oort Stratification of the Minimal Compactification}

Now we would like to define an Ekedahl-Oort stratification on the minimal compactification $X_K^{\text{min}}$.  There is no natural partial $\BT$ over $X_K^{\text{min}}$ to use to construct it directly.  Instead we use the construction of the previous section for the toroidal compactification.

Indeed we have a map $\pi_{K,\Sigma}:X_{K,\Sigma}^{\text{tor}}\to X_K^{\text{min}}$, and the fibers of $\pi_{K,\Sigma}$ are contained entirely inside single Ekedahl-Oort strata $X_{K,\Sigma,w}^{\text{tor}}$.  Hence we may simply define
\begin{equation*}
X_{K,w}^{\text{min}}=\pi_{K,\Sigma}(X_{K,\Sigma,w}^{\text{tor}})
\end{equation*}
and
\begin{equation*}
\overline{X}_{K,w}^{\text{min}}=\pi_{K,\Sigma}(\overline{X}_{K,\Sigma,w}^{\text{tor}}).
\end{equation*}

Here is the main theorem on the Ekedahl-Oort stratification of the minimal compactification.

\begin{thm}\label{T:EOmin}
Let $K\subset G(\hat{\Z}^{(p)})$ be neat open compact.  For each $w\in W^I$ we have
\begin{enumerate}
\item $\overline{X}_{K,w}^{\text{\rm min}}$ is well positioned at the boundary and for each cusp label $\Sc{C}\in\text{\rm Cusp}_K$ the corresponding subscheme is $\overline{X}_{\Sc{C},w}\subset X_\Sc{C}$.
\item We have (set theoretically)
\begin{equation*}
\overline{X}_{K,w}^{\text{\rm min}}=\coprod_{w'\preceq w}X_{K,w'}^{\text{\rm min}}.
\end{equation*}
where $\preceq$ is the partial order of Theorem \ref{T:EOprop}.
\item Let $g\in G(\A^{\infty,p})$ and let $K,K'\subset G(\hat{\Z}^{(p)})$ be neat open compact subgroups with $g^{-1}Kg\subset K'$ then
\begin{equation*}
[g]^{-1}(X_{K',w}^{\text{\rm min}})=X_{K,w}^{\text{\rm min}}\qquad\text{and}\qquad [g]^{-1}(\overline{X}_{K',w}^{\text{\rm min}})=\overline{X}_{K,w}^{\text{\rm min}}.
\end{equation*}
\end{enumerate}
\end{thm}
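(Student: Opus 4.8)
The plan is to deduce all three parts from the corresponding statements for the toroidal compactification (Theorem \ref{T:EOtor}), the basic properties of $\pi_{K,\Sigma}\colon X_{K,\Sigma}^{\text{tor}}\to X_K^{\text{min}}$ recorded in Theorem \ref{T:minimal}, and the correspondence between well positioned subschemes of Theorem \ref{T:wellpos}. The two facts about $\pi_{K,\Sigma}$ I would isolate at the outset are that it is proper and surjective (so images of closed subschemes are closed and $\pi_{K,\Sigma}(\pi_{K,\Sigma}^{-1}(Z))=Z$ on underlying sets), and that each fiber of $\pi_{K,\Sigma}$ lies in a single Ekedahl--Oort stratum $X_{K,\Sigma,w}^{\text{tor}}$; the latter is exactly the content of Theorem \ref{T:EOtor} part 1 combined with part 4 of Theorem \ref{T:minimal}, since near the boundary the stratification of $X_{K,\Sigma}^{\text{tor}}$ is pulled back along the structural morphism to $\Ca{X}_{\Sc{C}}$ through which $\hat{\pi}_{K,\Sigma}$ factors. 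From the fiber statement one gets $\pi_{K,\Sigma}^{-1}(X_{K,w}^{\text{min}})=X_{K,\Sigma,w}^{\text{tor}}$ (the inclusion $\supseteq$ is clear, and $\subseteq$ follows because any point lying over $X_{K,w}^{\text{min}}=\pi_{K,\Sigma}(X_{K,\Sigma,w}^{\text{tor}})$ shares a fiber with a point of $X_{K,\Sigma,w}^{\text{tor}}$), and hence, since the $X_{K,\Sigma,w}^{\text{tor}}$ partition $X_{K,\Sigma}^{\text{tor}}$ and $\pi_{K,\Sigma}$ is surjective, the $X_{K,w}^{\text{min}}$ partition $X_K^{\text{min}}$. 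Independence of the auxiliary choice $\Sigma$ also drops out at this stage, by applying Theorem \ref{T:EOtor} part 3 with $g=1$ to a common refinement of two cone decompositions together with $\pi_{K,\Sigma}=\pi_{K,\Sigma'}\circ[1]$ and surjectivity of $[1]$.

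For part 1, I would invoke Theorem \ref{T:wellpos}: since $\overline{X}_{K,\Sigma,w}^{\text{tor}}$ is well positioned at the boundary with associated subschemes $\overline{X}_{\Sc{C},w}\subset\Ca{X}_{\Sc{C}}$ by Theorem \ref{T:EOtor} part 1, there is a well positioned closed subscheme $Z^{\text{min}}\subset X_K^{\text{min}}$ with the same associated subschemes $Z_{\Sc{C}}=\overline{X}_{\Sc{C},w}$, satisfying $\pi_{K,\Sigma}^{-1}(Z^{\text{min}})=\overline{X}_{K,\Sigma,w}^{\text{tor}}$ and $\pi_{K,\Sigma,*}\O_{\overline{X}_{K,\Sigma,w}^{\text{tor}}}=\O_{Z^{\text{min}}}$. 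Because the $\overline{X}_{\Sc{C},w}$ and $\overline{X}_{K,\Sigma,w}^{\text{tor}}$ are reduced, part 3 of Theorem \ref{T:wellpos} forces $Z^{\text{min}}$ to be reduced; and by surjectivity of $\pi_{K,\Sigma}$ its underlying set is $\pi_{K,\Sigma}(\overline{X}_{K,\Sigma,w}^{\text{tor}})$, so $Z^{\text{min}}$ is exactly $\overline{X}_{K,w}^{\text{min}}$ with its reduced structure (the image of a closed subscheme under the proper map $\pi_{K,\Sigma}$, so closed, and visibly the Zariski closure of $X_{K,w}^{\text{min}}$). This is part 1.

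For part 2, I would apply $\pi_{K,\Sigma}$ to the set-theoretic identity $\overline{X}_{K,\Sigma,w}^{\text{tor}}=\coprod_{w'\preceq w}X_{K,\Sigma,w'}^{\text{tor}}$ of Theorem \ref{T:EOtor} part 2, using surjectivity to get $\overline{X}_{K,w}^{\text{min}}=\bigcup_{w'\preceq w}X_{K,w'}^{\text{min}}$, and then the partition statement from the first paragraph to upgrade the union to a disjoint one. For part 3, I would combine the commuting square relating $[g]$ on toroidal and minimal compactifications (part 5 of Theorem \ref{T:minimal}) with the Hecke equivariance $[g]^{-1}(X_{K',\Sigma',w}^{\text{tor}})=X_{K,\Sigma,w}^{\text{tor}}$, and likewise for closures, of Theorem \ref{T:EOtor} part 3, via a short diagram chase: given $x\in X_K^{\text{min}}$ choose $\tilde{x}\in X_{K,\Sigma}^{\text{tor}}$ with $\pi_{K,\Sigma}(\tilde{x})=x$, and use $\pi_{K',\Sigma'}([g]\tilde{x})=[g]x$ together with $\pi_{K',\Sigma'}^{-1}(X_{K',w}^{\text{min}})=X_{K',\Sigma',w}^{\text{tor}}$ to transfer membership upstairs and back; this yields both inclusions, and the same argument with $\overline{X}$ in place of $X$ handles the closures.

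I do not expect a serious obstacle here: once Theorem \ref{T:EOtor} is available, everything is formal bookkeeping with proper surjective maps and pre-images. The one point that genuinely requires care is controlling the scheme structure, rather than merely the underlying set, of $\overline{X}_{K,w}^{\text{min}}$ in part 1 — this is precisely where Theorem \ref{T:wellpos}, and in particular its reducedness statement, is doing the work, and it is the only place the argument is not a one-line consequence of surjectivity.
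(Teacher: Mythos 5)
Your proof is correct and follows essentially the same route as the paper: deduce part 1 from Theorem \ref{T:wellpos} together with the toroidal case \ref{T:EOtor}, and deduce parts 2 and 3 by transporting the closure relation and Hecke equivariance along the proper surjection $\pi_{K,\Sigma}$ whose fibers lie in single EO strata. The paper's proof is just a one-line citation of \ref{T:wellpos}, \ref{T:EOtor}, \ref{T:EOprop}, and \ref{P:EOhecke}; your expansion — in particular the care you take about the scheme structure via the reducedness clause of Theorem \ref{T:wellpos}, and the diagram chase for part 3 — fills in exactly the bookkeeping the paper elides.
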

\begin{proof}
The first point follows from Theorems \ref{T:wellpos} and \ref{T:EOtor}.  Part 2 follows from Theorem \ref{T:EOprop} and part 3 follows from \ref{P:EOhecke}.
\end{proof}

\section{Extension of Hasse Invariants to the Boundary}\label{S:hassebound}

In this section we explain how to extend the generalized Hasse invariants from the interior to the entire toroidal and minimal compactifications.

As a preliminary we prove a lemma which permits us to deduce the regularity of a rational function after passing to a formal completion.  If $A$ is a ring then we denote by $K(A)$ its total ring of fractions.  By definition $K(A)=S^{-1}A$ for the set $S$ of non zero divisors in $A$.  If $A\to B$ is flat, then a non zero divisor in $A$ remains a non zero divisor in $B$ and so there is an induced map $K(A)\to K(B)$.  We claim that if in fact $A\to B$ is faithfully flat, then
\begin{equation*}
A=B\cap K(A),
\end{equation*}
the intersection occurring inside $K(B)$.  Indeed suppose $a/s\in K(A)\cap B$.  Then $aB\subset sB$ and hence
\begin{equation*}
a\in (sB)\cap A=sA
\end{equation*}
where the last equality holds because $A\to B$ is faithfully flat.

Now consider the following situation.  Suppose $U_0=\spec A_0$ is a reduced noetherian affine scheme, and $Z\subset U_0$ is a closed subset defined by an ideal $I_0\subset A_0$, which we assume does not contain any generic point .  Let $A$ be the $I_0$-adic completion of $A_0$.  Let us assume that $Z$ meets every irreducible component of $U_0$ so that $A_0\to A$ is injective by Krull's theorem.  Moreover $A_0\to A$ is flat so we have a map $K(A_0)\to K(A)$ which is also injective.
\begin{lem}\label{L:compext}
With notation as above, we have
\begin{equation*}
A_0=A\cap H^0(U_0-Z,\O_{U_0})
\end{equation*}
the intersection taking place inside $K(A)$.  
\end{lem}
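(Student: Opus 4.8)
The plan is to reduce the claim to the faithfully flat intersection property $A_0 = A \cap K(A_0)$ established in the paragraph preceding the lemma, applied not to $A_0 \to A$ directly but after passing through the localization along $U_0 - Z$. The point is that $H^0(U_0 - Z, \O_{U_0})$ sits between $A_0$ and $K(A_0)$: indeed, since $A_0$ is reduced and $Z$ contains no generic point, $U_0 - Z$ is a dense open subscheme meeting every irreducible component, so restriction gives an injection $A_0 \hookrightarrow H^0(U_0 - Z, \O_{U_0}) \hookrightarrow K(A_0)$, and all of these inject into $K(A)$ because $A_0 \to A$ is flat (so non-zero-divisors stay non-zero-divisors) and injective (Krull, using that $Z$ meets every component). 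Thus it suffices to show that any element $x$ of $A \cap H^0(U_0 - Z, \O_{U_0})$ already lies in $A_0$.

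First I would note that such an $x$, viewed in $K(A_0)$, can be written locally on $U_0 - Z$ as a fraction; more usefully, since $A_0$ is noetherian, there is an ideal $J \subset A_0$ with $V(J) \subset Z$ such that $Jx \subset A_0$ — i.e. $x$ has ``poles supported on $Z$.'' The key step is then to observe that $A$ is faithfully flat over the $I_0$-adic completion is not automatic, so instead I would argue as follows: localize. For each generic point $\eta$ of $U_0$ we have $x \in A_{0,\eta} = \O_{U_0 - Z, \eta}$ trivially, so $x$ is certainly in $K(A_0)$; the content is integrality/regularity at height-one-ish behavior. I would instead invoke directly the faithfully flat descent statement proved just above the lemma, but with a twist: the map $A_0 \to A$ is flat, and it is \emph{faithfully} flat onto its image in the relevant sense because $Z$ meets every component — concretely, $\spec A \to \spec A_0$ hits every generic point of $\spec A_0$ (each irreducible component of $U_0$ meets $Z$, hence contributes to the completion). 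Given that, for $x = a/s \in K(A_0)$ with $s$ a non-zero-divisor of $A_0$ and $x \in A$, we get $aA \subset sA$, hence $a \in sA \cap A_0 = sA_0$ by faithful flatness, so $x \in A_0$. Combining with the sandwiching $A_0 \subset H^0(U_0-Z,\O_{U_0}) \subset K(A_0)$ gives $A \cap H^0(U_0-Z,\O_{U_0}) \subset A \cap K(A_0) = A_0$, while the reverse inclusion $A_0 \subset A \cap H^0(U_0 - Z, \O_{U_0})$ is immediate from the two injections. This yields the asserted equality.

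The main obstacle I anticipate is justifying that $\spec A \to \spec A_0$ meets every generic point of $\spec A_0$, equivalently that $A_0 \to A$ does not kill any minimal prime — this is where the hypothesis ``$Z$ meets every irreducible component of $U_0$'' must be used carefully, together with Krull's intersection theorem $\bigcap_n I_0^n = 0$ on the reduced noetherian ring $A_0$ to get injectivity, and with a component-by-component analysis to upgrade injectivity to the faithful-flatness-style conclusion $sA \cap A_0 = sA_0$. A clean way to handle this is to decompose: write $A_0 = \prod$-like behavior is false in general, but one can work prime by prime — pass to $A_0/\mathfrak p$ for each minimal prime $\mathfrak p$, note $I_0$ maps to a proper ideal there (since $Z$ meets that component), take completions, and use that completion commutes with the reduction in a way compatible with the total ring of fractions. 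Assembling these local statements back together, using that $A_0$ is reduced so $A_0 \hookrightarrow \prod_{\mathfrak p} A_0/\mathfrak p$ and similarly for $A$, finishes the argument. I would present the prime-by-prime reduction as the technical heart and keep the rest as the formal sandwich argument above.
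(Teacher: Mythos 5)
Your argument proves too much and therefore fails: the sandwich step reduces the claim to $A \cap K(A_0) = A_0$, but this is false. Take $A_0 = k[x]$, $I_0 = (x)$, so $A = k[[x]]$ and $K(A_0) = k(x)$. Then $1/(x-1)$ lies in $A$ (expand as a power series) and in $K(A_0)$ but not in $A_0$; in fact $A \cap K(A_0) = k[x]_{(x)} \supsetneq k[x]$. The root cause is that $A_0 \to A$ is \emph{not} faithfully flat: $I_0$-adic completion is flat, but $\spec A \to \spec A_0$ is far from surjective (in the example its image is $\{(0),(x)\}$, missing every other closed point). Hitting the generic points, which is what you verify, is strictly weaker than faithful flatness, and the consequence you want, $sA \cap A_0 = sA_0$ for all non-zero-divisors $s$, fails for $s = x-1$ above since $x-1$ becomes a unit in $k[[x]]$. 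Your closing prime-by-prime pass to $A_0/\mathfrak{p}$ for \emph{minimal} primes cannot repair this: when $A_0$ is a domain the reduction is vacuous and the same counterexample survives.

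The information you have discarded is precisely what makes the lemma true: $x$ is not an arbitrary element of $A \cap K(A_0)$ but lies in $H^0(U_0 - Z, \O_{U_0})$, so it is already regular at every prime $\p$ with $\p \not\supset I_0$. The lemma is a local-global statement: one shows $x \in A_{0,\p}$ for every prime $\p$ and concludes $x \in A_0 = \bigcap_\p A_{0,\p}$. For $\p \not\supset I_0$ this is immediate from $x \in H^0(U_0-Z,\O_{U_0})$. For $\p \supset I_0$ the paper applies the faithfully-flat intersection property you invoke, but to the map $A_{0,\p} \to \hat{A}_\p$ (the $\p$-adic completion of the noetherian local ring $A_{0,\p}$), which genuinely \emph{is} faithfully flat, with $x \in \hat{A}_\p$ supplied by the natural map $A \to \hat{A}_\p$. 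In short: localize at primes inside $Z$, where completion is honestly faithfully flat, and use the hypothesis $x \in H^0(U_0-Z,\O_{U_0})$ to handle the primes outside $Z$, rather than trying to force a global faithful-flatness statement that is simply not available.
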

\begin{proof}
Let $\p$ be a prime ideal contained in $Z$.  Let $A_{0,\p}$ be the localization of $A_0$ at $\p$, and let $\hat{A}_\p$ be the $\p$-adic completion of $A_0$, which is also the $\p A$-adic completion of $A$.  Then we have maps
\begin{equation*}
A\to A_\p,\quad H^0(U_0-Z,\O_{U_0})\to K(A_{0,\p}),\quad\text{and},\quad K(A)\to K(\hat{A}_\p).
\end{equation*}
Now suppose $x\in A\cap H^0(U_0-Z,\O_{U_0})$.  In order to show that $x\in A$ we need to show that $x$ is regular at each point of $Z$, i.e. for each $\p$ as above, the image of $x$ in $K(A_{0,\p})$ actually lies in $A_{0,\p}$.  But $A_{0,\p}\to \hat{A}_\p$ is faithfully flat, and hence
\begin{equation*}
A_{0,\p}=\hat{A}_\p\cap K(A_{0,\p})
\end{equation*}
by what we said above.
\end{proof}

\begin{thm}\label{T:hassetor}
For each $w\in W^I$, each $K\subset G(\hat{\Z}^{(p)})$ neat open compact, and each $\Sigma$ a good compatible family of cone decompositions at level $K$ there is a unique section
\begin{equation*}
A_{K,\Sigma,w}^{\text{\rm tor}}\in H^0(\overline{X}_{K,\Sigma,w}^{\text{\rm tor}},\omega_K^{\otimes N_w})
\end{equation*}
with the following properties
\begin{enumerate}
\item The restriction of $A_{K,w}^{\text{\rm tor}}$ to $\overline{X}_{K,w}$ is the section $A_{K,w}$ of Theorem \ref{T:hasseext}.
\item $A_{K,\Sigma,w}^{\text{\rm tor}}$ is non vanishing precisely on $X_{K,\Sigma,w}^{\text{\rm tor}}\subset \overline{X}_{K,\Sigma,w}^{\text{\rm tor}}$.
\item $A_{K,\Sigma,w}^{\text{\rm tor}}$ is well positioned at the boundary in the sense of definition \ref{D:wellpossec} where for each cusp label $\Sc{C}$ the corresponding section is $A_{\Sc{C},w}\in H^0(\overline{X}_{\Sc{C},w},\omega_{\Sc{C}}^{\otimes N_w})$.
\item If $g\in G(\A^{\infty,p})$ and $K,K'\subset G(\hat{\Z}^{(p)})$ are open compact subgroups with $g^{-1}Kg\subset K'$ and $\Sigma$ (resp. $\Sigma'$) is a good compatible family of cone decompositions at level $K$ (resp. $K'$) and $\Sigma$ is a $g$-refinement of $\Sigma'$ then
\begin{equation*}
[g]^*A_{K',\Sigma',w}^{\text{\rm tor}}=A_{K,\Sigma,w}^{\text{\rm tor}}
\end{equation*}
under the canonical isomorphism $[g]^*\omega_{K'}\simeq\omega_K$ restricted to $\overline{X}_{K,\Sigma,w}^{\text{\rm tor}}$.
\end{enumerate}
\end{thm}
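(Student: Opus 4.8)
The plan is to construct $A_{K,\Sigma,w}^{\text{tor}}$ by extending the section $A_{K,w}$ of Theorem \ref{T:hasseext} from $\overline{X}_{K,w}$ across the boundary, using the structure of the formal completions along the boundary strata. First I would fix $K$ and $\Sigma$ and observe that, since $\overline{X}_{K,\Sigma,w}^{\text{tor}}$ is reduced, the problem of extending $A_{K,w}$ (a section of a line bundle) across the boundary $\overline{X}_{K,\Sigma,w}^{\text{tor}}-\overline{X}_{K,w}$ is local in the Zariski topology and, by normality-type arguments, can be checked after formal completion. The key input is Theorem \ref{T:EOtor} part 1, which tells us $\overline{X}_{K,\Sigma,w}^{\text{tor}}$ is well positioned at the boundary with associated subscheme $\overline{X}_{\Sc{C},w}\subset X_{\Sc{C}}$, together with Proposition \ref{P:boundary} and Proposition \ref{P:torcompare}, which identify the formal completion of $\omega_K$ along a boundary stratum with $\det X\otimes \pi^*\omega_{\Sc{C}}$ and compute the Hasse invariant $\tilde A'_{\Sc{C},w}$ of the Raynaud extension in terms of the generator $\alpha$ of $\det X$ and $\pi^*A_{\Sc{C},w}$.

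The construction then proceeds as follows. Over the interior $X_{K,w}$ the section $(A'_{K,w})^n = A_{K,w}$ exists (interpreting $A_{K,w}$ as the given extension to $\overline{X}_{K,w}$). On a formal neighborhood $\hat{X}_{K,\Sigma,\Sc{C}}^{\text{tor}}$ of a boundary stratum, the semiabelian scheme $A$ and its $p$-torsion's finite part $A[p]^f$ agree with $\tilde A_{\Sc{C}}[p]$ (compatibly with $\O$-action and the canonical filtration), so the Hasse invariant of $A[p]|_{X_{K,\Sigma,w}^{\text{tor}}}$ formally completes to $\tilde A'_{\Sc{C},w}$, which by Proposition \ref{P:boundary} equals $\alpha^{N_w/(p-1)}\otimes\pi^*A_{\Sc{C},w}$; in particular this is a section of $\hat\omega_K^{\otimes N_w}$ over the whole formal completion $(\hat{X}_{K,\Sigma,\Sc{C}}^{\text{tor}})_{\overline{X}_{\Sc{C},w}}$ of $\overline{X}_{K,\Sigma,w}^{\text{tor}}$, not just its interior part. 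Now I would apply Lemma \ref{L:compext}: the formal section just produced and the algebraic section $A_{K,w}$ on the interior $\overline{X}_{K,w}$ agree on the overlap, hence glue to a section $A_{K,\Sigma,w}^{\text{tor}}\in H^0(\overline{X}_{K,\Sigma,w}^{\text{tor}},\omega_K^{\otimes N_w})$, since $\overline{X}_{K,w}\subset\overline{X}_{K,\Sigma,w}^{\text{tor}}$ is dense and the boundary meets every irreducible component. This gives property 1 by construction, property 3 is exactly the statement that the formal completion is $\alpha^{\otimes N_w/(p-1)}\otimes\pi^*A_{\Sc{C},w}$, and uniqueness follows from density of the interior together with the fact that $\overline{X}_{K,\Sigma,w}^{\text{tor}}$ is reduced.

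For property 2, the non-vanishing locus: on the interior this is Theorem \ref{T:hasseext} part 1. Along the boundary, property 3 exhibits $A_{K,\Sigma,w}^{\text{tor}}$ formally as $\alpha^{\otimes N_w/(p-1)}\otimes\pi^*A_{\Sc{C},w}$, where $\alpha$ is a generator of a line bundle (hence non-vanishing) and $\pi^*A_{\Sc{C},w}$ is non-vanishing precisely on $\pi^{-1}(X_{\Sc{C},w})$; combining with Theorem \ref{T:EOtor} part 1 (which says the boundary part of $\overline{X}_{K,\Sigma,w}^{\text{tor}}$ fibers over $\overline{X}_{\Sc{C},w}$ with the boundary of $X_{K,\Sigma,w}^{\text{tor}}$ mapping to the boundary of $X_{\Sc{C},w}$) shows the non-vanishing locus of $A_{K,\Sigma,w}^{\text{tor}}$ is exactly $X_{K,\Sigma,w}^{\text{tor}}$. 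Finally property 4, Hecke equivariance, is automatic: both $[g]^*A_{K',\Sigma',w}^{\text{tor}}$ and $A_{K,\Sigma,w}^{\text{tor}}$ restrict to $A_{K,w}$ on the dense open $\overline{X}_{K,w}$ by Theorem \ref{T:hasseext} part 3, so by density and reducedness they coincide. The main obstacle is the gluing step: one must be careful that the formal section on $(\hat{X}_{K,\Sigma,\Sc{C}}^{\text{tor}})_{\overline{X}_{\Sc{C},w}}$ really does extend over the full formal completion of $\overline{X}_{K,\Sigma,w}^{\text{tor}}$ (including its own boundary strata corresponding to deeper cusp labels), which requires the compatibility of the various $A_{\Sc{C},w}$ for nested cusp labels — this is where the inductive structure of the well-positionedness and the Hecke-compatibility/canonicity of the $A_{\Sc{C},w}$ (via the reduction to the Siegel case in Theorem \ref{T:hasseext}) must be invoked.
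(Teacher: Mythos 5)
Your proposal assembles the right tools (Proposition \ref{P:boundary}, Proposition \ref{P:torcompare}, the identification of $A[p]^f$ with $\tilde A_{\Sc{C}}[p]$, and Lemma \ref{L:compext}), and the arguments you give for properties 2 and 4 match the paper's. The gap is in the extension step itself. As written, you apply Lemma \ref{L:compext} with the algebraic data being the given section on the interior $\overline{X}_{K,w}$ and the formal data being the section on the completion along a \emph{single} boundary stratum $\Ca{X}^{\text{tor}}_{K,\Sigma,\Sc{C}}$. But Lemma \ref{L:compext} needs the given algebraic section to be regular on an open $U_0-Z$ whose formal completion along $Z$ is what you control; if $\Sc{C}$ is not the maximal cusp label, the open $\overline{X}_{K,w}$ is strictly smaller than $U_0-Z$ (the latter still contains all the boundary strata at cusp labels $\Sc{C}'>\Sc{C}$), so the hypotheses of the lemma are not met in one step. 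In your final paragraph you sense this, but you frame the obstacle as requiring compatibility of the Hasse invariants $A_{\Sc{C},w}$ at nested cusp labels and then extending the formal section over the boundary strata of the formal completion; this is not actually how it resolves.

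The paper instead runs an explicit induction on upward-closed subsets $S\subset\text{Cusp}_K$ of cusp labels, adding one cusp label $\Sc{C}$ at a time (in decreasing order of $\leq$, i.e.\ from least to most degenerate). At the step where $\Sc{C}$ is added, the section has already been extended over $\Ca{X}^{\text{tor}}_{K,\Sigma,S'}$ with $S'$ containing every $\Sc{C}'>\Sc{C}$. Now cover the formal neighborhood of $\Ca{X}^{\text{tor}}_{K,\Sigma,\Sc{C}}$ by affine formal opens $\Fr{U}=\text{Spf}\,A$ arising as completions of affine opens $U_0\subset\Ca{X}^{\text{tor}}_{K,\Sigma,S}$ along $Z=U_0\cap\Ca{X}^{\text{tor}}_{K,\Sigma,\Sc{C}}$; by the inductive hypothesis the section is already regular on $\overline{U}_{0,w}-Z$, and by Proposition \ref{P:boundary} (transferred via the comparison of $A[p]^f$ with $\tilde A_{\Sc{C}}[p]$) its image under formal completion is the regular element $\alpha^{N_w/(p-1)}\otimes\pi^*A_{\Sc{C},w}$, so Lemma \ref{L:compext} applies and the section extends over $\overline{U}_{0,w}$. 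No comparison between $A_{\Sc{C},w}$ at different cusp labels is needed; each step only sees one $\Sc{C}$ and uses what was already proved algebraically for $S'$. You should replace your ``single gluing'' step with this induction and drop the appeal to compatibilities of the $A_{\Sc{C},w}$ and to Hecke-equivariance (the latter enters only for property 4, via density of the interior).
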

\begin{proof}
Fix $w\in W^I$.  Let $S\subset\text{Cusp}_K$ be a set of cusp labels with the property that if $\Sc{C}\in S$ and $\Sc{C}'\in\text{Cusp}_K$ with $\Sc{C}\leq \Sc{C}'$ then $\Sc{C}'\in S$.  Then let
\begin{equation*}
\Ca{X}_{K,\Sigma,S}^{\text{tor}}=\coprod_{\Sc{C}\in S}\Ca{X}_{K,\Sigma,\Sc{C}}.
\end{equation*}
Then by the assumption on $S$, $\Ca{X}_{K,\Sigma,S}^{\text{tor}}$ is open in $\Ca{X}_{K,\Sigma}^{\text{tor}}$.  We will prove by induction on $S$ that
\begin{equation*}
A_{K,w}\in H^0(\overline{X}_{K,w},\omega_K^{\otimes N})
\end{equation*}
extends to $\Ca{X}_{K,\Sigma,S}^{\text{tor}}\cap\overline{X}_{K,\Sigma,w}^{\text{tor}}$.  So suppose we have an extension for $S'$ and we want to extend it to $S=S'\cup\{\Sc{C}\}$.

We recall some notation from section \ref{S:eotor}.  We may cover the formal schemes
\begin{equation*}
\hat{X}_{K,\Sigma,\Sc{C}}^{\text{tor}}\simeq(\Fr{X}_{\Sc{C},\Sigma_{\Sc{C}}}\times_R k)/\Gamma_{\Sc{C}}
\end{equation*}
by affine opens $\Fr{U}=\text{spf}\,A$ satisfying the conditions listed there.  We may in particular assume that for each $\Fr{U}$ there is an affine open $U_0=\spec A_0$ in $X_{K,\Sigma,S}^{\text{tor}}$ such that the formal completion of $U_0$ along $U_0\cap\Ca{X}_{K,\Sigma,\Sc{C}}^{\text{tor}}$ is $\Fr{U}$.

As in section \ref{S:eotor} we also denote $U=\spec A$, and let $\overline{U}_w=\spec A_w$ be the closed Ekedahl-Oort strata.  We also let $\overline{U_0},w=\spec A_{0,w}$ be the closed Ekedahl-Oort strata of $U_0$.

Then we have our partially extended section
\begin{equation*}
A_{K,w,S}\in H^0(\overline{U}_{0,w}-(\overline{U}_{0,w}\cap \Ca{X}^{\text{tor}}_{K,\Sigma,\Sc{C}}),\omega^{\otimes N_w})
\end{equation*}
which, after formally completing along $\Ca{X}^{\text{tor}}_{K,\Sigma,\Sc{C}}\cap U_0$ is regular by Proposition \ref{P:boundary}.  Hence by lemma \ref{L:compext} we get the desired extension to all of $\overline{U}_{0,w}$.

This proves the existence of $A_{K,\Sigma,w}^{\text{tor}}$ extending $A_{K,w}$ satisfying property 3.  Property 2 follows from Theorem \ref{T:hasseext}.  Finally property 4 follows from Theorem \ref{T:hasseext} and the fact that $\overline{X}_{K,w}$ is Zariski dense in $\overline{X}_{K,\Sigma,w}^{\text{tor}}$.
\end{proof}

As a consequence we deduce the existence of generalized Hasse invariants on the minimal compactification.

\begin{thm}\label{T:hassemin}
For each $w\in W^I$ and each $K\subset G(\hat{\Z}^{(p)})$ neat open compact there is a unique section
\begin{equation*}
A_{K,w}^{\text{\rm min}}\in H^0(\overline{X}_{K,w}^{\text{\rm min}},\omega_K^{\otimes N_w})
\end{equation*}
with the following properties
\begin{enumerate}
\item The restriction of $A_{K,w}^{\text{\rm min}}$ to $\overline{X}_{K,w}$ is the section $A_{K,w}$ of Theorem \ref{T:hasseext}.
\item $A_{K,w}^{\text{\rm min}}$ is non vanishing precisely on $X_{K,w}^{\text{\rm min}}\subset \overline{X}_{K,w}^{\text{\rm min}}$.
\item $A_{K,w}^{\text{\rm min}}$ is well positioned at the boundary in the sense of definition \ref{D:wellpossec} where for each cusp label $\Sc{C}$ the corresponding section is $A_{\Sc{C},w}\in H^0(\overline{X}_{\Sc{C},w},\omega_{\Sc{C}}^{\otimes N_w})$.
\item If $g\in G(\A^{\infty,p})$ and $K,K'\subset G(\hat{\Z}^{(p)})$ are open compact subgroups with $g^{-1}Kg\subset K'$ then we have
\begin{equation*}
[g]^*A_{K',w}^{\text{\rm min}}=A_{K,w}^{\text{\rm min}}
\end{equation*}
under the canonical isomorphism $[g]^*\omega_{K'}\simeq\omega_K$ restricted to $\overline{X}_{K,w}^{\text{\rm min}}$.
\end{enumerate}
\end{thm}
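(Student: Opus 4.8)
The plan is to deduce the existence of $A_{K,w}^{\text{min}}$ on the minimal compactification from the already-constructed section $A_{K,\Sigma,w}^{\text{tor}}$ on the toroidal compactification (Theorem \ref{T:hassetor}) by pushing forward along $\pi_{K,\Sigma}\colon\Ca{X}_{K,\Sigma}^{\text{tor}}\to\Ca{X}_K^{\text{min}}$, exactly in the spirit of Definition \ref{D:minext} and Remark \ref{R:wellposrem}. The key structural input is that $\overline{X}_{K,\Sigma,w}^{\text{tor}}$ and $\overline{X}_{K,w}^{\text{min}}$ are well positioned at the boundary and correspond to one another under Theorem \ref{T:wellpos}, with $\pi_{K,\Sigma}^{-1}(\overline{X}_{K,w}^{\text{min}})=\overline{X}_{K,\Sigma,w}^{\text{tor}}$ scheme-theoretically and $\pi_{K,\Sigma,*}\O_{\overline{X}_{K,\Sigma,w}^{\text{tor}}}=\O_{\overline{X}_{K,w}^{\text{min}}}$; this is precisely part 1 of Theorem \ref{T:EOmin} combined with Theorem \ref{T:wellpos}.

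First I would invoke Remark \ref{R:wellposrem}(2): since $\overline{X}_{K,\Sigma,w}^{\text{tor}}$ and $\overline{X}_{K,w}^{\text{min}}$ correspond under Theorem \ref{T:wellpos}, the pullback map
\begin{equation*}
H^0(\overline{X}_{K,w}^{\text{min}},\omega_K^{\otimes N_w})\to H^0(\overline{X}_{K,\Sigma,w}^{\text{tor}},\omega_K^{\otimes N_w})
\end{equation*}
is an isomorphism, and it carries sections well positioned at the boundary bijectively to sections well positioned at the boundary, matching up the associated cusp-label sections $A_{\Sc{C},w}$. Since $A_{K,\Sigma,w}^{\text{tor}}$ is well positioned at the boundary (Theorem \ref{T:hassetor}(3)), I define $A_{K,w}^{\text{min}}$ to be the unique preimage of $A_{K,\Sigma,w}^{\text{tor}}$ under this isomorphism. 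Uniqueness of $A_{K,w}^{\text{min}}$ is then immediate from injectivity of the pullback map. Property 1 follows because the pullback map restricts to the identity on the dense open $\overline{X}_{K,w}\subset\overline{X}_{K,w}^{\text{min}}$ (recall $\pi_{K,\Sigma}$ is an isomorphism over the interior), so $A_{K,w}^{\text{min}}|_{\overline{X}_{K,w}}=A_{K,w}$. Property 3 is built into the construction: the cusp-label sections attached to $A_{K,w}^{\text{min}}$ are the same $A_{\Sc{C},w}$ as for $A_{K,\Sigma,w}^{\text{tor}}$, again by Remark \ref{R:wellposrem}(2).

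Next, property 2: I need that $A_{K,w}^{\text{min}}$ is non-vanishing precisely on $X_{K,w}^{\text{min}}$. The non-vanishing locus of $\pi_{K,\Sigma}^*A_{K,w}^{\text{min}}=A_{K,\Sigma,w}^{\text{tor}}$ is exactly $X_{K,\Sigma,w}^{\text{tor}}$ by Theorem \ref{T:hassetor}(2); since $\pi_{K,\Sigma}$ is surjective and $\pi_{K,\Sigma}^{-1}(X_{K,w}^{\text{min}})=X_{K,\Sigma,w}^{\text{tor}}$ set-theoretically (by the definition of $X_{K,w}^{\text{min}}$ together with part of Theorem \ref{T:EOmin}), a point of $\overline{X}_{K,w}^{\text{min}}$ lies in the non-vanishing locus of $A_{K,w}^{\text{min}}$ iff some (equivalently every) point above it lies in $X_{K,\Sigma,w}^{\text{tor}}$, iff it lies in $X_{K,w}^{\text{min}}$. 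Finally, property 4: given $g$ with $g^{-1}Kg\subset K'$, choose good compatible families $\Sigma$ at level $K$ and $\Sigma'$ at level $K'$ with $\Sigma$ a $g$-refinement of $\Sigma'$, so that $[g]\circ\pi_{K,\Sigma}=\pi_{K',\Sigma'}\circ[g]$ by part 5 of Theorem \ref{T:minimal}; pulling back the identity $[g]^*A_{K',\Sigma',w}^{\text{tor}}=A_{K,\Sigma,w}^{\text{tor}}$ of Theorem \ref{T:hassetor}(4) through $\pi_{K,\Sigma}^*$ and using that $\pi^*$ is injective on the relevant $H^0$'s (Theorem \ref{T:wellpos}(2)), one obtains $[g]^*A_{K',w}^{\text{min}}=A_{K,w}^{\text{min}}$; alternatively one can argue directly from the Zariski density of $\overline{X}_{K,w}$ in $\overline{X}_{K,w}^{\text{min}}$ together with Theorem \ref{T:hasseext}(3). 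The main obstacle is purely bookkeeping — making sure the scheme-theoretic identifications $\pi_{K,\Sigma}^{-1}(\overline{X}_{K,w}^{\text{min}})=\overline{X}_{K,\Sigma,w}^{\text{tor}}$ and the well-positionedness correspondence are applied with the correct ambient subschemes — rather than anything requiring new ideas; all the analytic content was already absorbed into Theorems \ref{T:wellpos}, \ref{T:EOmin}, and \ref{T:hassetor}.
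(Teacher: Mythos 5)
Your proof is correct and takes essentially the same approach as the paper: deduce existence and properties 1--3 from Theorem \ref{T:hassetor} together with Remark \ref{R:wellposrem}(2), and deduce uniqueness and Hecke stability from the Zariski density of $\overline{X}_{K,w}$ in the reduced scheme $\overline{X}_{K,w}^{\text{min}}$ together with Theorem \ref{T:hasseext}. You unpack the non-vanishing locus (property 2) in slightly more detail than the paper, which simply absorbs it into the appeal to Theorem \ref{T:hassetor} and Remark \ref{R:wellposrem}(2), but the argument is the same.
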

\begin{proof}
The existence of $A_{K,w}^{\text{min}}$ and the first three properties follow immediately from Theorem \ref{T:hassetor} and part 2 of Remark \ref{R:wellposrem}.  The uniqueness and property 4 follow from \ref{T:hasseext} and the fact that $\overline{X}_{K,w}$ is Zariski dense in $\overline{X}_{K,w}^{\text{min}}$ and the latter is reduced.
\end{proof}

We record the following corollary.

\begin{cor}\label{C:affine}
For each $w\in W^I$, the Ekedahl-Oort stratum in the minimal compactification $X_{K,w}^{\text{\rm min}}$ is affine.
\end{cor}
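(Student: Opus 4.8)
The plan is to deduce Corollary \ref{C:affine} directly from the existence of the generalized Hasse invariant on the minimally compactified stratum, Theorem \ref{T:hassemin}. The key point is that $\omega_K$ is an ample line bundle on $X_K^{\text{min}}$: this is Proposition \ref{P:omegaamp} (together with the abuse of notation whereby we write $\omega_K$ for its pushforward $\pi_{K,\Sigma,*}\omega_K$ to the minimal compactification). Since $\overline{X}_{K,w}^{\text{min}}$ is a closed subscheme of the projective scheme $X_K^{\text{min}}$, it is itself proper, and the restriction of $\omega_K$ to $\overline{X}_{K,w}^{\text{min}}$ remains ample. Thus $\omega_K^{\otimes N_w}|_{\overline{X}_{K,w}^{\text{min}}}$ is an ample line bundle on a proper $k$-scheme, and by Theorem \ref{T:hassemin} part 2 there is a section $A_{K,w}^{\text{min}}$ of it whose non-vanishing locus is exactly $X_{K,w}^{\text{min}}$.

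First I would recall the standard fact that if $\L$ is an ample line bundle on a proper scheme $Y$ over a noetherian ring and $s\in H^0(Y,\L^{\otimes m})$ for some $m>0$, then the non-vanishing locus $Y_s=\{y\in Y\mid s(y)\neq 0\}$ is affine. One argues this, for instance, by replacing $\L$ by a power so that it is very ample, embedding $Y\hookrightarrow \mathbf{P}^n$, noting that $s$ extends (after a further power) to a section of $\O_{\mathbf{P}^n}(d)$ for suitable $d$ whose restriction to $Y$ is $s$; then $Y_s$ is a closed subscheme of the complement of a hypersurface in $\mathbf{P}^n$, which is affine, hence $Y_s$ is affine. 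Applying this with $Y=\overline{X}_{K,w}^{\text{min}}$, $\L=\omega_K|_{\overline{X}_{K,w}^{\text{min}}}$, $m=N_w$, and $s=A_{K,w}^{\text{min}}$ gives that $X_{K,w}^{\text{min}}$ is affine.

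I do not anticipate any serious obstacle here: the entire content is bundled into the ampleness of $\omega_K$ (Proposition \ref{P:omegaamp}) and into the construction of $A_{K,w}^{\text{min}}$ (Theorem \ref{T:hassemin}), both of which are available. The only points requiring a sentence of care are that the non-vanishing locus of $A_{K,w}^{\text{min}}$ is genuinely \emph{equal} to $X_{K,w}^{\text{min}}$ as a set (this is property 2 of Theorem \ref{T:hassemin}, not merely a containment), and that restriction of an ample line bundle to a closed subscheme stays ample (a standard fact). So the proof is essentially the one-line remark already made in the introduction: ``this follows immediately from the fact that the non vanishing locus of an ample line bundle on a proper variety is affine.''

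\begin{proof}
By Proposition \ref{P:omegaamp} the line bundle $\omega_K$ on $X_K^{\text{min}}$ is ample, and $X_K^{\text{min}}$ is projective over $R$, hence its special fiber is projective over $k$. The closed subscheme $\overline{X}_{K,w}^{\text{min}}\subset X_K^{\text{min}}$ is therefore proper over $k$, and $\omega_K^{\otimes N_w}$ restricts to an ample line bundle $\L$ on it. By Theorem \ref{T:hassemin} there is a section $A_{K,w}^{\text{min}}\in H^0(\overline{X}_{K,w}^{\text{min}},\L)$ whose non-vanishing locus is precisely $X_{K,w}^{\text{min}}$. Since the non-vanishing locus of a section of an ample line bundle on a proper $k$-scheme is affine (replace $\L$ by a very ample power, embed in projective space, and observe that the locus in question is a closed subscheme of the affine complement of a hypersurface), we conclude that $X_{K,w}^{\text{min}}$ is affine.
\end{proof}
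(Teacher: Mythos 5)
Your proof is correct and is exactly the argument the paper gives: the non-vanishing locus of the section $A_{K,w}^{\text{min}}$ of the ample line bundle $\omega_K^{\otimes N_w}$ on the proper scheme $\overline{X}_{K,w}^{\text{min}}$ is affine. No meaningful difference in approach.
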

\begin{proof}
This follows immediately from the fact that it is the non vanishing locus of the section $A_{K,w}^{\text{min}}$ of the ample line bundle $\omega_K^{\otimes N_w}$ on the proper scheme $\overline{X}_{K,w}^{\text{min}}$.
\end{proof}

\chapter{Construction of Congruences}\label{cong}

In this section we explain how to use the generalized Hasse invariants of the previous chapters in order to produce congruences between coherent cohomology classes of automorphic vector bundles on PEL modular varieties.  The underlying idea is quite simple, but the argument is complicated by considerations at the boundary.  We refer the reader to the introduction for an overview of how we construct congruences, and we advise the reader to first understand the argument when the PEL modular variety is compact.

Throughout this chapter we fix an integral PEL datum $(\Ca{O},*,L,\langle\cdot,\cdot,\rangle,h)$ satisfying Condition \ref{extcond} and a neat open compact subgroup $K\subset G(\hat{\Z}^{(p)})$.  We fix an algebraic representation $\rho'$ of $M$ on a finite free $R$-module $W$, as well as an integer $r>0$ and a filtration
\begin{equation*}
0=W_0\subset W_1\subset W_2\subset W_3=W/\pi^rW
\end{equation*}
by $M$-stable submodules such that $W_i/W_{i-1}$ is a free $R/\pi^r$-module for $i=1,2,3$.  We denote the representation of $M$ on $W_2/W_1$ by $\rho$.  For example we may just take $\rho$ to be the reduction mod $\pi^r$ of $\rho'$, so that $W_1=W_0$ and $W_2=W_3$, but we also want to allow representations $\rho$ which do not admit lifts to characteristic 0.

The goal of this section is to prove the following theorem, which may be regarded as the main result of this thesis.

\begin{thm}\label{T:abscong}
For all integers $n\geq0$ and $C$ there is an integer $k\geq C$ such that
\begin{equation*}
H^n(\Ca{X}_K^{\text{\rm min}},V_{\rho,K}^{\text{\rm sub}})
\end{equation*}
is a subquotient of
\begin{equation*}
H^0(\Ca{X}_K^{\text{\rm min}},V_{\rho',K}^{\text{\rm sub}}\otimes\omega_K^{\otimes k})
\end{equation*}
as $\Bf{T}_K$-modules.
\end{thm}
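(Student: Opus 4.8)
The plan is to follow the strategy outlined in the introduction (Section \ref{S:introcong}), transposed to the general PEL setting, combining the existence of generalized Hasse invariants at the boundary (Theorems \ref{T:hassetor}, \ref{T:hassemin}) with the well-positionedness machinery of Section \ref{S:wellpos}. First I would set up the ``glued Hasse invariants'' at the combinatorial level: for each $i=0,\ldots,\dim X_K$ let $X_i=\bigcup_{l(w)=\dim X_K - i}\overline{X}_{K,w}^{\text{min}}$, the union of the codimension $i$ Ekedahl--Oort strata, a reduced closed subscheme of $X_K^{\text{min}}$. Using that the restriction of $A_{K,w}^{\text{min}}$ to $\overline{X}_{K,w'}^{\text{min}}$ for $w'\preceq w$ is a power of $A_{K,w'}^{\text{min}}$ (which follows from uniqueness in Theorem \ref{T:hassemin}), one glues, after passing to sufficiently high common powers $N_i$, to a section $A_i\in H^0(X_i,\omega_K^{\otimes N_i})$ whose vanishing locus is set-theoretically $X_{i+1}$ and which is compatible with all prime-to-$p$ Hecke correspondences by Theorem \ref{T:hassemin}(4) and Theorem \ref{T:EOmin}(3). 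Crucially, each $A_i$ and each $V(A_i)$ are well positioned at the boundary with associated sections on the cusp schemes $X_{\Sc{C}}$ being the corresponding glued Hasse invariants for the smaller PEL data, by Theorem \ref{T:hassemin}(3) and Remark \ref{R:wellposrem}(3).

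Next I would carry out the inductive construction of the objects listed in the introduction. Set $\tilde{X}_0=\Ca{X}_K^{\text{min}}\times_R R/\pi^r$, $M_0=0$, and $V=\pi_{K,\Sigma,*}V_{\rho,K,\Sigma}^{\text{sub}}=V_{\rho,K}^{\text{sub}}$ (using Theorem \ref{T:HLTT} so that $H^n(\Ca{X}_K^{\text{min}},V)=H^n(\Ca{X}_{K,\Sigma}^{\text{tor}},V_{\rho,K,\Sigma}^{\text{sub}})$ and likewise for $\rho'$). Given $\tilde{X}_i\subset\Ca{X}_K^{\text{min}}$ a closed subscheme with $(\tilde{X}_i)_{\text{red}}=X_i$, Hecke stable, and well positioned at the boundary, one first lifts: $A_i^{p^m}$ for $m$ large admits a canonical lift $\tilde{A}_i\in H^0(\tilde{X}_i,\omega_K^{\otimes\tilde{N}_i})$ (the lift being canonical forces Hecke compatibility and, by Remark \ref{R:wellposrem} together with the well-positionedness of $A_i$, forces $\tilde{A}_i$ and $V(\tilde{A}_i)$ to be well positioned at the boundary with known associated cusp sections). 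Then consider for $k\geq 0$ the sequence \eqref{E:keyseq} of coherent sheaves on $\Ca{X}_K^{\text{min}}$:
\begin{equation*}
0\to V\otimes\omega_K^{\otimes M_i}|_{\tilde{X}_i}\overset{\tilde{A}_i^k}{\to}V\otimes\omega_K^{\otimes M_i+k\tilde{N}_i}|_{\tilde{X}_i}\to V\otimes\omega_K^{\otimes M_i+k\tilde{N}_i}|_{V(\tilde{A}_i^k)}\to 0.
\end{equation*}
The key claim is that this sequence is exact, i.e. that $\tilde{A}_i$ is a non zero divisor on $V|_{\tilde{X}_i}$. In the interior this follows because $\Ca{X}_K$ is regular, $\tilde{X}_i$ is cut out by the regular sequence $\pi^r,\tilde{A}_0^{k_0},\ldots,\tilde{A}_{i-1}^{k_{i-1}}$ (by induction), hence Cohen--Macaulay, and $\tilde{A}_i$ does not vanish on any component by our knowledge of the set-theoretic vanishing locus. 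At the boundary this is exactly the content of Proposition \ref{P:autminbound}: since all the $A_j$ and $\tilde{A}_j$ are well positioned at the boundary and the associated sequences of sections on each $X_{\Sc{C}}$ are regular sequences (by induction on the smaller PEL data), $\pi^r,\tilde{A}_0^{k_0},\ldots,\tilde{A}_{i-1}^{k_{i-1}}$ is $V_{\rho,K}^{\text{sub}}$-regular, and then $\tilde{A}_i$ is a non zero divisor on $V|_{\tilde{X}_i}$ by the same non-vanishing argument. Granting exactness, the long exact sequence gives
\begin{equation*}
H^{n-i-1}(V(\tilde{A}_i^k),V\otimes\omega_K^{\otimes M_i+k\tilde{N}_i})\to H^{n-i}(\tilde{X}_i,V\otimes\omega_K^{\otimes M_i})\to H^{n-i}(\tilde{X}_i,V\otimes\omega_K^{\otimes M_i+k\tilde{N}_i}),
\end{equation*}
and since $\omega_K$ is ample on $\Ca{X}_K^{\text{min}}$ (Proposition \ref{P:omegaamp}) and $n-i>0$, Serre vanishing lets us pick $k$ large so the last term vanishes; we then set $M_{i+1}=M_i+k\tilde{N}_i$, $\tilde{X}_{i+1}=V(\tilde{A}_i^k)$ (well positioned at the boundary by Remark \ref{R:wellposrem}(3)), obtaining a Hecke-equivariant surjection $H^{n-i-1}(\tilde{X}_{i+1},V\otimes\omega_K^{\otimes M_{i+1}})\twoheadrightarrow H^{n-i}(\tilde{X}_i,V\otimes\omega_K^{\otimes M_i})$.

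Finally I would assemble the output. Chaining these surjections for $i=0,\ldots,n-1$ yields a Hecke-equivariant surjection $H^0(\tilde{X}_n,V\otimes\omega_K^{\otimes M_n})\twoheadrightarrow H^n(\Ca{X}_K^{\text{min}},V/\pi^r)=H^n(\Ca{X}_K^{\text{min}},V_{\rho,K}^{\text{sub}})$, where $\tilde{X}_n$ is well positioned at the boundary, $(\tilde{X}_n)_{\text{red}}=X_n$, and $\tilde{A}_n\in H^0(\tilde{X}_n,\omega_K^{\otimes\tilde{N}_n})$ is a non zero divisor on $V|_{\tilde{X}_n}$ (again by Proposition \ref{P:autminbound}, now applied with $\rho'$ in place of $\rho$ — note $V_{\rho',K}^{\text{sub}}$ restricted to $\tilde{X}_n$ is what we multiply into). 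Then from the short exact sequence $0\to\Ca{K}\to V_{\rho',K}^{\text{sub}}\otimes\omega_K^{\otimes M_n}\overset{\text{res}}{\to}V_{\rho',K}^{\text{sub}}\otimes\omega_K^{\otimes M_n}|_{\tilde{X}_n}\to 0$ on $\Ca{X}_K^{\text{min}}$, tensoring with a high power of $\omega_K$ and invoking Serre vanishing for $H^1(\Ca{X}_K^{\text{min}},\Ca{K}\otimes\omega_K^{\otimes k\tilde{N}_n})$, the restriction map $H^0(\Ca{X}_K^{\text{min}},V_{\rho',K}^{\text{sub}}\otimes\omega_K^{\otimes M_n+k\tilde{N}_n})\to H^0(\tilde{X}_n,V_{\rho',K}^{\text{sub}}\otimes\omega_K^{\otimes M_n+k\tilde{N}_n})$ becomes surjective. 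Using the reduction-mod-$\pi^r$ maps $W\to W_2/W_1=$ (representation $\rho$), multiplication by $\tilde{A}_n^k$ is injective on $H^0(\tilde{X}_n,V_{\rho,K}^{\text{sub}}\otimes\omega_K^{\otimes M_n})$ and sits in a commuting diagram with the surjections above, exhibiting $H^n(\Ca{X}_K^{\text{min}},V_{\rho,K}^{\text{sub}})$ as a $\Bf{T}_K$-subquotient of $H^0(\Ca{X}_K^{\text{min}},V_{\rho',K}^{\text{sub}}\otimes\omega_K^{\otimes k})$ with $k=M_n+k\tilde{N}_n\geq C$ (enlarging $k$ freely via further multiplication by Hasse invariants or by Serre vanishing). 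I expect the main obstacle to be verifying that the sequence \eqref{E:keyseq} is exact at the boundary — making precise the induction on the smaller PEL data $(\O,*,L_{\Sc{C}},\ldots)$ so that Proposition \ref{P:autminbound} applies, tracking that the canonically lifted $\tilde{A}_i$ remain well positioned with the expected regular sequences of cusp sections, and handling the bookkeeping of Hecke equivariance of the glued sections $A_i$ across all the intermediate compactifications.
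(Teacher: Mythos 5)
Your proposal follows the paper's own strategy closely and would, with minor adjustments, yield a correct proof; let me flag the one point where your description diverges from what actually makes the argument work.

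You say that the regularity of the associated sequences $\tilde{A}_{\Sc{C},0},\ldots,\tilde{A}_{\Sc{C},i}$ on the cusp schemes $\Ca{X}_{\Sc{C}}$ is obtained ``by induction on the smaller PEL data.'' That is not how the verification runs, and setting up such an induction would be both unnecessary and delicate (each $\Ca{X}_{\Sc{C}}$ has its own boundary with its own cusp labels, so the recursion would never bottom out cleanly). What the hypothesis of Proposition \ref{P:autminbound} actually requires is regularity of the sequence of sections on subschemes of $\Ca{X}_{\Sc{C}}$ itself, and this is obtained \emph{directly}: one inducts on the index $i$ for a fixed cusp label $\Sc{C}$, observing that $\tilde{X}_0\cap\Ca{X}_{\Sc{C}}=\Ca{X}_{\Sc{C}}\times_R R/\pi^r$ is Cohen--Macaulay because $\Ca{X}_{\Sc{C}}/R$ is smooth (hence $\Ca{X}_{\Sc{C}}$ is regular and $\pi^r$ is a non zero divisor), and that each $\tilde{A}_{\Sc{C},i}$ fails to vanish identically on any component of $\tilde{X}_i\cap\Ca{X}_{\Sc{C}}$ (this is Proposition \ref{P:indsetup}(1): every irreducible component of $X_i\cap\Ca{X}_{\Sc{C}}$ has codimension $i$ in $\Ca{X}_{\Sc{C}}$, so the vanishing locus $X_{i+1}\cap\Ca{X}_{\Sc{C}}$ is strictly smaller). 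Cohen--Macaulayness propagates down the sequence and immediately gives the non zero divisor property. This is where the well-positionedness machinery pays off: Proposition \ref{P:autminbound} reduces the question on $\Ca{X}_K^{\text{min}}$, where $V_{\rho,K}^{\text{sub}}$ is not locally free and $\Ca{X}_K^{\text{min}}$ is singular at the boundary, to the same question on the smooth $\Ca{X}_{\Sc{C}}$, where no further recursion is needed.

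Beyond this, your sketch matches the paper: the canonical lift via Lemma \ref{L:lift} (with its uniqueness giving Hecke compatibility), the long exact sequence and Serre vanishing to get the Hecke-equivariant surjections dropping cohomological degree, and the assembly at the end using the filtration $0\subset W_1\subset W_2\subset W_3$ and multiplication by $\tilde{A}_n^k$. One last caution: the Hecke equivariance of the boundary map $\delta$ deserves an explicit verification (the paper writes out several commutative squares of $\delta$-functors to see that $[g]^{-1}$, the sheaf map $[g]^{-1}\F\to[g]^*\F$, the exact functor $[1]_*$, and the trace map each intertwine with the long exact sequences); stating ``obtaining a Hecke-equivariant surjection'' glosses over a non-trivial diagram chase.
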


\begin{rem}
The role of $C$ is to ensure that we can take the number $k$ guaranteed by the theorem to be as large as we like.  This will ensure that $H^0(\Ca{X}_K^{\text{min}},V_{\rho',K}^{\text{sub}}\otimes\omega_K^{\otimes k})\otimes E$ can be computed in terms of automorphic representations of $G$ which we expect to be able to attach Galois representations to by other means.
\end{rem}
\begin{rem}
Our primary goal in this work was to prove this theorem in the case $n>0$.  However we remark that the case $n=0$ is not without interest.  Indeed even when $n=0$, $C=0$, and $\rho$ is the reduction mod $\pi^r$ of $\rho'$ (so that $V_{\rho',K}^{\text{sub}}=V_{\rho,K}^{\text{sub}}/\pi^r$) the map
\begin{equation*}
H^0(\Ca{X}_K^{\text{min}},V_{\rho',K}^{\text{sub}})\to H^0(\Ca{X}_K^{\text{min}},V_{\rho,K}^{\text{sub}})
\end{equation*}
needn't be surjective.
\end{rem}

In section \ref{S:prelim} we give some preliminaries.  We first prove a general lifting lemma which will allow us to canonically extend a sufficiently large power of a section of a line bundle canonically to an infinitesimal thickening.  Then we will give some setup for the inductive step of the construction of congruences.  In particular we introduce certain unions of Ekedahl-Oort strata in the minimal compactification (denoted $X_i$) and certain Hasse invariants $A_i$ on them which are obtained by ``glueing together'' the Hasse invariants of the previous parts.  Then in section \ref{S:induct} the main line of our argument: we inductively reduce the cohomological degree while increasing the codimension of the subscheme of $\Ca{X}_K^{\text{min}}$ we are working on.  Finally we conclude the proof of Theorem \ref{T:abscong} in section \ref{S:finpf}.

\section{Preliminaries}\label{S:prelim}

\subsection{Lifting Lemma}

The following standard lemma says that we can improve congruences by taking powers.

\begin{lem}\label{L:loclift}
Let $A$ be a ring and $I\subset A$ an ideal.  Then if $x,y\in A$ with $x-y\in I$ then
\begin{equation*}
x^{p^n}-y^{p^n}\in I^{p^n}+pI^{p^{n-1}}+p^2I^{p^{n-2}}+\cdots+p^nI.
\end{equation*}
\end{lem}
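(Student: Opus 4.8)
The plan is to argue by induction on $n$, with the base case $n=0$ being exactly the hypothesis $x-y\in I$ (for $n=0$ the asserted ideal is just $I^{p^{0}}=I$). It is convenient to abbreviate $K_{m}=\sum_{j=0}^{m}p^{j}I^{p^{m-j}}$ for the ideal on the right-hand side, so that the statement to be proved for the exponent $n$ reads $x^{p^{n}}-y^{p^{n}}\in K_{n}$; note each $K_{m}$ is genuinely an ideal, being a finite sum of ideals of $A$.

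For the inductive step I would assume $x^{p^{n-1}}-y^{p^{n-1}}\in K_{n-1}$, write $x^{p^{n-1}}=y^{p^{n-1}}+j$ with $j\in K_{n-1}$, raise to the $p$-th power, and expand by the binomial theorem to get $x^{p^{n}}-y^{p^{n}}=\sum_{k=1}^{p}\binom{p}{k}\,y^{(p-k)p^{n-1}}\,j^{k}$. Since $\binom{p}{k}$ is divisible by $p$ for $1\le k\le p-1$, each of those terms lies in $pK_{n-1}^{k}\subseteq pK_{n-1}$, while the $k=p$ term is $j^{p}\in K_{n-1}^{p}$. Thus the induction comes down to the purely formal inclusion $K_{n-1}^{p}+pK_{n-1}\subseteq K_{n}$. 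The summand $pK_{n-1}$ is easy: $pK_{n-1}=\sum_{j=0}^{n-1}p^{j+1}I^{p^{n-1-j}}=\sum_{m=1}^{n}p^{m}I^{p^{n-m}}\subseteq K_{n}$.

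The one step that takes genuine (though elementary) bookkeeping, and the one I would flag as the main obstacle, is $K_{n-1}^{p}\subseteq K_{n}$. Here $K_{n-1}$ is the sum of the ideals $J_{a}:=p^{a}I^{p^{n-1-a}}$ for $0\le a\le n-1$, so $K_{n-1}^{p}$ is the sum of the products $J_{a_{1}}\cdots J_{a_{p}}=p^{M}I^{E}$ with $M=\sum_{i}a_{i}$ and $E=\sum_{i}p^{n-1-a_{i}}$, over all tuples $(a_{1},\dots,a_{p})\in\{0,\dots,n-1\}^{p}$. It suffices to show that each such $p^{M}I^{E}$ sits inside a single summand $p^{m}I^{p^{n-m}}$ of $K_{n}$, i.e. to produce $m$ with $0\le m\le n$, $m\le M$ and $p^{n-m}\le E$; then $p^{M}I^{E}\subseteq p^{m}I^{p^{n-m}}\subseteq K_{n}$ because powers of $I$ decrease with the exponent. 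Setting $a=\min_{i}a_{i}$, if all $a_{i}$ equal $a$ one takes $m=a$ (then $M=pa\ge a$, $a\le n-1\le n$, and $E=p\cdot p^{n-1-a}=p^{n-a}$); otherwise some $a_{i}>a$, forcing $a\le n-2$, and one takes $m=a+1$ (then $m\le n-1\le n$, $M\ge a+(a+1)\ge m$ since two of the $p\ge 2$ entries already contribute at least $a$ and $a+1$, and $E\ge p^{n-1-a}=p^{n-m}$ since one entry of the sum defining $E$ equals $p^{n-1-a}$). This finishes the inclusion and hence the induction.
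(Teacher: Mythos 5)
Your argument is correct and follows essentially the same route as the paper's: induct on $n$ by raising the $n-1$ case to the $p$-th power and then checking the formal containment $K_{n-1}^{p}+pK_{n-1}\subseteq K_{n}$ (which the paper states without proof). Your careful verification of $K_{n-1}^{p}\subseteq K_{n}$ via the case analysis on $m=a$ versus $m=a+1$ fills in precisely the bookkeeping that the paper leaves to the reader.
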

\begin{proof}
Write $x=y+b$ for $a\in A$.  Then
\begin{equation*}
x^p=y^p+b^p+\sum_{i=1}^{p-1}{p\choose i}y^ib^{p-i}
\end{equation*}
and hence $x^p-y^p\in I^p+pI$.  The result follows from this and induction, upon noting that if
\begin{equation*}
J=I^{p^{n-1}}+pI^{p^{n-2}}+\cdots+p^{n-1}I
\end{equation*}
then
\begin{equation*}
J^p+pJ\subset I^{p^n}+pI^{p^{n-1}}+\cdots+p^nI.
\end{equation*}
\end{proof}

Globalizing this we have
\begin{lem}\label{L:lift}
Let $X$ be a scheme and $X_0\subset X$ a closed subscheme defined by a sheaf of ideals $\I$.  Assume that there are integers $c$ and $d$ with $p^c=0$ on $X$ and $\I^{p^d}=0$ (and so in particular the sets underlying $X$ and $X_0$ are the same.)  Assume there is a line bundle $\L$ on $X$ and a section $s\in H^0(X_0,\L|_{X_0})$.  Then there exists a section $\tilde{s}\in H^0(X,\L^{p^{c+d-1}})$ with
\begin{equation*}
\tilde{s}|_{X_0}=s^{p^{c+d-1}}\in H^0(X_0,\L^{p^{c+d-1}}|_{X_0})
\end{equation*}
Moreover $\tilde{s}$ is the unique section with the following property: for any Zariski open $U\subset X$ and section $s'\in H^0(U,\L|_{U})$ with $s'|_{U_0}=s$ where $U_0=U\cap X_0$, we have
\begin{equation*}
\tilde{s}|_{U}={s'}^{p^{c+d-1}}.
\end{equation*}
\end{lem}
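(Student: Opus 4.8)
The plan is to reduce the global statement to the affine-local computation of Lemma \ref{L:loclift} via a gluing argument. First I would cover $X$ by affine opens $U = \spec A$ on which $\L$ is trivial; choosing a trivialization identifies $\L|_U \simeq \O_U$ and $\L^{p^{c+d-1}}|_U \simeq \O_U$, and the section $s|_{U_0}$ becomes an element $\bar a \in A/I$ where $I = \I(U)$. Lift $\bar a$ arbitrarily to some $a \in A$; then Lemma \ref{L:loclift} with $n = c+d-1$ gives $a^{p^{c+d-1}} - b^{p^{c+d-1}} \in I^{p^{c+d-1}} + pI^{p^{c+d-2}} + \cdots + p^{c+d-1}I$ for any other lift $b$ of $\bar a$. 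The key point is that under our hypotheses $p^c = 0$ and $I^{p^d} = 0$, every monomial $p^j I^{p^{c+d-1-j}}$ with $0 \le j \le c+d-1$ vanishes: if $j \ge c$ then $p^j = 0$, and if $j < c$ then $p^{c+d-1-j} \ge p^{d} $ wait — more carefully, $p^{c+d-1-j}$ as an exponent on $I$ satisfies $p^{c+d-1-j} \ge p^{d}$ exactly when $c+d-1-j \ge d$, i.e. $j \le c-1$, so $I^{p^{c+d-1-j}} \subseteq I^{p^d} = 0$. Hence $a^{p^{c+d-1}}$ is independent of the choice of lift, giving a well-defined element $\tilde s_U \in A = H^0(U, \L^{p^{c+d-1}}|_U)$, visibly restricting to $s^{p^{c+d-1}}$ on $U_0$.

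Next I would check that the $\tilde s_U$ are independent of the chosen trivialization and glue. For the trivialization: changing it multiplies $a$ by a unit $u \in A^\times$ and multiplies the target identification by $u^{p^{c+d-1}}$, so $\tilde s_U$ is unchanged. For gluing: on an overlap $U \cap U'$, both $\tilde s_U$ and $\tilde s_{U'}$ are, by the independence-of-lift property just established, equal to ${s'}^{p^{c+d-1}}$ for \emph{any} local lift $s'$ of $s$ over any affine open refining $U \cap U'$ on which $\L$ is trivial — such lifts exist locally because $H^0(V,\L|_V) \to H^0(V_0, \L|_{V_0})$ is surjective for $V$ affine (the obstruction lies in $H^1$ of a quasi-coherent sheaf on an affine scheme). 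Therefore $\tilde s_U|_{U\cap U'} = \tilde s_{U'}|_{U \cap U'}$, and the local sections glue to a global $\tilde s \in H^0(X, \L^{p^{c+d-1}})$ with $\tilde s|_{X_0} = s^{p^{c+d-1}}$.

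Finally, the uniqueness and the stated characterization: the displayed property — that $\tilde s|_U = {s'}^{p^{c+d-1}}$ for every open $U$ and every lift $s'$ of $s$ over $U$ — is exactly what the local construction produces (taking $U$ affine with $\L$ trivial it is the independence-of-lift statement; general $U$ follows by restricting to an affine cover of $U$), and conversely any section satisfying it is determined on an affine trivializing cover, hence globally. This gives both existence of a section with the characterizing property and its uniqueness.

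The routine points I would not belabor are the surjectivity of $H^0(V,\L|_V) \to H^0(V_0,\L|_{V_0})$ on affines and the cocycle bookkeeping for the trivialization change. The only genuinely delicate step is the exponent arithmetic showing $p^j I^{p^{c+d-1-j}} = 0$ for all $j$ in the range; but this is a short check using $p^c = 0$ and $I^{p^d} = 0$ as above, so I expect no serious obstacle — the lemma is essentially a globalization of Lemma \ref{L:loclift} once one observes that the error ideal collapses to zero.
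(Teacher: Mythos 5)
Your argument is correct and follows essentially the same route as the paper's proof: trivialize $\L$ on an affine cover, lift locally, apply Lemma \ref{L:loclift} with $n=c+d-1$, and note that the error ideal $I^{p^n}+pI^{p^{n-1}}+\cdots+p^nI$ vanishes under $p^c=0$ and $\I^{p^d}=0$, so that the $p^{c+d-1}$-th power of the lift is independent of choices and the local sections glue. Your explicit verification of the exponent arithmetic and the trivialization-independence check are details the paper leaves implicit, but the underlying idea is identical.
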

\begin{proof}
Pick an affine cover $\{U_i\}$ of $X$ such that $\L$ is trivial on each $U_i$.  Let $U_{i,0}=U_i\cap X_0$.  Then for each $i$
\begin{equation*}
H^0(U_i,\L|_{U_i})\to H^0(U_{i,0},\L_{U_{i,0}})
\end{equation*}
is surjective (because $U_{i,0}$ is a closed subscheme of $U_i$ and $\L|_{U_i}$ is trivial,) and so we can pick sections $s_i\in H^0(U_i,\L|_{U_i})$ reducing to $s|_{U_{i,0}}$.  We claim that the sections $s_i^{p^{c+d-1}}$ glue to give a section with the desired property.  For any pair of indices $i$ and $j$, and any affine open $V\subset U_i\cap V_i$ we have two lifts $s_i|_{V}$ and $s_j|_{V}$ of $s|_{V_0}$ where $V_0=V\cap X_0$.  Then by Lemma \ref{L:loclift} we have $s_i^{p^{c+d-1}}|_{V}=s_j^{p^{c+d-1}}|_{V}$.  Hence the sections $s_i^{p^{c+d-1}}$ glue together to give the desired section $\tilde{s}$.  The uniqueness statement is clear.
\end{proof}

\subsection{Setup}\label{S:setup}

For every $g\in G(\A^{\infty,p})$ we have a Hecke correspondence
\begin{equation*}
\begin{tikzcd}
\Ca{X}_K^{\text{min}}&\Ca{X}_{gKg^{-1}\cap K}^{\text{min}} \arrow{l}[swap]{[g]}\arrow{r}{[1]}&\Ca{X}_K^{\text{min}}
\end{tikzcd}
\end{equation*}

\begin{defn}
\begin{enumerate}
\item We say that a closed subscheme $\Ca{Z}\subset \Ca{X}_K^{\text{min}}$ is Hecke stable if for every $g\in G(\A^{\infty,p})$ we have
\begin{equation*}
[g]^{-1}(\Ca{Z})=[1]^{-1}(\Ca{Z})
\end{equation*}
as subschemes of $\Ca{X}_{gKg^{-1}\cap K}^{\text{min}}$.  We denote it by $\Ca{Z}_g$.
\item If $\Ca{Z}\subset\Ca{X}_K^{\text{min}}$ is Hecke stable then we say that a section
\begin{equation*}
A\in H^0(\Ca{Z},\omega_K^{\otimes k}|_{\Ca{Z}})
\end{equation*}
is Hecke stable if
\begin{equation*}
[g]^*A=[1]^*A
\end{equation*}
as sections of $H^0(\Ca{Z}_g,\omega_{gKg^{-1}\cap K}^{\otimes k}|_{\Ca{Z}_g})$ via the isomorphisms
\begin{equation*}
[1]^*\omega_K\simeq \omega_{gKg^{-1}\cap K}
\end{equation*}
and
\begin{equation*}
[g]^*\omega_K\simeq\omega_{gKg^{-1}\cap K}
\end{equation*}
restricted to $\Ca{Z}_g$.
\item If $\tau$ is any algebraic representation of $M$ on either a free $R$-module or a free $R/\pi^s$ module for some $s$, and $V_{\tau,K}^{\text{sub}}/\Ca{X}_K^{\text{min}}$ is the corresponding sub canonically extended automorphic vector bundle as in definition \ref{D:minext} so that we have maps
\begin{equation*}
g:[g]^*V_{\tau,K}^\text{sub}\to V_{\tau,gKg^{-1}\cap K}^{\text{sub}}
\end{equation*}
and
\begin{equation*}
\tr:[1]_*V_{\tau,gKg^{-1}\cap K}^{\text{sub}}\to V_{\tau,K}^{\text{sub}}
\end{equation*}
as in Proposition \ref{P:minhecke} and definition \ref{D:tracedef}.  Upon restricting to $\Ca{Z}$ and $\Ca{Z}_g$ we obtain
\begin{equation*}
g:[g]^*(V_{\tau,K}^{\text{sub}}|_{\Ca{Z}})=([g]^*V_{\tau,K}^{\text{sub}})|_{\Ca{Z}_g}\to V_{\tau,gKg^{-1}\cap K}^{\text{sub}}|_{\Ca{Z}_g}
\end{equation*}
and
\begin{equation*}
\tr:[1]_*(V_{\tau,gKg^{-1}\cap K}^{\text{sub}}|_{\Ca{Z}_g})=([1]_*V_{\tau,gKg^{-1}\cap K}^{\text{sub}})|_{\Ca{Z}}\to V_{\tau,K}^{\text{sub}}|_{\Ca{Z}}
\end{equation*}
where in the first equality we are using the fact that $[1]$ is affine.  Then we define the Hecke operator $T_g$ to be the endomorphism of $H^i(\Ca{Z},V_{\tau,K}^{\text{sub}}|_{\Ca{Z}})$ given by the composition of
\begin{equation*}
H^i(\Ca{Z},V_{\tau,K}^{\text{sub}}|_{\Ca{Z}})\overset{[g]^*}{\to} H^i(\Ca{Z}_g,[g]^*(V_{\tau,K}^{\text{sub}}|_{\Ca{Z}}))\overset{g}{\to}H^i(\Ca{Z}_g,V_{\tau,gKg^{-1}\cap K}^{\text{sub}}|_{\Ca{Z}_g})
\end{equation*}
and
\begin{equation*}
H^i(\Ca{Z}_g,V_{\tau,gKg^{-1}\cap K}^{\text{sub}}|_{\Ca{Z}_g})=H^i(\Ca{Z},[1]_*(V_{\tau,gKg^{-1}\cap K}^{\text{sub}}|_{\Ca{Z}_g}))\overset{\tr}{\to}H^i(\Ca{Z},V_{\tau,K}^{\text{sub}}|_{\Ca{Z}}).
\end{equation*}
\end{enumerate}
\end{defn}

For $i=0,\ldots,\dim(X_K)$ let
\begin{equation*}
X_i=\bigcup_{\substack{w\in W^I\\l(w)=\dim(X_K)-i}} \overline{X}_{K,w}^{\text{min}}\subset X_K^{\text{min}}
\end{equation*}
be the union of the closed codimension $i$ Ekedahl-Oort strata.  Let $N_i'$ be the least common multiple of the $N_w$ with $l(w)=\dim(X_K)-i$.  Consider the map
\begin{equation*}
f:X_i'=\coprod_{\substack{w\in W^I\\l(w)=\dim(X_K)-i}}\overline{X}_{K,w}^{\text{min}}\to X_i
\end{equation*}
this map $f$ is finite and if we consider the open
\begin{equation*}
U=\coprod_{\substack{w\in W^I\\l(w)=\dim(X_K)-i}}X_{K,w}^{\text{min}}\subset X_i
\end{equation*}
then
\begin{equation*}
f^{-1}(U)\to U
\end{equation*}
is an isomorphism and we have a section
\begin{equation*}
A_i'\in H^0(U,\omega_K^{\otimes N_i'}|_{X_i}).
\end{equation*}
whose restriction to $X_{K,w}^{\text{min}}$ is $(A_{K,w}^{\text{min}})^{N_i'/N_w}$.  Then $A_i'$ extends to a section of $\omega^{\otimes N_i'}$ on $X_i'$ that vanishes (set theoretically) on $X_i'-f^{-1}(U)$.  Then applying lemma \ref{L:extlem} there is some integer $M$ such that $A_i'^M$ extends to a section
\begin{equation*}
A_i\in H^0(X_i,\omega_K^{\otimes N_i}|_{X_i})
\end{equation*}
where $N_i=MN_i'$ whose set theoretic vanishing locus is $X_i-U=X_{i+1}$.

We summarize the key properties of the filtration $X_i$ of $X_{K,w}^{\text{min}}$ and the sections $A_i$ in the following proposition.  We remark that no properties of the $X_i$ and $A_i$ beyond those listed here will be used in what follows.
\begin{prop}\label{P:indsetup}
There is a filtration
\begin{equation*}
X_K^{\text{\rm min}}=X_0\supset X_1\supset\cdots\cdots\supset X_{\dim(X_K)}\supset X_{\dim(X_K)+1}=\emptyset
\end{equation*}
of $X_K^{\text{\rm min}}$ by reduced closed subschemes along with sections
\begin{equation*}
A_i\in H^0(X_i,\omega_K^{\otimes N_i}|_{X_i})
\end{equation*}
satisfying the following properties:
\begin{enumerate}
\item For each cusp label $\Sc{C}\in\text{\rm Cusp}_K$, each irreducible component of
\begin{equation*}
X_i\cap X_{\Ca{C}}
\end{equation*}
has codimension $i$ in $X_\Ca{C}$.
\item The subschemes $X_i\subset X_K^{\text{\rm min}}$ are well positioned at the boundary and are Hecke stable.
\item The set theoretic vanishing locus of the section $A_i$ is $X_{i+1}$.  The sections $A_i$ are Hecke stable and well positioned at the boundary.
\end{enumerate}
\end{prop}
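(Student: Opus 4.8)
The construction is the one already carried out in Section~\ref{S:setup}, so the task is really to record that it has the three listed properties. To recall it: for $0\le i\le\dim X_K$ we set $X_i=\bigcup_{w\in W^I,\ l(w)=\dim X_K-i}\overline{X}_{K,w}^{\text{min}}$, we let $A_i'$ be the section on the open $U=\coprod_{l(w)=\dim X_K-i}X_{K,w}^{\text{min}}\subset X_i$ which restricts to the appropriate power of $A_{K,w}^{\text{min}}$ on each $X_{K,w}^{\text{min}}$, we use Theorem~\ref{T:hassemin} to extend $A_i'$ to the finite cover $f\colon X_i'=\coprod_{l(w)=\dim X_K-i}\overline{X}_{K,w}^{\text{min}}\to X_i$ with zero locus contained in $X_i'-f^{-1}(U)$, and we apply Lemma~\ref{L:extlem} to push a power $A_i'^{\,M}$ down to $A_i\in H^0(X_i,\omega_K^{\otimes N_i}|_{X_i})$, $N_i=MN_i'$. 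The plan is to verify the properties in the order: filtration structure and zero locus of $A_i$; Hecke stability and well positionedness of $X_i$, then the codimension statement; and finally the Hecke and boundary behaviour of $A_i$.

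First I would check that the $X_i$ form an exhaustive descending filtration and that $A_i$ has the expected zero locus. That $X_0=X_K^{\text{min}}$ and $X_{\dim X_K+1}=\emptyset$ is immediate, the length-maximal element of $W^I$ indexing the open dense stratum and no element having negative length. By construction $A_i$ restricts on each $\overline{X}_{K,w}^{\text{min}}$ occurring in $X_i$ to a power of $A_{K,w}^{\text{min}}$, which by Theorem~\ref{T:hassemin}(2) is nonvanishing exactly on $X_{K,w}^{\text{min}}$; hence $V(A_i)=X_i-U$ set-theoretically, and setting $X_{i+1}=(X_i-U)_{\mathrm{red}}$ gives the first sentence of property~(3). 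To identify $X_{i+1}$ with $\bigcup_{l(w')=\dim X_K-i-1}\overline{X}_{K,w'}^{\text{min}}$ — which one needs in order to run the argument at the next stage — one combines the closure relation $\overline{X}_{K,w}^{\text{min}}=\coprod_{w'\preceq w}X_{K,w'}^{\text{min}}$ of Theorem~\ref{T:EOmin}(2) with the fact that the Ekedahl--Oort poset $(W^I,\preceq)$ is graded by $l$, so that the maximal elements strictly below a codimension-$i$ element have length $\dim X_K-i-1$; this grading statement, known for Ekedahl--Oort stratifications, is the one genuinely external input, since Theorem~\ref{T:EOprop} records the closure ordering but not its gradedness.

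Next, property~(2). Each $\overline{X}_{K,w}^{\text{min}}$ is well positioned at the boundary with associated subscheme $\overline{X}_{\Sc{C},w}\subset X_{\Sc{C}}$ by Theorem~\ref{T:EOmin}(1), and is Hecke stable by Theorem~\ref{T:EOmin}(3); I would then check the elementary stability statements that a finite reduced union of subschemes well positioned at the boundary is again well positioned (formal completion along $X_{\Sc{C}}$ commutes with such unions, the associated subscheme being the reduced union of the $\overline{X}_{\Sc{C},w}$, using the completion computations of Section~\ref{S:wellpos}), and that a finite union of Hecke-stable subschemes is Hecke stable. This gives property~(2) with $(X_i)_{\Sc{C}}=\bigcup_w\overline{X}_{\Sc{C},w}$, the union over the finitely many length-$(\dim X_K-i)$ elements $w$ in the image of the boundary map $\iota_{\Sc{C}}$. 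For the codimension statement~(1) I would pass to the toroidal side: $\overline{X}_{K,\Sigma,w}^{\text{tor}}$, the closure of the equidimensional interior stratum $X_{K,w}$, is pure of codimension $i$ in $X_{K,\Sigma}^{\text{tor}}$, which is smooth over $k$; it is well positioned with boundary subscheme $\overline{X}_{\Sc{C},w}$ by Theorem~\ref{T:EOtor}(1); and the structural map $\Xi_{\Sc{C},\Sigma_{\Sc{C}}}\to X_{\Sc{C}}$ is smooth, as used throughout Section~\ref{S:wellpos}. Pulling the codimension of $\overline{X}_{K,\Sigma,w}^{\text{tor}}$ through this smooth boundary chart forces $\overline{X}_{\Sc{C},w}$ to have codimension $i$ in $X_{\Sc{C}}$, so every irreducible component of $X_i\cap X_{\Sc{C}}=\bigcup_w\overline{X}_{\Sc{C},w}$ has codimension $i$, which is property~(1).

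Finally, the Hecke and boundary behaviour of $A_i$ demanded by the rest of property~(3). By the uniqueness clause in Lemma~\ref{L:extlem}, $A_i$ is characterized by $f^*A_i=A_i'^{\,M}$ on $X_i'$, whose restriction to each $\overline{X}_{K,w}^{\text{min}}$ is a power of $A_{K,w}^{\text{min}}$; since each $A_{K,w}^{\text{min}}$ is Hecke stable (Theorem~\ref{T:hassemin}(4)) and well positioned at the boundary with associated section $A_{\Sc{C},w}$ (Theorem~\ref{T:hassemin}(3)), and the diagram $X_i'\to X_i\hookrightarrow X_K^{\text{min}}$ is compatible with Hecke correspondences (Theorem~\ref{T:EOmin}(3) for the level maps) and with the structural morphisms to the $X_{\Sc{C}}$, this characterization propagates both properties from $A_i'^{\,M}$ to $A_i$, the associated boundary section being the glued power of the $A_{\Sc{C},w}$ on $(X_i)_{\Sc{C}}$ as in Definition~\ref{D:wellpossec}. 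The step I expect to be the main obstacle is, as everywhere in this work, the boundary: precisely the gradedness of the Ekedahl--Oort poset used to pin down $X_{i+1}$, and the verification that well positionedness survives finite reduced unions and descent of sections along $f$ — formal but dependent on unwinding the completion computations of Section~\ref{S:wellpos}; the codimension assertion~(1), by contrast, becomes harmless once one works on the smooth toroidal compactification with its smooth boundary charts.
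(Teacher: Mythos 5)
Your proof is correct and is essentially the paper's own: the construction is exactly that of Section~\ref{S:setup}, and the verification reduces to Theorems~\ref{T:EOmin} and~\ref{T:hassemin}, which is all the paper says. Your observation that $X_i-U=X_{i+1}$ (equivalently, that the zero locus of $A_i$ is the union of the closed codimension-$(i+1)$ strata) requires the length-gradedness of $(W^I,\preceq)$ is a genuine implicit input that the paper does not record explicitly in Theorem~\ref{T:EOprop}, and you are right to flag it; the only other place where your write-up is lighter than it should be is the identification ``$\overline{X}_{K,\Sigma,w}^{\text{tor}}=$ closure of the interior stratum $X_{K,w}$'' in the codimension argument, which is the density of $X_{K,w}$ in $X_{K,\Sigma,w}^{\text{tor}}$ and should be justified (it is what actually yields $l(w)-l(w_{\Sc{C}})=\dim X_K-\dim X_{\Sc{C}}$ rather than presupposing it), but this is a gap in exposition rather than a wrong step.
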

\begin{proof}
Points 1 and 2 follow from Theorem \ref{T:EOmin} while point 3 follows from Theorem \ref{T:hassemin}.
\end{proof}

\section{Inductive Step}\label{S:induct}

As the first step in the proof of Theorem \ref{T:abscong} we will prove the following proposition, which is the main induction step in the argument.

\begin{prop}\label{P:congind}
Let $n$ be as in the Theorem \ref{T:abscong}.
\begin{itemize}
\item There exists a filtration
\begin{equation*}
\Ca{X}_K^{\text{\rm min}}\times_R R/\pi^r=\tilde{X}_0\supset\tilde{X}_1\supset\cdots\supset\tilde{X}_n
\end{equation*}
of $\Ca{X}_K^{\text{\rm min}}$ by Hecke stable closed subschemes which are well positioned at the boundary and which satisfy $(\tilde{X}_i)_{\text{\rm red}}=X_i$.  Moreover for each cusp label $\Sc{C}\in\text{\rm Cusp}_K$, the (scheme theoretic) intersection
\begin{equation*}
\tilde{X}_i\cap\Ca{X}_{\Sc{C}}
\end{equation*}
is Cohen Macaulay.
\item For $i=0,\ldots,n$, there exists integers $\tilde{N}_i>0$ and Hecke stable sections
\begin{equation*}
\tilde{A}_i\in H^0(\tilde{X}_i,\omega_K^{\otimes\tilde{N}_i}|_{\tilde{X}_i})
\end{equation*}
which are well positioned at the boundary and have the property that for each cusp label $\Sc{C}\in\text{Cusp}_K$, the restriction of $\tilde{A}_i$ to $\tilde{X}_i\cap\Ca{X}_{\Sc{C}}$ is a non zero divisor.  Moreover for $i=0,\ldots,n-1$ there are integers $k_i>0$ such that $\tilde{X}_{i+1}\subset \tilde{X}_i$ is the vanishing locus of $\tilde{A}_i^{k_i}$.
\item For $i=0,\ldots,n$, there are integers $M_i$ defined by $M_0=0$ and $M_{i+1}=M_i+k_i\tilde{N}_i$ and for $i=0,\ldots,n-1$ there are Hecke equivariant surjections
\begin{equation*}
H^{n-i-1}(\tilde{X}_{i+1},(V_{\rho,K}^{\text{\rm sub}}\otimes\omega_K^{\otimes M_{i+1}})|_{\tilde{X}_{i+1}})\twoheadrightarrow H^{n-i}(\tilde{X}_i,(V_{\rho,K}^{\text{\rm sub}}\otimes\omega_K^{\otimes M_i})|_{\tilde{X}_i})
\end{equation*}
\end{itemize}
\end{prop}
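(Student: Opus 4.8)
The plan is to construct the required data by induction on $i$, running from $i=0$ to $i=n$; we may assume $n\leq\dim(X_K)$, since otherwise $H^n(\Ca{X}_K^{\text{min}},V_{\rho,K}^{\text{sub}})=0$ and there is nothing to prove. At each stage one first extracts $\tilde{A}_i$ from $\tilde{X}_i$, and then (for $i<n$) extracts $\tilde{X}_{i+1}$ from $\tilde{A}_i$. To begin, set $\tilde{X}_0=\Ca{X}_K^{\text{min}}\times_RR/\pi^r$ and $M_0=0$; this is trivially Hecke stable and well positioned at the boundary, has reduction $X_0=X_K^{\text{min}}$, and for every cusp label $\Sc{C}$ the intersection $\tilde{X}_0\cap\Ca{X}_{\Sc{C}}=\Ca{X}_{\Sc{C}}\times_RR/\pi^r$ is Cohen--Macaulay because $\Ca{X}_{\Sc{C}}/R$ is smooth and $R/\pi^r$ is Artinian. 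As $M_0=0$, the target group $H^n(\tilde{X}_0,(V_{\rho,K}^{\text{sub}}\otimes\omega_K^{\otimes M_0})|_{\tilde{X}_0})$ is precisely $H^n(\Ca{X}_K^{\text{min}},V_{\rho,K}^{\text{sub}})$.

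Given $\tilde{X}_i$ with the stated properties, one produces $\tilde{A}_i$ by applying Lemma \ref{L:lift} with $X=\tilde{X}_i$, $X_0=(\tilde{X}_i)_{\text{red}}=X_i$, $\L=\omega_K^{\otimes N_i}|_{\tilde{X}_i}$ and $s=A_i$; here $p$ is nilpotent on $\tilde{X}_i$ because $\pi^r=0$ there, and the ideal of $X_i$ in $\tilde{X}_i$ is nilpotent, so the hypotheses hold. This yields $\tilde{A}_i\in H^0(\tilde{X}_i,\omega_K^{\otimes\tilde{N}_i}|_{\tilde{X}_i})$ with $\tilde{N}_i$ a fixed $p$-power multiple of $N_i$ and $\tilde{A}_i|_{X_i}$ equal to the corresponding power of $A_i$. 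The uniqueness clause of Lemma \ref{L:lift}, combined with the Hecke stability of $A_i$ (Proposition \ref{P:indsetup}) and of $\tilde{X}_i$ (inductive hypothesis), forces $[g]^*\tilde{A}_i=[1]^*\tilde{A}_i$ on each Hecke correspondence, so $\tilde{A}_i$ is Hecke stable. Applying Lemma \ref{L:lift} in the same way to the boundary piece $\tilde{X}_i\cap\Ca{X}_{\Sc{C}}$ and the section $A_{\Sc{C},i}$ produces canonical lifts $\tilde{A}_{\Sc{C},i}$ of powers of the $A_{\Sc{C},i}$, and comparing the two constructions through the isomorphism $\hat{\omega}_{K,\Sc{C}}\simeq\det X\otimes\pi^*\omega_{\Sc{C}}$ of Proposition \ref{P:mincompare} shows that $\tilde{A}_i$ is well positioned at the boundary with associated sections $\tilde{A}_{\Sc{C},i}$.

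The core of the argument is a simultaneous induction: $\tilde{X}_i\cap\Ca{X}_{\Sc{C}}$ is Cohen--Macaulay and $\tilde{A}_{\Sc{C},i}$ is a non zero divisor on it. Indeed, a Cohen--Macaulay scheme has no embedded associated primes, so the associated primes of $\tilde{X}_i\cap\Ca{X}_{\Sc{C}}$ are the generic points of its irreducible components, each of which has codimension $i$ in $X_{\Sc{C}}$ by Proposition \ref{P:indsetup}(1); since $\tilde{A}_{\Sc{C},i}$ vanishes set theoretically only within $X_{i+1}\cap X_{\Sc{C}}$, whose components have codimension $i+1$ in $X_{\Sc{C}}$ (or which is empty), $\tilde{A}_{\Sc{C},i}$ lies in no associated prime and is a non zero divisor. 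Then $\tilde{X}_{i+1}\cap\Ca{X}_{\Sc{C}}=V(\tilde{A}_{\Sc{C},i})$ is a hypersurface section of a Cohen--Macaulay scheme by a non zero divisor, hence Cohen--Macaulay, which propagates the induction. Consequently, for every cusp label the sequence $\tilde{A}_{\Sc{C},0}^{k_0},\ldots,\tilde{A}_{\Sc{C},i-1}^{k_{i-1}},\tilde{A}_{\Sc{C},i}^{k_i}$ is a regular sequence on subschemes of $\Ca{X}_{\Sc{C}}$; as these sections and their vanishing loci are well positioned at the boundary (powers of well positioned sections are well positioned, and $V$ of such is well positioned by Remark \ref{R:wellposrem}(3)), Proposition \ref{P:autminbound} applies and shows that $\tilde{A}_0^{k_0},\ldots,\tilde{A}_i^{k_i}$ is $V_{\rho,K}^{\text{sub}}$-regular; in particular $\tilde{A}_i$ is a non zero divisor on $V_{\rho,K}^{\text{sub}}|_{\tilde{X}_i}$.

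For $i<n$ one then chooses $k_i>0$, sets $\tilde{X}_{i+1}=V(\tilde{A}_i^{k_i})\subset\tilde{X}_i$ and $M_{i+1}=M_i+k_i\tilde{N}_i$, and checks at once that $\tilde{X}_{i+1}$ is Hecke stable (vanishing loci commute with pullback), well positioned at the boundary (Remark \ref{R:wellposrem}(3)), has reduction $X_{i+1}$ (Proposition \ref{P:indsetup}(3)), and has Cohen--Macaulay intersections with the $\Ca{X}_{\Sc{C}}$. The non zero divisor statement makes the sequence
\begin{equation*}
0\to(V_{\rho,K}^{\text{sub}}\otimes\omega_K^{\otimes M_i})|_{\tilde{X}_i}\overset{\cdot\tilde{A}_i^{k_i}}{\to}(V_{\rho,K}^{\text{sub}}\otimes\omega_K^{\otimes M_{i+1}})|_{\tilde{X}_i}\to(V_{\rho,K}^{\text{sub}}\otimes\omega_K^{\otimes M_{i+1}})|_{\tilde{X}_{i+1}}\to 0
\end{equation*}
exact, and in its long exact cohomology sequence the connecting map $H^{n-i-1}(\tilde{X}_{i+1},(V_{\rho,K}^{\text{sub}}\otimes\omega_K^{\otimes M_{i+1}})|_{\tilde{X}_{i+1}})\to H^{n-i}(\tilde{X}_i,(V_{\rho,K}^{\text{sub}}\otimes\omega_K^{\otimes M_i})|_{\tilde{X}_i})$ is surjective once $H^{n-i}(\tilde{X}_i,(V_{\rho,K}^{\text{sub}}\otimes\omega_K^{\otimes M_{i+1}})|_{\tilde{X}_i})=0$; since $\omega_K$ is ample on $\Ca{X}_K^{\text{min}}$ (Proposition \ref{P:omegaamp}), hence on $\tilde{X}_i$, and $n-i>0$, this holds after enlarging $k_i$ by Serre vanishing. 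This surjection is $\Bf{T}_K$-equivariant because the maps in the displayed sequence are Hecke stable and the Hecke action on the cohomology of Hecke stable subschemes is set up compatibly with connecting homomorphisms (section \ref{S:setup}). At the final index $i=n$ one needs only $\tilde{A}_n$, produced exactly as above. The main obstacle is the boundary analysis of the third paragraph: in the interior the relevant regularity follows immediately from the regularity of $\Ca{X}_K$, but $\Ca{X}_K^{\text{min}}$ is not Cohen--Macaulay along the boundary, so one must carry the Cohen--Macaulayness of the $\tilde{X}_i\cap\Ca{X}_{\Sc{C}}$ and the regularity of the boundary sequences through the induction in order to be entitled to invoke Proposition \ref{P:autminbound}.
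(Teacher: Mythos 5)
Your proof is correct and follows essentially the same inductive strategy as the paper: canonical lifts via Lemma \ref{L:lift}, the Cohen--Macaulay/codimension argument at the boundary, Proposition \ref{P:autminbound}, and Serre vanishing. The one place you are lighter than the paper is the Hecke-equivariance of the connecting homomorphism, where the paper carries out an explicit verification via a chain of commutative squares; you are right that the compatibility holds, but that check is nontrivial enough that the paper spells it out.
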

\begin{proof}
This will be proved by induction on $i$.  For the base case we have $\tilde{X}_0=\Ca{X}_K^{\text{min}}\times_R R/\pi^r$.  We clearly have $(\tilde{X}_0)_{\text{red}}=X_0$ and for each cusp label $\Sc{C}\in\text{Cusp}_K$
\begin{equation*}
\tilde{X}_0\cap\Ca{X}_{\Sc{C}}=\Ca{X}_{\Sc{C}}\times_R R/\pi^r
\end{equation*}
which is Cohen Macaulay as $\Ca{X}_{\Sc{C}}/R$ is smooth and hence $\Ca{X}_{\Sc{C}}$ is regular and $\pi^r$ is a non zero divisor.

First we explain how to construct $\tilde{A}_i$ once $\tilde{X}_i$ has been constructed, using Lemma \ref{L:lift}.  Let $\I$ be the ideal sheaf of $X_i$ in $\tilde{X}_i$.  Then as $(\tilde{X}_i)_{\text{red}}=X_i$ we can pick an integer $c$ with $\I^{p^c}=0$ and an integer $d$ with $p^d\subset(\pi^r)$ and hence $p^d$ is zero on $\tilde{X}_i$.  Then let $\tilde{N}_i=p^{c+d-1}N_i$ and let $\tilde{A}_i\in H^0(\tilde{X}_i,\omega_K^{\otimes\tilde{N}_i}|_{\tilde{X}_i})$ be the canonical section guaranteed to exist by Lemma \ref{L:lift} which satisfies
\begin{equation*}
\tilde{A}_i|_{X_i}=A_i^{p^{c+d-1}}\in H^0(X_i,\omega_K^{\otimes\tilde{N}_i}|_{X_i}).
\end{equation*}

We need to explain why $\tilde{A}_i$ is Hecke stable.  For any $g\in G(\A^{\infty,p})$ let
\begin{equation*}
X_{i,g}=[g]^{-1}(X_i)=[1]^{-1}(X_i)\subset X_{gKg^{-1}\cap K}^{\text{min}}
\end{equation*}
and let
\begin{equation*}
\tilde{X}_{i,g}=[g]^{-1}(\tilde{X}_i)=[1]^{-1}(\tilde{X}_i)\subset\Ca{X}_{gKg^{-1}\cap K}^{\text{min}}.
\end{equation*}
Let $\I_g$ be the ideal sheaf of $X_{i,g}$ in $\tilde{X}_{i,g}$.  Then $\I_g^{p^d}=0$ by part 3 of Theorem \ref{T:wellpos} and the fact that if $\Sc{C}\in\text{Cusp}_K$ and $\Sc{C'}\in\text{Cusp}_{gKg^{-1}}$ are such that the restriction of $[1]$ to $\Ca{X}_{\Sc{C}'}$ factors through $\Ca{X}_{\Sc{C}}$ then $[1]:\Ca{X}_{\Sc{C}'}\to\Ca{X}_{\Sc{C}}$ is \'{e}tale.

Now Lemma \ref{L:lift} applied to
\begin{equation*}
[g]^*A_i=[1]^*A_i\in H^0(X_{i,g},\omega_{gKg^{-1}\cap K}^{\otimes N_i}|_{X_{i,g}})
\end{equation*}
gives a canonical section
\begin{equation*}
\tilde{A}_{i,g}\in H^0(\tilde{X}_{i,g},\omega_{gKg^{-1}\cap K}^{\otimes\tilde{N}_i}|_{\tilde{X}_{i,g}})
\end{equation*}
We will use the uniqueness statement of Lemma \ref{L:lift} to show that
\begin{equation*}
[g]^*\tilde{A}_i=\tilde{A}_{i,g}=[1]^*\tilde{A}_i.
\end{equation*}
Let us prove the first equality, the other following by the same argument.  Let $U\subset\tilde{X}_i$ be a Zariski open subset such that there exists a section
\begin{equation*}
A'\in H^0(U,\omega_K^{\otimes N_i}|_{U})
\end{equation*}
with
\begin{equation*}
A'|_{U\cap X_i}=A_i\in H^0(U\cap X_i,\omega_K^{N_i}|_{U\cap X_i}).
\end{equation*}
Then by the uniqueness statement of Lemma \ref{L:lift} we have
\begin{equation*}
(A')^{p^{c+d-1}}=\tilde{A}_i|_{U}\in H^0(U,\omega_K^{\otimes \tilde{N}_i}|_{U})
\end{equation*}
Similarly by the uniqueness statement of Lemma \ref{L:lift} applied to the section
\begin{equation*}
[g]^*(A')\in H^0([g]^{-1}(U),\omega_{gKg^{-1}\cap K}^{\otimes N_i}|_{[g]^{-1}(U)})
\end{equation*}
we have
\begin{equation*}
[g]^*(A')^{p^{c+d-1}}=\tilde{A}_{i,g}|_{[g]^{-1}(U)}\in H^0([g]^{-1}(U),\omega_{gKg^{-1}\cap K}^{\otimes \tilde{N}_i}|_{[g]^{-1}(U)})
\end{equation*}
and hence
\begin{equation*}
[g]^*\tilde{A}_i|_{[g]^{-1}(U)}=[g]^*(A')^{p^{c+d-1}}=\tilde{A}_{i,g}|_{[g]^{-1}(U)}
\end{equation*}
We are done upon noting that we can pick a cover of $\tilde{X}_i$ by such opens $U$.

Next we claim that for each cusp label $\Sc{C}\in\text{Cusp}_K$, the restriction of $\tilde{A}_i$ to $\tilde{X}_i\cap\Ca{X}_{\Sc{C}}$ is a non zero divisor.  As $\tilde{X}_i\cap\Ca{X}_{\Sc{C}}$ is Cohen-Macaulay, it has no embedded primes and so in order to prove the claim it suffices to show that it doesn't vanish set theoretically on any reduced irreducible component of $\tilde{X}_i\cap\Ca{X}_{\Sc{C}}$.  But the set theoretic vanishing locus of $\tilde{A}_i$ is the same of that of $A_i$, namely $X_{i+1}$ by part 3 of Proposition \ref{P:indsetup}.  But by part 1 of \ref{P:indsetup}, no reduced irreducible component of $\tilde{X}_i\cap \Ca{X}_{\Sc{C}}$ is contained in $X_{i+1}\cap\Ca{X}_{\Sc{C}}$.  From this, the inductive hypothesis, and Proposition \ref{P:autminbound} we conclude that $\tilde{A}_i$ is a non zero divisor on $V_{\rho,K}^{\text{sub}}|_{\tilde{X}_i}$.  

Now suppose $i<n$.  As $\tilde{A}_i$ is a non zero divisor on $V_{\rho,K}^{\text{sub}}|_{\tilde{X}_i}$, for any positive integer $k$ we have a short exact sequence
\begin{equation*}
0\to (V_{\rho,K}^{\text{sub}}\otimes\omega_K^{\otimes M_i})|_{\tilde{X}_i}\overset{\cdot\tilde{A}_i^k}{\to}(V_{\rho,K}^{\text{sub}}\otimes\omega_K^{\otimes M_i+k\tilde{N}_i})|_{\tilde{X}_i}\to(V_{\rho,K}^{\text{sub}}\otimes\omega_K^{\otimes M_i+k\tilde{N}_i})|_{V(\tilde{A}_i^k)}\to 0.
\end{equation*}
By Serre vanishing we may pick $k$ sufficiently large such that
\begin{equation*}
H^{n-i}(\tilde{X}_i,(V_{\rho,K}^{\text{sub}}\otimes\omega_K^{\otimes M_i+k\tilde{N}_i})|_{\tilde{X}_i})=0
\end{equation*}
and hence a piece of the long exact sequence in cohomology of the above short exact sequence reads
\begin{equation*}
H^{n-i-1}(\tilde{X}_i,(V_{\rho,K}^{\text{sub}}\otimes\omega_K^{\otimes M_i+k\tilde{N}_i})|_{V(\tilde{A}_i^k)})\overset{\delta}{\to} H^{n-i}(\tilde{X}_i,(V_{\rho,K}^{\text{sub}}\otimes\omega_K^{\otimes M_i})|_{\tilde{X}_i})\to 0.
\end{equation*}

We claim that $\delta$ is Hecke equivariant.  Take any $g\in G(\A^{\infty,p})$.  To save space let $K_g=gKg^{-1}\cap K$.  First note that whenever we have a map $f:X\to Y$ of schemes and a coherent sheaf $\F$ on $Y$, the map
\begin{equation*}
f^*:H^i(Y,\F)\to H^i(X,f^*\F)
\end{equation*}
is by definition the composition
\begin{equation*}
H^i(Y,\F)\overset{f^{-1}}{\to} H^i(X,f^{-1}\F)\to H^i(X,f^*\F)
\end{equation*}
where $f^{-1}(-)$ denotes the pullback of sheaves of abelian groups, and the second map is induced by the map $f^{-1}\F\to f^*\F$ of sheaves of abelian groups on $X$.  Moreover the functor $f^{-1}(-)$ is exact, and $f^{-1}:H^*(Y,-)\to H^*(X,f^{-1}(-))$ is a morphism of $\delta$-functors.  Applying this to $[g]:\tilde{X}_{i,g}\to\tilde{X}_i$ we obtain a commutative square
\begin{equation*}
\begin{tikzcd}[row sep=scriptsize, column sep=tiny]
H^{n-i-1}(\tilde{X}_i,(V_{\rho,K}^{\text{sub}}\otimes\omega_K^{\otimes M_i+k\tilde{N}_i})|_{V(\tilde{A}_i^k)}) \arrow{r}{\delta}\arrow{d}{[g]^{-1}} &H^{n-i}(\tilde{X}_i,(V_{\rho,K}^{\text{sub}}\otimes\omega_K^{\otimes M_i})|_{\tilde{X}_i}) \arrow{d}{[g]^{-1}}\\
H^{n-i-1}(\tilde{X}_{i,g},[g]^{-1}((V_{\rho,K}^{\text{sub}}\otimes\omega_K^{\otimes M_i+k\tilde{N}_i})|_{V(\tilde{A}_i^k)})) \arrow{r}{\delta}&H^{n-i}(\tilde{X}_{i,g},[g]^{-1}((V_{\rho,K}^{\text{sub}}\otimes\omega_K^{\otimes M_i})|_{\tilde{X}_i}))
\end{tikzcd}
\end{equation*}

Next we have a morphism of short exact sequences of sheaves of abelian groups on $X_{i,g}$
\begin{equation*}
\begin{tikzcd}[column sep=tiny]
{[g]}^{-1}(V_{\rho,K}^{\text{sub}}\otimes\omega_K^{\otimes M_i})|_{\tilde{X}_i}\arrow{r}{\cdot\tilde{A}_i^k}\arrow{d}& {[g]}^{-1}(V_{\rho,K}^{\text{sub}}\otimes\omega_K^{\otimes M_i+k\tilde{N}_i})|_{\tilde{X}_i}\arrow{r}\arrow{d}&{[g]}^{-1}(V_{\rho,K}^{\text{sub}}\otimes\omega_K^{\otimes M_i+k\tilde{N}_i})|_{V(\tilde{A}_i^k)}\arrow{d}\\
(V_{\rho,K_g}^{\text{sub}}\otimes\omega_{K_g}^{\otimes M_i})|_{\tilde{X}_{i,g}}\arrow{r}{\cdot\tilde{A}_{i,g}^k}&(V_{\rho,K_g}^{\text{sub}}\otimes\omega_{K_g}^{\otimes M_i+k\tilde{N}_i})|_{\tilde{X}_{i,g}}\arrow{r}&(V_{\rho,K_g}^{\text{sub}}\otimes\omega_{K_g}^{\otimes M_i+k\tilde{N}_i})|_{V(\tilde{A}_{i,g}^k)}
\end{tikzcd}
\end{equation*}
where, for example, the map in the first column is the composition
\begin{equation*}
[g]^{-1}((V_{\rho,K}^{\text{sub}}\otimes\omega_K^{\otimes M_i})|_{\tilde{X}_i})\to [g]^*((V_{\rho,K}^{\text{sub}}\otimes\omega_K^{\otimes M_i})|_{\tilde{X}_i})\overset{g}{\to}(V_{\rho,K_g}^{\text{sub}}\otimes\omega_{K_g}^{\otimes M_i})|_{\tilde{X}_{i,g}}
\end{equation*}
and the other two columns are defined similarly.  From this we get a morphism of long exact sequences in cohomology, and in particular a commutative square
\begin{equation*}
\begin{tikzcd}[column sep=tiny]
H^{n-i-1}(\tilde{X}_i,[g]^{-1}((V_{\rho,K}^{\text{sub}}\otimes\omega_K^{\otimes M_i+k\tilde{N}_i})|_{V(\tilde{A}_i^k)})) \arrow{r}{\delta}\arrow{d} &H^{n-i}(\tilde{X}_{i,g},[g]^{-1}((V_{\rho,K}^{\text{sub}}\otimes\omega_K^{\otimes M_i})|_{\tilde{X}_i})) \arrow{d}\\
H^{n-i-1}(\tilde{X}_{i,g},(V_{\rho,K_g}^{\text{sub}}\otimes\omega_{K_g}^{\otimes M_i+k\tilde{N}_i})|_{V(\tilde{A}_{i,g}^k)}) \arrow{r}{\delta}&H^{n-i}(\tilde{X}_{i,g},(V_{\rho,K_g}^{\text{sub}}\otimes\omega_{K_g}^{\otimes M_i})|_{\tilde{X}_{i,g}}).
\end{tikzcd}
\end{equation*}
Next the functor $[1]_*$ is exact (because $[1]$ is affine) and defines an isomorphism of $\delta$-functors $H^*(\tilde{X}_{i,g},-)\simeq H^*(\tilde{X}_i,[1]_*(-))$ and hence we obtain a commutative square
\begin{equation*}
\begin{tikzcd}[column sep=tiny]
H^{n-i-1}(\tilde{X}_{i,g},(V_{\rho,K_g}^{\text{sub}}\otimes\omega_{K_g}^{\otimes M_i+k\tilde{N}_i})|_{V(\tilde{A}_{i,g}^k)}) \arrow{r}{\delta}\arrow{d} &H^{n-i}(\tilde{X}_{i,g},(V_{\rho,K_g}^{\text{sub}}\otimes\omega_{K_g}^{\otimes M_i})|_{\tilde{X}_{i,g}}) \arrow{d}\\
H^{n-i-1}(\tilde{X}_i,[1]_*(V_{\rho,K_g}^{\text{sub}}\otimes\omega_{K_g}^{\otimes M_i+k\tilde{N}_i})|_{V(\tilde{A}_{i,g}^k)}) \arrow{r}{\delta}&H^{n-i}(\tilde{X}_i,[1]_*(V_{\rho,K_g}^{\text{sub}}\otimes\omega_{K_g}^{\otimes M_i})|_{\tilde{X}_{i,g}}).
\end{tikzcd}
\end{equation*}
Finally we have a morphism of short exact sequences of sheaves on $\tilde{X}_i$
\begin{equation*}
\begin{tikzcd}[column sep=tiny]
{[1]}_*(V_{\rho,K_g}^{\text{sub}}\otimes\omega_{K_g}^{\otimes M_i})|_{\tilde{X}_{i,g}}\arrow{r}{\cdot\tilde{A}_{i,g}^k}\arrow{d}{\tr}&{[1]}_*(V_{\rho,K_g}^{\text{sub}}\otimes\omega_{K_g}^{\otimes M_i+k\tilde{N}_i})|_{\tilde{X}_{i,g}}\arrow{r}\arrow{d}{\tr}&{[1]}_*(V_{\rho,K_g}^{\text{sub}}\otimes\omega_{K_g}^{\otimes M_i+k\tilde{N}_i})|_{V(\tilde{A}_{i,g}^k)}\arrow{d}{\tr}\\
(V_{\rho,K}^{\text{sub}}\otimes\omega_K^{\otimes M_i})|_{\tilde{X}_i}\arrow{r}{\cdot\tilde{A}_i^k}&(V_{\rho,K}^{\text{sub}}\otimes\omega_K^{\otimes M_i+k\tilde{N}_i})|_{\tilde{X}_i}\arrow{r}&(V_{\rho,K}^{\text{sub}}\otimes\omega_K^{\otimes M_i+k\tilde{N}_i})|_{V(\tilde{A}_i^k)}
\end{tikzcd}
\end{equation*}
and hence we obtain a commutative square
\begin{equation*}
\begin{tikzcd}[column sep=tiny]
H^{n-i-1}(\tilde{X}_i,[1]_*(V_{\rho,K_g}^{\text{sub}}\otimes\omega_{K_g}^{\otimes M_i+k\tilde{N}_i})|_{V(\tilde{A}_{i,g}^k)}) \arrow{r}{\delta}\arrow{d}{\tr} &H^{n-i}(\tilde{X}_i,[1]_*(V_{\rho,K_g}^{\text{sub}}\otimes\omega_{K_g}^{\otimes M_i})|_{\tilde{X}_{i,g}}) \arrow{d}{\tr}\\
H^{n-i-1}(\tilde{X}_i,(V_{\rho,K}^{\text{sub}}\otimes\omega_K^{\otimes M_i+k\tilde{N}_i})|_{V(\tilde{A}_i^k)}) \arrow{r}{\delta} &H^{n-i}(\tilde{X}_i,(V_{\rho,K}^{\text{sub}}\otimes\omega_K^{\otimes M_i})|_{\tilde{X}_i}).
\end{tikzcd}
\end{equation*}
Combining the four commutative squares above we conclude that $\delta$ commutes with $T_g$.

Finally (continuing to assume $i<n$) we must construct $\tilde{X}_{i+1}$ and show that it has the required properties.  We take $\tilde{X}_{i+1}=V(\tilde{A}_i^{k_i})$.  Then $(\tilde{X}_{i+1})_{\text{red}}=X_{i+1}$ because the set theoretic vanishing locus of $\tilde{A}_i^{k_i}$ is the same as that of $A_i$ which is $X_{i+1}$ by part 3 of proposition \ref{P:indsetup}.  Moreover we already saw that for each cusp label $\Sc{C}\in\text{Cusp}_K$, the restriction of $\tilde{A}_i^{k_i}$ to $\tilde{X}_i\cap\Ca{X}_{\Sc{C}}$ is a non zero divisor on $\tilde{X}_i\cap\Ca{X}_{\Sc{C}}$ and hence its (scheme theoretic) vanishing locus $\tilde{X}_{i+1}\cap\Ca{X}_{\Sc{C}}$ is also Cohen Macaulay.
\end{proof}

\section{Completing the Proof}\label{S:finpf}

By composing the Hecke equivariant surjections given by Proposition \ref{P:congind} we obtain a Hecke equivariant surjection
\begin{equation*}
H^0(\tilde{X}_n,(V_{\rho,K}^{\text{sub}}\otimes\omega_K^{\otimes M_n})|_{\tilde{X}_n})\twoheadrightarrow H^n(\tilde{X}_0,(V_{\rho,K}^{\text{sub}}\otimes\omega_K^{M_0})|_{\tilde{X}_0})=H^n(\Ca{X}_K^{\text{min}},V_{\rho,K}^{\text{sub}}).
\end{equation*}

Now consider the restriction map
\begin{equation*}
V_{\rho,K}^{\text{sub}}\to V_{\rho,K}^{\text{sub}}|_{\tilde{X}_n}.
\end{equation*}
It is surjective and so it fits into a short exact sequence of sheaves
\begin{equation*}
0\to\F\to V_{\rho,K}^{\text{sub}}\to V_{\rho,K}^{\text{sub}}|_{\tilde{X}_n}\to 0
\end{equation*}
for some coherent sheaf $\F$ on $\Ca{X}_K^{\text{min}}$.  Tensoring this exact sequence with the line bundle $\omega_K^{k\tilde{N}_n+M_n}$ for some nonnegative integer $k$ and taking cohomology we obtain part of a long exact sequence
\begin{equation*}
H^0(\Ca{X}_K^{\text{min}},V_{\rho,K}^{\text{sub}}\otimes\omega_K^{k\tilde{N}_n+M_n})\to H^0(\tilde{X}_n,(V_{\rho,K}^{\text{sub}}\otimes\omega_K^{k\tilde{N}_n+M_n})|_{\tilde{X}_n})\to H^1(\Ca{X}_K^{\text{min}},\F\otimes\omega_K^{k\tilde{N}_n+M_n}).
\end{equation*}

Next from the filtration
\begin{equation*}
0=W_0\subset W_1\subset W_2\subset W_3=W/\pi^rW
\end{equation*}
we obtain by Proposition \ref{P:autpropmin} a filtration of sheaves
\begin{equation*}
0=V_{W_0,K}^{\text{sub}}\subset V_{W_1,K}^{\text{sub}}\subset V_{W_2,K}^{\text{sub}}\subset V_{W_3,K}^{\text{sub}}=V_{\rho',K}^{\text{sub}}/\pi^r
\end{equation*}
such that for $i=1,2,3$
\begin{equation*}
V_{W_i,K}^{\text{sub}}/V_{W_{i-1},K}^{\text{sub}}\simeq V_{W_i/W_{i-1},K}^{\text{sub}}
\end{equation*}

Now as $\omega_K$ is ample, we can pick $k$ sufficiently large so that
\begin{itemize}
\item $H^1(\Ca{X}_K^{\text{min}},\F\otimes\omega_K^{k\tilde{N}_n+M_n})=0$,
\item $H^1(\Ca{X}_K^{\text{min}},V_{W_i/W_{i-1},K}^{\text{sub}}\otimes\omega_K^{k\tilde{N}_n+M_n})=0$ for $i=1,2,3$, and
\item $k\tilde{N}_n+M_n\geq C$, where $C$ is as in the statement of Theorem \ref{T:abscong}.
\end{itemize}
By the first point, and the long exact sequence above, we have a surjective, Hecke equivariant restriction map
\begin{equation*}
H^0(\Ca{X}_K,V_{\rho,K}^{\text{sub}}\otimes\omega_K^{k\tilde{N}_n+M_n})\twoheadrightarrow H^0(\tilde{X}_n,(V_{\rho,K}^{\text{sub}}\otimes\omega_K^{k\tilde{N}_n+M_n})|_{\tilde{X}_n}).
\end{equation*}

Next using the fact that $H^1(\Ca{X}_K^{\text{min}},V_{W_i/W_{i-1}^{\text{sub}},K}\otimes\omega_K^{k\tilde{N}_n+M_n})=0$ for $i=1,2,3$, we conclude that 
\begin{equation*}
H^1(\Ca{X}_K^{\text{min}},V_{\rho',K}^{\text{sub}}\otimes\omega_K^{k\tilde{N}_n+M_n}/\pi^r)=H^1(\Ca{X}_K^{\text{min}},V_{W_3,K}^{\text{sub}}\otimes\omega^{k\tilde{N}_n+M_n}_K)=0
\end{equation*}

and also that for the filtration

\begin{equation*}
0\subset H^0(\Ca{X}_K^{\text{min}},V_{W_1,K}^{\text{sub}}\otimes\omega_K^{k\tilde{N}_n+M_n})\subset H^0(\Ca{X}_K^{\text{min}},V_{W_2,K}^{\text{sub}}\otimes\omega_K^{k\tilde{N}_n+M_n})\subset H^0(\Ca{X}_K^{\text{min}},V_{W_3,K}^{\text{sub}}\otimes\omega_K^{k\tilde{N}_n+M_n})
\end{equation*}
we have
\begin{equation*}
H^0(\Ca{X}_K^{\text{min}},V_{W_i,K}^{\text{sub}}\otimes\omega_K^{k\tilde{N}_n+M_n})/H^0(\Ca{X}_K^{\text{min}},V_{W_{i-1},K}^{\text{sub}}\otimes\omega_K^{k\tilde{N}_n+M_n})\simeq H^0(\Ca{X}_K^{\text{min}},V_{W_i/W_{i-1},K}^{\text{sub}}\otimes\omega_K^{k\tilde{N}_n+M_n})
\end{equation*}
Hecke equivariantly.

Next consider the short exact sequence of sheaves on $X_K^{\text{min}}$
\begin{equation*}
0\to V_{\rho',K}^{\text{sub}}\overset{\cdot\pi^r}{\to}V_{\rho',K}^{\text{sub}}\to V_{\rho',K}^{\text{sub}}/\pi^r\to 0.
\end{equation*}
Tensor with $\omega_K^{\otimes k\tilde{N}_n+M_n}$ and take cohomology.  We see that
\begin{equation*}
H^1(\Ca{X}_K^{\text{min}},V_{\rho',K}^{\text{sub}}\otimes\omega_K^{k\tilde{N}_n+M_n})=0
\end{equation*}
as it is a finitely generated $R$-module and the cokerenel of multiplication by $\pi^r$ on it embeds into
\begin{equation*}
H^1(\Ca{X}_K^{\text{min}},V_{\rho',K}^{\text{sub}}\otimes\omega_K^{k\tilde{N}_n+M_n}/\pi^r)=0.
\end{equation*}
Hence we conclude that the Hecke equivariant reduction mod $\pi^r$ map
\begin{equation*}
H^0(\Ca{X}_K^{\text{min}},V_{\rho',K}^{\text{sub}}\otimes\omega_K^{k\tilde{N}_n+M_n})\twoheadrightarrow H^0(\Ca{X}_K^{\text{min}},V_{\rho',K}^{\text{sub}}\otimes\omega_K^{k\tilde{N}_n+M_n}/\pi^r)
\end{equation*}
is surjective.

Finally from Proposition \ref{P:congind} we have a section
\begin{equation*}
\tilde{A}_n\in H^0(\tilde{X}_n,\omega^{\otimes\tilde{N}_n}|_{\tilde{X}_n})
\end{equation*}
which is Hecke stable and a nonzero divisor on $V_{\rho,K}|_{\tilde{X}_n}$ as in the proof of Proposition \ref{P:congind}.  Hence we have an injective map
\begin{equation*}
H^0(\tilde{X}_n,(V_{\rho,K}^{\text{sub}}\otimes\omega^{M_n})|_{\tilde{X}_n})\overset{\cdot\tilde{A}_n^k}\hookrightarrow H^0(\tilde{X}_n,(V_{\rho,K}^{\text{sub}}\otimes\omega^{k\tilde{N}_n+M_n})|_{\tilde{X}_n})
\end{equation*}
which is Hecke equivariant by the Hecke stability of $\tilde{A}_n$.

To summarize we have:
\begin{itemize}
\item A Hecke equivariant surjection
\begin{equation*}
H^0(\tilde{X}_n,(V_{\rho,K}^{\text{sub}}\otimes\omega_K^{\otimes M_n})|_{\tilde{X}_n})\twoheadrightarrow H^n(\Ca{X}_K^{\text{min}},V_{\rho,K}^{\text{sub}}).
\end{equation*}
\item A Hecke equivariant injection
\begin{equation*}
H^0(\tilde{X}_n,(V_{\rho,K}^{\text{sub}}\otimes\omega^{M_n})|_{\tilde{X}_n})\overset{\cdot\tilde{A}_n^k}\hookrightarrow H^0(\tilde{X}_n,(V_{\rho,K}^{\text{sub}}\otimes\omega^{k\tilde{N}_n+M_n})|_{\tilde{X}_n}).
\end{equation*}
\item A Hecke equivariant surjection
\begin{equation*}
H^0(\Ca{X}_K,V_{\rho,K}^{\text{sub}}\otimes\omega_K^{k\tilde{N}_n+M_n})\twoheadrightarrow H^0(\tilde{X}_n,(V_{\rho,K}^{\text{sub}}\otimes\omega_K^{k\tilde{N}_n+M_n})|_{\tilde{X}_n}).
\end{equation*}
\item A Hecke stable filtration
\begin{equation*}
0\subset M_0\subset M_1\subset H^0(\Ca{X}_K^{\text{min}},V_{\rho',K}^{\text{sub}}\otimes\omega_K^{k\tilde{N}_n+M_n}/\pi^r)
\end{equation*}
for which we have
\begin{equation*}
M_1/M_0\simeq H^0(\Ca{X}_K^{\text{min}},V_{\rho,K}^{\text{sub}}\otimes\omega_K^{k\tilde{N}_n+M_n})
\end{equation*}
Hecke equivariantly.
\item A Hecke equivariant surjection
\begin{equation*}
H^0(\Ca{X}_K^{\text{min}},V_{\rho',K}^{\text{sub}}\otimes\omega_K^{k\tilde{N}_n+M_n})\twoheadrightarrow H^0(\Ca{X}_K^{\text{min}},V_{\rho',K}^{\text{sub}}\otimes\omega_K^{k\tilde{N}_n+M_n}/\pi^r)
\end{equation*}
\end{itemize}
Thus $H^n(\Ca{X}_K^{\text{min}},V_{\rho,K}^{\text{sub}})$ is a Hecke equivariant sub quotient of $H^0(\Ca{X}_K^{\text{min}},V_{\rho',K}^{\text{sub}}\otimes\omega^{k\tilde{N}_n+M_n})$, and hence theorem \ref{T:abscong} is proved.

\singlespacing

\bibliographystyle{plain}

\end{document}